\DeclareMathAlphabet{\mathpzc}{OT1}{pzc}{m}{it}
\def\a{\alpha}
\def\b{\beta}
\def\cd{\cdot}
\def\d{\delta}
\def\e{\epsilon}
\def\f{\frac}                          %op
\def\g{\gamma}
\def\G{\Gamma}
\def\k{\kappa}
\def\l|{\left|}
\def\la{\lambda}
\def\La{\Lambda}
\def\lra{\longrightarrow}              %op
\def\dllra{\Longleftrightarrow}        %op
\def\Om{\Omega}
\def\om{\omega}
\def\ov{\overline}                   %op
\def\ox{\otimes}                     %op
\def\p{\partial}                     %op
\def\r|{\right|}
\def\s{\sigma}
\def\sbq{\subseteq}                  %op
\def\supth{{\text{th}}}              %op
\def\tl{\tilde}
\def\un{\underline}                  %op
\def\vp{\varphi}
\def\wh{\widehat}                    %op
\def\wt{\widetilde}                  %op
\def\x{\times}                       %op
\def\z{\zeta}
\def\({\left(}
\def\){\right)}
\def\[{\left[}
\def\]{\right]}
\def\<{\left<}
\def\>{\right>}
\newcommand{\bk}[1]{\langle #1\rangle}
\def\ra{\rightarrow}
\newcommand{\rz}{\raisebox{.2ex}{*}}
\def\SA{\mathcal A}
\def\SB{\mathcal B}
\def\SC{\mathcal C}
\def\SD{\mathcal D}
\def\SE{\mathcal E}
\def\SF{\mathcal F}
\def\SG{\mathcal G}
\def\SH{\mathcal H}
\def\SI{\mathcal I}
\def\SJ{\mathcal J}
\def\SK{\mathcal K}
\def\SL{\mathcal L}
\def\SM{\mathcal M}
\def\SN{\mathcal N}
\def\SO{\mathcal O}
\def\SP{\mathcal P}
\def\SR{\mathcal R}
\def\SS{\mathcal S}
\def\ST{\mathcal T}
\def\SU{\mathcal U}
\def\SV{\mathcal V}
\def\SX{\mathcal X}
\def\SY{\mathcal Y}
\def\Fa{\mathfrak a}
\def\Fb{\mathfrak b}
\def\Ff{\mathfrak f}
\def\Fg{\mathfrak g}
\def\Fh{\mathfrak h}
\def\Fi{\mathfrak i}
\def\Fj{\mathfrak j}
\def\Fk{\mathfrak k}
\def\Fl{\mathfrak l}
\def\Fq{\mathfrak q}
\def\Fr{\mathfrak r}
\def\Fs{\mathfrak s}
\def\Ft{\mathfrak t}
\def\Fu{\mathfrak u}
\def\Fv{\mathfrak v}
\def\Fw{\mathfrak w}
\def\Fy{\mathfrak y}
\def\Fz{\mathfrak z}
\def\FA{\mathfrak A}
\def\FB{\mathfrak B}
\def\FC{\mathfrak C}
\def\FD{\mathfrak D}
\def\FE{\mathfrak E}
\def\FF{\mathfrak F}
\def\FG{\mathfrak G}
\def\FH{\mathfrak H}
\def\FI{\mathfrak I}
\def\FJ{\mathfrak J}
\def\FK{\mathfrak K}
\def\FL{\mathfrak L}
\def\FM{\mathfrak M}
\def\FR{\mathfrak R}
\def\FS{\mathfrak S}
\def\FZ{\mathfrak Z}
\newcommand{\BA}{\ensuremath{\mathbf A}}
\newcommand{\BS}{\ensuremath{\mathbf S}}
\newcommand{\BT}{\ensuremath{\mathbf T}}
\newcommand{\BU}{\ensuremath{\mathbf U}}
\def\sbbn{{\mathbb n}}
\def\bbc{{\mathbb C}}
\def\bbf{{\mathbb F}}
\def\bbm{{\mathbb M}}
\def\bbn{{\mathbb N}}
\def\bbq{{\mathbb Q}}
\def\bbr{{\mathbb R}}
\def\bbs{{\mathbb S}}
\def\bbz{{\mathbb Z}}
\def\ssm{\smallsetminus}
\newcommand{\bsm}[1]{\boldsymbol{#1}}    %%%%  THIS BOLDS MATH SYMBOLS %%%%
\DeclareMathOperator{\img}{Im}
\newcommand{\cmn}{$C^\infty(\bbr^m,\bbr^n)$}
\newcommand{\dcmn}{C^\infty(\bbr^m,\bbr^n)}
\newcommand{\cm}{$C^\infty(\bbr^m,\bbr)$}
\newcommand{\dscmn}{SC^\infty(\bbr^m,\bbr^n)}
\newcommand{\norm}[1]{\left\|#1\right\|}
\def\k{\kappa}
\newcommand{\ad}{\operatorname{ad}}    %%% MOVED COMMANDS
\def\x{\times}                         %%% FROM DISSERTATION
\newcommand{\expp}{\operatorname{EXP}} %%%%%
\newcommand{\card}{\operatorname{card}} %%%%
\newcommand{\nes}{\operatorname{nes}}    %%%%%
\newcommand{\st}{\operatorname{st}}       %%%%%
\newcommand{\open}{\operatorname{open}}    %%%%%%
\newcommand{\Hom}{\operatorname{Hom}}       %%%%%%%
\newcommand{\bs}{\ensuremath{\boldsymbol}}   %%%%%%%%
\newcommand{\en}{\operatorname{End}}        %%%%%%%%
\newcommand{\loc}{\operatorname{loc}}      %%%%%%%
\newcommand{\stan}{\operatorname{stan}}     %%%%%%%
\newcommand{\dist}{\operatorname{dist}}      %%%%%%%%
\newcommand{\intt}{\operatorname{int}}      %%%%%%%%
\newcommand{\gl}{\operatorname{gl}}        %%%%%%%%
\newtheorem{thm}{Theorem}[section]
\newtheorem{lem}{Lemma}[section]
\newtheorem{proposition}{Proposition}[section]
\newtheorem{definition}{Definition}[section]
\newtheorem{cor}{Corollary}[section]
\theoremstyle{definition}
\newtheorem{remark}{Remark}[section]
\keywords{local topological groups, local Lie groups, Hilbert's fifth problem, nonstandard analysis, smoothing}
\subjclass[2000]{Primary 22E05; Secondary 26E35,22A99}
\title{Regularity and Nearness Theorems for Families of Local Lie Groups
  }
\author{Tom McGaffey}
\date{}
\begin{document}

\maketitle

\begin{abstract}
     In this work, we prove three  types of results with the strategy that, together, the author believes  these should imply the local version of Hilbert's Fifth problem. In a separate development, we  construct a nontrivial topology for rings of map germs on Euclidean spaces. First, we develop a framework for the  theory of (local) nonstandard Lie groups and within that framework prove a nonstandard result that implies that a family of local Lie groups that converge in a pointwise sense must then differentiability converge, up to coordinate change, to an analytic local Lie group, see corollary \ref{cor: C^0 precpt fam in Gp is C^k precpt}.
 The second result  essentially says that a pair of mappings that almost satisfy the properties defining a local Lie group must have a local Lie group nearby, see proposition \ref{prop: standard almost->near}. Pairing the above two results, we get the principal standard consequence of the above work, corollary \ref{cor: standard reg + almost->near}, which can be roughly described as follows. If we have pointwise equicontinuous family of mapping pairs (potential local Euclidean topological group structures), pointwise approximating a (possibly differentiably unbounded) family of differentiable (sufficiently approximate) almost  groups, then the original family has, after appropriate coordinate change, a local Lie group as a limit point. The third set of results give nonstandard renditions of equicontinuity criteria for families of differentiable functions, see theorem \ref{thmbasreg}. These results are critical in the proofs of the principal results of this thesis as well as the standard interpretations of the main results here. Following this material, we have a long chapter constructing a Hausdorff topology on the ring of real valued  map germs on Euclidean space. This topology has good properties with respect to convergence and composition.
See the detailed introduction to this chapter for the motivation and description of this topology.
% The author believes that the framework of these results  will be sufficient for deriving  the local Fifth as a consequence.
 %Seemingly unrelated, we next prove, once more using a nonstandard framework, the existence of a good topology for germs of mappings on Euclidean space. This topology is good in the sense that it is a nondiscrete, regular, extends the usual notion of uniform convergence and makes composition with continuous germs a continuous morphism on this space of germs. We then apply this to the orbit of the space of germs of Lie groups, putting the local Lie group regularity result of the first part in this germ context.

\end{abstract}

\tableofcontents

\section{Introduction: History, Summary, Context}

\pagenumbering{arabic} \setcounter{page}{1}   %%%%DONT FORGET THIS

  We begin with a summary of contents as well as a perspective, historical and motivational.

\subsection{Content and objectives}In the first part of this paper we give  proofs of the following results. The first result is a regularity result.  Suppose that $(\SG,\psi,\nu)$
 is an SC$^\circ$ ${}^\s$local *Lie group (see definition \ref{def: sigma local *Lie group}) defined on some standard neighborhood of
 $0$ in $\bbr^n$ for $n\in\bbn$. (See preliminaries for definitions.) Then  there is a
 homeomorphic change of coordinates on some standard neighborhood of  $0$ in $\bbr^n$
 such that the standard part of $(\SG,\psi,\nu)$ is an analytic local Lie group in
 the new coordinates. (This is theorem \ref{thm: Main nonst thm}.) Note that the SC$^\circ$ condition only guarantees that the
 standard part will be a continuous local topological group with respect to the
 Euclidean topology; i.e., a locally Euclidean topological group. (Without this
 condition, the standard part of a ${}^\s$local *Lie group can be quite pathological, if it exists at all.) Some dramatic standard consequences follow immediately: eg., and crudely, $C^0$ precompact subsets of local Lie groups are in fact $C^k$ precompact for any integer $k$ (with respect to special coordinates); see the results in section \ref{sec: stan conseq of main reg thm}, especially corollary \ref{cor: C^0 precpt fam in Gp is C^k precpt}.

The (generalized) local Fifth problem of Hilbert asks if a general locally Euclidean local group has a homeomorphic change of coordinates making the local group into a local Lie group in the new coordinates. (We say generalized for, as defined, local Lie groups generally are not neighborhoods of the identity in (global) Lie groups.)
Given this statement, we find that a corollary of the above nonstandard result is a statement
asserting that the (generalized) local Fifth of Hilbert is implied
by a density result: ie., of local Lie groups in local Euclidean local topological groups. In attempting to prove the density result, we have proved the following almost implies near result, see chapter \ref{chap: partial solution to approx prob}. We define the notion of an almost local Euclidean $C^k$ group: an appropriate  pair of differentiable maps $(\psi,\nu)$  that are $s$-almost groups for some $s>0$ (roughly: instead of satisfying the equations defining a group structure, they satisfy inequalities that are pointwise $s$-close to these equalities)  , see definition \ref{def: s-almost gps}. Given this notion and given that we have some bound on derivatives of putative grouplike objects $(\psi,\nu)$, we prove that for every $r>0$, there is $s>0$ such that if $(\psi,\nu)$ is an $s$-almost group, then there is a $C^k$ (local) Lie group in an $r$ neighborhood of $(\psi,\nu)$ (in the $C^k$ topology.) This is a standard statement but the proof is nonstandard; see the specific statement in proposition \ref{prop: standard almost->near}, chapter \ref{chap: partial solution to approx prob}. Note that this result is new (much stronger than previous results along this line) and properly construed was not considered possible in some circles, see Ruh, \cite{Ruh1987} p. 563.
Now given (1): a nonstandard rendition of this almost implies near result (ie., corollary \ref{cor: NSA reg+almost->near}), (2): the nonstandard version of the main regularity result along with (3): a curious nonstandard smoothness result in the appendix (corollary \ref{cor: SC^j sim SC^k}); we can prove the surprising standard result given in corollary \ref{cor: standard reg + almost->near}.  {\it Roughly this says the following. Suppose that $\FC$ is a family of potential (but not!)  Euclidean local topological groups (ie., continuous pairs $(\psi,\nu)$ as described above) having $C^0$ precompactness properties (see definition \ref{def: equicont fam}). Suppose further that this family is pointwise  approximated by a good family of $C^k$ almost groups (whose derivatives are not necessarily bounded) that contains $s$-almost groups for arbitrarily small $s>0$.  Then, in fact, in appropriate coordinates, the family $\FC$ contains a sequence whose pointwise limit  is a local Lie group.}

One can also prove  an elementary approximation result getting that $C^k$ approximations of our local Euclidean group are approximate groups (this result has been omitted as it is not informative and awaits other results before it can be useful). The author believes that this simple approximation result along with the regularity-near results just mentioned should  enable us to prove that one can approximate locally Euclidean local topological groups by local Lie groups, ie., get the density result alluded to earlier. The argument is simple: approximate the local group by almost $C^k$ groups and then find a local Lie group close to this almost group (using some version of the almost implies near), then use some version of the regularity material to show that these approximations have bounded derivatives.  But the author is having problems with technical issues and to date has not finished this part. If the proof can be finished, then the density result along with the regularity result will have the local Fifth problem as a corollary.

The majority of the work in this paper is the proof of the main regularity theorem which  consists of two parts, both fairly
elementary with respect to complexity and depth of background
knowledge. If $\SG$ is our nonstandard local Lie group, the first part gives a proof that for $x\in\Fg$ (the
*Lie algebra of $\SG$) $ad_x$ is a nearstandard linear map. This is proved in chapter \ref{chap: pf that ad is sc0}. The
second part, see chapter \ref{chap: st(exp) is loc homeo}, uses this fact to prove that the standard part of the
exponential map for $(\Fg,\SG)$ is a local homeomorphism. In the
last brief part, chapter \ref{chap: Main NS reg thm and stan version}, we use again that $ad_x$ is nearstandard to show
that the Hausdorff series (ie., the *Lie group product in the
new coordinates) has analytic standard part.
Tieing together the regularity results on the $\ad$ map, the exponential map and the Hausdorff series, we easily get the main result, \ref{thm: Main nonst thm}.

It should be mentioned that the appendix of this paper, chapter \ref{chap: appendix: S-smoothness}, contains  technical  results that are new and  eg.,  critical to the proof of the almost implies near result noted above. These are results of nonstandard analysis. One part of the statement of the primary result, theorem \ref{thmbasreg}, can be be described as saying that  any internal function that is pointwise infinitesimally close to an internal S-smooth function (its internal derivatives are finite) is itself S-smooth; crudely: a super pointwise restriction implies differentiability restrictions. Note the corollaries of this result, especially corollary \ref{cor: 2nd stan cor of appendix thm}.

\subsection{History of the Fifth problem and NSA}
%An important related question is roughly whether a locally Euclidean
%local topological group is (the standard part of) an internal
%${}^\s$local $*$Lie group.
Hilbert stated his Fifth problem within the context of Sophus Lie's work on local transformation groups in the late nineteenth century. Roughly speaking, he asked if a local group acting continuously could be given coordinates for which the action would be analytic. See Richard Palais' contribution to the article, \cite{PalaisGleasonBio5th}, for this and the following historical remarks. With the advent of the modern formulation, ie., in terms of actions by (global) groups, the general group action problem was found to be false. A restricted question in terms of the group acting on itself, ie., the group's own product structure, had some possibility of holding if the group had a reasonably nice topology. That is, if  the group was assumed to be locally Euclidean, then the problem was solved in the affirmative.
%(In the following historical sketch, see Palais' article, \cite{PalaisGleasonBio5th} for the details.)
More specifically, first  von Neumann  solved  the compact case in 1933, then the abelian case was proved  by Pontryagin in 1939, followed by the solvable case (Chevally in 1941). Finally, after more than 10 years, a proof of the general case followed from the work of a pair of papers (Montgomery and Zippin \cite{MontZip1952} and  Gleason \cite{Gleason1952}) that appeared in a 1952 issue of the Annals of Mathematics . Very briefly, Montgomery and Zippin showed that a locally Euclidean group could not have ``small subgroups'', while  Gleason proved that such groups have continuous, injective homomorphisms into Lie groups and then invoked a result of E. Cartan which implied that such groups have smooth structures.

%@@@@@@
%As formulated here, the main theorem
%implies a density conjecture, the truth of which would imply a
%stronger version of Hilbert's local Fifth Problem than that which

A proof by Jacoby of the result for locally Euclidean local groups appeared in the Annals of Mathematics in 1957, \cite{Jacoby1957}. But as Peter Olver, \cite{Olver1996}, clearly
demonstrates, this proof is critically flawed, as will be discussed below and in in detail in chapter \ref{chap: error in Jacoby}.
As Olver also notes in this paper, in the intervening years substantial theory has come to
rely on this local version of Hilbert's Fifth.
Meanwhile, many years later, a much shortened nonstandard proof of the Fifth problem by Hirschfeld appeared in the Transactions of the AMS, \cite{Hirschfeld1990}. Hirschfeld followed the approach of the (standard) proof by Montgomery-Zippin-Gleason; but the nonstandard tools allowed great simplification of the original proof. Hirschfeld was able to show that the set of one parameter subgroups has a vector space structure by a straightforward  identification of these with a quotient of the set of infinitesimal elements of a given magnitude scale by those with infinitely smaller scale. Using this, he was able to then define a homomorphism from the group to the group of automorphisms of this vector space, essentially by (nonstandard)  hand. Generally, his arguments followed Gleason but substituted a careful analysis with infinitesimals in the place of Gleason's functional analysis.

The author of the present paper discovered  Olver's analysis of local topological groups (in particular explaining the failure of Jacoby's proof), \cite{Olver1996},  and began to think about a totally different (nonstandard) approach to the (until recently) open local version of the Fifth problem. About the same time we learned that Dr van den Dries had written up notes on a nonstandard proof of the Fifth problem (of which Hirschfeld mentions in his paper) and enquired about the possibility of obtaining a copy of such. During this correspondence, the author informed Dr van den Dries on the open local Fifth problem.
During this time, we began a correspondence with I. Goldbring, a student of Dr van den Dries, with respect to  the author's work on the main regularity theorem in this paper.
In the intervening years, Goldbring has produced a proof of the local Fifth problem, \cite{GoldbringFifthProblem2010} apparently following the approach of Hirschfeld, avoiding the problem of nonglobalizability of local Euclidean local groups (see below and chapter \ref{chap: error in Jacoby}) that doomed Jacoby's approach. Hence, in a strong sense, the proof, in spite of its nonstandard detour,  is modelled on the original proof of 1952.
As summarized above, the approach in the present paper could not be more different from this.

%The local  version has recently  been proved by I. Goldbring, (\cite{Goldbring2010}) a student of Dr van den Dries. Note that the author informed Dr. van den Dries about the missing proof of the local Fifth in an email exchange concerning van den Dries' NSA lecture notes on the Fifth problem. The proof is apparently an extension of the nonstandard proof of the (global) result given by Hirschfeld, \cite{Hirschfeld1990}.)

%Note that Hirschfeld's strategy is to significantly simplify the proof of Gleason using nonstandard analysis, staying with Gleason's strategy. As a consequence, the proof of Goldbring is a careful nonstandard remodeling of the original strategy.

%The goal in this paper is a proof using a fundamentally different approach. The idea is that the local Fifth would be a corollary of the conjunction of two strategies: a regularity theorem and a density theorem. The regularity theorem says that given a nonstandard (local) Lie group with the weak assumption that it is infinitesimally close to a local (Euclidean) topological group, then in fact its standard part is analytic.

%appeared apparently solving the same problem for local groups, see
%sections 3.1.1, 3.1.2 for specific definitions. Jacoby followed the
%@@@@@sufficiently numerous.
We need some remarks on why the  local Fifth problem is fundamentally different than the (global) Fifth problem.
Crudely, some local topological groups can
not be neighborhoods of the identity in  topological groups:  many-fold associativity follows from associativity for topological groups, but for local groups it does not, as it involves global topological considerations. For details see Chapter \ref{chap: error in Jacoby}
of this paper. The upshot of this is that, apparently many years
after Jacoby's paper appeared, its argument was found to
depend on this flawed assumption that the local group embedded in a (global) group. For a careful exposition, see Olver's paper
\cite{Olver1993}. Malcev had published a paper in 1941 specifying nontrivial
conditions necessary for `globalizing' a local group. See \cite{Malcev1941}, but
especially see Olver's lucid extension of Malcev's result. Chapter \ref{chap: error in Jacoby}
 of the present paper gives some relevant details. Curiously, Pontryagin
refers in his book to the paper of Malcev \cite{Pontryagin1986} p138--39 stating
that local groups are not always locally isomorphic to a
neighborhood of the identity in a global topological group, and
nobody seemed to have noticed this at the time.
% As far as I can determine, the
%local result remains open to this day.
Furthermore,  nobody seems to have investigated the relation of Pontryagin's embedding of local Lie groups with trivial center into global Lie groups and Olver's critical counterexamples.

For an overall understanding of local Lie groups and
the position of Jacoby's flawed result, see Olver's excellent paper,
\cite{Olver1993}. Good exposition on Hilbert's Fifth Problem are given by
Kaplansky's texts \cite{Kaplansky1971} and \cite{Kaplansky1977}, and the book of Montgomery and Zippin \cite{MontZip1955}. A nonstandard rendition of the problem was
written by van den Dries, but the author has not seen it.

\subsection{Strategy}
  Here we summarize the major strategies in this paper. Overall, and from a standard point of view, the strategic approach here is to show that locally Euclidean local groups are limits of local Lie groups and that the objects in these limiting processes can be regularized by appropriate coordinate changes to preserve their smoothness. Finally, strategic use of nonstandard mathematics allows us to avoid sequences and limiting arguments. Below, we summarize from the nonstandard perspective the major strategies involved.
\subsubsection{Main nonstandard theorem}
The strategy of the proof follows from the
insight that if we can prove that the $*$Lie algebra of the
${}^\s$local *LG (see \ref{subsec: intro: *Lie alg structure}) is nearstandard  i.e., that the
*Lie bracket $[\ ,\ ]:\rz\bbr^n_{\nes}\x\rz\bbr^n_{\nes}\mapsto\rz\bbr^n_{\nes}$,
then we can prove our change of coordinates $\rz\exp^{-1}$ is an
$S$-homeomorphism and that the group law in the new coordinates, the
*Hausdorff series, $\rz H$ series, is $S$-analytic. For the
definition of an {\bf $S$-homeomorphism} see sections \ref{subsec: notion of S-property} and \ref{sec: fact on S-homeos and pf finish}, definition \ref{def: S-homeo}. For
preliminaries on the dual use of the {\bf*Hausdorff} (aka
{\bf*CHD series}) see sections \ref{sec: H-series estimates, S-lemma} and \ref{sec: prdct is S-analy after coord change}. To prove that {\bf $\rz\exp$} is
an $S$-homeomorphism, we had to prove estimates on the $\rz H$ series
(see 6.2.2) and prove a subtle NSA fact (section \ref{sec: fact on S-homeos and pf finish}). The proof that the
$\rz H$ series is {\bf $S$-analytic} (see section \ref{subsec: S-analyticity}, and \ref{sec: prdct is S-analy after coord change}) is
straightforward.

\subsubsection{Lie bracket S-continuity} Yet we must still prove that
the *Lie bracket, two derivatives above a group operation that is
only assumed to have continuous standard part, is in fact continuous
(at the standard level). The proof depends only on
%the elementary
%structure theory of Lie algebras (recalled in 4.2.1 and 4.2.2) and
the intertwining formulas of the three canonical maps
$a_g: h\mapsto
ghg^{-1}, Ad_g(\nu) = d(a_g)\bigm|_{g=e}\quad\text{and}\quad
ad_\nu:w\to[v,w]$. Note that we are using these, and what follows,
on the internal level. See section \ref{sec: Ad is SC0}, expression \ref{diagram: 1st intertwining formula} for the first formula and section \ref{sec: transl pf of ad is SCo to Gin}, expression \ref{diagram: 2nd intertwining formula} for the second formula. Using these
formulas we reduce the problem to a question about the asymptotic
behavior of the internal Euclidean exponential map, $\expp$, and
from this to the elementary differential equation it satisfies,
written in terms of the differential of the product map on$\rz
Gl_n$, see section \ref{sec: diff geom of *Gl_n}.

\subsubsection{Almost implies near}
  The work in chapter \ref{chap: partial solution to approx prob} was initially motivated by the paper of Anderson, \cite{Anderson1986}. Nonetheless, understanding  the argument of Spakula and Zlatos in  \cite{SpakulaZlatos2004} was instrumental in our  construction of  the proper nonstandard equicontinuity argument necessary for a good ``almost implies near'' result for local topological groups. To complete the proof of this fact, we also needed the S-smoothness material in the appendix, chapter \ref{chap: appendix: S-smoothness} (see the material on S-smoothness below).
  Note that the almost implies near strategy is used also for almost associativity in chapter \ref{chap: error in Jacoby}. 
\
\subsubsection{Summary and prospects} This paper is structured as follows.
Chapter \ref{chap: intro:nonstan top and calc on E^n} gives basic preliminaries on  nonstandard analysis and eg., nonstandard calculus on Euclidean space. Chapter \ref{chap: intro: loc gp setup}
gives the basics on local groups, local Lie groups and their Lie algebras and some nonstandard renditions of these. In both chapters We have to prove basic groundwork
material as some does not exist or is not clear in the literature. Chapter \ref{chap: pf that ad is sc0} gives the
 preliminary one dimensional material, the Ad lemma, the preliminary intertwining
 formulas and finally a proof of our linchpin result: that $\ad$ is $SC^o$. Chapter \ref{chap: st(exp) is loc homeo} gives a proof that the exp map is
SC$^o$ and involves a fair amount of NSA work. Chapter \ref{chap: Main NS reg thm and stan version} starts with
a proof, using that $\ad$ is S-continuous, that the group product in the new coordinates, the *CHD
series, is $S$-analytic, uses this and the previous chapter to
give a short proof of the main nonstandard regularity theorem. Note that section \ref{sec: stan conseq of main reg thm} covers a fairly
compelling standard corollary of this main theorem.  Chapter \ref{chap: partial solution to approx prob}
considers the density question and chapter \ref{chap: error in Jacoby} gives an account on why local groups don't embed in (global) groups along with a nonstandard result on global associativity.
Chapter \ref{chap: appendix: S-smoothness} gives some generally useful technical results on S-smoothness that were helpful in chapter \ref{chap: partial solution to approx prob}.

 It seems possible to the
author that a more general result is possible, namely that the {\bf
condition on $S$-continuity of the internal product can be relaxed
to $S$-Borel measurability} of the product. Also, it seems to the
author that these results can be extended to Lie groups over
$p$-adic fields, which he will pursue as time allows. Finally, as a
method for showing that weak regularity implies strong regularity,
these tools appear to have much broader application than the usual
standard tools

\subsection{Topologizing map germs}
   The last extensive chapter has an, in house, summary of its contents, see section \ref{sec: intro: top on map germs}.  Here, we will merely note our motivation and then give a descriptive summary of results. This work was partially motivated by the author's belief that germs (of functions, for example) had not been properly covered by nonstandard analysts. Although Robinson, see his book \cite{Robinson1970} and his paper on germs, \cite{Robinson1969}, had given nonstandard presentations of germs, we believed that the capacity of nonstandard tools to analyze this area had not really been utilized. Furthermore, with respect to the present  work on families local Lie groups, we felt that a nonstandard study of families of germs of local Lie groups would help to understand their nature. We therefore began a study of families of germs of mappings from the point of view of nonstandard mathematics,  and immediately the desire for a good ambient topology for these seemed critical  before we could proceed to a nonstandard study of families of germs of topological groups.  The last chapter is the result of our analysis to this point.

   Briefly summarizing our results, we are able to construct a Hausdorff topology on the ring of germs at $0$ of real valued functions on $\bbr^n$ that has the following properties. A convergent net of germs of continuous functions has a limit point that is the germ of a continuous function. Furthermore, ring operations as well as left and right composition are continuous in this topology. The topology is defined in terms of a *supremum norm on a ball about $0$ of infinitesimal radius $\d$. Nonetheless, we prove that the topology is independent of the choice of $\d$ and also prove results that give close connections with standard convergence.
    For a much more detailed description of the contents of chapter \ref{chap: topologizing germs}, we refer the reader to the extensive introduction to this chapter beginning on page \pageref{sec: intro: top on map germs}.

   We need to note that a much updated and expanded version of this chapter now exists on the arXiv, \cite{McGafGerms2012arXiv1206.0473M}. Although of possibly independent interest, the Hardy field constructions and several other constructions in chapter \ref{chap: topologizing germs} have been left out of the updated paper. On the other hand, the updated paper has essentially completed the objectives of the original. There is interesting material left out of both that will appear at a later date.

\section{Preliminaries: Nonstandard topology and calculus}\label{chap: intro:nonstan top and calc on E^n}

\subsection{A brief description of NSA and local NS calculus}\label{sec: brief deecrip nsa nsa calc}

For those familiar with nonstandard analysis (NSA), this section can be
referred back to for some notation, definitions and a few basic
results in local differential calculus, appropriately transferred.
In introducing NSA, instead of giving a rigorous, and therefore obscure, approach to
to its foundations, we will instead begin with a crude and
descriptive introduction to the basic structures and tools of the
superstructure approach. Then we will include a basic index of
definitions, usually along with notation. We will then follow with
the basic nonstandard local calculus that is needed.

For NSA, my standard is the text of Stroyan and Luxemburg \cite{StrLux76}, but Henson in \cite{Henson1997} and  Lindstrom in \cite{Lindstrom1988} are  good user
friendly introduction, and the article of Farkas and Szabo \cite{FarkasSzabo} give a good picture on how nonstandard methods simplify proofs.
For a clear exposition on the relation between ultrafilters and the faithful transfer of
     theorems to ultrapower models, see Barnes and Mack, \cite{BarnMack1975} p.62-64.
 One way to motivate and explain nonstandard models of mathematical objects is to think of them as a
   "structure". See Ballard, \cite{Ballard1994}, for a short introduction to the model theory of structures
   along with a topological introduction to recent versions of
   nonstandard models and more generally  Di Nasso, \cite{DiNasso1999}, for an overview and analysis of the various approaches to a nonstandard mathematics.
\subsubsection{Brief overview of NSA}\label{subsec: nsa overview}

%   (See [Chang and Keisler] for the model
%   theoretic framework in which the original formulation of
%   nonstandard analysis is embedded.)
    We will begin with the notion of structure from model theory, the birthplace of nonstandard mathematics and briefly try to give a model theoretic view of nonstandard math. We will rapidly segue into the ultrapower idea and spend the lion's share of this introduction on developing a picture of the prototypical example: the nonstandard real numbers and its internal and external subsets.

    Loosely speaking, a structure $\{\SX,\SR\}$ consists of a pair: an ambient (variable) set $\SX$ along with
     a formal (fixed) collection, $\SR$ of relations (unary, binary,...) and
    operations on (unary, binary,...)
    on this set and possibly some canonical elements, $a_0, b_0,...$, of that
    set. Think of an ordered topological group $\{\SG,\SR\}$ as a set $\SG$ coming from a bag of many such, along
    with $\SR=\{e,\cdot,< \}$, where  $e$, denotes the identity, $\cdot$, the product binary
    operation, and the order,$<$ (binary relation); the formal set of symbols, $\SR$, structuring any and all such $\SG$ as ordered groups.  Or think of an ordered field $\{\bbf,\SS\}$ as a set $\bbf$ along with symbols for the two binary operations and the relation symbol and including two canonical symbols representing the additive and multiplicative identities.  These object are
    not an ordered group, respectively ordered field, unless all of compatibility axioms among
    the collection $\SR$, respectively $\SS$, are satisfied. These can be expressed
    \textbf{formally}; irrespective of the particular set $\SX$, respectively $\bbf$, using the syntax of (usually) first order logic.

%    Suppose that we had the capacity to enrich, greatly thicken and extend $\SX$, yet keep the identical set
%    $\SR$ acting now on this enriched set (in a manner compatible with the richer context); endowing it with the same structural
%    features, having the same theorems (now `transferred'), etc., as the original
%    object.
%    It turns out often that the theorems that follow from the formal setting $\{\SX,\SR\}$ are much easier to prove within the enriched structure $\{\rz\SX,\SR\}$
%    This opens the possibility of strategically simplifying pathways through a nonstandard world to standard results.
    For a given set and structure on it, $\{\SX,\SR\}$,
    in our (standard) world, we can construct richer (nonstandard)
    models $\{\rz\SX,\SR\}$ that satisfy all of the theorems, suitably interpreted, that our
    standard model satisfies. In fact, we have a formal transfer correspondence between the two `models'. For example, if our ordered field above is the pair $\{\wt{\bbq},\SS\}$ where along with the compatibility axioms to get the full set of axioms getting a characteristic $0$ totally ordered field (hence containing $\bbq$), then $\{\rz\wt{\bbq},\SS\}$ would be a characteristic $0$ totally ordered field. But it would now have vastly more elements and
    with this larger (model theoretically isomorphic!) object, we find that those theorems which might be
    conceptually or proofwise difficult in the original structure are often much less so in
    the enriched model. Given that there is a formal correspondence (transfer) between the set of theorems of $\{\SX,\SR\}$ and those of $\{\rz\SX,\SR\}$, this allows the strategic possibly of proving theorems in the richer $\{\rz\SX,\SR\}$ and then transferring them back to $\{\SX,\SR\}$.  (See the example with respect to continuity below.)
%    See Farkas and Szabo, \cite{FarkasSzabo} for an analysis on how nonstandard methods simplify proofs.
%    Variations on the basic ultrapower construction of $\rz\SX$ are the traditional ways to
%    construct these rich nonstandard models, although many others occur,
%     some constructions unrelated to the ultrapower constructions; see eg., \cite{Ballard1994}

     With respect to  the ultrapower method for acquiring these enriched models, we have the following outline followed by the less abstract constructions for $\rz\bbr$, the nonstandard real numbers.
     Let $\SF_{\SJ}$ is a suitable nonprincipal
    ultrafilter (see the discussion below) on a sufficiently large set $\SJ$, and if, for
    $f,g:\SJ\ra\SX$, we define $f\stackrel{\SF}{\sim}g$ if
     $\{j\in\SJ:f(j)=g(j)\}\in\SF$, then the four properties of  ultrafilters makes
     this a particularly nice equivalence relation on the set of
     maps from $\SJ$ to $\SX$ (below we will write these as sequences in $\SX$ indexed by $\SJ$). In fact,
     $\rz\SX\doteq\SX\!\diagup\;\lower4pt\hbox{$\stackrel{\SF}{\sim}$}$
     has precisely the same (first order) mathematical properties as
     $\SX$,  once they are suitably defined for $\SF_{\SJ}$ -equivalences classes of $\SJ$- sequences of
     elements of $\SX$. In particular, $\rz\SX$ satisfies all of the structural
     features of $\SR$,  along with all
     of the logical consequences, eg., theorems, when, loosely speaking, constants and sets being quantified over
     are replaced with the appropriate internal (see below) analog in the nonstandard universe.
%     (For example, the internal subsets of $\SX$ are
%     give by $\SF_{\SJ}$ -equivalence classes of maps $f:\SJ\ra\SP(\SX)$, $\SP(\SX)$ being the power
%      set of $\SX$; see the example below.)  This is  typically called the transfer principle in the generality
%     of nonstandard analysis.

     Let us give a simple but in some ways prototypical example of an ultrapower, a nonstandard model, $\rz\bbr$, for the real numbers, $\bbr$, and we will show how the properties of our ultrafilter make $\rz\bbr$ a totally ordered field properly containing the totally ordered field $\bbr$; eg., verify that $\rz\bbr$ thickens and extends $\bbr$. We begin with a definition of an ultrafilter on $\bbn$. This will be a collection of subsets, $\SF$, of $\bbn$ that satisfy the following four properties: (1) if $A,B\subset\bbn$ with $A\subset B$ and $A\in\SF$, then $B\in\SF$, (2) if $A,B\in\SF$, then $A\cap B\in\SF$, (3) the empty set is \textbf{not} an element of $\SF$, and finally the maximality property (4): if $A\subset\bbn$, then precisely one of $A$ or $\bbn\smallsetminus A$ is in $\SF$. To get a richer $\rz \SX$ from a given $\SX$, at least when $\SX$ is infinite, we need that $\SF$ be (5) nonprincipal; ie., satisfies the additional property that $\cap\{J:J\in\SF\}$ is empty. (Note that nonprincipal filters on eg., $\bbn$, ie., collections satisfying (1), (2), (3) and (5) are easily had: one example, the Frechet filter, is the collection of all $A\subset\bbn$ such that $\bbn\ssm A$ is finite. To get $\SF$  that satisfy property (4), in addition to the other four properties, we must invoke Zorn's lemma; and as such can't have constructive examples of such ultrafilters.)

     To get our ultrapower of $\bbr$ with respect to $\SF$, we introduce an equivalence on the set of sequences $\bbr^\bbn=\{(a_i):i\in\bbn\}$ via our ultrafilter (as noted in the previous paragraph); namely, if $(a_i),(b_i)\in\bbr^\bbn$, we declare that $(a_i)\stackrel{\SF}{\approx}(b_i)$, ie., are in the same $\SF$ equivalence class, if $\{i:a_i=b_i\}\in\SF$ and we let $\rz\bbr=\bbr^\bbn/\SF$ denote the set of $\SF$ equivalence classes. (The fact that $\stackrel{\SF}{\approx}$  gives an equivalence relation follows from the definition of ultrafilter.) Typical of the ultrapower construction, we lift all functions, relations, operations, etc., that are defined on $\bbr$, to $\bbr^\bbn$ componentwise and verify that that they push down to $\rz\bbr$ as functions, relations, operations with precisely the same (finitely stated) properties, ie., they transfer to $\rz\bbr$.

     For example, first lift the product on $\bbr$ to $\bbr^\bbn$ by defining $(a_i)\cdot(b_i)\dot=(a_i\cdot b_i)$. Then lift the ordering from $\bbr$ by $(a_i)<(b_i)$ if $a_i<b_i$ for all $i\in\bbn$. This will just give a partially ordered ring (with lots of zero divisors.)
     Next, we find that we can, in a well defined manner, push these  down to the set of equivalence classes, ie., $\rz\bbr$, and magically (via the properties of ultrafilters) get a totally ordered field as follows.
     First, if $\bk{a_i}\in\rz\bbr$ denote the equivalence class containing $(a_i)\in\bbr^\bbn$, let's show that $+$ and $<$ descend in a well defined way to $\rz\bbr$. Define $\bk{a_i}+\bk{b_i}=\bk{a_i+b_i}$ and $\bk{a_i}<\bk{b_i}$ if $\{i:a_i<b_i\}\in\SF$. Is this well defined, ie., do they respect equivalence classes? Suppose $(a_i)\stackrel{\SF}{\approx}(a'_i)$ and $(b_i)\stackrel{\SF}{\approx}(b'_i)$, we must verify that $(a_i)+(b_i)\stackrel{\SF}{\approx}(a_i')+(b'_i)$. But, by definition, if $I=\{i:a_i=a_i'\}$ and $J=\{i:b_i=b_i'\}$, then $I,J\in\SF$ and so property (2) above implies that $K\dot=I\cap J\in\SF$, that is, as $K\subset L\dot=\{i:a_i+b_i=a'_i+b'_i\}$, then $L$ is in $\SF$ by property (1) above, as we wanted. Similarly, (with the same hypotheses and notation) we verify that $\bk{a_i}<\bk{b_i}$ is well defined: if $A=\{i:a_i<b_i\}\in\SF$, we must have that $B=\{i:a_i'<b_i'\}\in\SF$. But clearly, if $i\in C\dot=I\cap J\cap A$, then $a_i'<b_i'$, ie., $C\subset B$ and as $C\in\SF$ by (repeated use of) property (2), then $B\in\SF$ by property (1). Continuing with these verifications, we get that $\rz\bbr$ is a partially ordered ring.

     Let's verify that $\rz\bbr$ in fact is totally ordered by $<$.
     At this point, we need a property of nonprincipal ultrafilters that follows from the above four properties. If $A_1,\ldots,A_k$ are pairwise disjoint subsets of $\bbn$  with $A_1\cup\cdots\cup A_k=\bbn$, then precisely one of the $A_j$'s is in $\SF$. From this we will get immediately that the partial order $<$ is in fact a total order on $\rz\bbr$. For given $\bk{a_i},\bk{b_i}\in\rz\bbr$ and $I=\{i:a_i<b_i\}$, $J=\{i:a_i=b_i\}$ and $K=\{i:a_i>b_i\}$. Then $I,J$ and $K$ are clearly disjoint with  $I\cup J\cup K=\bbn$ and so the previous statement says that precisely one of $I,J$ or $K$ is in $\SF$, ie., by definition precisely one of $\bk{a_i}<\bk{b_i}$, $\bk{a_i}=\bk{b_i}$ or $\bk{a_i}>\bk{b_i}$ holds, as we asserted. One can go on to verify that $\rz\bbr$ is a totally ordered field that contains an isomorphic copy of $\bbr$, ie., the set of equivalence classes of constant sequences, ie., those elements $\bk{a_i}$ satisfying $\{i:a_i=a\}\in\SF$. Of course, if we denote such a sequence by $\bk{a}$, we have a field injection $a\mapsto\bk{a}:\bbr\ra\rz\bbr$. We will denote this embedded field of standard real numbers by $^\s\bbr$.

     Let's demonstrate that $\rz\bbr$ is essentially `thicker' and `longer' than the embedded copy of $\bbr$. We will first show that it is thicker at $0$. Here is where the nonprincipal assumption plays a direct role because it implies that finite subsets of $\bbn$ cannot be elements of $\SF$, hence their complements the cofinite subsets must be elements of $\SF$. Given this, suppose that $a_1,a_2,\ldots\in\bbr$ are positive and $a_j\ra 0$ as $j\ra\infty$ and let $\a$ denote $\bk{a_i}\in\rz\bbr$. Then if $a\in\bbr$ is positive, $I\dot=\{i:0<a_i<a\}$ is clearly cofinite in $\bbn$ and so $I\in\SF$. But this says that for any positive real number $a$, we have that $\bk{0}<\a<\bk{a}$, ie., $\a$ is positive in $\rz\bbr$, but smaller than any `standard' real number; by definition $\a$ is a positive infinitesimal. One can verify that these are quite numerous, and with a little more work, see that the image of $\bbr$ in $\rz\bbr$ is in a strong sense discrete in $\rz\bbr$. But $^\s\bbr$  is also bounded in $\rz\bbr$ in the following sense. If $b_i\in\bbr$ is a sequence of positive numbers with $b_i$ unboundedly increasing and $\b$ denotes $\bk{b_i}\in\rz\bbr$, then an identical argument shows that if $a\in\bbr$ is any positive number, then $\bk{a}<\b$, ie., $\b$ is a positive infinite element of $\rz\bbr$ (by definition). So one might say that $^\s\bbr$ is bounded between $-\a$ and $\a$ for any such infinite positive number $\a$.

     In the previous example, we used the properties of $\SF$ directly. Yet much of the groundwork theory for the model theoretic approach to NSA is to
      allow one to avoid ever more complicated  arguments involving equivalence classes
      of sequences. Instead, one wishes to be able to use the enriched universe by deploying a small number of basic
      principles.
     For example, in working with `internal sets' (see our discussion on internal subsets of $\rz\bbr$ below) one uses the internal definition principle instead of equivalence classes
      of sequences. The internal definition principle exists in a range of generalities; see Keisler, \cite{Keisler1976} p46, for a transparent version,
       see Henson, \cite{Henson1997} p31, for a more involved version.  Note that, inherent in the logical transfer of structure is the
     fact that some subsets of $\rz\SX$, the external ones (again see below), don't
     faithfully carry over the logical consequences of $\SR$. The internal subsets support these
     transferred statements. (As we further develop our example around the transfer of $\bbr$ and its collection of subsets, $\SP(\bbr)$, we will give some idea on how this works.)   But, internal subsets are still remarkably numerous, and the external subsets
     coupled with internality of transferred statements imply the
     powerful overflow phenomena; see below.

     Continuing with the above concrete construction of $\rz\bbr$; let's give examples of internal and external sets in order to get a sense of the difference between internal and external sets, see why only the internal sets lift all standard properties to the nonstandard level, and also get a crude sense of how to deal with the `levels problem' (see below). In order to do this we must move up to the next rung in the set theoretic universe (ie., sets whose elements are themselves nontrivial sets); we will look at the transfer of $\SP(\bbr)$, the collection of subsets of $\bbr$. First of all, we will follow the recipe used when trying to lift the properties of  $\bbr$ to $\rz\bbr$; that is, we will lift componentwise and then, in pushing down use $\SF$ as before. Now as $\SP(\bbr)$ denotes the the collection of subsets of $\bbr$; then elements of $\rz\SP(\bbr)$ should be $\SF$ equivalence classes of elements of $\SP(\bbr)^\bbn$. Denoting the elements of $\SP(\bbr)^\bbn$ by $(A_i)$,  we should, according to the recipe for $\rz\bbr$, define $(A_i)\stackrel{\SF}{\approx}(B_i)$ precisely if $\{i:A_i=B_i\}\in\SF$. (One can check that this does give an equivalence relation, again via the use of the properties of $\SF$.) According to our recipe for lifting relations, eg., $<$, on $\bbr$ to $\rz\bbr$, and here `is a subset of' is a relation on $\SP(\bbr)$, we define $\bk{A_i}\subset\bk{B_i}$ if $\{i:A_i\subset B_i\}\in\SF$.
     %well almost. Recall how we defined operations, relations on $\rz\bbr$ \textbf{$\SF$-almost everywhere}, ie., if they hold on some element of $\SF$; this is how we should define define set membership $\in$ and subset $\subset$.

     This works perfectly. Note though a possibly confusing point (the first manifestation of `problem of levels'): early on we defined $\rz\bbr$ to consist of a set of equivalence classes, but now we also have an `element' of $\rz\SP(\bbr)$, apparently given by the (equivalence class of the constant sequence $(\bbr)$, denoted $\bk{\bbr}$). That is, we have two manifestations of the nonstandard reals, as a set consisting of the nonstandard reals and as an element of a higher level nonstandard set $\rz\SP(\bbr)$; these need to be the reconciled as with our ordinary sets.  But once we follow our recipe and define the lift of the `is an element of' relation of set membership, all will fit together. So if $\a=\bk{a_i}$ is an element of $\rz\bbr$ and $\SA=\bk{A_i}$, then following our mantra, we define $\bk{a_i}\in\bk{A_i}$ if $\{i:a_i\in A_i\}\in\SF$. (Once more ultrafilter properties get this to be a well defined relation on $\rz\bbr\x\rz\SP(\bbr)$.) So every element of $\rz\SP(\bbr)$ can be seen as a set of nonstandard reals; in particular $\bk{a_i}\in\bk{\bbr}$ if and only if $\{i:a_i\in\bbr\}\in\SF$, but nonstandard numbers $\bk{a_i}$ are defined up to $\SF$ equivalence, ie., this element of $\rz\SP(\bbr)$ contains precisely the same elements as $\rz\bbr$. Note also analogous to the inclusion $\bbr\ra\rz\bbr$ of the standard real numbers (the image being the isomorphic copy denote by $^\s\bbr$), there is the set of `standard subsets' of $\rz\bbr$, $^\s\SP(\bbr)\subset\rz\SP(\bbr)$, given (as before) by equivalence classes of constant sequences. In particular, note that, although as a subset of $\rz\SP(\bbr)$, $^\s\SP(\bbr)$ is external, all of its elements must be internal. With a little more detail, it's clear that all elements of $^\s\SP(\bbr)$ are of the form $\SA=\bk{A}$, for some $A\subset\bbr$,  and so as such $\bk{a_i}\in\SA$ if and only if $\{i:a_i\in A\}\in\SF$.

     Just so the reader may see that indeed these definitions of $\subset$ and $\in$ on the ultrapower (ie., nonstandard) level are consistent with each other (and as a further demonstration of the effectiveness of the properties of a nonprincipal ultrafilter), let's verify that $\bk{A_i}\subset\bk{B_i}$ $\Leftrightarrow$ $\bk{a_i}\in\bk{A_i}$ implies that $\bk{a_i}\in\bk{B_i}$. (Note that it follows immediately that two `good' nonstandard sets, ie., elements of $\rz\SP(\bbr)$, are equal if and only if they have the same elements. This is the transfer of a basic property of sets: they are equal if and only if they contain the same elements.) To verify $\Rightarrow$, let $I=\{i:a_i\in A_i\}$ and $J=\{i:A_i\subset B_i\}$. Then, our hypothesis and definitions imply that $I$ and $J$ are in $\SF$ and so $I\cap J\in\SF$. But clearly $I\cap J\subset K\dot=\{i:a_i\in B_i\}$ and so property (1) implies that $K\in\SF$. We will  prove $\Leftarrow$ by contradiction: suppose that the conclusion does not hold, ie., that $\bk{A_i}\nsubseteq\bk{B_i}$. Then it can't be true that $K\dot=\{i: A_i\subset B_i\}\in\SF$ and so by property (4), $K^c$, the complement of $K$ in $\bbn$, is an element of $\SF$. So, by definition of $K^c$, if $i\in K^c$, then there is $a_i\in A_i$ with $a_i\notin B_i$. Define an element $\ov{\a}=\bk{\ov{a}_i}\in\rz\bbr$ as follows: if $i\in K^c$, let $\ov{a}_i=a_i$, and for all other $i$ define $a_i$ arbitrarily. As $K^c\subset\{i:\ov{a}_i\in A_i\}$, it's clear that $\ov{\a}\in\bk{A_i}$; so it suffices to show that $\ov{\a}\notin\bk{B_i}$. But $K'\dot=\{i:\ov{a}_i\in B_i\}\subset K$, as by definition $\ov{a}_i\notin B_i$ for $i\in K^c$. Given this, suppose that $\ov{\a}\in\bk{B_i}$, then $K'\in\SF$ and so by property (1) $K\in\SF$,  contradicting this assumption.

     But now we find that there are subsets  of $\rz\bbr$ that are not of the form $\bk{A_i}$, ie., $\rz\SP(\bbr)\subsetneqq\SP(\rz\bbr)$. For example, the set, $\mu(0)$, of infinitesimal in $\rz\bbr$ cannot be written in the form $\bk{A_i}$. Let's indicate why this is true and at the same time give some idea on why transfer works for sets of the form $\bk{A_i}$, our internal subsets of $\rz\bbr$, ie., elements of $\rz\SP(\bbr)$ and not for sets like $\mu(0)$, ie., the external subsets of $\rz\bbr$, symbolically elements of $\SP(\rz\bbr)\smallsetminus\rz\SP(\bbr)$. Recall how we `transferred' all relations, operations, etc., from $\bbr$ to $\rz\bbr$: componentwise and then take $\SF$ equivalence classes. The identical process works for elements of $\rz\SP(\bbr)$; in particular, let's consider the `transfer' of supremum. If $A\subset\bbr$ is bounded above, then the completeness of $\bbr$ says that $\sup A$ exists in $\bbr$. First, we need to define the transfer of bounded above for elements of $\rz\SP(\bbr)$. We say that $\bk{A_i}$ is *bounded above (* to indicate the transfer of this property) if $\{i:A_i\;\text{is bounded above}\}\in\SF$. One can verify that this is well defined and has all of the properties of the standard notion bounded above. An important note here: we did not demand a uniform bound (for indices in some element of $\SF$).  Given this and with some work, one can verify that if we follow our recipe and for $\SA=\bk{A_i}\in\rz\SP(\bbr)$ that is *bounded above define  $\rz\sup\SA\dot=\bk{\sup A_i}$, then $\rz\sup\SA$ is clearly in $\rz\bbr$ and *sup has all of the properties of the standard supremum, ie., the properties of supremum transfer to the internal subsets of $\rz\bbr$. (Note that this *supremum can be an infinite element of $\rz\bbr$!) In particular, if $\SA$ is *bounded above and $\Fs=\bk{s_i}$ is the *supremum of $\SA$, then $2\Fs\notin\SA$ and if $\Fa$ is a *upper bound for $\SA$ such that $\Fa/2$ is also a *upper bound, then $\Fa$ cannot be $\rz\sup\SA$. Now suppose, by way of contradiction, that $\mu(0)\in\rz\SP(\bbr)$ and note clearly that $\mu(0)$ is bounded above. Therefore, $\Fa=\rz\sup(\mu(0))\in\rz\bbr$. But $\Fa$ can't be infinitesimal as $2\Fa$ would also be infinitesimal, violating a (transferred) property of supremum. Therefore, $\Fa$ must be noninfinitesimal, but then $\Fa/2$ is also noninfinitesimal, eg., a smaller *upper bound for $\mu(0)$, contradicting that $\Fa$ is the least such. Hence, $\mu(0)$ cannot be internal, eg., cannot carry the transferred properties of bounded subsets of $\rz\bbr$ unlike the internal subsets, where in fact these properties can be transferred using our all inclusive recipe.
     Note that hidden in this discussion is the first example of the `overflow principle'. Namely, suppose that $\SB$ is an internal subset of $\rz\bbr$ that contains the infinitesimals, then clearly it must contain noninfinitesimals. We will return to this a little later.

     Returning to the general overview, depending on how carefully one chooses $\SF_{\SJ}$, internal sets
     become  so densely numerous that various intensities of an
     extremely useful compactness phenomena, \textbf{saturation} may occur\label{page: saturation}. {\it If $\varsigma$ denotes a given
     cardinality,
     we say that $\{\rz\SX,\SR\}$ satisfies $\varsigma^+$ saturation if
     given a set $\SS\doteq\{A_i: A_i\subset \rz\SX\; \text{is internal}\;\forall
     i\in\SI\}$ such that $card\SI\leq\varsigma$ and $\SS$ has the
     finite intersection property, then  $ \cap_{i\in \SI}A_i $ is
     nonempty}, in fact, usefully fat. Monads occur when $\SS$ is the
     collection of neighborhoods of, for example, a point in a topological space.
     With these, expressions of continuity, Hausdorff-ness, etc,
     become intuitively simple. The various degrees of saturation do not come into their own until one begins building a nonstandard model of whole
     communities of mathematical objects. See below.

     Let's look at saturation in the example we are considering above. We begin with the trivial finite intersection property statement about the set $\FI$ of  open symmetric intervals around $0$ in $\bbr$, if  $k\in\bbn$ and $I_1,\ldots,I_k\in\FI$, then $I_1\cap\cdots\cap I_k$ is nonempty. (Countable) saturation of our nonstandard real numbers, $\rz\bbr$, then says that $\cap\{\rz I: I\in\FI\}$ is nonempty, in fact, quite numerous. We can see this directly from our work above and in fact see that  this set is precisely the (external) set of infinitesimals, $\mu(0)$. Clearly, if $\Fi\in\mu(0)$, then $|\Fi|<\bk{a}$ for every positive $a\in\bbr$, ie., $\Fi\in\rz I$ for each $I\in\FI$. On the other hand, if $\Fi$ is not in $\mu(0)$, then $|\Fi|$ is greater than some positive standard number $\bk{a}$ and so is not in, eg., $\rz[-a/2,a/2]$.

     Let's get a hint at how infinitesimal (ie., elements in our enriched structure) simplify the description of continuity and at the same time say a bit more about `overflow'. In order to do this, we need to extend our family of nonstandard sets. If $\xi,\z\in\rz\bbr$ with $\xi-\z\in\mu(0)$, then $\xi,\z$ are said to be infinitesimally close (with respect to the metric topology on $\bbr$) and we write $\xi\sim \z$ to denote this. Now just as we defined the transfer of the set of all subsets of $\bbr$, ie., $\rz\SP(\bbr)$, we can similarly define the transfer of the set of all real valued  functions, $F(\bbr,\bbr)$, on $\bbr$ to be $F(\bbr,\bbr)^\bbn$ modulo the equivalence relation defined by $\SF$; it works as before. And just as we saw that $\rz\SP(\bbr)$ can be seen to be (once *set membership is properly defined) to be (the internal) subsets of $\rz\bbr$, we can identically get that elements of $\rz F(\bbr,\bbr)$ can be seen to be the set of  internal functions mapping $\rz\bbr$ to itself. That is, we extend componentwise and then mod by $\SF$: if $\Ff\dot=\bk{f_i}\in\rz F(\bbr,\bbr)$, and $\xi=\bk{x_i}\in\rz\bbr$, define $\Ff(\xi)=\bk{f_i(x_i)}$. As before, this is well defined and is indeed an element of $F(\rz\bbr,\rz\bbr)$. In particular, those internal functions that are equivalence classes of constant sequences, ie., those $\bk{f_i}$ where there is $f\in F(\bbr,\bbr)$ such that $\{i:f_i=f\}\in\SF$ are precisely the set of transfers, $\rz f$, of elements  $f\in F(\bbr,\bbr)$, ie., the `standard elements' in $\rz F(\bbr,\bbr)$. Note now that just as $\rz[1/2,1]$ is far richer than $[1/2,1]$, so is $\rz f$ much more that $f$, eg., $\rz f$ is now defined on all of $\rz\bbr$ so that its asymptotic behaviors in the large and the small are explicitly revealed. For example, we now have the capacity to give the nonstandard characterization of continuity of $f$ at $0$: if $\xi\sim \rz 0$, then $\rz f(\xi)\sim\rz f(0)$. With this and a few metric properties of nonstandard numbers, proving continuity become greatly streamlined. This kind of simplification of proofs is often the case; again see the paper \cite{FarkasSzabo}.

     We can see here a little of the use of `overflow' and the `internal definition principle' in the verification that this nonstandard condition is indeed equivalent to the usual definition of continuity of $f$at $0$. Suppose that $\rz f$ satisfies the above condition and let $r\in\bbr$ be a positive number. The internal definition principle implies that $\SO\dot=\{\xi\in\rz\bbr:|\rz f(\xi)-\rz f(0)|<\rz r\}$ is an internal set (see the next paragraph). Actually, using our recipe (now getting a bit involved) and writing $\xi=\bk{x_i}$, we can see that this is just $\bk{\{x_i\in\bbr:|f(x_i)-f(0)|<r\}}$, an internal set by definition. But by hypothesis, $\SO$ contains the infinitesimals and hence must contain noninfinitesimals (this is overflow). In fact, it contains a noninfinitesimal interval implying that there is a positive $s\in\bbr$ such that $|x|<s$ implies that $\rz x$ is an element of $\SO$.

     We will finish  the  analysis of $\rz\bbr$  with a remark on the internal definition principle. Above, we needed to know that the nonstandard set $\{\xi\in\rz\bbr:|\rz f(\xi)-\rz f(0)|<\rz r\}$ is internal to carry out the above argument.{\it For this set, the internal definition principle says: the relations and operations $(<,-,\in, \text{function})$ are all formally defined, the `constants' $(\rz r,\rz f,\rz\bbr)$ are all internal;  hence  set formation involving these extended operations and relations with internal entities returns internal sets.} It's a recursive type of definition. Of course, in this case we could see directly that it is of the form $\bk{S_i}$ for `good' sets $S_i$ and therefore internal, but this can get quite involved. The internal definition principle allows one to check if a set is internal by the constituents in its definition, a very useful shortcut.

       It is important to
     note that the standard model fits consistently; standard structures lift into the nonstandard setting  via the embedding into the
     nonstandard model by sending an element, set, function to the equivalence class of the corresponding constant
     sequence.

     In many situations this ultrapower   extension of a particular object is sufficient.
     See for example, van den Dries and Wilkie's greatly simplified proof of Gromov's theorem on groups of polynomial
     growth, \cite{DriesWilkie1984}.
     But what if a nonstandard model needs to be more encompassing of families of objects,
     their families of maps, functionals between these families,
     etc.
     One long standing solution to this extended enterprise is the superstructure approach of Robinson and Zakon. Over a ground object
     of ones choosing, one builds a tower of objects, a superstructure,
     in the manner of building the universe of set theory.
      In the examples given above, we have already begun this process in passing form the construction of $\rz\bbr$ to the next level the construction of $\rz\SP(\bbr)$ and then pulling these together by extending the notion of set membership.
     Generally, if we want to build this up to functionals on function spaces, etc., we need to iterate this procedure, and extend the set membership relations (make sense of the transfer of a set whose elements are sets whose elements are sets whose.....) This is the problem of levels mentioned above that was adequately solved by Robinson and Zakon in the manner we indicated.
%     For the given smooth compact
     Abstractly, for a given `base' $S$ (above we were working with $S=\bbr$) we build the superstructure \textbf{V(S)}\;$=\cup_{n\in\bbn}\text{ \textbf{V(S)}}_n$
     on this base. (See Lindstrom, \cite{Lindstrom1988} p.23) where
     $\text{\textbf{V(S)}}_{n+1}=\text{\textbf{V(S)}}_n\cup\SP(\text{\textbf{V(S)}}_n)$,
     For example, a  collection of subsets of eg., $C^\infty(\bbr,\bbr)$, and so for example a germ of a subset of $C^\infty(\bbr,\bbr)$, an equivalence class of family of germs of subsets of $C^\infty(\bbr,\bbr)$, etc. is an element of
    $\text{\textbf{V(S)}}_n$ for some $n\in\bbn$ (see, for example Rubio, \cite{Rubio1994} pp. 19-22) and therefore
    its transfer is a standard element of $\rz\text{\textbf{V(S)}}_n$. Looking at our examples above one should not be surprised to see that the internal elements in this transferred tower must be precisely the elements of $\rz\text{\textbf{V(S)}}_n$ for some $n\in\bbn$.

\subsection{Three principles and working tools}\label{sec: 3 principles and tools of nsa}
  After the introduction above, we will briefly discuss the main three working principles of nonstandard mathematics. The above discussion should be sufficient to make the descriptions below understandable.
  We will then list, with very brief description, the nonstandard notations we will use.
\subsubsection{First principle: transfer}\label{subsec: transfer principle} There are three principles
that make $\rz V(S)$ particularly useful. The first, sometimes called
the {\bf transfer principle} says, roughly, that {\it any true statement
in the standard world has a precise counterpart about the
corresponding {\bf internal} objects in $\rz V(S)$}.
We saw this at work in (partially) verifying that $\rz\bbr$ is a totally ordered field.
 For this paper, there
is a basic theorem in Lie group theory that a continuous
homomorphism of Lie groups is $C^\infty$. The transfer theorem
(roughly) implies that a *continuous homomorphism of *Lie groups is
$\rz C^\infty$. Basically, the $\rz$ that is qualifying the three terms continuous, Lie
group and $C^\infty$ says we are talking about internal objects and
hence guarantees the validity of the statement in $\rz V(S)$. Sometimes
we will say that we are *transforming (or transferring) a
particular object or statement; in such cases we are invoking the
transfer theorem. Sometimes we will use it without such a remark.

Note that we will use reverse transfer at critical points in the proofs of the standard consequences of nonstandard results, eg., in corollarys \ref{cor: C^0 precpt fam in Gp is C^k precpt} and \ref{cor: standard reg + almost->near} (see also the curious corollary \ref{cor: 2nd stan cor of appendix thm} of the theorem in the appendix). These will typically be of the following form. If $B$ is a set satisfying $\rz B$ is nonempty, then $B$ is nonempty. This curious strategy is helpful as it is often much easier to show that $\rz B$ is nonempty, than to show that $B$ is nonempty!

\subsubsection{Second principle: saturation}\label{subsec: saturation}
 The second principle is that of (sufficient)
``saturation'' of $\rz V(S)$. We have described consequences of saturation, as well as a brief description on page \pageref{page: saturation}.
There are a variety of types and degrees of saturation. A $\rz V(S)$
big enough to have nonzero *polynomials that vanish at all standard real numbers is an enlargement.
Generally, if $\k$ is a cardinal larger than the countable cardinal,
then there exists $\rz V(S)$ that are ``$\k$-saturated.'' As described above,
this means that if $\{\SA_j:j\in J\}$ is a set of internal elements
of $\rz V(S)$ that has the finite intersection property and $\card(J)$
is less that $\k$, then $\bigcap_{j\in J}\SA_j$ is nonempty. Note
that this intersection is typically an external set. For example, let
$\k$ be greater than the cardinality of $\SA=\{A:A\subset
C^\infty(\bbr^m,\bbr^n)\}$, let $f_0\in C^\infty(\bbr^m,\bbr)$ and
$\SU_{f_0}=\{\SU\subset C^\infty(\bbr^m,\bbr):\SU$ is a neighborhood
of $f_0$ in some $C^\infty$-topology\}. Then
$\{\rz \SU:\SU\in\SU_{f_0}\}$ is a collection of internal sets and the
cardinality of the collection is less than $\k$; so if  $\rz V(S)$ has
``$\k$-saturation'' then
$$
\mu(f_0) = \bigcap\{\rz \SU:\SU\in\SU_{f_0}\}\neq\emptyset,\quad
\text{as an {\bf external} subset of $\rz \SA$.}
$$
Descriptively, $\mu(f_0)$ consists of all elements of
$\rz C^\infty(\bbr^m,\bbr)$ that are {\bf infinitesimally} close to $f_0$
in the given topology. This is an example of a {\bf monad} for the
given topology. In this paper, we will assume $\k$-saturation of
$\rz V(S)$ for $\k$ big enough for our purposes. Also, as all
topological spaces here are Hausdorff, if $x$ and $y$ are two
standard elements and $x\neq y$, then $\mu(x)\cap\mu(y)=\emptyset$.

\subsubsection{Third principle: overflow}\label{subsec: overflow}
We  said a little about this last principle, but overflow is quite important in putting external sets to work. Often external sets are used in simplifying standard notions; eg., the set of infinitesimals, $\mu(0)\subset\rz\bbr$ in the simple nonstandard characterization of continuity (see above discussion). Overflow exists in various guises in NSA; e.g., see
{\bf overflow} in Lindstrom, \cite{Lindstrom1988} p12 or {\bf Cauchy's Principle} in Stroyan and Luxemburg, \cite{StrLux76}
p188. It basically says that {\it if one has an internal statement $p(x)$
such that $x$ is a free variable and $p(x)$ holds for $x$ in an
external set $E$, then $p(x)$ holds for all $x$ in an internal set
containing $E$}; this also includes a use of the internal definition principle.
This principle is used repeatedly here.
We use this principle explicitly in 2.2.12 and 2.2.9
and implicitly in 6.5 where it is critical in constructing a
standard homeomorphism from an internal map. In chapter \ref{chap: partial solution to approx prob}, it is used repeatedly and is also important in the appendix, chapter \ref{chap: appendix: S-smoothness}.

\subsection{Nonstandard tools specific to this paper}

\subsubsection{Definitions of NSA working tools}\label{subsec: nsa working tools}
 We follow this sketch of basic
principles of NSA with a listing of definitions of working tools
from NSA that will be used here. This list is obviously queued by
the notation for the given tool. Again some of the definitions are
heuristic.
\begin{itemize}
\item [$\rz X\lra$]  If $X$ is a standard set, then this is the
corresponding internal set in the nonstandard universe $\rz V(S)$. If
$x\in S$, then $\rz x$ is a point in $^\s S\subset \rz S$.
\item[$^\s X\lra$]  If $X$ is a set in the standard universe,
$^\s X$ is the external set in the nonstandard universe given by
$\{\rz x:x\in X\}$, e.g., $^\s\bbn$ is the (external!) set of standardly
finite integers in $\rz \bbn$.
\item[$a\sim b\lra$] If $X$ is a standard set with a topology
$\tau$, and $a$, $b\in \rz X$, then $a\sim b$ ($a$ is infinitesimally
close to $b$ with respect to the topology $\tau$) holds when $a\in
\rz U$ if and only if $ b\in \rz U$, for all open set $U$ in $\tau$.
\item[$a\nsim b\lra$]  $a\sim b$ is not satisfied.
\item[$a\lnsim b\lra$] $a<b$ and $a\nsim b$.
%\item[$a<{}^\s\infty\lra$] $a<\rz c$ for some standard number $c$.
%\item[$a>{}^\s\infty\lra$] If $c\in\bbr$, then $a>\rz c$.
\item[$X_{\nes}\longrightarrow$] If $X$ is a set in the nonstandard universe,
set $\rz X_{nes}=\{x\in X:$ there is a (standard) element $\rz z$ in $X$ such that
$x\sim\rz z\}$. Obviously, sometimes $X_{\nes}=\emptyset$ (is empty);
e.g., if $\om>0$ is infinite, and if $X=\{a\in\rz \bbr:a\ge\om\}$, then
$X_{\nes}=\emptyset$. The elements of $X_{\nes}$ are called the {\bf
nearstandard points of} $X$.
\item[$^\circ x$ or $\st x\lra$] If $x\in X_{\nes}$, so that
$x\sim\rz z$ for some standard point $\rz z$ in $X$, then $^\circ x=z$,
or$\ st(x)=z$. (This is well defined as monads are disjoint).
%\item[$^\circ X\lra$] If $X$ is a set in our nonstandard universe,
% and if $X=\;^\circ\!(X_{\nes})$, then $^\circ X=\{^\circ x:x\in X\}$. We
% therefore have a map $\st$ or
% $^\circ:X_{\nes}\lra^\circ(X_{\nes})$. $^\circ X$ is called the
% {\bf standard part of} $X$.
\item[$\mu(x_0)$ or $\mu_{x_0}(X)\lra$] If $(x,\tau)$ is a topological
 space, the $\mu(x_0)$ or $\mu_{x_0}(X)$ is the set $\{x\in X:x\sim
 x_0\}$. This is called the {\bf monad of $x_0$} in $X$ (with respect
 to $\tau$). As already noted this is
 $\bigcap\{\rz U:U\in\tau\}\neq\emptyset$ by sufficient saturation.
\item[$\rz X_{\nes}\smallsetminus\mu(x_0)\lra$] This is $\{x\in X_{\nes}:x\nsim x_0\}$.
\item[$^\s\open\lra$] This is a set of the form $\rz U$, where $U$ is an
open set in the given topology.

For mappings, these refine to
\item[$\rz f\lra$] If $f:X\lra Y$ is a standard map, then identifying
$f$ with its graph $\G_f\subset X\x Y$, $\rz f$ is defined to be the
internal map with graph $\rz \G_f\subset\rz(X\x Y)=\rz X\x\rz Y$.
\item[$^\s f\lra$] Again, identifying $f$ with its graph $\G_f$,
this is defined to be the external map with graph $^\s\G_f$, an
external subset of $*\G_f$. In particular,  $^\s\G=\rz \G_f\cap\
{}^\s(X\x Y)$, could be empty.
\item[$^\s$local $\lra$] A description of the domain of an internal function: it is a standard open set.
\end{itemize}

\subsubsection{Pertinent NSA facts for this paper}\label{subsec: pertinent nsa facts}
 We need some final remarks on the NSA needed for this
paper. We will generally be working with internal maps
$f:\rz U\to\rz \bbr^{n}_{\nes}$, where $U\subset\bbr$ is an open
neighborhood of $0$, and attempting to prove that their standard
parts, $^\circ f:\;^\circ\!(*U)\to{}^\circ\!(f(*U))$ have nice properties.
First of all, note that $^\circ(\rz U)=\ov U$, the topological closure
of $U$.  The introduction in Wicks, \cite{Wicks1991}, covers the basic facts of topology from a nonstandard perspective very well. See also Lindstr{\o}m, \cite{Lindstrom1988}, p52--57. Even if $U$ is
connected, simply connected, etc. $\ov U$ may not have any of these
properties. We will assume that our $U$ are convex to prevent this.
Our arguments will be local and so we will be able to restrict our
consideration to a convex subset. Second, our ``map''  $^\circ\!f$
may not even be a function. For example, it might send distinct
points in a monad to points in distinct monads. $S$-continuity
 will prevent this. It might also send nearstandard points to
non nearstandard points. For example, we will be working with
*bilinear maps: $B:V\x V\to V$ (e.g., our *Lie bracket --- here
$V$ is an internal vector space over $\rz\bbr$ such that $V_{\nes}$ is
well defined). If $B\mid V_{\nes}\x V_{\nes}$ does not have image in
$V_{\nes}$, then $^\circ\!B$ will not be defined.

\subsubsection{Transferring maps and their domains}\label{subsec: transferring maps and domains}
We need to also say something about the domains of standard parts of
internal maps. If  $\Ff:\rz U\to\rz\bbr$ is an internal map as above with
well defined standard part $^\circ\Ff:{}^\circ\!(\rz U)\to\bbr$; i.e.,
$^\circ\Ff:\ov U\to\bbr$, we will restrict to the original open set $U$.
There are two reasons for this. First of all, sometimes we will need
to consider internal maps $\Ff$ of the form $\rz h$, for some standard
$h:U\to\bbr$. In this case, we get $^\circ\!(\rz h)=h$ but now extended to
$\ov U$ via it limiting behavior. This process gets these limiting
values automatically --- if they exist! In this paper, worry about
this is not needed. In particular, for this paper we will always
have  $^\circ\!(\rz h)=h$ by restricting our domain to the original open
$U$. The second reason is that we need our map $^\circ\Ff$ to be
defined on an open $U$ in order to consider its regularity
properties without needless boundary technicalities. {\bf Therefore,
henceforth when we write $^\circ\Ff$ for some internal
$\Ff:\rz U\to\rz\bbr$, it will be understood that we are considering
$^\circ\Ff\mid U$.} Later in the text, we will typically remedy this  by considering $\rz f$ not on $\rz U$ but on its `nearstandard part', denoted $U^\mu$, or $\rz U_{nes}$,  the set of all points in $\rz U$ infinitesimally close to a point of $U$. In such cases we will use that $\rz U_{nes}=\cup\{\mu(x):x\in U\}=\cup\{\rz K: K\subset U\;\text{is compact}\}$.

Further tools along with notations are defined in the text.

\subsection{Nonstandard calculus} \label{sec: nonstandard calculus}
In this section we describe the nonstandard
calculus required for this paper. Some of this is in the literature.
We turn now to the basic nonstandard differential calculus that will
be needed.

\subsubsection{Transfer of calculus on Euclidean space}\label{subsec: transfer of calc on E^n}
All internal differential calculus will
be {\bf${}^\s$local near $0$} in $\rz\bbr^n$, i.e., on {\bf standard
neighborhoods of $0$}, i.e., on sets $\rz U$ where $U$ is a
neighborhood of $0$. Here $n\in{}^\s\bbn$. As we will work with only
a finite number of such neighborhoods, we don't have to worry about
externality creeping in. Our nonstandard calculus will follow Stroyan and Luxemburg, \cite{StrLux76}
but be in the spirit of Lutz and Goze (geometry in internal set theory), \cite{LutzGoze1981}. {\bf By the
transfer theorem, all standard differential calculus constructions
have parallel NS, nonstandard, copies. For example, the existence
and properties of tangent spaces and their morphisms, the
differentials of smooth maps, will be asserted to exist in the
nonstandard domain without proof.} Occasionally, their existence
will be asserted by transfer. The work arises from the further
assertion that these have special properties beyond those that arise automatically via transfer. These properties will often be used in verifying such assertions.

%\subsubsection{}

\subsubsection{Nonstandard metric properties at the
tangent space level.}\label{subsec: intro ns metrc tngnt sp E^n}

\begin{remark}
   The development in this subsection was originally motivated by the following problem. Given a finite dimensional internal vector space $V$ over $\rz\bbr$  (eg., the *tangent space at the identity of an internal Lie group), how does one define $V_{nes}$? In fact, generally, $V_{nes}$, the set of nearstandard vectors in $V$, cannot be defined. The solution to this question was central to the undertaking in this paper: how could we prove that the the *bilinear map $\ad:V\x V\ra V$ is S-continuous (the critical technical result of this paper) if we did not have an unambiguous framework within which to define $V_{nes}$? (An internal bilinear map is S-continuous precisely when it sends nearstandard pairs of vectors to nearstandard vectors.)
   For an example of such a vector space, let $\Fq\in\rz\bbr$ be such that $^\s\bbr\cap(\Fq\cdot^\s\!\!\bbr)$ is empty and let $V=\{\Fr(1,\Fq):\Fr\in\rz\bbr\}$. $V$ is internally isomorphic to $\rz\bbr$, but no element of $V$ besides $0$ is standard. So unless one considers the extrinsic embedding of $V$, it has no nearstandard points (beyond those given by $\Fr\sim 0$)! The nonstandard calculus (on $\rz\bbr^n$!) developed in this section allowed the author to canonically (standardly) translate the (standard) metric structure on $\rz\bbr^n$ up to the (transferred) tangent spaces from which a natural notion of nearstandard follows, solving my problem. We should be clear on this. The argument here needed the transfer of the natural metrical identification of $\bbr^n$ with $T_0\bbr^n$ via the identification of the canonical frame on $\bbr^n$ with the that on $T_0\bbr^n$. As this is the transfer of a standard isomorphism, it automatically gives a correspondence between nearstandard vectors. Note that the transfer of the process that canonically identifies a general $n$ dimensional real vector space, $V$,  with its tangent space at $0$, $T_0V$, to the class of internal vector spaces over $\rz\bbr$  is not good enough for unambiguously defining the notion of a nearstandard vector!
\end{remark}

To begin, if $U$ is a standard neighborhood of $0$ in $\bbr^n$, then
$\rz C^\infty(\rz U,\rz \bbr)$ is just the *transfer of $C^\infty(U,\bbr)$.
So $\rz C^\infty(\rz U,\rz\bbr)=\{\Ff:\rz U\to\rz\bbr:\Ff$ is internal and all internal
derivatives of $\Ff$ are  *continuous on $\rz U\}$. (Sometimes we will
write this space as $\rz C^\infty(U,\bbr)$, sometimes as
$\rz C^\infty(\rz U,\rz \bbr)$.) Similarly, we define, for $k\in\bbn$, $\rz C^k(U,\bbr)$ to be those internal $\Ff:\rz U\ra\rz\bbr$ with the property that all internal partial derivatives up to order $k$ on $\rz U$ are *continuous.   We must qualify here in order to have a good
notion of nearstandard: all internal derivatives will be with respect to the
standard basis on $\rz\bbr^n$, i.e., the *transfer of the canonical
frame on $\bbr^n$. That is, if $(e_1,\dots,e_n)$ is the canonical
frame  on $\bbr^n$ and $x\in\rz U$, and if $1\le j\le n$ then we have
that $(\rz \p_jf)(x)\doteq\rz \f d{dt}\bigm|_{t=0}$ ($f(x+te_j)$).

Let $\rz T_xU$ be the internal tangent space to
$\rz U$ at $x$ and $\rz TU=\cup\{\rz T_xU:x\in\rz U\}$ be the internal tangent
bundle to $\rz U$. Note that
$$
\rz \Hom(\rz TU\ox\rz TU,\rz TU) = \rz (\Hom(TU\ox TU, TU))
$$
and therefore standard elements on the right hand side of this equality give us
standard metric tensors over $*U$ on the left hand side. These are elements of
$^\s(\Hom(TU\ox TU, TU))$. For this paper, the notions of
nearstandard tangent vectors and nearstandard differentials of
$\rz C^\infty$ maps is critical and we proceed to define these.

\subsubsection{Nearstandard tangent vectors.}\label{subsec: nearst tngnt vectors E^n}
 For the given
canonical frame $(e_1,\dots,e_n)$ for $\bbr^n$, there corresponds the {\bf
constant canonical sections} on $TU$, denoted by $\p_1,\dots,\p_n$.
As vector fields these act on smooth $f:U\to\bbr$ as already defined;
namely $\p_j\!\bigm|_x(f)=\p_jf(x)$.
% These give the canonical frame corresponding to $(e_1,\dots,e_n)$.
These *transfer to give the
corresponding canonical standard frame on $\rz TU$, now over $\rz\bbr$. We
will call this a {\bf standard frame on $\rz TU$}. Using these,
$\bs(\bs\rz\BT\BU\bs)_{\nes}$ can be defined in two equivalent ways.
(That these are equivalent is just the transfer of basic tensor facts.)
For each $x\in U$ we have the usual inner product $\<\  \>_x:TU\x
TU\to\bbr$ defined by $\<\p_i\!\bigm|_x,\p_j\!\bigm|_x\>_x=\d_{ij}$,
extended via $\bbr$-bilinearity. The collection of these for each
$x\in U$ will give the constant metric tensor,  $\<\  \>$, over $U$.
If $\nu\in T_xU$, then $|\bs\nu|_{\bf
x}\doteq\sqrt{\<\nu,\nu\>_x}:T_xU\to\bbr$ gives the usual norm on
$T_xU$. Note then that as $|\p_i|_x=1$, then taking *transfer we get that
for every $\xi\in\rz U$, $\rz|\rz\p_i|_\xi=1$. \textbf{This allows us an unambiguous}
\textbf{definition of} $\bsm{\bs(\bs\rz\BT_\xi\BU\bs)_{\nes}}$ for each $\xi\in\rz U$, \textbf{the} $\bsm{\bbr_{nes}}$\textbf{-module of nearstandard tangent vectors on} $\bsm{\rz U}$, as the
$\{\nu\in\rz T_\xi U:|\nu|_\xi\in\rz\bbr_{\nes}\}$. We can then show that for each $\xi\in\rz U$,
$$
(\rz T_\xi U)_{\nes}=\{\sum_ja_j\rz\p_j\bigm|_\xi:a_j\in\rz\bbr_{\nes}\}.
$$

We define
$\bs(\bs\rz \BT\BU\bs)_{\nes}\doteq\bigcup_{\xi\in\rz U}(\rz T_\xi U)_{\nes}$.
(Note that we have to be careful here for we are including tangent vectors over points whose standard parts (if they exist) are in $\ov{U}$. These problems can be avoided by considering *tangent vectors lying over $\xi\in\rz U$ that are infinitesimally close to points of $^\s U$. This is just, following Wick, \cite{Wicks1991}, p.6, $U^\mu=\cup\{\mu(x):x\in U\}$.)
A more natural approach is given by noting that
$(\rz TU)_{\nes}\subset\rz TU$ and the trivialization of $\rz TU$ is a {\bf
standard} trivialization that defines the topology and is outlined
as follows. First note that the standard natural bundle
trivialization $TU\overset{p}{\lra}U\x\bbr^n$ defines the (smooth)
topology on $TU$ in terms of the standard product of the Euclidean
topologies on $U\subset\bbr^n$. Note also that there is a canonical map
$t_x:T_xU\stackrel{\cong}{\ra}T_0U$ translating a vector at
$x$ to the corresponding one in $T_0U$. This is just the
differential at $x$ of the map $v\to x-v$. In fact $t_x$ is the
restriction of the unique map $t:TU\lra T_oU\x
U:\nu_x\lra(t_x(\nu_x),x)$ which is a bundle $\cong$. Given the
natural identification
$F:T_o\bbr^n\stackrel{\cong}{\ra}\bbr^n$, we get
 $p=(F\x1_U)\circ t$. *Transferring this canonical identification, we get a canonical natural standard
*bundle isomorphism $\rz TU\lra\rz T_0U\x\rz U$. Note that as $\rz p$ is standard and defines
the topology, it carries nearstandard points to nearstandard points;
i.e., $(\rz TU)_{\nes}=p((\rz T_0U)_{\nes}\x\rz U_{\nes})$. It is easy to see
that this is the same as the above definition of $(\rz TU)_{\nes}$.
(Note here we are talking about nearstandard points {\bf and} the
near standard tangent vectors at those points.)

\subsubsection{Infinitesimal tangent vectors.}\label{subsec: intro:infinites tngnt vctors}

As $U$ is open, in the following our considerations will be restricted to $\xi\in\rz U$ that are infinitesimally close to points of $^\s U$. For the moment, we will denote this set by $\rz U^\mu$ (see the introductory topology material in Wicks, \cite{Wicks1991}) and the set of nearstandard tangent vectors to such points by $(\rz TU^\mu)_{nes}$. Note that these correspond precisely to $\rz U^\mu\x\rz\bbr^n_{nes}$ under the trivialization $\rz p$.
Given this, as $p$ is a topological equivalence the following definition makes sense.
\begin{definition}\label{def: infinites close elts in *T(R^n)}
 If $x\in U$ we \textbf{define the} {\bf monad of a
point} $\bsm{\nu_x\in(\rz TU)_{\nes}}$ to be $\rz p^{-1}(\mu(\rz p(\nu_x))$.   For $\nu_x\in\rz T_x\bbr^n,\;\;w_y\in\rz T_y\bbr^n$, this
is equivalent to defining $\bsm{\nu_x\sim w_y}$
 if and only if
 $\rz p(\nu_x)\sim
\rz p(\nu_y)$, that is, if and only if $|\rz t_x(\nu_x)-\rz t_y(\nu_y)|_o\sim 0$ and $x\sim y$.
\end{definition}
 Note
that with respect to the first approach writing $\nu_x=\sum a_i\p_i\bigm|_x$ and
$w_y=\sum b_i\p_i\bigm|_y$ we see that $\nu_x\sim w_y\dllra x\sim y$
and $\forall i$ $a_i\sim b_i$, i.e., the monads on $(\rz TU)_{\nes}$
are defined via the standard coordinate chart trivializations.

We will now define the (${}^\s$local) {\bf *differentiable structure
on $\rz (TU)$}. For later purposes, we will give two  definitions (whose equivalence as above follows from the transfer of equivalent notions: in terms of using the trivialization to define differentiable maps or to define differentiable curves). We
define $\bf f:\bs\rz\BT\BU\lra\bs\rz\bbr$ \textbf{(internal) to be}
$\bs\rz\bbc^{\bs\infty}$ $\dllra f\circ\rz p^{-1}:\rz(U\x\bbr^m)\lra\rz\bbr$ is
$\rz C^\infty$. We can also define a
 \textbf{curve $c:\rz\bbr,0\lra\bs\rz\BT\BU$ to be $\bs\rz\bbc^{\bs\infty}$} $\dllra$ the curve
$\rz p\circ c$ is $\rz C^\infty$. As above, since $\rz p$ is standard, $\rz TU$
carries a ${}^\s C^\infty$-structure and as we shall see later an
$SC^\infty$ structure. The ${}^\s C^\infty$-structure will not be
used and there will be no further mention of it.

\subsubsection{Differentiable structures and maps.} \label{subsec: intro:diff struct and maps}
Note that we now have enough machinery to well define the following notion. Suppose that $\vp$, $\psi\in\rz C^\infty(U,V)$, then we have that ($\rz d\vp$ is the internal differential of $\vp$, etc.) $\rz d\vp$, $\rz d\psi\in\rz C^\infty\Hom(TU,TV)$ ($\rz C^\infty$ maps, *linear on *fibers covering $\rz C^\infty$ maps).
The definition will be given in multiple (obviously) equivalent formulations.
\begin{definition}
 We say $\rz d\phi$ is infinitesimally close to $\rz d\psi$ on $\rz U$, written $\rz\bf d\bs\vp\bs\sim\rz\bf d\bs\psi\dllra$ for all $x\in\rz U$, for all $\nu\in(\rz T_xU)_{\nes}$, $\rz d\vp(\nu)\sim\rz d\psi(\nu)$.
 That is, in {\bf standard} local coordinates on $U$ and $V$, we have $\rz \p_i\vp^j(x)\sim\rz \p_i\psi^j(x)$ for all $i$, $j$ and $x\in\rz U$.
   This is the same as saying that for all $\xi\in\rz U$ and
$\forall\nu\in(\rz T_\xi U)_{\nes}$, $d\vp_\xi(\nu)$ is in the monad of $d\psi_\xi(\nu)$.
\end{definition}

As standard local coordinate trivializations
preserve monads, then this should be clear.
%\textbf{PROBABLY NEED TO LEAVE OUT THE NEXT PART.}
Note that generally we have the following definition.
Let $F$, $G$ be
$\rz C^\infty$ bundle mappings: $\rz TU\lra\rz TV$, i.e., linear fiber
mappings covering a $\rz C^\infty$ map: $U\lra V$. Then similar to
above we say that $\bsm{F\sim G} $ if $\forall
u\in(\rz TU)_{\nes}$, $F(u)\sim G(u)$ and as above this is equivalent
to $F^i_j(u)\sim G^i_j(u)$ for all $i,j$ where these are the
components of $F$ and $G$ for a given {\bf standard} trivialization.

\subsection{Nonstandard functions and S-regularity}\label{sec: intro: NS fcns and S-regularity}
   Nonstandard mathematics is useful for standard mathematics if we can build subtle connections beyond the formal transfer theorem. Here we give an introductions to our attempts at such connections.
\subsubsection{The notions S-property where S is continuity, smoothness, etc.}\label{subsec: notion of S-property}
We want to give a brief introduction to regularity properties of
internal maps with respect to the standard world. (After all, proving
such regularity criteria is what this paper is about.) The idea here
is to find useful conditions to impose on a map at the nonstandard
level that force the needed regularity properties on the standard
part of the map. Suppose we are given an internal map $f:X\to Y$
where the internal sets $X$, $Y$ have $X_{\nes}$, $Y_{\nes}$ well
defined and $\mu_x(X)$, $\mu_y(Y)$ well defined for $x\in X_{\nes}$
and $y\in Y_{\nes}$. Then we say that $f$ is {\bf$\BS$-continuous},
written $f\in SC^0$ or $f\in SC^0(X,Y)$, if $f:X_{\nes}\to
Y_{\nes}$ and if $\forall x\in X_{\nes}$,
$f(\mu(x)\subset\mu(f(x))$. See Wicks, \cite{Wicks1991} p.7, for a  wrapup of the basic definitions. If
$X=\rz M$, $Y=\rz N$ for Hausdorff spaces $M$ and $N$ and $f=\rz g$, then
this is the nonstandard version of the continuity of $f$, only that
in this case it needs to be checked just for $x\in{}^\s X$ (checking at all points gets uniform continuity) It
should not be too surprising to find that for such an internal $f$
as above, not only does $^\circ\!f:{}^\circ\!X_{\nes}\to {}^\circ\! Y_{\nes}$ exist, but it is $C^\circ$(continuous).

%\textbf{IT SEEMS THAT MOST OF THE FOLLOWING CAN BE LEFT OUT AS THE NEEDED %STUFF IS IN THE APPENDIX.}
It is straightforward to give condition on an internal map to guarantee that its standard part is continuous. But such conditions are not so clear if we want the standard part to be a homeomorphism. In this situation and others it is useful to fall back on the default position. {\bf That is we say that an internal map $f$ is $dS\!-\!P$ if its standard part, $^\circ f$, has property $P$}. For example, given $X$ and $Y$ as above and internal $f$ as above, we say that $f$ is an {\bf$dS$-homeomorphism}, if $^\circ f:{}^\circ X_{\nes}\to {}^\circ Y_{\nes}$ is a homeomorphism. Note that there are a variety of nonstandard conditions on $f$ that could force $^\circ f$ to be a homeomorphism. In section \ref{sec: fact on S-homeos and pf finish} we will develop one of these. There is a comparable
definition for {\bf$dS$-diffeomorphisms}, but first we need to say
something about our two definitions of sets of internal maps whose
standard parts are continuous, $SC^0$ and $dSC^0$. Our perspective
will always be directed towards a particular type of regularity of
the standard parts of internal maps. Obviously, $SC^0\subset dSC^0$
and the particular internal nature of the maps in $SC^0$ is clearly
defined so that we can actually work with these on the internal
level. But sometimes we just need to know that particular maps on
the internal level have the kinds of standard parts specified
without knowing their nature on the internal level. For example, let
$\bf d\BS\BA^{\bs\om}$ be those internal maps (unconcerned with
domain and range at the moment) whose standard parts are well
defined (real) analytic maps. We will give a working definition for
a (functorial!) set of internal maps, denoted by $\BS\BA^{\bs\om}$
and prove that indeed the internal maps satisfying these properties
have analytic standard parts. We also have other subsets of
$dSA^{\om}$, $^{\bs\s}\BA^{\bs\om}$ which is the $*$-transfer of
standard analytic maps and {\bf SPoly} our notation for the set of internal polynomial
maps of $^\s$finite degree with nearstandard coefficients. We will
not need the full strength of $SA^{\om}$ in this paper, here we will need some control in the manner that
$^{\s}A^{\om}$ and $S$Poly  interact in $dSA^{\om}$.

\subsubsection{Properties of standard part map}\label{subsec: props of standard part map}

Before moving on to $SC^\infty$ maps, we would like to point out some well known NSA facts. $\rz\bbr_{\nes}$ is not a field, but is a subring of the field $\rz\bbr$. Our internal maps will be $\rz\bbr_{\nes}^n$ valued for some standard $n$ and hence will form on $\rz\bbr_{\nes}$-module. As such, the "taking the standard part" operation commutes with the module operations, i.e. if $\a$, $\b\in\rz\bbr_{\nes}$ and $f$ and $g$ are $\rz\bbr_{\nes}$-valued, then
$$
^{\circ}\!(\a f +\b g) =\; ^\circ\!\a ^{\circ}\!f + {}^{\circ}\!\b\;{}^{\circ}g.
$$
If our maps are $\rz\bbr_{\nes}$-valued, then in fact the operation of
taking the standard part  commutes with the algebra operations,
i.e., if $f$ and $g$ are $\rz\bbr_{\nes}$-valued then
$$
^{\circ}\!(f\cd g) = (^{\circ}\!f)\cd(^{\circ}\!g)
$$
as $\bbr$-valued maps. Finally if $U^{\open}\subset\bbr^m$,
$V^{\open}\subset\bbr$ and if $f\in SC^0(\rz U,\rz V)$ and $g\in
SC^0(\rz V,\rz\bbr^p)$, then $g\circ\!f\in SC^0(\rz U,\bbr^p)$ and
$^{\circ}\!(g\circ f) = (^{\circ}\!g){\circ}(^{\circ}\!f)$ as elements of
$C^0(U,\bbr^p)$. These properties will be used without further
mention.

\subsubsection{S-smoothness.}\label{subsec: intro: S-smoothness}
 Before proceeding we want to cover what is needed with respect to $SC^\infty$ maps defined in standard neighborhoods of $0$ in
$\rz\! \bbr^n$. In order to define this external set of nice internal functions, we first need some notation.
%we will define $SC^\infty(\rz\! U,\rz\! \bbr^n)$ as follows.
 If $k\in{}^\s\bbn$, then a weight $k$, $m$-multiindex is an ordered $m$-tuple
$\a=(\a_1,\dots,\a_m)$ such that $\a_i\ge0$ are integers for all $i$
and $|\a|\doteq\a_1+\dots+\a_m=k$.
If $U$ is a neighborhood (eg., bounded) of $0$ in $\bbr^m$ and if $\Ff\in\rz\! C^\infty(U,\rz\! \bbr^m)$,
then the $\a^{\supth}$ {\bf internal derivative of} $\Ff$ at $x\in\rz\! U$,
$(\rz\! \p^\a)(\Ff)(x)$ is well defined in $\rz\! \bbr^m$. Similarly, by transfer, if $p\in\bbn$ we say that $\Ff:\rz U\ra\rz\bbr^m$ is $\rz C^p$, written $\Ff\in\rz C^p(U,\bbr^m)$ if for all multiindices $\a$ with $|\a|\leq p$, we have that $\rz\p^\a\Ff:\rz U\ra\rz\bbr^m$ exists and is *continuous.
 With this we have the following definition.
\begin{definition}\label{def: SC^k fcns, k in N and infty}
 Suppose that  we have an internal map $\Ff\in\rz C^\infty(U,\bbr^m)$. Then we
say that $\Ff\in SC^\infty(\rz\! U,\bbr^m)$ if for each $(k,m)$-multi
index (weight $k$, $m$-multi index), the map $(\rz\! \p^\a)\Ff:\rz U\lra\rz\! \bbr^n$
is $SC^0$.
For $p\in\bbn$, suppose that  we have that $\Ff\in\rz C^p(U,\bbr^m)$. Then we say that $\Ff\in SC^p(U,\bbr^m)$ if for all multiindices $\a$ with $|\a|\leq p$, we have that $\rz\p^\a\Ff:\rz U\ra\rz\bbr^m$ is $SC^0$.
 (We include here the empty multi index; i.e., that
$\Ff:U\lra\rz\! \bbr^n$ is $SC^0$, e.g., the image of $\Ff$ is nearstandard).
\end{definition}

  We need to list some basic facts about S-smooth functions.
%  \textbf{LEAVE OUT 5 AND 7 IN NEXT LEMMA}
\begin{lem}[S-smooth facts]\label{lem: S-smooth facts}
One can then show that if $\Ff\in SC^\infty(\rz\! U,\bbr^m)$, the following is true
\begin{enumerate}
\item $^\circ \Ff\in C^\infty(U,\bbr^m)$
\item For every $(k,m)$-multi index $\a$, $^\circ((\rz\! \p^\a)\Ff)\in C^\infty(U,\bbr)$
\item For every $(k,m)$-multi index $\a$, as maps, $\p^\a({}^\circ \Ff)={}^\circ((\rz\! \p^\a)\Ff)$
\item If $f\in C^\infty(U,\bbr^m)$, then $\rz\! f\in SC^\infty(\rz U,\rz \bbr)$
%\item If\; $\Ff\in$SPoly$(\rz U,\rz \bbr)$, then $\Ff\in SC^\infty(\rz %U,\bbr)$
\item If\; $\Ff\in SC^\infty(\rz U,\rz V)$ and $\Fg\in SC^\infty(\rz V,\rz \bbr^p)$, then $\Fg\circ \Ff\in SC^\infty(\rz U,\rz \bbr^p)$
%\item $SC^\infty(\rz U,\bbr)$ is a $\rz \bbr_{\nes}$-module.
\end{enumerate}
\end{lem}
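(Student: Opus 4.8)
The plan is to reduce the whole lemma to one analytic core fact --- that an internal map which is $SC^1$ has a $C^1$ standard part whose partial derivatives are computed by the standard parts of the internal partials --- after which parts (1)--(3) follow by an induction on the order of differentiation, and parts (4)--(5) reduce to transfer together with the composition and algebra properties of the standard-part map recorded in \S\ref{subsec: props of standard part map}.

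First I would establish the core statement: if $\Ff\in\rz C^1(U,\bbr^m)$ with $\Ff$ and each $\rz\p_i\Ff$ in $SC^0$, then $^\circ\Ff$ is differentiable on $U$ (every point of which is standard, $U$ being a standard open set) and $\p_i({}^\circ\Ff)={}^\circ(\rz\p_i\Ff)$. Fix a standard $x_0\in U$, a coordinate direction $e_i$, and put $g_i:={}^\circ(\rz\p_i\Ff)$, which is continuous because $\rz\p_i\Ff\in SC^0$. Working componentwise --- to stay with scalar-valued functions and so avoid the failure of the mean value theorem for vector-valued maps --- the transferred mean value theorem produces, for each small standard $t\neq0$, a point $\xi$ on the segment from $\rz x_0$ to $\rz x_0+\rz t\,e_i$ with $\Ff(\rz x_0+\rz t\,e_i)-\Ff(\rz x_0)=\rz t\,(\rz\p_i\Ff)(\xi)$ (componentwise; equivalently use the integral form $\int_0^{\rz t}(\rz\p_i\Ff)(\rz x_0+se_i)\,ds$, whose standard part is $\int_0^t g_i(x_0+se_i)\,ds$ since $\rz\p_i\Ff$ is $S$-continuous, hence $S$-bounded, on the compact segment). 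Taking standard parts and using $^\circ\xi\in\overline{B(x_0,|t|)}$ gives $\tfrac{{}^\circ\Ff(x_0+te_i)-{}^\circ\Ff(x_0)}{t}=g_i({}^\circ\xi)$; letting $t\to0$ and invoking continuity of $g_i$ yields $\p_i({}^\circ\Ff)(x_0)=g_i(x_0)$. Combined with the $SC^0$ fact that $^\circ\Ff$ is continuous (already noted in \S\ref{subsec: notion of S-property}), this certifies $^\circ\Ff\in C^1$.

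Then I would run the induction on $p\in{}^\s\bbn$: if $\Ff\in SC^p(U,\bbr^m)$ then $^\circ\Ff\in C^p(U,\bbr^m)$ and $\p^\a({}^\circ\Ff)={}^\circ(\rz\p^\a\Ff)$ for $|\a|\le p$. The base $p=0$ is the $SC^0$ fact. For the step, $\Ff\in SC^{p+1}$ forces each $\rz\p_i\Ff\in SC^p$, since $\rz\p^\b\rz\p_i\Ff=\rz\p^{\b+e_i}\Ff$ is $SC^0$ for $|\b|\le p$; the core statement gives $\p_i({}^\circ\Ff)={}^\circ(\rz\p_i\Ff)$, which by the inductive hypothesis applied to $\rz\p_i\Ff$ lies in $C^p$, so $^\circ\Ff\in C^{p+1}$, and for $\a=\b+e_i$ with $|\b|\le p$ one computes $\p^\a({}^\circ\Ff)=\p^\b(\p_i{}^\circ\Ff)=\p^\b({}^\circ(\rz\p_i\Ff))={}^\circ(\rz\p^\b\rz\p_i\Ff)={}^\circ(\rz\p^\a\Ff)$. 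Letting $p\to\infty$ gives (1) and (3), and (2) is (1) applied to $\rz\p^\a\Ff$, which is itself $SC^\infty$.

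Finally, for (4), each $\rz\p^\a(\rz f)=\rz(\p^\a f)$ is the transfer of a standard continuous map, hence $SC^0$ (the nonstandard characterization of continuity, restricting attention to $U^\mu$ as in \S\ref{subsec: transferring maps and domains}), so $\rz f\in SC^\infty$. For (5), $\Fg\circ\Ff$ is internal and $\rz C^\infty$ by the transferred chain rule, and the transferred Fa\`a di Bruno formula expresses $\rz\p^\a(\Fg\circ\Ff)$ as a fixed universal polynomial in the maps $(\rz\p^\b\Fg)\circ\Ff$ ($|\b|\le|\a|$) and $\rz\p^\g\Ff$ ($|\g|\le|\a|$); each $\rz\p^\g\Ff$ is $SC^0$ by hypothesis, each $(\rz\p^\b\Fg)\circ\Ff$ is $SC^0$ because $\Ff(\rz U_{nes})\subseteq\rz V_{nes}$ and composition of $SC^0$ maps is $SC^0$, and sums and products of $\rz\bbr_{nes}$-valued $SC^0$ maps are again $SC^0$ --- all from \S\ref{subsec: props of standard part map} --- so $\rz\p^\a(\Fg\circ\Ff)\in SC^0$ and $\Fg\circ\Ff\in SC^\infty$. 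I expect the main obstacle to be the core $SC^1\Rightarrow C^1$ step: it is the only place needing a genuine analytic estimate rather than transfer plus the algebra of standard parts, and care is required both with the vector-valued mean value theorem and with verifying that the standard part of the internal derivative is the correct continuous function before the limit $t\to0$ is taken.
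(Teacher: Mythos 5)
Your proposal takes essentially the same route as the paper, which defers this lemma to the appendix: the core $SC^1\Rightarrow C^1$ step there (Proposition~\ref{approxprop}) likewise works componentwise via the integral form $\Ff(x+\e v)-\Ff(x)=\int_0^\e\rz d\Ff_{x+tv}(v)\,dt$ and $S$-continuity of the internal derivative, and Theorem~\ref{thmbasreg} then inducts on the differentiation order exactly as you do; the only difference is that you use standard increments and pass directly to the limit whereas the paper uses an infinitesimal increment and overflow to manufacture a standard $\e_0$. Your explicit handling of (4) via transfer and (5) via Fa\`a di Bruno fills in steps the paper leaves to the reader, but rests on the same composition and algebra facts for $SC^0$ maps.
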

\begin{proof}
These facts follow from the literature and the work in the appendix. See Stroyan and Luxemburg \cite{StrLux76} p.
96--109, 269--270, for the best background nonstandard material on this topic . Their text is encyclopedic and faithful to Robinson,
but has many misprints, so care must be taken in reading.
For a careful proof of (3) and associated equivalences, from which the others essentially follow, see this paper's appendix, chapter \ref{chap: appendix: S-smoothness}.
\end{proof}

\subsubsection{S-analyticity.}\label{subsec: S-analyticity}

We also need to talk about $S$-analytic maps on $\rz\! U$. We will draw our material for (standard) analytic functions from the text of Krantz and Parks, \cite{Krantz1992AnalyticFcns}.
  If $U$ is a contractible neighborhood of $0$ in $\bbr^m$, recall that $f:U\ra\bbr$ is analytic if each $a\in U$ has a neighborhood in which $f$ can be written as a convergent power series, and we will denote the set of such by $A^\om(U)$. An $\bbr^m$-valued map $f=(f_1,f_2,\ldots,f_m)$ on $U$ is analytic on $U$ if all of its components $f_j$ are in $A^\om(U)$.  Transferring , we say that an \textbf{internal $\bsm{\Ff:\rz U\ra\rz\bbr}$ is *analytic, written $\bsm{\Ff\in\rz A^\om(\rz\bbr)}$} if each $\xi\in\rz U$ has a *neighborhood where it can be written as a *convergent internal power series. These can be quite pathological in general; for example for $\a\in\rz\bbn$, the map  $\xi\mapsto\rz\cos(\a\xi)\in\rz A^\om(\rz\bbr)$ and has a well defined standard part, but for many infinite such $\a$'s, this standard part is not even Lebesgue measurable, see Stroyan and Luxemburg, \cite{StrLux76} p.218. Nonetheless, there are useful conditions that can force good regularity behavior on these. One type of restriction is in the subset of S-analytic internal functions defined below. In the definition, we will use the Taylor expansion for the power series representation for analytic functions. (Later we will give a second useful criterion for forcing regularity on elements of $\rz A^\om$.)
\begin{definition}\label{def: S-analytic}
Suppose that $f\in SC^\infty(\rz U,\rz \bbr^n)$. For $k\in\rz \bbn$, let $\rz T^kf:\rz U\ra\rz \bbr^m$ be the $k^{\supth}$ order $\rz\! $Taylor polynomial of $f$ centered at $0$ and let $\rz R^kf:\rz U\lra\rz \bbr^m$ be the $k^{\supth}$ order remainder term $f-T^kf$. Then we say that  {\bf$f$ is $S$-analytic on $U$} denoted {\bf$f\in SA^\om(\rz U,\rz \bbr^n)$}, if the following holds: a)  for every $ k\in\; ^\s\bbn$, $\rz R^kf$ and $\rz T^kf\in SC^\infty(\rz U,\rz \bbr^n)$ and b) for all $ k>\infty$  and finite $\a$,  and for all $ x\in \rz U$, $(\rz \p^\a)(\rz R^kf)(x)\sim0$.
\end{definition}
\begin{remark}
   Note that the above definition is implied  if the following holds: $a'$) for every $ k\in\rz \bbn$, $\rz T^kf\in SC^\infty(\rz U,\rz \bbr^n)$ and $b'$) for all $ k>\infty$  and finite $\a$,  and for all $ x\in \rz U$, $(\rz \p^\a)(\rz R^kf)(x)\sim0$
\end{remark}
The following lemma contains information useful to the argument of this paper.
\begin{lem}[$S$-analytic maps]\label{lem: S-analytic maps} The follows assertions hold.
\begin{enumerate}
\item[A)] If $f\in SA^\om(\rz U)$, then $^\circ f\in A^\om(U)$.
\item[B)] If $f\in$SPoly$(\rz U,\rz \bbr)$, then $f\in SA^\om(\rz U,\rz \bbr)$.
\item[C)] If $f\in$SPoly$(\rz U,\rz \bbr)$ and $g\in{}^\s A^\om(\rz V,\rz \bbr^p)$ with $f(\rz U)\subset \rz V)$, then the composition map $g\circ f\in dSA^\om\cap SC^\infty(\rz U,\rz \bbr^p)$.
\end{enumerate}
\end{lem}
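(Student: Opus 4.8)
I would prove the three parts separately; B) is immediate, C) follows from how $^\circ(\cdot)$ behaves under composition, and A) is where the work lies. For B), write $f=\sum_{|\a|\le d}c_\a x^\a\in\mathrm{SPoly}(\rz U,\rz\bbr)$ with $d\in{}^\s\bbn$ and all coefficients $c_\a\in\rz\bbr_{\nes}$. Every internal partial derivative of $f$ is again a polynomial of $^\s$finite degree $\le d$ whose coefficients are $\rz\bbr_{\nes}$-combinations of the $c_\a$ with standard integer factors, hence again in $\mathrm{SPoly}$; on the (bounded) $U$ such maps are visibly $SC^0$, so $f\in SC^\infty(\rz U,\rz\bbr)$. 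For finite $k$, $\rz T^k f$ is the degree $\le k$ truncation of $f$ and $\rz R^k f$ its complementary tail, again polynomials with $\rz\bbr_{\nes}$ coefficients, so clause a) of Definition~\ref{def: S-analytic} holds; and for infinite $k$ we have $k>d$, so $\rz T^k f=f$, $\rz R^k f=0$, and clause b) holds trivially. Hence $f\in SA^\om(\rz U,\rz\bbr)$.

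For A): since $f\in SC^\infty$, Lemma~\ref{lem: S-smooth facts} gives $^\circ f\in C^\infty(U,\bbr^n)$, and since clause a) of Definition~\ref{def: S-analytic} puts each finite-order $\rz T^k f,\rz R^k f$ in $SC^\infty$, the same lemma identifies $^\circ(\rz T^k f)=T^k({}^\circ f)$ and $^\circ(\rz R^k f)=R^k({}^\circ f)$ for finite $k$. The point is to push clause b) down. Fix standard $\e>0$: for every \emph{infinite} $k$ and every $x\in\rz U$, clause b) (with $\a=0$) gives $\rz R^k f(x)\sim 0$, in particular $|\rz R^k f(x)|<\rz\e$; so the internal statement ``$\forall x\in\rz U,\ |\rz R^k f(x)|<\rz\e$'' in the free variable $k$ holds for all infinite $k$. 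By underflow (the spillover companion of the overflow principle of \S\ref{subsec: overflow}) it then holds for all $k\ge k_0(\e)$ with $k_0(\e)$ finite; taking standard parts at standard points $x=\rz a$, $a\in U$, yields $\|R^k({}^\circ f)\|_{\infty,U}\le\e$ for all finite $k\ge k_0(\e)$. Thus $T^k({}^\circ f)\to{}^\circ f$ uniformly on $U$.

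To turn this into analyticity one argues in the standard world (cf.\ Krantz--Parks, \cite{Krantz1992AnalyticFcns}): pick $\rho>0$ with $\overline{B(0,\rho)}\subset U$; the homogeneous components $P_k$ of the Taylor series of $^\circ f$ at $0$ satisfy $\|P_k\|_{\infty,\overline{B(0,\rho)}}=\|T^k({}^\circ f)-T^{k-1}({}^\circ f)\|_{\infty,\overline{B(0,\rho)}}\to 0$, so by homogeneity $|P_k(x)|\le(|x|/\rho)^k\|P_k\|_{\infty,\overline{B(0,\rho)}}\le 2^{-k}$ eventually on $B(0,\rho/2)$; hence $\sum_k P_k$ converges absolutely and uniformly on $B(0,\rho/2)$, its sum is analytic there, and it equals $\lim_k T^k({}^\circ f)={}^\circ f$. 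So $^\circ f$ is analytic near $0$, as required. This last step --- reading off genuine analyticity from uniform decay of the Taylor remainders --- is the main obstacle, and it is exactly where the uniform-in-$x$ clause b) of $SA^\om$ does its job; note also that ``analytic on $U$'' must strictly be read as ``analytic near $0$'' unless $U$ is taken small.

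For C): let $g=\rz h$ with $h\in A^\om(V,\bbr^p)\subset C^\infty(V,\bbr^p)$ and $f\in\mathrm{SPoly}(\rz U,\rz\bbr)$ with $f(\rz U)\subset\rz V$. By part B), $f\in SA^\om\subset SC^\infty(\rz U,\rz V)$, and by Lemma~\ref{lem: S-smooth facts}(4), $\rz h\in SC^\infty(\rz V,\rz\bbr^p)$; hence Lemma~\ref{lem: S-smooth facts}(5) gives $g\circ f=\rz h\circ f\in SC^\infty(\rz U,\rz\bbr^p)$. For the $dSA^\om$ assertion, the fact that $^\circ(\cdot)$ commutes with composition (\S\ref{subsec: props of standard part map}) gives $^\circ(g\circ f)=({}^\circ\rz h)\circ({}^\circ f)=h\circ{}^\circ f$, where $^\circ f=\sum_{|\a|\le d}{}^\circ c_\a\,x^\a$ is a standard polynomial and, after shrinking $U$ so that $^\circ f(\overline U)\subset V$, the composite is defined on $U$. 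A polynomial is real-analytic and composites of standard real-analytic maps are real-analytic (\cite{Krantz1992AnalyticFcns}), so $h\circ{}^\circ f\in A^\om(U,\bbr^p)$, i.e.\ $g\circ f\in dSA^\om\cap SC^\infty(\rz U,\rz\bbr^p)$.
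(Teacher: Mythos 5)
Your proof is correct and follows essentially the same route as the paper's: B) by truncation of a $^\s$finite-degree nearstandard polynomial, C) by the composition properties of $SC^\infty$ and of the standard-part map, and A) by pushing clause b) of the definition down to finite $k$ via underflow and identifying $^\circ(\rz R^k f)$ with $R^k({}^\circ f)$ through property (3) of Lemma \ref{lem: S-smooth facts}. The only notable difference is at the end of A): the paper works pointwise at each $x\in U$ and each finite multiindex $\b$ and stops once $\p^\b(R^k({}^\circ f))(x)\to 0$, treating this as analyticity, whereas you run the underflow uniformly in $x$ (with $\a=0$) and then supply the standard homogeneity estimate on the Taylor components to exhibit an actual convergent power series representation of $^\circ f$ near $0$ --- a slightly more complete finish of the same argument.
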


\begin{remark} As they will not be needed for this paper we will not prove the following facts; $^\s A^\om\subset SA^\om$ and $SA^\om$ is closed under composition. Yet the conditions on $SA^\om$ are strong. If $f\in SC^\infty$ is such that $^\circ(\p^\a\rz\! f)=0$ for all $\rz\! $multiindices $\a$, $f$ is still typically not in $SA^\om$. The tail end Taylor conditions are quite stringent.
\end{remark}

Proof: A) We must show that given a finite multi index $\b$ then the
following is true. If $x\in U$ and $\e_0$ is positive in $\bbr$, then there exists $k_0\in\bbn$ such that $|\p^\b(R^k({}^\circ f)(x)|<\e_0$ if $k>k_0$. Let
$A=\{k\in\rz \bbn:|\p^\b(\rz\! R^kf)(\rz\! x)|<\frac{ *\e_0}{2}\}$. From b) in the
definition of $SA^\om$, it follows that $\rz \bbn_\infty\subset A$. By
definition $A$ is internal and therefore by overflow there is
$k_0\in{}^\s\bbn $ such that $[k_0,\rz\! \infty)\subset A$, eg., if
$k\in\bbn$ and $k\ge k_0$, then $|\rz\! \p^\b(\rz\! R^kf)(\rz\! x)|<
\frac{\e_0}{2}$. As $\e_0$ is standard this implies
$^\circ(\rz\! \p^\b(\rz\! R^kf)(\rz\! x)|)<\e_0$.
%  $\rz\! T^kf\in SC^\infty\ \forall k\in\rz\! \bbn$ implies
But by definition for $k\in\bbn$
$\rz\! Rf^k\in SC^\infty$ and this implies
$^\circ(\rz\! \p^\b(\rz\! R^kf)(\rz\! x)=\p^\b(^\circ(\rz\! R^kf))(x)$ by the third
property of $SC^\infty$ maps. So we have that
$$
|\p^\b(^\circ(\rz\! R^kf))(x)| < \e_0\quad\text{for }\ k>k_0.
$$
But, if $k\in\bbn$, $^\circ(\rz\! R^kf)={}^\circ(f-\rz\! T^kf)={}^\circ
f-T^k({}^\circ f)\doteq R^k({}^\circ f)$, since
$^\circ(\rz\! T^kf)=T^k(^\circ f)$ because for $f\in SC^\infty$,
$^\circ(\rz\! \p^\a(f)=\p^\a(^\circ f)$ for finite $\a$, again by
property 3). That is, $|\p^\a(R^k(^\circ f))(x)|<\e_0$ as we needed
to show.

B) As $f$ is a nearstandard polynomial, we have that $\rz\! T^kf=f$ or a
truncation of $f$ depending on the value of $k\in\rz \bbn$. Hence as a
nearstandard polynomial is $SC^\infty$, then condition a) is
satisfied. For $k\in\rz\bbn$ an infinite integer, $\rz\! R^kf=0$ as $f$ is of finite degree, so
certainly $\rz\! \p^\b(\rz\! R^kf)=0$, so condition b) is clear.

C) First of all, its clear that elements of SPoly or of $^\s A^\om$
are $SC^\infty$. But then by the properties of $SC^\infty$, $g\circ
f\in SC^\infty$. But then we know that $^\circ(g\circ f)=(^\circ
g)\circ(^\circ f)$ and as composition of analytic maps are analytic
we have that $g\circ f\in dSA^\om$.

\subsection{Properties of *$Gl_n$, *End$_n$, *Exp}\label{sec: props *Gl_n,*End_n,*Exp}

\subsubsection{Nonstandard definitions}\label{subsec: def *End_n, *Gl_n}
For the next section, see the lucid introduction to Lie groups in Warner's text \cite{Warner1971}.
 Here we need to talk about the internal topology of $\rz \en_n=\rz \en(\bbr^n)$, the $\rz\! $ transfer
  of $\en_n\doteq\{A:\bbr^n\to\bbr^n:A$ is $\bbr$-linear\}, and $\rz Gl_n=\rz Gl(\bbr^n)$ the  $\rz\! $ transfer
 of $Gl_n\doteq\{A\in\en_n:A$ is invertible\}. First note that $\rz \en_{n,\nes}\doteq\{A\in\rz \en_n:A$ is a nearstandard linear map\} and $\rz Gl_{n,\nes}=\rz Gl_n\cap\rz \en_{n,\nes}$. This all follows from the topological vector space identification of $\en_n$ with $\bbr^{n^2}$ by the identification of a linear map with its corresponding matrix with respect to the canonical basis of $\bbr^n$, and therefore of $Gl_n$ with the corresponding open dense subset of  $\bbr^{n^2}$. It is not hard to see an $A\in\rz \en_n$ is actually in $\rz \en_{n,\nes}$ if $A:\rz \bbr^n_{\nes}\to\rz \bbr^n_{\nes}$ and $A\in\rz Gl_{n,\nes}$ if $A\in\rz \en_{n,\nes}$ and $\rz \det A\neq0$, $\rz \det A$ being the determinant of $A$ with respect to a standard basis on $\rz \bbr$. Note, if $\rz \det A\sim0$, then $^\circ A$ is not in $Gl_n$.

We have the composition map $C:\en_n\x\en_n\to\en_n$. As this is a
polynomial map, it is analytic and therefore $\rz C$ is $S$-analytic on
the nearstandard parts of $\rz \en_n\x\rz \en_n$, i.e., on
$\rz \en_{n,\nes}\x\rz \en_{n,\nes}$. In particular, if
$A\in\rz \en_{n,\nes}$, then $L_A=$ left multiplications by
$A:\rz \en_n\to\rz\! \en_n$, $L_A(B)=A\circ B$ (composition) is $S$-analytic
on $\rz \en_{n,\nes}$. For the same reason, $R_A:\rz \en_n\to\rz \en_n$,
$R_A(B)=B\circ A$ is $S$-analytic when restricted to $\rz \en_
{n,\nes}$.

\subsubsection{Properties of the nonstandard EXP map}\label{subsec: props of *Exp}
 Finally we want to look at $\expp:\en_n\to Gl_n$, $\expp(A)\doteq\sum^\infty_{J=0}\f1{J!}A^J$. $\expp$ is an analytic diffeomorphism from a neighborhood $U$ of $0$ to a neighborhood $V$ of Id, the identity map, in $Gl_n$. This follows from e.g., that $d(\expp)_0=1_{\en_n}$, e.g., a linear isomorphism. Here we are using the canonical identification of $T_0\en_n$ with $\en_n$ --- see the discussion earlier in this section. $Gl_n$ is our protypical Lie group with $\en_n$ the Lie algebra of $Gl_n$, denoted $LA(Gl_n)$, this Lie algebra having product given by the Lie bracket
  \begin{align}
    [\; , \;]:\en_n\x\en_n\to\en_n:(A,B)\to A\circ B-B\circ A
  \end{align}
   (the commutator). Here $\expp$ is the corresponding exponential map tying the Lie algebra to its (local) Lie group. At this point all we need to know is that $\rz\! \expp:\rz\! \en_n\to Gl_n$ is an $dS$-analytic $S$-diffeomorphism. (Note here that this means in particular (and as alluded to earlier) that $^\circ(\rz\! \expp)$ is a diffeomorphism (on a neighborhood of $0$). More specific facts needed for $\rz\! $Lie groups and $\rz\! $Lie algebras will be developed in chapters \ref{chap: pf that ad is sc0}, \ref{chap: st(exp) is loc homeo} and \ref{chap: Main NS reg thm and stan version} as they are needed.

\subsection{Why NSA? The perspective behind this paper}
   In this chapter, we have attempted to give an overview of a typical construction of a nonstandard setting, of the general working principles as well as those specific tools. We have even attempted to give some idea why one might think that it's capacities are worth the trouble to develop some skill in this curious discipline. But we have not discussed  why the author was motivated by this nonstandard approach to this problem. It seems that this belief in the capacities of nonstandard mathematics for this problem comes from the author's view of it's capacities in the study of asymptotic behavior of (say parametric) families of geometric structures. In particular, those structures that at each (parametric) instant subtly integrate   multileveled objects. For example, in considering families of Lie groups, each element of the family has a Lie algebra, an exponential map relating these, various canonical `representations' ($Ad,ad$, etc.) and so on.
   How does one grasp the sophisticated behaviors of these asymptotics. Recall in our introduction to NSA, we described how infinitesimals simplify the criterion for continuity. {\it In a way, this example sells NSA short while it is trying to make it understandable.} The example with the simplification of the formal description of continuity once one enriches the real line (and all of those objects defined on it) is a clarifying but undramatic example of the possibilities of such `enrichment'. In this example, we have pulled a typical function up to a realm where its asymptotic properties are not only revealed; but to view, ascertain and categorize these behaviors, we have the full structured framework of the real numbers, the algebra of functions defined there, etc., all nicely lifted to this enhanced arena. And this enhancement becomes ever more dramatic the more sophisticated the framework we are `transferring'. Referring again to the above example of families of Lie groups (the subject of this paper), the asymptotics are fully revealed by examining the nonstandard Lie groups in the transferred family and as noted above (and played out in chapters \ref{chap: pf that ad is sc0} to \ref{chap: Main NS reg thm and stan version}) we can bring into play those elements of the structure theory (now transferred) to find regularities in these nonstandard objects and hence (back in the standard world) in the families themselves.

   On the level of model theory, Henson and Keisler, \cite{HensonKeisler1986}, have written on this phenomenon. In the context here, what they say is the following. A standard approach to (the standard version of) the main theorem here would mean building families of `good' coordinate changes for our equicontinuous family of local Lie groups carefully orchestrated, via an argument that must use, eg., asymptotic families of commutative diagrams from Lie theory, to have `stringently good' asymptotic properties. This is a tall order, if it can be done. But on the nonstandard level, we essentially deal with fixed individual ideal asymptotic elements of these families and bring to bear all of the framework of Lie theory (now at this ideal/nonstandard level) in the task of finding one good set of coordinates for this solitary ideal group, a feasible project.
%    To be able to apply our analytical tools to a     And dramatic examples of enrichment are at hand.

\section{The local group setup}\label{chap: intro: loc gp setup}

\subsection{Local topological groups}\label{sec: loc top gps}

\subsubsection{Foundational material}\label{subsec: intro to LTG background}
For {\bf local topological groups}, {\bf LTG}'s, in general, we follow Montgomery and Zippin \cite{MontZip1955} (eg.,p. 31--35) and Pontryagin \cite{Pontryagin1986} (eg., p. 137--143). Olver, \cite{Olver1996},  is a good reference for the varieties of local Lie groups and how they relate to (global) Lie groups. On locally Euclidean LTG's, we follow Kap1ansky \cite{Kaplansky1971} (eg.,p. 87). As such after a homeomorphic change of coordinates in some neighborhood of the identity, our group will be modeled on $\bbr^n$ with the identity being the point $0$.  The Euclidean setting will also facilitate the development of the crucial notions of infinitesimal and nearstandard (see the previous section).

 Heuristically, we will have a nonstandard internal local Lie group modeled
 on a standard neighborhood of $0$ in $\rz \bbr^n$. Its standard part will be our continuous
  locally Euclidean local topological group. First we give the following definition.
\begin{definition}\label{def: loc Euclid top gp}
 Our Euclidean local topological group,LTG, will be given by a quadruple $G=(U,\psi,\nu,0)$ where $U$
  is a neighborhood of $0$ in $\bbr^n$, $\psi\in C^0(U\x U\to\bbr^n)$,
    $\nu\in C^0(U\to\bbr^n)$ such that $\forall x$, $y$, $z$ where all expressions are defined
\begin{enumerate}
\item[a)] $\psi(x,\psi(y,z))=\psi(\psi(x,y),z)$\quad associativity,
\item[b)] $\psi(0,x)=\psi(x,0)=x$ \qquad\ \ \ \ \  $0$ is the identity,
\item[c)] $\psi(\nu(x),x)=\psi(x,\nu(x))=0$\quad\ $\nu(x)$ is the inverse of $x$.
\end{enumerate}
\end{definition}
Note that we may also write $x\circ y$ or $xy$ for $\psi(x,y)$,
$x(yz)$ for $\psi(x,\psi(y,z))$, $x^{-1}$ for $\nu(x)$, etc. As the
identity is at the origin we will sometimes write $0$ instead of $e$
when it seems appropriate.
%\begin{rem}
% We did not include a distributive axiom;
%this is not needed and is in fact a sticking point in the differences between the local and global objects. We will say more on this later.
%\end{rem}
\subsubsection{Equivalent local Lie groups}\label{subsec: equivalence of loc top gps}
 In the previous paragraph we used the qualifying phrase ``where all expressions are defined.'' As a local group is defined on neighborhoods of the identity giving various representatives, the mapping defining the product of elements or the inverse of elements may need restrictions to be well defined. For example, if $U$ is a neighborhood of the identity, then the map $\nu:x\to x^{-1}$ may have $\nu(U)\not\sbq U$. In this case, we choose a symmetric neighborhood of $0$; say $V=U\cap U^{-1}$ which is actually a neighborhood of $e$ (see \cite{MontZip1955} pp.32, 33 and \cite{Pontryagin1986}  pp. 137--143). So, e.g., to deal with these problems, we restrict repeatedly to different neighborhoods of $e$. In a strong sense (\cite{Pontryagin1986}, p. 138--141), the LTG's defined by these various representative neighborhoods of $e$ are isomorphic. (That is, there is a smaller neighborhood of $0$ where the two local groups coincide. Note that Duistermaat and Kolk, \cite{DuistermaatKolk1999} p.31 give a brief and clear definition of germ equivalence of local Lie groups. If one changes `local Lie' to `locally Euclidean topological' we get the modern rendition. Their definition looks different, but is the same once one realizes that here we are carrying the various change of coordinates representatives along in our calculations.) Critical here is that we clutter an argument with a finite sequence of qualifying restrictions, each time restricting to another isomorphic local topological group, LTG. As long as there is only a finite number of restrictions, the final restriction will be equivalent to the original. Pontryagin argues in his book \cite{Pontryagin1986} p. 431, that for clarity's sake one can forego these qualifications in such arguments. In this paper, there is no infinite sequence of restrictions. (We do work with families of LTG's but in this case the domains of definition is uniformly fixed.) On the Lie algebra level, such exists in the Hausdorff series work, but the Lie algebra is a globally defined object. Hence, with the exceptions of the $\mu$-exp lemma, and the S-lemma, We will generally not use the qualifications for such restrictions. (These restrictions may affect the globalizability properties of $\SG$, see Olver's paper, \cite{Olver1996}, but do not affect the proof.)

\subsubsection{Standardly local internal Lie groups}
For this paper, a LTG as defined above will be the standard part of a  ${}^\s$local $\rz$Lie group (${}^\s\loc\rz LG$) $\SG$ defined on $\rz \bbr^n$. So we need to give this definition.

\begin{definition}\label{def: sigma local *Lie group}
 A ${}^\s$local $\rz$Lie group (${}^\s\loc\rz LG$) will be defined in terms of representatives on neighborhoods $U$ of $0$ in $\bbr^n$ as follows. By definition $\SG=(\rz U,\tl\psi,\tl\nu,0)$ where a representative is defined on a standard neighborhood $\rz U$ of $0$ in $\rz\bbr^n$ so that $\tl\psi\in\rz C^\infty(U\x U,\bbr^n)$ and $\tl\nu\in\rz C^\infty(U,\bbr^n)$ satisfy the conditions  a)--c) above. Furthermore, in order to insure that $\psi={}^\s\tl\psi$ and $\nu={}^\s\tl\nu$ are $C^0$ and satisfy a)--c), we must impose $S$-continuity on $\tl\psi$. Also, in order to insure that the standard part of $\tl\psi$ has the the properties of a local topological group (see Montgomery and Zippin, \cite{MontZip1955}, p 32 ) in the topological sense, we also impose on $\tl\psi$ the condition that right and left $\rz\! $multiplications are $S$-homeomorphisms where defined. We also assume that right and left $\rz\! $ multiplications are $\rz$ local diffeomorphisms,  a quite weak assumption (see Olver \cite{Olver1996}). We write  ${\bf\SG\in{}^\s\loc SC^0\rz LG}$ to denote a ${}^\s\loc\rz\! LG$ with these properties.
 \end{definition}

{\bf From this point until the finish of the proof of the main regularity theorem, this paper will be concerned with the
properties of a fixed $\SG\in{}^\s\loc SC^0\rz LG$ on a representative
neighborhood of the identity.} We need some observations. As our
${}^\s\loc\rz LG$ is internal, it will carry all of the properties and
structures of a standard local LG, but now  *transferred to the
nonstandard universe. Therefore, note it has an internal Lie algebra
with a $\rz\bbr$ bilinear, antisymmetric bracket satisfying the Jacobi
identity, an internal exponential map, etc.
%Here is $U$ a $\rz\! $ neighborhood of $0$ in $\rz\! \bbr^n$ such that $\psi(x,y)=x+y+B(x,y)$ for $x$, $y\in U$ and $B\in\rz\! C^\infty(U\x U,\bbr^n)$ $\rz\! $ vanishes to 2$^{\text{nd}}$ order at $(0,0)$.

\subsubsection{Structure of the  *Lie algebra of the local *Lie group}\label{subsec: intro: *Lie alg structure}

Before we proceed with the first lemma, we need to say something about the {\bf internal Lie algebra (*LA) $\Fg$ of $\SG$}. First of all, as a $\rz\bbr$ vector space of $\rz\bbr$ dimension $n$, it is the $\rz\! $tangent space of $\SG$ at $e$. But as a $\rz\! $topological space, $\SG$ is a standard neighborhood of $0$ in $\rz\bbr^n$. {\bf So there is a {\bf standard} identification of $\Fg$ as a *vector space with $\rz\bbr^n$.} (See the local differential calculus section.) Therefore, the following are well defined.
\begin{definition}
  The $\rz\bbr_{nes}$-submodules of $\Fg$ of nearstandard and infinitesimal vectors are defined respectively by
\begin{align}
  \Fg_{\nes}=\{v\in\Fg:|v|\in\rz\bbr_{nes}\}\\
  \mu_{\Fg}=\mu_{\Fg}(0)=\{v\in\Fg:|v|\sim 0\}
\end{align}
\end{definition}
 Let $[\ ,\ ]:\Fg\x\Fg\to\Fg$ be the *Lie algebra product of $\Fg$. By *transfer,  $[\ ,\ ]$ is a $\rz\bbr$-bilinear map satisfying anticommutativity and the Jacobi identity. As $\Fg_{\nes}$ is well defined, it makes sense to ask if $[\ ,\ ]$ is a {\bf nearstandard map}, that is if $[\ ,\ ]:\Fg_{\nes}\x\Fg_{\nes}\to\Fg_{\nes}$. We will prove this in the next section; it {\bf will be our pivotal regularity result}.

 Also by *transfer, there is the $\rz C^\infty$ map $\exp:(\Fg,0)\to(\SG,e)$
satisfying $\rz\! d(\exp)_0=1_{\rz\! \bbr^n}$ and therefore is a *local
diffeomorphism; see the previous section. Note here that $\Fg$ has a
standard group structure, $\Fg_{gp}$, given by vector space
addition. In the next part we will use that in the one dimensional
case, $\exp$ is a group isomorphism.

\subsection{The $\mu$-exp lemma}\label{sec: mu-exp lemma}

 The proof of the $\mu$-exp lemma, that the exponential map sends infinitesimal vectors
 to infinitesimal group elements and no other vectors close to $0$,
 depends on the one dimensional case and the Restriction lemma. We
 begin with the Restriction lemma.

\begin{lem}[Restriction Lemma]\label{lem: Restriction lem}
\it Suppose that we have $\SG\in{}^\s\loc SC^{0}\rz LG$ and $\Fg=\rz LA(\SG)$. Suppose that $\Fh<\Fg$ is a *subalgebra. Let $U$ a neighborhood of $0$, so that $\rz U$ is a set of definition of $\SG$, $\SH'=\exp(\Fh)$ where defined and $\SH=\SH'\cap U$. Then $\SH$ is a set of definitions of the $^\s\loc$\rz\! Lie subgroup of $\SG$
given by $\exp(\Fh)$ and $\SH$ is $SC^{0}$ with the restriction topology.
\end{lem}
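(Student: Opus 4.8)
The plan is to show two things: (i) $\SH = \SH' \cap U$ carries the structure of a $^\s\loc\rz\!$Lie subgroup of $\SG$ with $\rz\!$Lie algebra $\Fh$, which is essentially an internal (transferred) fact, and (ii) that this subgroup satisfies the $SC^0$ condition (namely that $\tl\psi\!\mid\!\SH$ is $S$-continuous and that the restricted left and right multiplications are $S$-homeomorphisms) with respect to the topology $\SH$ inherits as a subspace of $\rz U$. The first part is routine by transfer: $\Fh < \Fg$ being a $\rz\!$subalgebra, the internal version of the standard fact that a subalgebra exponentiates to a local subgroup (see Warner \cite{Warner1971}) gives that $\exp(\Fh)$, suitably restricted, is closed under $\tl\psi$ and $\tl\nu$, contains $0$, and that $\exp\!\mid\!\Fh$ is a $\rz\!$local diffeomorphism onto $\SH$ with $\rz\! d(\exp\!\mid\!\Fh)_0 = 1_{\Fh}$. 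So the genuine content is part (ii).

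For part (ii), the key observation is that $S$-continuity and the $S$-homeomorphism property are conditions about how the maps behave on monads and on nearstandard points, and the topology on $\SH$ as a subspace of $\rz\bbr^n$ has monads $\mu_\SH(\xi) = \mu(\xi)\cap\SH$ for $\xi\in\SH$. First I would note that $\tl\psi\!\mid\!(\SH\x\SH)$ is just the restriction of an $S$-continuous internal map to an internal subset, and restriction of an $S$-continuous map to an internal subset (with the subspace monads) is again $S$-continuous — this is immediate from the definitions in section \ref{subsec: notion of S-property}, since $\tl\psi(\mu_\SH(\xi)\x\mu_\SH(\eta)) \subset \tl\psi(\mu(\xi)\x\mu(\eta)) \cap \SH \subset \mu(\tl\psi(\xi,\eta))\cap\SH$, using that $\SH$ is closed under $\tl\psi$. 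The same reasoning handles $\tl\nu\!\mid\!\SH$. Then for the $S$-homeomorphism property of left/right multiplications $L_g\!\mid\!\SH$, $R_g\!\mid\!\SH$ for $g\in\SH$: these are restrictions of the $S$-homeomorphisms $L_g, R_g$ on $\SG$ to the internal subset $\SH$, and since $L_g$ maps $\SH$ bijectively to $L_g(\SH)\cap U$ (again by closure, after a finite number of the harmless restrictions discussed in section \ref{subsec: equivalence of loc top gps}), the standard part $^\circ(L_g\!\mid\!\SH)$ is the restriction of the homeomorphism $^\circ L_g$ to $^\circ\SH_{\nes}$, hence a homeomorphism onto its image.

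The main obstacle I anticipate is not any single deep step but rather the bookkeeping around \emph{nearstandardness and domains}: one must check that $\SH$ has enough nearstandard points for the standard part $^\circ\SH$ to be a genuine locally Euclidean local group (i.e. that $\SH_{\nes}$ is nonempty and, near $0$, is carved out correctly), and that the finitely many restrictions to smaller symmetric neighborhoods needed to make $\tl\psi,\tl\nu$ well-defined on $\SH$ do not destroy the structure — this is exactly the point flagged in section \ref{subsec: equivalence of loc top gps}, and here it matters because $\exp(\Fh)$ is an internal set that may only be a subgroup after such restriction. One should also verify that $\exp\!\mid\!\Fh$ being $SC^\infty$ (which follows since $\exp$ is, and restriction to the internal linear subspace $\Fh$ preserves $SC^\infty$ by lemma \ref{lem: S-smooth facts}) transfers $S$-continuity of the ambient product to $S$-continuity of the product written in the $\Fh$-coordinates, so that the conclusion "$\SH$ is $SC^0$ with the restriction topology" is the one actually needed downstream in the $\mu$-exp lemma. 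Once the domain technicalities are pinned down, everything else is a direct unwinding of the definitions of $SC^0$ and $dS$-homeomorphism applied to restrictions of already-$SC^0$ data.
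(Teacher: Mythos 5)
Your proposal is correct and follows essentially the same route as the paper: both arguments delegate the subgroup structure of $\SH$ to the known (transferred) standard theory and reduce the $SC^0$ claim to the observation that the restriction-topology monad of $h\in\SH$ is $\mu(h)\cap\SH$, so that infinitesimal closeness in $\SH$ implies infinitesimal closeness in $\SG$ and the $S$-continuity of $\SG$ passes down to the restriction. Your extra remarks on left/right multiplications and domain bookkeeping are sound elaborations of the same idea rather than a different method.
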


\begin{proof}
 This means of definition for subgroups of local group is well known (see Montgomery and Zippin, \cite{MontZip1955} pp.33-34, and Kirillov, \cite{Kirillov1976} p.99). We need to prove that $\SH$ is an ${}^\s\loc SC^{0}$ *subgroup. But if we can show that $h_1\sim h_2$ in $\SH\Rightarrow h_1\sim h_2$  in $\SG$, then the result will follow from $\SG\in SC^{0}$. Yet for the restriction topology $\mu^{\SH}(h)=\SH\cap\mu(h)$, where $\mu^{\SH}(h)$ is the monad of $h\in\SH$ in the restriction topology. But $h_1\overset{\SH}{\sim}h_2$ if and only if $\mu^{\SH}(h_1)\cap\mu^{\SH}(h_2)\not=\emptyset$; i.e., $\mu(h_1)\cap\SH\cap\mu(h_2)\not=\emptyset$ eg., $\mu(h_1)\cap\mu(h_2)\not=\emptyset$, i.e., $h_1\sim h_2$ in $\SG$, as we wanted to show.
\end{proof}

\subsubsection{One dimensional case}\label{subsec: one dim'l case}
  In this part, we will be working with one dimensional local LG's,
 $\mathbf{LG^1}$, and the $^\s$local restrictions of their internal transfers, $\rz LG^1$.
  All will be modeled on $\rz \bbr,0$. Hence if $\SG$ is one such, then
   $\wh\SG\doteq\SG_{\nes}\smallsetminus\mu(0)$, the set of nearstandard noninfinitesimal elements is well defined. In particular, if $(\rz\bbr,\cd)$ denotes a model for a 1-dimensional internal  $^\s$local $\rz LG$, then $(\wh\bbr,\cd)$ is well defined, eg., if $(\rz\bbr,+)_{\stan}$ is the standard (local) LG structure on $\rz\bbr$ in the internal universe, then $\wh\bbr$ will be well defined. If $\SG_1$, $\SG_2$ are in $\rz LG^1$ and $\vp:\SG_1\to\SG_2$ is a $\rz LG$ isomorphism which is also a (local) $S$-homeomorphism, we say that $\vp$ is an {\bf $S$-equivalence}.

We will prove after two preliminary lemmas that if $\SG\in\rz LG^1$ is
$SC^0$ and $\La$ is its *exponential map, then $\La$ is an
$S$-equivalence. We begin with the following

%\begin{L1d}
\begin{lem}\label{lem: 1-dim-a}
 Suppose that $\G:(\rz\bbr,+)_{\stan}\to(\rz\bbr,+)_{\stan}$ is a $\rz LG^1$ isomorphism.
 Then $\G$ is an $S$-equivalence if and only if  $ d\G_0\in\wh\bbr$.
\end{lem}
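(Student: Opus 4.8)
The plan is to first pin $\G$ down explicitly and then read off exactly when it is a local $S$-homeomorphism. For the first step I would use transfer: a germ at $0$ of a smooth local homomorphism $(\bbr,+)\to(\bbr,+)$ has the form $x\mapsto cx$ (differentiate the identity $f(x+y)=f(x)+f(y)$ in $y$ at $y=0$ to get $f'\equiv f'(0)$, then integrate using $f(0)=0$), and such a homomorphism is a local isomorphism exactly when $c\neq 0$. Transferring this, the $\rz LG^1$ isomorphism $\G:(\rz\bbr,+)_{\stan}\to(\rz\bbr,+)_{\stan}$ must be of the form $\G(x)=cx$ for a unique $c\in\rz\bbr\smallsetminus\{0\}$, and $c=d\G_0$ by construction. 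So the lemma reduces to: multiplication by $c$ is a local $S$-homeomorphism at $0$ if and only if $c\in\wh\bbr$.

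For the forward implication I would assume $\G$ is an $S$-equivalence and pick a standard $a\neq 0$ lying in the (standard) domain of $\G$. Since ${}^\circ\G$ is then defined and a homeomorphism on a standard neighborhood of $0$, ${}^\circ\G(\rz a)$ exists, so $\G(\rz a)=ca\in\rz\bbr_{\nes}$; dividing by the standard scalar $a$ gives $c\in\rz\bbr_{\nes}$, i.e., $c$ is nearstandard. If $c$ were infinitesimal, then $\G(x)=cx\sim 0$ for every nearstandard $x$, so ${}^\circ\G$ would be identically $0$ near $0$ and in particular not injective, contradicting that it is a homeomorphism. Hence $c\in\rz\bbr_{\nes}\smallsetminus\mu(0)=\wh\bbr$.

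For the converse I would assume $c=d\G_0\in\wh\bbr$, so $c$ is finite and noninfinitesimal. Finiteness of $c$ gives that $\G(x)=cx$ carries infinitesimals to infinitesimals and nearstandard points to nearstandard points, so $\G$ is $S$-continuous and ${}^\circ\G$ is a well-defined map with ${}^\circ\G(t)={}^\circ c\cdot t$, using that taking standard parts commutes with multiplication of finite elements. Since $c$ is noninfinitesimal, ${}^\circ c\neq 0$, so ${}^\circ\G$ is a linear automorphism of $\bbr$ on the relevant neighborhood of $0$, with continuous inverse $t\mapsto t/{}^\circ c$; equivalently, $1/c\in\wh\bbr$ because ${}^\circ(1/c)=1/{}^\circ c$ is a nonzero real, so the internal inverse $\G^{-1}(x)=x/c$ is itself $S$-continuous. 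Thus ${}^\circ\G$ is a homeomorphism, and as $\G$ is already a $\rz LG^1$ isomorphism it is an $S$-equivalence.

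The one step needing care is the reduction to linearity via transfer, together with the bookkeeping that the requirement on ${}^\circ\G$ is to be a \emph{homeomorphism}, not merely continuous: that is precisely what rules out the case $d\G_0\sim 0$. Everything else is routine arithmetic of standard parts.
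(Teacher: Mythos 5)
Your proposal is correct and follows essentially the same route as the paper: use transfer to reduce $\G$ to the dilation $t\mapsto \bar k t$ with $\bar k=d\G_0\neq 0$, and then observe that such a dilation is an $S$-equivalence exactly when $\bar k\in\wh\bbr$. The only difference is that you spell out the second step (which the paper asserts in one line), and your verification of it — nearstandardness of $c$ from ${}^\circ\G(\rz a)$ existing, ruling out $c\sim 0$ by injectivity of ${}^\circ\G$, and $S$-continuity of $\G^{-1}$ from $1/c\in\wh\bbr$ — is sound.
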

%\end{L1d}

\begin{proof}
 Any $LG^1$ isomorphism $\G:(\bbr,+)\to(\bbr,+)$ is given by $t\mapsto\bar kt$ for some $\bar k\neq0$ in $\bbr$ as it is covered on the Lie algebra level by an $\bbr$-linear map. Therefore by *transfer an $\rz LG^1$ isomorphism $\G:(\rz\bbr,+)_{\stan}\to(\rz \bbr,+)_{\stan}$ is given by $t\mapsto\bar kt$ for some $\bar k\in\rz \bbr\smallsetminus\{0\}$. But the magnification map $t\mapsto\bar kt$ is an $S$-equivalence $\dllra\bar k\in\wh\bbr$ and as $d\G_0=\bar k$, we have that $\G$ is an $S$-equivalence $\dllra d\G_0\in\wh\bbr$.
\end{proof}

%\begin{L2d}
\begin{lem}\label{lem: 1-dim-b}
 Suppose that $(\rz\bbr,\cd)\in{}^\s\loc$SC$^\circ\rz LG^1$
such that $\vp:(\rz \bbr,\cd)\to(\rz \bbr,+)_{\stan}$ is an $S$-equivalence.
Then $\rz d\vp_0$ is nearstandard which implies that  there is $ c\in \wh \bbr$ such
that $\rz d\vp_0$ is dilation by $c$.
\end{lem}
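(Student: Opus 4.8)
The plan is to reduce the statement to Lemma \ref{lem: 1-dim-a} by composing $\vp$ with a fixed ``logarithmic'' coordinate chart. First record the purely algebraic part: since $\vp$ is a $\rz LG^1$ isomorphism it is covered, on the $\rz$Lie algebra level, by an $\rz\bbr$-linear isomorphism $\rz d\vp_0$, and under the standard identifications of the one dimensional $\rz$tangent spaces at the identities with $\rz\bbr$ (see the nonstandard calculus section) this is multiplication by a scalar $c\doteq\rz d\vp_0(1)\in\rz\bbr\ssm\{0\}$, i.e.\ dilation by $c$. This already gives the last clause of the lemma and reduces everything to proving $c\in\wh\bbr$.

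Next I would produce a reference $S$-equivalence. Since $(\rz\bbr,\cd)\in{}^\s\loc SC^0\rz LG^1$ it is a ${}^\s$local restriction of the internal transfer $\rz G$ of a standard one dimensional local Lie group $G$; as a one dimensional Lie algebra is abelian, $\exp_G$ is simultaneously a group isomorphism and a standard diffeomorphism near the identity, so $\lambda_0\doteq\exp_G^{-1}\colon G\to(\bbr,+)$ is a standard $LG^1$ isomorphism and diffeomorphism. By Lemma \ref{lem: S-smooth facts}(4) both $\rz\lambda_0\colon(\rz\bbr,\cd)\to(\rz\bbr,+)_{\stan}$ and $\rz(\lambda_0^{-1})$ are $SC^\infty$, hence $SC^0$; so $\rz\lambda_0$ is a (local) $S$-homeomorphism and, being a $\rz LG^1$ isomorphism, an $S$-equivalence, with $d(\rz\lambda_0)_0$ equal to the nonzero standard real $(\lambda_0)'(0)$.

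Now set $\G\doteq\vp\circ(\rz\lambda_0)^{-1}\colon(\rz\bbr,+)_{\stan}\to(\rz\bbr,+)_{\stan}$. Being a composite of two $\rz LG^1$ isomorphisms, $\G$ is a $\rz LG^1$ automorphism of $(\rz\bbr,+)_{\stan}$; being a composite of two $S$-homeomorphisms it is an $S$-homeomorphism; hence $\G$ is an $S$-equivalence, and Lemma \ref{lem: 1-dim-a} gives $d\G_0\in\wh\bbr$. By the chain rule $d\G_0=\rz d\vp_0\circ d\big((\rz\lambda_0)^{-1}\big)_0=c\cdot\big(d(\rz\lambda_0)_0\big)^{-1}$, so $c=d\G_0\cdot d(\rz\lambda_0)_0$; since $\wh\bbr$ is stable under multiplication by nonzero standard reals and $d(\rz\lambda_0)_0$ is such a real, $c\in\wh\bbr$. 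Thus $\rz d\vp_0$ is dilation by $c\in\wh\bbr$, in particular nearstandard.

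The one genuine obstacle is the middle step: one must extract from ``$(\rz\bbr,\cd)\in\rz LG^1$'' the availability of a genuinely standard logarithmic chart, i.e.\ that $(\rz\bbr,\cd)$ is the transfer of a standard one dimensional local Lie group. This is essential, not cosmetic — an internal $\rz C^\infty$, $SC^0$, $S$-homeomorphism of a standard neighborhood of $0$ can have infinite or infinitesimal derivative at $0$, so it is only the marriage of the group-isomorphism hypothesis on $\vp$ with the standardness packaged into $\rz LG^1$ that forces $\rz d\vp_0\in\wh\bbr$. If one prefers to avoid Lemma \ref{lem: 1-dim-a}, the same two ingredients work directly: $(\rz\bbr,\cd)$ being a transfer, its exponential map $\La=\rz\exp_G$ is $SC^0$, an $S$-homeomorphism fixing $0$, with $\rz d\La_0=1$; and $\exp_{(\rz\bbr,+)_{\stan}}=\mathrm{id}$ gives the intertwining $\vp\circ\La=(\text{dilation by }c)$. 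Pushing infinitesimals back and forth through the $SC^0$ maps $\vp,\vp^{-1},\La,\La^{-1}$ then yields both ``$ct\sim0$ whenever $t\sim0$'' (ruling out $c$ infinite) and ``$t/c\sim0$ whenever $t\sim0$'' (ruling out $c$ infinitesimal), whence $c\in\wh\bbr$.
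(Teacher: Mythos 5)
Your reduction to Lemma \ref{lem: 1-dim-a} via a ``logarithmic'' chart would work \emph{if} the chart existed, and you correctly identify its existence as the crux --- but the step you then assert, that $(\rz\bbr,\cd)\in{}^\s\loc SC^0\rz LG^1$ is the ${}^\s$local restriction of the transfer $\rz G$ of a \emph{standard} one dimensional local Lie group $G$, is exactly the gap. By definition \ref{def: sigma local *Lie group}, a ${}^\s\loc\rz LG$ is an \emph{internal} Lie group whose domain is a standard neighborhood; its product and inversion lie in $\rz C^\infty$ but need not be transfers of standard maps (the ``$^\s$'' qualifies only the locality). If every such object were a transfer of a standard group, the $SC^0$ hypothesis in the lemma would be vacuous and the paper's main regularity theorem would be trivial; moreover the lemma is later applied (via the Restriction and $\mu$-exp lemmas) to one parameter $\rz$subgroups $\ell^v=\exp(L^v)$ of a general internal $\SG$, which are certainly not transfers of standard groups. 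So there is no standard $\exp_G^{-1}$ to transfer, and your $\rz\lambda_0$ is not available. Your fallback argument has the same defect in sharper form: it assumes the internal exponential $\La$ of $(\rz\bbr,\cd)$ is $SC^0$ and an $S$-homeomorphism, which is precisely the conclusion of the One Dimensional Lemma that the present lemma is being used to establish --- the argument is circular within the paper's logical order.

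The paper closes this gap by working with standard parts rather than with a transferred chart: since $(\rz\bbr,\cd)$ is $SC^0$ and $\vp$ is an $S$-equivalence, $^\circ\vp$ identifies the $C^0$ local topological group $^\circ(\rz\bbr,\cd)$ homeomorphically with a neighborhood of $0$ in $(\bbr,+)_{\stan}$; two classical results (Warner, Theorems 3.20 and 3.39: a locally Euclidean topological subgroup of a Lie group carries a compatible smooth structure, and a continuous homomorphism of Lie groups is $C^\infty$) then make $^\circ\vp$ smooth, whence $\rz d\vp_0$ is nearstandard and Lemma \ref{lem: 1-dim-a} finishes. Your opening observation --- that $\rz d\vp_0$ is dilation by some nonzero $c\in\rz\bbr$ and everything reduces to $c\in\wh\bbr$ --- is correct and consistent with the paper, but to get $c\in\wh\bbr$ you need an argument of the standard-part/Lie-theory type above, not a standard chart on the internal group.
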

%\end{L2d}

\begin{proof} $(\rz \bbr,\cd)$ is ${}^\s\loc$SC$^\circ$ implies that
$^\circ(\rz\bbr,\cd)$ is a $C^o$ local topological group modeled in a
neighborhood of $0$ in $\bbr$. Also $\vp$ is an $S$-equivalence
implies that $^\circ \vp$ is a local group isomorphism that is a
local homeomorphism and therefore a homeomorphism on the domain of
definition of $^\circ(\rz\bbr,\cd)$. So $^\circ \vp$ is a homeomorphic
 identification of the local group $^\circ(\rz\bbr,\cd)$ with a
 Euclidean neighborhood of zero of the standard group structure on
 $\bbr$, ie. $(\bbr,+)_{\stan}$. By a very special case of a
 theorem in Lie group theory, (see [Warner], p. 95, Theorem 3.20) this embedded
 $^\circ(\rz\bbr,\cd)$ has a smooth group structure compatible with $(\bbr,+)_{\stan}$. So now we have that $^\circ
 \vp :{}^\circ \: (\rz\bbr,\cd)\to (\bbr,+)_{\stan}$ is a $C^o$
 group isomorphism  of (local) Lie groups. Therefore, we can invoke, a
 special case of another result in Lie groups, (see [Warner], p.109, Theorem 3.39) to assert
 that $^\circ \vp$ is, in fact $C^{\infty}$. (Note that Warner proves
 both of these results locally, in some neighborhood of the
 identity, then moves them out globally to the full group, but here we are
 using only the local conclusions of of these results.) But then $\rz d\vp_0$
 is a nearstandard *linear map and the conclusion follows from the previous lemma.
\end{proof}

%\begin{L3d}
\begin{lem}[One dimensional lemma]\label{lem: 1-dim-last}
 Suppose that $\SG\in{}^\s\loc$SC$^\circ\rz LG^1$, that $\Fg=\rz LA(\SG)$
and $\La:\Fg\to\SG$ is the *exponential map. Then for $v\in\Fg$
small enough,
 $\nu\sim0$ if and only if $\La(\nu)\sim0$.
\end{lem}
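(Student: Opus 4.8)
The plan is to deduce the biconditional from the stronger statement that $\La$ is an $S$-equivalence, that is, an $\rz LG^1$-isomorphism of $(\Fg_{gp},+)=(\rz\bbr,+)_{\stan}$ onto $\SG$ that is also a local $S$-homeomorphism; granting that, both $\La$ and $\La^{-1}$ send the monad $\mu(0)$ into $\mu(0)$, and for $v$ in the common domain of definition this is exactly $v\sim0\iff\La(v)\sim0$. The two structural inputs I would use are these. First, by $*$transfer of the classical one-dimensional fact that the exponential map of a $1$-dimensional local Lie group is an isomorphism of local groups near the identity, $\La:(\rz\bbr,+)_{\stan}\to\SG$ is an $\rz LG^1$-isomorphism, with $\rz d\La_0=1_{\rz\bbr}$ (since $\rz d(\exp)_0=1$). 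Second, because $\SG\in{}^\s\loc SC^0\rz LG^1$, its standard part $^\circ\SG$ is a $1$-dimensional $C^0$ locally Euclidean local topological group, so by the elementary $1$-dimensional case of the local Fifth Problem there is a homeomorphic change of coordinates near $e$ carrying $^\circ\SG$ isomorphically onto a neighborhood of $0$ in $(\bbr,+)_{\stan}$.

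From that standard coordinate change --- $*$transferred, and reconciled with the fact that $S$-continuity of $\tl\psi$ makes it infinitesimally close on nearstandard points to the transfer of its (only $C^0$) standard part --- I would extract an $S$-equivalence $\vp:\SG\to(\rz\bbr,+)_{\stan}$. Lemma~\ref{lem: 1-dim-b} applied to $\vp$ then shows that the $*$linear map $\rz d\vp_0$ is nearstandard, hence dilation by some $c\in\wh\bbr$, so in particular $c$ is noninfinitesimal. Now $\vp\circ\La:(\rz\bbr,+)_{\stan}\to(\rz\bbr,+)_{\stan}$ is an $\rz LG^1$-isomorphism, hence by $*$transfer of the classification of such maps it is the dilation $t\mapsto\bar kt$; comparing differentials at $0$ gives $\bar k=\rz d\vp_0\circ\rz d\La_0=c$. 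By Lemma~\ref{lem: 1-dim-a} the dilation $t\mapsto ct$ is itself an $S$-equivalence, so $\La=\vp^{-1}\circ(c\,\cdot)$ is a composite of $S$-equivalences and hence an $S$-equivalence. Concretely, for $v$ small: $v\sim0$ forces $cv\sim0$ (as $c$ is finite), whence $\La(v)=\vp^{-1}(cv)\sim\vp^{-1}(0)=0$; and $\La(v)\sim0$ forces $cv=\vp(\La(v))\sim\vp(0)=0$, hence $v=(cv)/c\sim0$ since $c$ is noninfinitesimal.

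The step I expect to be the main obstacle is the construction of the $S$-equivalence $\vp$: it is where the (elementary but genuine) one-dimensional topological-group input is used, and it must be arranged so that $\vp$ is at once an $\rz LG^1$-morphism of $\SG$ and $S$-continuous with $S$-continuous inverse, which forces careful bookkeeping of the difference between the internal product $\tl\psi$ of $\SG$ and the transfer of its standard part. A coordinate-free variant of the forward implication, avoiding $\vp$, uses only that $\La$ is a homomorphism: if $v\sim0$ but $^\circ(\La(v))=r\neq0$, then $\La(v)^{n}=\La(nv)$ would have standard part $\approx nr$ in $^\circ\SG\cong(\bbr,+)$ and so leave the standard model $\rz U$ of $\SG$ for $n$ large, a contradiction --- but making this precise requires handling the domain of $\exp$, which is essentially the business of the $\mu$-exp lemma that follows.
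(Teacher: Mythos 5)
Your proof follows the paper's own argument step for step: obtain an $S$-equivalence $\vp:\SG\to(\rz\bbr,+)_{\stan}$, use Lemma~\ref{lem: 1-dim-b} to get $\rz d\vp_0\in\wh\bbr$, observe that $\G=\vp\circ\La$ has $d\G_0=d\vp_0$ so that Lemma~\ref{lem: 1-dim-a} makes $\G$ an $S$-equivalence, and then write $\La=\vp^{-1}\circ\G$. The one place you diverge is in how you justify the existence of $\vp$: the paper simply asserts it ``by hypothesis,'' whereas you derive it from the one-dimensional local Fifth plus transfer and, correctly, flag that reconciling the internal product $\tl\psi$ with the transfer of its merely $C^0$ standard part is the delicate point --- a gap the paper glosses over rather than resolves, so your candor here is a feature, not a bug.
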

%\end{L3d}

\begin{proof} By hypothesis, $(\SG,\cd)$ is $^\s$locally $S$-equivalent to $(\rz\bbr,+)_{\stan}$,
 i.e., on some standard neighborhood of the identity, there exists an $S$-equivalence
  $\vp:(\SG,\cd)\to(\rz\bbr,+)_{\stan}$. Letting  $(\bbr,\cd)$ stand for a locally Euclidean
   model of an object in $LG^1$, and letting
   $\vp:(\rz\bbr,\cd)\to(\rz\bbr,+)_{\stan}$ be an $S$- equivalence given
   by the hypothesis, we see that the previous lemma implies that $\rz d\vp_0$ is  dilation by an element of $\wh{\bbr}$.
    Now $\G\doteq\vp\circ\La:(\rz\bbr,+)_{\stan}\to(\rz\bbr,+)_{\stan}$ is a $\rz LG^1$  isomorphism
     and $d\G_0=d\vp_0$, as $d\La_0=$Id; i.e., $d\G_0\in\wh\bbr$. But then Lemma \ref{lem: 1-dim-a}
      implies that $\G$ is an $S$-equivalence. So $\La=\vp^{-1}\circ\G$ must also be an $S$-equivalence,
       e.g., a $^\s$local $S$-homeomorphism. Therefore, for $\nu\in\Fg$ small enough
        $v\sim0\Longleftrightarrow\La(v)\sim0$.
\end{proof}
\subsubsection{Exp preserves infinitesimals}\label{subsec: Exp preserves infinitesimals}
  We will now use the above one dimensional results to give a proof of the first main result toward our goal. Let $\SG\in{}^\s\loc SC^0\rz LG$, modeled on $\bbr^n$ and $\Fg$ be it's *Lie algebra.
\begin{lem}[$\mu$-exp lemma]\label{lem: mu-exp lem}
 Suppose that $v$ is in a small enough standard neighborhood of $0$
in $\Fg$. Then $\exp:\Fg\to\SG$ satisfies $\exp(v)\sim e\dllra
v\sim0$.
\end{lem}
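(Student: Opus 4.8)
The plan is to reduce to the one--dimensional case already settled in Lemma~\ref{lem: 1-dim-last}, invoking the Restriction Lemma exactly as the section heading advertises. If $n=1$ the statement is literally Lemma~\ref{lem: 1-dim-last} applied to $\SG$, so assume $n\ge 2$. Fix $v$ in a small standard neighborhood of $0$ in $\Fg$; if $v=0$ then $\exp(v)=e$ and there is nothing to prove, so take $v\neq 0$. Then $\Fh\doteq\rz\bbr\cdot v$ is a one--dimensional $\rz\bbr$--subspace of $\Fg$, hence a $\rz$subalgebra (every line through the origin is an abelian subalgebra, since $[av,bv]=ab[v,v]=0$), and $\Fh\subsetneq\Fg$. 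Choosing a unit vector $w$ spanning $\Fh$ and parametrizing $\Fh$ by $t\mapsto tw$ identifies $\Fh$ isometrically with $\rz\bbr$. By the Restriction Lemma (Lemma~\ref{lem: Restriction lem}), $\SH\doteq\exp(\Fh)\cap U$ is a set of definition for the ${}^\s\loc\rz$Lie subgroup of $\SG$ with $\rz$Lie algebra $\Fh$, it is $SC^0$ in the restriction topology, hence $\SH\in{}^\s\loc SC^0\rz LG^1$, and its $\rz$exponential map is the restriction $\La\doteq\exp|_{\Fh}\colon\Fh\to\SH$. The only feature of the restriction topology we use is that the monad of any $h\in\SH$ is $\mu^{\SH}(h)=\SH\cap\mu(h)$, with $\mu(h)$ the monad of $h$ in $\SG$.

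Now apply Lemma~\ref{lem: 1-dim-last} to $\SH$: there is a small standard neighborhood of $0$ in $\Fh$ on which $u\sim 0\dllra\La(u)\sim 0$ (the latter relation taken in $\SH$). Specialize to $u=v$, assuming $v$ small enough to lie in this neighborhood and to have $\exp(v)\in U$. Since $v\in\Fh$ we have $\exp(v)=\La(v)\in\SH$, so $\exp(v)\sim e$ in $\SG$ together with $\exp(v)\in\SH$ gives $\exp(v)\in\SH\cap\mu(e)=\mu^{\SH}(e)$, i.e.\ $\La(v)\sim 0$ in $\SH$; conversely $\La(v)\sim 0$ in $\SH$ gives at once $\exp(v)\sim e$ in $\SG$. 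Thus $\exp(v)\sim e\dllra\La(v)\sim 0$ in $\SH$. On the other side, because $t\mapsto tw$ identifies $\rz\bbr\cong\Fh\subset\Fg$ isometrically, $v\sim 0$ in $\Fh$ means $|v|\sim 0$, which is exactly $v\sim 0$ in $\Fg$. Feeding these two translations into the equivalence supplied by Lemma~\ref{lem: 1-dim-last} yields $\exp(v)\sim e\dllra v\sim 0$, as desired.

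Two technical points deserve care, and I regard the first as the main obstacle. First, Lemma~\ref{lem: 1-dim-last} hands us, for each direction $w$, a standard neighborhood of $0$ in $\Fh=\rz\bbr\cdot w$ on which the equivalence holds, whereas the statement we want needs a \emph{single} standard neighborhood of $0$ in $\Fg$ good for all $v$ at once. One expects this uniformity because the relevant neighborhoods may be taken inside a fixed standard neighborhood of $0$ on which $\exp$ is a $\rz$diffeomorphism ($\rz d\exp_0$ being the identity) and on which the Lie--theoretic input to Lemma~\ref{lem: 1-dim-b} applies near the identity; but it should be verified, if need be by an overflow argument applied to the internal family of one--parameter subgroups $\{\exp(\rz\bbr\cdot w)\cap U\}$ indexed by unit vectors $w\in\Fg$. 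Second, one must know that ``$SC^0$ in the restriction topology'' really places $\SH$ in ${}^\s\loc SC^0\rz LG^1$ in the sense demanded by Lemma~\ref{lem: 1-dim-last}, i.e.\ that $\SH$ admits an $S$--homeomorphic chart onto a standard neighborhood of $0$ in $\rz\bbr$ compatible with its $\rz C^\infty$ structure; this is essentially what the Restriction Lemma provides, the chart coming from a standard linear coordinate on $\rz\bbr^n$ restricted to $\SH$ (equivalently, from the inverse of $\exp|_{\Fh}$, legitimate since $\exp|_{\Fh}$ is a $\rz$diffeomorphism near $0$), and the place where the unit--vector normalization of $w$ is used is precisely in checking that this chart and its inverse are $S$--continuous near $0$.
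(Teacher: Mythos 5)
Your proposal follows the paper's own argument essentially verbatim: reduce to the one-dimensional case by considering the line $\rz\bbr\cdot v$, apply the One Dimensional Lemma to its image under $\exp$, and use the Restriction Lemma to transport monad equivalence between $\SH$ and $\SG$, together with the fact that the standard linear identification $\Fg\cong\rz\bbr^n$ makes ``$v\sim 0$ in $\Fh$'' and ``$v\sim 0$ in $\Fg$'' agree. The uniformity concern you raise (a single standard neighborhood good for all directions $w$) is a legitimate subtlety that the paper's proof does not explicitly address either; your suggested overflow argument over the internal family of one-parameter subgroups is a reasonable way to close it if one wished to be more careful than the source.
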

%\end{L4d}
\begin{remark}
Before we prove this we will need to point out the functorality of
$\exp$ from the category of Lie algebras to that of $(\loc)$ Lie
groups. Let LGp be the category of $\loc$LG's and morphisms and LA
the category of Lie Algebras and morphisms. Then $\exp$ is a
functor: LA$\to$LGp; see Kirillov, \cite{Kirillov1976} p.103, or Warner, \cite{Warner1971} p.104]. In particular,
considering inclusion morphisms, we find that {\bf if for a given
Lie algebra $L$, $\exp^L$ denotes its exponential map and $L_2$ is a
Lie subalgebra of $L_1$, then $\exp^{L_2}=\exp^{L_1}\bigm|_{L_2}$.}
With this, we turn to the proof.
\end{remark}

\begin{proof}[$\mu$-exp lemma proof] First note that if $\rz\! \bbr$-dimension of $\Fg$ is $1$ and $\SG$ is
$SC^0$, then this is the 1-D lemma.

Next consider the general case when $\SG\subset\rz\bbr^n,0$. Let
$v\in \Fg_{\nes}$, $L^v\doteq\rz\bbr\cd v$, $\ell^v=\exp(L^v)$, where
defined and $\exp^v=\exp|L^v$, so that $\exp^v:L^v\to\ell^v$ is the
exponential map of a one dimensional *Lie algebra. As $\Fg$ is a
standard $\rz\bbr^n$, for some standard $n$, and $L^\nu$ is a
*subspace, if $w\sim0$ in $\Fg$, then $w\sim0\in L^v$ (and
conversely). But if $w\in L^v$, then $w\sim0\dllra\exp^v(w)\sim0$ in
$\ell^v$, by the 1-D lemma. By the restriction lemma,
$\mu_0(\ell^v)=\mu_0(\SG)\cap\ell^v$ which implies $\exp^v(v)\sim0$
in $\ell^v\dllra\exp(v)\sim0$ in $\SG$, as we wanted.
\end{proof}

\subsubsection{Perspective}\label{subsec: perspective: mu-exp lem}
 Two comments are in order. First, although the $\exp$ map is usually defined in terms of the one parameter subgroups, 1PSGps; and approaches to the Fifth problem, both standard and nonstandard, have been through 1PSGps, we avoid them. Dealing with the relationship between the $SC^0$ properties of these $\rz\! C^\infty$ maps and the putative $SC^1$ regularity of these seemed to be more work than exploiting the functorality of $\exp$.

Second, the local one dimensional case of the Fifth problem goes at
least back to the 19$^{\supth}$ century. After all, this problem was
first conceived of locally. Nonetheless, we felt uncomfortable
invoking the  local one dimensional case without proof. But as it
was not explicitly an integral part of the struggle to finish the
general proof in the middle of the 20$^{\supth}$ century, we have
left our proof in the preliminaries section.

\section{Proof that ad is S-continuous}\label{chap: pf that ad is sc0}

In this chapter we prove that $\ad$ is
nearstandard. The ingredients of the proof consists of the *transfers of two basic formulas (see Warner, \cite{Warner1971} p 114) that intertwine homomorphisms on the Lie group level with the lifted homomorphism on the Lie group level,  along with the $Ad$ Lemma. Note a technicality here: as our group object $\SG$ is defined locally only,  then we will routinely need to restrict to a smaller neighborhood of $0$, to get well defined expressions, in particular with the mappings $a_g$ and $Ad_g$, we shall shortly use. (Of course this caution applies similarly to our S-continuous *groups.) On the Lie algebra level, this is not a problem.

\subsection{Ad is S-continuous}\label{sec: Ad is SC0}

The first intertwining formula is
crucial in the proof of the $Ad$ Lemma. The second intertwining formula is the bridge to
reducing the problem to a question about singularities of one
parameter subgroups of $\rz Gl_n$. This second part depends on the the fact that if $\SA\in\rz Gl_{nes}$, then the internal differential of left multiplication by $\SA$ is also nearstandard.

The first formula, given directly below, is critical to the proof of the ad Lemma.
 Let $G\in\loc LG$ and $L=LA(G)$. Let $g\in G$, $v\in L$. Then $a_g:h\mapsto ghg^{-1}$ is an
  automorphism of $G$ (when restricted to a sufficiently small neighborhood of $0$, the identity) and as $d(a_g)_e:T_eG\ra T_eG$, in fact is an automorphism of $T_eG=L$, then, in particular, $Ad_g\dot=d(a_g)|T_eG\in Gl(L)$. This gives a smooth Lie group homomorphism $Ad:G\ra Gl(L)$ whose differential induces on the Lie algebra level the homomorphism  of Lie algebras $v\in L\mapsto ad_v\in\en(L)$ ( the homomorphism property is essentially the Jacobi identity) and it's not hard to prove that $\ad_v(w)=[v,w]$ .
 Given these preliminaries the first intertwining formula is, for $v\in L$ and $g\in G$
 \begin{align}\label{diagram: 1st intertwining formula}
     a_g(\exp v)=\exp(Ad_g(v)).
  \end{align}
defined for $g$ in a sufficiently small neighborhood of the identity and $v\in L$ sufficiently small so that $\exp(v)$ is well defined in the expression. This formula will be used to show sufficient regularity of $Ad$.

    Recall that $\SG\in{}^\s\loc SC^{0}\rz LG$ and $\Fg=\rz LA(\SG)$ is it's internal Lie algebra. In the following lemma, we will use the first intertwining formula above.

\begin{lem}[Ad lemma]\label{lem: Ad lemma}
  Suppose that $g\in\SG$ ($=\SG_{\nes}$) is sufficiently small. Then if $X\in\Fg$ with $X\sim 0$, then  $Ad_{g}X\sim0$.
\end{lem}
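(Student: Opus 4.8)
The plan is to read the conclusion straight off the first intertwining formula \ref{diagram: 1st intertwining formula}, $a_g(\exp v)=\exp(Ad_g(v))$, combined with the $\mu$-exp lemma (Lemma \ref{lem: mu-exp lem}) and with the hypothesis $\SG\in{}^\s\loc SC^0\rz LG$ used to control the conjugation map $a_g$.

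First I would observe that, for $g$ sufficiently small, $a_g=L_g\circ R_{g^{-1}}$ is a composition of right and left $\rz$multiplications, both of which are $S$-homeomorphisms by the definition of ${}^\s\loc SC^0\rz LG$; since $R_{g^{-1}}$ is $S$-continuous at $e$ with $R_{g^{-1}}(e)=g^{-1}$ and $L_g$ is $S$-continuous at $g^{-1}$ with $L_g(g^{-1})=e$, the composite $a_g$ is $S$-continuous at $e$ and satisfies $a_g(\mu(e))\subseteq\mu(e)$. Now let $X\in\Fg$ with $X\sim 0$. By the $\mu$-exp lemma $\exp X\sim e$, hence $a_g(\exp X)\sim e$, and the intertwining formula gives $\exp(Ad_g X)=a_g(\exp X)\sim e$. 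Applying the $\Rightarrow$ direction of the $\mu$-exp lemma ($\exp v\sim e\Rightarrow v\sim 0$) then yields $Ad_g X\sim 0$, which is the assertion.

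The one subtle point — and the step I expect to be the main obstacle — is that the $\mu$-exp lemma constrains only vectors $v$ lying in a fixed small standard neighborhood of $0$, whereas a priori $Ad_g X$ need not be small: the internal linear map $Ad_g$ is not yet known to be nearstandard, and since a linear map carrying $\mu(0)$ into $\mu(0)$ automatically has finite operator norm, the Ad lemma is essentially equivalent to $Ad_g$ being nearstandard, so this cannot simply be waved away. To close the gap I would argue along the line $L^X=\rz\bbr\cdot X$: by the Restriction Lemma (Lemma \ref{lem: Restriction lem}) the one-dimensional subgroups $\ell^X=\exp(L^X)$ and $\ell^{Ad_gX}=a_g(\ell^X)=\exp(L^{Ad_gX})$ are $SC^0$, and by the one-dimensional lemma (Lemma \ref{lem: 1-dim-last}) their exponential maps are $S$-equivalences; then $Ad_g\!\mid\! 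L^X = (\exp^{L^{Ad_gX}})^{-1}\circ a_g\circ\exp^{L^X}$ is a composition of $S$-homeomorphisms each sending base point to base point, hence $S$-continuous at $0$, so it carries $\mu(0)\cap L^X$ into $\mu(0)$; in particular $X\sim 0$ forces $Ad_gX\sim 0$. Equivalently, and closer to the spirit of the intertwining formula, one rescales $X$ to $tX$ with an infinitesimal $t$ chosen so that $|t\,Ad_gX|$ takes a prescribed small standard value, uses that $s\mapsto a_g(\exp sX)$ and $s\mapsto\exp(s\,Ad_gX)$ are one-parameter subgroups with the same initial velocity $d(a_g)_eX=Ad_gX$ and hence agree near $0$, and derives a contradiction from the $\mu$-exp lemma unless $Ad_gX$ was nearstandard and infinitesimal to begin with.
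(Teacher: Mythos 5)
Your first paragraph is, step for step, the paper's own proof: $a_g=L_g\circ R_{g^{-1}}$ is an $S$-homeomorphism fixing $e$, so $X\sim 0$ gives $\exp X\sim e$ (the $\mu$-exp lemma), hence $a_g(\exp X)\sim e$, hence $\exp(Ad_gX)\sim e$ by the first intertwining formula, hence $Ad_gX\sim 0$ by the $\mu$-exp lemma again.

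The subtlety you flag, however, is a real one that the paper does not pause over: the closing appeal to Lemma \ref{lem: mu-exp lem} presupposes that $Ad_gX$ lies in the small standard neighborhood of $0$ on which that lemma is asserted, and, as you correctly observe, this is tantamount to the nearstandardness of $Ad_g$ — the very regularity that the Ad lemma is the first link in establishing (it is extracted only afterwards via Lemma \ref{lem: EXP(ad) is regular} and Theorem \ref{thm: EXP is regular}), so it cannot be assumed. Of your two repairs, the rescaling argument is the cleaner and does close the hole: if $Ad_gX\nsim 0$, choose $t\in\rz\bbr_+$ (appreciable when $|Ad_gX|$ is appreciable, infinitesimal when $|Ad_gX|$ is infinite) so that $|tAd_gX|$ equals a fixed small positive standard number; then $tX\sim 0$, so $\exp(tX)\sim e$ and $a_g(\exp tX)\sim e$, and by the intertwining formula together with *linearity of $Ad_g$ we get $\exp(tAd_gX)\sim e$ — and now Lemma \ref{lem: mu-exp lem} is legitimately applicable to the nearstandard vector $tAd_gX$ and forces $tAd_gX\sim 0$, a contradiction. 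Your alternative route through Lemma \ref{lem: Restriction lem} and Lemma \ref{lem: 1-dim-last}, writing $Ad_g|_{L^X}=(\exp^{L^{Ad_gX}})^{-1}\circ a_g\circ\exp^{L^X}$, also works, but note you should first normalize the generator of $L^{Ad_gX}$ to unit length so the one-dimensional machinery, which is stated for nearstandard generators, applies; it amounts to re-running the proof of the $\mu$-exp lemma for the specific line $L^{Ad_gX}$ and is more work than the rescaling. Either way you have supplied the justification the paper leaves implicit.
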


\begin{proof} (Note here we are working with the transfer of the above machinery although *'s are generally absent.) First note that if $g\in\SG$, then the mapping $a_g:\SG\to\SG$ given by $a_g(h)=ghg^{-1}$ is an $S$-homeomorphism. This follows from the fact that
$a_g=L_g\circ R_{g^{-1}}$ and that $L_g$ and $R_{g^{-1}}$ are
$S$-homeomorphisms, and composition of $S$-homeomorphisms are such.
This implies that if $h$, $h'\in\SG$, then $h\sim
h'\underset{\circledast}{\dllra}a_g(h)\sim a_g(h')$. Note also that
$$
a_g(e) = e.
$$
So if $X\in\Fg$ is such that $X\sim0$, then (by the $\mu-\exp$
lemma) $\exp X\sim e$.

So by $\circledast$ $a_g(\exp X)\sim a_g(e)=e$. But
$$
a_g(\exp X)=\exp(Ad_g(X)),
$$
so that $\exp(Ad_g(X))\sim e$. But then  $Ad_g(X)\sim0$ (again by
the $\mu-\exp$ lemma).
\end{proof}

%\subsubsection{\bf Restriction Lemma} {\it Suppose that we have $\SG\in{}^\s\loc SC^{0}*LG$ and $\Fg=*LA(\SG)$. Suppose that $h<\Fg$ is a $*$subalgebra. Let $U$, a $^\s$neighborhood of $0$, be a set of definition of $\SG$, $\SH'=\exp(h)$ where defined and $\SH=\SH'\cap U$. Then $\SH$ is a set of definitions of the $^\s\loc$*Lie subgroup of $\SG$ given by $\exp(h)$ and $\SH$ is $SC^{0}$ with the restriction topology.}

%\begin{proof} This means of definition for subgroups of local group is well known (see [MZ] and [Kir, p. 99]). We need to prove that $\SH$ is an ${}^\s\loc SC^{0}$ *subgroup. But if we can show that $h_1\sim h_2$ in $\SH\Rightarrow h_1\sim h_2$  in $\SG$, then the result will follow from $\SG\in SC^{0}$. Yet for the restriction topology $\mu^{\SH}(h)=\SH\cap\mu(h)$, where $\mu^{\SH}(h)$ is the monad of $h\in\SH$ in the restriction topology. But $h_1\overset{\SH}{\sim}h_2\dllra h_1\in\mu^{\SH}(h_2)$; i.e., $h_1\in\SH\cap\mu(h_2)$ eg., $h_1\in\mu(h_2)$, i.e., $h_1\sim h_2$ in $\SG$, as we wanted to show.
%\end{proof}

\subsection{Translating to the general linear group}\label{sec: transl pf of ad is SCo to Gin}

In this part we move the problem from $\SG$ to $\rz Gl_n$.
% Here, we will use the (*transfer of) the second intertwining formula.
%   section we will give the proof
%that $ad$ is $SC^o$. It relies on a crucial use of the intertwining
%formulas to transfer the problem of the regularity of the Lie
%product with respect to the exponential map to a corresponding
%problem on $\rz Gl_n$.

%\begin {L2ad} Suppose that $g\in\SG$ ($=\SG_{\nes}$). Then $0\sim
%X\in\Fg\Rightarrow Ad_{g}X\sim0$.
%\end {L2ad}

%\begin {proof} This is 4.3.2 in the paper. See the proof there.
%\end {proof}

 We will use the (*transfer of) the intertwining formula that connects the group/Lie algebra structure of $\SG$
 with the group/Lie algebra structure of a Euclidean group.
 One can check that the definition of $Ad_g$ determines a smooth Lie group homomorphism $Ad:G\ra Gl(L)$ whose differential induces on the Lie algebra level the homomorphism  of Lie algebras $ad:v\in L\mapsto ad_v\in\en(L)$.
 With this, we have the second  formula
    \begin{align}\label{diagram: 2nd intertwining formula}
      Ad_{\exp(v)}(w)=\expp(ad_v)(w)\notag,
    \end{align}
  where $\exp$ is the exponential map for $L$
and $\expp$ is the exponential map for $\en(L)$. So lemma \ref{lem: Ad lemma} (the Ad
lemma) along with this formula has as a consequence the following
restatement of lemma \ref{lem: Ad lemma}. Before we make the statement, we need some
notation. When we are looking at $Ad_{g}$, it will be for
$g=\exp(tv)$ for a fixed, small $v\in\Fg_{nes}$ with $v\nsim0 $, and
$t\in\rz U$, $U$ being a symmetric neighborhood of $0$ in $\bbr$. See
the $\mu$-exp lemma (\ref{lem: mu-exp lem}).

%\begin {L3ad}
\begin{lem}[EXP(ad) is regular]\label{lem: EXP(ad) is regular}Let $v\in\Fg$ be  fixed (standardly) small enough,  and $t\in\rz U$.
Then $w\in\Fg_{nes}$ implies that $\expp(ad_{tv})(w) \in\Fg_{nes}.$
\end{lem}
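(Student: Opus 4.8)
The plan is to reduce the assertion to the Ad lemma (Lemma~\ref{lem: Ad lemma}) by way of the second intertwining formula, and then to upgrade ``preserves $\mu_{\Fg}$'' to ``preserves $\Fg_{\nes}$'' by an elementary operator-norm argument. Concretely: by the second intertwining formula applied with $v$ replaced by $tv$, one has, for $g\doteq\exp(tv)$, the internal identity $\expp(\ad_{tv})(w)=Ad_{g}(w)$; hence the lemma is exactly the statement that the internal $\rz\bbr$-linear map $Ad_{g}\colon\Fg\to\Fg$ carries $\Fg_{\nes}$ into $\Fg_{\nes}$, i.e.\ the $\Fg_{\nes}$-strengthening of the $\mu_{\Fg}$-version already established in Lemma~\ref{lem: Ad lemma}. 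Before anything else I would pin down the neighborhoods: since $v$ is standardly small and $U$ is a (suitably small) standard symmetric neighborhood of $0$ in $\bbr$, every $t\in\rz U$ is finite, so $tv$ ranges over a fixed standard neighborhood of $0$ in $\Fg$, and $g=\exp(tv)$ is defined and, after shrinking $v$ and $U$ if necessary, lies in whatever sufficiently small standard neighborhood of $e$ Lemma~\ref{lem: Ad lemma} and the intertwining formula require; cf.\ the $\mu$-exp lemma (Lemma~\ref{lem: mu-exp lem}).

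The step that does the real work is the following general observation about an arbitrary internal $\rz\bbr$-linear map $A\colon\Fg\to\Fg$, where $\Fg$ is equipped with the transferred Euclidean norm $|\cdot|$ on $\Fg\cong\rz\bbr^n$ (see~\S\ref{subsec: nearst tngnt vectors E^n}): if $A(\mu_{\Fg})\sbq\mu_{\Fg}$, then $A(\Fg_{\nes})\sbq\Fg_{\nes}$. To prove it I would form the internal operator norm $\|A\|\doteq\rz\!\sup\{\,|Au|:u\in\Fg,\ |u|=1\,\}\in\rz\bbr$ and claim $\|A\|\in\rz\bbr_{\nes}$. Otherwise $\|A\|$ is infinite; since $\Fg$ is finite-dimensional the internal supremum is attained at some internal unit vector $u$, so $|Au|=\|A\|$ is infinite. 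Setting $z\doteq u/\sqrt{\|A\|}$ gives $|z|=1/\sqrt{\|A\|}\sim0$, hence $z\in\mu_{\Fg}$, while $|Az|=|Au|/\sqrt{\|A\|}=\sqrt{\|A\|}$ is infinite, contradicting $A(\mu_{\Fg})\sbq\mu_{\Fg}$. So $\|A\|$ is finite, and then for every $w\in\Fg_{\nes}$ we get $|Aw|\le\|A\|\,|w|\in\rz\bbr_{\nes}$, i.e.\ $Aw\in\Fg_{\nes}$.

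Putting the pieces together: Lemma~\ref{lem: Ad lemma} gives $Ad_{g}(\mu_{\Fg})\sbq\mu_{\Fg}$ for $g=\exp(tv)$, the observation upgrades this to $Ad_{g}(\Fg_{\nes})\sbq\Fg_{\nes}$, and the intertwining identity yields $\expp(\ad_{tv})(w)=Ad_{g}(w)\in\Fg_{\nes}$ for every $w\in\Fg_{\nes}$, which is the claim. I do not anticipate a serious obstacle: the operator-norm upgrade, while it is the one place a genuinely nonstandard idea is used, is routine, and the only point really needing care is the neighborhood bookkeeping (making sure $\exp(tv)$ stays in the domain of validity of Lemma~\ref{lem: Ad lemma} and of the intertwining formula uniformly for $t\in\rz U$), which is handled in advance by shrinking $v$ and $U$. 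It is worth emphasizing that this argument deliberately does \emph{not} use nearstandardness of $\ad_{v}$ or of $\expp(\ad_{tv})$ — that is exactly the regularity being bootstrapped in this chapter, and the present lemma is an intermediate step toward it.
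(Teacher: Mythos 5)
Your proposal is correct and follows essentially the same route as the paper: reduce via the second intertwining formula to showing that $Ad_{\exp(tv)}$ maps $\Fg_{\nes}$ into itself, invoke the Ad lemma for preservation of $\mu_{\Fg}$, and upgrade to preservation of $\Fg_{\nes}$ using linearity. The only difference is in the final linear-algebra step and is cosmetic: the paper applies the Ad lemma to $\e w$ for an arbitrary infinitesimal $\e$ and uses the characterization that a vector $u$ is nearstandard iff $\e u\sim 0$ for every $\e\sim 0$, whereas you establish finiteness of the internal operator norm of $Ad_{\exp(tv)}$ by a contradiction with the $\sqrt{\|A\|}$ rescaling — both are one-step consequences of the same linearity of the internal map.
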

%\end {L3ad}

\begin {proof} This follows from Lemma \ref{lem: Ad lemma}, from the intertwining
formula and from the next statement in the following manner. Let $v\in\rz\bbr^n$. Then
$v\in\rz\bbr^n_{nes}$ if and only if the following holds. For every
$\e\!\in\!\!\rz\bbr$\;,\;$\e\sim0$ if and only if $\e v \sim0$. Let
$tv\in\Fg_{nes}$ be small enough and suppose that $w\in \Fg_{nes}$.
Then if $\e\in\rz\bbr$ with $\e\sim0$ we have that $\e w\sim0$
in$\rz\Fg$. So Lemma \ref{lem: Ad lemma} implies that $Ad_{\exp(tv)}(\e w)\sim0$. That
is, $\e Ad_{\exp(tv)}(w)\sim0$. As this holds for all $\e\sim0$, we
have that $Ad_{\exp(tv)}(w)$ must be nearstandard.
\end {proof}

Considering $ad_{tv}$ as an element of $End(\Fg)$, we will show that if the conclusion of this lemma holds for
\textbf{any} $A\in\rz\en_n$, in particular, for any
$A\in\rz\en(\Fg)$, then that $A$ must be nearstandard.
 % this lemma is actually a special case of a
%statement about the exponentials of linear endomorphisms of
%$\bbr^n$,when they are of the form $\ad_{tv}$, for a given $n\in\bbn$.
\textbf{That is, Lemma \ref{lem: EXP(ad) is regular}, along with the next general result about
the relationship between nearstandard elements of $\rz\en(\bbr^n)$
and their exponentials in $\rz Gl_n$, will be all we need to prove
that $\ad$ is $SC^o$}. We need one more lemma and some elementary
differential geometry before we begin the theorem.

%\begin {L4ad}
\begin{lem}\label{lem:local *gp hom is regular}Let $t\mapsto g_t:\rz U\rightarrow\rz Gl_{n,nes}$ denote a
$\rz$local one parameter subgroup, where $U$ is a symmetric
neighborhood of $0$ in $\bbr^n$. Suppose that $v\in\rz\bbr^n_{nes}$ with
$v\nsim0$. Then for $t\in\rz U$, $g_t(v)\in\rz\bbr^n_{nes}$.
\end{lem}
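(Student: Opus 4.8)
The plan is to reduce the lemma to the elementary fact that a nearstandard invertible linear map sends nearstandard vectors to nearstandard vectors, which I will prove directly. First I would fix $t\in\rz U$; the hypothesis $g_t\in\rz Gl_{n,nes}\subset\rz\en_{n,nes}$ (recall $\rz Gl_{n,nes}=\rz Gl_n\cap\rz\en_{n,nes}$) means $g_t$ is infinitesimally close, in the topological vector space identification $\rz\en_n\cong\rz\bbr^{n^2}$, to a standard linear map, so I may write $g_t={}^\circ g_t+\eta_t$ with ${}^\circ g_t\in\en_n$ standard and every matrix entry of $\eta_t\in\rz\en_n$ infinitesimal. Likewise $v\in\rz\bbr^n_{nes}$ gives $v={}^\circ v+\delta$ with ${}^\circ v\in\bbr^n$ standard and $\delta\sim 0$; the extra hypothesis $v\nsim 0$ only records that ${}^\circ v\neq 0$ and plays no role in this particular lemma.

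Next I would expand, using $\rz\bbr$-bilinearity of matrix--vector multiplication,
\begin{align*}
 g_t(v)={}^\circ g_t({}^\circ v)+{}^\circ g_t(\delta)+\eta_t({}^\circ v)+\eta_t(\delta),
\end{align*}
and check that each of the last three terms is infinitesimal: ${}^\circ g_t(\delta)$ is a standard matrix applied to an infinitesimal vector; $\eta_t({}^\circ v)$ has each coordinate equal to a sum of $n$ products of an infinitesimal entry of $\eta_t$ with a finite coordinate of ${}^\circ v$, hence is infinitesimal; and $\eta_t(\delta)$ is a product of infinitesimal quantities. Therefore $g_t(v)\sim{}^\circ g_t({}^\circ v)\in\bbr^n$, so $g_t(v)\in\rz\bbr^n_{nes}$ (indeed ${}^\circ(g_t(v))=({}^\circ g_t)({}^\circ v)$), which is exactly the assertion of the lemma. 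The one-parameter-subgroup hypothesis is used here only to guarantee that each $g_t$ is a genuine internal element of $\rz Gl_n$ for every $t\in\rz U$; it is not otherwise needed in this lemma, although it is essential for the differential-geometric argument that follows it.

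I do not expect a genuine obstacle in this lemma: the only point deserving a word of care is that the mixed term $\eta_t({}^\circ v)$ is infinitesimal, which is just the statement that taking standard parts commutes with the $\rz\bbr_{nes}$-module operations (Section~\ref{subsec: props of standard part map}), applied to a finite linear combination. The substance of the development lies instead in the subsequent theorem, which must run the converse direction: from nearstandardness of all the vectors $\expp(tA)(v)$, $t\in\rz U$, one has to deduce nearstandardness of the generator $A$ itself, using the ordinary differential equation satisfied by $t\mapsto\expp(tA)$ on $\rz Gl_n$ and the differential geometry of $\rz Gl_n$. The present lemma is simply the easy half that gets invoked there.
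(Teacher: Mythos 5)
Your proof is correct and takes essentially the same route as the paper, which simply observes that $g_t\in\rz Gl_{n,nes}\subset\rz\en_{n,nes}$ and $v\in\rz\bbr^n_{nes}$ imply $g_t(v)\in\rz\bbr^n_{nes}$ ``by definition.'' You have merely unpacked the definitional fact the paper takes as immediate (that a nearstandard linear map sends nearstandard vectors to nearstandard vectors, noted earlier in the paper as ``not hard to see''), and your meta-observations that $v\nsim 0$ is unused and that the one-parameter-subgroup structure serves only to guarantee $g_t\in\rz Gl_n$ are both accurate.
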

%\end {L4ad}

\begin {proof}
But, if $A\in\!\rz Gl_{n,nes}$ and $v\in\!\rz\bbr^n_{nes}$,then, by
definition, $A(v)\in\rz\bbr^n_{nes}$. In particular, this holds for
$A=g_t$, when $t\in\rz U$.
%Suppose to the contrary that for such a $v$, we have
%that $g_t(v)\doteq{w}\sim0$. Then as $-t\in\rz U$, we have that
%$g_{-t}\in\rz Gl_{n,nes}$. If $r\in\rz\bbr_{\infty}$ is such that
%$rw\nsim0$ and finite, then $rv\in\rz\bbr_{\infty}$, and by
%$\rz$linearity, $g_{-t}(rw)=rv$, contradicting the nearstandardness
%of $g_{-t}$.
\end {proof}

\subsection{Differential geometry of the transferred general linear group}\label{sec: diff geom of *Gl_n}
%\begin{dg gln}
%\subsubsection{Differential geometry of $\rz Gl_n$}
\subsubsection{Restricted differential of the general linear group structure}

  The tangent bundle of $Gl_n$ has a canonical smooth
trivialization, ie., $TGl_n\cong Gl_n\times T_{Id}(Gl_n)$, and
$T_g(Gl_n)\cong \en_n$ One can, eg., get this via the embedding of
$Gl_n$ as an open (dense) subset of $\bbr^{n^2}$. (See \ref{sec: props *Gl_n,*End_n,*Exp} in the
paper) Also, using this identification, we have that $T_g(Gl_n)\cong
\en_n$, for any $g\in Gl_n$.
%\end{dg gln}
 We $*$transfer
these \textbf{standard} identifications to get that $\rz TGl_n\cong
\rz Gl_n\times \rz T_{Id}(Gl_n)\cong \rz Gl_n\times \rz\en_n$. In
particular, if $g\in\rz Gl_n$ we have an internal, \textbf{not
necessarily nearstandard}, identification $\rz T_g(Gl_n)=\rz\en_n$
But, as this is identification on the standard level was a
homeomorphism (in fact, an analytic diffeomorphism), then the
*transferred identification restricts to a \textbf{nearstandard}
identification of the nearstandard  parts, ie.,
$$
\rz(TGl_{n})_{nes}\cong \rz Gl_{n,nes}\times
\rz(T_{Id}(Gl_n))_{nes}\cong \rz Gl_{n,nes}\times \rz\en_{n,nes}
$$
and, in this case, when $g\in Gl_{n,nes}$, the identification
$\rz(T_gGl_n)_{nes}\cong \rz\en_{n,nes}$ is a \textbf{nearstandard}
internal isomorphism.
  (See the last part of \ref{subsec: def *End_n, *Gl_n}). Let
  $\mu:Gl_n\times Gl_n\rightarrow Gl_n$ denote the product map on
  $Gl_n$, and let
 % $L_g=d\la_g:T_h(Gl_n)\rightarrow T_{(gh)}(Gl_n)$ be the
  %differential of $\la_g$.
$$
  \rz d\mu_{(g,h)}: \rz T_g(Gl_n)\times \rz T_h(Gl_n)\rightarrow \rz T_{gh}(Gl_n)
  $$
  denote its internal differential at $(g,h)\in \rz Gl_n\times \rz Gl_n$.
  Then using the identification above, we have $\rz d\mu_{(g,h)}:\rz\en_n\times \rz\en_n\rightarrow
  \rz\en_n$. Let $\la_g:Gl_n\rightarrow Gl_n:h\mapsto gh$ denote left
  multiplication by $g$. Similarly let $\rho_h:Gl_n\rightarrow Gl_n:g\mapsto gh$ denote
  right
  multiplication by $h$.
   %Let $L_g=d\la_g$ and $R_h=d\rho_h$.
    Then we
  have the following well known formula (See Greub, Halperin and Vanstone, \cite{GHV1973} p 25). Let $A\in T_gGl_n, B\in
  T_hGl_n$, then
  $$
d\mu_{(g,h)}(A,B)=d\la_g(B)+d\rho_h(A).
$$
But then, if we restrict this formula to the smooth submanifold of
$TGl_n\times TGl_n$ given by $Z_{Gl_n}\times T_{Id}Gl_n $ where
$Z_{Gl_n}$ is the zero section of $TGl_n$ ie., when $h=Id$ and
$A=0$, we get $d\la_g(B)=d\mu_{(g,Id)}(0,B)$. In particular, as
$d\mu$ is a smooth map, this implies the following lemma.

%\begin {L5ad}
\begin{lem}\label{lem: differential of left transl is smth }
The differential of left translation by $g\!\in\! Gl_n$;
\begin{align}
 d\la_g :Gl_n\times \en_n\rightarrow \en_n
\end{align}
 is a\;
$C^{\infty}$ map, and so if $g\in \rz Gl_{n,nes}$,
$d\la_g|\rz\en_{n,nes}$ is a nearstandard linear map.
\end{lem}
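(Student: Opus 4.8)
The plan is to reduce the statement to the transfer of a purely standard, finite-dimensional fact about the general linear group, so that nearstandardness is inherited automatically. First I would recall from the preceding discussion (see section \ref{sec: diff geom of *Gl_n}) the well-known identity
\[
d\mu_{(g,h)}(A,B) = d\la_g(B) + d\rho_h(A)
\]
for the differential of the multiplication map $\mu\colon Gl_n\times Gl_n\to Gl_n$, valid for $A\in T_gGl_n$ and $B\in T_hGl_n$ (Greub–Halperin–Vanstone \cite{GHV1973}, p.\ 25). Restricting to $h=\mathrm{Id}$ and $A=0$ gives $d\la_g(B) = d\mu_{(g,\mathrm{Id})}(0,B)$, exhibiting $d\la_g$ as the composite of $d\mu$ with the smooth inclusion $(g,B)\mapsto\bigl((g,0_g),(\mathrm{Id},B)\bigr)$ of $Gl_n\times \en_n$ into $TGl_n\times TGl_n$, where we use the canonical trivializations $TGl_n\cong Gl_n\times\en_n$ and $T_{\mathrm{Id}}Gl_n\cong\en_n$.

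Next, since $\mu$ is smooth (indeed analytic, being the restriction of a polynomial map on $\bbr^{n^2}\times\bbr^{n^2}$) and the inclusion above is smooth, the composite
\[
d\la\colon Gl_n\times\en_n\longrightarrow\en_n,\qquad (g,B)\longmapsto d\la_g(B)
\]
is a $C^\infty$ map between (open subsets of) Euclidean spaces. This is the first assertion of the lemma and it is entirely standard.

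For the second assertion, I would invoke transfer together with the basic fact, already established in section \ref{subsec: transfer of calc on E^n} and used throughout, that the $\rz$-transfer of a standard $C^\infty$ map carries nearstandard points to nearstandard points and is $SC^\infty$ on its nearstandard part (lemma \ref{lem: S-smooth facts}(4)). Thus $\rz(d\la)\colon \rz Gl_n\times\rz\en_n\to\rz\en_n$ is $SC^\infty$ when restricted to $\rz Gl_{n,\nes}\times\rz\en_{n,\nes}$, using the standard trivializations to identify nearstandard tangent vectors as in section \ref{subsec: nearst tngnt vectors E^n}. In particular, for fixed $g\in\rz Gl_{n,\nes}$, the internal linear map $d\la_g = \rz(d\la)(g,\,\cdot\,)$ sends $\rz\en_{n,\nes}$ into $\rz\en_{n,\nes}$, which is exactly what it means for $d\la_g\bigm|\rz\en_{n,\nes}$ to be a nearstandard linear map (cf.\ the characterization of $\rz\en_{n,\nes}$ in section \ref{subsec: def *End_n, *Gl_n}).

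I do not anticipate a serious obstacle here: the content is essentially bookkeeping, making sure that the trivializations being used are standard so that transfer applies cleanly. The one point requiring care — and the closest thing to a genuine step — is verifying that the identification $d\la_g(B)=d\mu_{(g,\mathrm{Id})}(0,B)$ really does express $d\la_g$ as a restriction of a globally defined smooth map on $Gl_n\times\en_n$ (rather than something only defined fiberwise), since it is precisely this global smoothness, transferred, that forces the nearstandardness to be uniform in the fiber variable $B$. Once that is in place, the conclusion is immediate.
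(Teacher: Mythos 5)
Your proposal is correct and follows essentially the same route as the paper: the paper's proof simply refers back to the preceding discussion, which uses exactly the Greub--Halperin--Vanstone formula $d\mu_{(g,h)}(A,B)=d\la_g(B)+d\rho_h(A)$, the restriction to $h=\mathrm{Id}$, $A=0$ to write $d\la_g(B)=d\mu_{(g,\mathrm{Id})}(0,B)$, and the smoothness of $d\mu$ in the standard trivializations, transferred to conclude nearstandardness on $\rz\en_{n,\nes}$. Your added care about the global (rather than merely fiberwise) smoothness of $(g,B)\mapsto d\la_g(B)$ is the right point to emphasize and is implicit in the paper's argument.
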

%\end {L5ad}

\begin {proof}
See the argument before the lemma.
\end {proof}
\begin{remark}
    Essentially, this (almost trivial) lemma says that if a group element is nearstandard, then the *differential of (left) multiplication by this element is nearstandard. This, to some extent, reveals the regularity argument of this paper.
\end{remark}
 A standard analogue of the next result is a statement of the following sort. Suppose that $A_1,A_2,\ldots$ is a sequence of elements of $End_n$ with $g_j(t)\dot= EXP(tA_j)$ for all $j$ and suppose  the following holds. There is a symmetric neighborhood, $U$, of $0$ in $\bbr$ such that  the sequence of maps $t\mapsto g_j(t):U\ra Gl_n$ is equicontinuous in $t$. Then the sequence $A_1,A_2,\ldots$ has a subsequence converging to an element in $End_n$.
 We could not find a result like this in
the literature, and so proved the corresponding needed nonstandard result.
\subsubsection{EXP is regular: ad is S-continuous}\label{subsec: EXP is reg, ad is SCo}
%\begin {thmexp}
\begin{thm}[EXP is regular]\label{thm: EXP is regular}
Let $A\in \rz\en_n$, and let $g_t$ denote $\rz\expp(tA)$. Let $\rz U$ be a standard,
symmetric neigborhood of\; $0$ in $\rz\bbr$. Suppose that the
*one parameter subgroup
$$
 t\mapsto g_t :\rz U\lra\rz Gl_n
 $$
 is a (${}^\sigma$\!\! local) subgroup of $\rz Gl_{n, nes}$. Then $A\in\rz\en_{n,nes}$.
\end{thm}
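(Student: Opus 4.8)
The plan is to exploit that $g_t=\rz\expp(tA)$ solves the internal differential equation $\frac{d}{dt}g_t=Ag_t=g_tA$ (the internal exponential series for $g_t$ commutes with $A$), to integrate it, and then to ``divide by the integral'' at a carefully chosen \emph{standard} value of $t$. By transfer $t\mapsto g_t$ is $\rz C^\infty$ with $g_0=\mathrm{Id}$ and $\frac{d}{dt}g_t=Ag_t$, so if $F(t)\doteq\int_0^t g_s\,ds$ denotes the internal Riemann integral (defined since $s\mapsto g_s$ is $\rz$-continuous), the transferred fundamental theorem of calculus yields
\[
g_t-\mathrm{Id}=\int_0^t Ag_s\,ds=A\,F(t),\qquad\text{hence also }\ A\,F(t)=F(t)\,A=g_t-\mathrm{Id}.
\]
So the theorem reduces to producing one standard $\delta>0$ at which $F(\delta)$ is invertible with $\norm{F(\delta)^{-1}}$ finite: then $A=(g_\delta-\mathrm{Id})F(\delta)^{-1}$ is a product of nearstandard operators, hence nearstandard. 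Throughout, $\norm{\cdot}$ denotes the operator norm on $\rz\en_n$; recall $A\in\rz\en_{n,\nes}$ iff $\norm{A}\in\rz\bbr_{\nes}$, by equivalence of norms on the finite dimensional space $\en_n$.

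To get such a $\delta$ I would use the hypothesis --- part of $t\mapsto g_t$ being a $^\s$local $SC^0$ subgroup --- that $t\mapsto g_t$ is $S$-continuous on $\rz U$; equivalently ${}^\circ g_{(\cdot)}\colon U\to Gl_n$ exists and is continuous, with ${}^\circ g_0=\mathrm{Id}$. (Mere near-standardness of each $g_t$ does not suffice: for infinite $\om$ and $A=\om\cdot\bigl(\begin{smallmatrix}0&-1\\1&0\end{smallmatrix}\bigr)$ every $g_t$ is a rotation, so $g_t\in\rz Gl_{n,\nes}$, yet $A\notin\rz\en_{n,\nes}$; this $g_{(\cdot)}$ fails $S$-continuity, which is precisely the regularity the $^\s$local $SC^0$ hypothesis contributes.) By continuity of ${}^\circ g$ at $0$, pick a standard $\delta>0$ with $[0,\delta]\subset U$ and $\norm{{}^\circ g_s-\mathrm{Id}}<\tfrac12$ for $s\in[0,\delta]$; since every internal $s$ with $0\le s\le\delta$ is nearstandard and $g_s$ is then infinitesimally close to the standard matrix ${}^\circ g$ at ${}^\circ s\in[0,\delta]$, we get $\norm{g_s-\mathrm{Id}}\le\tfrac34$ for all such $s$. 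Writing $F(t)=t\,\mathrm{Id}+\int_0^t(g_s-\mathrm{Id})\,ds=t\bigl(\mathrm{Id}+E(t)\bigr)$ with $\norm{E(t)}\le\tfrac34<1$ for $0<t\le\delta$, the transferred Neumann series makes $\mathrm{Id}+E(t)$ invertible with $\norm{(\mathrm{Id}+E(t))^{-1}}\le 4$, so $F(t)$ is invertible with $\norm{F(t)^{-1}}\le 4/t$. Evaluating the displayed identity at $t=\delta$ gives $\norm{A}\le\norm{g_\delta-\mathrm{Id}}\cdot\norm{F(\delta)^{-1}}\le\tfrac34\cdot\tfrac4\delta=\tfrac3\delta$, a finite standard real; hence $A\in\rz\en_{n,\nes}$.

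The derivation of $g_t-\mathrm{Id}=AF(t)$, the Neumann estimate, and the final bound are routine transferred calculus; the step needing genuine care is extracting \emph{uniform} control of $g_s-\mathrm{Id}$ over a \emph{standard} interval about $0$ --- the only place the $^\s$local $SC^0$ (i.e.\ $S$-continuity) hypothesis is essential, and the rotation example shows it cannot be dropped. The other crucial point is that $A=(g_\delta-\mathrm{Id})F(\delta)^{-1}$ must be evaluated at a \emph{standard} $\delta$: at an infinitesimal $t$ the same manipulation only yields the vacuous estimate $\norm{A}\lesssim\norm{A}\cdot(\text{finite})\cdot t$, so no bound results; fixing a standard scale is what breaks the circularity. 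In spirit this runs Lemma~\ref{lem: differential of left transl is smth } backwards: there, near-standardness of a group element forces near-standardness of (the differential of) multiplication by it, and here one recovers near-standardness of $A$ by integrating that relation along a standard-length arc of the one-parameter subgroup $g_{(\cdot)}$.
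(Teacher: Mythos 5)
Your argument is internally correct and takes a genuinely different route from the paper's. The paper proves the theorem by contradiction: assuming $A\in\rz\en_{n,\infty}$, it uses Lemma \ref{lem: differential of left transl is smth } to assert that, since each $g_t\in\rz Gl_{n,\nes}$, the derivative $\frac{d}{dt}g_t=A\cdot g_t$ of the curve is a nearstandard linear map, and then reaches a contradiction by evaluating $A\cdot g_t$ at $w=g_{-t}(v)$ (nearstandard by Lemma \ref{lem:local *gp hom is regular}), where $v$ is nearstandard and noninfinitesimal with $A(v)$ infinite. You instead integrate the same ODE to get $g_\delta-\mathrm{Id}=A\,F(\delta)$ with $F(\delta)=\int_0^\delta g_s\,ds$, invert $F(\delta)$ by a Neumann series at a standard $\delta$, and read off the explicit bound $\norm{A}\le 3/\delta$; your version is constructive and quantitatively sharper.

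The substantive issue is the hypothesis. The theorem as printed assumes only that the image of $t\mapsto g_t$ lies in $\rz Gl_{n,\nes}$; it does not say $SC^0$, and the paper's own proof nowhere invokes $S$-continuity in $t$. Your proof does need it --- it is the sole source of the uniform bound $\norm{g_s-\mathrm{Id}}\le\tfrac34$ over a standard interval --- and your rotation example $A=\om J$ shows the weaker hypothesis cannot suffice: there every $g_t$ is nearstandard yet $A$ is infinite. (That example also pinpoints where the paper's argument strains: the tangent vectors $A g_s$ of the curve $\tilde c$ are exactly what may fail to be nearstandard, so Lemma \ref{lem: differential of left transl is smth }, which controls $d\la_{g_t}$ only on \emph{nearstandard} vectors, does not by itself yield ``$dg_t$ is nearstandard.'') So as a proof of the theorem as stated, your argument assumes more than it is given; to use it in place of the paper's, one must verify that $S$-continuity of $t\mapsto\expp(\ad_{tv})=Ad_{\exp(tv)}$ is actually available in Corollary \ref{cor: ad is nearst}, where the theorem is applied and where only the image condition of Lemma \ref{lem: EXP(ad) is regular} is established. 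It is plausibly derivable there from the Ad lemma, the $\mu$-exp lemma and the S-lemma, but that is an additional step, not a free one.
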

%\end {thmexp}

\begin {proof}Suppose to the contrary that, $A\in\rz\en_{n,\infty}$.
% Now
%the product map $\mu:\rz Gl_{n,nes}\times\rz Gl_{n,nes}\rightarrow
%\rz Gl_{n,nes}$ is $S$ analytic, and so certainly, $SC^{\infty}$, as
%it is a nearstandard polynomial. But then, $\rz d\mu$, as noted
%above, is a nearstandard map; that is,
%$$
%\forall g,h\in \rz Gl_{n,nes},\; \rz
%d\mu_{(g,h)}:\rz\en_{n,nes}\times\rz\en_{n,nes}\rightarrow
%\rz\en_{n,nes}.
%$$
In the proof below, we will use the notation from the previous lemma. So
$\la_g:Gl_n\rightarrow Gl_n$ denotes left multiplication by an
element $g\in Gl_n$,
 and by Lemma \ref{lem: differential of left transl is smth }, if $g\in\rz Gl_{n,nes}$, then $d\la_g$ is a
 nearstandard linear map, ie.,
$d\la_g:\rz \en_{n,nes}\rightarrow \rz \en_{n,nes}$.
%Then, as $\rz d\mu$ is a nearstandard linear
%map, it follows that\;$L_g$ is also a nearstandard linear map for
%$g\in \rz Gl_{n,nes}$.(See Lemma 4 above.)
Let $\tilde{c}$ denote the ${}^\s$\!\! local $\rz$subgroup given by
the image of $t\mapsto g_t$. Then, as $\tilde{c}$ is a local
subgroup of $\rz Gl_{n,nes}$, we have that the restriction of
$d\la_{g_t}$ to the $\rz$tangent space of $\tilde{c}$ is a
nearstandard linear map. (This, again follows from Lemma \ref{lem: differential of left transl is smth }.) As the ${}^\s$local subgroup $\tilde{c}$
is abelian, $d\la_{g_t}|_{\rz T\tilde{c}}= \rz dg_t$, the
differential of multiplication by $g_t$. In particular, $dg_t$ is a
nearstandard linear map. But, for $g_t=\rz\expp(tA)$, $\rz
dg_t=A\cdot g_t\cdot dt$, (see Arnold's ODE book; he writes in the
usual way, $\frac{d}{dt}(g_t)=A\cdot g_t$). That is, $A\cdot g_t$ is
a nearstandard linear map. On the other hand, using the hypothesis, let
$v\in\rz\bbr^n_{nes}$ with $v\nsim0$ such that $z\doteq
A(v)\in\rz\bbr^n_{\infty}$ holds. Let $w\doteq g_{-t}(v)$, so that by
Lemma \ref{lem:local *gp hom is regular}, $w$ is nearstandard. Then $A\cdot g_t(w)=z$, ie., an
infinite vector, a contradiction.
\end {proof}

% Therefore, as $g_t$, $t\in\rzU$ is in $Gl_{n,nes}$, and as, for
%$s,t,s+t\!\in \!\!\rzU$, $g_{s+t}=\mu(g_s,g_t)$, then the group
%operation for $t\mapsto g_t$ is also $SC^{\infty}$ , as it is just
%the restriction of the group operation on $Gl_{n,nes}$ to the
%subgroup. Let $\dot{g}_t$, respectively $\dot{g}_t(s)$ denote the
%$\rz$tangent field to the $\rz$\! one parameter subgroup, $t\mapsto
%g_t$, respectively, its value at $s\!\in\!\rzU$. $\dot{g}_t(s)$ is
%given as a standard linear function of the first $\rz$derivatives of
%the $SC^{\infty}$ group action at $g_s$, and therefore is in
%$\rz\en_{n,nes}$  for each $s\in\rzU$.( See the appendix below.) That
%is, the values of the $\rz$tangent field to this $\rz$\! one parameter
%subgroup are nearstandard endomorphisms. The explicit expression for
%the tangent field to $t\mapsto g_t$ is $\dot{g}_t=A \cdot g_t$.
%(See, for example, Arnold's ODE book.)

Our last result applies Theorem 1 to the case $A=\ad_{tv}$. We
retain the notation used above.

%\begin {thmad}
\begin{cor}[ad is nearstandard]\label{cor: ad is nearst}
 $\ad$ is nearstandard. That is, suppose  $v\in\rz\Fg_{nes}$ is  fixed small enough,  and
$t\in\rz U$. Then $w\in\Fg_{nes}$ implies that $\ad_{tv}(w)$ is
nearstandard.
\end{cor}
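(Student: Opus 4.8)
The plan is to apply Theorem~\ref{thm: EXP is regular} exactly once, with the single choice $A=\ad_v$, where $v$ is the fixed standardly-small element of $\Fg_{nes}$ in the statement. Under the standard identification $\Fg\cong\rz\bbr^n$ of Section~\ref{subsec: intro: *Lie alg structure}, the internal adjoint endomorphism $\ad_v$ is a bona fide element of $\rz\en(\Fg)\cong\rz\en_n$, so Theorem~\ref{thm: EXP is regular} applies to it. Two algebraic facts will be used: first, $\ad$ is $\rz\bbr$-linear in its subscript, so $\ad_{tv}=t\,\ad_v$ for all $t\in\rz U$; second, by the second intertwining formula of Section~\ref{sec: transl pf of ad is SCo to Gin}, $\rz\expp(\ad_{tv})=Ad_{\exp(tv)}$ as internal linear automorphisms of $\Fg$, wherever these are defined.

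First I would verify the hypothesis of Theorem~\ref{thm: EXP is regular}. Put $g_t:=\rz\expp(tA)=\rz\expp(\ad_{tv})$. Since $\rz\expp$ is the internal Euclidean exponential, $t\mapsto g_t$ is automatically a ${}^\s$local one parameter subgroup of $\rz Gl(\Fg)$; equivalently it is $Ad$ composed with the one parameter subgroup $t\mapsto\exp(tv)$ of $\SG$. Fix $t\in\rz U$. By Lemma~\ref{lem: EXP(ad) is regular}, $w\in\Fg_{nes}$ implies $\rz\expp(\ad_{tv})(w)\in\Fg_{nes}$; that is, the internal linear map $g_t$ carries $\rz\bbr^n_{nes}$ into $\rz\bbr^n_{nes}$. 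By the criterion recorded in Section~\ref{subsec: def *End_n, *Gl_n} (an internal $A$ with $A(\rz\bbr^n_{nes})\subseteq\rz\bbr^n_{nes}$ lies in $\rz\en_{n,nes}$) this forces $g_t\in\rz\en_{n,nes}$, and since $g_t$ is invertible, $g_t\in\rz Gl_{n,nes}$. As $t\in\rz U$ was arbitrary, $t\mapsto g_t:\rz U\to\rz Gl_{n,nes}$ is a ${}^\s$local subgroup of $\rz Gl_{n,nes}$, so Theorem~\ref{thm: EXP is regular} yields $\ad_v\in\rz\en_{n,nes}$, i.e.\ $\ad_v$ is a nearstandard linear map.

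Finally I would conclude. Shrinking $U$ if necessary, we may assume $U$ bounded, so every $t\in\rz U$ is finite; hence $\ad_{tv}=t\,\ad_v$ is again a nearstandard linear map, and for $w\in\Fg_{nes}$ we get $\ad_{tv}(w)=t\cdot\ad_v(w)\in\rz\bbr^n_{nes}$, i.e.\ $\ad_{tv}(w)$ is nearstandard, which is exactly the assertion.

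The argument is short because the real work was carried out in Lemma~\ref{lem: EXP(ad) is regular} and Theorem~\ref{thm: EXP is regular}; the only genuinely delicate point -- and thus the main obstacle here -- is ensuring that the abstract internal adjoint representation is legitimately regarded as taking values in $\rz\en_n$, so that ``nearstandard'' carries the intended meaning and Theorem~\ref{thm: EXP is regular} applies verbatim. This is precisely the standard metric identification $\Fg\cong\rz\bbr^n$ (equivalently $\rz\en(\Fg)\cong\rz\en_n$) emphasized in Section~\ref{subsec: intro ns metrc tngnt sp E^n}, which is harmless exactly because it is the transfer of a standard isomorphism and therefore preserves nearstandardness.
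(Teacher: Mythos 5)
Your proof is correct and follows essentially the same route as the paper's: both verify via Lemma~\ref{lem: EXP(ad) is regular} that the one-parameter subgroup $t\mapsto\rz\expp(\ad_{tv})$ lands in $\rz Gl_{n,nes}$, then apply Theorem~\ref{thm: EXP is regular} with $A=\ad_v$ to conclude $\ad_v$ is nearstandard. You simply spell out more carefully the verification of the hypothesis of Theorem~\ref{thm: EXP is regular} and the final step $\ad_{tv}=t\,\ad_v$, both of which the paper compresses into a single sentence.
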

%\end {thmad}

\begin {proof} Lemma \ref{lem: EXP(ad) is regular} implies that $\expp(\ad_{tv})(w)$ is
nearstandard for  $t\in \!\!\rz U$, the standard symmetric
neighborhood of $0$, $v$ and $w$ nearstandard. But applying the EXP is regular result, ie., Theorem \ref{thm: EXP is regular},
 to the case where $tA=t\ad_v=\ad_{tv}$, we get that $\ad_v$ is
nearstandard.
\end {proof}

\section{Standard part of the exponential is a local homeomorphism}\label{chap: st(exp) is loc homeo}

\subsection{Introduction and strategy}\label{sec: intro: st(exp) is S-homeo}
In this section we prove the following result.

\begin{thm}\label{thm: exp is SC^o} $^\circ\exp$ is a local homeomorphism.
\end{thm}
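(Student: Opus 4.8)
The plan is to combine the $\mu$-exp lemma (Lemma \ref{lem: mu-exp lem}) with the regularity result that $\ad$ is nearstandard (Corollary \ref{cor: ad is nearst}) to show that $\rz\exp$ is an $S$-homeomorphism, which by definition gives that $^\circ\exp$ is a local homeomorphism. First I would recall that $\exp:(\Fg,0)\to(\SG,e)$ is an internal $\rz C^\infty$ map with $\rz d(\exp)_0 = 1_{\rz\bbr^n}$, hence a $\rz$local diffeomorphism by transfer; in particular it is an internal bijection between suitable standard neighborhoods of $0$ in $\Fg$ and of $e$ in $\SG$. The $\mu$-exp lemma already tells us that $\rz\exp$ preserves monads \emph{at the origin}: $\exp(v)\sim e \dllra v\sim 0$ for $v$ small. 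To upgrade this to an $S$-homeomorphism I need monad-preservation at every nearstandard point, i.e.\ $v_1\sim v_2$ in $\Fg_{\nes}$ iff $\exp(v_1)\sim\exp(v_2)$ in $\SG$, and I need the standard part to be a genuine bijection of the relevant standard neighborhoods.

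The key step is to transport the $\mu$-exp statement from $0$ to an arbitrary nearstandard point $v_0\in\Fg_{\nes}$. For this I would use the canonical maps $a_g$, $Ad_g$, $ad_v$ and the intertwining formula $a_g(\exp v)=\exp(Ad_g(v))$ together with the fact, proved in the previous chapter, that $Ad_g$ is a nearstandard linear map when $g$ is nearstandard (this is what Corollary \ref{cor: ad is nearst}, via $Ad_{\exp(v)}=\expp(ad_v)$ and the regularity of $\expp$ on $\rz\en_{n,\nes}$, buys us). Concretely, given $v_0\in\Fg_{\nes}$ with $g_0\dot=\exp(v_0)\in\SG_{\nes}$, and a small $w$, I can write $\exp(v_0+w)$ in terms of $\exp(v_0)$ and a correction governed by the Hausdorff/CHD series in the $\ad$-operators; since $\ad$ is $SC^0$, the relevant series operators are nearstandard, so $w\sim 0$ forces $\exp(v_0+w)\sim\exp(v_0)$, and conversely (using that $L_{g_0^{-1}}$ is an $S$-homeomorphism, part of the ${}^\s\loc SC^0$ hypothesis on $\SG$) $\exp(v_0+w)\sim\exp(v_0)$ forces $w\sim 0$. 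This gives $S$-continuity of $\rz\exp$ and of a local internal inverse, hence $^\circ\exp$ and $^\circ(\exp^{-1})$ are well-defined mutually inverse continuous maps on the standard neighborhoods obtained as standard parts — so $^\circ\exp$ is a local homeomorphism.

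The main obstacle I anticipate is the passage from the infinitesimal-at-$0$ statement to the infinitesimal-at-$v_0$ statement without circularity: one wants to express the difference $\exp(v_0+w)$ versus $\exp(v_0)$ purely through operators that are already known to be nearstandard. The natural tool is the derivative-of-$\exp$ formula $\rz d(\exp)_{v_0} = \rz d(L_{\exp v_0})_e \circ \frac{1-e^{-\ad_{v_0}}}{\ad_{v_0}}$, whose right-hand factor is $\expp$-type in $\ad_{v_0}$ and hence nearstandard by Corollary \ref{cor: ad is nearst}, while the left factor is nearstandard by Lemma \ref{lem: differential of left transl is smth }. Establishing that this internal differential is a nearstandard \emph{and nearstandardly invertible} linear map at each nearstandard $v_0$, and then integrating this infinitesimal control to the genuine monad statement (e.g.\ by an $S$-mean-value / overflow argument along the segment from $v_0$ to $v_0+w$), is where the real work lies; I would expect to invoke the $S$-smoothness facts of Lemma \ref{lem: S-smooth facts} and an overflow argument to make the integration rigorous. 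Once monad-preservation in both directions is in hand at every nearstandard point, together with injectivity of $^\circ\exp$ (which follows from $v_1\nsim v_2 \Rightarrow \exp v_1\nsim\exp v_2$) and surjectivity onto a standard neighborhood of $e$ (from $\rz\exp$ being a $\rz$local diffeomorphism together with $^\circ(\rz U)=\ov U$), the theorem follows.
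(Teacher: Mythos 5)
Your overall architecture matches the paper's: reduce the theorem to an abstract $S$-homeomorphism criterion, use the nearstandardness of $\ad$ (hence of the bracket) to control $\exp$ away from the origin, and use the $\mu$-exp lemma as the base case at $e$. For the monad-preservation step, however, the paper does not differentiate $\exp$ or integrate along segments. It uses the closed-form identity $\exp(tX)=\exp(H(tX,-tY))\exp(tY)$ together with explicit two-sided estimates on the Hausdorff series (the H-lemma: $\tfrac{t}{2}|x+y|\le|H(tx,ty)|\le 2t|x+y|$, proved directly from bilinearity and the finiteness of $\|[\ ,\ ]\|$), which reduce $\exp(tX)\sim\exp(tY)$ to $H(tX,-tY)\sim 0$ and then to $X\sim Y$ via the $\mu$-exp lemma and the $SC^0$ group property. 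Your proposed route through $\rz d(\exp)_{v_0}$ and an $S$-mean-value/overflow argument is morally equivalent (the H-lemma's lower bound is the quantitative stand-in for nearstandard invertibility of the differential), but you leave precisely that step — uniform two-sided control along the segment — as an acknowledged open point, and it is the crux.

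The genuine gap is your treatment of surjectivity. You claim the image of $\exp$ contains a standard neighborhood of $e$ "from $\rz\exp$ being a $\rz$local diffeomorphism together with $^\circ(\rz U)=\ov U$." Transfer only gives that $\exp$ is a $*$diffeomorphism from some \emph{internal} neighborhood of $0$ onto some \emph{internal} neighborhood of $e$, and nothing prevents that internal neighborhood from being infinitesimal; in that case $^\circ\exp$ would fail to be open and would not be a homeomorphism onto any standard neighborhood. This is exactly why the paper proves the Compactness Lemma (a uniform infinitesimal radius $\d$ such that $L_g(D_\e(e))\supset D_\d(g)$ for all $g$ in a standard compact set), the Onto Lemma ($\mu(e)\subset\img(\exp)$, via a minimal-counterexample argument combined with the identity $\exp(v_1)\exp(w)=\exp(H(v_1,w))$), and the Onto Corollary ($g\in\img\exp\Rightarrow\mu(g)\subset\img\exp$); the last of these is hypothesis (ii) of the $S$-homeomorphism lemma \ref{lem: S-homeo}, without which the continuity of $({}^\circ\exp)^{-1}$ cannot be established. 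Your proof needs this entire block of argument, and it does not follow from transfer of local invertibility.
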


More precisely, we have the standard identification
 $\Fg\cong\rz\bbr^n$ as a $\rz\bbr$ vector space and $(\SG,e)\subset(\rz\bbr^n,0)$. Using these, we can view $\exp:\rz\bbr^n,0\to\rz\bbr^n,0$. With this set up, we will show that there exists $U$, $V$, neighborhoods of $0$ in $\bbr^n$ such that the following holds: $^\circ\!\exp$ exist, is a continuous map: $U\to\bbr^n$ and in fact is a homeomorphism onto $V$.

At this point, let's motivate the importance of the above result by indicating where we are going with it. Using the exponential map and its inverse to transport the Lie group structure on $\SG$ to the vector space of $\frak g$, we get a canonical Lie group structure (product) on $L$ that is a rather complicated power series which is essentially an infinite sum multilinear forms in the bilinear form given by the Lie bracket on $L$. One can observe that the convergence of this power series depends only on the norm of this bilinear form and therefore without loss of information one can view the Lie bracket simply as a bilinear map on $L$. Hence, the point of this transferral to the structure on $L$ (via the exponential map) is that this product structure is susceptible to estimates in terms of the norm of this bilinear form $[\ ,\ ]$ and we know that it is nearstandard as $[v,\,w\ ]=ad_v(w)$.

In order to prove that $\exp$ is an S-homeomorphism in this section, we need to do some estimates with this series, the
\textbf{Hausdorff series of }$\bsm{(\SG,\Fg,[\ ,\ ])}$, using the result of the last
section that $[\ ,\ ]:\rz\bbr^n\x\rz\bbr^n\to\rz\bbr^n$ is a nearstandard
bilinear form and also using a critical relation between the $\exp$ map and the H-series, see Lemma \ref{lem: S-lem}. Where defined, the Hausdorff series
{\bf ($\bsm{H}$-series)} is a $*$analytic map $H:\rz\bbr^n\x\rz\bbr^n\to\rz\bbr^n$
defined in terms of  $[\ ,\ ]$. Kirillov, \cite{Kirillov1976} p. 105, and Duistermaat and Kolk, \cite{DuistermaatKolk1999} p.29-31, give
the power series expansions for the H-series.  Bourbaki, \cite{Bourbaki1989} pp.165--168 give convergence estimates that are complicated. We will
give simplified bounding approximations for which the reader can
fill in the details.

\subsection{H-series estimates, S-lemma}\label{sec: H-series estimates, S-lemma}
 First note that we can assume that $H\neq0$.
For suppose that $H=0$, then $[\ ,\ ]=0$. If $[\ ,\  ]=0$, ie., if
our local group is abelian, then the
 following
argument establishes a ${}^\s$local analytic structure. If $[\ ,\
]=0$, then $exp: (\rz\!\bbr^n_{\nes},+)\to \SG$ is a group isomorphism.
That is, $exp(x+y)= exp(x)+ exp(y)$; so $exp(\mu (x))=exp(x)+exp(\mu
(e))=\mu (exp(x))$, ie., $exp$ is an $S-homeomorphism$. It's inverse
therefore gives a change of coordinates to the analytic structure
$(\rz\!\bbr^n_{\nes},+)$.

We will begin by proving the following estimates for the $H$-series.

%\begin{hl}
\begin{lem}[H-lemma]\label{lem: H-lem }
 Suppose that $|x|$, $|y|\le 1/2$ and $0<t<1/B_0$, where $B_0$ is the norm of $[\ ,\ ]$. Then
$$
\f t2 |x + y|\le |H(tx,ty)|\le 2t|x+y|.
$$
\end{lem}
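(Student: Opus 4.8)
The plan is to expand $H(tx,ty)$ about its linear term $t(x+y)$ and show that the whole remainder has norm at most $\tfrac{t}{2}|x+y|$; both inequalities then fall out of the triangle inequality, since $t|x+y|-\tfrac{t}{2}|x+y|\le|H(tx,ty)|\le t|x+y|+\tfrac{t}{2}|x+y|$. Because the asserted estimate is the $*$-transfer of a purely formal/analytic statement about a Lie algebra whose bracket has norm $B_0$, by the transfer principle it suffices to prove it for a standard finite-dimensional real Lie algebra $(\Fg,[\ ,\ ])$ with $\|[\ ,\ ]\|=B_0$, which I do below. If $x+y=0$ there is nothing to prove (and this is consistent, since then $[x,y]=[x,x+y]=0$), so assume $x+y\neq 0$.

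Write the Hausdorff series as $H(X,Y)=X+Y+P(X,Y)$ with $P(X,Y)=\sum_{n\ge 2}H_n(X,Y)$, where $H_n$ is the degree-$n$ homogeneous Lie-polynomial component given by Dynkin's formula, and let $D_1$ denote the derivative in the first slot. The mechanism that creates a factor $|x+y|$ is that $\exp(-Y)\exp(Y)=e$ forces the formal identity $H(-Y,Y)=0$. Setting $W_s=-Y+s(X+Y)$ (so $W_0=-Y$, $W_1=X$) and applying the fundamental theorem of calculus to $F(s)=H(W_s,Y)-W_s-Y$, whose endpoint values are $F(0)=H(-Y,Y)=0$ and $F(1)=H(X,Y)-X-Y$, we get
\[
H(X,Y)-X-Y=\int_0^1 F'(s)\,ds=\int_0^1\big(D_1P\big)(W_s,Y)\,[\,X+Y\,]\,ds .
\]
Since $P$ is a sum of Lie polynomials homogeneous of degree $n\ge 2$, the simplified Dynkin/Bourbaki coefficient bounds give $\|(D_1P)(W,Y)\|\le\sum_{n\ge 2}c_nB_0^{\,n-1}(|W|+|Y|)^{\,n-1}$, and along the path $|W_s|+|Y|\le|X|+|Y|$.

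Now substitute $X=tx$, $Y=ty$ with $|x|,|y|\le\tfrac12$, so $|X|+|Y|\le t$ and $|X+Y|=t|x+y|$; this yields
\[
\big|H(tx,ty)-t(x+y)\big|\le t\,|x+y|\sum_{n\ge 2}c_n\,(B_0t)^{\,n-1}.
\]
The last step is constant-bookkeeping: with the simplified bounds one checks $\sum_{n\ge 2}c_n(B_0t)^{\,n-1}\le\tfrac12$ for $0<t<1/B_0$ — the $n=2$ term is already small, $\tfrac{t^2}{2}|[x,y]|=\tfrac{t^2}{2}|[x,x+y]|\le\tfrac{t^2B_0}{4}|x+y|\le\tfrac{t}{4}|x+y|$ (using $|x|\le\tfrac12$ and $B_0t<1$), and the tail $n\ge 3$ is a fast-decaying geometric-type series. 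Granting this, $|H(tx,ty)-t(x+y)|\le\tfrac{t}{2}|x+y|$, hence
\[
\tfrac{t}{2}|x+y|\le|H(tx,ty)|\le 2t|x+y|,
\]
and transferring the inequality back to the internal $*$Hausdorff series finishes the proof.

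The step I expect to be the real obstacle is precisely this last one: making the crude BCH convergence estimates explicit enough to see that $\sum_{n\ge 2}c_n(B_0t)^{\,n-1}$ remains $\le\tfrac12$ on the whole interval $0<t<1/B_0$, rather than on some smaller explicit multiple of $1/B_0$. If the honest constants only deliver the bound on a shorter interval, it costs nothing in the intended application to shrink the standard neighborhoods accordingly; the shape of the argument is unaffected.
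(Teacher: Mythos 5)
Your argument is correct in outline and lands on the same target estimate as the paper -- namely that the nonlinear remainder $H(tx,ty)-t(x+y)$ has norm at most $\tfrac{t}{2}|x+y|$, after which both inequalities are the triangle inequality -- but the mechanism by which you produce the crucial factor $|x+y|$ is genuinely different. The paper works term by term in the series $H(tx,ty)=t(x+y)+f_t(x,y)$: each higher-order term is an iterated bracket whose innermost factor is $[x,y]$, and the algebraic identity $[x+y,x-y]=-2[x,y]$ gives $|[x,y]|\le \tfrac{B_0}{2}|x+y|\,|x-y|$, so every term inherits the factor $|x+y|$ and the remaining factors are summed as a rapidly decaying numerical series bounded by $2$. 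You instead extract $|x+y|$ analytically, from the identity $H(-Y,Y)=0$ and a path integral of $D_1P$ along $W_s=-Y+s(X+Y)$; this is slicker and avoids inspecting the bracket structure of each term, at the cost of needing derivative bounds $\|(D_1P)(W,Y)\|$ rather than bounds on the terms themselves. Two caveats. First, your claim that $|W_s|+|Y|\le |X|+|Y|$ along the path is not literally true (at $s=0$ the left side is $2|Y|$, which exceeds $|X|+|Y|$ when $|Y|>|X|$); what does hold under the hypotheses is $|W_s|\le s|X|+(1-s)|Y|\le t/2$ and hence $|W_s|+|Y|\le t$, which is all you use, so the argument survives. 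Second, your final bookkeeping step $\sum_{n\ge 2}c_n(B_0t)^{n-1}\le\tfrac12$ on the whole interval $0<t<1/B_0$ is exactly as loose as the paper's own claim that its parenthesized series is $<2$; the paper explicitly defers the details to the reader, and your observation that shrinking the interval would be harmless for the application is correct. So I would classify this as a correct alternative proof with the same residual hand-waving as the original, not a gap peculiar to your approach.
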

%\end{hl}

\begin{proof} We first need some estimates. Let $B_0=\|[\ ,\ ]\|\doteq\rz\!\sup_{|x|,|y|=1}|[x,y]|\in\rz\!\bbr_{+,\nes}$ as $[\ ,\ ]$ is $SC^{0}$. Then $|[x,y]|\le (B_0/2)|x-y| |x+y|$. This follows from expanding $[x+y,x-y]$ using bilinearity and antisymmetry. Using this, we get
\begin{align*}
\mid[x,[x,y]]\mid  &\le (B^2_0/2) |x| |x - y|\ |x + y|\\
\mid[y,[x,[x,y]]]\mid  &\le (B^3_0/2) |x|\ |y|\ |x - y|\ |x + y|,
\text{ etc.}
\end{align*}
Now,
\begin{align*}
H(tx,ty)  &= t(x + y) + \f12[tx,ty] + \f1{12}[tx,[tx,ty]] -  \f1{24}[tx,[ty,[tx,ty]]] + \cdots\\
&= t(x + y) + \f{t^2}2[x,y] + \f{t^3}{12}[x,[x,y]] - \f{t^4}{24}[x,[y,[x,y]]] + \cdots\\
&= t(x + y) + f_t(x,y).
\end{align*}
Using the above estimates we get the bound:
$$
|f_t(x,y)|\le\f{B_0t^2}4 |x - y|\ |x + y|\(1 + \f{B_0t}6 |x| +
\f{(B_0t)^2}{12} |y|\ |x| +\cdots\).
$$

Now suppose that $0<t\le\f1{B_0}$ (e.g., $0<B_0t\le1$) and that
$|x|$, $|y|\le1/2$. Then we get
\begin{align*}
|f_t(x,y)|  &\le\f{B_0t^2}4 |x - y|\ |x + y|\(1 + \f16\cd\f12+\f1{12}\cd\(\f12\)^2+\cdots\)\\
&\le\f{B_0t^2}2 |x - y|\ |x + y|\ \text{ as }\ (\ )<2.
\end{align*}
Using these estimates, we have that
\begin{align*}
|H(tx,ty)|\ &\le t|x + y| + |f_t(x,y)| \le t|x + y|\(1 + \f{B_0t}2 |x - y|\)\\
&\le t|x + y| (1 + 1)(\text{as $0<B_0t\le1$ and $|x-y|\le2$})\\
&= 2t|x + y|\ \text{ which is the RHS of our bound.}
\end{align*}
We have to establish the lower bound.
\begin{align*}
|H(tx,ty)|\ &\ge |t(x + y)| - |f_t(x,y)| \ge |t(x + y)| - \f{B_0t^2}2 |x - y|\ |x + y|\\
&= t|x + y| \(1 - \f{B_0t}2 |y - x|\)\\
&\ge t|x + y| \(1 - \f{|y-x|}2\)\text{ as }\  0<B_0t\le1\\
&\ge\f t2 |x + y|\text{ as }\ |x| + |y|\le 1,
\end{align*}
which finishes the proof of the H-Lemma.
\end{proof}

%\begin{hcor}
\begin{cor}[H-corollary]\label{cor: H-cor}
Suppose $0\overset{<}{\nsim}t<1/B_0$, $0\nsim|x|$, $|y|\le 1/2$. Then
$$
|H(tx, - ty)|\sim0\dllra x\sim y.
$$
\end{cor}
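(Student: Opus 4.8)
The plan is to read the corollary off the H-lemma by applying it with $-y$ in place of $y$. Under the present hypotheses $|x|\le 1/2$, so also $|-y| = |y|\le 1/2$, and $0 < t < 1/B_0$; hence Lemma~\ref{lem: H-lem } applies to the pair $(x,-y)$ and yields the two-sided estimate
$$
\tfrac{t}{2}\,|x - y|\ \le\ |H(tx,-ty)|\ \le\ 2t\,|x - y|.
$$
Given this, both implications in the stated equivalence are immediate infinitesimal bookkeeping.

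For $|H(tx,-ty)| \sim 0 \Rightarrow x\sim y$: the left-hand inequality gives $\tfrac{t}{2}|x-y| \le |H(tx,-ty)| \sim 0$, so $\tfrac{t}{2}|x-y| \sim 0$; since $0\nsim t$, dividing the infinitesimal $\tfrac{t}{2}|x-y|$ by the noninfinitesimal $t/2$ forces $|x-y| \sim 0$, i.e.\ $x\sim y$. (This half needs only that $t$ is noninfinitesimal.) For the converse $x\sim y \Rightarrow |H(tx,-ty)|\sim 0$: now $|x-y|\sim 0$, and the right-hand inequality gives $|H(tx,-ty)| \le 2t|x-y|$, which is the product of a finite quantity $2t$ with an infinitesimal, hence infinitesimal, so $|H(tx,-ty)|\sim 0$.

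The one point requiring any care is the converse direction, where I use that $t$ is finite (nearstandard): this is exactly the regime in which the corollary will be applied — $t$ ranges over a bounded standard neighborhood of $0$ in $\bbr$, so $t\in\rz\bbr_{\nes}$ automatically. Apart from that, nothing is at stake; the whole argument is a mechanical transcription of the H-lemma under the sign change $y\mapsto -y$, and I anticipate no genuine obstacle.
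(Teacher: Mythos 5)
Your proof is correct and is exactly the argument the paper intends: the paper's entire proof of this corollary is the sentence ``This follows directly from the H-lemma,'' and your substitution $y\mapsto -y$ in Lemma~\ref{lem: H-lem } followed by the two pieces of infinitesimal bookkeeping is precisely the intended fill-in. Your caveat that the converse direction additionally uses finiteness of $t$ is sound and correctly dispatched (in the regime of interest $B_0\nsim 0$, so $t<1/B_0$ already forces $t$ nearstandard), and you are also right that the hypothesis $0\nsim|x|$ is never actually invoked --- it is simply inherited from the setting of Lemma~\ref{lem: S-lem} where the corollary is applied.
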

%\end{hcor}

This follows directly from the H-lemma.

%\begin{slem}
\begin{lem}[S-lemma]\label{lem: S-lem}
Suppose $0\nsim|X|$, $|Y|<1/2$ and $0\underset{\nsim}{<}t<1/B_0$. Then
$$
\exp(tX)\sim\exp(tY)\dllra X\sim Y.
$$\qed
%\end{slem}
\end{lem}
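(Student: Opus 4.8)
The plan is to chain together four facts already available: that right translations in $\SG$ are $S$-homeomorphisms (part of the standing assumption $\SG\in{}^\s\loc SC^0\rz LG$, Definition \ref{def: sigma local *Lie group}); the Baker--Campbell--Hausdorff identity $\exp(a)\exp(b)=\exp(H(a,b))$, obtained by $*$-transfer; the $\mu$-exp lemma (Lemma \ref{lem: mu-exp lem}); and the H-corollary (Corollary \ref{cor: H-cor}). As is customary for local groups, one passes silently to a representative neighborhood of $e$ small enough that every product written below is defined; only finitely many such restrictions occur.

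First I would reduce the left-hand side to a statement about a single group element near $e$: right multiplication $R_{\exp(tY)^{-1}}\colon h\mapsto h\exp(tY)^{-1}$ is an $S$-homeomorphism, it sends $\exp(tY)$ to $e$ and $\exp(tX)$ to $\exp(tX)\exp(tY)^{-1}$, and an $S$-homeomorphism both preserves and reflects $\sim$ between monads of nearstandard points; hence $\exp(tX)\sim\exp(tY)$ if and only if $\exp(tX)\exp(tY)^{-1}\sim e$. By the (transferred) BCH identity applied with $a=tX$, $b=-tY$, the element $\exp(tX)\exp(tY)^{-1}$ equals $\exp\big(H(tX,-tY)\big)$, the hypotheses on $t$, $|X|$ and $|Y|$ keeping $tX$, $-tY$ and $H(tX,-tY)$ inside the relevant domains of definition and convergence. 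Moreover the upper bound in the H-lemma gives $|H(tX,-tY)|\le 2t|X-Y|$, which is small, so $H(tX,-tY)$ sits in a standard neighborhood of $0$ in $\Fg$ on which Lemma \ref{lem: mu-exp lem} applies; therefore $\exp\big(H(tX,-tY)\big)\sim e$ if and only if $H(tX,-tY)\sim 0$.

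Finally, the H-corollary --- whose hypotheses $0\overset{<}{\nsim}t<1/B_0$, $0\nsim|X|$ and $|Y|\le 1/2$ are exactly the ones assumed here --- yields $|H(tX,-tY)|\sim 0$ if and only if $X\sim Y$. Composing the three equivalences gives $\exp(tX)\sim\exp(tY)\dllra X\sim Y$, as claimed. I do not expect a genuine obstacle: the real content --- the H-lemma estimates, and beneath them the nearstandardness of the bracket (Corollary \ref{cor: ad is nearst}), which keeps $B_0$ nearstandard --- is already in hand. The one point needing care is the verification that $H(tX,-tY)$ actually lands in the neighborhood where the $\mu$-exp lemma is valid (and, in the degenerate case where the bracket is infinitesimal, falling back on the near-abelian argument sketched at the start of Section \ref{sec: H-series estimates, S-lemma}); both are routine given the H-lemma bound and the local-group conventions of Section \ref{chap: intro: loc gp setup}.
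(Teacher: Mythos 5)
Your proposal is correct and follows essentially the same route as the paper's proof: the same chain of equivalences through the BCH identity $\exp(tX)=\exp(H(tX,-tY))\exp(tY)$, the $\mu$-exp lemma, and the H-corollary, with your appeal to right translation being an $S$-homeomorphism playing exactly the role of the paper's fact that $\d\cdot g\sim g\dllra\d\sim e$. The extra care you take with domains and the H-lemma upper bound is consistent with the paper's parenthetical remark about shrinking the standard neighborhood.
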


\begin{proof} We will use the formulas A) $\exp(tX) = \exp(H(tX,-tY))\exp(tY)$, which comes from eg., Duistermaat and Kolk, \cite{DuistermaatKolk1999} p.27, and B) if $g$, $\d\in\SG$, then $\d\cd g\sim g\dllra\d\sim e$, which just follows from the fact that $\SG$ is $SC^0$ group. So suppose that $X\sim Y$, $0\nsim|X|$, $|Y|\le1/2$. Then by the H-corollary this is equivalent to $|H(tX,-tY)|\sim0$. But the $\mu-\exp$-Lemma (Lemma \ref{lem: mu-exp lem}) implies that this is equivalent to  $\exp(H(tX,-tY))\sim e$, where here we might have to shrink our standard neighborhood so that the $\mu-\exp$ lemma applies. But then this is equivalent by B) above to
$$
\exp(H(tX, -tY))\cd\exp(tY)\sim\exp(tY).
$$
But by A), the left hand side is $exp(tX)$, as we wanted.
\end{proof}

\subsection{Exp is onto a neighborhood of e}\label{sec: exp is onto nbd of e}
 We want to next prove that the image of $\exp:\Fg\to\SG$ contains a
 standard neighborhood of $e$ in $\SG$ ($\subset\rz\!\bbr^n$).

 First, we need some preliminaries. We will leave the $\rz\!$ off the
 transfer of standard sets, e.g., $U$ instead of $\rz\!U$ will be used.

Given $\SG$ defined on a standard neighborhood $U$ of $e$ ($=0$),
 there exists standard
 neighborhoods $V$, $W$ of $e$ in $\SG$ such that $V\cd W\subset U$. (See Montgomery and Zippin, \cite{MontZip1955} p. 32).
  Let $Z=V\cap W$, a standard neighborhood of $e$. So $Z\cd Z\subset U$. For $q\in Z$,
   let $\ov Z$ be the standard closure of $Z$ and $L_g$ be the $\rz\!$local diffeomorphism: $\rz\!\ov Z\to\rz\!U$
    given by left multiplication by $g$. (see Olver, \cite{Olver1996} p. 30).  Some notation is needed before we proceed.
    If $0<\e\in \rz\!\bbr$ and $g\in\SG$, then $D_{\e}(g)$ denotes the $\rz\!$Euclidean ball with
 center $g$ and radius $\e$. With this, we have the following result.

%\begin{clem}
\begin{lem}[Compactness lemma]\label{lem: compactness}
  If $0<\e\sim0$, there is $\d\in\rz\bbr$ with $0<\d\sim0$ such that $L_g(D_{\e}(e))\supset D_{\d}(g)$, $\forall g\in\rz\!\ov Z$.
\end{lem}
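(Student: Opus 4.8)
The plan is to exploit the fact that left multiplication $L_g$ is, on the standard level, a local diffeomorphism of the closure $\ov Z$, and that $\ov Z$ is compact, so that the derivative of $L_g$ is bounded below uniformly in $g\in\ov Z$; transferring this uniform lower bound then produces the desired $\d$. More concretely, first I would work standardly: the map $(g,h)\mapsto\la_g(h)=\psi(g,h)$ is $C^1$ on a neighborhood of $\ov Z\times\{e\}$ (after passing to an equivalent representative so that $\ov Z$ lies well inside $U$ and $\la_g$ is a diffeomorphism onto its image, using Olver \cite{Olver1996} p.~30 for the local diffeomorphism property and Montgomery--Zippin \cite{MontZip1955} p.~32 for the neighborhood structure). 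The inverse function theorem, applied with $g$ as a parameter, gives for each $g$ an $\eta_g>0$ and a $\lambda_g>0$ with $L_g(D_{\eta}(e))\supset D_{\lambda_g \eta}(g)$ for $\eta<\eta_g$; by compactness of $\ov Z$ and continuity of $g\mapsto (\eta_g,\lambda_g)$ (which follows from $C^1$ dependence of $\psi$ and the quantitative inverse function theorem) one gets \emph{uniform} constants $\eta_0>0$ and $\lambda_0>0$ such that
$$
L_g(D_{\eta}(e))\supset D_{\lambda_0\eta}(g)\qquad\text{for all }g\in\ov Z,\ 0<\eta<\eta_0.
$$

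Now I would transfer this statement. The transferred assertion says: for all internal $g\in\rz\ov Z$ and all $\eta\in\rz\bbr$ with $0<\eta<\eta_0$, we have $L_g(D_\eta(e))\supset D_{\lambda_0\eta}(g)$, where now $\eta_0,\lambda_0$ are the (standard, noninfinitesimal) constants above. Given the hypothesis $0<\e\sim0$, certainly $\e<\eta_0$, so applying the transferred inclusion with $\eta=\e$ yields $L_g(D_\e(e))\supset D_{\lambda_0\e}(g)$ for every $g\in\rz\ov Z$. Setting $\d=\lambda_0\e$ finishes the proof, since $\lambda_0$ is a positive noninfinitesimal standard real and $\e\sim0$ force $0<\d\sim0$.

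I expect the main obstacle to be the standard part: establishing the \emph{uniform} lower bound $\lambda_0$ on how much $L_g$ expands small balls about $e$, uniformly over the compact set $\ov Z$. This requires a quantitative (parametrized) version of the inverse function theorem — one needs that the radius on which $L_g^{-1}$ is defined and its Lipschitz constant can be controlled by $\|d(L_g)_e\|$, $\|d(L_g)_e^{-1}\|$, and a modulus of continuity of $d\psi$, all of which are bounded on the compact set $\ov Z\times D_{\eta_0}(e)$ because $\psi$ is $C^1$ there and $\la_g$ is a diffeomorphism (so $d(L_g)_e$ is invertible) for each $g$. A clean way to package this is: the function $g\mapsto \inf\{\,\lambda : L_g(D_\eta(e))\not\supset D_{\lambda\eta}(g)\text{ for some }\eta<\eta_g\,\}$ is lower semicontinuous and positive on the compact $\ov Z$, hence bounded below by some $\lambda_0>0$. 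Once that standard fact is in hand, the nonstandard half of the argument is immediate by transfer and overflow is not even needed.
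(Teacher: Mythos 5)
Your proposal rests on a hypothesis that is not available here: you treat the product map $\psi$ as a \emph{standard} $C^1$ function and propose to run a standard, parametrized inverse function theorem argument over the compact set $\ov Z$, then transfer the resulting uniform constants $\eta_0,\lambda_0$. But in the setting of this lemma, the object in play is the internal product $\tl\psi$ of a ${}^\s\loc\,SC^0\rz LG$; it belongs to $\rz C^\infty(U\x U,\bbr^n)$, which is the set of \emph{internal} $\rz C^\infty$ maps, not the set of transfers of standard $C^\infty$ maps. At this stage of the argument the only regularity known on the standard side is that the standard part of $\tl\psi$ is continuous (the $SC^0$ hypothesis); differentiability of that standard part is precisely what the entire regularity theorem is building toward. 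So there is no standard $C^1$ map to which you can apply the quantitative inverse function theorem, and your ``standard half'' of the argument does not exist to be transferred.

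A secondary consequence is that even if one tried to imitate your argument purely internally, the resulting constant $\lambda_0$ would at best be an internal positive element of $\rz\bbr$, and since nothing bounds the internal derivatives of $\tl\psi$ away from $0$ or $\infty$, nothing forces $\lambda_0$ to be noninfinitesimal. This is visible in the conclusion the paper actually proves: $\d$ is merely some positive infinitesimal, not $\lambda_0\e$ for a noninfinitesimal $\lambda_0$. The paper's proof sidesteps all of this by working entirely at the internal level: since each $L_g$ is assumed to be a $\rz$local diffeomorphism, the internal inverse function theorem gives, for each $g\in\rz\ov Z$, a $\rz$neighborhood $U_g$ and a positive $\e_g\in\rz\bbr$ with $L_{g'}(D_\e(e))\supset D_{\e_g}(g')$ for $g'\in U_g$; then $\rz$compactness of $\rz\ov Z$ yields a $\rz$finite subcover $U_{g_1},\dots,U_{g_n}$ (with $n\in\rz\bbn$), and $\d\doteq\rz\min\{\e_{g_j}\}$ does the job. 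The move you would need to learn here is to use the \emph{transfer of compactness} (extracting a $\rz$finite subcover) rather than trying to descend to the standard world, which is not accessible.
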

%\end{clem}
The proof is given after the next lemma.
We can now prove the following. For a mapping $f$, $\img(f)$ or $\img f$ will denote the image of $f$ in its range.

%\begin{olem}
\begin{lem}[Onto lemma]\label{lem: Onto}
 Given the setup above, $\mu(e)\subset\img(\exp)$.
\end{lem}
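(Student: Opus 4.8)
The goal is to show that every group element infinitesimally close to $e$ lies in the image of $\exp:\Fg\to\SG$. The strategy is to start from the fact that $\exp$ is a $\rz$local diffeomorphism (by transfer, since $\rz d(\exp)_0=1_{\rz\bbr^n}$), so its internal image $\exp(\Fg)$ contains some internal neighborhood of $e$; the problem is that this neighborhood could have infinitesimal radius, so it need not contain all of $\mu(e)$. The point of the Compactness Lemma is precisely to remedy this: it lets one "spread" an infinitesimal ball around $e$ to a uniformly-sized infinitesimal ball around every point of the compact-like set $\rz\ov Z$ via left translations, and left translations are group operations so they stay inside the image of $\exp$ once one has a subgroup-type statement.

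First I would fix, by transfer of the local-diffeomorphism property of $\exp$, an $\e_0>0$ in $\rz\bbr$ (necessarily we may take $\e_0\sim 0$) such that the Euclidean ball $D_{\e_0}(e)\subset \img(\exp)\cap \rz\ov Z$, and such that $\exp$ restricted to the corresponding ball in $\Fg$ is an internal diffeomorphism onto $D_{\e_0}(e)$. Next, given an arbitrary $h\in\mu(e)$, I want to produce $v\in\Fg$ with $\exp(v)=h$. Apply the Compactness Lemma with $\e=\e_0$: there is $\d\sim 0$, $\d>0$, with $L_g(D_{\e_0}(e))\supset D_\d(g)$ for all $g\in\rz\ov Z$. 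Now I would iterate: starting at $e$, since $h\sim e$ the straight segment (or a chain of points) from $e$ to $h$ can be covered by finitely many — in fact, by overflow, by a hyperfinite but controlled number of — steps each of length less than $\d$, staying inside $\rz\ov Z$ because $h\in\mu(e)$ and $\rz\ov Z$ is a standard neighborhood of $e$. At each step, if the current point $g$ is already in $\img(\exp)$, write $g=\exp(v)$; then any point $g'\in D_\d(g)$ satisfies $g'\in L_g(D_{\e_0}(e))$, i.e. $g'=g\cdot\exp(u)$ for some $u$ with $|u|<\e_0$, and by the first intertwining formula / the Hausdorff series this product is again $\exp$ of something (here one uses that $H(v,u)$ is defined and small, so $g'=\exp(H(v,u))\in\img(\exp)$). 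Hence reachability by $\d$-steps propagates membership in $\img(\exp)$, and after the (internally) finitely many steps we reach $h$, giving $h\in\img(\exp)$.

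The main obstacle I anticipate is the bookkeeping in the iteration step: one must ensure that the product $g\cdot\exp(u)$ genuinely lies back in $\img(\exp)$, which requires that the relevant composite $\exp(v)\cdot\exp(u)=\exp(H(v,u))$ be well defined, i.e. that $v$ and $u$ stay in the region where the $H$-series converges and represents the product — this is exactly where the H-lemma estimates (Lemma \ref{lem: H-lem }) and the standing assumption that we may shrink to a convex representative neighborhood are used. One must also be careful that the hyperfinite chain of $\d$-steps does not wander out of $\rz\ov Z$; since $h\in\mu(e)$, a single step or a short chain suffices, so this is not serious, but it should be stated cleanly (the chain can be taken to be, say, the points $\tfrac{j}{N}h$ for $j=0,\dots,N$ with $N\sim\infty$ chosen so $|h|/N<\d$, all of which lie in $\mu(e)\subset\rz\ov Z$). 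Modulo these checks, the conclusion $\mu(e)\subset\img(\exp)$ follows, and in fact the same argument shows $\img(\exp)$ contains a full standard neighborhood of $e$ — but for this lemma only the infinitesimal statement is claimed.
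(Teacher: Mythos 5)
Your plan identifies the right ingredients — the Compactness Lemma to spread infinitesimal balls by left translations, the BCH product $\exp(v)\exp(u)=\exp(H(v,u))$, and the $\mu$-exp lemma lurking behind the fact that preimages of points near $e$ are infinitesimal — but the iterative/hyperfinite-chain mechanism you propose does not close cleanly, and the paper instead uses a minimal-counterexample argument that sidesteps precisely the obstacle you flag.

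Here is the difficulty in the forward induction. For the chain $g_j=\tfrac{j}{N}h$, the inductive step wants: if $g_j=\exp(v)$ then $g_{j+1}=\exp(H(v,u))$ for some $u$ with $|u|<\e_0$. To run this for $N$ hyperfinite steps inside an internal induction, the predicate must be internal, so you cannot simply say ``$v\sim 0$.'' The natural internal bound $|v_j|\lesssim 2j\e_0$ keeps $H$ convergent only as long as $N\e_0$ stays below the radius of convergence, i.e. as long as $|h|\,\e_0/\d$ is small; but the Compactness Lemma gives a $\d\sim 0$ with no control on the ratio $\d/\e_0$ (indeed $\d$ could be incomparably smaller than $\e_0$, since the derivatives of $L_g$ for $g\in\rz\ov Z$ need not be bounded below in any uniform way). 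So the H-lemma estimates alone do not guarantee the accumulated $v_j$ stay in the domain of the Hausdorff series for all $j\le N$. The alternative — using the $\mu$-exp lemma to conclude $v_j\sim 0$ at every step — introduces an external condition into the induction hypothesis and therefore does not give a legitimate internal induction over $\{0,\dots,N\}$.

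The paper avoids this by doing a single step, not $N$ of them. Suppose $\mu(e)\not\subset\img(\exp)$ and take $g_0\in\mu(0)\smallsetminus\img(\exp)$ with $|g_0|$ minimal (the minimum is taken over an internal disc, where it exists). By minimality there is $g_1\in\img(\exp)$ with $\dist(g_1,g_0)<\d/2$ (e.g.\ on the ray from $0$ to $g_0$). The Compactness Lemma then gives $g_0\in L_{g_1}(D_\e(e))$, so $g_0=g_1 h$ with $h\in D_\e(e)\subset\img(\exp)$; writing $g_1=\exp(v_1)$, $h=\exp(w)$ with $v_1,w\sim 0$ (this is where the $\mu$-exp lemma enters — once), one gets $g_0=\exp(H(v_1,w))$, a contradiction. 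Since only one application of $H$ occurs, there is no accumulation to control. If you want to rescue your chain argument, the fix is to take the maximal $j_0$ with $\{0,\dots,j_0\}\subset\{j:g_j\in\img(\exp)\}$ (this set is internal), apply the external reasoning at that single transition point, and derive a contradiction if $j_0<N$ — but that is really the paper's minimal-$g_0$ argument in a different guise. So: right lemmas, right intuition, but as written the ``after the (internally) finitely many steps we reach $h$'' step does not go through, and the obstacle you anticipated is genuine rather than bookkeeping.
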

%\end{olem}

\begin{proof} Suppose not. Then  there exists $g_0\in\mu(0)$, $g_0\not\in\img(\exp)$ with $|g_0|$ minimal for this. (This is the transfer of an internal statement, technically this is stated for all $g$ in some internal disc.) Therefore, there exists $g_1\in\img(\exp)$ such that $\dist(g_1,g_0)<\d/2$. But then as $L_{g_1}(D_{\e}(e))$ contains the ball of radius $\d$ centered on $g_1$, it follows that $g_0\in L_{g_1}(D_{\e}(e))$. That is, $g_0=g_1h$ some some $h\in\img(\exp)$. So we have that $g_1=\exp(v_1)$ and $h=\exp(w)$ for some $v_{1}$, $w\sim0$ in $\Fg$. That is
$$
g_0 = \exp(v_1)\exp(w) = \exp(H(v_1,w)),
$$
contradicting that $g_0\not\in\img(\exp)$, finishing the proof.
\end{proof}

%Note that $v_1$, $w\sim0\Rightarrow H(v_1,w)\sim0$. (This is an easy check as $[\ ,\ ]$ is nearstandard).

\begin{proof}[Proof of compactness lemma] As $L_g$ is a *local diffeomorphism for $g\in\rz\!\ov Z$, there is a *neighborhood $U_g$ of $g$ in $\rz\!\ov Z$ and $0<\e_g\in\rz\!\bbr$ such that if $g'\in U_{g}$, $L_{g'}(D_{\e}(e))\supset D_{\e_g}(g')$. This just follows from the fact that $L_g$ is *locally aproximable by a *linear isomorphism.

Now $\SU\doteq\{U_g:g\in\rz\ov Z\}$ is a *open cover of $\rz\ov Z$ and
as $\rz\ov Z$ is $\rz\!$compact, there is $n\in\rz\bbn$ and
$U_{g_1},\dots,U_{g_n}\in\SU$ such that $\rz\!\ov Z\subset\bigcup^n_{j=1}U_{g_j}$.
Let $\d\doteq\rz\!\min\{\e_{g_j}:j=1,\dots,n\}$. Then if $\tl g\in\ov
Z$, there is $j_{0}\in\{1,\dots,n\}$ such that $\tl g\in U_{g_{j_0}}$. This
then implies that  $L_{\tl g}(D_{\e}(e))\supset D_{\e_{j_0}}(\tl g)\supset
D_\d(\tl g)$, as we wanted to prove.
\end{proof}

As a direct result of the Onto Lemma, we have the following statement.

%\begin{ocor}
\begin{cor}[Onto corollary]\label{cor: Onto}
 If $g\in\img\exp$, then $\mu(g)\subset\img\exp$.
\end{cor}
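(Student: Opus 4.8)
The plan is to deduce the Onto Corollary directly from the Onto Lemma (Lemma \ref{lem: Onto}) together with the homomorphism-like behavior of $\exp$ encoded in the Hausdorff series, exactly as in the proof of the Onto Lemma itself. So suppose $g\in\img\exp$, say $g=\exp(v)$ for some $v\sim 0$ in $\Fg$, and let $g'\in\mu(g)$ be arbitrary. I want to produce $w\sim 0$ in $\Fg$ with $\exp(w)=g'$.

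First I would translate the problem back to the identity using left multiplication by $g^{-1}$. Since $g\sim e$, the map $L_{g^{-1}}$ is an $S$-homeomorphism (right and left multiplications are $S$-homeomorphisms in $\SG\in{}^\s\loc SC^0\rz LG$), so $g^{-1}g'\in L_{g^{-1}}(\mu(g))\subset\mu(L_{g^{-1}}(g))=\mu(e)$. Hence $h\doteq g^{-1}g'\in\mu(e)$, and of course all these products are defined after the usual finite restriction to a smaller neighborhood of $0$, which does not affect the argument (see section \ref{subsec: equivalence of loc top gps}). Now apply the Onto Lemma, Lemma \ref{lem: Onto}: $\mu(e)\subset\img(\exp)$, so there is $w\sim 0$ in $\Fg$ with $h=\exp(w)$.

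Then I would reassemble: $g'=gh=\exp(v)\exp(w)=\exp(H(v,w))$, where $H$ is the $\rz$Hausdorff series of $(\SG,\Fg,[\ ,\ ])$ and the identity $\exp(v)\exp(w)=\exp(H(v,w))$ is valid because $v,w\sim 0$ are small enough for the series to converge and represent the product (the same fact invoked at the end of the proof of Lemma \ref{lem: Onto}). Since $v\sim 0$ and $w\sim 0$, the $H$-series value $H(v,w)$ is infinitesimal as well — this is immediate from the $H$-lemma type estimates, or more simply from $H(v,w)=v+w+(\text{higher bracket terms})$ with $[\ ,\ ]$ nearstandard by Corollary \ref{cor: ad is nearst}, so $H(v,w)\sim 0$. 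Thus $g'=\exp(H(v,w))$ with $H(v,w)\sim 0$, proving $g'\in\img\exp$, and since $g'\in\mu(g)$ was arbitrary we conclude $\mu(g)\subset\img\exp$.

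I do not expect any real obstacle here: the corollary is a soft consequence of the Onto Lemma plus the $S$-homeomorphism property of translations plus the local homomorphism property of $\exp$. The only point requiring a modicum of care is the bookkeeping of domains of definition — making sure that $g^{-1}g'$, $\exp(v)\exp(w)$, and $H(v,w)$ are all defined simultaneously — but as noted throughout the paper (section \ref{subsec: equivalence of loc top gps}), a finite sequence of restrictions to isomorphic local groups handles this and is suppressed in the write-up.
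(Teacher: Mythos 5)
Your argument is essentially identical to the paper's: left-translate $g'\in\mu(g)$ back to $h=g^{-1}g'\in\mu(e)$ using the $S$-homeomorphism property of translations, apply the Onto Lemma to write $h=\exp(w)$ with $w\sim 0$, and reassemble via the Hausdorff series identity $g'=\exp(v)\exp(w)=\exp(H(v,w))\in\img\exp$. One small inaccuracy: you assert $v\sim 0$ (and correspondingly $g\sim e$, $H(v,w)\sim 0$), but the corollary's hypothesis is only $g\in\img\exp$, which in the application covers $g$ with noninfinitesimal distance from $e$; fortunately none of those claims is actually load-bearing in your argument, since the $S$-homeomorphism property of $L_{g^{-1}}$ and the BCH identity hold regardless, and the paper likewise simply writes $g=\exp(v)$ for some $v\in\Fg$ without requiring $v\sim 0$.
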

%\end{ocor}

\begin{proof} Let $k\in\mu(g)$. The LTG condition implies that $L_g\mu(e)=\mu(g)$ and so there is $h\in\mu(e)$ such that $k=gh$. But the Onto Lemma implies $h=\exp(w)$ for some $w\in\mu(\Fg)$ and the hypothesis implies $g=\exp(v)$ for some $v\in\Fg$. Hence,
$$
k=gh=\exp(v)\cd\exp(w)=\exp(H(v,w))\in\img(\exp)
$$
as needed.
\end{proof}

Using the following result we can finish. We, again, sometimes
denote a standard neighborhood by $U$ instead of $\rz\!U$.

\subsection{Fact on S-homeomorphisms and finish of proof}\label{sec: fact on S-homeos and pf finish}

 We need a NSA result that verifies that particular
 properties of an internal map on the nonstandard level make its standard part a homeomorphism on
  some neighborhood of the origin. The following definition does this.
    Suppose that $U$ is a standard neighborhood of $0$ in $\rz\bbr^n$.
\begin{definition}[S-homeomorphism]\label{def: S-homeo}
     Suppose that  $f:(\rz\bbr^n,0)\to(\rz\bbr^n,0)$ is an internal map.
     Then we say that $f$ is an $S$-homeomorphism on $\rz U$ if for all
     $x, y\in\rz U,x\nsim y\dllra f(x)\nsim f(y)$  and (ii) If $y\in\img f$, then $\mu(y)\subset\img f$.
\end{definition}

%\begin{shlem}
\begin{lem}[S-homeomorphism lemma]\label{lem: S-homeo}
Suppose that $f:(\rz\bbr^n,0)\to(\rz\bbr^n,0)$ is an $S$-homeomorphism on $\rz U$.
 Let $W=\;^{\circ}\!f(U)$. Then $^{\circ}f: U\to\intt(W)$ is a homeomorphism.
\end{lem}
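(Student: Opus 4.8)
The plan is to prove the statement in four stages: (a) $^\circ\!f$ is a well‑defined continuous injection $U\to\bbr^n$; (b) its image $W={}^\circ\!f(U)$ is open; (c) $^\circ\!f$ is an open map; and then to conclude, since a continuous open bijection onto its image has continuous inverse.

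First I would reduce, as is the standing convention in the paper and harmless since the claim is local, to the case where $U$ is bounded and convex, so that every $\xi\in\rz U$ is finite, hence nearstandard, with $^\circ\xi\in\overline U$. Well‑definedness of $^\circ\!f$ on $U$ is part of the setup ($f$ is $SC^0$, in particular nearstandard‑valued on ${}^\s U$, as in the intended application $f=\rz\exp$). Continuity of $^\circ\!f$ is immediate from condition (i): read contrapositively it gives $x\sim y\Rightarrow f(x)\sim f(y)$ for $x,y\in\rz U$, i.e.\ $f$ is $S$‑continuous on $\rz U$, and then $^\circ\!f$ is continuous by the standard fact recalled just after the definition of $SC^0$ (Wicks, \cite{Wicks1991} p.7). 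Injectivity is also immediate: distinct $x_0,y_0\in U$ have $\rz x_0\nsim\rz y_0$, so $f(\rz x_0)\nsim f(\rz y_0)$ by (i), whence $^\circ\!f(x_0)\neq{}^\circ\!f(y_0)$.

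The substantive part is (b) and (c). Fix $x_0\in U$, set $y_0={}^\circ\!f(x_0)$, and choose a standard $\rho>0$ with $\overline{D_\rho(x_0)}\subset U$. I would first establish a \emph{localization} claim using (i) and saturation: there is a standard $r_0>0$ with $f^{-1}\!\big(D_{r_0}(\rz y_0)\big)\cap\rz U\subset\rz D_\rho(x_0)$. If this failed, saturation (applied to the decreasing internal family $\{z\in\rz U:z\notin\rz D_\rho(x_0),\,|f(z)-\rz y_0|<r\}$ over standard $r>0$) would produce $\xi\in\rz U$ outside $\rz D_\rho(x_0)$ with $f(\xi)\sim\rz y_0\sim f(\rz x_0)$, and then (i) forces $\xi\sim\rz x_0$, contradicting $|\xi-\rz x_0|\ge\rz\rho$. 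Next, using (ii) and overflow: since $f(\rz x_0)\in\img f$ and $f(\rz x_0)\sim\rz y_0$, condition (ii) gives $\mu(\rz y_0)=\mu(f(\rz x_0))\subset\img f$, and overflow applied to the internal set $\{\e\in\rz\bbr_{>0}:D_\e(\rz y_0)\subset\img f\}$, which contains all positive infinitesimals, yields a standard $r_0'>0$ with $D_{r_0'}(\rz y_0)\subset\img f=f(\rz U)$. Taking $r_1=\min(r_0,r_0')$: for standard $w$ with $|w-y_0|<r_1$, write $\rz w=f(\xi)$ with $\xi\in\rz U$; localization puts $\xi\in\rz D_\rho(x_0)$, so $x':={}^\circ\xi\in\overline{D_\rho(x_0)}\subset U$, and $S$‑continuity gives $f(\rz x')\sim f(\xi)=\rz w$, whence $^\circ\!f(x')=w$. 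Thus $D_{r_1}(y_0)\subset{}^\circ\!f(U)=W$, i.e.\ $y_0\in\intt W$; since $y_0$ ranges over all of $W$ and $\intt W\subset W$ always, $W$ is open and $^\circ\!f(U)=\intt W$. Re‑running the same argument with an arbitrary open $V'\subset U$ in place of $U$ shows $^\circ\!f(V')$ is a neighborhood of each of its points, so $^\circ\!f$ is open; hence $^\circ\!f\colon U\to\intt W$ is a homeomorphism.

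I expect the main obstacle to be exactly the localization claim — controlling the boundary. A point $w$ near $y_0$ is reachable by $f$ (thanks to (ii)), but a priori only from points $\xi\in\rz U$ whose standard parts could sit on $\partial U$, in which case $w$ would not obviously lie in $^\circ\!f(U)$. The fact that condition (i) is a two‑sided equivalence (not merely $S$‑continuity) is precisely what rules this out, and it lets the argument avoid invoking Brouwer's invariance of domain.
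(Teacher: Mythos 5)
Your proof is correct, but it follows a different route from the paper's. The paper proves bicontinuity directly and sequentially: continuity of $^{\circ}f$ via the standard correspondence between convergence of a standard sequence and the monadic behaviour of its internal extension at infinite indices, and continuity of $({}^{\circ}f)^{-1}$ by an explicit construction — given $y_j\to z$ in $\intt W$, hypothesis (ii) is used to manufacture internal points $\hat y_j\in\img f$ with $\hat y_j\sim\rz y_j$, their preimages $\hat x_j$, and then hypothesis (i) in the form $f(\hat x_j)\sim f(\hat w)\Rightarrow\hat x_j\sim\hat w$ closes the loop. You instead prove that $^{\circ}f$ is an \emph{open} map (and that $W$ is open), via a saturation argument for the localization claim plus (ii) and overflow to fill a standard ball in the image, and then quote the fact that a continuous injective open map is a homeomorphism onto its open image. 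The two uses of the hypotheses are parallel — your localization claim is the surrogate for the paper's step "$\hat y_j\sim\hat z\Rightarrow\hat x_j\sim\hat w$" — but your version buys something the paper's write-up glosses over: it establishes explicitly that $W={}^{\circ}f(U)$ equals $\intt(W)$, which is needed for the statement "$^{\circ}f:U\to\intt(W)$ is a homeomorphism" to be literally correct, whereas the paper only shows $({}^{\circ}f)^{-1}$ is continuous on $\intt(W)$ without verifying surjectivity onto it. Two cosmetic points: the overflow step yields a noninfinitesimal radius, from which a standard one follows by downward closure of the set $\{\e:D_\e(\rz y_0)\subset\img f\}$; and well-definedness of $^{\circ}f$ (nearstandardness of values) is indeed an implicit standing assumption here, exactly as in the paper's own proof.
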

%\end{shlem}

\begin{proof} First of all, it is easy to verify that
$$
^{\circ}f:x\in\intt(U)\to{}^{\circ}(f(\rz\!x))\in W
$$
 is a well defined map. We want to show $^{\circ}f$ is $C^0$.
  We will show that if $x_j\in U$, for $j\in\bbn$ and $x_j\to x_0\in U$ (converges to $x_0$),
   then $^{\circ}f(x_j)\to{}^{\circ}f(x_0)$. To do this we need two typical NSA facts.
  Let $\rz\bbn_{\infty}=\rz\bbn\smallsetminus {}^{\s}\bbn$ be the infinite natural numbers.
   (a) If $S\doteq\{a_j:j\in\bbn\}$ is a sequence in $\bbr^n $ such that $a_j\to d$ in $\bbr^n$ and
    if $\rz\!S=\{\bar a_j:j\in\rz\bbn\}$ is the corresponding internal sequence,
     then $\bar a_j\sim\rz\!d$ for $j\in\rz\bbn_{\infty}$. In the reverse direction
   we have the following result. (b) If $\{\la_j:j\in\rz\bbn\}$ is an internal sequence
    such that there exists $\mu\in\rz\bbr^n_{\nes}$ with $\la_j\sim\mu$ for $j\in\rz\bbn_{\infty}$,
   then $\{{}^{\circ}\la_j:j\in\rz\bbn\}$ converges to $^{\circ}\mu$ in $\bbr^n$. See Stroyan and Luxemburg, \cite{StrLux76} p. 73.
    We now turn to the proof.

Now if $x_j\in U$ as above such that $x_j\to x_0$, then (b)
implies that $\bar x_j\sim x_0$ for $j>\infty$. But then by hypothesis
$f(\bar x_j)\sim f(\rz\!x_0)$ for $j>\infty$. But then by (a) for
$j\in\bbn$, $^{\circ}(f(x_j))\to{}^{\circ}f(\rz\!x_0)$. So $^{\circ}f$ is
$C^0$.

Also $^{\circ}f$ is $1-1$ on $U$. Let $x\neq y$ in $U$. Then
$\rz\!x\nsim\rz\!y$ in $U$ and therefore by hypothesis $f(\rz\!x)\nsim
f(\rz\!y)$. But then $^{\circ}(f(\rz\!x))\neq{}^{\circ}(f(\rz\!y))$. So $f$ is
$1-1$. This implies that $(^{\circ}f)^{-1}:\wt W\to U$ is a well defined
map where $\wt W=\intt(W)$.

 We will show that $(^{\circ}f)^{-1}$ is $C^0$. Let $z\in\wt W$ and $J=\{y_j:j\in\bbn\}$ be a sequence in $\wt W$ such that $y_j\to z$. We want to prove that $(^{\circ}f)^{-1}(y_j)\to(^{\circ}f)^{-1}(z)$. Let $x_j$, $j\in\bbn$ and $w$ in $U$ be defined by $x_j=(^{\circ}f)^{-1}(y_j)$ and $w=(^{\circ}f)^{-1}(z)$; so that $f(\rz\!x_j)\sim y_j$ and $f(\rz\!w)\sim z$ by the definition of $^{\circ}f$.

Let $d(x,y)$ be the standard Euclidean distance between $x$ and $y$
in $\rz\bbr^n$. Now suppose that $C=\{c_j:j\in\bbn\}$ is defined by
$d(y_j,z)=c_j$, e.g., $c_j\to0$ as $j\to\infty$. Let $\hat z\in
f(U)$ with $\hat z\sim\rz\!z$ and let $\SY=\{\hat y_j:j\in\rz\bbn\}$ be an
internal sequence in $f(U)$ such that for $j\in\bbn$, $\hat
y_j\sim\rz\!y_j$ and $d(\hat y_j,\hat z)\le2\hat c_j$ where $\rz C=\{\hat
c_j:j\in\rz\bbn\}$ is the *transfer of $C$. Such a sequence exists
because of the hypothesis: if $y\in\img f$, then $\mu(y)\subset\img
f$. So there exist $\SX=\{\hat x_j:j\in\rz\!\bbn\}\subset U$ and $\hat w\in U$
such that $f(\hat x_j)=\hat y_j$, for all $ j\in\rz\bbn$ and $f(\hat
w)=\hat z$. So note by (a) that for $j>\infty$,  $\hat c_j\sim0$,
i.e., $j>\infty$ implies that $\hat y_j\sim\hat z$.

Now for $j>\infty$, $\hat y_j\sim\hat z$ implies that $\hat x_j\sim\hat
w$ by hypothesis (i) on $f$. But $\hat z\sim \rz\!z$ implies that $\hat
w\sim\rz\!w$ for the same reason. That is, $\hat x_j\sim\rz\!w$ for
$j>\infty$. But then by result (b), the standard sequence
$\circledast$\: \: ${}^{\circ}(\hat x_j)\to w$ as
$j\to{}^{\s}\infty$. But $\hat y_j\sim\rz\!y_j$ for
$j\in\rz\bbn$ implies that $\hat x_j\sim\rz\!x_j$ for $j\in\rz\bbn$, again by
hypothesis (1). But then this implies that ${}^{\circ}\hat x_j=x_j$ and so
$\circledast$ now reads $x_j\to w$ for $j\to\infty$ as needed.
\end{proof}

So now we have that $^{\circ}f:U\to\wt W$ and its inverse
$(^{\circ}f)^{-1}:\wt W\to\wt U$ are both continuous maps, hence
$^{\circ}f: U\to\wt W$ is a homeomorphism onto $\wt W$. \qed

\begin{proof}[Proof of theorem \ref{thm: exp is SC^o}] To finish the proof of the theorem, note that if we let
$\exp=f$ in the NSA fact (Lemma \ref{lem: S-homeo}) and $U=\{tx\in\Fg:t>0$ and
$|xt|<\f1{2B_0}\}$ then the {\bf S Lemma} (Lemma \ref{lem: S-lem}) and {\bf$\mu\exp$ Lemma} (Lemma \ref{lem: mu-exp lem})
give hypothesis (i) in the NSA fact and the {\bf Onto Corollary} (Corollary \ref{cor: Onto})
gives hypothesis (ii) in the NSA fact.
\end{proof}

\section{Main nonstandard regularity theorem and standard version}\label{chap: Main NS reg thm and stan version}

\subsection{The product is S-analytic after coordinate change}\label{sec: prdct is S-analy after coord change} In this section, let $[\ ,\ ]$ denote our nearstandard  $\rz\!$Lie bracket and
let $\psi:\SG\x\SG\to\rz\!\bbr_{\nes}$ denote our product map. In this
section we will finish the proof of the main theorem by using the
results that $[\ ,\ ]$ is $SC^0$ and $^{\circ}\exp$ is local
homeomorphic along with the Campbell-Hausdorff-Dynkin, CHD, series
expansion for the product map of our local group in ``canonical''
(i.e., $\log$) coordinates.

The CHD series is the Hausdorff series we have already seen. But we
need different results and so a different formulation (which we will
provide), along with how it fits in here. Then we will summarize the
proof. We find the passage in Kolar, Michor and Slovak (henceforth KMS), \cite{KMS1993} p. 40,41, most suitable for our
purpose. See also e.g., \cite{Kirillov1976}, p. 105,106. KMS states their results
for a global group but it holds locally in the same manner (see
\cite{Kirillov1976}, p105,106, and \cite{Bourbaki1989}).

 Our $\rz$Lie bracket $[\ ,\ ]$ is a nearstandard bilinear form:
$\rz(\bbr^n\x\bbr^n)_{\nes}\to\rz\bbr^n_{\nes}$ and it defines the usual
map $\ad:\rz\bbr^n_{\nes}\to\rz\gl(n)_{\nes}$ by $v\to\ad_v:
(w\to[v,w])$. For clarity of purpose, we will instead work with an arbitrary nearstandard
bilinear form $B: (\rz\bbr^n\x\rz\bbr^n)_{\nes}\to\rz\bbr_{\nes}$ and define
$\ad^B_v(w)=B(v,w)$ just as with $[\ ,\ ]$.

For a standard Lie bracket, $[\ ,\ ]$, the text of Kol\'{a}\v{r}, Michor and Slov\'{a}k \cite{KMS1993}, defines (p. 40) an analytic map
$H:\bbr^n\x\bbr^n\to\bbr^n$ by
\begin{align*}
H(X,Y) = H_{[\ ,\ ]}(X,Y)\doteq &Y + \int^1_0 f(e^{t\ad_X}
e^{\ad_Y})\cd X dt
\tag*{$\circledast$}\\
&\qquad\text{where }\ f(t) = \f{\log(t)}{t-1}.
\end{align*}
This is a nice closed form for the CHD series. We will substitute
$B$ for $[\ ,\ ]$ in this formula. The (long known: see \cite{Bourbaki1989}
Historical Notes) Baker, Campbell, Hausdorff (BCH) formula is given as follows.

If $g$, $h\in\SG$ and $\psi(g,h)$ are all defined and if $g$,$h$ are
in the range of $\exp$ then $\psi(g,h)=\exp(H_{[\ ,\ ]}(\exp^{-1}g,
\exp^{-1}h))$. (see \cite{Kirillov1976} p. 105]. We will prove that for $B=$ our
$[\ ,\ ]$, $H_{[\ ,\ ]}$ has analytic standard part.

At this point we need to make some clarifying remarks. First of all,
for an arbitrary $\rz$Lie algebra product $[\ ,\ ]$
$$
H_{[\ ,\ ]}: \rz\bbr^n\x \rz\bbr^n\lra\rz\bbr^n
$$
defines a $\rz\!$ analytic group structure on $\rz\bbr^n$ (with $0$ as the
identity) near $0$. That is, (see \cite{Bourbaki1989} p. 162) where defined
\begin{align*}
H(H(x,y),z)  &= H(x,H(y,z))\\
H(x,-x)  &= H(-x,x) = 0\\
H(x,0)  &= H(0,x) = x.
\end{align*}
Note here that $x^{-1}=-x$ with respect to this group structure. That is, the
inversion map for the $\rz$group structure $H$ is just $x\to-x$ which
is obviously $^\s$analytic and therefore e.g., $S$ analytic.
Therefore to prove that this $\rz\!$group structure (on $\rz\bbr^n$
defined by $H$) is $S$-analytic, we just need to prove that the
product map, i.e. $H$, is $S$-analytic.

As the $S$-analyticity of $H_{[\ ,\ ]}$ depends only on the
$^\s$boundedness of $[\ ,\ ]$ as a bilinear map and as we want to
look at the $S$-analyticity of one map in our proof in the context
of multilinear maps; for us, it is more clear to work through the
steps with a general $\rz$bilinear map $B$ substituted in place of $[\
,\ ]$.

The next lemma verifies that the map $H_B$ as defined by the
expression $\circledast$ is $S$-analytic by building the expression
for $H_B$ from clearly $S$-analytic simpler expressions.

%\begin{hbl}
\begin{lem}[$H_B$ analyticity]\label{lem: H_B analyticity}
 If $B$ is nearstandard, $H_B$ is $S$-analytic.
\end{lem}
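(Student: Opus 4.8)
The plan is to build $H_B$ out of maps whose $S$-analyticity we already understand, using Lemma \ref{lem: S-analytic maps}. Recall the closed form $\circledast$: writing $\ad^B_X(w)=B(X,w)$ as usual, we have $H_B(X,Y)=Y+\int_0^1 f(\rz\!e^{t\,\ad^B_X}\circ\rz\!e^{\ad^B_Y})\cdot X\,dt$ with $f(t)=\log(t)/(t-1)$, which is analytic near $t=1$. First I would record the elementary building blocks. The map $X\mapsto \ad^B_X\in\rz\en_n$ is $\rz\!$linear; since $B$ is nearstandard it sends $\rz\bbr^n_{\nes}$ into $\rz\en_{n,\nes}$, so it is in $S$Poly (degree one, nearstandard coefficients), hence in $SA^\om$ by part B) of Lemma \ref{lem: S-analytic maps}. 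Composition with the linear projections/inclusions and the pairing $(X,Y)\mapsto(t\,\ad^B_X,\ad^B_Y)$ is likewise $S$Poly. So the inner argument is, for each fixed $t\in[0,1]$, an $S$Poly map of $(X,Y)$ into $\rz\en_{n,\nes}\times\rz\en_{n,\nes}$.

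Next I would handle the analytic-in-the-matrix part. $\expp$ and matrix multiplication are standard analytic maps on neighborhoods of $0$ and of $(\mathrm{Id},\mathrm{Id})$ in $\en_n$ respectively; their $\rz\!$transfers are $^\s A^\om$ maps. The subtle point: the composition $\rz\!e^{t\,\ad^B_X}\circ\rz\!e^{\ad^B_Y}$ must land in a \emph{standard} neighborhood of $\mathrm{Id}$ on which $f$ is $^\s A^\om$; this is exactly where I restrict $X,Y$ to a small enough standard neighborhood of $0$ in $\bbr^n$ (using $B_0=\|B\|\in\rz\bbr_{+,\nes}$ to get uniform control, just as in the H-lemma, Lemma \ref{lem: H-lem }). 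Granted that, $f(\rz\!e^{t\,\ad^B_X}\circ\rz\!e^{\ad^B_Y})$ is $^\s A^\om$ composed with $S$Poly, so by part C) of Lemma \ref{lem: S-analytic maps} it lies in $dSA^\om\cap SC^\infty$; applying the resulting nearstandard endomorphism to $X$ and composing once more keeps us in $dSA^\om\cap SC^\infty$. So for each $t$ the integrand is in $SC^\infty(\rz U,\rz\bbr^n)$ with $dSA^\om$ standard part.

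The last step is the integration in $t$ over the standard interval $[0,1]$. Here I would argue that $\rz\!$integrating an $SC^\infty$-in-$(X,Y)$, $\rz C^\infty$-in-$t$ internal integrand over a standard compact interval preserves $SC^\infty$ — internal derivatives in $X,Y$ pass under the integral sign (transfer of differentiation under the integral), and $S$-continuity of each $\rz\!\p^\a$ of the integrand, together with a $^\s$finite uniform bound coming from $B_0$ and the $^\s A^\om$ pieces, gives $S$-continuity of the $t$-integral via the standard estimate $|\int_0^1 g\,dt|\le \sup|g|$. Then I would identify the standard part: $^\circ H_B = Y + \int_0^1 {}^\circ(\text{integrand})\,dt$, and since the standard integrand is analytic in $(X,Y)$ and analyticity is preserved under integration against a parameter over a compact interval (real-analytic dependence on parameters, differentiate under the integral), $^\circ H_B\in A^\om(U)$. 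To match it cleanly to Definition \ref{def: S-analytic} one can instead expand $H_B$ as the CHD power series $\sum_k P_k(X,Y)$ in multilinear forms in $B$, note each partial sum $\rz T^k H_B$ is $S$Poly hence $SC^\infty$, and use the $B_0$-estimates (exactly the geometric-series bounds in the H-lemma, now for the full CHD series) to show the infinite-order remainders $\rz R^kH_B$ have all finite derivatives infinitesimal on $\rz U$; that verifies conditions a$'$) and b$'$) of the remark after Definition \ref{def: S-analytic}. The main obstacle I expect is precisely this remainder-tail control: showing not just that $H_B$ converges and is nearstandard, but that for infinite $k$ every finite-order derivative of the tail is actually infinitesimal uniformly on the standard neighborhood — this needs the quantitative CHD convergence estimates (à la Bourbaki \cite{Bourbaki1989}) run through carefully with $B_0$ nearstandard, which is the one genuinely computational part of the argument.
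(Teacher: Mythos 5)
Your proposal starts on the paper's own road: same KMS closed form, same identification of $X\mapsto \ad^B_X$ as the one genuinely nonstandard ingredient (an $S$Poly map precisely because $B$ is nearstandard), and the same appeal to parts B) and C) of Lemma~\ref{lem: S-analytic maps} for the composition. But you then take a detour that the paper's argument is designed to avoid: you treat the $t$-integral as a $\rz$-integration of an internal integrand, worry about differentiation under the internal integral, and finally fall back on tail estimates for the CHD power series to match Definition~\ref{def: S-analytic} directly. None of that is needed.

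The paper's trick is to do the integration \emph{before} transferring, so that it never meets the nonstandard universe at all. Define the \emph{standard} function
\[
\bar g(A_1,A_2,x,y)\;\doteq\; y+\int_0^1 g(tA_1,A_2)\,x\,dt ,\qquad g(A_1,A_2)=f\bigl(e^{A_1}e^{A_2}\bigr),
\]
on a suitable standard neighborhood in $\gl(n)\times\gl(n)\times\bbr^n\times\bbr^n$. Since $g$ is a standard analytic function (composition of $\expp$, matrix multiplication, and $f(t)=\log(t)/(t-1)$), and integration of an analytic family over the compact interval $[0,1]$ preserves real analyticity, $\bar g$ is a standard analytic map. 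Therefore $\rz\bar g$ is in $^\s A^\om$, full stop — no internal $\rz$-integral appears, no differentiation under the integral sign, and no CHD remainder tail needs to be controlled. The only nonstandard datum is $\wh B(x,y)=(\ad^B_x,\ad^B_y)$, which is $\rz$-linear with nearstandard matrix (since $B$ is nearstandard), hence $S$Poly. Then
\[
H_B=\rz\bar g\circ(\wh B\times 1_{\rz\bbr^n\times\bbr^n})
\]
is exactly the composition $^\s A^\om\circ S\mathrm{Poly}$ covered by Lemma~\ref{lem: S-analytic maps}~C). This is both shorter and more robust than the route you sketch, and it makes clear that the ``genuinely computational part'' you anticipate (quantitative CHD convergence with $B_0$ nearstandard) is precisely what one gains by \emph{not} expanding $H_B$ as a series. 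Your version would also work — the estimates in your Lemma~\ref{lem: H-lem } style would give what's needed — but it re-proves, by hand, exactly the analyticity that the standard theory of parameter-dependent integrals already gives for $\bar g$.
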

%\end{hbl}

\begin{proof} The proof will be a reconstruction of $H_B$ as the composition of
two $S$-analytic, $SA^w$, maps. Suppose that
$B:\bbr^n\x\bbr^n\to\bbr^n$ is a bilinear map. For each $x\in\bbr^n$,
$B$ defines an element $\ad^B_x\in\gl(\bbr^n)$ by $\ad^B_x(y)\doteq
B(x,y)$ for $y\in\bbr^n$. We therefore get a linear map
$$
\ad = \ad^B: \bbr^n\to\gl(n)\quad\text{by}\quad x\mapsto\ad^B_x.
$$
Then note that if the norm on $A\in\gl(n)$ is defined (typically) by
$$
\|A\| = \sup\{|A(x)|: |x| = 1\},
$$
then $\forall x$, $y\in\bbr^n$, $|\ad_x(y)|\le\|\ad_x\|$.

Note that if $A\in\gl(n)$, then $e^A (=\exp A)\in Gl(n)$ satisfies
$$
\|e^A\|\le e^{\|A\|}\le 1 + \|A\|\quad\text{if}\quad \|A\|\le1
$$
and if $p(x)$ is a real polynomial in $x$, then $p(e^A)\in\gl(n)$
and $\|p(e^A)\|\le p(1+\|A\|)$ again if $\|A\|\le1$. By continuity,
these estimates extend to convergent power series. So if $A_1$,
$A_2\in\gl(n)$ are such that $\|e^{A_1} e^{A_2}\|$ lies in the  domain of $\log$, then
$\log(e^{A_1} e^{A_2})$ is defined, as is $(e^{A_1}
e^{A_2}-1)^{-1}\log(e^{A_1} e^{A_2})$ if we  also have $e^{A_1}
e^{A_2}\neq1_{\bbr^n}$. Let $f(x)=(x-1)^{-1}\log(x)$. Then we have
the well defined element of $\gl(n)$, $g(A_1,A_2)$ defined to be
$f(e^{A_1} e^{A_2})$.

By construction $g:\gl(n)\x\gl(n)\to\gl(n)$ is an analytic function.
If $t$ is a real number, and $x$, $y\in\bbr^n$, $\bar
g(A_1,A_2,x,y)\doteq y+\int^1_0g(tA_1,A_2)x\cdot dt$ is therefore also
an analytic function:
$$
\bar g: \gl(n)\x\gl(n)\x\bbr^n\x\bbr^n\lra\bbr^n.
$$
Let $\wh B: \bbr^n\x\bbr^n\lra\gl(n)\x\gl(n)$ be defined by $\wh
B(x,y)=(\ad^B_x,\ad^B_y)$. Now $\rz$transfer the above construction. $\rz\bar g$ is
now a $^\s$analytic function (and therefore $S$-analytic) on
$\rz\!(\gl(n)\x\gl(n)\x\bbr^n\x\bbr^n)_{\nes}$ and if $B$ is nearstandard,
then $\wh B$ is an $S$-analytic map:
$\rz\!(\bbr^n\x\bbr^n)_{\nes}\lra\rz\!(\gl(n)\x\gl(n))_{\nes}$. But then
$\rz\!\bar g\circ(\wh B\x1_{\rz\!\bbr^n\x\bbr^n})$ is $S$-analytic (at
nearstandard points, of course) as it is the composition of
$S$-analytic maps. (See Fact B) and C) in Lemma \ref{lem: S-analytic maps} in the preliminaries.) But
this is just the mapping $H_B(x,y)=H(x,y)$ as defined in \cite{KMS1993} p.40,
where here we have not specialized our general bilinear form $B$ to
a Lie bracket.

That is, if $B$ is nearstandard, then
$$
H_B(x,y): \rz\!(\bbr^n\x\bbr^n)_{\nes}\lra\rz\bbr^n_{\nes}
$$
   is an S-analytic map, as we wanted to show.
\end{proof}
\subsection{Finish of proof of main nonstandard theorem}
%\textbf{COMBINE ALL RESULTS IN THIS SUBSECTION INTO ONE}

Using this we can get the following result.

%\begin{pthm}
\begin{thm}[Product theorem]\label{thm: Product thm}
 Let $(\SG,\psi)\in{}^\s\loc SC^0\rz LG$ modeled on $\rz U\x\rz U$, where $U$ is a convex neighborhood of 0 in $\bbr^n$. Let $(L,[\ ,\ ])=\rz\!LA(\SG,\psi)$. Suppose that $[\ ,\ ]:\rz(\bbr^n\x\bbr^n)_{\nes}\lra\rz\bbr^n_{\nes}$ is $SC^0$. Suppose that $\exp$ is a $^\s$local $S$-homeomorphism. Suppose that $(\ov\SG,\ov\psi)$ is the representation of the $^\s\loc\rz\!LG(\SG,\psi)$ with respect to the local coordinates given by $\exp^{-1}$. Then $\ov\psi$ is $SA^w$ on $\exp^{-1}(\rz\wt U)\x\exp^{-1}(\rz\wt U)$, where $\wt U=U\cap V$. Here $\rz V$ is a standard neighborhood of $0$ on which $\exp^{-1}$ is an  $S$-homeomorphism (see chapter \ref{chap: st(exp) is loc homeo}).
\end{thm}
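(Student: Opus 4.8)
The plan is to obtain $\ov\psi$ as a composition of maps whose $S$-regularity we have already established, and then invoke the composition closure properties of $SA^\om$ and $SC^\infty$. Concretely, by the Baker--Campbell--Hausdorff formula (cited from Kirillov and from KMS), on the range of $\exp$ we have $\psi(g,h)=\exp(H_{[\ ,\ ]}(\exp^{-1}g,\exp^{-1}h))$, so in the new coordinates $x=\exp^{-1}g$, $y=\exp^{-1}h$ the product map becomes simply $\ov\psi(x,y)=H_{[\ ,\ ]}(x,y)$. Thus the theorem reduces to showing that $H_{[\ ,\ ]}$ is $SA^\om$ on $\exp^{-1}(\rz\wt U)\x\exp^{-1}(\rz\wt U)$; but this is precisely what Lemma \ref{lem: H_B analyticity} ($H_B$ analyticity) gives, applied to $B=[\ ,\ ]$, since by the hypothesis (and Corollary \ref{cor: ad is nearst}) the bracket $[\ ,\ ]$ is a nearstandard bilinear form.

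First I would pin down the domains: since $\exp$ (equivalently $\rz\exp$) is an $S$-homeomorphism on $\rz U$ by Theorem \ref{thm: exp is SC^o}, and $\exp^{-1}$ is an $S$-homeomorphism on the standard neighborhood $\rz V$, the set $\rz\wt U$ with $\wt U=U\cap V$ is a standard neighborhood of $0$ on which both the BCH identity and the coordinate change are valid; after the (finitely many) restrictions to smaller standard neighborhoods discussed in \S\ref{subsec: equivalence of loc top gps}, all the expressions $\psi(g,h)$, $\exp^{-1}g$, $\exp^{-1}h$, $H(\exp^{-1}g,\exp^{-1}h)$ are simultaneously defined. Next I would carefully record that the coordinate-change representation $(\ov\SG,\ov\psi)$ really does have $\ov\psi = \exp^{-1}\circ\psi\circ(\exp\x\exp)$ as internal maps, and that on the range in question this equals $H_{[\ ,\ ]}$ by BCH — this is the only place the group axioms and the specific structure of $\SG$ enter.

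Then the core step is the invocation of Lemma \ref{lem: H_B analyticity}: with $B=[\ ,\ ]$ nearstandard, $H_B\in SA^\om(\rz(\bbr^n\x\bbr^n)_{\nes},\rz\bbr^n_{\nes})$. Since $\exp^{-1}(\rz\wt U)$ is contained in the nearstandard part of $\Fg\cong\rz\bbr^n$ (again because $\exp^{-1}$ is $S$-homeomorphic, hence carries the standard neighborhood into nearstandard points) and $U$ is convex so that $\ov U$ inherits no pathology (cf. \S\ref{subsec: pertinent nsa facts}), we may restrict $H_B$ to $\exp^{-1}(\rz\wt U)\x\exp^{-1}(\rz\wt U)$ and conclude $\ov\psi\in SA^\om$ there. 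Finally, I would note in passing that the inversion map in the $H$-coordinates is $x\mapsto -x$, which is $^\s$analytic and hence $SA^\om$, so the whole $^\s\loc\rz LG$ structure $(\ov\SG,\ov\psi,\ -\ )$ is $S$-analytic; this is not strictly needed for the stated conclusion but completes the picture for the main regularity theorem.

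The main obstacle I anticipate is not the analyticity argument itself — that is essentially packaged in Lemma \ref{lem: H_B analyticity} — but the bookkeeping around domains of definition: ensuring that the BCH identity $\psi(g,h)=\exp(H(\exp^{-1}g,\exp^{-1}h))$ holds on a genuinely standard neighborhood, that $\exp^{-1}$ of that neighborhood lands where $H_B$ is known to be $SA^\om$ (i.e. in the nearstandard region where the estimates of Lemma \ref{lem: H-lem } and the Taylor-tail conditions defining $SA^\om$ are controlled), and that the finite chain of restrictions to smaller isomorphic local groups does not disturb any of this. Convexity of $U$ is invoked precisely to keep $\ov U$ and the associated monads well-behaved, and I would make sure the radius $\tfrac{1}{2B_0}$ appearing in the $\exp$-homeomorphism proof is compatible with the neighborhood on which $H_B$'s $SA^\om$-ness was verified.
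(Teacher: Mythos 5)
Your proposal is correct and follows essentially the same route as the paper: identify $\ov\psi$ with $H_{[\ ,\ ]}$ via the BCH formula and then invoke Lemma \ref{lem: H_B analyticity} with $B=[\ ,\ ]$ nearstandard. The paper's own proof is a one-liner to this effect; your additional care over domains, convexity, and the compatibility of the $\tfrac{1}{2B_0}$ radius is a sensible expansion of what the paper leaves implicit.
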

%\end{pthm}

\begin{proof} The result follows from the above preliminaries, the $H_B$-lemma (Lemma \ref{lem: H_B analyticity}) and the fact that $\ov\psi(x,y)=H_{[\ ,\ ]}(x,y)$.
% and the CHD formula.
\end{proof}

We can now give our main nonstandard result. We continue to use the notation $\ov{\psi}$ from the product theorem above for the group structure in the new coordinates.

%\begin{mthm}
\begin{thm}[Main nonstandard theorem]\label{thm: Main nonst thm}
 Suppose that  $(\SG,\psi)$ is an $SC^0$ ${}^{\s}\loc\rz\!LG$. Then there exists standard neighborhoods\; $U$ and\; $V$ of \;$0$ in $\bbr^n$, such that $\eta\doteq\;^o(\exp^{-1}):( U,0)\ra (V,0)$ is a homeomorphism $^o\psi:V\x V\to\bbr^n$ is an analytic local group structure and $\eta\circ\ov\psi\circ(\eta^{-1}\times\eta^{-1}):V\x V\to\bbr^n$ is our original $^{\s}\loc\rz\!LG$ structure $\psi$.
\end{thm}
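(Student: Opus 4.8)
The statement is essentially a bookkeeping assembly of the three regularity results already proved: that $\ad$ (equivalently $[\ ,\ ]$) is $SC^0$ (Corollary \ref{cor: ad is nearst}), that $^\circ\exp$ is a local homeomorphism (Theorem \ref{thm: exp is SC^o}), and that the product $\ov\psi$ in canonical coordinates is $S$-analytic (Theorem \ref{thm: Product thm}). So the proof proposal is: first invoke these three results to produce the candidate objects, then check that taking standard parts turns the internal identities into the required standard identities, and finally check that the conjugated map is the original $\psi$.

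\begin{proof}[Proof of Theorem \ref{thm: Main nonst thm}]
Since $(\SG,\psi)\in{}^\s\loc SC^0\rz LG$, Corollary \ref{cor: ad is nearst} gives that the $\rz$Lie bracket $[\ ,\ ]=\rz LA(\SG,\psi)$ is $SC^0$, i.e.\ $[\ ,\ ]\colon\rz(\bbr^n\x\bbr^n)_{\nes}\to\rz\bbr^n_{\nes}$. By Theorem \ref{thm: exp is SC^o} there are standard neighborhoods $U_0$, $V_0$ of $0$ in $\bbr^n$ so that $^\circ\exp$ exists and restricts to a homeomorphism $U_0\to V_0$; equivalently $\exp$ is a $^\s$local $S$-homeomorphism and $\exp^{-1}$ is a $^\s$local $S$-homeomorphism on a standard neighborhood $V$ of $0$. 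Shrinking once to a convex standard neighborhood $U$ (as in \S\ref{subsec: equivalence of loc top gps} this loses nothing) and setting $\wt U = U\cap V$, Theorem \ref{thm: Product thm} applies: the representation $(\ov\SG,\ov\psi)$ of $(\SG,\psi)$ in the coordinates $\exp^{-1}$ has $\ov\psi = H_{[\ ,\ ]}$ which is $SA^\om$ on $\exp^{-1}(\rz\wt U)\x\exp^{-1}(\rz\wt U)$. Set $\eta\doteq{}^\circ(\exp^{-1})\colon(U,0)\to(V,0)$; by the above it is a homeomorphism onto its (open) image, and after replacing $U$, $V$ by this image and its preimage we may take it to be a homeomorphism onto $V$.

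It remains to identify $^\circ\ov\psi$ and to check the conjugation formula. By Lemma \ref{lem: S-analytic maps}(A), since $\ov\psi\in SA^\om$, the standard part $^\circ\ov\psi\colon V\x V\to\bbr^n$ is an analytic map. Moreover $\ov\psi$ is an internal local group structure with identity $0$: the internal associativity, two-sided identity, and inversion identities
\begin{align*}
\ov\psi(\ov\psi(x,y),z) &= \ov\psi(x,\ov\psi(y,z)),\\
\ov\psi(0,x)=\ov\psi(x,0) &= x,\\
\ov\psi(x,-x)=\ov\psi(-x,x) &= 0
\end{align*}
hold internally (these are the transferred BCH identities recalled in \S\ref{sec: prdct is S-analy after coord change}, with inversion $x\mapsto-x$). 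Because $\ov\psi\in SC^\infty$ and the coordinates are restricted to nearstandard points of a fixed standard domain, the standard part map commutes with composition (\S\ref{subsec: props of standard part map}), so applying $^\circ(\cdot)$ to each identity and using $^\circ(-x)=-x$ yields that $^\circ\ov\psi$ satisfies associativity, the two-sided identity law at $0$, and the inversion law with inverse $x\mapsto-x$ on $V$ (after the usual finite shrinking to make all compositions defined). Together with continuity of $^\circ\ov\psi$ this says $^\circ\ov\psi$ is an analytic local group structure on $V$ in the sense of Definition \ref{def: loc Euclid top gp}.

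Finally, the conjugation identity. On the internal level, $\ov\psi$ is by construction the representation of $\psi$ in the coordinate $\exp^{-1}$, i.e.\ $\ov\psi = (\exp^{-1})\circ\psi\circ(\exp\times\exp)$ wherever defined. Hence $\psi = \exp\circ\,\ov\psi\circ(\exp^{-1}\times\exp^{-1})$. Taking standard parts and again using that $^\circ(\cdot)$ commutes with composition of $SC^0$ maps and with the $S$-homeomorphisms $\exp$, $\exp^{-1}$ (Lemma \ref{lem: S-homeo} and \S\ref{subsec: props of standard part map}), together with $^\circ\exp = \eta^{-1}$ and $^\circ(\exp^{-1}) = \eta$, we obtain
$$
\psi = {}^\circ\psi = \eta\circ({}^\circ\ov\psi)\circ(\eta^{-1}\times\eta^{-1})\colon V\x V\to\bbr^n
$$
on a suitable standard neighborhood of $0$, which is the asserted formula (note $\psi$ is already $C^0$ and hence equals its own standard part). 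This exhibits the original $^\s\loc\rz LG$ structure $\psi$ as analytic after the homeomorphic change of coordinates $\eta$, completing the proof.
\end{proof}

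The only genuine subtlety — and the step I would expect to absorb the most care — is the repeated silent passage between ``where defined'' internal identities and their standard counterparts: one must make sure the finitely many shrinkings of the representative neighborhood needed to have all of $\ov\psi(\ov\psi(x,y),z)$, $\psi\circ(\exp\times\exp)$, etc.\ simultaneously defined do not disturb the conclusions, which is exactly the point addressed in \S\ref{subsec: equivalence of loc top gps}; and that $^\circ(\cdot)$ commuting with composition genuinely applies here, which it does because every map in sight is $SC^0$ (indeed $SC^\infty$ or an $S$-homeomorphism) on a standard domain restricted to its nearstandard part.
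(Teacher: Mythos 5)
Your proposal is correct and follows exactly the paper's route: the paper's own proof is a one-line assembly of Theorem \ref{thm: exp is SC^o}, Corollary \ref{cor: ad is nearst}, and Theorem \ref{thm: Product thm}, which is precisely the skeleton you use. The additional details you supply (passing the group identities to standard parts, the conjugation bookkeeping, and the caveat about finite shrinkings) are consistent with what the paper leaves implicit.
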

%\end{mthm}

\begin{proof} This follows immediately from the $\exp$ is an $S$-homeomorphism theorem (see Theorem \ref{thm: exp is SC^o}) and from the $\ad$ is  $SC^0$ Corollary (see Corollary \ref{cor: ad is nearst} ) applied to  Theorem \ref{thm: Product thm} above.
\end{proof}

Suppose that $(\SG,\psi,\nu)$ is an $SC^0$ $^\s\loc\rz\! LG$ modeled on $(\bbr^n,0)$. Then we have proved the following: there is a (canonical) S-homeomorphic choice of coordinates in which our $^\s$local *Lie group is S-analytic. \textbf{Henceforth in this paper, if we have such a $^\s$local *Lie group with standard part a locally Euclidean local topological group, then the standard part is in fact S-analytic. For the purposes of this paper, this can be heuristically restated as follows.}
\begin{remark}
 Suppose that we have a $^\s$local *Lie group modeled on $\bbr^n$ lying (pointwise) infinitesimally close to a locally Euclidean local topological group on some standard neighborhood of $0$, then that local topological group is analytic.
\end{remark}

\subsection{Standard Consequences of Main Theorem}\label{sec: stan conseq of main reg thm}
  In this section, we will give some standard corollaries of the main regularity theorem.
    We start with a nonstandard result that is a statement about how a single nonstandard object with mild regularity in fact has strong regularity, and when we shift to the standard domain we get a result about the asymptotic regularity of families of objects. This is typical.
    Note that these formulations have the flavor of the theory of normal families of analytic functions; see Robinson's paper \cite{RobinsonNormalFamilies1965MR0188406}. But, here, instead of some kind of nonstandard Cauchy formula applied to nonstandard elements of a standard sequence of analytic functions, we work with the (transferred) properties that ideal (nonstandard) elements of families of local Lie groups might have and use our work to force properties on the family itself.

  We will be talking about families of local topological groups defined on some neighborhood of $0$ on some fixed $\bbr^n$. As such, we need to fix a convex  neighborhood $V$ of $0$ in $\bbr^n$, where our local groups will be defined. If $G=(\psi,\nu)$ is a local group defined on $\bbr^n$, let $\SD_G\subset\bbr^n$ denote a domain of definition for $G$, ie., an open neighborhood $N$ of $0$, such that $\psi$ is defined on $N\x N$ and $\nu$ on $N$    Given this, then note that as our choice for $V$ is arbitrary; then for a given family $\FG$ of local groups such that $\cap\{\SD_G:G\in\FG\}$ contains an open set, $\wh{N}$, then we can just choose $V$ to be a convex neighborhood of $0$ contained in $\wh{N}$.

   If $G_j=(\psi_j,\nu_j)$, $j=1,2$, are two local topological groups on $\bbr^n$ (here, the identity of any such group will always coincide with $0$), and $U$ is a neighborhood of $0$ in $\bbr^n$ with $U\subset\SD_{G_1}\cap\SD_{G_2}$, we say that they $G_1$ equals $G_2$ on $U$ if $\psi_1=\psi_2$ on $U\x U$ and $\nu_1=\nu_2$ on $U$; we say they are equal if there is such a $U$ on which they are equal.
%   Generally below we will be considering families of local Lie groups on $\bbr^n$; in this case $0$ will always be the identity and all will have a common domain of definition.
   For perspective, we have included  the following lemma.
\begin{lem}
   Suppose that $U\subset V\subset\bbr^n$ are convex open neighborhoods of $0$ and $G_1,G_2$ are local topological groups such that $V\subset \SD_{G_1}\cap\SD_{G_2}$ and  $G_1=G_2$ on $U$, then $G_1=G_2$ on $V$.
\end{lem}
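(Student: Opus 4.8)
The statement is that two local topological group structures on the same convex domain $V$ that share a common germ at $0$ (they agree on the smaller neighborhood $U$) must coincide on all of $V$. Note first that this is \emph{not} a soft fact: two merely continuous maps agreeing on an open set need not agree elsewhere, so any proof must genuinely exploit associativity and the existence of inverses. The plan is a connectedness argument. Since $V\subset\SD_{G_1}\cap\SD_{G_2}$, both $\psi_i$ are defined on all of $V\times V$ and both $\nu_i$ on all of $V$; set
\[
Q=\{v\in V:\ \nu_1(v)=\nu_2(v),\ \text{and } \psi_1(v,w)=\psi_2(v,w),\ \psi_1(w,v)=\psi_2(w,v)\ \text{for all } w\in V\},
\]
with the standing convention (as elsewhere in the paper, following Montgomery--Zippin \cite{MontZip1955} and Pontryagin \cite{Pontryagin1986}) that such identities are asserted wherever the relevant iterated products stay inside $V$, passing to successively smaller neighborhoods as needed. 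Clearly $Q=V$ gives exactly the conclusion, so since $V$ is convex, hence connected, it suffices to show $Q$ is nonempty, closed, and open.

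The first two are immediate. One has $0\in Q$: axioms (b),(c) at $x=0$ give $\nu_i(0)=0$, and $\psi_i(0,w)=w=\psi_i(w,0)$ for every $w\in V$ by axiom (b); so $Q\neq\emptyset$. And $Q$ is closed in $V$, being the intersection over all $w\in V$ of the sets $\{v:\psi_1(v,w)=\psi_2(v,w)\}$, $\{v:\psi_1(w,v)=\psi_2(w,v)\}$, and $\{v:\nu_1(v)=\nu_2(v)\}$, each closed by continuity of $\psi_1,\psi_2,\nu_1,\nu_2$.

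The substance is the openness of $Q$. Fix $v_0\in Q$; I would show a whole neighborhood $\Omega$ of $v_0$ lies in $Q$. The key preliminary point is that $\nu_1(v_0)=\nu_2(v_0)=:\bar v_0$ (they agree since $v_0\in Q$, taking $w=\nu_i(v_0)$ appropriately) and, using uniqueness of inverses in a local group together with the fact that $\psi_i(v_0,\cdot)$ is a local homeomorphism hence injective, that $\psi_i(\bar v_0,\cdot)$ is also $i$-independent on the relevant range: $z=\psi_i(\bar v_0,w)$ is characterized by $\psi_i(v_0,z)=w$. Now for $v$ close to $v_0$ write $v=\psi_i\big(v_0,\,a\big)$ with $a:=\psi_i(\bar v_0,v)$; then $a\to 0$ as $v\to v_0$ (continuity and $\psi_i(\bar v_0,v_0)=0$), and by the previous sentence $a$ is the same for $i=1,2$. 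Associativity then gives, for $w\in V$ with products defined,
\[
\psi_i(v,w)=\psi_i\big(\psi_i(v_0,a),w\big)=\psi_i\big(v_0,\psi_i(a,w)\big),
\]
and an entirely parallel argument (translation by the \emph{small} element $a$, which one controls by a bootstrap off the agreement on $U$ and off $v_0\in Q$) shows $\psi_1(a,w)=\psi_2(a,w)$; combining with $\psi_1(v_0,\cdot)=\psi_2(v_0,\cdot)$ yields $\psi_1(v,w)=\psi_2(v,w)$. The identity $\psi_1(w,v)=\psi_2(w,v)$ is symmetric, and $\nu_1(v)=\nu_2(v)$ follows from $v^{-1}=\psi_i(\nu_i(a),\bar v_0)$ with $\nu_i(a)$ small. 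Hence $\Omega\subset Q$. I expect the genuinely delicate step to be exactly this bookkeeping of domains of definition in the openness proof — in particular relating translations by elements far from the identity to translations by infinitesimally small ones — which is why the hypotheses require $U$ and $V$ convex and why the standard ``shrink the neighborhood'' conventions are invoked throughout; once that is handled, connectedness of $V$ closes the argument.
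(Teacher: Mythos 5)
Your overall architecture (nonempty--open--closed on a connected set) is the same as the paper's, but you run the connectedness argument on a different set: the set $Q$ of single elements $v$ whose left and right translations (and inverse) agree, rather than on the set of pairs in $V\x V$ where the two products agree, which the paper reaches by first invoking the Montgomery--Zippin generation theorem (every element of $V$ is a $k$-fold product of elements of $U$) and then inducting on word length. Unfortunately your version has a genuine gap at exactly the step you yourself flag as delicate. In the openness argument you factor $v=\psi_i(v_0,a)$ with $a$ small and reduce, via associativity, to the claim $\psi_1(a,w)=\psi_2(a,w)$ for \emph{arbitrary} $w\in V$, asserting that this follows by ``an entirely parallel argument'' bootstrapped off the agreement on $U$ and off $v_0\in Q$. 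It does not: the hypothesis $G_1=G_2$ on $U$ gives agreement of the products only on $U\x U$, so it controls $\psi_i(a,w)$ only for $w\in U$; and $v_0\in Q$ controls only translations by $v_0$ itself, saying nothing about translations by the small element $a$. If you try to prove the missing claim by the same device --- factor $w=\psi_i(w_0,b)$ with $b$ small and apply associativity --- you land on $\psi_i(\psi_i(a,w_0),b)$ with $\psi_i(a,w_0)$ an arbitrary point of $V$, i.e.\ you regenerate the original difficulty (``element far from the identity composed with a small element'') rather than resolving it. The assertion ``translation by a small element agrees for the two structures against every element of $V$'' is essentially equivalent in strength to the lemma itself, not a consequence of the hypotheses.

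This is precisely the point the paper's proof is built to handle: it writes every $z\in V$ as a word in elements of $U$, inducts on the length of the word, uses $k$-fold associativity (which holds on $V$ only because $V$ is convex; see chapter \ref{chap: error in Jacoby} for why this is not automatic for local groups) to peel off one letter at a time, and uses an overflow argument on $\xi\in\mu(0)$ to upgrade agreement at infinitesimal second arguments to agreement on a standard neighborhood of $0$ in the second slot. Some such generation-plus-induction mechanism has to appear somewhere in any correct proof; as written, yours defers it into a parenthetical.
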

\begin{proof}
   We prove the statement for the products, $\psi_1,\psi_2$, the proof for the inversions following from this and using a similar open and closed argument. First of all, one can adapt the theorem on connected topological groups in \cite{MontZip1955} p.37 to get that elements of both $G_1$ and $G_2$ in $V$ are  products of elements of these groups in $U$. We therefore proceed by induction on the length of the product keeping in mind that convexity of $V$ implies that $k$-fold associativity holds for $G_1$ and $G_2$ on $V$ (see chapter \ref{chap: error in Jacoby} for why this could be a problem). Let's verify that if $z\in V\smallsetminus U$ is given by $\psi_j(x,y)$ for $j=1,2$ (as $G_1=G_2$ on $U$), then $w$ in some neighborhood of $0$, we have that $\psi_1(z,w)=\psi_2(z,w)$. Choose $\xi\in\mu(0)$; then $\rz\psi_j(\rz y,\xi)\in\rz U$ and so $\rz\psi_1(\rz\psi_1(\rz y,\xi))=\rz\psi_2(\rz\psi_2(\rz y,\xi))$. But the left hand side is (by associativity) $\rz\psi_1(\rz\psi_1(\rz x,\rz y),\xi)=\rz\psi_1(\rz z,\xi)$  and the right hand side is $\rz\psi_2(\rz\psi_2(\rz x,\rz y),\xi)=\rz\psi_2(\rz z,\xi)$. This holds for all $\xi\in\mu(0)$ and therefore by overflow (everything here being internal) there is a neighborhood $N$ of $0$ such that $\rz\psi_1(\rz z,\rz w)=\rz\psi_2(\rz z,\rz w)$ for all $w\in N$, ie., $\psi_1(z,w)=\psi_2(z,w)$ for $z\in U^2$ and $w\in N$. By doing the previous with $\rz\psi_j(\rz x,\xi)$ instead, we get $\psi_1(w,z)=\psi_2(w,z)$ for $w$ in a neighborhood of $0$ and $z\in U^2$.  So we have that if $\psi_1(z,w)=\psi_2(z,w)$ for some $z,w\in V$, then this holds on some neighborhoods of $z$ and $w$, ie., the set where they are equal is open.
    On the other hand, if $\psi_1=\psi_2$ on some set $S_1\x S_2$ and  $(\xi,\z)\in\rz S_1\x\rz S_2$, then transfer implies that $\rz\psi_1(\xi,\z)=\rz \psi_2(\xi,\z)$. But then if $x,y\in V$ are in the closure of $S_1\x S_2$, then choosing $(\xi,\z)\in\rz S_1\x\rz S_2$ with $^o\xi=x$ and $^o\z=y$, we have,  by the continuity of the $\psi_j$'s  on $V\supset S_1\x S_2$, that
 \begin{align}
    \psi_1(x,y)=\psi_1(^o\xi,\;^o\z)=\;^o(\psi_1(\xi,\z))=\;^o(\psi_2(\xi,\z))=\psi_2(^o\xi,\;^o\z)=\psi_2(x,y),
  \end{align}
   ie., they are equal on the closure of $S_1\x S_2$.
   So we know that the subset where $\psi_1=\psi_2$ contains $U\x U$ and is an open and closed subset of $V\x V$ and so as $V\x V$ is connected must be equal to $V\x V$.
%   On the other hand,
%   Let  $\SS:V\x V\ra\bbr^n\x\bbr^n$ denote the homeomorphism given by $(x,y)\ra (y,x)$. Then $\SS$ define a local group isomorphism $G_1\x G_2\ra G_2\x G_1$ that is the identity on
\end{proof}

%  Before we begin, as we will be considering families of local Lie groups, we need to take some care to make sure that our families of local Lie groups are such that the notions of distance between local Lie groups that we will use here are well defined. First of all, we will consider \textbf{sufficient families, or sufficient sequences $\FG=\FG(W,V)$ of local Lie groups} that are associated with a pair $(W,V)$ of fixed convex open neighborhoods of $0$ in $\bbr^n$ for some $n\in\bbn$,  where $W$ has compact closure in $V$. $(\psi,\nu)$ is \textbf{$(W,V)$-sufficient} if for all $x,y,z\in W$, $\psi(\nu(x),y)$,$\psi(\nu(x),\nu(y))$,  $\psi(x,\nu(y))$, $\nu(\psi(x,y))$,  $\psi(x,\psi(y,z))$ and $\psi(\psi(x,y),z)$, ie., all 2-fold compositions, are defined     @@@@@@@@@@@@@@

%\textbf{CLOSE TO CORRECT DEFINITION FOR THIS CHAPTER}
    As we will be talking about potential groups on this fixed neighborhood of $0$ of varying degrees of differentiability,  we will include some organizational definitions here. If $k\in\{0\}\cup\bbn$, we will let $\bsm{\wt{C}^k}=\wt{C}^k(V)$ denote $C^k(V\x V,\bbr^n)\x C^k(V,\bbr^n)$
    Recall that $Gp=Gp^k(V)$ denotes the family $C^k$ local Lie groups on $V$ for some fixed $2\leq k\leq\infty$ so that $G^k(V)\subset \wt{C}^k(V)$.  If $G\in Gp$  with $G=(\psi,\nu)$ and $(x,y),(\ov{x},\ov{y})\in V\x V$, then we write $|G(x,y)-G(\ov{x},\ov{y})|$ for $\max\{|\psi(x,y)-\psi(\ov{x},\ov{y})|,|\nu(x)-\nu(y)|\}$. Given an ordered pair of multiindices $(\a,\b)$, we will write $\p^{(\a,\b)}G$ for $(\p^\a_x\p^\b_y\psi,\p^\a\nu)$ where $\p^\a_x$ means taking the $\a$ partial derivative in the first coordinates of $\psi$ and $\p^\b_y$ the $\b$ partial derivative in the second coordinates. We now have our first result.
\begin{proposition}\label{prop: ptwise lim G_j in Gp is Gp}
    Suppose that $G^1,G^2,\ldots$ is a sequence in $Gp^k$ and $H\in\wt{C}^0$  such that for all $x,y\in V$ we have $|H(x,y)-G^j(x,y)|$ tends to $0$ as $j\ra\infty$. Then there are coordinates on $V$ in which $H$ can be given the structure of a local Lie group on $V$. Furthermore, in these coordinates we have that for each $(x,y)\in V\x V$ and all multiindices $(\a,\b)$ with $|\a|+|\b|\leq k$, we have  $|\p^\a\p^\b (G^j-H)(x,y)|$ tends to 0 as $j\ra\infty$ .
\end{proposition}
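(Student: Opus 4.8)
The plan is to transfer the problem to a single ideal member of the sequence and then invoke the main nonstandard regularity theorem. Fix an infinite index $\omega\in\rz\bbn_{\infty}$ and let $G^{\omega}=(\psi^{\omega},\nu^{\omega})$ be the corresponding internal element. By transfer $G^{\omega}$ is an internal $\rz C^{k}$ local $\rz$Lie group modelled on $\rz V$, with identity at $0$ and with right and left $\rz$multiplications $\rz$local diffeomorphisms. Since for each \emph{standard} $x,y\in V$ the data $j\mapsto G^{j}(x,y)$ converge to $H(x,y)$, transfer together with $\omega$ infinite gives $\psi^{\omega}(\rz x,\rz y)\sim\rz(H_{1}(x,y))$ and $\nu^{\omega}(\rz x)\sim\rz(H_{2}(x))$, where $H=(H_{1},H_{2})$; equivalently, ${}^{\circ}G^{\omega}$, restricted to standard points, is exactly $H$.

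The decisive step, carrying essentially all the content, is to show that the internal Lie bracket of $G^{\omega}$ is nearstandard; granted this (and its analogue for $\nu^{\omega}$), the Hausdorff-series estimates of chapters \ref{chap: st(exp) is loc homeo}--\ref{chap: Main NS reg thm and stan version} show that $G^{\omega}$, in its internal canonical coordinates $\exp_{\omega}^{-1}$, is an $SC^{0}$ ${}^{\s}\loc\rz LG$ in the sense of definition \ref{def: sigma local *Lie group}, and in fact (Lemma \ref{lem: H_B analyticity}) is $S$-analytic there; corollary \ref{cor: ad is nearst} is precisely the converse statement, confirming that nearstandardness of the bracket is the real issue. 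The identity axioms $\psi^{\omega}(0,y)=y$ and $\psi^{\omega}(x,0)=x$ already pin the first $\rz$derivatives of $\psi^{\omega}$ at the origin to the identity and the pure second $\rz$derivatives there to $0$, so near $0$ one has $\psi^{\omega}(x,y)=x+y+\Phi^{\omega}(x,y)$ with $\Phi^{\omega}$ vanishing on the two coordinate axes, its bilinear leading term being the mixed second $\rz$derivative of $\psi^{\omega}$ at $0$ and its antisymmetrisation the $\rz$bracket. My plan is to compare the rescaled quantities $t^{-2}\bigl(\psi^{\omega}(\rz(tx),\rz(ty))-\rz(tx)-\rz(ty)\bigr)$ for several standard $t$ tending to $0$: each is nearstandard, being infinitesimally close to $t^{-2}\bigl(H_{1}(tx,ty)-tx-ty\bigr)$, while the group law — after passing to canonical coordinates — recognises $\psi^{\omega}$ near $0$ as the $\rz$Hausdorff series built from that mixed second derivative, so that an infinite bracket is incompatible with $\psi^{\omega}$ remaining pointwise infinitesimally close to the continuous map $H_{1}$. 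I expect this to be the main obstacle: it is exactly the ``$C^{0}$ control forces $C^{2}$ control'' phenomenon around which the whole paper is built, and it has to be extracted from the group axioms, not from pointwise convergence alone.

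Granting this, the first assertion is immediate: the main nonstandard theorem (theorem \ref{thm: Main nonst thm}), fed by corollary \ref{cor: ad is nearst} and the product theorem \ref{thm: Product thm} applied to $G^{\omega}$, furnishes standard neighbourhoods $U,W$ of $0$ in $\bbr^{n}$ and a homeomorphism $\eta\doteq{}^{\circ}(\exp_{\omega}^{-1}):(U,0)\to(W,0)$ — in fact an analytic diffeomorphism, since it equals the inverse exponential of the Lie algebra with bracket ${}^{\circ}[\ ,\ ]^{\omega}$ — such that ${}^{\circ}\bar\psi^{\omega}$ is an analytic local group structure and $\eta$ conjugates it back to ${}^{\circ}G^{\omega}=H$. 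Thus, in the chart $\eta$, the pair $H$ acquires the structure of an analytic (hence $C^{k}$) local Lie group on $W$; one spreads this from a small neighbourhood to all of $V$ by the open-and-closed argument of the lemma preceding this proposition (or shrinks $V$).

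For the second assertion I would argue $C^{k}$-equicontinuity plus Arzel\`a--Ascoli. The reasoning above applies verbatim to \emph{every} infinite $\omega'$, so the bracket of each $G^{\omega'}$ is nearstandard; hence $\exp_{\omega'}$ is $S$-analytic, the transition from the canonical coordinates of $G^{\omega'}$ to the fixed analytic chart $\eta$ is $S$-analytic, and therefore $G^{\omega'}$, and with it its $\eta$-representative $\bar G^{\omega'}$, are $SC^{\infty}\subset SC^{k}$ (using closure of $SC^{\infty}$ under composition, Lemma \ref{lem: S-smooth facts}). By the nonstandard characterisation of equicontinuity of a sequence of maps together with their partial derivatives of order $\le k$ — the criterion behind theorem \ref{thmbasreg}, used throughout chapter \ref{chap: partial solution to approx prob} — this says precisely that $\{\p^{(\a,\b)}\bar G^{j}:j\in\bbn,\ |\a|+|\b|\le k\}$ is equicontinuous on compact subsets of $W\times W$. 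Since $\bar G^{j}\to\bar H$ pointwise ($\eta$ being continuous) and $\bar H$ is already known to be $C^{k}$, a standard bootstrap — Arzel\`a--Ascoli applied in turn to the derivatives of increasing order, the pointwise limit identifying the limit at each order — gives $\p^{(\a,\b)}\bar G^{j}\to\p^{(\a,\b)}\bar H$ uniformly on compacta for all $|\a|+|\b|\le k$; that is, $|\p^{\a}\p^{\b}(G^{j}-H)(x,y)|\to 0$ in the coordinates $\eta$ for every $(x,y)\in V\times V$, as required.
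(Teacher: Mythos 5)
Your ``decisive step'' --- re-deriving nearstandardness of the $\rz$bracket of $G^{\omega}$ by hand from a Taylor/rescaling argument --- misidentifies the role of the main theorem and introduces a circularity. In the paper's framework the $SC^0$ condition of Definition \ref{def: sigma local *Lie group} is the \emph{hypothesis} of Theorem \ref{thm: Main nonst thm}, and nearstandardness of $\ad$ (Corollary \ref{cor: ad is nearst}) is a downstream \emph{conclusion} of chapter \ref{chap: pf that ad is sc0}; you have this reversed, treating the nearstandard bracket as an input you must manufacture before $G^{\omega}$ can be called an $SC^0$ ${}^{\s}\loc\rz LG$. The argument you sketch for it is moreover circular: to recognise $\psi^{\omega}$ near $0$ as the $\rz$Hausdorff series you must pass to the canonical coordinates $\exp_{\omega}^{-1}$, but the $S$-homeomorphism property of $\exp_{\omega}$ (chapter \ref{chap: st(exp) is loc homeo}) is itself deduced from the nearstandard bracket. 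And continuity of $H_1$ gives no control on $t^{-2}\bigl(H_1(tx,ty)-tx-ty\bigr)$ as $t\to 0$: a merely continuous germ can have divergent second difference quotients, so ``each rescaled quantity is nearstandard because it is close to the $H_1$-quantity'' fails. A related omission: what you actually establish --- $\psi^{\omega}(\rz x,\rz y)\sim\rz H_1(x,y)$ at \emph{standard} $x,y$ --- is not the $SC^0$ condition, which demands $\psi^{\omega}(\mu(x)\x\mu(y))\subset\mu(\psi^{\omega}(\rz x,\rz y))$. The paper's route is shorter and closes these gaps: from $\rz G^{\omega_0}(\xi,\zeta)\sim\rz H(\xi,\zeta)$ on $\rz V_{\nes}$ and $S$-continuity of $\rz H$ (because $H\in\wt{C}^0$), a triangle inequality gives $S$-continuity of $\rz G^{\omega_0}$; Theorem \ref{thm: Main nonst thm} then furnishes the coordinate change and $S$-analyticity outright, with the bracket regularity falling out as a by-product rather than serving as an input.

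For the derivative convergence your Arzelà--Ascoli bootstrap is a genuinely different route and could in principle be completed, but it is heavier and has an unargued step: you need $\bar G^{\omega'}\in SC^k$ of the \emph{fixed} chart for every infinite $\omega'$, which forces you to show the transition maps $\exp_{\omega'}^{-1}\circ\eta^{-1}$ are $S$-analytic uniformly in $\omega'$, a claim you assert but do not justify. The paper avoids this entirely: having placed $\rz H$ and $\rz G^{\omega}$ (for $\omega\geq\omega_0$) in $S\wt{C}^k(U)$ in the new chart, it applies statement (2) of Theorem \ref{thmbasreg} once, obtaining $\rz\p^{(\alpha,\beta)}\rz G^{\omega}\sim\rz\p^{(\alpha,\beta)}\rz H$ on $\rz V_{\nes}$, and then reverse-transfers via the internal sets $\FE_{K,r}$ of good indices ($\omega_0\in\rz\FE_{K,r}$ forces $\FE_{K,r}\neq\emptyset$), reading off the standard pointwise conclusion directly with no compactness, subsequence extraction, or equicontinuity of the standard family required.
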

\begin{proof}
    Let $\om_0$ be any fixed infinite integer; then the transfer of the hypothesis gives  $|\rz H(\xi,\z)-\rz G^\om(\xi,\z)|\sim 0$ for all $\om\geq \om_0$. In particular, this implies that  $\rz G^{\om_0}$ is S-continuous on $\rz V$. But then theorem \ref{thm: Main nonst thm} implies that $\rz G^{\om_0}$ is a $^\s$local $SC^0$ *Lie group and therefore there are coordinates on $V$ so that $\rz G^{\om_0}$ is an S-analytic $^\s$local *Lie group. That is, $^o(\rz G^{\om_0})$ is a local analytic Lie group. Yet $\rz H(\xi,\z)\sim\rz G^{\om_0}(\xi,\z)$ for all $\xi,\z\in\rz V_{nes}$ means that $H=\;^o(\rz H)=\;^o(\rz G^{\om_0})$, eg., $H$ is an S-analytic group in these coordinates. Backing up, since in these coordinates, both $\rz G^\om$, for $\om\geq\om_0$, and $\rz H$ are eg., in $S\wt{C}^k(U)$, we have by theorem \ref{thmbasreg}  that the condition $\rz H(\xi,\z)\sim\rz G^\om(\xi,\z)$ for all $\xi,\z\in\rz V_{nes}$ (and all $\om\geq\om_0$) implies in fact that for all multiindex pair $(\a,\b)$ with $|\a|+|\b|\leq k$, we have that $\rz\p^{(\a,\b)}(\rz H)(\xi,\z)\sim\rz\p^{(\a,\b)}(\rz G^\om)(\xi,\z)$ for all $\xi,\z\in \rz V_{nes}$ (and all $\om\geq\om_0$). As this holds for all $\om\in\rz\bbn$ with $\om\geq\om_0$ and as $\rz V_{nes}=\cup\{\rz K:K\subset V\;\text{is compact}\}$, then we can rewrite this statement as follows. \textbf{(A)}: For every compact $K\subset V$ we have  $\rz\p^{(\a,\b)}(\rz H)(\xi,\z)\sim\rz\p^{(\a,\b)}(\rz G^\om)(\xi,\z)$ for all $\xi,\z\in \rz K$ and all $\om>\om_0$. Now consider, for each $r\in\bbr_+$ and compact $K\subset V$ the following set.
\begin{align}
     \FE_{K,\;r}=\{j_0\in\bbn:|\p^{(\a,\b)}G^j(x,y)-\p^{(\a,\b)}H(x,y)|<r\qquad\notag\\ \text{for all}\;x,y\in K,j\geq j_0\;\text{and}\;|\a|+|\b|\leq k \}.
\end{align}
    We claim that $\FE_{K,\;r}$ is nonempty; this will follow from reverse transfer when we verify that $\rz\FE_{K,\;r}$ is nonempty. By transfer, we have
\begin{align}
    \rz\FE_{K,\;r}=\{\la_0\in\rz\bbn:\rz|\rz\p^{(\a,\b)}(\rz G^\la)(\xi,\z)-\rz\p^{(\a,\b)}(\rz H)(\xi,\z)|<\rz r \qquad\notag \\ \text{for all}\;\xi,\z\in\rz K,\la\geq\la_0\;\text{and}\;|\a|+|\b|\leq k\};
\end{align}
    and note that statement (A) above implies, in particular, that $\rz G^{\om_0}\in\rz\FE_{K,\;r}$. So given this, if $K_1\subset K_2\subset\cdots$ is a sequence of compact subsets of $V$ with union $V$, then for each $t>0$ and $x,y\in V$, we have that there is $j_0\in\bbn$ such that $x,y\in K_j$ and $j\in\FE_{K_j,\;t}$ for all $j\geq j_0$. In other words, if $t>0$ and $x,y\in V$, we have that $|\p^{(\a,\b)}G^j(x,y)-\p^{(\a,\b)}H(x,y)|<t$ for all multiindex pairs $(\a,\b)$ with $|\a|+|\b|\leq k$ and $j\geq j_0$.
\end{proof}
   Let's give a more nuanced version of the previous proposition.  First note that the convergence condition in the above proposition is distinctly weaker than uniform convergence on $U$. Given this, we have a definition. (See Stroyan and Luxemburg, \cite{StrLux76}, p217 for a nonstandard rendition of the usual definition of equicontinuity.)
\begin{definition}\label{def: equicont fam}
   We say  that a family $\SF\subset \wt{C}^0(U)$  is equicontinuous if the following holds.   For each $x\in \ov{V}$ and each $r>0$, there is  a neighborhood of $x$, $U^x$, an element of $\SF$, $G^x$, and $s^x\in\bbr_+$ such that the following holds. If for all $y,\ov{y}\in U^x$, we have $|G^x(y,\ov{y})-G^x(x,x)|<s^x$, then for every $G\in\SF$, we have that $|G(y,\ov{y})-G(x,x)|<r$.
\end{definition}
   This is not the usual definition of equicontinuity; ours relates all elements of $\SF$ to some element of $\SF$ rather than to the universal function $d^x(y)=|y-x|$.
   This is a weak form of equicontinuity, a more typical form is a uniform version of this.  A typical example of of such a uniform equicontinuous subset of $\wt{C}^0(U)$ is given as follows. Let $\FM$ denote the set of functions  $m:\bbr_+\ra\bbr_+$  satisfying $\lim_{t\ra 0}m(t)$ exists and is $0$. Let $\SF_m$ denote the set of $G\in\wt{C}^0(U)$ such that there is  $b_G\in\bbr_+$ such that for all $x,y,\ov{x},\ov{y}\in U$ with $\max\{|x-\ov{x}|,|y-\ov{y}|\}<b_G$, we have $|G(x,y)-G(\ov{x},\ov{y})|<m(|(x,y)-(\ov{x},\ov{y})|)$.   Note also that if $m,m'\in\FM$ and $m$ decays no slower that $m'$, ie., $\lim_{t\ra 0}(m(t)/m'(t))$ exists and is finite, then $\SF_m\subseteq\SF_{m'}$, but note that this uniform condition implies that  $\wt{C}^0(U)\supsetneqq\cup\{\SF_m:m\in\FM\}$. A nonuniform version of the $\SF_m$ that still defines an equicontinuous set is as follows. Choose $M:U\x\bbr_+\ra\bbr_+$ such that for each $x\in U$,  $\lim_{t\ra 0}M(x,t)=0$ and define $\SF_M$ as follows: we say that $G\in\SF_M$ if for each $x\in U$, there is $K_{G,x}\in\bbr_+$ and $r_x>0$ such that for $\max\{|y-x|,|\ov{y}-x|\}<r_x$, we have that $|G(y,\ov{y})-G(x,x)|<K_{G,x}M(x,\max\{|y-x|,|\ov{y}-x|\})$ . Note that these sets $\SF_M$ are also subrings of $\wt{C}^0(U)$. Furthermore, we find that for these equicontinuous sets we do get exhaustion; ie., if $\bbm$ denotes the set of these $M$'s, then $\wt{C}^0(U)=\cup\{\SF_M:M\in\bbm\}$, which follows from the fact that  we can define an $M\in\bbm$ in terms of a given $G\in\wt{C}^0(U)$, in which case we will have  $G\in\SF_M$. Hence our equicontinuity can be defined in this manner; and note, in particular, that the rate of decay of $t\mapsto M(x,t)$ at $0$ can be arbitrarily slowly as $x\in U$ leaves compact subsets of $U$.
   In spite of this severely weak form of equicontinuity, we nonetheless have our second standard consequence of the main nonstandard theorem.
\begin{proposition}\label{prop: C^0 equicont fam is C^k equicont}
    Suppose that $\SF\subset Gp$ is an infinite equicontinuous subset. Then there are coordinates on $V$ and $c\in\bbr_+$ depending only on $\SF$ such that for all $G\in\SF$, we have that $\|G\|_k<c$.
\end{proposition}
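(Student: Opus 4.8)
The strategy is the by-now familiar nonstandard template: pass to the nonstandard universe, pick an arbitrary internal member of $\rz\SF$, use the main regularity theorem to place a common coordinate change, and then use overflow plus reverse transfer to pull a uniform bound back down to the standard world. The first step is to reformulate equicontinuity of $\SF$ nonstandardly: by definition \ref{def: equicont fam}, for each standard $x\in\ov V$ and each standard $r>0$ there is a neighborhood $U^x$, a witness $G^x\in\SF$, and $s^x>0$ so that closeness of $G^x$-values at $x$ forces $r$-closeness of \emph{every} $G\in\SF$ at $x$. Transferring and feeding in infinitesimal data, this says: for every $\G\in\rz\SF$ and every $\xi\in\rz V_{nes}$, $\G(\xi,\xi)\sim\G(\st\xi,\st\xi)$, and more to the point, $\G(\xi,\zeta)\sim\G(\xi',\zeta')$ whenever $\xi\sim\xi'$, $\zeta\sim\zeta'$ lie in $\rz V_{nes}$. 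In other words, \textbf{every internal $\G\in\rz\SF$ is $SC^0$ on $\rz V_{nes}$}, with monads-to-monads uniformly across the internal family. Since $\SF\subset Gp$, each such $\G$ is also an internal $C^k$ (indeed $C^\infty$ by transfer of the smoothness of local Lie groups) local $\rz$Lie group, hence lies in ${}^\s\loc SC^0\rz LG$.

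Next I would fix one infinite-indexed member $\G_0\in\rz\SF$ — here the infiniteness of $\SF$ is used exactly as in proposition \ref{prop: ptwise lim G_j in Gp is Gp}, to guarantee $\rz\SF$ contains a genuinely nonstandard element, but in fact for the bound we do not even need that; any internal member works once we observe the coordinate change is $\SF$-uniform. Apply Theorem \ref{thm: Main nonst thm} to $\G_0$: there is an $S$-homeomorphic change of coordinates $\eta = \st(\exp_{\G_0}^{-1})$ on a standard neighborhood, after which $\G_0$ becomes $S$-analytic, in particular $S\wt C^k$. The key point — and this is where one must be slightly careful — is that the \emph{same} coordinate change works for every element of $\rz\SF$ simultaneously, because the equicontinuity hypothesis forces all members of $\rz\SF$ to have infinitesimally-close $\rz$Lie brackets and infinitesimally-close exponential maps to that of $\G_0$; more precisely, the construction of $\exp$ and of the Hausdorff series depends only on the bracket $[\,,\,]$, and equicontinuity of $\SF$ translates (via the $\mu$-exp lemma and the Ad/ad machinery of chapter \ref{chap: pf that ad is sc0}) into $^\s$boundedness of the brackets uniformly over $\rz\SF$. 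So after this single coordinate change, \emph{every} $\G\in\rz\SF$ is $SC^0$, hence by the Product Theorem \ref{thm: Product thm} is $SA^\om$, hence in $S\wt C^k$: all its internal partials up to order $k$ are $SC^0$ on $\rz V_{nes}$, in particular have nearstandard values there.

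Now pass to the uniform bound. In the new coordinates, for each compact $K\subset V$ and each $\G\in\rz\SF$, $\|\G\|_{k,K}\doteq \rz\sup\{|\rz\p^{(\a,\b)}\G(\xi,\zeta)| : \xi,\zeta\in\rz K,\ |\a|+|\b|\le k\}$ is finite (nearstandard), since $\G\in S\wt C^k$. The content to extract is a \emph{single} finite bound $c$ independent of $\G$. I would do this exactly as in proposition \ref{prop: ptwise lim G_j in Gp is Gp}: for $c\in\bbr_+$ let
\[
B_c=\{c\in\bbr_+ : \|G\|_k < c \ \text{for all}\ G\in\SF\},
\]
and show $B_c\neq\emptyset$ by reverse transfer, i.e. by showing $\rz B_c\neq\emptyset$. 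But $\rz B_c = \{\kappa\in\rz\bbr_+ : \|\G\|_k<\kappa\ \text{for all}\ \G\in\rz\SF\}$, and this is nonempty because the external statement ``every $\G\in\rz\SF$ has all order-$\le k$ partials bounded by a common finite $\kappa_0$'' follows from $S\wt C^k$-ness plus equicontinuity: equicontinuity with infinitesimal input gives that the internal function $\G\mapsto\|\G\|_k$ is bounded above on the external set $^\s\SF$ by a standard constant, and then overflow extends this bound to an internal superset of $^\s\SF$ containing all of $\rz\SF$. (One should restrict to the fixed convex $V$ and to the coordinates above; the $\|\cdot\|_k$ in the statement is the $C^k$ norm over $V$, which by the compact-exhaustion argument of proposition \ref{prop: ptwise lim G_j in Gp is Gp} and $\rz V_{nes}=\cup\{\rz K:K\subset V\ \text{compact}\}$ is controlled by the pointwise/compact estimates.) Picking $c = \st\kappa_0 + 1$ gives $\rz B_c\neq\emptyset$, hence $B_c\neq\emptyset$, which is the conclusion.

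\medskip

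\noindent\textbf{Where the difficulty lies.} The genuinely delicate step is the claim that \emph{one} coordinate change serves the whole family — that $\eta=\st(\exp_{\G_0}^{-1})$ simultaneously regularizes every $\G\in\rz\SF$. This needs the observation that the equicontinuity of $\SF$ forces the $\rz$Lie brackets of all members to be infinitesimally close (after, perhaps, passing to a slightly smaller neighborhood), so that their exponential maps, and hence their canonical coordinates, agree up to infinitesimals; this is exactly the kind of "the bracket controls everything" reasoning underlying Theorem \ref{thm: Product thm}, but applied uniformly across the internal family rather than to a single group, and it must be checked that the estimates in the H-lemma (Lemma \ref{lem: H-lem }) are uniform in the family given the common bound $B_0$ on all the brackets. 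Once that uniformity is in hand, the overflow-and-reverse-transfer extraction of the constant $c$ is routine and parallels proposition \ref{prop: ptwise lim G_j in Gp is Gp} almost verbatim.
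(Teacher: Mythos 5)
Your proposal takes a genuinely different route from the paper, and that route has a gap you yourself half-identify but do not close.

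The paper argues by contradiction: it negates the conclusion to get, for each $b\in\bbr_+$, some $G_b\in\SF$ with $\|G_b\|_k>b$ in all coordinates, transfers to get a single $\ov{\SG}\in\rz\SF$ with $\rz\|\ov{\SG}\|_k$ infinite in all $*$coordinates, shows via the transferred equicontinuity statement that $\ov{\SG}$ is $SC^0$, applies Theorem \ref{thm: Main nonst thm} to get coordinates in which $\ov{\SG}$ is $S$-analytic — hence $\rz\|\ov{\SG}\|_k$ finite — and that is the contradiction. Crucially, by quantifying over coordinates inside the negated hypothesis, the paper never has to exhibit a single coordinate change that works for the whole family; it only has to find, for the one problematic $\ov{\SG}$, a single coordinate change that tames that one element.

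Your direct approach forces you to produce one coordinate change $\eta=\st(\exp_{\G_0}^{-1})$ serving all of $\rz\SF$ simultaneously, and the justification you offer — that equicontinuity forces all members of $\rz\SF$ to have infinitesimally close $\rz$Lie brackets and hence infinitesimally close exponential maps — is not correct. Equicontinuity (Definition \ref{def: equicont fam}) is a $C^0$ condition saying that each $G(y,\ov{y})$ is close to its own value $G(x,x)$, not that distinct $G,G'\in\SF$ are close to each other, and in particular it says nothing about second-order data. The Lie bracket lives two derivatives above the product, and two equicontinuous (even uniformly Lipschitz) families of local group products can have wildly different brackets. What the main nonstandard theorem delivers for an individual $\G\in\rz\SF$ is that $SC^0$ forces \emph{its own} bracket to be nearstandard; it does not assert that the brackets of distinct $\G,\G'\in\rz\SF$ are infinitesimally close, and there is no mechanism in the Ad/ad chapter that would give that from equicontinuity alone. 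Without it, your "same $\eta$ for everybody" step has no basis.

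The second gap is the overflow argument at the end. You want $\rz B_c\neq\emptyset$ where $\rz B_c=\{\kappa\in\rz\bbr_+:\rz\|\G\|_k<\kappa\ \forall\G\in\rz\SF\}$, and you assert the existence of a common finite $\kappa_0$ bounding $\rz\|\G\|_k$ over $^\s\SF$, then claim overflow extends the bound to an internal superset "containing all of $\rz\SF$." Overflow gives an internal superset of the external set $^\s\SF$, but there is no reason that superset should be (or even contain) all of $\rz\SF$. Worse, the existence of a common finite $\kappa_0$ for $^\s\SF$ is literally the statement $\{\|G\|_k:G\in\SF\}$ is bounded — i.e., the thing you are trying to prove. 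Each $\G$ being $S\wt{C}^k$ gives each $\rz\|\G\|_k$ finite individually; a uniform bound requires an extra argument, and in the paper that extra argument is precisely the proof by contradiction plus transfer, which you have discarded. As written, your extraction of $c$ is circular.
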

\begin{proof}
    Suppose that the conclusion is false, so that for each $b\in\bbr_+$, there is $G_b\in\SF$ with $\|G_b\|_k>b$ for all choices of coordinates on $V$. Transferring this statement get an element $\ov{\SG}\in\rz\SF$ with $\rz\|\ov{\SG}\|_k$ infinite in all *coordinates. Given this, let $x\in V$ and consider $\xi,\z\in\mu(x)$. Choose an arbitrary $r\in\bbr_+$, so that $U^x, G^x, s^x$ exist by the hypothesis of equicontinuity. We have the statement $\FS(x,r,U^x,G^x,s^x)$ as follows: if $y,\ov{y}\in U^x$ with $|G^x(y,\ov{y})-G^x(x,x)|<s^x$, we have that for all $G\in\SF$ that $|G(y,\ov{y})-G(x,x)|<r$. Therefore the transfer, $\rz\FS(x,r,U^x,G^x,s^x)$, of this statement is the following. If $\Fy,\ov{\Fy}\in\rz U^x$, satisfy $\bsm{(\ddag)}$ $|\rz G^x(\Fy,\ov{\Fy})-\rz G^x(\rz x,\rz x)|<\rz s^x$, then for all $\SG\in\SF$, we have that $|\SG(\Fy,\ov{\Fy})-\SG(\rz x,\rz x)|<\rz r$. Yet as $\xi,\z\sim \rz x$, we have that $(\ddag)$ is satisfied for $\Fy=\xi,\ov{\Fy}=\z$. But $r>0$ was chosen arbitrarily and so in fact $|\SG(\xi,\z)-\SG(\rz x,\rz x)|\sim 0$. Yet as we chose $\xi,\z$ arbitrarily in $\mu(x)$ and we chose $x\in V$ arbitrarily, we have that $\SG$ is S-continuous in $V$. But then the main nonstandard theorem, \ref{thm: Main nonst thm}, implies that there are coordinates (in fact the standard part of $\log^\SG$ coordinates) for which $\SG$ is S-analytic. But then eg., we must have $\rz\|\SG\|_k$ is finite; in particular $\rz\|\ov{\SG}\|_k$ is finite, a contradiction.
\end{proof}
    From the previous fact, we have the following consequence. If $G\in\wt{C}^k(U)$, $G=(\psi,\nu)$ and if $\a=(\a_1,\a_2)$ is a multiindex with $|\a_1+\a_2|\leq k$, then $G^\a=G^{\a_1,\a_2}$ will denote $(\psi^{\a_1,\a_2},\nu^{\a_1})\in\wt{C}^{k-|\a|}(U)$ where eg., $\psi^{\a_1,\a_2}(x,y)=\p_y^{\a_2}\p_x^{\a_1}\psi(x,y)$.
\begin{cor}\label{cor: C^0 precpt fam in Gp is C^k precpt}
    Suppose that $\SF\subset Gp^k$ is equicontinuous, $k\in\bbn$ and $\SS\subset\SF$ is a sequence. Then there is a homeomorphic change of coordinates on $U$, a subsequence $\SS'=\{G_1,G_2,\ldots\}\subset\SS$ and $\wh{G}\in Gp^k$ (in these coordinates), such that for each $r\in\bbr_+$ and $x,y\in U$, there is $j_0\in\bbn$ such that  we have $|G_j^\a(x,y)-\wh{G}^\a(x,y)|<r$ for $j\geq j_0$ and $|\a|\leq k$ in these new coordinates.
\end{cor}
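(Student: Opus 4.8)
The plan is to combine the boundedness statement of Proposition \ref{prop: C^0 equicont fam is C^k equicont} with a diagonal (Arzel\`a--Ascoli type) extraction argument, carried out nonstandardly so as to avoid the delicate bookkeeping of choosing nested subsequences. First I would invoke Proposition \ref{prop: C^0 equicont fam is C^k equicont}: since $\SF\subset Gp^k$ is equicontinuous and infinite, there is a homeomorphic change of coordinates on $V$ (the standard part of $\log^\SG$ coordinates for a suitable nonstandard member) and a constant $c\in\bbr_+$, depending only on $\SF$, such that $\normk{G}<c$ for every $G\in\SF$ in these coordinates. From now on I work in these fixed coordinates. In particular every $G\in\SF$, hence every $G_j$ in our sequence $\SS=\{G_1,G_2,\ldots\}$, is $C^k$ with all derivatives of order $\le k$ bounded by $c$ on $V$; this bound immediately gives uniform equicontinuity of each family $\{G_j^\a\}$ for $|\a|\le k-1$ on compact subsets of $V$, via the mean value inequality applied to the next derivative.

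Next I would pass to the nonstandard setting to extract the limit. Fix an infinite $\om_0\in\rz\bbn_\infty$ and set $\SG\doteq\rz G^{\om_0}$ (the $\om_0$-th term of the internal sequence obtained by transferring $j\mapsto G_j$). By the uniform bound $\rz\normk{\SG}<\rz c$ with $c$ standard, $\SG\in S\wt{C}^k$ on $\rz V_{nes}$: all internal derivatives $\rz\p^{(\a,\b)}\SG$ with $|\a|+|\b|\le k$ are bounded by the finite constant $c$, hence are $SC^0$ (finite internal derivatives of the next order force $S$-continuity on $\rz V_{nes}$; this is exactly the mechanism of theorem \ref{thmbasreg}). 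Define $\wh{G}\doteq\;^\circ\SG$; by the $S$-smoothness facts (Lemma \ref{lem: S-smooth facts}(1)--(3)) $\wh{G}\in\wt{C}^k(V)$ and $\p^{(\a,\b)}\wh{G}=\;^\circ(\rz\p^{(\a,\b)}\SG)$ for all $|\a|+|\b|\le k$. Moreover $\wh{G}$ is a $C^k$ local Lie group: it satisfies the group identities (a)--(c) because $\SG$ does internally and taking standard parts respects the ring operations and composition (Subsection \ref{subsec: props of standard part map}), and it has $C^k$ (indeed, since it is $SC^0$, one even gets analyticity of its group structure by Theorem \ref{thm: Main nonst thm}, though only $C^k$ is claimed); thus $\wh{G}\in Gp^k$ in these coordinates.

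Now I would run the reverse-transfer step exactly as in the proof of Proposition \ref{prop: ptwise lim G_j in Gp is Gp}. Fix a sequence of compact sets $K_1\subset K_2\subset\cdots$ with $\bigcup_i K_i=V$. For $r\in\bbr_+$ and a compact $K\subset V$ let
\begin{align}
\FE_{K,r}=\{j_0\in\bbn:\;&|\p^{(\a,\b)}G_j(x,y)-\p^{(\a,\b)}\wh{G}(x,y)|<r\notag\\
&\text{for all }x,y\in K,\ j\ge j_0,\ |\a|+|\b|\le k\}.
\end{align}
Its transfer $\rz\FE_{K,r}$ is described by the same formula with internal quantifiers, and the key point is that $\om_0\in\rz\FE_{K,r}$: for $\la\ge\om_0$ infinite, $\rz\normk{\rz G^\la}<\rz c$ forces $\rz G^\la$ to be $SC^k$ on $\rz V_{nes}$ and infinitesimally close to $\wh{G}$ in $C^k$ there --- because the sequence of internal members with indices $\ge\om_0$ is $S$-equicontinuous (uniform bound $c$) and all have the same analytic standard part $\wh{G}$, so that $\rz\p^{(\a,\b)}(\rz G^\la)(\xi,\z)\sim\rz\p^{(\a,\b)}\wh{G}(\xi,\z)$ for all $\xi,\z\in\rz V_{nes}$, which restricted to $\rz K$ (with $K$ compact, so $\rz K\subset\rz V_{nes}$) gives exactly membership in $\rz\FE_{K,r}$. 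Since $\rz\FE_{K,r}\ne\emptyset$, reverse transfer yields $\FE_{K,r}\ne\emptyset$. Applying this with $K=K_i$ and $r=1/i$ produces, by a standard diagonal choice, a subsequence $\SS'=\{G_1,G_2,\ldots\}\subset\SS$ (relabeled) such that for every $r\in\bbr_+$ and every $x,y\in V$ there is $j_0$ with $|G_j^\a(x,y)-\wh{G}^\a(x,y)|<r$ for all $j\ge j_0$ and $|\a|\le k$, which is the assertion.

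The main obstacle is the middle step: justifying that for \emph{all} infinite $\la$ the internal group $\rz G^\la$ is $S$-continuous and has the \emph{same} standard part $\wh{G}$ as $\rz G^{\om_0}$. This is where the uniform derivative bound from Proposition \ref{prop: C^0 equicont fam is C^k equicont} and the $S$-smoothness machinery of the appendix (theorem \ref{thmbasreg}, Lemma \ref{lem: S-smooth facts}) do the real work: the bound gives a single finite modulus of $S$-continuity valid for the whole transferred tail, so that any two infinite-index members are infinitesimally $C^k$-close, and each is $SC^0$, hence S-analytic by Theorem \ref{thm: Main nonst thm}, pinning down a common standard analytic group. Once that is in hand the remainder is the routine overflow/diagonal packaging already used in Proposition \ref{prop: ptwise lim G_j in Gp is Gp}.
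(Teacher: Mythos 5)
There is a genuine gap in the middle of your argument, and it is precisely the step you flag as ``the main obstacle.'' You define
\begin{align}
\FE_{K,r}=\{j_0\in\bbn:\;&|\p^{(\a,\b)}G_j(x,y)-\p^{(\a,\b)}\wh{G}(x,y)|<r\notag\\
&\text{for all }x,y\in K,\ j\ge j_0,\ |\a|+|\b|\le k\},
\end{align}
a set of integers $j_0$ bounding a \emph{tail} of the sequence. After transfer, to have $\om_0\in\rz\FE_{K,r}$ you need every internal $\la\ge\om_0$ to satisfy $\rz\p^{(\a,\b)}(\rz G^\la)(\xi,\z)\sim\rz\p^{(\a,\b)}\wh{G}(\xi,\z)$ on $\rz K$. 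That does not follow from the hypotheses. The equicontinuity of $\SF$ together with Proposition \ref{prop: C^0 equicont fam is C^k equicont} gives a uniform $C^k$ bound, hence that each $\rz G^\la$ with $\la$ infinite is $S\wt{C}^k$ and has \emph{some} $C^k$ standard part; it does \emph{not} force all those standard parts to coincide. There is nothing preventing, say, the even-indexed and odd-indexed groups in $\SS$ from oscillating between two distinct $C^k$ limits within the same uniform $C^k$ bound, in which case $\rz G^{\om_0+1}$ has a different standard part from $\rz G^{\om_0}$ and $\om_0\notin\rz\FE_{K,r}$. Your sentence ``and all have the same analytic standard part $\wh{G}$'' is the missing assertion, not a consequence of the equicontinuity bound. (In Proposition \ref{prop: ptwise lim G_j in Gp is Gp}, whose scheme you are copying, the hypothesis is pointwise convergence of the whole sequence, which does supply the tail; here the hypothesis is only equicontinuity.)

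The paper avoids this by using a weaker, existence-only set: for each compact $K$ and each $r$ it defines
\begin{align}
\SB_{K,r}=\{G\in\SS:|G^\a(x,y)-G_0^\a(x,y)|<r\ \text{for}\ |\a|\le k,\ x,y\in K\},
\end{align}
a set of \emph{groups}, not of indices controlling a tail. Showing $\SB_{K,r}\ne\emptyset$ only requires exhibiting one element of $\rz\SB_{K,r}$, and $\SG_0=\rz G^{\om_0}$ itself works: this is exactly statement $(\flat)$, that $\SG_0^\a(\xi,\z)\sim\rz G_0^\a(\xi,\z)$ for $\xi,\z\in\rz U_{nes}$ and $|\a|\le k$, which follows from Theorem \ref{thmbasreg} since $\SG_0\in S\wt{C}^k$. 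Reverse transfer then gives a single $G\in\SS$ in $\SB_{K,r}$, and the usual exhausting-compacts/$r=1/j$ diagonalization assembles the subsequence. Everything else in your proposal --- the use of Proposition \ref{prop: C^0 equicont fam is C^k equicont} to fix coordinates and get the uniform bound, the choice of $\om_0$ and $\wh{G}=\;^\circ\SG$, the appeal to Theorem \ref{thmbasreg} and Lemma \ref{lem: S-smooth facts}, and the diagonal at the end --- matches the paper. The fix is simply to replace your tail-quantified $\FE_{K,r}$ by the paper's existential $\SB_{K,r}$, after which your reverse-transfer step needs only the statement $(\flat)$ about $\SG_0$ alone, which you already have.
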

\begin{proof}
     Consider $\SG_0\in\rz\SS\smallsetminus\;^\s\SS$. Then by the previous proposition, we have that $\SG_0\in S\wt{C}^k(U)$, eg., that $G_0\doteq\;^o\SG_0$ is a $C^k$ group on $U$. Assume for the moment that we have verified the following statement $\bsm{(\flat)}$:  for each $\xi,\z\in \rz U_{nes}$, we have that  for $|a|\leq k$, $\rz G_0^\a(\xi,\z)\sim\SG_0^\a(\xi,\z)$. Fixing arbitrary compact $K\subset U$ and $r\in\bbr_+$ and define
\begin{align}
     \SB_{K,\;r}=\{G\in\SS:|G^\a(x,y)-G_0^\a(x,y)|<r\;\text{for}\;|\a|\leq k\;\text{and for all}\;x,y\in K\}.
\end{align}
   We claim that $\SB_{r,\;K}$ is nonempty and will show this by verifying that its transfer, $\rz\SB_{K,\;r}$, is nonempty. To this end, transfer gives
\begin{align}
    \rz\SB_{K,\;r}=\{\SG\in\rz\SS:|\SG^\a(\xi,\z)-\rz G_0^\a(\xi,\z)|<\rz r\;\text{for}\;|\a|\leq k,\;\text{for all}\;\xi,\z\in\rz K\}.
\end{align}
    But the expression $(\flat)$ implies that $\SG_0\in\rz\SB_{K,\;r}$ and therefore, eg., by reverse transfer $\SB_{K,\;r}$ is nonempty. So choosing successively, $r=1/2,1/3,\ldots$, $K=K_1,K_2,\ldots$ with $K_j\subset K_{j+1}$ for all $j$ and $\cup\{K_j:j\in\bbn\}=U$, the above conclusion implies that we can find $G_j\in\SS$ such that $ G_j\in\SB_{K_j,1/j}$. But given  arbitrary $x,y\in U$, we have that there is $j_0\in\bbn$ with $x,y\in K_j$ for $j\geq j_0$. So given $r>0$ and $x,y\in U$, there is big enough $j_0$ such that $x,y\in K_{j_0}$ and $1/j_0<r$. Given this, the above argument assures us that $G_j$ for $j\geq j_0$ satisfies our conclusion. So we just need to verify $(\flat)$ above. But as the previous proposition gets $\SG_0\in S\wt{C}^k$, this fact follows from theorem \ref{thmbasreg}.
%   We will show that if $S$ is an infinite subset  of $\SF$, then there is $\ov{G}\in\wt{C}^k(U)$ and an infinite subset $S'=G_1,G_2,\ldots$ of $S$ such that for every $x,y\in U$ and $|\a|\leq k$
\end{proof}

%\begin{enumerate}
%   \item $|\psi(x,\ov{x})-\psi(y,\ov{y})|\leq C^1_{(x,\ov{x})}(|\psi_0(x,\ov{x})-\psi_0((y,\ov{y}))|)$
%   \item $|\nu(x)-\nu(y)|\leq C^2_x(|\nu_0(x)-\nu_0(y)|)$
%\end{enumerate}

\section{Almost implies near and a partial solution to the approximation problem}\label{chap: partial solution to approx prob}

\subsection{Summary of Initial Approach}

The {\bf initial approach} to this part of the result was to use a
$\rz\!$transferred version of Anderson's almost  $\Rightarrow$ near
construction as follows. [See Anderson's paper (\cite{Anderson1986}) or Keisler's
jazzed up version, \cite{Keisler1995},  for the details of this {\bf unused} theory].

First note that his construction implies than an accumulating
sequence of almost (local) Lie groups has a local Lie group as an
accumulation point. From Anderson's perspective if an almost Lie
group is sufficiently close to being a Lie group (locally speaking),
then there is a local Lie group lurking quite close. (In the author's work
on this, this is all structured in terms of careful numerical estimates; we are giving a crude description here).

We then *transferred this entire argument. So that now we have that if
an *almost *Lie group is *sufficiently close to being a *Lie
group, then there is a *Lie group lurking *nearby. The author had set this
problem up so that he could apply the transfer of Anderson's nearby implies (now infinitesimally) close.
That is, we applied this (almost $\Rightarrow$ near) set up to a
*transferred sequence of almost Lie groups approximating a local
topological group. Fixing one of these *sufficiently close to
being a *local Lie group, and by construction infinitesimally
close to the top group, we could then find a *local Lie group
infinitesimally close to the approximating *almost Lie group. This
is where the *transfer of Anderson's result is used. Together
these approximations imply that the approximating *local Lie group
is infinitesimally close to our (standard) local topological group.
But this implies that our *local Lie group is $S$-continuous, the
hypothesis needed to get the main nonstandard theorem.

This is a crude statement of the argument. For example, we need to
work in $^\s$local instead of *local. Nonetheless, we found a
mistake in the estimates for our approximating sequences that we have
not been able to fix.  It seems that this approximation (density) result
must certainly be true and in fact should be in the literature. To date, we have not been able to locate or prove this result, although we now believe that it will follow from the work done here.
%It's important to note here that, a priori, a *local Lie group can be
%quite pathological from the standard point of view.
Furthermore, some discussion with experts in   this matter indicates a general sense of the plausibility of the density assertion.

Recently, a reconstruction of the strategy just described may potentially give a proof of the local Fifth problem. We will summarize the strategy here and prove big parts of it's components later. First of all, we will prove an almost implies near result for local Lie groups motivated more by the approach of \v{S}pakula, Zlato\v{s}, \cite{SpakulaZlatos2004}, buttressed by a nonstandard formulation of a equicontinuity type property of $C^k$ mappings.

\subsection{Almost implies near for local Lie groups}

\subsubsection{almost implies near}
     In this subsection, we will prove that if $(\psi,\nu)\in\wt{C}^k$ is almost a (local) group, then there is a local (Lie) group nearby in the $C^k$ topology. Let us first define a nearness notion for potential locally Euclidean topological groups.
\begin{definition}
  Suppose that $V$ is a neighborhoods of $0$ in $\bbr^n$. If $k\in\{0\}\cup\bbn$, recall that  $\wt{C}^k$ denotes the set of $(\psi,\nu)\in C^k(V\x V,\bbr^n)\x C^k(V,\bbr^n)$ and if $b\in\bbr$ is positive, let $\wt{C}^k_b$ denote those $(\psi,\nu)\in\wt{C}^k$ such that $\|\psi\|_k$ and $\|\nu\|_k$ are bounded by $b$, where the supremum is on $V$. If $G=(\psi,\nu)$, we may write $\|G\|_k$ for $\max\{\|\psi\|_k,\|\nu\|_k\}$.
  In particular, if $G_j=(\psi_j,\nu_j)\in\wt{C}^k_b$ for $j=1,2$, then $G_1-G_2\in\wt{C}^k$, and so we have $\|G_1-G_2\|_k=\max\{\|\psi_1-\psi_2\|_k,\|\nu_1-\nu_2\|_k\}$.
\end{definition}
  One can check that the triangle inequality holds for $\|\;\|_k:\wt{C}^k_b\ra[0,\infty]$ and that $\|G_1-G_2\|_k=0$ if and only if $G_1=G_2$ on $V$.
  \textbf{Henceforth, until the finish of this subsection, all smooth maps will be defined on $U$ or its Cartesian products as is appropriate and all norms, ie., as in the previous definition will be on $\ov{V}$.}
%  Note also that we still have the proviso on compositions: namely if $G=(\psi,\nu)$, then as in the case of local topological groups, we will assume that @@@@@@@@@

\begin{lem}
  Suppose that $\SG_1,\SG_2$ are in $S\wt{C}^0\cap\rz\wt{C}^k$ are such that $\rz\|\SG_1-\SG_2\|_0\sim 0$. Then, for $j=1,2$, $^o\SG_j$ exists are continuous and equal on $V$.
\end{lem}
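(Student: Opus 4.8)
The plan is to reduce the statement to the elementary properties of the standard-part map recorded in section~\ref{subsec: props of standard part map} together with the defining property of $S$-continuity from section~\ref{subsec: notion of S-property}. Write $\SG_j=(\tl\psi_j,\tl\nu_j)$ for $j=1,2$, so that by hypothesis $\tl\psi_j$ is an internal $\rz C^k$ map on $\rz(V\x V)$, $\tl\nu_j$ an internal $\rz C^k$ map on $\rz V$, both moreover $SC^0$; and $\rz\|\SG_1-\SG_2\|_0\sim 0$ unpacks (by transfer of the standard sup norm on $\ov V$) to $\rz\sup_{\ov V\x\ov V}|\tl\psi_1-\tl\psi_2|\sim 0$ and $\rz\sup_{\ov V}|\tl\nu_1-\tl\nu_2|\sim 0$.

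\textbf{Step 1 (existence and continuity of $^o\SG_j$).} Since $\tl\psi_j\in SC^0(\rz(V\x V),\rz\bbr^n)$, by definition it sends nearstandard points to nearstandard points and carries each monad into a single monad; hence $^o\tl\psi_j$ is a well-defined function on $V\x V$, where (following the convention fixed in section~\ref{subsec: transferring maps and domains}) we restrict the standard part to the original open set rather than to $\ov V$, and it is continuous — this is exactly the observation made immediately after the definition of $SC^0$ in section~\ref{subsec: notion of S-property}. The same argument applies to $\tl\nu_j$. Thus $^o\SG_j=({}^o\tl\psi_j,{}^o\tl\nu_j)$ exists and lies in $\wt C^0(V)$.

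\textbf{Step 2 (equality on $V$).} Fix $x,y\in V$. Then $\rz x,\rz y$ are standard points of $\rz V$, hence nearstandard, and since $\tl\psi_j\in SC^0$ the values $\tl\psi_j(\rz x,\rz y)$ are nearstandard for $j=1,2$. From $\rz\sup_{\ov V\x\ov V}|\tl\psi_1-\tl\psi_2|\sim 0$ and the (transferred) fact that the supremum dominates every value we get $|\tl\psi_1(\rz x,\rz y)-\tl\psi_2(\rz x,\rz y)|\sim 0$, i.e. $\tl\psi_1(\rz x,\rz y)\sim\tl\psi_2(\rz x,\rz y)$; applying the standard-part map, which is single-valued because monads are disjoint, gives $^o\tl\psi_1(x,y)={}^o\tl\psi_2(x,y)$. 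Likewise $^o\tl\nu_1(x)={}^o\tl\nu_2(x)$ for every $x\in V$. As $x,y$ were arbitrary, $^o\SG_1={}^o\SG_2$ on $V$, completing the argument.

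There is essentially no real obstacle here; the points needing care are bookkeeping ones already dispatched by the preliminaries: interpreting $\rz\|\,\cdot\,\|_0$ as the transfer of the standard supremum norm on $\ov V$ so that the pointwise bound $|\tl\psi_1(\rz x,\rz y)-\tl\psi_2(\rz x,\rz y)|\le\rz\|\SG_1-\SG_2\|_0$ is available by transfer, and taking all standard parts as functions on the open set $V$ (not on $\ov V$) so that ``continuous on $V$'' is unambiguous and no boundary limiting behavior intervenes.
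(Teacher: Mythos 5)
Your proof is correct, and since the paper's own proof of this lemma is literally the single line ``This is clear,'' your argument is just the natural expansion of what the paper takes for granted: $SC^0$ gives existence and continuity of the standard parts, and the infinitesimal sup-norm bound plus disjointness of monads gives equality at every standard point. No discrepancy to report.
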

\begin{proof}
   This is clear.
\end{proof}

 Let us next define a notion measuring how close an element of $\wt{C}^k$ is to being a (local) topological group (on $V$). Essentially, we take approximations to the defining relations, ie., definition \ref{def: loc Euclid top gp}. Note that the approximations are pointwise only and this is what makes the following almost implies near result a bit surprising.
\begin{definition}\label{def: s-almost gps}
   Suppose that $H=(\psi,\nu)\in \wt{C}^k$ and $B\subset V$ is compact. Let
\begin{enumerate}
  \item $D_1(H)_B=\|\psi\circ(Id\x\psi)-\psi\circ(\psi\x Id)\|_{B,\;0}$,
  \item $D_2(H)_B=\max\{\|\psi\circ(\nu\x Id)\|_{B,\;0},\|\psi\circ(Id\x\nu)\|_{B,\;0}\}$
  \item $D_3(H)_B=\max\{\|\psi\circ(Id\x 0)-Id\|_{B,\;0},\|\psi\circ(0\x Id)-Id\|_{B,\;0}\}$.
\end{enumerate}
  If $s\in\bbr$ is positive, then we say that $H$ is an s-almost (local) group on $B$ if $D_j$ is defined at all elements of $V, V\x V$, etc.,  and  $D_j(H)_B<s$ for $j=1,2,3$. Let's denote the set of ($C^k$) s-almost groups on $B$ by $\SA_s(V,B)$ and $\SA^b_s(V,B)=\SA_s(V,B)\cap\wt{C}^k_b(V)$. Let's also denote the subset of $\wt{C}^k_b$ of local groups on $B$, ie., such that $D_j(G)_B=0$ for $j=1,2,3$, by $Gp^k_b(B)=Gp^k_b(V,B)$ (or $Gp_b(B)$ if $k$ is understood). If we want to emphasize that $V$ is a domain of the mappings defining the  $B$-local Lie group $G$, then we may write $G\in Gp(V,B)$.
\end{definition}
\begin{remark}
  Note that we did not demand a $C^k$ closeness for our $C^k$ structures, nonetheless this will be sufficient.
  In particular, note that if $H\in\wt{C}^k$ is such that $D_j(H)_B=0$ for $j=1,2,3$, then we have the defining conditions, definition \ref{def: loc Euclid top gp}, for $H$ to be a local ($C^k$) group on $B$ although now $B$ is a compact set and $H$ is defined on the larger open $V$.
\end{remark}
   We can now state our first almost implies near result. Note the history of attempts to verify that differentiable manifolds that are sufficiently close to being Lie groups in some sense must therefore have Lie group structures. All previously proven facts follow from hypotheses that, crudely speaking, are stated in terms of global $C^1$ smallness conditions on tensor fields that obstruct (on the tangent space level) a group structure. Furthermore, the hypotheses assume a particular type of group structure. For example, see the papers of Ruh, \cite{Ruh1987} and \cite{Ghanaat1989}.    On the other hand, our conditions are pointwise (ie., much weaker) conditions and do not assume that we are approximating a particular type of group. Note that although our assertions are local, assuming our local groups are globalizable, they may imply the global results.

    Again note that to get a local Lie group $C^k$-near the given almost group, we are assuming only $C^0$ closeness to a group structure for this almost group. Although this seems to regularize the nearness condition, this result is independent of our main nonstandard theorem. Nevertheless, in the next subsection the $C^0$ closeness allows us to pair the following result (through a nonstandard argument) with the main nonstandard (regularity) theorem.
%    We say that $H=(A,u)\in\wt{C}^k(V)$ has all 2-fold compositions defined on $V$ if those compositions occurring in definition \ref{def: s-almost gps} are defined at all elements of $V$.

     Let $cpt(V)$ denote the set of compact subsets of $V$. Note that $V=\cup\{K:K\in cpt(V)\}$ and here we can always find a countable subset of $cpt(V)$, whose elements are increasing with respect to inclusion whose union is $V$.
\begin{proposition}[Almost implies near]\label{prop: standard almost->near}
    For each $(B,r,b)\in cpt(V)\x\bbr_+\x\bbn$, there is a positive $s\in\bbr$ such that if $H\in\SA_s(B)\cap\wt{C}^k_b(V)$, then there is a local Lie group $G\in Gp^k(B)\cap\wt{C}^k_b(V)$ such that $\|G-H\|_{B,\;k}<r$.
\end{proposition}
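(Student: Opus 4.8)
The plan is to prove the contrapositive in nonstandard form, exploiting the fact that the conclusion of the proposition is an internal statement about a countable intersection of data. Suppose the proposition fails for some triple $(B,r,b)$. Then for every positive $s\in\bbr$ there is $H_s\in\SA_s(B)\cap\wt{C}^k_b(V)$ with no $G\in Gp^k(B)\cap\wt{C}^k_b(V)$ satisfying $\|G-H_s\|_{B,\,k}<r$. Applying this to $s=1/m$ produces a sequence $H_1,H_2,\ldots$ in $\wt{C}^k_b(V)$ with $D_j(H_m)_B<1/m$ for $j=1,2,3$ and $\dist_{B,k}(H_m,Gp^k_b(B))\geq r$ for all $m$. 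Transferring, pick an infinite $\om\in\rz\bbn$ and set $\SH\doteq\rz H_\om$. Then $\SH\in\rz(\wt{C}^k_b(V))$, so all internal derivatives of $\SH$ up to order $k$ are bounded by $\rz b$; in particular $\SH\in S\wt{C}^{k-1}$ and, since the $C^k$ bound gives equicontinuity of the order-$(k-1)$ derivatives, $\SH\in S\wt{C}^{k-1}\cap\rz\wt{C}^k$ (one uses that an internal $C^k$ function with $^\s$finite $C^k$ norm on a convex domain is $SC^{k-1}$, a consequence of the mean value theorem applied internally). Moreover $\rz D_j(\SH)_{\rz B}<1/\om\sim 0$ for $j=1,2,3$.

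Next I would take standard parts. Set $G_0\doteq{}^\circ\SH$. Because $\SH\in S\wt{C}^{k-1}$, $G_0$ is a well-defined $C^{k-1}$ pair on $V$; because $\rz D_j(\SH)_{\rz B}\sim 0$ and the standard part commutes with composition and with $\p^\a$ for finite $\a$ (lemma \ref{lem: S-smooth facts}(3) and the composition property of $SC^0$ maps), we get $D_j(G_0)_B=0$ for $j=1,2,3$. Hence $G_0=(\psi_0,\nu_0)$ satisfies the three defining relations a)--c) of definition \ref{def: loc Euclid top gp} on $B$, i.e. $G_0$ is a $C^{k-1}$ locally Euclidean local topological group on $B$ (a priori only $C^{k-1}$, but by the classical smoothing/regularity theory --- or directly by Jacoby/Goldbring for the genuine local Fifth, or more elementarily since a $C^2$ local group law is automatically analytic in canonical coordinates, which is exactly what the main nonstandard theorem gives us below --- it is in fact $C^\infty$, even analytic, on a possibly smaller neighborhood). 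Concretely: $\SH\in S\wt{C}^{k-1}$ and $\rz D_j(\SH)_{\rz B}\sim 0$ means $\SH$ lies pointwise infinitesimally close to the locally Euclidean local topological group $G_0$; by the Remark following theorem \ref{thm: Main nonst thm}, after a homeomorphic change of coordinates $\chi$ on a neighborhood $V'\subset V$ of $0$ the group $G_0$ is analytic there, and in those coordinates $\SH^\chi$ is a $^\s$local $SC^0$ *Lie group, hence by the main theorem $S$-analytic; thus ${}^\circ(\SH^\chi)=G_0^\chi$ is an analytic local Lie group on $V'$.

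Now produce the contradiction. The analytic local group $G_0^\chi$ can be represented in canonical ($\log$) coordinates where its law is the convergent Hausdorff series of its (nearstandard, by corollary \ref{cor: ad is nearst}) Lie bracket, so $G_0^\chi$ extends to, or restricts to, a genuine $C^\infty$ local Lie group $\widehat G$ on a fixed compact $B'\subset V'$ containing a neighborhood of the relevant part of $B$; pulling back by $\chi^{-1}$ and, if needed, patching via the Lemma preceding Proposition \ref{prop: ptwise lim G_j in Gp is Gp} (uniqueness of a local group law propagating from a small neighborhood to a convex one), we obtain $\widehat G\in Gp^k(B)$. Finally apply theorem \ref{thmbasreg}: since $\SH\in S\wt{C}^k$ (the full $C^k$ bound from membership in $\rz\wt{C}^k_b$, not just $C^{k-1}$) and $\SH$ is pointwise infinitesimally close to the internal $S$-smooth object ${}^\circ{}^{-1}$-lift of $\widehat G$ on $\rz B$, the theorem upgrades this to $\rz\p^{(\a,\b)}\SH\sim\rz\p^{(\a,\b)}(\rz\widehat G)$ on $\rz B$ for all $|\a|+|\b|\leq k$, i.e. $\rz\|\SH-\rz\widehat G\|_{\rz B,\,k}\sim 0<\rz r$. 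So $\widehat G$ is a local Lie group in $Gp^k(B)\cap\wt{C}^k_b(V)$ within $C^k$-distance $<\rz r$ of $\SH=\rz H_\om$ on $\rz B$, contradicting $\rz\dist_{\rz B,k}(\rz H_\om, \rz Gp^k_b(\rz B))\geq\rz r$ (the transfer of our standing assumption). This contradiction proves the proposition, and the bound $s$ is recovered by reverse transfer of the nonemptiness of the set of good $s$'s.

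\textbf{Main obstacle.} The delicate point is the step where $G_0$, a priori only $C^{k-1}$ and only \emph{almost} a group before taking standard parts, is shown to be an honest locally Euclidean local topological group \emph{and} to fall under the hypotheses of the main nonstandard theorem --- in particular, verifying that the $S$-continuity of right/left multiplications and the $^\s$local $\rz$-diffeomorphism condition of definition \ref{def: sigma local *Lie group} are inherited from the uniform $C^k$ bound together with $\rz D_j\sim 0$, rather than being extra assumptions. Equally delicate is arranging the domains: $\widehat G$ must be a local Lie group on exactly the compact set $B$ with mappings defined on $V$, whereas the analytic structure emerges naturally only on some smaller $V'$ and in twisted coordinates $\chi$; reconciling these (and checking $\widehat G\in\wt{C}^k_b(V)$ with the \emph{same} $b$, which may require noting that the $C^k$ norm of the analytic limit is controlled by $^\circ$ of the $C^k$ norm of $\SH$, hence $\leq b$) is where the bookkeeping is heaviest. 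I expect the analytic-regularity input and this domain/coordinate reconciliation to be the crux; the rest is transfer, standard parts, and the two cited theorems (\ref{thm: Main nonst thm} and \ref{thmbasreg}).
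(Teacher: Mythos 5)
Your proposal diverges sharply from the paper's actual argument, and the detour is both unnecessary and, as carried out, not recoverable. The paper's proof never invokes Theorem~\ref{thm: Main nonst thm}; in fact the text explicitly states that the almost-implies-near proposition is \emph{independent} of the main regularity theorem. After transferring the contradiction hypothesis and fixing $\Fs\sim 0$ and $\wh{\SH}\in\rz\SA_\Fs(B_0)\cap\rz\wt{C}^k_{b_0}(V)$, the paper (i)~cites Theorem~\ref{thmbasreg} to place ${}^\circ\wh{\SH}$ in $\wt{C}^k_{b_0}(V)$, (ii)~derives the three group identities for ${}^\circ\wh{\SH}$ on $B_0$ directly from $S$-continuity of $\wh{\SH}$ together with $\rz D_j(\wh{\SH})\sim 0$ (via composition of monad-preserving maps and the fact that infinitesimally close standard points are equal), and (iii)~applies Theorem~\ref{thmbasreg} again to get $\rz\|\wh{\SH}-\rz({}^\circ\wh{\SH})\|_{\rz B_0,k}\sim 0$, contradicting the transferred lower bound $\geq\rz r_0$. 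No analytic coordinates, no patching, no Jordan-separation argument, no need to leave the ambient coordinates at all.

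Your route through Theorem~\ref{thm: Main nonst thm} is not just wasteful, it undermines the conclusion you need. That theorem produces analyticity only after the homeomorphic (not even $C^1$!) change of coordinates $\chi={}^\circ\exp^{-1}$, but the proposition measures $\|G-H\|_{B,k}$ in the \emph{original} coordinates. Pulling an analytic group law back through a mere homeomorphism destroys all $C^k$ control: your $\widehat G$, analytic in $\chi$-coordinates, need not even be once differentiable in the ambient coordinates, so it cannot possibly be $C^k$-close to $H$ there. The "heaviest bookkeeping" you flag is, by this route, not bookkeeping at all but an obstruction. Worse, to even start that machinery you'd need $\wh{\SH}$ to satisfy the hypotheses of Definition~\ref{def: sigma local *Lie group} (it must be a genuine internal group with $S$-homeomorphic translations), whereas $\wh{\SH}$ is only an \emph{almost} group; the main theorem does not apply to it directly, only to a bona fide *Lie group constructed near it — which is exactly the thing you are trying to produce.

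You do, however, put your finger on a genuine subtlety, and your instinct is sound even though your fix is not. Membership in $\rz\wt{C}^k_{b_0}(V)$ bounds the internal derivatives of $\wh{\SH}$ up to order $k$ by a finite constant, which makes the derivatives up to order $k-1$ uniformly Lipschitz and hence $SC^0$ — so $\wh{\SH}\in S\wt{C}^{k-1}$. But it does \emph{not} by itself give $\wh{\SH}\in S\wt{C}^k$: the top-order internal derivatives are merely bounded, and boundedness alone does not yield the monad-preservation clause of $SC^0$ (consider an internal $k$-th derivative oscillating like $\cos(\om\xi)$ for infinite $\om$: finite, yet not $S$-continuous). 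Theorem~\ref{thmbasreg} is stated with the hypothesis $\Ff\in SC^k$, so both invocations in the paper's proof quietly presuppose more than the $C^k$ bound supplies — a lacuna worth flagging. Your proposal, despite noticing this (you write "$\SH\in S\wt{C}^{k-1}$" early on), then asserts at the end "since $\SH\in S\wt{C}^k$" citing "the full $C^k$ bound," contradicting your own observation. The detour through analyticity manufactures no $S$-continuity for the order-$k$ derivatives of $\wh{\SH}$, so your argument inherits exactly the gap it was designed to plug, now buried under substantially more machinery. If you wish to pursue this further, the honest repairs are either to strengthen the hypothesis to $H\in\wt{C}^{k+1}_b(V)$ (so the $k$-th derivatives are Lipschitz, hence $SC^0$) or to weaken the conclusion to a $\|\cdot\|_{B,k-1}$ bound; either change makes the paper's direct two-invocation-of-\ref{thmbasreg} argument go through without any appeal to the main regularity theorem.
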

\begin{proof}
%   Before we begin the proof, we want to note that all of the various compositions used in the proof are well defined as @@@@@@@.
   By way of contradiction, suppose that the conclusion is false. That is, suppose that there is  $(B_0,r_0,b_0)\in cpt(V)\x\bbr_+\x\bbn$ such that the following statement $\bsm{S(B_0,r_0,b_0)}$  holds.  For all positive $s\in\bbr$ and for all $H\in\SA_s(B_0)\cap\wt{C}^k_{b_0}(V)$, we have that for every local Lie group $G\in Gp(B_0)\cap\wt{C}^k_{b_0}(V)$, we have $\|H-G\|_{B_0,\;k}\geq r_0$. Transferring  statement $S(B_0,r_0,b_0)$ gets the following internal statement. For all positive $\Fs\in\rz\bbr$, and $\SH\in\rz\SA_\Fs(B_0)\cap\rz\wt{C}^k_{*b}(V)$, we have that for all local *Lie groups $\SG\in\rz Gp(B_0)\cap\rz\wt{C}^k_{b_0}(V)$, we have  $\rz\|\SH-\SG\|_{\rz B_0,\;k}\geq\rz r_0$.   But then choosing an $\Fs\sim 0$ and an $\wh{\SH}\in\rz\SA_\Fs(B_0)\cap\rz\wt{C}^k_{b_0}(V)$, this must imply that for all local *Lie groups $\SG\in\rz Gp(B)\cap\rz\wt{C}^k_{*b_0}(V)$ we have $\rz\|\wh{\SH}-\SG\|_{\rz B_0,\;k}\geq\rz r_0$. Yet we  will now verify that $\Fs\sim 0$ implies that the standard part of $\wh{\SH}$ is a $C^k$ group on $B_0$. First of all, if $\wh{\SH}=(\psi,\nu)$, then $^o\psi$ and $^o\nu$ exist and are in $C^k_{b_0}(V)$. This is a direct consequence of Theorem \ref{thmbasreg}. It also follows from this theorem that for all $\xi,\z\in\rz V_{nes}$, we have that $\psi(\xi,\z)\sim\rz(^o\psi)(\xi,\z)$ and $\nu(\xi)\sim\rz(^o\nu)(\xi)$ and as these are S-continuous we have that compositions are also infinitesimally close; for example if $\eta$ is also in $\rz V$ and the products are defined, we have $\psi(\xi,\psi(\z,\eta))\sim\psi(\xi,\rz(^o\psi)(\z,\eta))\sim\rz(^o\psi)(\xi,\rz(^o\psi)(\z,\eta))$  again by S-continuity. But also we have
   that as $\Fs\sim 0$ and $(\psi,\nu)$ are in $\SA_\Fs(B)$, the $D_1$ condition gives $\psi(\xi,\psi(\z,\eta))\sim\psi(\psi(\xi,\z),\eta)$ for $\xi,\z,\eta\in\rz B_0$. Putting this together with the above expressions, we get that $\rz(^o\psi)(\xi,\rz(^o\psi)(\z,\eta))\sim\rz(^o\psi)(\rz(^o\psi)(\xi,\z),\eta)$ for all $\xi,\z,\eta\in\rz B_0$, and as both sides are standard if $\xi$, $\z$ and $\eta$ are standard, then we actually have $\rz(^o\psi)(\xi,\rz(^o\psi)(\z,\eta)) = \rz(^o\psi)(\rz(^o\psi)(\xi,\z),\eta)$ for all $\xi=\rz x$, $\z=\rz y$ and $\eta=\rz z$ with $x,y,z\in B_0$; this gets that $^o\psi:V\x V\ra V$ satisfies associativity on $B_0$. This was a consequence of S-continuity and $\rz D_1(\wh{\SH})\sim 0$. Similarly, we can get that $\rz D_2\sim 0$ and $\rz D_3\sim 0$ imply the other two conditions that $^o\wh{\SH}=(^o\psi,^o\nu)$ is a $C^k$ local group. To show that the verification of the other two is quite similar, let's verify that $\rz D_2\sim 0$ implies the second group condition for $\wh{\SH}$.  As above, $\Fs\sim 0$ applied to the first $\rz D_2(\wh{\SH})\sim 0$ condition gets that $\psi(\nu(\xi,\xi))\sim 0$ for all $\xi\in\rz V_{nes}$. This along with the S-continuity of $\psi$ and the facts that $\rz(^o\psi)\sim\psi$ and $\rz(^o\nu)\sim \nu$, gets
\begin{align}
   0\sim\psi(\nu(\xi),\xi)\sim\psi(\rz(^o\nu(\xi)),\xi)\sim \rz(^o\psi)(\rz(^o\nu)(\xi),\xi).
\end{align}
    Summarizing, we have that $^o\wh{\SH}$ is a local Lie group in $Gp(B_0)\cap\wt{C}^k_{b_0}(V)$ and so $\wt{\SH}=\rz(^o\wh{\SH})$ is a local $\rz\wt{C}^k_{*b_0}$ group on $B_0$. But note that Theorem \ref{thmbasreg} also implies that $\rz\|\rz(^o\psi)-\psi\|_{\rz B_0,\;k}\sim 0$ and $\rz\|\rz(^o\nu)-\nu\|_{\rz B_0,\;k}\sim 0$, ie., $\wt{\SH}$ lies in a $C^k$ infinitesimal neighborhood of $\wh{\SH}$ over $\rz B_0$ in $\rz\wt{C}^k_{*b_0}(V)$, eg., as $r_0$ is a positive standard number, it's certainly not true that $\rz\|\wh{\SH}-\wt{\SH}\|_{\rz B_0,\;k}\geq\rz r_0$,  contradicting our assumption of the contrary conclusion.
\end{proof}
\begin{definition}\label{def: (b,s) is r-good}
     If $K\in cpt(V)$ and $r\in\bbr_+$, we say that the pair $\bsm{(b,s)\in\bbn\x\bbr_+}$ \textbf{is} $\bsm{(K,r)}$\textbf{-good} if the $b$ and $s$ satisfy the hypothesis in the above proposition for the conclusion to hold for this $K$ and $r$.
\end{definition}
\begin{remark}
   Note that this proof would not work if the hypotheses in the theorem did not include the fixed positive bound $b>0$. (This result is a kind of nonstandard equicontinuity argument; see eg., \cite{StrLux76}, p.217 and \cite{SpakulaZlatos2004}.) Yet as our approximations to the given local Euclidean topological group sharpen, the $C^k$ norms of these approximations grow in an unbounded way.
   But this turns out to not be a problem. As noted earlier in this paper, the local *Lie groups in our regularity theorem may have no restrictions on how large (in $\rz\bbr_+$) their *derivatives may be.  Nonetheless, according to our regularity theorem, as long as they are infinitesimally close to a local topological group (on $\bbr^n$), this local group is, in fact, an analytic group.
   \textbf{This is the crux of our strategy to prove the local Fifth. See the next subsection for our principal result; which is a consequence of this strategy.}
\end{remark}
\subsubsection{Principal standard result}
   Consider the following corollary of the above proposition and our main nonstandard theorem. See the appendix, chapter \ref{chap: appendix: S-smoothness}, definition \ref{def: dSC^k}, for the definition of $dSC^k(U)$. The principal standard result is a direct corollary of this nonstandard result.
\begin{cor}\label{cor: NSA reg+almost->near}
    Let $k\geq 2$ be an integer. Suppose that $\FB\in\rz cpt(V)$ with $\FB\supset\rz V_{nes}$ and $\Fr\in\rz\bbr_+$ with $\Fr\sim 0$ and   $\Fb\in\rz\bbn$ is arbitrary (possibly infinite). Given these data, there is $\Fs\in\rz\bbr_+$  such that if $\SH\in\rz\SA_\Fs(\FB)\cap\rz\wt{C}^k_\Fb(V)$ is S-continuous, then there are coordinates on $V$ such that $\SH\in\rz\SA_\Fs(\FB)\cap\rz\wt{C}^k_{*c}(V)$ for some $c\in\bbr_+$, eg., $^o\SH$ is in $\wt{C}^k_c(V)$ and is, in fact, a local Lie group. In particular, if $\wt{\SH}\in S\wt{C}^0(V)$ is such that $\rz\|\SH-\wt{\SH}\|_{0,\;\rz V_{nes}}\sim 0$, then $\wt{\SH}\in dS\wt{C}^k$ with respect to these coordinates.
\end{cor}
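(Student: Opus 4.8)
\textbf{Proof proposal for Corollary \ref{cor: NSA reg+almost->near}.} The plan is to feed the transferred hypotheses of Proposition \ref{prop: standard almost->near} into the machinery that has already been assembled: the almost-implies-near result gives a local Lie group $C^k$-close to $\SH$ in \emph{some} coordinates, and then the Main nonstandard theorem (\ref{thm: Main nonst thm}) together with Proposition \ref{prop: C^0 equicont fam is C^k equicont}-style reasoning gives the $*c$-boundedness in the canonical ($\log$) coordinates. First I would fix $k\geq 2$, the internal data $\FB\supset\rz V_{nes}$, $\Fr\sim 0$, $\Fb\in\rz\bbn$, and obtain $\Fs$ as the transfer of the $s$ produced by Proposition \ref{prop: standard almost->near}: by transfer of Definition \ref{def: (b,s) is r-good}, there is an internal $\Fs\in\rz\bbr_+$ such that the pair $(\Fb,\Fs)$ is internally $(\FB,\Fr)$-good, i.e. for every $\SH\in\rz\SA_\Fs(\FB)\cap\rz\wt C^k_\Fb(V)$ there is an internal $\SG\in\rz Gp^k(\FB)\cap\rz\wt C^k_\Fb(V)$ with $\rz\|\SG-\SH\|_{\FB,\,k}<\Fr$. (Here I am applying the transfer theorem to the \emph{standard} Proposition \ref{prop: standard almost->near}, which is legitimate precisely because that proposition is a true standard statement.)

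Next, given such an $\SH$ that is moreover $S$-continuous, I would argue as follows. Since $\rz\|\SG-\SH\|_{\FB,\,0}<\Fr\sim 0$ and $\FB\supset\rz V_{nes}$, the internal local $*$Lie group $\SG$ is pointwise infinitesimally close to $\SH$ on $\rz V_{nes}$; because $\SH$ is $S$-continuous this forces $\SG$ to be $S$-continuous as well (the monad-preservation transfers across an infinitesimal perturbation, exactly as in the first lemma of Section \ref{subsec: intro: S-smoothness}-type arguments and in the lemma preceding Proposition \ref{prop: standard almost->near}). Thus $\SG\in{}^\s\loc SC^0\rz LG$, and the Main nonstandard theorem \ref{thm: Main nonst thm} applies: in the coordinates $\eta=\,^o(\exp^{-1})$ the group $\SG$ becomes $S$-analytic, hence in particular lies in some $\rz\wt C^k_{*c_0}(V)$ with $c_0=\,^o(\rz\|\SG\|_k)\in\bbr_+$ finite. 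Now I need to transport this bound back to $\SH$: in these same $\log^\SG$ coordinates, $\rz\|\SH-\SG\|_{\rz V_{nes},\,k}$ is still infinitesimal. This is the point where Theorem \ref{thmbasreg} does the essential work — the hypothesis that $\SH$ and $\SG$ are $C^0$-infinitesimally close on $\rz V_{nes}$, combined with $\SG\in S\wt C^k$ (from $S$-analyticity) and $\SH\in\rz\wt C^k$ (from the $\rz\wt C^k_\Fb$ hypothesis, which transfers the finite-derivative condition of Theorem \ref{thmbasreg}), upgrades $C^0$-closeness to $C^k$-closeness, so $\rz\|\SH-\SG\|_{\rz V_{nes},\,k}\sim 0$. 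Therefore $\rz\|\SH\|_k\leq\rz\|\SG\|_k+1$ is finite, giving $c=c_0+1$ and $\SH\in\rz\SA_\Fs(\FB)\cap\rz\wt C^k_{*c}(V)$ in these coordinates; and $^o\SH$ is a $C^k$ local group because $^o\psi,{}^o\nu$ exist (Theorem \ref{thmbasreg}) and the $D_j$-conditions descend to equalities exactly as in the proof of Proposition \ref{prop: standard almost->near} (using $\Fs\sim 0$ and $S$-continuity to pass the pointwise approximate associativity/inverse/identity relations to genuine ones on standard points).

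For the final clause: if $\wt\SH\in S\wt C^0(V)$ with $\rz\|\SH-\wt\SH\|_{0,\,\rz V_{nes}}\sim 0$, then in the coordinates just produced $\wt\SH$ is $C^0$-infinitesimally close to the $S$-analytic (in particular $S\wt C^k$) object $\SH$ on $\rz V_{nes}$; applying Theorem \ref{thmbasreg} once more, this time with $\SH\in S\wt C^k$ playing the role of the ``nearby $S$-smooth function,'' yields $\wt\SH\in S\wt C^k(V)$, and since $dS\wt C^k$ only demands that the standard part be $C^k$ (Definition \ref{def: dSC^k}), we conclude $\wt\SH\in dS\wt C^k$ with respect to these coordinates. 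The main obstacle I anticipate is the bookkeeping in the coordinate change: the coordinates depend on $\SG$, which in turn depends on $\SH$, so one must be careful that the ``same coordinates'' in which $\SH$ is $*c$-bounded are also the ones in which $\wt\SH$ inherits $C^k$-smoothness — this is fine because $\wt\SH\sim\SH$ and coordinate change by an $S$-diffeomorphism preserves all the relevant $S$-regularity (Lemma \ref{lem: S-smooth facts}(5) and its $dSC^k$ analogue), but it requires stating explicitly that $\eta$ is a single fixed $S$-diffeomorphism and invoking composition-closure of the $S$-regularity classes.
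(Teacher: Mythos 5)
Your overall architecture coincides with the paper's: transfer Proposition \ref{prop: standard almost->near} to obtain $\Fs$ and an internal group $\SG$ with $\rz\|\SG-\SH\|_{\FB,\,k}<\Fr$, deduce $S$-continuity of $\SG$ from that of $\SH$, invoke Theorem \ref{thm: Main nonst thm} to get the $\log$ coordinates in which $\SG$ is $S$-analytic, and then pass the conclusions to $\SH$ and $\wt\SH$. However, both places where you invoke Theorem \ref{thmbasreg} misuse it. Statement (2) of that theorem requires \emph{both} functions to lie in $SC^k$; you apply it to the pair $(\SH,\SG)$ with $\SG\in S\wt C^k$ but $\SH$ only in $\rz\wt C^k_\Fb$ with $\Fb$ possibly infinite. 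That is not the ``finite-derivative condition'': an internal $C^k$ map with infinite derivative bound is not $S$-smooth, and pointwise infinitesimal closeness to an $S$-smooth function does not force the internal derivatives to be close (take $\SG=0$ and $\SH(\xi)=\e\,\rz\!\sin(\xi/\e^2)$ with $\e\sim 0$). The detour is also unnecessary: the transferred proposition already hands you $\rz\|\SG-\SH\|_{\FB,\,k}<\Fr\sim 0$, i.e.\ $C^k$-infinitesimal closeness, in the original coordinates; the only thing left to check is that this survives the coordinate change, which is precisely the bookkeeping point you raise at the end (and which the paper itself treats lightly).

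Second, for the final clause you claim Theorem \ref{thmbasreg} ``yields $\wt\SH\in S\wt C^k(V)$.'' It cannot: $\wt\SH$ is only $S$-continuous and pointwise infinitesimally close to an $S\wt C^k$ object, and such a perturbation need not be $S$-smooth or even internally differentiable (this is the paper's saw-tooth remark; note $dSC^k(U)\nsubseteq\rz C^k(U)$). The correct tool --- and the one the paper uses --- is Corollary \ref{cor: SC^j sim SC^k}, which from $\rz\|\wt\SH-\SG\|_{\rz V_{nes},\,0}\sim 0$ and $\SG\in S\wt C^k$ delivers exactly $\wt\SH\in dS\wt C^k$, which is all the statement asserts. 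Your final conclusion is the right one, but the intermediate claim is false and the citation points to the wrong result.
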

\begin{proof}
   First, transfer proposition \ref{prop: standard almost->near} above. So: for all\; $(\FB,\Fr,\Fb)\in\rz cpt(V)\x\rz\bbr_+\x\rz\bbn$, there is $\Fs\in\rz\bbr_+$ such that if $\SH\in\rz\SA_\Fs(\FB)\cap\rz\wt{C}^k_\Fb(V)$, then there is $\SG\in\rz Gp(\FB)\cap\rz\wt{C}^k_{\Fb}(V)$ such that $\rz\|\SG-\SH\|_{\FB,\;k}<\Fr$. We will use this statement for the (fixed) $(\Fr,\Fb,\FB)$ in the hypothesis.
%   Now according to (the transfer of) proposition \ref{prop: standard almost->near}, we have $\Fr$ good $\Fs$ and according to lemma \ref{lem: NSA; LGps close to almost gps}, given the $\Fr\sim 0$ and any infinite $\Fb$, such an  $\Fs\in\rz\bbr_+$  is infinitesimal.
   So we have  $\SG\in \rz Gp(\FB)\cap\rz\wt{C}^k_\Fb(V)$ with $\rz\|\SH-\SG\|_{\rz \FB,\;k}<\Fr\sim 0$. But as  $\rz V_{nes}\subset\FB$, we have that the S-continuity of $\SH$ implies that $\SG$ is S-continuous on $\rz V_{nes}$. Given this, the main nonstandard theorem implies that there are S-homeomorphic *coordinates on $V$ so that in these *coordinates $\SG$ is S-analytic, ie., $^o\SG$ is analytic in the corresponding standard coordinates. Now $\rz\|\SH-\SG\|_{\FB,\;k}\sim 0$ implies, eg., that $^o\SH=\;^o\SG$ on $V$ (as $^o\FB=V$), eg., $^o\SH$ is a local Lie group (in these coordinates) on $V$.  With respect to $\wt{\SH}$ in the hypothesis, note that as $\SG$ is S-analytic in these coordinates, we have in particular that $\rz\|\SG\|_{k,\;\rz V_{nes}}<\rz c$ for some $c\in\bbr_+$ ie., $\SG\in S\wt{C}^k(V)$  and so from the above hypothesis on $\wt{\SH}$, we have $\rz\|\wt{\SH}-\SG\|_{\rz V_{nes},\;0}\leq \rz\|\wt{\SH}-\SH\|_{\rz V_{nes},\;0}+\rz\|\SH-\SG\|_{\FB,\;k}\sim 0$ in these new coordinates as homeomorphic coordinate change preserves infinitesimal distances (for pairs of nearstandard points). But then $\SG\in S\wt{C}^k$ and corollary \ref{cor: SC^j sim SC^k}, implies that $\wt{\SH}$ must be in $dS\wt{C}^k$.
\end{proof}
   We have the following standard corollary of the above nonstandard result. First we need some notation, we write $m\in\FL(V)$ if $m:V\x V\x \bbr_+\ra\bbr_+$ is a continuous map such that for each $x,y\in V$, we have $\lim_{t\ra 0}m(x,y,t)=0$.
\begin{cor}[Principal  result]\label{cor: standard reg + almost->near}
    Let $k\geq 2$ be an integer. Suppose that $\FH\subset \wt{C}^0(V)$ is an equicontinuous subset and $m\in\FL$ with the following properties. For each compact $K\subset V$ and $r>0$, there is $\wt{H}\in\FH$ such that the following holds. There is $(s,b,H)\in\bbr_+\x\bbn\x \wt{C}^k(V)$ such that (1) $|H(x,y)-\wt{H}(x,y)|<m(x,y,r)$ for all $x,y\in K$ and (2) $(b,s)$ is $(K,r)$-good with $H\in\SA_s(B)\cap\wt{C}^k_b(V)$.
    Then, there is a sequence $\FS=\{H_j:j\in\bbn\}\subset\FH$, a choice of coordinates on $V$ and a  $C^k$ local Lie group $G$ on $V$ so that for every $t>0$ and $x,y\in V$, there is $j_0\in\bbn$ so that we have $|H_j(x,y)-G(x,y)|<t$ for all $j\geq j_0$.
\end{cor}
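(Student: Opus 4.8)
The plan is to deduce the statement from the nonstandard regularity corollary \ref{cor: NSA reg+almost->near} (equivalently, from proposition \ref{prop: standard almost->near} combined with the main nonstandard theorem \ref{thm: Main nonst thm}), the equicontinuity mechanism already used in the proof of proposition \ref{prop: C^0 equicont fam is C^k equicont}, and a reverse--transfer/diagonalization argument of exactly the kind carried out in corollary \ref{cor: C^0 precpt fam in Gp is C^k precpt}. Fix once and for all an increasing exhaustion $K_1\subset K_2\subset\cdots$ of $V$ by compact sets with $\bigcup_j K_j=V$, and fix an infinite $\omega\in\rz\bbn\smallsetminus{}^\s\bbn$. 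Put $\FB:=\rz K_\omega\in\rz cpt(V)$ and $\Fr:=1/\omega$; then $\FB$ is an internal compact subset of $\rz V$ with $\rz V_{nes}=\bigcup_{j\in{}^\s\bbn}\rz K_j\subset\FB$, and $\Fr\sim 0^+$. Here $B$ in hypothesis (2) is read as $K$.

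The nonstandard core proceeds as follows. Transferring the hypothesis of the corollary and applying it to the pair $(\FB,\Fr)$ produces an internal $\SH_0\in\rz\FH$ together with $(\Fs,\Fb,\SH_0')\in\rz\bbr_+\times\rz\bbn\times\rz\wt{C}^k(V)$ such that (1') $|\SH_0'(\xi,\z)-\SH_0(\xi,\z)|<\rz m(\xi,\z,\Fr)$ for all $\xi,\z\in\FB$, and (2') $(\Fb,\Fs)$ is $\rz(\FB,\Fr)$--good with $\SH_0'\in\rz\SA_\Fs(\FB)\cap\rz\wt{C}^k_\Fb(V)$. By the transfer of definition \ref{def: (b,s) is r-good} (which records precisely that $(b,s)$ witnesses the conclusion of proposition \ref{prop: standard almost->near}) applied to $\SH_0'$, there is a local $\rz$Lie group $\SG_0\in\rz Gp^k(\FB)\cap\rz\wt{C}^k_\Fb(V)$ with $\rz\|\SG_0-\SH_0'\|_{\FB,\,k}<\Fr\sim 0$. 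Since $m\in\FL$ one has $\rz m(\xi,\z,\Fr)\sim 0$ for $\xi,\z\in\rz V_{nes}$ (using continuity of $m$ and $\lim_{t\to 0^+}m(x,y,t)=0$), so (1') and $\rz\|\SG_0-\SH_0'\|_{\FB,0}\sim 0$ give $\SH_0(\xi,\z)\sim\SH_0'(\xi,\z)\sim\SG_0(\xi,\z)$ for all $\xi,\z\in\rz V_{nes}$. Next, just as in the proof of proposition \ref{prop: C^0 equicont fam is C^k equicont}, the transfer of the equicontinuity of $\FH$ (definition \ref{def: equicont fam}) forces $\SH_0$, hence $\SG_0$, to be $S$--continuous on $\rz V_{nes}$, and one checks the same way that the left and right $\rz$translations of $\SG_0$ are $S$--homeomorphisms, so that $\SG_0\in{}^\s\loc SC^0\rz LG$. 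The main nonstandard theorem \ref{thm: Main nonst thm} then furnishes a homeomorphic change of coordinates $\eta$ on a standard neighbourhood of $0$ (namely $\eta={}^{\circ}(\exp^{-1})$ for $\rz LA(\SG_0)$) in which $^{\circ}\SG_0=:G$ is an analytic local Lie group on $V$, in which $\SG_0$ is $S$--analytic with standard part $G$, and in which, by the closeness above, $^{\circ}\SH_0$ exists and equals $G$.

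To produce the asserted sequence, keep the coordinates $\eta$ and the analytic local Lie group $G$ just obtained, write $H^\eta=(\psi^\eta,\nu^\eta)$ for the structure got from $H=(\psi,\nu)\in\FH$ by the coordinate change $\eta$, and for compact $K\subset V$ and $r>0$ set
\begin{align*}
\SB_{K,r}=\{H\in\FH:\ |H^\eta(x,y)-G(x,y)|<r\ \text{ for all }x,y\in K\}.
\end{align*}
Since $\eta$ and $\eta^{-1}$ are $S$--homeomorphisms on the relevant nearstandard domains they preserve infinitesimal nearness of nearstandard points, and $\rz K\subset\rz V_{nes}$ for standard compact $K$; hence the closeness $\SH_0\sim\SG_0$ of the previous step gives $\SH_0^{\rz\eta}(\xi,\z)\sim\SG_0^{\rz\eta}(\xi,\z)\sim\rz G(\xi,\z)$ for all $\xi,\z\in\rz K$, so that $\SH_0\in\rz\SB_{K,r}$. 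Thus $\rz\SB_{K,r}\neq\emptyset$, and by reverse transfer $\SB_{K,r}\neq\emptyset$. Choosing $H_j\in\SB_{K_j,\,1/j}$ for each $j$ yields $\FS=\{H_j:j\in\bbn\}\subset\FH$: given $t>0$ and $x,y\in V$, pick $j_0$ with $1/j_0<t$ and $x,y\in K_{j_0}$; then for all $j\ge j_0$ we have $x,y\in K_j$ and $|H_j^\eta(x,y)-G(x,y)|<1/j\le 1/j_0<t$, which is the conclusion in the coordinates $\eta$.

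The main obstacle is the step asserting that the nearby local $\rz$Lie group $\SG_0$ genuinely lies in ${}^\s\loc SC^0\rz LG$ — that it is $S$--continuous and that its one--sided translations are $S$--homeomorphisms — even though its internal $C^k$--norm $\Fb$ may be infinite, so that no naive derivative estimate is available. This regularity must be extracted purely from the equicontinuity of $\FH$ (transported to $\SH_0$, and thence to $\SG_0$ through the $\rz m(\cdot,\cdot,\Fr)$--closeness), combined with the exact local--group identities $\psi_{\SG_0}(0,x)=x$, $\psi_{\SG_0}(x,0)=x$ and $\psi_{\SG_0}(\nu_{\SG_0}(x),x)=0$ for $\SG_0$; the argument runs parallel to the one in proposition \ref{prop: C^0 equicont fam is C^k equicont}, but must additionally propagate $S$--continuity off the diagonal by using the group law and the $k$--fold associativity available on the convex set $V$. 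A secondary technical point, flagged above, is the verification that membership $m\in\FL$ forces $\rz m(\xi,\z,\Fr)\sim 0$ on all of $\rz V_{nes}\times\rz V_{nes}$ whenever $\Fr\sim 0^+$; this is exactly where the continuity of $m$ together with its vanishing as the last argument tends to $0$ is used, and it is what prevents the pointwise-only hypothesis (1) from degenerating.
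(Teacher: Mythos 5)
Your proposal is correct and follows essentially the same route as the paper: transfer the hypothesis at an infinitesimal pair $(\FB,\Fr)$ with $\FB\supset\rz V_{nes}$, use the $(K,r)$-goodness to produce a nearby internal local Lie group, derive $S$-continuity from the equicontinuity of $\FH$ and the $\rz m(\cdot,\cdot,\Fr)\sim 0$ estimate, invoke the main nonstandard regularity theorem for the coordinates and the limit group $G$, and finish by reverse transfer on the sets $\SB_{K,r}$ plus a diagonal extraction over an exhaustion of $V$. The only difference is that you inline the content of corollary \ref{cor: NSA reg+almost->near} rather than citing it, and the ``obstacle'' you flag (that $\SG_0$ lies in ${}^\s\loc SC^0\rz LG$) is exactly the step the paper absorbs into that corollary's appeal to theorem \ref{thm: Main nonst thm}, so it is not a gap relative to the paper's own argument.
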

\begin{proof}
    We will demonstrate the existence of the sequence with the asserted properties. Transferring the above hypothesis statement, we get the following statement.
    If $\FB\in\rz cpt(V)$ and $\Fr\in\rz\bbr_+$, then there is $\wt{\SH}\in\rz\FH$ such that the following holds. There is $(\Fs,\Fb,\SH)\in\rz\bbr_+\x\rz\bbn\x \rz\wt{C}^k_\Fb(V)$ such that \textbf{(*1)}: $\rz|\SH(\xi,\z)-\wt{\SH}(\xi,\z)|<\rz m(\xi,\z,\Fr)$ and \textbf{(*2)}: $(\Fb,\Fs)$ is $(\FB,\Fr)$-good with $\SH\in\rz\SA_\Fs(\FB)\cap\rz C^k_b(V)$. Now first note that the conditions on $m$ imply that if $K\subset V$ is compact then $\Fr\sim 0$ implies that $\rz m(\xi,\z,\Fr)\sim 0$ for all $\xi,\z\in\rz K$ and so as $\rz V_{nes}$ is the union of these $\rz K$'s, then for $\Fr\sim 0$, we have that $\rz m(\xi,\z,\Fr)\sim 0$ for all $\xi,\z\in\rz V_{nes}$.
    So fix some $\bsm{\FB}\in\rz cpt(V)$ with $\FB_0\supset\rz V_{nes}$, $\Fr_0\sim 0$ and $\bsm{\wt{\SH}_0}\in\rz\FH$ and $\bsm{\SH_0}\in\rz\SA_\Fb(\FB_0)\cap\rz\wt{C}^k_\Fb(V)$ satisfying the above (*1) and (*2).
%     and note that by lemma \ref{lem: NSA; LGps close to almost gps} we can choose the corresponding $(\Fb_0,\Fs_0)$ pair so that $\Fs_0\sim 0$. (Note further that our $\Fb_0$, the bound on the derivatives, can possibly be infinite.).
     With this setup, the previous discussion implies the following. First, (*1) now implies the statement \textbf{ (*3)}: $\SH_0(\xi,\z)\sim\wt{\SH}_0(\xi,\z)$ for all $\xi,\z\in\rz V_{nes}$. But as $\FH$ is equicontinuous, we have that $\wt{\SH}_0$ is S-continuous on $\rz V_{nes}$, so that (*3) implies  \textbf{(*4)}: $\SH_0$ is S-continuous on $\rz V_{nes}$. But then, as $\Fr\sim 0$, (*2) and (*4) imply that $\SH_0$ satisfies the hypotheses on the $\SH$ in the previous corollary, \ref{cor: NSA reg+almost->near}; that is, after a change of coordinates, we have the statement \textbf{(*5)}: $G\dot=\;^0\SH_0$ is, eg., a $C^k$ local Lie group on $V$ and by (*3) and S-continuity, we have that $\wt{\SH}_0\sim \rz G$ on $\rz V_{nes}$, which is equivalent to saying that, \textbf{(*6)}: for each compact $K\subset V$, we have that $|\wt{\SH}_0(\xi,\z)-\rz G(\xi,\z)|\sim 0$ for each $\xi,\z\in\rz K$. Given this development, consider, for each $t\in\bbr_+$ and compact $K\subset V$, the following set:
\begin{align}
   \FD_{K,\;t}\dot=\{\wt{H}\in\FH:|\wt{H}(x,y)-G(x,y)|<t\;\text{for all}\;x,y\in K\}.
\end{align}
   We assert that for each $t>0$ and compact $K\subset V$, $\FD_{K,\;t}$ is nonempty. By reverse transfer, it suffices to prove that $\rz\FD_{K,\;t}$ is nonempty. But transfer of $\FD_{K,\;t}$ implies that
\begin{align}
    \rz\FD_{K,\;t}=\{\wt{\SH}\in\rz\FH:|\wt{\SH}(\xi,\z)-\rz G(\xi,\z)|<\rz t\;\text{for all}\;\xi,\z\in\rz K\}.
\end{align}
    And clearly, statement (*6) says that $\wt{\SH}_0\in\rz\FD_{K,\;t}$, eg., $\rz\FD_{K,\;t}$ is nonempty. It's now clear how to find the sequence $\FS$. For each $j\in\bbn$, let $K_j\subset V$ be compact so that $K_j\subset K_{j+1}$ for all $j$ and $V=\cup\{K_j:j\in\bbn\}$. Then for each $j\in\bbn$, we have just proved  that there is $H_j\in\FD_{K_j,\;1/j}$. So given $t>0$ and $x,y\in V$, there is a $j'\in\bbn$ so that $x,y\in K_{j'}$ and there is a $j''\in\bbn$ with $1/j''<t$; choosing $j_0\geq\max\{j',j''\}$, we get our result.
%    For each $\Fr\in\rz\bbr_+$, there is $\Fs\in\rz\bbr_+$, $\Fb\in\rz\bbn$ with $(\Fb,\Fs)$ being $\Fr$-good and $\SH\in\rz\SA^\Fb_\Fs$ and $\wt{\SH}\in\rz\FH$ with $|\wt{\SH}(\xi,\z)-\SH(\xi,\z)|<\rz \Fm(\Fr)$ for all $\xi,\z\in\rz V$. Choosing our $\Fr\sim 0$, we have $\rz \Fm(\Fr)\sim 0$.
% ...........................
%    Now we assert that, $\Fr\sim 0$ implies that we can (re)choose our $\Fs\sim 0$, in fact this is the conclusion of lemma \ref{lem: NSA; LGps close to almost gps}. (Note that $\Fb$ is probably infinite.) Note also that as $\FH$ is equicontinuous, then $\wt{\SH}$ is S-continuous on $\rz V$. We therefore have all of the hypotheses in corollary \ref{cor: NSA reg+almost->near}; ie., our $\SH$ satisfies the hypotheses on the $\SH$ in the above corollary and $\wt{\SH}$ satisfies the hypotheses on the $\wt{\SH}$ in that corollary. Hence we have the conclusion of that result. That is, there are coordinates on $V$ such that $^o(\wt{\SH})=\;^o\SH$ is a local Lie group on $V$. Given this conclusion, let $\wh{H}$ denote $^o(\wt{\SH})$ and note that $\rz\wh{H}\sim \wt{\SH}$ on $\rz V_{nes}$.
%    Let $t\in\bbr_+$ be arbitrary and define
%\begin{align}
%    \FC_t=\{H\in\FH: |H(x,y)-\wh{H}(x,y)|<t\;\text{for all}\;x,y\in V \}.
%\end{align}
%  Then, as $\wt{\SH}\sim\rz \wh{H}$, it follows that $\wt{\SH}\in\rz\FC_t$, eg., $\FC_t$ is not empty, ie., there is $H'_t\in\FH$ with $|H'_t(x,y)-\wh{H}(x,y)|<t$. But choosing $t=1/2,1/3,\ldots$, we find a sequence $\FH'=\{H_{1/2}',H'_{1/3},\ldots\}$ satisfying the assertion of our corollary.
 \end{proof}
\begin{remark}[\textbf{Interpretation}]
   \textbf{The previous result uses our main nonstandard theorem to, in some sense, preserve the conclusion of the almost implies near result (in the limit) while greatly weakening the condition on the almost hypothesis. That is, although the $H_j$'s are not required to lie within an $\SA^b_s$ for $(b,s)$ $r$-good for $r$'s tending to $0$, eg., they aren't even differentiable, nevertheless, as they are (only) $C^0$ approximating an almost implies near sequence, this is sufficient to prove that they are getting arbitrarily close to (for the appropriate coordinates) local Lie groups.}
   The operative example of an equicontinuous sequence $\FH$ is one that for which every subsequence has a limit point in $\wt{C}^0(U)$; eg., a sequence in $\wt{C}^0$ that is approximating our local Euclidean topological group $\SM$.
\end{remark}
%   But this means as we carry out our approximation process, the $s\in\bbr$ in the theorem depends also on the growing norm. This problem is apparently insurmountable without a second almost implies near result and the regularity properties of $\exp$, as we shall see.
%   The second almost implies near result will function, in conjunction with the regularity facts for $\exp$ to regularize our sequence of approximate $C^k$ groups approximating the given local Euclidean topological group.
%   We now state and prove a variation on the previous result.

\subsubsection{Technical lemmas}\label{subsec: technical lems: almos-> near}
\begin{lem}\label{lem: given b,r_0,s_0->exists 0<s<s_0}
   Let $b\in\bbr_+$ with $b\geq 1$, $r_0,s_0\in\bbr_+$  and $R_{s_0}(b)\subset\bbr_+$ denote the set
\begin{align}
    \{0<s<s_0:\text{if}\;H\in\!\SA^b_s,\;\text{there is}\;\;G\in Gp^k_b\;\text{such that}\;\|H-G\|_k<r_0\}.\notag
\end{align}
    Then  $R_{s_0}(b)$ is nonempty.
\end{lem}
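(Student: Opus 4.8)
The plan is to derive Lemma \ref{lem: given b,r_0,s_0->exists 0<s<s_0} as an immediate consequence of the almost-implies-near proposition, Proposition \ref{prop: standard almost->near}, by fixing an appropriate compact set and applying that result. First I would fix some compact $B\in cpt(V)$; for instance, by choosing a countable exhaustion $K_1\subset K_2\subset\cdots$ of $V$ by compact sets as discussed before the proposition, I can take $B=K_1$ (any fixed compact subset will do, since the statement of the present lemma does not involve a compact set — the norms $\|\;\|_k$ here are the global ones on $\ov V$, but the proposition's conclusion on a compact $B$ is at least as strong once we note the local groups produced are honest local groups). Actually, the cleanest route is to observe that the lemma's sets $\SA^b_s$, $Gp^k_b$ and norm $\|\;\|_k$ are precisely the ``$B=\ov V$'' (or a fixed exhausting compact) instances of the notation in Definition \ref{def: s-almost gps}, so the present lemma is literally the specialization of Proposition \ref{prop: standard almost->near} to that fixed compact set, with $r=r_0$ and $b$ as given.

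The key steps, in order, are: (1) Apply Proposition \ref{prop: standard almost->near} to the data $(B, r_0, \lceil b\rceil)$ — noting that the proposition is stated with $b\in\bbn$, so if the given $b\in\bbr_+$ is not an integer we simply enlarge it to $\lceil b\rceil$, which only enlarges the class $\wt C^k_b$, hence only strengthens the hypothesis $H\in\SA^b_s$ we need to satisfy (wait — enlarging $b$ enlarges $\wt C^k_b$, so $\SA^b_s\subset\SA^{\lceil b\rceil}_s$, meaning the conclusion for $\lceil b\rceil$ applies to all $H\in\SA^b_s$; and the local group $G$ produced lies in $\wt C^k_{\lceil b\rceil}$, but for the present lemma we want $G\in Gp^k_b$ — so here I would instead keep $b$ as the bound and use the monotonicity the other way, or simply note that the proposition can be restated verbatim for real $b\ge 1$ since its proof never used integrality of $b$, only the fixed positive bound). (2) The proposition yields a positive $s_1\in\bbr$ such that every $H\in\SA_{s_1}(B)\cap\wt C^k_b(V)$ has a local Lie group $G\in Gp^k(B)\cap\wt C^k_b(V)$ with $\|G-H\|_{B,k}<r_0$. (3) Set $s=\min\{s_1, s_0/2\}>0$; then $0<s<s_0$, and for $H\in\SA^b_s = \SA_s(B)\cap\wt C^k_b(V)\subset\SA_{s_1}(B)\cap\wt C^k_b(V)$ the proposition gives the required $G$. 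Hence $s\in R_{s_0}(b)$, so $R_{s_0}(b)\neq\emptyset$.

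The only genuine subtlety — and I expect it to be the main (minor) obstacle — is bookkeeping about the parameter $b$: reconciling the ``$b\in\bbn$'' statement of Proposition \ref{prop: standard almost->near} with the ``$b\in\bbr_+$, $b\ge 1$'' hypothesis here, and making sure the bound $b$ is preserved (not merely $\lceil b\rceil$) in the output local Lie group $Gp^k_b$. I would handle this by observing that the nonstandard proof of Proposition \ref{prop: standard almost->near} goes through for any fixed real bound $b\ge 1$ verbatim: the contradiction argument transfers $\wt C^k_b(V)$ for whatever real $b$ we fix, extracts $\wh\SH\in\rz\SA_\Fs(B_0)\cap\rz\wt C^k_{*b}(V)$ with $\Fs\sim 0$, and Theorem \ref{thmbasreg} produces $^o\wh\SH\in C^k_b(V)$ — the bound $b$ is inherited by the standard part because taking standard parts does not increase $C^k$ norms. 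So no integrality is needed, and $b$ is genuinely preserved. With that remark in place, the lemma is a one-line corollary. I would also remark that the assumption $b\ge 1$ is only needed downstream (it is not used in this reduction) and is harmless.

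\begin{proof}
Fix any $B\in cpt(V)$; by the discussion preceding Proposition \ref{prop: standard almost->near} we may take $B$ to be an element of a countable increasing compact exhaustion of $V$, and we note that the norm $\|\;\|_k$ appearing in the statement of the present lemma agrees with $\|\;\|_{B,k}$ up to enlarging $B$, so that the classes $\SA^b_s$ and $Gp^k_b$ are exactly the $B$-instances of Definition \ref{def: s-almost gps}. The proof of Proposition \ref{prop: standard almost->near} used the hypothesis $b\in\bbn$ only through the fixed positive $C^k$-bound it imposes; the nonstandard argument transfers $\rz\wt C^k_{*b}(V)$, extracts $\wh\SH\in\rz\SA_\Fs(B)\cap\rz\wt C^k_{*b}(V)$ with $\Fs\sim 0$, and invokes Theorem \ref{thmbasreg} to obtain $^o\wh\SH\in C^k_b(V)$, the bound $b$ being inherited because passing to standard parts does not increase $C^k$ norms. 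Hence Proposition \ref{prop: standard almost->near} holds verbatim for any real bound $b\ge 1$.

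Apply this version of Proposition \ref{prop: standard almost->near} to the triple $(B, r_0, b)$: there is a positive $s_1\in\bbr$ such that whenever $H\in\SA_{s_1}(B)\cap\wt C^k_b(V)=\SA^b_{s_1}$ there exists a local Lie group $G\in Gp^k(B)\cap\wt C^k_b(V)=Gp^k_b$ with $\|H-G\|_k<r_0$. Put $s\dot=\min\{s_1,\,s_0/2\}$; then $0<s<s_0$. Since $\SA^b_s\subseteq\SA^b_{s_1}$, for every $H\in\SA^b_s$ the above yields $G\in Gp^k_b$ with $\|H-G\|_k<r_0$. Therefore $s\in R_{s_0}(b)$, and in particular $R_{s_0}(b)\neq\emptyset$.
\end{proof}
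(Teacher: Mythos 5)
The paper does \emph{not} derive this lemma from Proposition \ref{prop: standard almost->near}; it gives a fresh, self-contained nonstandard contradiction argument (assume $R_{s_0}(\ov b)=\emptyset$, transfer, take $\Fs\sim 0$, extract $\SH$, invoke ``the proof of the previous lemma'' to see $^o\SH\in Gp^k_{\ov b}$ with $\rz\|\SH-\rz(^o\SH)\|_k\sim 0$, contradiction). Your proposal instead treats the proposition as a black box and specializes its parameters, which is a genuinely different packaging. Your key observation — that the integrality hypothesis $b\in\bbn$ in the proposition plays no role in its proof, so the result holds verbatim for real $b\ge 1$ — is correct and is what the paper is implicitly exploiting when it reruns the argument for $\ov b\in\bbr_+$.

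The one place where the reduction is looser than the paper's direct argument is the compact-set bookkeeping. Proposition \ref{prop: standard almost->near} is quantified over $B\in cpt(V)$ and its conclusion controls $\|G-H\|_{B,k}$, whereas the lemma's $\|\;\|_k$ is (per the boldface ``Henceforth\ldots'' convention) the norm over $\ov V$, which is \emph{not} an element of $cpt(V)$ when $V$ is open. Your remark that this agrees ``up to enlarging $B$'' does not actually close that gap, since no $B\in cpt(V)$ contains $\ov V$. The paper's own proof sidesteps this by working directly with the global norm in its nonstandard argument rather than passing through a $B$-dependent statement (though, to be fair, the paper's proof then silently inherits the same S-smoothness-up-to-the-boundary question that it also glosses over inside the proof of Proposition \ref{prop: standard almost->near}). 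If you want the corollary-style deduction to be airtight, you should either (i) reinterpret the lemma's unadorned $\SA^b_s$, $Gp^k_b$, $\|\;\|_k$ as the $B$-instances for some fixed $B\in cpt(V)$, which is consistent with the paper's own loose notation, or (ii) restate the proposition for $B=\ov V$ and observe that its nonstandard proof is unaffected by this enlargement (the relevant internal estimate $\rz\|\rz(^o\psi)-\psi\|_{\rz B_0,k}\sim 0$ from Theorem \ref{thmbasreg} is the only step that sees $B$). With either fix, your shorter deduction is sound and arguably cleaner than rerunning the transfer argument.
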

\begin{proof}
    By way of contradiction, suppose that there is $\ov{b}\in\bbr_+$ such that $R_{s_0}(\ov{b})$ is empty. So  the following statement $\ST(\ov{b})$ holds: for all $s\in\bbr_+$ with $s<s_0$, there exists $H\in\SA^{\ov{b}}_s$ such that for all $G\in Gp^k_{\ov{b}}$, $\|H-G\|_k\geq r_0$. Therefore, transfer implies that $\rz\ST(\ov{b})$ holds, ie., for all $\Fs\in\rz\bbr_+$ with $\Fs<\rz s_0$, there exists $\SH\in\SA^{*\ov{b}}_k$ such that for all $\SG\in\rz Gp^k_{*\ov{b}}$, we have $\rz\|\SH-\SG\|_k\geq r_0$. But choosing $\ov{\Fs}\sim 0$, then $\rz\ov{b}$ being finite implies that if $\SH\in\rz\SA^{*\ov{b}}_k$, then we know from the proof of the previous lemma that $^o\SH\in Gp^k_{\ov{b}}$ and also that $\rz\|\SH-\rz(^o\SH)\|_k\sim 0$ which is certainly less than $r_0$, contradicting the contrary conclusion.
\end{proof}
\begin{remark}
    If for some $s,b$, $\SA^b_s$ is empty and $s_0>s$, then $R_{s_0}(b)$ is nonempty. We will, of course, use this lemma in situations where $\SA_s^b$ is not empty. Note that if $s<s'$, then $\SA^b_s\subset\SA^b_{s'}$ and so if $s'\in R_{s_0}(b)$, then $s\in R_{s_0}(b)$.
\end{remark}
   In order to  expose a closer relationship between $s$ and $(b,r)$, we need a definition.
\begin{definition}
   Let $q(A,b,r)$ be the assertion: there exists $G\in Gp_b$ with $\|A-G\|_k<r$. Let
\begin{align}
    m(b,r)=\sup\{s\in\bbr_+:\text{if}\;A\in\SA^b_s, \text{then}\;q(A,b,r)\;\text{holds}\}.
\end{align}
\end{definition}
\begin{lem}\label{lem: monotonicity & lim props of m}
   The following holds.
\begin{enumerate}
   \item If $r_1,r_2,b\in\bbr_+$ with $r_1<r_2$, then $m(b,r_1)\leq m(b,r_2)$.
   \item If $r,b_1,b_2\in\bbr_+$ with $b_1<b_2$, then $m(b_1,r)\geq m(b_2,r)$.
   \item  For all $b\in\bbr_+$, we have $\lim_{r\ra 0}m(b,r)=0$.
\end{enumerate}
\end{lem}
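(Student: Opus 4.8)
\textbf{Proof plan for Lemma \ref{lem: monotonicity & lim props of m}.}

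The plan is to prove each of the three items by reducing it to the definitions of $\SA^b_s$, $q(A,b,r)$ and $m(b,r)$, using only elementary monotonicity observations about the families involved together with a single nonstandard argument (analogous to the one in Lemma \ref{lem: given b,r_0,s_0->exists 0<s<s_0}) for item (3). For item (1), I would argue as follows: if $s>0$ has the property that $A\in\SA^b_s$ implies $q(A,b,r_1)$, I claim the same $s$ forces $q(A,b,r_2)$. Indeed, if there is $G\in Gp_b$ with $\|A-G\|_k<r_1$, then since $r_1<r_2$ we certainly have $\|A-G\|_k<r_2$, so $q(A,b,r_2)$ holds with the same $G$. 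Hence every $s$ in the set defining $m(b,r_1)$ lies in the set defining $m(b,r_2)$, so the supremum can only increase: $m(b,r_1)\le m(b,r_2)$.

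For item (2), the key observation is the monotonicity $\SA^{b_1}_s\subseteq\SA^{b_2}_s$ when $b_1<b_2$ (an $s$-almost group with $C^k$ norms bounded by $b_1$ is a fortiori one with norms bounded by $b_2$), together with $Gp^k_{b_1}\subseteq Gp^k_{b_2}$. Suppose $s>0$ belongs to the set defining $m(b_2,r)$, i.e.\ $A\in\SA^{b_2}_s\Rightarrow q(A,b_2,r)$. I want to show $s$ also belongs to the set defining $m(b_1,r)$, i.e.\ $A\in\SA^{b_1}_s\Rightarrow q(A,b_1,r)$. Given such an $A$, we have $A\in\SA^{b_1}_s\subseteq\SA^{b_2}_s$, so by hypothesis there is $G\in Gp^k_{b_2}$ with $\|A-G\|_k<r$; but I must produce $G\in Gp^k_{b_1}$, which is not immediate. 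The remedy is to note that $A\in\wt{C}^k_{b_1}$, so $\|G\|_k\le\|A\|_k+\|A-G\|_k<b_1+r$; so the honest statement is $m(b_1,r)\ge m(b_2,r)$ once one slightly enlarges the bound, or else one should read the definition of $q$ as allowing $G\in Gp_b$ for the \emph{same} $b$ as the source. The cleanest fix is to observe that $Gp^k_b$ is increasing in $b$ and that $\SA^b_s$ is increasing in $b$, so the set $\{s:A\in\SA^b_s\Rightarrow q(A,b,r)\}$ is decreasing in $b$ because both the hypothesis (more $A$'s to check) and the requirement on $G$ get harder; therefore the supremum $m(b,r)$ is nonincreasing in $b$. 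I expect this bookkeeping about which bound $G$ must satisfy to be the main subtlety, and I would state at the outset the convention that $q(A,b,r)$ requires $G$ in $Gp_b$ with the same $b$, so that the monotonicity is clean.

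For item (3), fix $b\in\bbr_+$. The statement $\lim_{r\to 0}m(b,r)=0$ amounts to: for every $\e>0$ there is $r_0>0$ such that $m(b,r)<\e$ for $0<r\le r_0$. By part (1), $m(b,\cdot)$ is nondecreasing, so it suffices to rule out $m(b,r)\ge\e$ for all $r>0$, i.e.\ to show $\inf_{r>0}m(b,r)=0$. Suppose for contradiction there is $\e_0>0$ with $m(b,r)\ge\e_0$ for all $r>0$; unwinding the definition of the supremum, for each $r>0$ there is $s$ with $\e_0/2<s$ (say) such that $A\in\SA^b_s\Rightarrow q(A,b,r)$, and since $\SA^b_{s'}\subseteq\SA^b_s$ for $s'\le s$, we get that $A\in\SA^b_{\e_0/2}\Rightarrow q(A,b,r)$ for \emph{every} $r>0$. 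Transferring this statement into the nonstandard universe and applying it with an infinitesimal $\Fr\sim 0$ yields: every $\SH\in\rz\SA^b_{\rz(\e_0/2)}\cap\rz\wt{C}^k_{*b}(V)$ admits $\SG\in\rz Gp^k_{*b}$ with $\rz\|\SH-\SG\|_k<\Fr\sim 0$. But $\e_0/2$ is a fixed positive standard number, so there is no contradiction yet; the obstruction is that $\SA^b_{\e_0/2}$ may be nonempty and may contain elements whose standard parts are genuinely \emph{not} local Lie groups. This is exactly the point where I would instead invoke Proposition \ref{prop: standard almost->near}: that proposition says that for the fixed $b$ and for each $r>0$ there \emph{is} an $s>0$ making the implication true, but it gives no lower bound on $s$, and in fact the proof shows (via Theorem \ref{thmbasreg}) that the required $s$ must shrink as $r$ shrinks. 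So the correct argument for (3) is: pick a sequence $r_j\downarrow 0$; if $m(b,r_j)$ did not tend to $0$, there would be $\e_0>0$ and a subsequence with $m(b,r_{j})\ge\e_0$, hence $A\in\SA^b_{\e_0/2}\Rightarrow q(A,b,r_j)$ for all $j$, hence $A\in\SA^b_{\e_0/2}\Rightarrow q(A,b,0^+)$, i.e.\ $A$ is within every positive $C^k$ distance of some element of $Gp^k_b$, i.e.\ $A$ lies in the $\|\cdot\|_k$-closure of $Gp^k_b$; transferring and taking an infinitesimally-good representative one concludes $A$ itself would have to be a local Lie group, so $\SA^b_{\e_0/2}\subseteq\overline{Gp^k_b}^{\,\|\cdot\|_k}$. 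But a genuine $(\e_0/2)$-almost group need not be a limit of exact groups in $C^k$ — concretely, one exhibits $A\in\SA^b_{\e_0/2}$ with $D_1(A)_B>0$ small but for which any $C^k$-close $G$ would also have $D_1(G)_B>0$, contradicting $G\in Gp_b$ — and this is the contradiction. The hard part, as in Lemma \ref{lem: given b,r_0,s_0->exists 0<s<s_0}, is making the final contradiction airtight: one must either construct such an explicit $A$, or, cleaner, observe that $\bigcap_{s>0}\SA^b_s=Gp^k_b$ is false in the topology of $C^0$-pointwise closeness used to define the $D_j$, so that for a suitable fixed $s$ the set $\SA^b_s\setminus\overline{Gp^k_b}$ is nonempty. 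I would present (3) in this last form, deriving it as a direct corollary of Proposition \ref{prop: standard almost->near} combined with the failure of the reverse inclusion, and I expect the verification that $\SA^b_s$ is strictly larger than the $C^k$-closure of $Gp^k_b$ for every fixed $s$ to be the only genuinely non-formal step.
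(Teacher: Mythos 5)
Your treatment of items (1) and (2) is essentially the paper's: (1) is exactly the observation that the defining set of $s$'s for $m(b,r_1)$ is contained in that for $m(b,r_2)$, and (2) rests on $\SA^{b_1}_s\subset\SA^{b_2}_s$. You are right that (2) hides a subtlety which the paper itself glosses over: the paper simply asserts that $q(A,b_2,r)$ for all $A\in\SA^{b_2}_s$ yields $q(A,b_1,r)$ for all $A\in\SA^{b_1}_s$, without addressing that the witness $G$ is only known to lie in $Gp^k_{b_2}$. But your closing justification is backwards: as $b$ grows the requirement on $G$ gets \emph{easier} (since $Gp^k_{b_1}\subseteq Gp^k_{b_2}$), not harder, so the two effects oppose one another and the monotonicity is not formal. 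Your earlier estimate $\|G\|_k\le\|A\|_k+\|A-G\|_k<b_1+r$, or an explicit convention that $q(A,b,r)$ demands $G\in Gp^k_b$ for the same $b$, is the honest way out, and you should commit to one of these rather than the final hand-wave.

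For (3) you converge on the paper's argument --- transfer the statement that $A\in\SA^b_{s_0}$ implies $q(A,b,r_j)$ for all $j$, evaluate at an infinite index, and conclude that every $\mathscr{A}\in\rz\SA^{*b}_{*s_0}$ is infinitesimally $C^k$-close to some $\mathscr{G}\in\rz Gp^k_{*b}$ --- but you stop short of the paper's actual contradiction mechanism. The paper chooses $\mathscr{A}_0=(\phi,\eta)$ with, say, $\rz D_1(\mathscr{A}_0)>\rz s_0/2$, and then shows concretely that $\rz\|\phi-\psi\|_k\sim 0$ together with S-continuity of both products (available because the $C^k$ norms are bounded by the standard $b$) forces $\phi(\phi(\xi,\z),\g)\sim\phi(\xi,\phi(\z,\g))$, i.e., the associativity defect of $\mathscr{A}_0$ is infinitesimal, contradicting the choice of $\mathscr{A}_0$. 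This defect-propagation step is the content you only gesture at with ``any $C^k$-close $G$ would also have $D_1(G)_B>0$,'' and it is what makes the contradiction airtight without having to discuss the $C^k$-closure of $Gp^k_b$. You are right, however, that both your sketch and the paper tacitly assume the existence of an element of $\SA^b_{s_0}$ whose defect is bounded below by $s_0/2$; the paper writes ``choose $\mathscr{A}_0$'' without exhibiting such a witness, and that existence (e.g., by perturbing an exact group so as to break associativity by a definite amount while keeping the $C^k$ norm under $b$) is the one substantive point neither argument discharges.
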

\begin{proof}
    The first assertion follows from the fact that for a given positive $s$, we have that for some $A\in \wt{C}^k$, $q(A,b,r_1)$ holds implies that $q(A,b,r_2)$ holds. That is, the set of positive $s$ satisfying $A\in\SA^k_b$ implies that $q(A,b,r_1)$ holds is a subset of the set of positive $s$ satisfying $A\in\SA^k_b$ implies that $q(A,b,r_2)$ holds. The second statement follows similarly as $\SA^k_{b_1}\subset\SA^k_{b_2}$ and so, for fixed $r,s$, if $q(A,b_2,r)$ holds for all $A\in\SA^{b_2}_s$ then $q(A,b_1,r)$ holds for all $A\in\SA^{b_1}_s$.
    Suppose that the last statement is false, ie., there is $b_0\in\bbr_+$ such that $\lim_{r\ra 0}m(b_0,r)>2s_0$ for some positive $s_0\in\bbr_+$. So let $r_1>r_2>\cdots$ be a decreasing sequence of positive numbers with limit $0$; then we have the following statement: for each $j\in\bbn$ we have that for all $A\in\SA^b_{s_0}$, there exists $G\in Gp^k_b$ such that $\|A-G\|_k<r_j$. Tranferring this statement we have that the following holds. For each $\Fj\in\rz\bbn$, we have that for all $\mathscr{A}\in\rz\SA^{*b}_{*s_0}$, there exists $\mathscr{G}\in\rz Gp^{*k}_{*b}$ such that $\rz\|\mathscr{A}-\mathscr{G}\|_k<\rz r_\Fj$. Pick $\Fj\in\rz\bbn_\infty$, so that the previous statement implies that $\rz\|\mathscr{A}-\mathscr{G}\|_k\sim 0$ and choose $\mathscr{A}_0\in\SA^{*b}_{*s_0}$ so that $\rz D_k(\mathscr{A})>\rz s_0/2$ for at least one of $k=1,2,3$. If $k=1$ and writing $\mathscr{A}=(\phi,\eta)$, we have that there is $\xi_0,\z_0,\g_0\in\rz V$ such that $\bsm{(\dag)}$: $|\phi(\phi(\xi_0,\z_0),\g_0)-\phi(\xi_0,\phi(\z_0,\g_0))|>\rz s_0/2$. But we know that there is $\mathscr{G}=(\psi,\nu)\in\rz Gp^{*k}_{*b}$ such that $\rz\|\mathscr{A}-\mathscr{G}\|_k\sim 0$, eg., $\rz\|\phi-\psi\|_k\sim 0$. It's easy to check that this and S-continuity of $\psi$ and $\phi$ imply that for all $\xi,\z,\g\in\rz V$, we have $\phi(\phi(\xi,\z),\g)\sim\psi(\psi(\xi,\z),\g)$ and similarly $\phi(\xi,\phi(\z,\g))\sim \psi(\xi,\psi(\z,\g))$. But then these can be strung together with the associativity expression for $\psi$ to imply $\phi(\phi(\xi,\z),\g)\sim\phi(\xi,\phi(\z,\g))$, contradicting $(\dag)$. A similar argument applies if instead $\rz D_2(\mathscr{A})>\rz s_0/2$ or $\rz D_3(\mathscr{A})>\rz s_0/2$.
\end{proof}
    The following lemma (which is a consequence of the previous standard lemma) will give us the critical information on the asymptotic relation between $(b,r)$ and $s$ from a nonstandard perspective.
\begin{lem}\label{lem: NSA; LGps close to almost gps}
    Let $m:\bbr_+\x\bbr_+\ra\bbr_+$ be as defined above. Let $\rz m$ be its transfer and suppose that $\Fr_0\in\rz\bbr_+$ is infinitesimal. Then for any infinite $\Fb_0\in\rz\bbr_+$ we have that $0<\rz m(\Fb_0,\Fr_0)\sim 0$. In particular, if $\Fr\sim 0$, irrespective of the value of $\Fb\in\rz\bbn$, an $\Fs$ that is $(\Fr)$ good must be infinitesimal.
\end{lem}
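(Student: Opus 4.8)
The statement to prove is Lemma \ref{lem: NSA; LGps close to almost gps}: if $\Fr_0 \in \rz\bbr_+$ is infinitesimal, then for any infinite $\Fb_0 \in \rz\bbr_+$ we have $0 < \rz m(\Fb_0, \Fr_0) \sim 0$; and consequently, if $\Fr \sim 0$, then regardless of $\Fb \in \rz\bbn$, any $\Fs$ that is $(\Fr)$-good must be infinitesimal. The key inputs are the three properties of $m$ established in Lemma \ref{lem: monotonicity & lim props of m}: monotonicity in $r$ (increasing), monotonicity in $b$ (decreasing), and $\lim_{r \to 0} m(b,r) = 0$ for every $b \in \bbr_+$. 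The plan is to transfer each of these and combine them with a standard-part / overflow argument.

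\textbf{Step 1: positivity.} First I would argue that $\rz m(\Fb_0, \Fr_0) > 0$. This should follow from Lemma \ref{lem: given b,r_0,s_0->exists 0<s<s_0} (transferred): for standard $b, r_0, s_0$ the set $R_{s_0}(b)$ is nonempty, so $m(b, r_0) \geq \sup R_{s_0}(b) > 0$ whenever the relevant almost-group sets are nonempty; transferring, $\rz m(\Fb_0, \Fr_0) > 0$. (If the relevant $\SA^b_s$ happens to be empty the supremum defining $m$ is still positive by the convention noted in the remark after Lemma \ref{lem: given b,r_0,s_0->exists 0<s<s_0}.) This is the easy half.

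\textbf{Step 2: infinitesimality — the main obstacle.} The hard part is showing $\rz m(\Fb_0, \Fr_0) \sim 0$ when \emph{both} $\Fr_0$ is infinitesimal \emph{and} $\Fb_0$ is infinite, since property (2) only tells us $m$ \emph{decreases} in $b$, and property (3)'s limit $\lim_{r\to 0} m(b,r) = 0$ is a priori only for \emph{standard} $b$, with no uniformity in $b$. The route I would take: fix an arbitrary standard $\e > 0$ and show $\rz m(\Fb_0, \Fr_0) < \e$. For each standard $b \in \bbn$, property (3) gives a standard $\rho(b) > 0$ with $m(b, r) < \e$ for all $0 < r < \rho(b)$. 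Now consider the internal set $A = \{\,b \in \rz\bbr_+ : \rz m(b, \Fr_0) < \e\,\}$ (internal by the internal definition principle, since $\Fr_0$ and $\e = \rz\e$ are constants and $\rz m$ is internal). For every standard $b$, since $\Fr_0 \sim 0 < \rho(b)$, transfer of property (3) gives $\rz m(\rz b, \Fr_0) < \e$, so $^\s\bbn \subseteq A$. Wait — this alone is not enough, because overflow would only push $A$ out to \emph{some} infinite integer, not necessarily past $\Fb_0$. To fix this I would instead use monotonicity in $b$ (property (2), transferred): if I can exhibit a \emph{single} infinite $\Fb_1 \in A$ with $\Fb_1 \geq \Fb_0$, then $\rz m(\Fb_0, \Fr_0) \geq \rz m(\Fb_1, \Fr_0)$ goes the wrong way. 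So monotonicity in $b$ is unhelpful here and the real content must come from a different combination.

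\textbf{Step 2, corrected approach.} The correct mechanism, I believe, is to argue by contradiction at the standard level combined with a direct construction, rather than pure overflow. Suppose $\rz m(\Fb_0, \Fr_0) \not\sim 0$, so $\rz m(\Fb_0, \Fr_0) > \e$ for some standard $\e > 0$. Since $\Fr_0 \sim 0$, for every standard $r > 0$ we have $\Fr_0 < \rz r$, and by monotonicity in $r$ (property (1), transferred), $\rz m(\Fb_0, \Fr_0) \leq \rz m(\Fb_0, \rz r)$; this is automatic and gives nothing. So instead I would look at the contrapositive packaging of the definition of $m$: $\rz m(\Fb_0, \Fr_0) > \e$ means, by definition of $m$ as a supremum, there exists internal $\Fs > \e$ such that every $\SH \in \rz\SA^{\Fb_0}_{\Fs}$ satisfies $q(\SH, \Fb_0, \Fr_0)$ — i.e. admits a $*$-Lie group within $C^k$-distance $\Fr_0 \sim 0$. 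But then, running the argument of Proposition \ref{prop: standard almost->near} / Corollary \ref{cor: NSA reg+almost->near} backwards: pick any $\SH \in \rz\SA^{\Fb_0}_{\Fs}$ with $\Fs \gsim \e$ a \emph{standard} amount away from being a group; such $\SH$ cannot be $C^k$-infinitesimally close to any $*$-Lie group, because $D_j(\SG) = 0$ for a group $\SG$ forces $D_j(\SH) \lesssim \rz\|\SH - \SG\|_k \sim 0$, contradicting $D_j(\SH) > \e/2 \not\sim 0$ for some $j$ — exactly the argument already used in the proof of Lemma \ref{lem: monotonicity & lim props of m}(3). Hence $\rz m(\Fb_0, \Fr_0) \leq \e$; as $\e$ was an arbitrary standard positive, $\rz m(\Fb_0, \Fr_0) \sim 0$. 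The one subtlety is checking that $\rz\SA^{\Fb_0}_{\Fs}$ is nonempty for such $\Fs$ (needed so that the "there exists $\SH$" is not vacuous) — for infinite $\Fb_0$ this should be clear by transfer since $\SA^b_s$ is nonempty for all sufficiently large standard $b$ and $s$.

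\textbf{Step 3: the consequence for $(\Fr)$-good $\Fs$.} Finally, recall (Definition \ref{def: (b,s) is r-good}) that $(\Fb, \Fs)$ being $(\FB, \Fr)$-good means $\Fb, \Fs$ satisfy the hypothesis of Proposition \ref{prop: standard almost->near} for the conclusion to hold for that $\FB$ and $\Fr$; unwinding, this forces $\Fs \leq \rz m(\Fb, \Fr)$ (since $m(b,r)$ is precisely the supremum of admissible $s$). Given $\Fr \sim 0$: if $\Fb$ is finite, apply transfer of property (3) directly ($\Fs \leq \rz m(\rz b, \Fr) \sim 0$ using $\Fr < \rz\rho(b)$ for the relevant standard $\rho(b)$, and $\rz m(\rz b, \cdot)$ standard); if $\Fb$ is infinite, apply Step 2 with $\Fb_0 = \Fb$, $\Fr_0 = \Fr$ to get $\Fs \leq \rz m(\Fb, \Fr) \sim 0$. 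Either way $\Fs \sim 0$, completing the proof.

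\textbf{Expected main difficulty.} The genuine obstacle is exactly the non-uniformity in $b$ of the limit in Lemma \ref{lem: monotonicity & lim props of m}(3): the naive overflow argument over $^\s\bbn \subseteq A$ does not reach an arbitrary prescribed infinite $\Fb_0$. Bypassing this requires recognizing that one should not transfer property (3) for fixed $b$ and overflow in $b$, but instead re-run the structural argument (a $*$-Lie group near an almost group forces the defect functionals $D_j$ to be infinitesimal) at the level of the infinite parameters directly — which is robust to $\Fb_0$ being infinite because it never quantifies over $b$ at all. Once that reframing is in hand the rest is bookkeeping with the definitions of $m$ and of $(\Fr)$-good.
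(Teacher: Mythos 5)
Your proposal goes off course in Step~2 in two related ways, and together they amount to a genuine gap that the paper avoids by a much more direct argument.

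First, the missed direct route. In your initial version of Step~2, you correctly establish $\rz m(\rz b,\Fr_0)<\e$ for every standard $b$ (hence $\rz m(\rz b,\Fr_0)\sim 0$), and then you try to reach $\Fb_0$ by looking for an infinite $\Fb_1\in A$ with $\Fb_1\geq\Fb_0$. That is the wrong side: since $m$ is \emph{decreasing} in $b$ (Lemma~\ref{lem: monotonicity & lim props of m}(2), transferred), to bound $\rz m(\Fb_0,\Fr_0)$ from above you want a $\Fb_1\leq\Fb_0$ with $\rz m(\Fb_1,\Fr_0)$ already known to be small — and any standard $\rz b$ works, because $\Fb_0$ infinite means $\Fb_0>\rz b$. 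So $\rz m(\Fb_0,\Fr_0)\leq\rz m(\rz b,\Fr_0)\sim 0$ and you are done in one line. This is essentially the paper's proof (the paper dresses it with a saturation step producing a single infinitesimal upper bound $\Fs_0$ for all the $\rz m(\rz b,\Fr_0)$, but the same monotonicity step from a fixed standard $b$ already closes the argument). You abandoned this route only because you swapped the direction in which to chase $\Fb_1$.

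Second, the ``corrected approach'' you substituted has a real hole. You want to extract $\Fs>\e$, pick $\SH\in\rz\SA^{\Fb_0}_\Fs$ with some $D_j(\SH)$ noninfinitesimal, and then argue $D_j(\SH)\lesssim\rz\|\SH-\SG\|_k\sim 0$ from the nearby $*$-Lie group $\SG$ produced by $q$. But that estimate, as in the proof of Lemma~\ref{lem: monotonicity & lim props of m}(3), runs through $S$-continuity of $\psi_\SH$ and $\psi_\SG$: the defect functionals $D_j$ are built from compositions, and infinitesimal $C^0$-closeness of $\psi_\SH$ and $\psi_\SG$ propagates through the compositions only because both maps are $S$-continuous, which in turn is guaranteed by a \emph{finite} $C^k$ bound. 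At an infinite $\Fb_0$ the membership $\SH,\SG\in\rz\wt{C}^k_{\Fb_0}$ gives no such bound; $\|\psi_\SG\|_1$ can be infinite, and an infinite constant times the infinitesimal $\Fr_0$ is indeterminate. The argument can be salvaged by insisting $\SH$ have finite $C^k$ norm (e.g. $\SH\in\rz\SA^{\rz b}_\Fs\subset\rz\SA^{\Fb_0}_\Fs$ for a standard $b$, which also makes the nearby $\SG$ have finite norm), but at that point you have implicitly reduced to the statement $\rz m(\rz b,\Fr_0)\sim 0$ for standard $b$ plus monotonicity — i.e., you have recreated the direct route you discarded, with extra machinery and the $S$-continuity detail left unaddressed. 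Step~3 of your proposal is fine once Step~2 is fixed; the identification of ``$(\Fr)$-good $\Fs$'' with $\Fs\leq\rz m(\Fb,\Fr)$ and the split into finite and infinite $\Fb$ is the intended reading.
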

\begin{proof}
    Fix $0<\Fr_0\sim 0$ and note that if $b\in\bbr_+$, then Item (3) in Lemma \ref{lem: monotonicity & lim props of m} implies $\rz m(\rz b,\Fr_0)\sim 0$. That is, we have a map $b\mapsto m(\rz b,\Fr_0):\bbr_+\ra\mu(0)_+$, ie., the set of image values is a family $\SF$ of positive infinitesimals (internally) parametrized by a standard set. But, then as our model is sufficiently saturated, $\SF$ is bounded above by a positive infinitesimal $\Fs_0$, ie., $\rz m(b,\Fr_0)<\Fs_0$ for all $b\in\bbr_+$. Next, note that also by the transfer of Item (2) of Lemma \ref{lem: monotonicity & lim props of m},
    if $\Fb\in\rz\bbr_+$ is larger than a given $b\in\bbr_+$, then $\rz m(\Fb,\Fr_0)\leq\rz m(\rz b,\Fr_0)<\Fs_0$, eg., this holds for all infinite $\Fb\in\rz\bbr_+$. That $\rz m(\Fb,\Fr_0)>0$ follows from the transfer of Lemma \ref{lem: given b,r_0,s_0->exists 0<s<s_0}.
%     that if $\rz m(\rz b,\Fr_0)\leq\Fs_0$ and $\Fb\in\rz\bbr_+$ with $\Fb\leq \rz b$, then $\rz m(\Fb,\Fr_0)\leq\Fs_0$.   So if $\SJ=\{\Fb\in\bbr_+:\rz m(\Fb,\Fr_0)\leq\Fs_0\}$, then we have just verified that positive elements of $\rz\bbr_{nes}$ are contained in $\SJ$. But $\SJ$ is internal and so there is a positive $\Fb_0\in\rz\bbr_\infty$ such that $\Fb_0\in\SJ$ as we wanted to prove.

%   First note that for finite $b\in\bbr_+$, we have that $\lim_{r\ra 0}m(b,r)=0$. Note that by the previous lemma, monotonicity implies that this is well defined. Second, suppose that it does not hold
\end{proof}

\section{The error in Jacoby's proof of the local Fifth according to Olver}\label{chap: error in Jacoby}

\subsection{Hilbert's Fifth: Local does not follow from global}
Let's first use a paragraph to give some idea about why the 1957 proof of the local Fifth fails and cannot be fixed.
Let {\bf GA} denote global associativity (defined on
the following pages) and {\bf GL} denote globalizability (also
defined on the following pages).
Olver finds in his J. of Lie Theory paper, \cite{Olver1996}, families of local Lie groups
that are not GL. So for local Lie groups and especially for local
topological groups GL does not hold in general. In fact, as mentioned early in the paper, Olver, \cite{Olver1996},
clarifies Mal'cev's GA condition for GL to hold.
Jacoby in his paper Jacoby, \cite{Jacoby1957}, proving the local fifth problem, assumes
that his local topological group has the GA property of (global)
topological groups in order to prove his result with a line of argument along
the line of Gleason, \cite{Gleason1952}. But as Olver, \cite{Olver1996} (p.28), makes clear, such an
assumption cannot be made and hence the proof cannot be fixed without throwing out the strategy.

Before we talk about how local Lie groups are not like global Lie groups, about GA and GL, let's be clear on how they are similar. (See, eg.,  Kirillov,\cite{Kirillov1976} p. 99 for the following facts.) First of all, if $(L,\Fl)$ is a local Lie group and its Lie algebra, and $(G,\Fg)$ is a (global) Lie group with its Lie algebra and we have a Lie algebra isomorphism $\phi:\Fl\ra\Fg$, then a (classical) consequence is that there is a neighborhood, $L'$, of the identity in $L$ and a (local) Lie group isomorphism $\Phi:L'\ra G$ such that $d\Phi=\phi$ (ie., Lie algebra isomorphisms induce (sufficiently local) Lie group isomorphisms). Furthermore, given any finite dimensional $\bbr$ Lie algebra, $\Fg$, there is a (real) Lie group $G$ with Lie algebra $\Fg$. It's clear that these two statements together imply that any local Lie group is (locally!) isomorphic to a Lie group, locally in the sense that we may have restrict to a smaller neighborhood of the identity of the local Lie group to get a Lie injection into a (global) Lie group.
Nonetheless, Olver, \cite{Olver1996} explicitly produces the worst case scenario: for every lie group $G$, and any local Lie subgroup $H\subset G$, there is a local subgroup $H'\subset H$ and a local group isomorphism $H'\ra \wt{H}$ where $\wt{H}$ cannot be embedded as a local subgroup of any (global) Lie group.
It's important to note that all of these groups have isomorphic Lie algebras (given by well defined maps) and so all of these groups and local groups are locally isomorphic.
%Before we proceed, we would like to point out the example of Olver in \cite{Olver1996}, p.29 and a general observation on the difference between local groups that happen to be local subgroups of global groups and those that are isomorphic to local subgroups of global groups. Olver proves that every local group is contained in a local group that is not globalizable. @@@@@@@@ The idea of the construction is to choose an analytic mapping $\phi:\bbr\ra\bbr$ whose domain $D\subsetneqq\bbr$ and $D$ is maximal for $\phi|D$ is a diffeomorphism where defined. Olver's example is $\phi(x)=x/(x-1)$. Given this attempt to define a group on a group product, $m$, on $D$, by demanding that it is isomorphic to the additive structure on $\bbr$ (where defined) by making $\phi$ into a (local) group isomorphism; ie., demanding that $\phi(m(x,y))=\phi(x)+\phi(y)$. (Depending to the choice for $\phi$) one can find an explicit formula for $m$ @@@@@@@@@@@@@

The strategic approach of the present paper is totally different from, and hence uses different ingredients from that of of Gleason (and hence Jacoby) and
so GA is not an issue here. The local topological group that We am
approximating with an internal local Lie group is not assumed to be GA
and so the internal local Lie group, will generally not be GA either (a
variation on the easy arguments of the next pages). But it doesn't
matter, GA does not play a role in the arguments here.

Apparently, if attaching the assumption of GA to Jacoby's proof
somehow allowed it to be fixed, my result (along with a proved
conjecture) would still be more general --- {\bf my result would
prove the local fifth for the apparently larger class of non GA
local topological groups}. (Note here, for simplicity sake, we have
not included the niceness qualifiers for the local topological
group, nor the $^{\s}$local one for the infinitesimally
approximately $^{\s}$local *Lie group.)

On the next pages we prove that if the local topological group is
GA, then the standard part of the approximating internal local Lie
group is GA. We will then prove an almost implies near result, stated roughly:  if a local Lie group is close to being $p$-fold associative, then it must have a $p$-fold associative local Lie group nearby.

\subsection{Olver's construction}
To begin, we have the following definitions from Olver, \cite{Olver1996} p. 27. Let $G$ be a
$\loc LG$. $G$ is {\bf associative to order $n$}, denoted
$G\in\SA(n)$, if for all $ k$, $3\le k\le n$ and for every ordered
$k$-tuple $(x_1,\dots,x_k)\in G^k$, all $k$-fold products of
$x_1,\dots,x_k$ are equal. A $k$-fold product of $(x_1,\dots,x_n)$
is a sequence of choices of products of these elements (in the given
order) by pairing adjacent pairs, adjacent elements and parentheses,
adjacent parentheses, etc. For example the 4-fold products of the
ordered four tuple $(x_1,x_2,x_3,x_4)$ are $((x_1x_2)x_3)x_4)$,
$((x_1x_2)(x_3x_4))$, $((x_1(x_2x_3))x_4)$, $(x_1((x_2x_3)x_4))$,
and  $(x_1(x_2(x_3x_4)))$. $G$ is {\bf globally
associative, GA}, {\bf denoted} $G\in\SA(\infty)$ if $G\in\SA(n)\;\forall
n\in\bbn$.

Given this definition, let's sketch Olver's construction of nonGA local local Lie
groups. Remove a point $x_0$ from a neighborhood of the identity,
$e$, in a Lie group $(G,m,e)$. For this construction $G$ should be
two dimensional, but an analogous construction works for any Lie group of dimension larger than one.  Suppose also that $G$ is simply connected, so that
the fundamental group of $G$ with a point removed is isomorphic to
$\bbz$.
%eg., one may use $SO(3)$ as it is diffeomorphic to the unit sphere in $\bbr^4$.
Olver's construction is an insightful integration of the
given group product with the monodromy associated with the
fundamental group of the punctured group into the making of a nonGA
local Lie group. In particular it displays a dependence of global associativity on semilocal topology.
 Let $\tl{G}$ denote the simply connected covering
space, with $\pi:\tl{G}\ra G$ the projection map. On a sufficiently
restricted neighborhood $U$ of $e$ in $G$, we can lift enough of the
group structure to get a 3-associative product, ie., a local Lie
group $(\tl{G},\tl{m},\tl{U},\tl{e})$  on a selected component of
$\tl{U}\subset\pi^{-1}(U)$, the identity being the unique point in
$\pi^{-1}(e)\cap U$. For  $n\in\bbn$, Olver constructed
$2n$-fold products of elements consisting of n-fold
 products of the same ordered $n$-tuple, but associated in reverse
 order:
 $m(m(\cdots m(x_1,x_2),\cdots),x_n)$ and
 $m(x_1,(m(x_2,\cdots,m(x_{n-1},x_n)\cdots)))$.
As summarized below, GA of $G$ implies these determine a closed loop
in $G$, ie., associating the sequence in reverse order gets the same element of $G$; but, for $n\geq 4$, we can get $n$-tuples of elements in $\wt{U}$ products enclosing $x_0$ which therefore prevents the lifted loop from closing.
 Associate to
$m(m(\cdots(m(x_1,x_2),\cdots),x_n)$ the polygonal path, $\SP_1$,
with ordered vertices given by
\begin{align}
 e\ra x_1\ra m(x_1,x_2)\ra   \notag
m(m(x_1,x_2),x_3)\ra\ldots \\   \ra m(m(\cdots
m(x_1,x_2),\ldots,x_{n-1}),x_n). &   \notag
\end{align}
Here the arrows indicate directed movement along the path from
vertex to vertex, beginning at $e$ and ending at the given n-fold
product.
 If care is taken with respect to the singularity $x_0$,
then the sequence of lifted products, and hence the lifted polygonal
path, $\tl{\SP}_1$ whose vertices are given by this ordered sequence
of $\tl{m}$-products of the lifted points $\tl{x_1},\
\ldots,\tl{x_n}$,
\begin{align}
\tl{e}\ra\tl{x_1}\ra\tl{m}(\tl{x_1},\tl{x_2})\ra\tl{m}(\tl{m}(\tl{x_1},\tl{x_2}),\tl{x_3})\ra\ldots\notag
\\
 \ra\tl{m}(\tl{m}(\cdots\tl{m}(\tl{x_1},\tl{x_2}),\ldots,\tl{x}_{n-1})\tl{x_n})&\notag
\end{align}
is well defined in $\tl{G}$ and unique once $\tl{e}\in\tl{U}$ is
chosen.

 Similarly, to the same ordered  n-tuple
$x_1,x_2,\ldots,x_n$, we assign the polygonal path, $\SP_2$, with
vertices given by doing the associations in the product in the
reverse order, namely
\begin{align}
e\ra x_n\ra m(x_{n-1},x_n)\ra m(x_{n-2},m(x_{n-1},x_{n}))\ra\cdots
\notag
\\
  \ra m(x_1,(m(x_2,\cdots,m(x_{n-1},x_n)\cdots))).& \notag
\end{align}
As with $\tl{\SP}_1$ giving the well defined polygonal lift of
$\SP_1$, we have the well defined lift $\tl{\SP}_2$ given by the
successive $\tl{m}$-products of lifted points
\begin{align}
\tl{e}\ra \tl{x}_n\ra \tl{m}(\tl{x}_{n-1},\tl{x}_n)\ra
\tl{m}(\tl{x}_{n-2},\tl{m}(\tl{x}_{n-1},\tl{x}_{n}))\ra\cdots \notag
\\
  \ra \tl{m}(\tl{x}_1,(\tl{m}(\tl{x}_2,\cdots,\tl{m}(\tl{x}_{n-1},\tl{x}_n)\cdots))).& \notag
\end{align}
 As Lie groups are GA, $\SP_1$ and $\SP_2$ begin and end at the same
 points,
 and so form a closed polygonal loop. Olver chooses the $x_i$'s, in his explicit example $n=4$, so that this loop
  encloses $x_0$. But then the lifted polygons
$\tl{\SP}_1,\tl{\SP}_2$ cannot form a closed loop; the endpoints lie
on different sheets over the $n$-fold product of $x_1,\ldots,x_n$ in
$G$. That is, the  n-fold $\tl{m}$- product associated in one way is
not equal to the n-fold $\tl{m}$-product associated in the other
manner. By an artful choice for $U$, Olver produces the
worst case scenario when $n=4$, as local associativity is tested with 3-fold products.

 From this construction, it should be clear
that if we restrict our domain to smaller $V\subset U$, then for product of elements in $\wt{V}=\pi^{-1}(V)$, we may get a higher associativity but global associativity still does not hold.

% will still result in a nonGA local
%Lie group; although the smallest $n$ such that it is not associative
%to order $n$ may be much larger.

Olver's general assertion follows this example: here he is given  a (global) Lie group, $G$, chooses a neighborhood of the identity, $U$, that is punctured ie., not simply connected. From this he chooses a piece $\wt{U}$ (a part of a sheet over $U$) of a good covering space of the altered $G$ so that we get associativity of 3-fold products of elements of $\wt{U}$ but associativity fails for sufficiently long associations as `downstairs' we are getting noncontractible loops in $U$. . Note that $\wt{U}$ is diffeomorphic to $U$ via the covering map, and, in fact, with his choice of lifted group sturcture, this is a \textbf{local} isomorphism on a sufficiently small part of $U$. But we see that although locally isomorphic, $U$ is globally associative (it is a subset of a global group!) but, by construction, $\wt{U}$ is not.

Later in the paper, his Theorem 21 (p.43) proves that any local Lie group is fully covered by a partial covering group that also is locally a group isomorphism onto a neighborhood of the identity of a (global) Lie group. So although the local Lie group is only locally isomorphic to a Lie group, we can extend the domain of the isomorphism (to all of the local Lie group) by weakening the notion of Lie group equivalence (via the covering group intermediary).
 Olver gives a very careful definition of a local
group homeomorphism (p28). Suffice it to say that it is a smooth
group isomorphism with great care given to the intertwining of
domains of definition of product and inverse maps.  He then defines
a local Lie group to be {\bf globalizable} if there is a local group
homeomorphism onto a neighborhood of the identity in a Lie group.
His version of Mal'cev's theorem (p46) is as follows.
{\bf A connected Lie group is globalizable $\dllra$ it is globally
associative.} Note here that connected is a technical condition
(p31) concerning connectivity and generation.

\subsection{Almost associativity}
  It seems that nonstandard analysis can say something about the subject of k-fold associativity by asking questions about almost or near associativity. Our perspective is similar to the almost implies near question for topological groups.
 Let $\SG\in{}^{\s}\loc\rz\!LG$ so that its
representatives are defined on standard neighborhoods of $0$ in
$\rz\bbr^m$ ($m\in\bbn$). Fix a standard domain of definition $\rz U$ for $\SG$. Let $n\in\bbn$. Then $\SG$ is
said to be {\bf almost associative to order $n$ on $\rz U$}, denoted
$\SG\stackrel{\in}{\sim}\SA(n)$ if for all $ k$, $3\le k\le n$, and
ordered $k$-tuple $(x_1,\dots,x_k)\in U\x\cdots\x U$, all corresponding
$k$-fold products are defined and infinitesimally close to each other (in
$\rz\!\bbr^m$). Similarly, we say that $\SG$ is  {\bf almost globally
associative on $\rz U$}, denoted $\SG\stackrel{\in}{\sim}\SA(\infty)$, if
$\SG\stackrel{\in}{\sim}\SA(n)$ for all $ n\in{}^{\s}\bbn$. We know that if
$G\in\SA(n)$ and $k\le n$, then all $k$-fold products of ($x_i\in
G$, $\forall i$) $(x_1,\dots,x_k)$ are equal and so we can
unambiguously denote this by $[x_1,\dots,x_k]$. Similarly if
$\SG\stackrel{\in}{\sim}\SA(n)$, then all $k$-fold products, for $k\le n$, of
$(x_1,\dots,x_k)$ ($x_i\in\SG$) are in the same monad, so we will
denote this monad by $\mu(x_1,\dots,x_n)$. Now suppose that
$^{\circ}\SG=G$, then it follows that $\SG\sim\rz\!G$; that is if the
product in $G$, and in $\rz\!G$ is denoted $x\cd y$ (for $x$, $y\in G$
or $\rz\!G$) and in $\SG$ is denoted by $x*y$ and if $\rz\!U$ is a standard
representative neighborhood of $0$ in $\rz\!\bbr^m$, so that both $x\cd
y$ and $x*y$ is defined, then $x\cd y\sim x*y$ in  $\rz\!\bbr^n$. We will
prove the following result.

%\subsubsection{}
\begin{lem} Suppose that $\SG$ and $G$ are as given above, e.g.,
$\SG\sim\rz\!G$. Then
$G\in\SA(\infty)\Rightarrow\SG\stackrel{\in}{\sim}\SA(\infty)$.
\end{lem}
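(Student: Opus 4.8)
The plan is to prove this by induction on the length $k$ of the products, showing that for every $k \in {}^\s\bbn$, all $k$-fold products of a standard $k$-tuple in $\SG$ are infinitesimally close to one another. The base case $k = 3$ is immediate: for $x_1, x_2, x_3 \in \rz U$ we have $x_i \sim \rz x_i'$ for standard $x_i'$ only if we are evaluating at standard points, but more directly, the two $3$-fold products $(x_1 * x_2) * x_3$ and $x_1 * (x_2 * x_3)$ satisfy, using $\SG \sim \rz G$ and $S$-continuity of the product map (recall $\SG \in {}^\s\loc SC^0 \rz LG$), $(x_1 * x_2) * x_3 \sim (x_1 \cd x_2) \cd x_3$ and $x_1 * (x_2 * x_3) \sim x_1 \cd (x_2 \cd x_3)$; since $G$ (hence $\rz G$ by transfer) is associative, the two right-hand sides are equal, so the two left-hand sides lie in the same monad.

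The inductive step is the heart of the argument. Suppose all $j$-fold products agree up to infinitesimals for $j \le k$, and let $P$ be any $(k+1)$-fold product of $(x_1, \dots, x_{k+1})$, $x_i \in \rz U$ (working in a representative neighborhood small enough that all products in question are defined — only finitely many products occur since $k$ is standard, so no externality issues arise). Then $P = Q * R$ where $Q$ is an $a$-fold product of $(x_1,\dots,x_a)$ and $R$ is a $b$-fold product of $(x_{a+1},\dots,x_{k+1})$ with $a + b = k+1$ and $a, b \ge 1$. By the inductive hypothesis, $Q \sim [\![x_1 \cd \cdots \cd x_a]\!]$ — the unambiguous product in $\rz G$ (well-defined since $G \in \SA(\infty)$ implies $\rz G \in \rz\SA(\rz\infty)$ by transfer) — and similarly $R \sim [\![x_{a+1}\cd \cdots \cd x_{k+1}]\!]$; here I use that $\SG \sim \rz G$ to compare each $\SG$-product with the corresponding $\rz G$-product and the inductive hypothesis to collapse ambiguity. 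Then $S$-continuity of the $\SG$-product gives $P = Q * R \sim [\![x_1\cd\cdots\cd x_a]\!] \cd [\![x_{a+1}\cd\cdots\cd x_{k+1}]\!]$, and by global associativity of $\rz G$ this last element equals $[\![x_1 \cd \cdots \cd x_{k+1}]\!]$, independent of the decomposition point $a$ and of the internal structure of $Q$ and $R$. Hence any two $(k+1)$-fold products are each $\sim [\![x_1\cd\cdots\cd x_{k+1}]\!]$, so they lie in a common monad, closing the induction. This establishes $\SG \stackrel{\in}{\sim} \SA(n)$ for each $n \in {}^\s\bbn$, which is exactly $\SG \stackrel{\in}{\sim} \SA(\infty)$.

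The main obstacle — and the point requiring care — is that $S$-continuity of the $\SG$-product only lets us push infinitesimal closeness through \emph{one} multiplication at a time, and a priori a $(k+1)$-fold product is built by nesting up to $k$ multiplications, so a naive attempt to propagate closeness through all of them at once would accumulate the infinitesimal error uncontrollably (or, worse, would require comparing non-nearstandard intermediate quantities). The induction circumvents this: at each stage we only invoke $S$-continuity across the single top-level multiplication $Q * R$, having already reduced $Q$ and $R$ to \emph{nearstandard} (indeed monad-of-standard) elements via the inductive hypothesis, so that $S$-continuity applies cleanly. One should also note that all the intermediate products $Q, R$ are automatically nearstandard: each is within a monad of an element of $\rz G$ lying over a point of $G$ in a fixed standard neighborhood, so the hypotheses of $S$-continuity ($SC^0$ sends nearstandard to nearstandard) are genuinely met. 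Finally, since $k$ ranges over the \emph{external} set ${}^\s\bbn$, the induction is an ordinary (external) induction on standard naturals and needs no transfer — we are not claiming the internal statement for all $k \in \rz\bbn$, only for standardly finite $k$, which is all that $\SA(\infty)$ requires.
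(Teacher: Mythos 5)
Your proposal is correct and follows essentially the same route as the paper: induction on the product length $n$, decomposing an $n$-fold product into a single top-level $*$ applied to two shorter subproducts, using the inductive hypothesis to control those subproducts, then pushing through the top multiplication via $S$-continuity of $*$ and $\SG\sim\rz G$, and invoking global associativity of $\rz G$. The one organizational difference is that you show each $n$-fold $\SG$-product is $\sim$ the \emph{fixed} unambiguous $\rz G$-product $[\![x_1\cd\cdots\cd x_n]\!]$ and conclude by transitivity, which makes the strengthened inductive hypothesis (each $k$-fold $\SG$-product is near the corresponding $\rz G$-product, not merely near one another) explicit; the paper instead compares two arbitrary $n$-fold $\SG$-products directly via a chain $y'_1*y'_2\sim\bar y'_1*\bar y'_2\sim\bar y'_1\cd\bar y'_2=\bar y''_1\cd\bar y''_2\sim\bar y''_1*\bar y''_2\sim y''_1*y''_2$, leaving the strengthened hypothesis implicit in the step $y'_1\sim\bar y'_1$. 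Your framing is arguably the cleaner of the two.
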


\begin{proof} We will actually prove that if $n\ge3$,
$\SG\stackrel{\in}{\sim}\SA(n)$. The result follows directly from
this. So suppose that $G\in\SA(\infty)$, then we will prove by
induction that $\SG\stackrel{\in}{\sim}\SA(n)$. The induction
beginning will be obvious (2 fold products) and the induction step
will be to assume that we have proved that $\SG\in\SA(n-1)$ and to
show that this assumption and $G\in\SA(\infty)$ implies that it
holds for $\SA(n)$.

So given an ordered $n$-tuple $(x_1,\dots,x_n)\in\SG^n$ we want to
show that all $n$-fold products are infinitesimally close to each
other. We do know that all $k$-fold products of a given ordered
$k$-tuple for $k<n$, are infinitesimally close. An $n$-fold product
of $(x_1,\dots,x_n)$ will be of the form $y*y_2$ where $y_1$ is a
$k$ fold product of $(x_1,\dots,x_k)$ and $y_2$ is an $n-k$ fold
product of $(x_{k+1},\dots,x_n)$ where $1<k<n$. Let $k'$ and $k''$
be two such $k$. So we have $y'_1$ a $k'$-fold product and $y'_2$ a
$n-k'$-fold product such that $y'_1*y'_2$ gives one of the $n$-fold
products. We also have $y''_1$ a $k''$-fold product and $y''_2$ a
$n-k''$-fold product such that $y''_1*y''_2$ gives another possible
$n$-fold product of $(x_1,\dots,x_n)$ in $\SG$. Let $\bar y'_1$,
$\bar y'_2$, $\bar y''_1$ and $\bar y''_2$ be the corresponding
products in $\rz\!G$. Then $y'_1\sim\bar y'_1$,  $y'_2\sim\bar y'_2$,
$y''_1\sim\bar y''_1$ and  $y''_2\sim\bar y''_2$, which follows from
the induction hypothesis. Now $G\in\SA(n)\Rightarrow (a)\bar
y'_1\cd\bar y'_2=\bar y''_1\cd\bar y''_2$, but as $\SG$ is
$SC^0$,(b) $y'_1*y'_2\sim\bar y'_1*\bar y'_2$ and (c)
$y''_1*y''_2\sim\bar y''_1*\bar y''_2$. Also (d) $\bar y'_1*\bar
y'_2\sim\bar y'_1\cd\bar y'_2$ and (e) $\bar y''_1*\bar
y''_2\sim\bar y''_1\cd\bar y''_2$. Putting all of these together, we
get that $ y'_1* y'_2\sim y''_1*y''_2$. Specifically, from
(b)$y'_1*y'_2\sim\bar y'_1*\bar y'_2$ which by (d) is $\sim$ $\bar
y'_1\cd\bar y'_2$. But by (a) this is$\sim$ $\bar y''_1\cd\bar
y''_2$. By (e), this last is $\sim\bar y''_1*\bar y''_2$ which
finally by(c) is $\sim$ $y''_1*y''_2$. That is, an arbitrary pair of
the $n$-fold products are infinitesimally close to each other as we
wanted to prove.
\end{proof}
\begin{lem}
  If $\SG$ is an $SC^k$ local group defined on $V$, and let $G=\;^o\SG$ denote the standard part restricted to $V$. Suppose that for some $p\in\bbn$ and an ordered $p$-tuple $(\xi_1,\ldots,\xi_p)$, we have  that all $p$-fold product associations in $\SG$ are defined and for two such associations $[\xi_1\cdots\xi_p]^\SG_j$ ($j=1,2$), we have $[\xi_1\cdots\xi_p]_1\sim [\xi_1\cdots\xi_p]^\SG_2$. Then, if $x_j\dot=\;^o\xi_j$, for $j=1,\ldots,p$, we have $[x_1\cdots x_p]^G_1=[x_1\cdots x_p]^G_2$.
\end{lem}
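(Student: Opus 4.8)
The plan is to push the approximation ``$\SG\approx G$'' through the tree of an arbitrary association, and then use that infinitesimally close transfers of standard points must coincide. Throughout, as is implicit in the statement, I take the $\xi_j$ to be nearstandard to points of $V$, and I freely use the paper's convention that every expression written is defined; write $\psi^G={}^\circ\tl\psi$ and $\nu^G={}^\circ\tl\nu$ for the (continuous, by $S$-continuity of $\SG$) structure maps of $G={}^\circ\SG$ on $V$.

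The basic step I would isolate first is: if $\xi,\z\in\rz V_{nes}$ with $^\circ\xi=x$, $^\circ\z=y$ and the $\SG$-product $\tl\psi(\xi,\z)$ is defined, then $\tl\psi(\xi,\z)\sim\rz\bigl(\psi^G(x,y)\bigr)$; in particular $\tl\psi(\xi,\z)$ is nearstandard with $^\circ\bigl(\tl\psi(\xi,\z)\bigr)=\psi^G(x,y)$, so that $\psi^G(x,y)$ is defined. Indeed $\tl\psi(\xi,\z)\sim\tl\psi(\rz x,\rz y)$ by $S$-continuity of $\tl\psi$, and $\tl\psi(\rz x,\rz y)\sim\rz\bigl({}^\circ\tl\psi(x,y)\bigr)=\rz\bigl(\psi^G(x,y)\bigr)$ since a nearstandard point is infinitesimally close to the transfer of its standard part. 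The analogous statement holds for $\tl\nu$, which is needed only if inverse symbols occur among the associations; the induction below then carries over verbatim.

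Next I would prove, by induction on $k$, the claim: for every $k$-fold association pattern $A$ of a subtuple $(\xi_{j_1},\dots,\xi_{j_k})$ of $(\xi_1,\dots,\xi_p)$ for which $[\xi_{j_1}\cdots\xi_{j_k}]^\SG_A$ is defined, this element is nearstandard and $^\circ\bigl([\xi_{j_1}\cdots\xi_{j_k}]^\SG_A\bigr)=[x_{j_1}\cdots x_{j_k}]^G_A$, so in particular the right-hand association is defined in $G$. The cases $k=1$ and $k=2$ are trivial and the basic step above, respectively. For $k\ge 3$, write $[\xi_{j_1}\cdots\xi_{j_k}]^\SG_A=\tl\psi(P_1,P_2)$, where $P_1,P_2$ are the $\SG$-evaluations of the two shorter subassociations into which $A$ splits; by the induction hypothesis $P_1,P_2$ are nearstandard with $^\circ P_i$ the corresponding $G$-evaluations, and applying the basic step to $\tl\psi(P_1,P_2)$ completes the induction.

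Applying the claim to the two given $p$-fold patterns gives $[\xi_1\cdots\xi_p]^\SG_1\sim\rz\bigl([x_1\cdots x_p]^G_1\bigr)$ and $[\xi_1\cdots\xi_p]^\SG_2\sim\rz\bigl([x_1\cdots x_p]^G_2\bigr)$; combining with the hypothesis $[\xi_1\cdots\xi_p]^\SG_1\sim[\xi_1\cdots\xi_p]^\SG_2$ yields $\rz\bigl([x_1\cdots x_p]^G_1\bigr)\sim\rz\bigl([x_1\cdots x_p]^G_2\bigr)$, and since monads of distinct standard points of $\bbr^n$ are disjoint this forces $[x_1\cdots x_p]^G_1=[x_1\cdots x_p]^G_2$. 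The one genuinely delicate point — and the thing I would be most careful about — is the domain bookkeeping: one must ensure that each intermediate product arising in the $G$-evaluations lies in the domain of the subsequent $G$-multiplication, which is exactly why the induction is phrased so that ``defined in $\SG$'' automatically propagates ``defined in $G$'' via the $SC^0$ approximation. Note also that only $SC^0$ of $\SG$ is used, not the full $SC^k$ hypothesis, and that no associativity or group axiom for $G$ itself enters — the lemma is precisely the device that converts an almost-associativity statement for $\SG$ into an exact equality of specific product associations in $G$.
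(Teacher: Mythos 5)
Your proof is correct, and while it runs on the same engine as the paper's (induction on the length of the product, powered by $S$-continuity of $\tl\psi$ and the decomposition of an association into two shorter ones), its logical organization is genuinely different. The paper inducts directly on the statement of the lemma, disposing of the base case $p=3$ by appealing to associativity of the local group $G={}^\circ\SG$, and compresses the induction step into a one-sentence reference to the previous lemma's argument. You instead isolate and prove by induction the stronger intermediate claim that taking standard parts commutes with evaluating an \emph{arbitrary} association, $^\circ\bigl([\xi_{j_1}\cdots\xi_{j_k}]^\SG_A\bigr)=[x_{j_1}\cdots x_{j_k}]^G_A$, and then obtain the lemma in one stroke from the hypothesis together with disjointness of monads of distinct standard points. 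This buys two things: the base cases become trivial and no group axiom of $G$ --- in particular no associativity --- is ever invoked, which is aesthetically right in a chapter whose whole point is to be careful about which associativity statements may be assumed; and the induction hypothesis is strengthened in exactly the way needed (each subassociation is shown nearstandard with the correct standard part), which is the detail the paper's sketch leaves implicit. The domain bookkeeping you flag is a real, if minor, issue that the paper does not address either; your convention that definedness in $\SG$ propagates definedness in $G$ through the $SC^0$ approximation is the correct way to handle it.
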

\begin{proof}
   The proof is by induction on the length $p$ of the product. The first nontrivial length is $p=3$ and this is just associativity. So it remains to prove that given the result holds for associations within $p-1$-fold products, $p>3$, it follows that it holds for associations within $p$-fold products.  But just as with the proof of the previous lemma, the result follows from the fact that associations in $p$-fold products decompose into associations within products of length less than $p$ (so that we may use the fact for $q$-fold products, $q<p$) and also from the S-continuity of the product.
\end{proof}
  In the other direction, we have the following statement. Recall the notation: $cpt(V)$ denotes the set of compact subsets of $V$.
\begin{definition}\label{def: t-close p-assoc. gps}
 For $B\in cpt(V)$, let $\bsm{Gp^k_b(V)\cap Asc_t(B,p)}$ denote the elements of $G\in Gp_b^k$ that are $t$-almost $p$-associative on $B$, ie., such that for each ordered ordered $p$-tuple $(x_1,\ldots,x_p)\in B\x\ldots\x B$, if $[x_1\cdots x_p]_j$, for $j=1,2$ are any two associations defining a $G$-product of the $x_j$'s (in this order), then $|[x_1\cdots x_p]_1-[x_1\cdots x_p]_2|<t$. If $G$ is actually $p$-fold associative on $V$ ($t=0$), then we will denote this by $G\in\bsm{Gp^k_b(V)\cap Asc(B,p)}$.
\end{definition}
\begin{proposition}[p-fold associativity: almost implies near]\label{prop: p-fold assoc: almost->near}
   Fix $p,n,k\in\bbn$ and $V$ a convex neighborhood of $0$ in $\bbr^n$. For each $B\in cpt(V)$ and positive $r\in\bbr_+$, there is \;$t>0$ in $\bbr$ such that if $G\in Gp^k_b(V)\cap Asc_t(B,p)$, eg., a local Lie group on $V$ that is  $t$-almost $p$-fold associative on $B$, then there is  $H\in Gp^k_b(V)\cap Asc(B,p)$ such that $\|G-H\|_{B,\;k}<r$.
\end{proposition}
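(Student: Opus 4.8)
The plan is to run the same contradiction‑plus‑transfer argument used to prove Proposition~\ref{prop: standard almost->near}, with "$t$‑almost $p$‑fold associative on $B$" in place of "$s$‑almost group on $B$" and "$p$‑fold associative local Lie group" in place of "local Lie group". Suppose the conclusion fails. Then there are a fixed compact $B_0\subset V$, a fixed $r_0\in\bbr_+$ and a fixed $b$ such that for every $t>0$ and every $G\in Gp^k_b(V)\cap Asc_t(B_0,p)$ one has $\|G-H\|_{B_0,\;k}\ge r_0$ for all $H\in Gp^k_b(V)\cap Asc(B_0,p)$. (If for some $t$ the set $Gp^k_b(V)\cap Asc_t(B_0,p)$ were empty the proposition holds vacuously for that datum; under the contradiction hypothesis these sets are nonempty, which is all we need.) Transferring this internal statement and choosing an infinitesimal $\Ft\sim 0$ together with a witness $\wh{\SG}=(\psi,\nu)\in\rz Gp^k_b(V)\cap\rz Asc_{\Ft}(B_0,p)$, I obtain $\rz\|\wh{\SG}-\SH\|_{\rz B_0,\;k}\ge\rz r_0$ for every $\SH\in\rz Gp^k_b(V)\cap\rz Asc(B_0,p)$.

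Next I would extract the standard part. Since $b\in\bbn$ is standard and $\wh{\SG}\in\rz\wt{C}^k_b$, all internal derivatives of $\psi$ and $\nu$ up to order $k$ are bounded by the finite number $b$, so $\wh{\SG}\in S\wt{C}^k$ by Theorem~\ref{thmbasreg}; in particular $\wh{\SG}$ is $SC^0$, hence its standard part $G_0\doteq(^o\psi,{}^o\nu)$ exists, lies in $\wt{C}^k_b(V)$, and (being the standard part of an $SC^0$ $\rz$local group, exactly as in Definition~\ref{def: sigma local *Lie group}) is a $C^k$ local Lie group on $V$, i.e.\ $G_0\in Gp^k_b(V)$. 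Moreover Theorem~\ref{thmbasreg} gives $\rz\|\rz G_0-\wh{\SG}\|_{\rz B_0,\;k}\sim 0$, using that $B_0$ is compact in $V$, so $\rz B_0\subset\rz V_{nes}$.

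The one genuinely new ingredient is the verification that $G_0\in Asc(B_0,p)$. For this I would invoke the lemma stated immediately above this proposition — the one asserting that if $\SG$ is an $SC^k$ local group with ${}^o\SG=G$ and all $p$‑fold product associations of an ordered $p$‑tuple $(\xi_1,\dots,\xi_p)$ in $\SG$ are defined and mutually infinitesimally close, then all $p$‑fold product associations of $({}^o\xi_1,\dots,{}^o\xi_p)$ in $G$ coincide. Given a standard ordered $p$‑tuple $(x_1,\dots,x_p)\in B_0\times\cdots\times B_0$, set $\xi_j=\rz x_j$; the defining property of $\rz Asc_{\Ft}(B_0,p)$ guarantees that all $p$‑fold products of $(\xi_1,\dots,\xi_p)$ in $\wh{\SG}$ are defined and that any two differ by less than $\Ft\sim 0$, hence are infinitesimally close, so the lemma yields $[x_1\cdots x_p]^{G_0}_i=[x_1\cdots x_p]^{G_0}_\ell$ for every pair of associations. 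Thus $H\doteq G_0\in Gp^k_b(V)\cap Asc(B_0,p)$, so $\rz H$ is one of the $\SH$'s quantified over in the transferred statement, while $\rz\|\wh{\SG}-\rz H\|_{\rz B_0,\;k}\sim 0<\rz r_0$ (as $r_0$ is a positive standard real) — a contradiction. Reverse transfer then yields the proposition.

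The step I expect to be the main obstacle is the one shared with Proposition~\ref{prop: standard almost->near}: extracting a $C^k$ (not merely $C^0$) conclusion $\|G-H\|_{B_0,\;k}<r$ from hypotheses that are purely pointwise. This is exactly where the finite derivative bound $b$ and the appendix's $S$‑smoothness result, Theorem~\ref{thmbasreg}, are indispensable — without the uniform bound one cannot conclude $\wh{\SG}\in S\wt{C}^k$, nor that ${}^o\wh{\SG}$ is $C^k$‑infinitesimally close to $\wh{\SG}$, and the argument collapses; this is why $b$ must be carried through the statement. A secondary point requiring care is the domain‑of‑definition bookkeeping: one must track, through the transfer, that for every standard $p$‑tuple from $B_0$ all iterated products stay inside $V$, so that the products $[\xi_1\cdots\xi_p]_i$ over which $\rz Asc_{\Ft}(B_0,p)$ quantifies — and those appearing in the hypotheses of the cited lemma — are genuinely defined.
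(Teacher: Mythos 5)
Your proposal is correct and follows essentially the same route as the paper's own proof: contradict, transfer, choose an infinitesimal associativity defect $\Ft\sim 0$ with a witness, use the $SC^k$/Theorem~\ref{thmbasreg} machinery to obtain a $C^k$ standard part that is $C^k$-infinitesimally close, and invoke the lemma preceding the proposition (mutually infinitesimal $p$-fold product associations have equal standard parts) to show that standard part is $p$-fold associative on $B_0$, yielding the contradiction. Your closing remark about the indispensability of the fixed finite bound $b$ (which the paper's statement uses but slightly underspecifies) is a correct reading of the intended hypotheses.
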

\begin{proof}
   The proof is an analogue of our proof of the almost a group implies a group nearby result, proposition \ref{prop: standard almost->near}. Suppose that the conclusion does not hold, that is, suppose that there is compact $B_0\subset V$ and a nonzero positive $r_0\in\bbr$ satisfying the following statement. $\bsm{\bbs(B_0,r_0)}$: For all $s>0$, there is $G\in Gp^k_b(V)\cap Asc_t(B_0,p)$ with the property that for all $G'\in Gp^k_b(V)\cap Asc(B_0,p)$, we have $\|G-G'\|_{B_0,\;k}\geq r_0$. Therefore, the transfer of $\bbs(B_0,r_0)$ holds. $\rz\bbs(B_0,r_0)$: For all $\Fs\in\rz\bbr_+$, there is $\SG\in\rz Gp^k_b(V)\cap\rz Asc_\Fs(B_0,p)$ with the internal property $P$:  for all $\SG'\in\rz Gp^k_b(V)\cap\rz Asc(B_0,p)$, we have $\rz\|\SG-\SG'\|_{*B,\;k}\geq\rz r_0$.
   Given this, choose $\Fs$ to be a positive infinitesimal so that there is $\SG_0\in\rz Gp^k_b(V)\cap\rz Asc_\Fs(B_0,p)$ with the above property $P$. But, we claim that  $\Fs\sim 0$ implies that if we let $G_0=\;^o\SG_0$ restricted to $V$, then $G_0\in Gp^k_b(V)\cap Asc(B_0,p)$. To see this, first, we have  that the group properties in definition \ref{def: loc Euclid top gp} hold by the S-continuity of $\SG_0$, second, $G_0\in \wt{G}^k(V)$ by theorem \ref{thmbasreg} and finally, $G_0$ is $p$-associative follows from $\Fs\sim 0$. For suppose that  $(x_1,\ldots,x_p)\in  B\x\ldots\x B$ is an ordered $p$-tuple and $[\rz x_1\cdots\rz x_p]^{\SG_0}_j$, for $j=1,2$, are two product associations in $\SG_0$ of this ordered $p$-tuple. Then $\Fs\sim 0$ implies that $[\rz x_1\cdots\rz x_p]^{\SG_0}_1\sim [\rz x_1,\cdots\rz x_p]^{\SG_0}_2$ and so the lemma above implies that $[x_1\cdots x_p]^{G_0}_1=[x_1\cdots x_p]^{G_0}_2$, ie., $G_0$ is $p$-fold associative and so by transfer, we have that $\rz G_0\in\rz Gp^k_b(V)\cap\rz Asc(B,p)$. We  claim that $\|\SG_0-\rz G_0\|_{*B_0,k}\sim 0$ which is clear as S-continuity of $\SG_0$ implies that $\|\SG_0-\rz G_0\|_{*B_0}\sim 0$ and so theorem \ref{thmbasreg} implies the claim as both are $SC^k$. But then the existence of $\rz G_0$ with these properties violates our contrary conclusion that $\SG_0$ has property $\rz P$, as $r_0$ is noninfinitesimal.
\end{proof}

\section{Appendix 1: Nonstandard conditions of smooth equicontinuity}\label{chap: appendix: S-smoothness}

    In this part, we will prove that internally regular maps satisfying mild nonstandardly stated regularity properties have good (again nonstandardly stated) regularity properties, see theorem \ref{thmbasreg}. We then give standard corollaries of this result, see eg., corollary \ref{cor: 2nd stan cor of appendix thm}. We follow this with results on the the nonstandard class of maps, $dSC^k(U)$, followed again with standard corollaries, for example see corollary \ref{cor: SC^j sim SC^k} and \ref{cor: standard dSC^k result}.   More specifically, in this section we will present *smooth representations of maps whose standard parts are  $C^k$ for $1\leq k\leq\infty$ or asymptotically $C^k$. Theorem \ref{thmbasreg} is the principal result for these $SC^k$ internal maps: it says that  internal differentiation behaves nicely with respect to being infinitesimally close (pointwise!) and also with respect to the operation of taking standard parts. This theorem is used repeatedly in the previous chapters. (In later work, we will extend this to Lipschitz maps, to maps belonging to the Sobolev classes of maps and other classes of weakly differentiable functions.)
    These representations act in different ways and have different uses. The *
     representations of  standard smooth maps effectively give straightforward criteria for *smooth maps to actually have standard smooth parts; eg., these are regularity results. For example, in this paper, we have families of differentiable maps and we are looking at the asymptotic properties of these families. Here, we do this by transferring the families and looking at elements `at infinity'. Knowing that these are *differentiable, if their standard parts exist,  the results of this chapter imply that the asymptotic behavior of these families have certain regularities.

    The nonstandard representations for the various classes of maps (asymptotically) lacking differentiability properties are of a different nature: they give these standard mapping extra facility. These nonstandard representations of asymptotically nondifferentiable  families of maps have all of the operational properties of differentiable maps, although the standard parts of `asymptotic elements' are potentially quite wild. Nevertheless, the transfer of, eg., (typically nonlinear) differential equation type restrictions on these families (eg., the Maurer Cartan equations in this paper) can directly force (on the nonstandard level) certain regularizing behavior. This section gives one way of seeing when nonstandard maps have standard regularity properties and when these are preserved under nonstandard operations (eg., $\rz\f{\p}{\p x_j}$).
    (Parenthetically, although the results in this appendix play an important role in this paper, the primary motivation behind these results is the facilitation of direct, often nonlinear methods in investigations on partial differential equations.)

    The proof of the main theorem for smooth maps is rather involved. Most of the hard work will occur in the next central proposition.

\begin{proposition}\label{approxprop}                                      %%%LEM
   If $\frak f\in SC^1(U,\bbr)$,
    then $d(^o\Ff)_x$ exists (and is finite) for all $x\in U$, $^o(\rz d\Ff)_x=d(^o\Ff)_x$ for all $x\in U$,
   and finally $x\mapsto d(^o\Ff)_x$ is continuous on $U$. That is,  $^o\Ff\in C^1(U,\bbr)$.
   In particular, if $\f{\p}{\p x_1},\ldots,\f{\p}{\p x_n}$ is the  canonical basis for $TU$, the tangent space of $U$, with $\rz\f{\p}{\p x_1},\ldots,\rz\f{\p}{\p x_n}$ the transferred internal basis for $\rz TU$, then for each $x\in\bbr^n$, we have $^o(\rz\f{\p}{\p x_j}\Ff)(x)=\f{\p}{\p x_j}(^o\Ff)(x)$ and $x\mapsto \f{\p}{\p x_j}(^o\Ff)(x)$ is continuous on $U$.
\end{proposition}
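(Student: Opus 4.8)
The plan is to deduce the statement from the transfer of the fundamental theorem of calculus together with the fact that the standard-part operation commutes with internal integration of $SC^0$ integrands. First I would unpack the hypothesis: by Definition \ref{def: SC^k fcns, k in N and infty}, $\Ff\in SC^1(U,\bbr)$ means $\Ff\in\rz C^1(U,\bbr)$, that $\Ff$ is $SC^0$, and that each first internal partial $\rz\p_j\Ff$ ($1\le j\le n$) is $SC^0$. Using the standard fact that an $SC^0$ internal map has a well-defined continuous standard part (Wicks, \cite{Wicks1991} p.\,7), one already knows $^o\Ff\in C^0(U,\bbr)$ and $g_j\doteq{}^o(\rz\p_j\Ff)\in C^0(U,\bbr)$; write $g=(g_1,\dots,g_n)\colon U\to\bbr^n$, a continuous map. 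It then remains to prove that for each $x\in U$ the map $^o\Ff$ is Fr\'echet differentiable at $x$ with $d({}^o\Ff)_x=g(x)$, equivalently $\p_j({}^o\Ff)(x)={}^o(\rz\p_j\Ff)(x)$; continuity of $x\mapsto d({}^o\Ff)_x$ is then immediate from continuity of $g$.

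Fix $x\in U$ and a standard $h\in\bbr^n$ with $|h|$ small enough that the closed segment $[x,x+h]$ lies in a compact ball $K\subset U$. Since $f(x+h)-f(x)=\int_0^1(Df)(x+th)\cdot h\,dt$ holds for $C^1$ functions on convex sets, transfer applied to the internal $C^1$ map $\Ff$ yields
$$
\Ff(\rz(x+h))-\Ff(\rz x)=\rz\!\!\int_0^1(\rz D\Ff)(\rz x+t\,\rz h)\cdot\rz h\;dt .
$$
The left-hand side is nearstandard, with standard part $^o\Ff(x+h)-{}^o\Ff(x)$, because $\Ff$ is $SC^0$ and standard part commutes with subtraction on $\rz\bbr_{\nes}$. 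For the right-hand side, set $F\colon\rz[0,1]\to\rz\bbr$, $F(t)=(\rz D\Ff)(\rz x+t\,\rz h)\cdot\rz h$. I would check that $F$ is $SC^0$ on $\rz[0,1]$ with $^oF(t)=g(x+th)\cdot h$: for $s\sim t$ in $\rz[0,1]$ the points $\rz x+s\,\rz h$ and $\rz x+t\,\rz h$ lie in the monad of the standard point $\rz(x+t_0h)$, $t_0={}^os={}^ot$, so $SC^0$-ness of each $\rz\p_j\Ff$ forces $\rz\p_j\Ff(\rz x+s\,\rz h)\sim\rz\p_j\Ff(\rz x+t\,\rz h)\sim\rz(g_j(x+t_0h))$; all values are finite since $\rz x+s\,\rz h\in\rz K$ and, by $SC^0$-ness together with the internal extreme value theorem, the internal maximum modulus of each $\rz\p_j\Ff$ over the internally compact $\rz K$ (whose points are all nearstandard) is finite. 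In particular $F$ is internally bounded on $\rz[0,1]$ by a finite number.

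The heart of the argument is the interchange
$$
{}^o\!\Big(\rz\!\!\int_0^1 F\;dt\Big)=\int_0^1{}^oF\;dt=\int_0^1 g(x+th)\cdot h\;dt .
$$
Here $^oF$ is continuous on $[0,1]$, hence Riemann integrable; and since $F$ is $SC^0$ on the internally compact $\rz[0,1]$, the internal function $|F-\rz({}^oF)|$ is pointwise infinitesimal on all of $\rz[0,1]$, so for every standard $\e>0$ the internal set $\{t\in\rz[0,1]:|F(t)-\rz({}^oF)(t)|\ge\rz\e\}$ is empty; hence $\rz\!\int_0^1|F-\rz({}^oF)|\,dt\le\rz\e$ for every standard $\e>0$, so it is infinitesimal. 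Combining this with $\rz\!\int_0^1\rz({}^oF)\,dt=\rz\big(\int_0^1{}^oF\,dt\big)\sim\int_0^1{}^oF\,dt$ (transfer, plus $\rz c\sim c$ for standard $c$) gives $\rz\!\int_0^1 F\,dt\sim\int_0^1{}^oF\,dt$, which is the interchange. (This is essentially the $S$-integrability fact in Stroyan and Luxemburg, \cite{StrLux76} pp.\,96--109; I would record the short argument just sketched.) Putting the displays together yields $^o\Ff(x+h)-{}^o\Ff(x)=\int_0^1 g(x+th)\cdot h\,dt=g(x)\cdot h+R(h)$ with $|R(h)|\le|h|\sup_{0\le t\le1}|g(x+th)-g(x)|$, and the supremum tends to $0$ as $h\to0$ by continuity of $g$ at $x$. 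Hence $^o\Ff$ is differentiable at $x$ with $d({}^o\Ff)_x=g(x)={}^o(\rz D\Ff)(x)$, equivalently $^o(\rz\p_j\Ff)(x)=\p_j({}^o\Ff)(x)$, and $x\mapsto d({}^o\Ff)_x=g(x)$ is continuous, so $^o\Ff\in C^1(U,\bbr)$.

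The step I expect to be the main obstacle is the interchange in the third paragraph, i.e.\ making the passage of the standard part through the internal integral rigorous: the subtlety is upgrading the \emph{pointwise} infinitesimality of $|F-\rz({}^oF)|$ to a \emph{uniform} (internal) bound, which is exactly where compactness of $[0,1]$ and the finite internal bound on $F$ are used. Everything else is bookkeeping with transfer and the algebraic properties of the standard-part map recorded in \S\ref{subsec: props of standard part map}.
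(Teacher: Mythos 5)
Your argument is correct, and it reaches all three conclusions (existence of $d({}^o\Ff)_x$, the identity $^o(\rz d\Ff)_x=d({}^o\Ff)_x$, and continuity) in one stroke, but it is organized rather differently from the paper's proof. The paper splits the work into three stages: it first proves existence of the derivative by estimating the difference quotient at an \emph{infinitesimal} increment $\e$ (using $\Ff(x+\e v)-\Ff(x)=\int_0^\e\rz d\Ff_{x+tv}(v)\,dt$ together with *compactness of $\{\|\rz d\Ff_{x+tv}-\rz L_x\|:t\in[0,\e]\}\subset\mu(0)$) and then \emph{overflows} to a standard $\e_0$; it then separately runs a three-epsilon comparison at standard increments to identify $^o(\rz d\Ff)_x$ with $d({}^o\Ff)_x$; and finally it deduces continuity from $S$-continuity of $\xi\mapsto\rz d\Ff_\xi$. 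You instead stay at standard increments $h$ throughout, push the standard part through the internal integral (the $S$-integrability step, which you correctly isolate as the crux and justify by upgrading pointwise infinitesimality of $|F-\rz({}^oF)|$ on the internally compact $\rz[0,1]$ to the bound $\rz\e$ for every standard $\e$), and then finish with the purely classical fact that $f(x+h)-f(x)=\int_0^1 g(x+th)\cdot h\,dt$ with $g$ continuous forces $f\in C^1$ with $Df=g$. Both proofs run on the same engine — the transferred fundamental theorem of calculus plus $S$-continuity of the internal differential — but yours trades the paper's overflow argument for a single interchange lemma and delegates the differentiability bookkeeping to a standard theorem, which is arguably cleaner; the paper's version keeps the infinitesimal difference-quotient estimate explicit, which is the form it reuses elsewhere. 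No gaps.
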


\begin{proof}
We will first prove the existence of $d(^o\Ff)_x$ for all $x\in U$. We will then prove that $^o(\rz d\Ff)_x=d(^o\Ff)_x$ and finally we will prove the continuity statement.

First, we want to show that for every $x\in\bbr^m$ and $0<\d_0\in\bbr$,
there exists $0<\e_0\in\bbr$, and $L_x\in\hom(\bbr^m,\bbr^n)$ such
that if $v\in\bbr^m$, then
%$$
\begin{align}
\big|\f{ \Ff(x+\e v)- \Ff(x)}{\e }-\rz L_x(v)\big|<\d_0 \label{der1}
\end{align}
%$$
if $0<\e<\e_0$. We left the *'s off $x,v,\e,\d$ above.  (Without loss of generality, we may assume that
$|v|=1$.) To prove expression\; \ref{der1}, it suffices to prove
that if $0<\e\sim 0$ then
\begin{align}
\big|\f{\frak f(x+\e v)-\frak f(x)}{\e }-\rz L_x(v)\big|<\d_0
\label{est2}
\end{align}
as indicated by the following argument. (Here we are writing $x$ for
$\rz x$ and $|\;\;|$ for$\rz|\;\; |$.) Let $\d_0>0$ as given above
and let
$$
\frak S\doteq\big\{\e_0>0:0<\e<\e_0\Rightarrow\big|\f{\frak f(x+\e v)-\frak f(x)}{\e}-\rz
L_x(v)\big|<\rz\d_0\big\},
$$
  where we at this point choose $L_x(v)\dot=^o(\rz d\Ff)_v$ which is finite as $\xi\mapsto\rz d\Ff_\xi$ is S-continuous by hypothesis.
Then $\frak G$ is internal and $\{\e\in\rz\bbr:0<\e\sim
0\}\subset\frak S$ by(\ref{est2}). Therefore, by overflow, there
exists $\e_0$ with $0<\e_0\not\sim 0$ such that
$\e_0\in\frak S$. That is
\begin{align}
0<\e<\e_0\Rightarrow\big|\f{\frak f(x+\e v)-\frak f(x)}{\e }-\rz
L_x(v)\big|<\rz\d_0 \label{est3}
\end{align}
Note that as $\frak f$, $\rz L$, and $|\;|$ are $SC^o$, then
$^o(\frak f(x+\e v))=f(x+\;^o\e v)$, $^o(\rz L_x(v))=L_x(v)$, and
$^o(|w|)=|^ow|$ if $w\in\rz\bbr^n_{nes}$. Therefore taking standard
parts of expression (\ref{est3}), we get expression (\ref{der1}), as
we wanted.

We have reduced the proof of the first assertion to proving the assertion
(\ref{est2}). By hypothesis, if $u,v\in\rz\bbr^m_{nes}$, then $u\sim
v\Rightarrow\rz d\frak f_u\sim\rz d\frak f_v$. It follows that if
$0<\e\sim 0$, then $t\in[0,\e]\Rightarrow\rz d\frak f_{x+tv}\sim\rz
d\frak f_x$. In particular, as $L_x\doteq\; ^o(\rz d\frak
f_x)$, we get that $\rz d\frak f_{x+tv}\sim\rz L_x$ for $0\leq
t\leq\e$. So as $\{\|\rz d\frak f_{x+tv}-\rz L_x\|:t\in[0,\e]\}$ is
*compact and a  subset of $\mu(0)$, there is $\d$ with $0<\d\sim 0$ such
that $\|\rz d\frak f_{x+tv}-\rz L_x\|<\d$ for $0\leq t\leq\e$. (Here
$\|\;\|$ stands for $\rz\|\;\|$, the *transfer of the usual operator
norm on $\hom(\bbr^m,\bbr^n)$.)With this, we have that
\begin{align}
\bigg|\int_0^{\e}\rz d\frak f_{x+tv}dt-\int_0^{\e}\rz
L_x(v)dt\bigg|\leq\e |v|\cdot\|\rz d\frak f_{x+tv}-\rz L_x\|=\e\d
\label{est4}
\end{align}
But substituting $\int_0^{\e}\rz d\frak f_{x+tv}(v)dt=\frak f(x+\e
v)-\frak f(x)$ and $\int_0^{\e}L_x(v)dt=\e L_x(v)$ into expression
(\ref{est4}), we get expression (\ref{est2}).

Let's now prove the second assertion. We have preliminaries; let
$(x,v)\in\bbr^n$ and in the following, we shall let $x=\!\rz x$, and
$v=\!\rz v$, ie., use the same symbols whether in $\bbr^n$ or
$\rz\bbr^n$. Then we shall prove that
\begin{align}
^o(\rz d\Ff)_x(v)=d(\!\;^o\!\Ff)_x(v).  \label{disstan}
\end{align}
Now we have $\Ff\in\rz C^1(U,\bbr^m)$, and so if $0<r\in\; ^{\s} \bbr$, then
\begin{align}
\Ff(x+rv)-\Ff(x)=\int_0^r\rz d\Ff_{x+tv}(v)dt \label{basthm1.1}
\end{align}
where we leave (as usual) the * off the integral and also off $r$.
Now the fact that $\rz d\Ff$ is $SC^0$ implies that if $0<\d\in\bbr$,
then there exists $0<{\e}_1\in\bbr$ such that $|\rz d\Ff_{x+\e
v}(v)-\rz d\Ff_x(v)|<\f{\d}{2}$ if $\e<{\e}_1$. That is,
\begin{align}
\bigg|\int_0^{\e}\rz d\Ff_{x+tv}(v)-\e\rz
d\Ff_x(v)\bigg|<\f{\e\cdot\d}{2}\; \mbox{for}\; \e<{\e}_1
\label{basthm1.2}
\end{align}
On the other hand as $^of$ is differentiable, then for the given
$\d$ above, there exists $0<{\e}_2\in\bbr$ such that
\begin{align}
\bigg|d(^o\Ff)_x(v)-\f{^o\Ff(x+\e v)-^o\!\!\Ff(x)}{\e}\bigg|<\f{\d}{2}\;\text{for}\;0<\e<\e_2.
\label{basthm1.3}
\end{align}
But $^o\Ff(x+\e v)\sim \Ff(x+\e v)$ and $^o\Ff(x)\sim \Ff(x)$ and as $\e$ is
a standard positive number, then there exists $\eta\in\rz\bbr_+$
with $\eta\sim 0$ such that
\begin{align}
\bigg|\f{^o\Ff(x+\e v)-^of(x)}{\e}\;-\;\f{\Ff(x+ \e
v)-\Ff(x)}{\e}\bigg|<\eta. \label{basthm1.4}
\end{align}
But then expressions (\ref{basthm1.3}) and (\ref{basthm1.4}) give
\begin{align}
\bigg|d(^o\Ff)_x(v)-\f{\Ff(x+\e v)-\Ff(x)}{\e}\bigg|<\eta+\f{\d}{2}
\label{basthm1.5}
\end{align}
On the other hand, using expression (\ref{basthm1.1}) at $r=\e
(<{\e}_1)$, dividing it by $\e$ and combining it with expression
(\ref{basthm1.2}), we get
\begin{align}
\bigg|\f{\Ff(x+\e v)-\Ff(x)}{\e}-\rz d\Ff_x(v)\bigg|<\f{\d}{2}
\label{basthm1.6}
\end{align}
Finally, using the triangle inequality with expressions
(\ref{basthm1.5}) and (\ref{basthm1.6}), we get that
\begin{align}
\big|d(^o\Ff)_x(v)-\rz d\Ff_x(v)\big|<\d+\eta.\label{basthm1.7}
\end{align}
But in this inequality, the left side is independent of the
arbitrarily chosen standard positive number $\d$, that is,
$|\;d(^o\Ff)_x(v)-\rz d\Ff_x(v)|\sim 0$; which is the same as saying
$^o(\rz d\Ff_x(v))=d(^o\Ff)_x(v)$.
But the standard map $^o(\rz d\Ff):\bbr^m\x\bbr^m\ra\bbr^n$ is defined by
 $^o(\rz d\Ff)_x(v)\doteq^o(\rz d\Ff_x(v))$, ie., as standard maps
$:\bbr^m\x\bbr^m\ra\bbr^n$, $^o(\rz d\Ff)=d(^o\Ff)$ as we wanted.

Finally, we will prove the continuity of the map $x\ra\;^o(\rz d\Ff)_x$. It suffices to verify that if $\xi,\z\in\rz U_{nes}$ with $\xi\sim \z$, then $\rz(d(^o\Ff))_\xi\sim \rz(d(^o\Ff))_\z$. But note that we have proved that $^o(\rz d\Ff)=d(^o\Ff)$ and so, transferring, we have $\rz(^o(\rz d\Ff))=\rz(d(^o\Ff))$. Given this, it is sufficient to prove that $\rz(^o(\rz d\Ff))_\xi\sim\rz(^o(\rz d\Ff))_\z$.
But we know that the map $\Fv\mapsto\Fh(\Fv)\dot=\rz d\Ff_\Fv$ is S-continuous. It is basic in NSA that this implies two things: first, we have for all $\Fv,\Fw\in\rz U_{nes}$ with $\Fv\sim\Fw$, $\Fh(\Fv)\sim\Fh(\Fw)$;   and second, for all $\Fv\in\rz U_{nes}$, we have $\rz(^o\Fh)(\Fv)\sim\Fh(\Fv)$. Applying both of these statements to $\Fh(\Fv)\dot=\rz d\Ff_\Fv$, we have
\begin{align}
   \rz(^o(\rz d\Ff))_\xi\sim\rz d\Ff_\xi\sim\rz d\Ff_\z\sim\rz(^o(\rz d\Ff))_\z
\end{align}
   as we wanted; finishing the proof of the three assertions of the proposition.
   For the result on the coordinate derivatives, since we have $\bsm{(\diamondsuit)}$: $d(^o\Ff)_x=\;^o(\rz d\Ff)_x$ for each $x\in U$, and $\xi\mapsto\rz d\Ff_\xi$ is S-continuous and so $\bsm(\sharp)$: $\xi\mapsto\rz d\Ff_\xi(\rz\f{\p}{\p x_j})=\rz\f{\p}{\p x_j}(\Ff)(\xi)$ is S-continuous. With this we have
\begin{align}
   \f{\p}{\p x_j}\big(^o\Ff\big)(x)\stackrel{1}{=}d(^o\Ff)_x\big(\f{\p}{\p x_j}\big)\stackrel{2}{=}\;^o(\rz d\Ff)_{*x}\big(\f{\p}{\p x_j}\big)\stackrel{3}{=}\qquad\qquad\qquad \notag\\
    ^o\Big(\rz d\Ff_{*x}(\rz\f{\p}{\p x_j})\Big)\stackrel{4}{=}\;^o\Big(\rz\f{\p}{\p x_j}\big(\Ff\big)(\rz x)\Big)\stackrel{5}{=}\;^o\Big(\rz\f{\p}{\p x_j}\big(\Ff\big)\Big)(x)
\end{align}
   where equality (1) is basic vector calculus, (4) is its transfer, (2) is the above formula $(\diamondsuit)$ we just proved and both (3) and (5) follow from the S-continuity expressed in $(\sharp)$ above.
\end{proof}

Given the previous proposition, we can now prove the main theorem of this appendix.

 \subsection{ Basic S-smooth regularity theorem and consequences}                            %%%THM
%  The following theorem is basic to the arguments in this paper.
%\subsection{Proof of the theorem}
   The representation result for internal S-smooth maps and some related results are contained in the following theorem.
  \begin{thm}\label{thmbasreg}
  Let $\Ff:\rz U_{nes}\ra\rz\bbr^n$ be an internal map and $k\in\bbn\cup\{\infty\}$. Then the following statements hold.
\begin{enumerate}
  \item If $\Ff\in SC^k(U,\bbr^n)$, then $^o\Ff$ exists and is in
  $C^k(U,\bbr^n)$.\\Furthermore, if $\a$ is a $^\s$finite multiindex, with $|\a|\leq k$ then
  $^o(\rz\p^{\a}\Ff)=\p^{\a}(^o\Ff)$ on $U$.
  \item If $\Ff,\Fg\in SC^k(U,\bbr^n)$, and $\Ff(\xi)\sim \Fg(\xi)$ for all $\xi\!\in\!\rz U_{nes}$,
  then $\rz\p^\a(\Ff)(\xi)\sim\rz\p^\a(\Fg)(\xi)$ for all\; ${}^\s$\!\! finite
  multiindices \;$\a$ with $|\a|\leq k$ and $\xi\in\rz U_{nes}$.
  \item If $\Ff\in SC^k(U,\bbr^n)$, then $\rz\p^\a(\Ff)(\xi)\sim\rz\p^\a(\rz(^o\Ff))(\xi)$ for all $\a$ with $|\a|\leq k$ and $\xi\in\rz U_{nes}$.

\end{enumerate}
  \end {thm}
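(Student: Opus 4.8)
\textbf{Proof plan for Theorem \ref{thmbasreg}.}

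The plan is to reduce the whole theorem to Proposition \ref{approxprop} by an induction on $k$. For the base case $k=1$: statement (1) is exactly Proposition \ref{approxprop} (applied coordinate by coordinate to the $\bbr^n$-valued map), giving $^o\Ff\in C^1(U,\bbr^n)$ together with the interchange identity $^o(\rz\p^\a\Ff)=\p^\a(^o\Ff)$ for $|\a|\le 1$. For statement (3) with $k=1$, I would argue as follows: $\rz d\Ff$ is $SC^0$ by hypothesis, so for each $\xi\in\rz U_{nes}$ the standard fact ``$\Fh$ $S$-continuous $\Rightarrow$ $\rz(^o\Fh)(\xi)\sim\Fh(\xi)$'' (used already at the end of Proposition \ref{approxprop}) gives $\rz(^o(\rz d\Ff))(\xi)\sim\rz d\Ff_\xi$; combining this with the transferred identity $\rz(^o(\rz d\Ff))=\rz(d(^o\Ff))=\rz\p^\a(\rz(^o\Ff))$ (the transfer of (1)) yields $\rz\p^\a(\Ff)(\xi)\sim\rz\p^\a(\rz(^o\Ff))(\xi)$ for $|\a|\le 1$. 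Statement (2) for $k=1$ then follows from (3): if $\Ff(\xi)\sim\Fg(\xi)$ for all $\xi\in\rz U_{nes}$ then $^o\Ff=\;^o\Fg$ on $U$, hence $\rz(^o\Ff)=\rz(^o\Fg)$, so $\rz\p^\a(\Ff)(\xi)\sim\rz\p^\a(\rz(^o\Ff))(\xi)=\rz\p^\a(\rz(^o\Fg))(\xi)\sim\rz\p^\a(\Fg)(\xi)$.

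For the inductive step, suppose the theorem holds for $k-1$ (for all choices of $U$ and all internal maps). Let $\Ff\in SC^k(U,\bbr^n)$. By definition each first-order internal partial $\rz\p_j\Ff$ lies in $SC^{k-1}(U,\bbr^n)$. Apply Proposition \ref{approxprop} to get $^o\Ff\in C^1$ and $^o(\rz\p_j\Ff)=\p_j(^o\Ff)$; then apply the induction hypothesis, statement (1), to $\rz\p_j\Ff\in SC^{k-1}$ to conclude $^o(\rz\p_j\Ff)\in C^{k-1}(U,\bbr^n)$, i.e.\ $\p_j(^o\Ff)\in C^{k-1}$ for every $j$, which is exactly $^o\Ff\in C^k(U,\bbr^n)$. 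The multiindex identity $^o(\rz\p^\a\Ff)=\p^\a(^o\Ff)$ for $|\a|\le k$ is obtained by writing $\rz\p^\a=\rz\p^\b\circ\rz\p_j$ with $|\b|=|\a|-1\le k-1$, using $^o(\rz\p_j\Ff)=\p_j(^o\Ff)$ from Proposition \ref{approxprop} and then the induction hypothesis applied to $\rz\p_j\Ff$ for the remaining $\rz\p^\b$. For statement (3) at level $k$: again factor $\rz\p^\a=\rz\p^\b\circ\rz\p_j$; the induction hypothesis (3) applied to $\rz\p_j\Ff\in SC^{k-1}$ gives $\rz\p^\b(\rz\p_j\Ff)(\xi)\sim\rz\p^\b(\rz(^o(\rz\p_j\Ff)))(\xi)$, and since $^o(\rz\p_j\Ff)=\p_j(^o\Ff)$ we have $\rz(^o(\rz\p_j\Ff))=\rz\p_j(\rz(^o\Ff))$, so the right-hand side is $\rz\p^\b\rz\p_j(\rz(^o\Ff))(\xi)=\rz\p^\a(\rz(^o\Ff))(\xi)$, as required. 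Statement (2) at level $k$ then follows from (3) verbatim as in the base case, via $^o\Ff=\;^o\Fg$.

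The routine bookkeeping will be keeping track of the distinction between $\rz\p^\a$ acting internally and $\p^\a$ acting on the standard parts, and making sure all the ``$S$-continuous $\Rightarrow$ standard-part-close'' invocations are legitimate (they are, since at each stage the relevant internal derivative is assumed $SC^0$). The main obstacle, and the real content, is Proposition \ref{approxprop} itself --- establishing differentiability of $^o\Ff$ and the interchange $^o(\rz d\Ff)=d(^o\Ff)$ from only a pointwise $SC^1$ hypothesis; everything in the theorem beyond that is the inductive packaging sketched above. A secondary point requiring minor care is the case $k=\infty$: this is handled by observing that $SC^\infty=\bigcap_{p\in{}^\s\bbn}SC^p$ and each of statements (1)--(3) for $k=\infty$ is just the conjunction over all finite $p$ of the corresponding finite-$k$ statement, so no new argument is needed once the finite levels are done. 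I would also remark that the induction hypothesis must be stated for \emph{all} open convex $U$ and all internal $\Ff$ simultaneously, since the step from $k$ to $k-1$ replaces $\Ff$ by its internal partials but keeps the same domain, so this causes no difficulty.
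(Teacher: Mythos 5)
Your proposal is correct and takes essentially the same route as the paper: both anchor everything on Proposition \ref{approxprop}, prove statement (1) and the interchange identity $^o(\rz\p^\a\Ff)=\p^\a(^o\Ff)$ by induction on $k$ applied to the first-order internal partials, and then obtain (2) and (3) from (1) together with the fact that an $SC^0$ map is infinitesimally close to the transfer of its standard part. The only (immaterial) difference is the order of dependence — you derive (2) from (3), while the paper derives (3) from (2) via the difference $\Fh=\Ff-\Fg$ — and the side of the factorization $\p^\b=\p^j\circ\p^\a$ on which the single derivative sits.
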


  \begin{proof}
  We shall show that 2) follows easily from 1) and   3)  from 1) and 2).
  %We will then prove 2) and then with the help of the previous proposition  prove 1).

  As these assertions are  true if and only if they hold for points infinitesimally close to some point of $U$,
  then it suffices to prove these when
  the domain is $\rz\bbr^m_{nes}$.
  To verify 2), let $\Fh=\Ff-\Fg$, then $\Fh\in SC^k$ and $\Fh\sim 0$, ie., $^o\Fh=0$, eg., $\p^\a(^o\Fh)=0$ for all multiindices. In particular, using 1), and as eg., $\rz\p^\a\Ff$ and $\rz\p^\a\Fg$ are nearstandard, we see that this implies that, for $|\a|\leq k$, that
\begin{align}
  0=\;^o(\rz\p^\a\Fh)=\;^o(\rz\p^\a\Ff-\rz\p^\a\Fg)=\;^o(\rz\p^\a\Ff)-\;^o(\rz\p^\a\Fg),
\end{align}
  which implies 2).
  So now let us verify 3) assuming that 1) and 2) holds. We know that if
  $\Ff\in SC^0$, then $\Ff\sim\rz(^o \Ff)$. But 1) implies that
\begin{align}
  \Ff\in SC^k\Rightarrow\;^o \!\Ff\in C^k\Rightarrow
  \rz(^o\Ff)\in SC^k.
\end{align}
  Therefore 2) implies that for
   multiindices $\a$ with $|\a|\leq k$ that
  $\rz\p^{\a}\Ff\sim\rz\p^{\a}(\rz(^o\Ff))$; that is\quad
  $\rz\p^{\a}(\Ff-\rz(^o\Ff))\sim 0$ as we wanted.

%  @@@@@@@ BELOW IS THE OLD PROOF OF 2) @@@@@@@@@
%  We shall now prove 2).
%  It suffices to prove that  $\Ff\in SC^k$ and $\Ff\sim 0$
%  implies that $\p^{\a}\Ff\sim 0$ for all $\a$ with $|\a|\leq k$.
%  Furthermore it suffices to prove that $\Ff\in SC^k$ and
%  $\Ff\sim 0$ implies that $\rz d\Ff_x\sim 0$ for all $x\in\rz\bbr^m_{nes}$. For
%  then we have that for $^\s$finite $\a$, $\p^{\a}\Ff\in SC^{\infty}$
%  and $\p^{\a}f\sim 0$ implies that $\rz d(\p^{\a}f)_x\sim 0$ for
%  all $x\in\rz\bbr^m_{nes}$ and then proceed by induction on $|\a|$. So now
%  assume by way of contradiction, that $f(x)\sim 0$ all $x\in\rz\bbr^m_{nes}$,
%  but there exists $x_0\in\rz\bbr^m_{nes}$ such that $\rz df_{x_0}\not\sim 0$.
%  So for some $j\in\{1\,\ldots,n\}$, we have $\rz df^j_{x_0}\not\sim
%  0$. Let $f$ denote his component of $f$. We shall find $y\in\rz\bbr^m_{nes}$
%  such that $f(y)\not\sim 0$, contrary to the hypothesis. Now $\rz df_{x_0}\not\sim
%  0$ implies that there exists $v\in\rz T_{x_0}(\rz\bbr^m_{nes})$, with $v\not\sim 0$
%  such that $\rz df_{x_0}(v)\not\sim 0$ in $\rz
%  T_{f(x_0)}(\rz\bbr)\cong\rz\bbr$. We may assume that$\rz
%  df_{x_0}(v)>0$. But $x\mapsto\rz df_x$ is $SC^o$ and so for some
%  $0<\e\not\sim 0$, if $t\in(-\e,\e)$ then $\rz df_{x+tv}(v)\gnsim
%  0$. So as $f(x+\e v)-f(x)=\int_0^{\e}\rz df_{x+tv}(v)dt$ and as
%  $f(x)\sim 0$, we have that $f(x+\e v)\not\sim 0$.

To prove 1),
  we need the above technical result, which is essentially the induction step in the proof of 1) and the lemma below.
  The previous proposition gives statement (1) for $k=1$. We will first verify that $\Fg\in SC^k$ implies that $^o\Fg\in C^k$. Inductively assuming that we have the statement for all $k$ up to some value $l\in\bbn$, we will verify the statement for $k=l+1$. Supposing  that $\Fg\in SC^{l+1}(U)$, we know that this is equivalent to having the map $\xi\mapsto\rz d\Fg_\xi$ in $SC^l$.   Writing $\rz d\Fg_\xi=\rz\!\f{\p}{\p x_1}\Fg(\xi)\rz dx_1+\cdots+ \rz\!\f{\p}{\p x_n}\Fg(\xi)\rz dx_n$, we see that this is equivalent to $\xi\mapsto\rz\f{\p}{\p x_j}\Fg(\xi)$ being $SC^l$ for all $j$. By the induction hypothesis, this implies that $^o(\rz\f{\p}{\p x_j}\Fg)\in C^k$. But $l\geq 1$, and so proposition \ref{approxprop} implies that $^o(\rz\f{\p}{\p x_j}\Fg)=\f{\p}{\p x_j}(^o\Fg)$, and so we have that $x\mapsto\f{\p}{\p x_j}(^o\Fg)(x)\in C^l(U)$ for all $j$, which therefore implies that $^o\Fg\in C^{l+1}$, completing the induction step.
  Finally, we need to show that taking standard parts intertwines internal and standard partial derivatives. Again we will verify this by induction: we have the $k=1$ case in the proposition; suppose that we have the result up to $k=l$ for some $l\geq 1$ and we need to verify it for $k=l+1$. To this end, suppose that  $\Fg\in SC^{l+1}(U)$ and let $\b$ be a multiindex with $|\b|\leq l+1$. Writing $\b=\a_j$ where $|\a|\leq l$, and letting $\p^j$ denote $\f{\p}{\p x_j}$, we have the following.
\begin{align}
   ^o(\rz\p^\b\Fg)\stackrel{a}{=}\;^o(\rz\p^j(\rz\p^\a\Fg))\stackrel{b}{=}  \p^j(^o(\rz\p^\a\Fg))\stackrel{c}{=}\p^j(\p^\a(^o\Fg))\stackrel{d}{=}\p^\b(^o\Fg)
\end{align}
   where equalities (a) and (d) are obvious, equality (b) follows from the fact that as $|\a|\leq l$, then $\rz\p^\a\Fg$ is $SC^1$ and equality (c) follows from the fact that $\Fg$ is, in particular, in $SC^l$, $|\a|\leq l$ and the induction hypothesis.
\end{proof}

Before we proceed to a nonstandard corollary, let's demonstrate how this nonstandard theorem about individual mappings has standard consequences about asymptotic properties of families of maps. The next corollary is a consequence of the third statement of the theorem. It is close to (note we are working on an open set!) a $C^k$ version of a classic result on equicontinuous (as usually defined) families of continuous maps. For the closest nonstandard rendition, see \cite{StrLux76}, chapter 8.4. The corollary following this, which is a consequence of the second statement of the theorem, as stated, may be new. It's important to note at this point that theorem \ref{thmbasreg} is much more effective in this paper in its nonstandard form.

   Before we proceed to the corollaries, we will give some lemmas that simplifies a step in both corollaries.
   The first lemma, although a conceptually simple result, has a tedious proof. But it will streamline the proof of the second lemma and allow weaker hypotheses in the following two corollaries.
\begin{lem}\label{lem: SC^0,connected, finite at pt->bdd}
    Suppose that $U\subset\bbr^n$ is open and connected and $\Ff:\rz U\ra\rz\bbr$ is an internal function with the following properties. For each $\xi,\z\in\rz U_{nes}$ with $\rz|\xi-\z|\sim 0$, we have $\rz|\Ff(\xi)-\Ff(\z)|\sim 0$ (SC-criterion) and there is $\xi_0\in\rz U_{nes}$ with $\Ff(\xi_0)\in\rz\bbr_{nes}$. Then $\Ff$ is finite on $\rz U_{nes}$ and therefore S-continuous on $\rz U_{nes}$.
\end{lem}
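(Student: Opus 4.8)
The statement asserts a kind of "connectedness propagation" of nearstandardness: S-continuity (in the $\varepsilon$--$\delta$-free form: infinitesimal input separation forces infinitesimal output separation) together with a single point where $\Ff$ is finite, forces $\Ff$ finite everywhere on $\rz U_{nes}$. The natural strategy is a chain/overflow argument on the connected open set $U$: being finite at a point "spreads" to an infinitesimal neighborhood via the SC-criterion, then along finite chains of such neighborhoods, and connectedness of $U$ lets one reach every point of $\rz U_{nes}$ in a standardly finite number of steps.

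First I would reduce to nearstandard points: fix $\z \in \rz U_{nes}$ and let $z = {}^\circ\z \in U$, and similarly $x_0 = {}^\circ\xi_0 \in U$. It suffices to show $\Ff(\z) \in \rz\bbr_{nes}$. Since $U$ is open and connected, it is polygonally connected, so choose a polygonal path in $U$ from $x_0$ to $z$; cover its (compact) image by finitely many open balls $B_1,\dots,B_N$ contained in $U$, chosen so consecutive balls overlap and $x_0 \in B_1$, $z \in B_N$. The key local claim is: if $y \in U$ and $\Ff$ is finite at some point of $\mu(y) \cap \rz U$ that is nearstandard, then $\Ff$ is finite (indeed takes values in a single monad, after taking standard parts) on all of $\mu(y) \cap \rz U$ — this is immediate from the SC-criterion, since any two points of $\mu(y)$ are at infinitesimal distance. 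Slightly more globally: if $\Ff$ is finite at one nearstandard point lying over a ball $B_i$, I claim it is finite at every nearstandard point over $B_i$. Here is where the overflow principle enters: the set $\{r \in \rz\bbr_+ : \rz|\Ff(\eta) - \Ff(\zeta)| < r \text{ whenever } \eta,\zeta \in \rz B_i \text{ with } \rz|\eta - \zeta| < r\}$ — or rather an internal set capturing "oscillation controlled on scale $r$ implies oscillation controlled on the whole ball by a fixed amount" — is internal and contains all infinitesimals by the SC-criterion, hence contains a noninfinitesimal; chaining a standardly-finite number of such steps across $B_i$ (whose radius is a standard positive number) shows the oscillation of $\Ff$ over $\rz B_i$ between nearstandard points is finite. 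Then finiteness at one nearstandard point over $B_i$ gives finiteness at all of them.

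With the local claim in hand, the global argument is a finite induction along the chain $B_1, \dots, B_N$: $\Ff$ is finite at $\xi_0$ over $B_1$; pick a point in $B_1 \cap B_2$ and a nearstandard lift of it, at which $\Ff$ is finite by the $B_1$ claim, then apply the $B_2$ claim; continue to $B_N$, concluding $\Ff(\z)$ is finite. Since $N$ is standardly finite, no externality issues arise. Finally, "finite on $\rz U_{nes}$" together with the SC-criterion is exactly the definition of S-continuity on $\rz U_{nes}$ (the SC-criterion says monads map into monads once we know the values are nearstandard so that standard parts exist), giving the last sentence.

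\textbf{Main obstacle.} The delicate point is the "single ball" step: the SC-criterion only controls oscillation across infinitesimal distances, so one must promote this to control over a standard-radius ball without a uniform (standard) modulus of continuity — a priori the oscillation over a standard distance could be infinite. The fix is precisely the internal-set-plus-overflow device: define, for the fixed ball $B_i$, the internal set $A_i = \{\delta \in \rz\bbr_{>0} : \forall \eta,\zeta \in \rz \overline{B_i}\ (\rz|\eta-\zeta| \le \delta \Rightarrow \rz|\Ff(\eta) - \Ff(\zeta)| \le 1)\}$; the SC-criterion guarantees every positive infinitesimal lies in $A_i$, so by overflow some standard $\delta_i > 0$ lies in $A_i$, and then subdividing a segment of length $\le 2\,\mathrm{radius}(B_i)$ into $\lceil 2\,\mathrm{radius}(B_i)/\delta_i\rceil$ (a standard integer) subintervals telescopes the oscillation bound to a finite constant. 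Care is needed that the endpoints of the subdivision and the lifts stay nearstandard and inside $\rz U$, which is arranged by working with $\overline{B_i} \subset U$ and convexity of balls; but these are routine once the overflow step is set up correctly.
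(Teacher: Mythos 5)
Your proof is correct but reaches the single-ball bound by a genuinely different route than the paper, which is worth recording. Both arguments reduce the lemma to the same local step: if $B$ is a standard ball with $\overline{B}\subset U$ and $\Ff$ is finite at one point of $\rz\overline{B}$, then $\Ff$ is finite on all of $\rz\overline{B}$; the global conclusion then follows by chaining overlapping balls along a path from $^\circ\xi_0$ to $^\circ\zeta$. You prove the local step directly: the internal set of scales $\delta>0$ at which $\rz|\Ff(\eta)-\Ff(\zeta)|\le 1$ whenever $\eta,\zeta\in\rz\overline{B}$ and $|\eta-\zeta|\le\delta$ contains every positive infinitesimal by the SC-criterion (all of $\rz\overline{B}$ is nearstandard), so by overflow it contains a standard $\delta_0>0$, and telescoping over a standardly finite number of length-$\delta_0$ steps across $B$ produces an explicit finite oscillation bound. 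The paper instead proves the local step by contradiction: supposing $\la_0=|\Ff(\Fv_0)|$ is infinite for some $\Fv_0\in\rz B_r$, it forms the internal running supremum $\Fg(\Ft)=\rz\sup\{|\Ff(\Fr\Fv_0)|:0\le\Fr\le\Ft\}$ along the segment to $\Fv_0$, shows $\Fg$ is S-continuous, and subdivides $[0,1]$ into $\om$ equal pieces with $\om$ the integer part of $\la_0$, so that $\la_0/\om\sim 1$ and the mean telescoping increment forces some noninfinitesimal jump of $\Fg$ over an infinitesimal subinterval, contradicting the S-continuity of $\Fg$. Your version is arguably cleaner: it is direct rather than indirect, it isolates overflow as the operative principle (the same mechanism the paper itself uses in the immediately following equicontinuity lemma), and the paper's somewhat delicate choice of the subdivision count $\om$ is replaced by the routine extraction of a standard modulus $\delta_0$.
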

\begin{proof}
    First, as $\xi_0\in\rz U_{nes}$, then for some $x_0\in U$, $\xi\sim\rz x_0$ and so the hypothesis implies that $\Ff(\xi_0)$ is finite if and only if $\Ff(\rz x)$ is finite; so we may assume that $\xi_0$ is standard and in fact, we can take it to be $0$ by composing with a standard translation.
    Next, it suffices to verify the conclusion for (the transfer of) connected, compact subsets of $U$.  Finally, it suffices to prove the result for $K$ a closed ball $B_r$ of radius $r\in\bbr_+$ centered at $0$, for one can cover $K$ by overlapping balls (contained in $U$) such that $|\Ff|$ can change a finite amount on each ball. Given this, suppose that we have $\Ff:\rz B_r\ra\rz\bbr$ with $\Ff(0)\in\rz\bbr_{nes}$ and such that, for each $\xi,\z\in\rz B_r$ with $|\xi-\z|\sim 0$, we have $|\Ff(\xi)-\Ff(\z)|\sim 0$. Assuming also that $\Ff(0)=0$ (by adding a finite constant to $\Ff$), suppose that the conclusion does not hold; ie., there is $\Fv_0\in\rz B_r$ with $\la_0\dot=|\Ff(\Fv_0)|$ infinite. So now define the internal $\Fg:\rz[0,1]\ra\rz\bbr_+$ by $\Fg(\Ft)=\rz\sup\{|\Ff(\Fr\Fv_0)|:0\leq\Fr\leq\Ft\}$. First, the hypothesis on $\Ff$ implies that if $\Ft_1,\Ft_2\in\rz\bbr_+$ with $0\leq\Ft_1<\Ft_2\leq 1$, then $\Fg(\Ft_1)\sim\Fg(\Ft_2)$. For, by definition, we have $\Fg(\Ft_1)\leq\Fg(\Ft_2)$ and on the other hand, we have $\Ff(\Ft_1\Fv_0)\sim\Ff(\Ft_2\Fv_0)$. Now let $\om\in\rz\bbn_\infty$ be the largest integer with $\om\leq\la_0$ and for $\Fj=1,\ldots,\om$, let $\a_\Fj=\Fj/\om$. Using the monotonicity of $\Fg$, we have
\begin{align}
    \la_0=\Fg(1)-\Fg(0)=\rz\sum_{\Fj=1}^\om\big(\Fg(\a_\Fj)-\Fg(\a_{\Fj-1})\big)\leq\om\cdot\rz\max\{\Fg(\a_\Fj)-\Fg(\a_{\Fj-1}):1\leq\Fj\leq\om\}.
\end{align}
    That is, $1\sim \la_0/\om\leq\rz\max\{\Fg(\a_\Fj)-\Fg(\a_{\Fj-1}):1\leq\Fj\leq\om\}$, eg., there is a $\Fj_0$ such that $\Fg(\a_{\Fj_0})-\Fg(\a_{\Fj_0-1})$ is noninfinitesimal, a contradiction.
\end{proof}
\begin{lem}\label{lem: basic equicont families lemma}
    Let $U\subset\bbr^n$ be open and connected, and suppose that $\{h_j\in C^0(U,\bbr):j\in J\}$ is an equicontinuous family with the property that there is $c\in\bbr_+$ and compact $B\subset U$ such that for each $j\in J$, there is $x_j\in B$ with $|h_j(x_j)|<c$. Then, for $\om\in\rz\bbn_\infty$, we have that $\rz h_\om$ is S-continuous.
    % and so for each compact $K\subset U$, we have $\sup\{|h_j(x)|:x\in K,j\in J\}<\infty$.
\end{lem}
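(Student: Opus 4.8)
The plan is to reduce this lemma to the previous one, Lemma \ref{lem: SC^0,connected, finite at pt->bdd}, by checking that the internal function $\rz h_\om$ satisfies that lemma's two hypotheses: an SC-criterion for pairs of infinitesimally close nearstandard points, and finiteness at a single nearstandard point. The equicontinuity of the family $\{h_j\}$ will give the first, and the boundedness at the points $x_j$ will give the second.

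First I would unwind the definition of equicontinuity (Definition \ref{def: equicont fam}, adapted to families of functions rather than pairs of maps, or more simply the standard notion — note the remark pointing to \cite{StrLux76}, p.217, for the nonstandard rendition). Concretely: for each $x\in U$ and each $r>0$ there is a neighborhood $U^x$ of $x$ and an index $j^x\in J$ and $s^x>0$ such that, for all $j\in J$, if $|h_{j^x}(y)-h_{j^x}(x)|<s^x$ for $y\in U^x$ then $|h_j(y)-h_j(x)|<r$. Transferring this statement, fixing $\om\in\rz\bbn_\infty$, and taking $\Fy\in\rz U^x$ with $\Fy\sim\rz x$: since $\rz h_{j^x}$ is the transfer of a standard continuous function it is S-continuous at $\rz x$, so $\rz|\rz h_{j^x}(\Fy)-\rz h_{j^x}(\rz x)|\sim 0<\rz s^x$, and hence by the transferred equicontinuity statement $\rz|\rz h_\om(\Fy)-\rz h_\om(\rz x)|<\rz r$. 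As $r>0$ was arbitrary, $\rz h_\om(\Fy)\sim\rz h_\om(\rz x)$. This gives: for every $x\in U$ and every $\xi\in\mu(\rz x)$, $\rz h_\om(\xi)\sim\rz h_\om(\rz x)$. From this, the SC-criterion of Lemma \ref{lem: SC^0,connected, finite at pt->bdd} follows: if $\xi,\z\in\rz U_{nes}$ with $|\xi-\z|\sim 0$, they lie in a common monad $\mu(\rz x)$ for some $x\in U$ (here using that $U$ is open so $\rz U_{nes}=\cup\{\mu(\rz x):x\in U\}$), hence $\rz h_\om(\xi)\sim\rz h_\om(\rz x)\sim\rz h_\om(\z)$.

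Next I would handle the finiteness hypothesis. By assumption, for each $j\in J$ there is $x_j\in B$ with $|h_j(x_j)|<c$. I would encode this internally: the statement ``for all $j\in J$ there exists $x\in B$ with $|h_j(x)|<c$'' transfers to ``for all $\Fj\in\rz J$ there exists $\xi\in\rz B$ with $\rz|\rz h_\Fj(\xi)|<\rz c$.'' Applying this with $\Fj=\om$ yields $\xi_0\in\rz B$ with $|\rz h_\om(\xi_0)|<\rz c$, so $\rz h_\om(\xi_0)\in\rz\bbr_{nes}$; and since $B$ is compact, $\rz B\subset\rz U_{nes}$, so $\xi_0\in\rz U_{nes}$. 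Thus both hypotheses of Lemma \ref{lem: SC^0,connected, finite at pt->bdd} hold for $\rz h_\om$ on the open connected set $U$, and the lemma gives that $\rz h_\om$ is finite and S-continuous on $\rz U_{nes}$, which is the conclusion.

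The main obstacle, such as it is, is purely bookkeeping: making sure the form of equicontinuity actually being assumed (the paper's nonstandard-flavored Definition \ref{def: equicont fam} refers to families of map-pairs $(\psi,\nu)$, whereas here we have a family of scalar functions $h_j$) is correctly specialized, and that the transfer of the equicontinuity statement is set up with the right quantifier structure so that the ``test function'' $h_{j^x}$ plays its role. One should also be slightly careful that $\om$, though infinite, indexes an honest internal function $\rz h_\om$ via the transfer of $j\mapsto h_j$ — this is where we use that $\{h_j:j\in J\}$ is a genuine (standard) family so that $j\mapsto h_j$ is a standard map whose transfer can be evaluated at $\om\in\rz J$. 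Once these are in place the argument is immediate from the two cited lemmas and basic transfer.
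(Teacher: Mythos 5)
Your proof is correct and follows the same overall strategy as the paper: establish the SC-criterion for $\rz h_\om$ from equicontinuity, establish finiteness at a single nearstandard point via the boundedness hypothesis, and then invoke Lemma \ref{lem: SC^0,connected, finite at pt->bdd}. The only substantive difference is that you work from the paper's self-referential Definition \ref{def: equicont fam} (using the continuity of the test function $h_{j^x}$ to verify the antecedent of the transferred implication), whereas the paper's own proof explicitly invokes the ``usual rendering'' of equicontinuity and then an underflow argument on the internal set $\FA$; both routes are valid here since, as you observe, the test function being continuous makes the two formulations effectively equivalent for this family, and your direct verification of $\rz h_\om(\Fy)\sim\rz h_\om(\rz x)$ even avoids the underflow step.
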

\begin{proof}
    That $\rz h_\om$ is S-continuous is basic in the NSA literature; but let's give a proof from basics. $\{h_j:j\in\bbn\}$ is equicontinuous means (in the usual rendering) that for each $x\in U$ and $r\in\bbr_+$, there is $s\in\bbr_+$ so that if $y\in U$ is such that $|y-x|<s$, then for all $j\in\bbn$, we have $|h_j(y)-h_j(x)|<r$. So we have a family of $(x,r,s)$ statements $S(x,r,s)$. Transferring each of these separately we get the following statement. $\bsm{\ST}$: For each $x\in U$ and $r\in\bbr_+$, there is $s\in\bbr_+$ such that if $\xi\in\rz U$ and $|\xi-\rz x|<\rz s$, then, for all $\la\in\rz\bbn$,  $|\rz h_\la(\xi)-\rz h_\la(\rz x)|<\rz r$. Given this, let $\e\in\rz\bbr_+$ be infinitesimal and consider the set $\FA=\{\d\in\rz\bbr_+:\;\text{if}\;|\xi-\rz x|<\e, \;\text{then}\;|\rz h_\la(\xi)-\rz h_\la(\rz x)|<\d\}$. Now $\FA$ is internal and contains arbitrarily small standard $\rz r$'s (as for any $r\in\bbr_+$, the corresponding $s\in\bbr_+$ in statement $\FA$ is bigger than $\e$). Therefore, underflow implies that $\FA$ contains infinitesimals, proving the SC-criterion (previous lemma) for each $\rz h_\la$ for $\la\in\rz\bbn$. But note, eg., that our boundedness hypothesis implies that there is  $\xi\in\rz U_{nes}$ with $\rz h_\la(\xi)$  finite  and therefore as  $\rz h_\la$ satisfies the SC-criterion  on $\xi\in\rz U_{nes}$, the previous lemma implies that $\rz h_\la$ is S-continuous on $\rz U_{nes}$.%  is finite on $\rz K$ for each compact $K\subset U$.
%    Given this, suppose the second conclusion fails for some compact $K\subset U$. This implies that there is $\la\in\rz\bbn$ and $\xi\in\rz K$, such that $|\rz h_\la(\xi)|$ is infinite. But by compactness of $K$, $\xi\in\rz K$ implies that there is $x\in K$ such that $\xi\sim\rz x$ and we know that, as $\rz h_\la$ is S-continuous, we have $\rz h(\xi)\sim\rz h_\la(\rz x)$. But $\rz h_\la(\rz x)$ is finite by transfer of the hypothesis, and so we have an infinite number $\rz h_\la(\xi)$ infinitesimally close to a finite number $\rz h_\la(\rz x)$.
\end{proof}
\begin{cor}\label{cor: 1st stan cor of appendix thm}
    Suppose that $\SF=\{f_l:l\in L\}$ is a family of functions in $C^k(U)$ satisfying the following. For each multiindex $\a$ with $|\a|\leq k$, the family $\{\p^\a f_l:l\in L\}$ is equicontinuous.
    Suppose also that there is $c\in\bbr_+$, a compact $K\subset U$ and $x_l\in K$ for each $l\in L$, such that $|\p^\a f_l(x_l)|<c$ for all $l\in L$ and $\a$ with $|\a|=k$.
    Then there is a sequence $\SS=\{f_i:i\in\bbn\}\subset\SF$ and $g\in C^k(U)$ such that for each $r>0$,  $x\in U$, we have that there is $j_0=j_0(r,x)\in\bbn$ with $|\p^\a f_j(x)-\p^\a g(x)|<r$ for all $\a$ with $|\a|\leq k$ and $j\geq j_0$.
\end{cor}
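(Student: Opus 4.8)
The plan is to reduce this to the nonstandard statement (3) of Theorem \ref{thmbasreg} together with the equicontinuity lemmas \ref{lem: SC^0,connected, finite at pt->bdd} and \ref{lem: basic equicont families lemma}, and then extract a standard sequence by the now-familiar ``reverse transfer'' device used in Proposition \ref{prop: ptwise lim G_j in Gp is Gp} and Corollary \ref{cor: C^0 precpt fam in Gp is C^k precpt}. First I would pick an arbitrary infinite index $\om\in\rz\bbn_\infty$ inside $\rz L$ (enlarging $L$ to $\bbn$ WLOG, or just working with the internal family $\{f_\la:\la\in\rz L\}$ and an internal enumeration) and set $\SG\dot=\rz f_\om$. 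Applying Lemma \ref{lem: basic equicont families lemma} to each of the finitely many families $\{\p^\a f_l:l\in L\}$ with $|\a|\le k$ (using the boundedness hypothesis for $|\a|=k$, and for lower $|\a|$ deriving boundedness at a point from the $|\a|=k$ bound plus equicontinuity by integrating along a path inside a compact connected set, exactly as in Lemma \ref{lem: SC^0,connected, finite at pt->bdd}), I get that $\rz\p^\a\SG$ is S-continuous on $\rz U_{nes}$ for every $|\a|\le k$; i.e. $\SG\in SC^k(U,\bbr^n)$. Hence by statement (1) of Theorem \ref{thmbasreg}, $g\dot=\;^o\SG\in C^k(U,\bbr^n)$, and by statement (3), $\rz\p^\a\SG(\xi)\sim\rz\p^\a(\rz g)(\xi)$ for all $\xi\in\rz U_{nes}$ and $|\a|\le k$.

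Next I would run the standard-extraction argument. For each compact $K\subset U$ and $t\in\bbr_+$ define
\begin{align}
   \FD_{K,\,t}=\{l\in L:|\p^\a f_l(x)-\p^\a g(x)|<t\ \text{for all}\ x\in K\ \text{and all}\ \a\ \text{with}\ |\a|\le k\}.
\end{align}
I claim $\FD_{K,\,t}\ne\emptyset$; by reverse transfer it suffices to show $\rz\FD_{K,\,t}\ne\emptyset$, and indeed
\begin{align}
   \rz\FD_{K,\,t}=\{\la\in\rz L:|\rz\p^\a(\rz f_\la)(\xi)-\rz\p^\a(\rz g)(\xi)|<\rz t\ \text{for all}\ \xi\in\rz K,\ |\a|\le k\},
\end{align}
which contains $\om$ by the infinitesimal-closeness just established (since $\rz t$ is a positive standard number and $\rz K\subset\rz U_{nes}$). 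Now choose an exhaustion $K_1\subset K_2\subset\cdots$ of $U$ by compact sets with $\bigcup_j K_j=U$, and for each $j$ pick $f_{i_j}\in\FD_{K_j,\,1/j}$; let $\SS=\{f_{i_j}:j\in\bbn\}$. Given $r>0$ and $x\in U$, choose $j_0$ with $x\in K_{j_0}$ and $1/j_0<r$; then for $j\ge j_0$ we have $x\in K_j\subset K_{j_0}$ (monotonicity) so $|\p^\a f_{i_j}(x)-\p^\a g(x)|<1/j\le 1/j_0<r$ for all $|\a|\le k$, as required. (If one wants $\SS$ to be an honest subsequence rather than a subfamily, the $i_j$ can be chosen increasing since each $\FD_{K_j,1/j}$ is infinite — it contains $\om$, hence by reverse transfer and overflow-type reasoning is an infinite standard set, or simply drop already-used indices.)

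The main obstacle is the bookkeeping in the first paragraph: Lemma \ref{lem: basic equicont families lemma} as stated hands us boundedness at a point only for the top-order derivatives, so I need to check that an equicontinuous family of $C^0$ functions on a connected open set, bounded at \emph{one} point, has each internal member S-continuous — this is precisely Lemma \ref{lem: SC^0,connected, finite at pt->bdd}, so the step is: for $|\a|<k$, bound $\p^\a f_l$ at a fixed point $x_0$ by writing $\p^\a f_l(x_0)=\p^\a f_l(x_l)+\int_{x_l}^{x_0}(\text{higher derivatives})$ along a fixed path in a compact connected subset, using the $|\a|=k$ bound and equicontinuity (which controls the lower-order increments uniformly in $l$); this gives a uniform constant $c'$ so Lemma \ref{lem: basic equicont families lemma} applies to $\{\p^\a f_l\}$ for every $|\a|\le k$. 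Everything else is the standard transfer/reverse-transfer template and should be routine. The one genuine subtlety worth a remark is that connectedness of $U$ is used essentially here (via Lemma \ref{lem: SC^0,connected, finite at pt->bdd}); if $U$ is disconnected one applies the argument componentwise, which is harmless since the exhaustion and the conclusion are both local.
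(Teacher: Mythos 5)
Your overall strategy matches the paper's: take an infinite index $\om\in\rz L\ssm{}^\s L$, show $\rz f_\om\in SC^k(U)$ by applying Lemma \ref{lem: basic equicont families lemma} to each family $\{\p^\a f_l\}$, invoke Theorem \ref{thmbasreg} (statements (1) and (3)) to get $g={}^o(\rz f_\om)\in C^k(U)$ with $\rz\p^\a(\rz f_\om)(\xi)\sim\rz\p^\a(\rz g)(\xi)$ on $\rz U_{nes}$, then reverse-transfer on an exhaustion to extract the sequence. Your second paragraph is essentially identical to the paper's extraction argument.

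The fill you propose for the lower-order boundedness gap does not work, though you were right to single it out as the delicate point. Writing $\p^\a f_l(x_0)=\p^\a f_l(x_l)+\int_{x_l}^{x_0}(\cdots)$ for $|\a|<k$ begs the question: $\p^\a f_l(x_l)$ on the right is precisely the quantity not controlled by the hypothesis. The integral controls only the \emph{increment}; the top-order bound and equicontinuity give no purchase on the absolute value of a strictly lower-order derivative. The constant family $f_l\equiv l$ (with $L=\bbn$, $k\geq 1$) shows the problem concretely: every family $\{\p^\a f_l\}$ is equicontinuous and the $|\a|=k$ anchor bound holds with $c=1$ since all derivatives of order $\geq 1$ vanish, yet $\rz f_\om=\om$ is not nearstandard for infinite $\om$, so the claimed $SC^k$ membership fails and the whole nonstandard step collapses. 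The resolution is that the corollary's hypothesis should read $|\a|\leq k$ rather than $|\a|=k$ — indeed the paper's own proof quietly applies Lemma \ref{lem: basic equicont families lemma} ``for each $\a$ with $|\a|\leq k$'' as though the boundedness hypothesis covered all of them — and under that reading your first paragraph is immediate and the extra induction is unnecessary. Your remark that connectedness can be handled componentwise is also off: the anchor points $x_l$ all sit in one fixed compact $K$, so on any component of $U$ disjoint from $K$ there is nothing to anchor boundedness at all; connectedness (or an anchor per component) is genuinely needed for Lemma \ref{lem: SC^0,connected, finite at pt->bdd} to apply.
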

\begin{proof}
   The result will follow  from statement (1) of the above theorem and the above lemma. First, note that the condition on the $\p^\a f_l$'s with respect to the sequence of points $x_l$ implies that for $\Fl\in\rz L$, we have $\rz x_\Fl\in\rz K\subset\rz U_{nes}$, so that for each $\a$, $\p^\a$  The previous lemma applied, for each $\a$ with $|\a|\leq k$, to the the families $\{\p^\a f_l:l\in L\}$ implies that for $\Fl\in\rz L\smallsetminus\;^\s L$, we have that for each of these $\a$'s, $\rz\p^\a f_\Fl\in SC^0(U)$, ie., by definition  $\rz f_\Fl\in SC^k(U)$. But then statement (3) of the theorem implies the statement $\bsm{(\natural)}$:  $g=\;^o(\rz f_\Fl)$ is a well defined element of $C^k(U)$ and for all $\xi\in\rz U_{nes}$ and $\a$ with $|\a|\leq k$ we have $\rz\p^\a(\rz f_\Fl)(\xi)\sim\rz (\p^\a g)(\xi)$. Given this, let $K_j$ for $j\in\bbn$ be an increasing sequence of compact subsets of $U$ satisfying $U=\cup\{K_j:j\in\bbn\}$. For $j\in\bbn$, define the following  subset of $\SF$.
\begin{align}
   \SE_j=\{f\in\SF: |\p^\a f(x)-\p^\a g(x)|<1/j \;\text{for all}\;x\in K_j,\;\text{for all}\;\a\;\text{with}\;|\a|\leq k\}.
\end{align}
  We will show that $\SE_j$ is nonempty by verifying that $\rz\SE_j$ is nonempty. Now, by transfer, $\Ff\in\rz\SF$ is in $\rz\SE_j$ precisely if
\begin{align}
   |\rz\p^\a\Ff(\xi)-\rz(\p^\a g)(\xi)|<\rz\f{1}{j}\;\text{for all}\;\xi\in\rz K_j,\;\text{for all}\;\a\;\text{with}\;|\a|\leq k.
\end{align}
   But the statement  $(\natural)$ above implies that $\rz f_\Fl$ satisfies these properties as $\rz K\subset\rz U_{nes}$ and so eg., $\rz\SE_j$ is nonempty, hence $\SE_j$ is nonempty, ie., for each $j\in\bbn$, there is  $f_j\in\SF$ that is in $\SE_j$ . So given $r>0$ and $x\in U$, then first as the $K_j$'s are nondecreasing and $\cup\{K_j:j\in\bbn\}=U$, there is $j_1\in\bbn$ such that $x\in K_j$ for all $j\geq j_1$ and so choosing $j_0>\max\{j_1,1/r\}$ we have our assertion.
\end{proof}

  We now consider a second corollary to the above theorem. As far as the author can tell, this result, although of a basic nature, seems to be new.
\begin{cor}\label{cor: 2nd stan cor of appendix thm}
    Let $U\subset\bbr^n$ be connected and open. Suppose that for $j\in\bbn$, $\SF=\{f_j\}$ and $\SG=\{g_j\}$ are two sequences in $C^k(U)$ with the following properties.
    For each multiindex $\a$ with $|\a|\leq k$, the sequences $\{\p^\a f_i:i\in\bbn\}$ and $\{\p^\a g_i:i\in\bbn\}$ are equicontinuous sequences on $U$ and there is compact $B\subset U$, $c\in\bbr_+$ and  $x_{j,\;\a}\in B$ such that $\sup\{|\p^\a f_j(x_{j,\a})-\p^\a g_j(x_{j,\a})|:j\in\bbn,|\a|\leq k\}<c$.
    Given this, if, for each $r>0$ and $x\in U$ there $i=i_{x,\;r}\in\bbn$ such that $|f_i(x)-g_i(x)|<r$ for $i\geq i_{x,\;r}$,
    then, for each $s>0$ and $x\in U$, there is $j_{x,r}\in\bbn$ such that $|\p^\a f_j(x)-\p^\a g_j(x)|<s$  for every multiindex $\a$ with $|\a|\leq k$, for all $j\geq j_{x,r}$.
\end{cor}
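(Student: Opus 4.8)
The plan is to mimic the proof of Corollary \ref{cor: 1st stan cor of appendix thm}, pushing everything through the transfer of the pointwise-closeness hypothesis and then invoking statement (2) of Theorem \ref{thmbasreg}. First I would fix an infinite $\om\in\rz\bbn_\infty$. The two equicontinuity hypotheses, together with the boundedness of $\{\p^\a f_j(x_{j,\a})-\p^\a g_j(x_{j,\a})\}$, let me apply Lemma \ref{lem: basic equicont families lemma} (componentwise, to each family $\{\p^\a f_j\}$ and $\{\p^\a g_j\}$ for $|\a|\le k$) to conclude that $\rz\p^\a(\rz f_\om)$ and $\rz\p^\a(\rz g_\om)$ are $SC^0$ on $\rz U_{nes}$ for every such $\a$; that is, $\rz f_\om,\rz g_\om\in SC^k(U)$. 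Here I must be slightly careful: Lemma \ref{lem: basic equicont families lemma} as stated needs a single common bound at a single (possibly $\a$-dependent) point, which is exactly what the hypothesis $\sup\{|\p^\a f_j(x_{j,\a})-\p^\a g_j(x_{j,\a})|\}<c$ plus the equicontinuity of the $f$-family alone gives for $\rz f_\om$; then the $g$-family boundedness follows from $|\p^\a g_j(x_{j,\a})|\le |\p^\a f_j(x_{j,\a})|+c$ combined with a point where $\rz f_\om$ is finite. I would spell this reduction out as its own short paragraph.

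Next I would translate the pointwise-closeness hypothesis. For each fixed $x\in U$ and $r>0$ the statement ``there is $i_{x,r}$ with $|f_i(x)-g_i(x)|<r$ for $i\ge i_{x,r}$'' transfers to ``there is $\la_{x,r}\in\rz\bbn$ with $|\rz f_\la(\rz x)-\rz g_\la(\rz x)|<\rz r$ for $\la\ge\la_{x,r}$''; applying this for all standard $r>0$ gives $\rz f_\om(\rz x)\sim\rz g_\om(\rz x)$ for every $x\in U$, hence (by $SC^0$-ness, i.e. $\rz f_\om(\xi)\sim\rz f_\om(\rz x)$ for $\xi\in\mu(\rz x)$, and likewise for $\rz g_\om$) one gets $\rz f_\om(\xi)\sim\rz g_\om(\xi)$ for all $\xi\in\rz U_{nes}$. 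Now statement (2) of Theorem \ref{thmbasreg} applies to the pair $\rz f_\om,\rz g_\om\in SC^k(U)$: for every multiindex $\a$ with $|\a|\le k$ and every $\xi\in\rz U_{nes}$,
\begin{align}
   \rz\p^\a(\rz f_\om)(\xi)\sim\rz\p^\a(\rz g_\om)(\xi).
\end{align}

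Finally I would run the standard reverse-transfer packaging exactly as in Corollary \ref{cor: 1st stan cor of appendix thm}. Fix $x\in U$ and $s>0$; consider
\begin{align}
   \SE_{x,s}=\{i\in\bbn:|\p^\a f_j(x)-\p^\a g_j(x)|<s\ \text{for all}\ j\ge i\ \text{and all}\ \a\ \text{with}\ |\a|\le k\}.
\end{align}
Its transfer consists of those $\la_0\in\rz\bbn$ such that $|\rz\p^\a(\rz f_\la)(\rz x)-\rz\p^\a(\rz g_\la)(\rz x)|<\rz s$ for all $\la\ge\la_0$ and all $|\a|\le k$; the displayed infinitesimal-closeness at $\xi=\rz x$ for \emph{every} $\om\in\rz\bbn_\infty$ (the argument above used an arbitrary infinite index) shows any fixed infinite integer lies in $\rz\SE_{x,s}$, so $\rz\SE_{x,s}$ is nonempty, hence $\SE_{x,s}$ is nonempty by reverse transfer, which is precisely the conclusion (with $j_{x,s}$ the least element of $\SE_{x,s}$).

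The main obstacle I anticipate is not the transfer bookkeeping but verifying that the hypotheses genuinely force $\rz f_\om,\rz g_\om\in SC^k$ uniformly in the choice of infinite $\om$ — in particular checking that the single-point boundedness condition, phrased with $\a$-dependent points $x_{j,\a}$, really feeds Lemma \ref{lem: basic equicont families lemma} correctly for each $\a$ (one needs the boundedness for the \emph{same} infinite index $\om$ across all $\a$, which is fine since there are only $^\s$finitely many multiindices with $|\a|\le k$ when $k<\infty$, and for $k=\infty$ one works one $\a$ at a time). A secondary point to handle with care is that Lemma \ref{lem: basic equicont families lemma} requires $U$ connected, which is given; and that its hypothesis is stated for real-valued functions, so for $\bbr^n$-valued $f_j$ one applies it coordinatewise. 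Once those reductions are in place, the rest is the routine ``$\rz\SE$ nonempty $\Rightarrow$ $\SE$ nonempty'' pattern used repeatedly in the paper.
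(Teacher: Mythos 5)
There is a genuine gap, and it is precisely the one you flagged as your ``main obstacle'' — but your proposed fix does not close it.  Your route needs the \emph{individual} internal functions $\rz f_\om$ and $\rz g_\om$ to lie in $SC^k(U)$, because that is what statement (2) of Theorem~\ref{thmbasreg} requires of both arguments, and also what you appeal to when you upgrade $\rz f_\om(\rz x)\sim\rz g_\om(\rz x)$ to $\rz f_\om(\xi)\sim\rz g_\om(\xi)$ on all of $\rz U_{nes}$ via $SC^0$-ness of each factor.  To feed Lemma~\ref{lem: basic equicont families lemma} you need, besides equicontinuity of $\{\p^\a f_j\}$, a uniform bound on $|\p^\a f_j|$ at some sequence of points in a fixed compact $B$.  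But the stated hypothesis only bounds the \emph{differences} $|\p^\a f_j(x_{j,\a})-\p^\a g_j(x_{j,\a})|$; this says nothing about $\p^\a f_j$ alone.  A trivial counterexample to your claim that ``the hypothesis plus equicontinuity of the $f$-family gives the bound for $\rz f_\om$'': take $f_j\equiv j$, $g_j\equiv j$; both families of constants are equicontinuous, the differences vanish identically, yet $\rz f_\om$ is infinite everywhere.  The step $|\p^\a g_j|\le|\p^\a f_j|+c$ is fine as a deduction, but it rests on the unestablished premise.

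The paper sidesteps this entirely by working with $h_i=f_i-g_i$ from the start.  Equicontinuity passes to differences, the hypothesis bounds $\p^\a h_j$ at the $x_{j,\a}$ directly, so Lemma~\ref{lem: basic equicont families lemma} yields $\rz h_\la\in SC^k(U)$ for infinite $\la$.  Pointwise convergence of $h_i$ to $0$ plus equicontinuity of the $h_i$ (via the same lemma) gives $\rz h_\la(\xi)\sim 0$ on $\rz U_{nes}$, and then statement (3) of Theorem~\ref{thmbasreg} — applied to the single function $\rz h_{\om'}$, whose standard part is the zero function — gives $\rz\p^\a(\rz h_{\om'})(\xi)\sim 0$.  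So the fix for your argument is to replace $(\rz f_\om,\rz g_\om)$ by $\rz h_\om$ throughout, replace statement (2) by statement (3), and keep your reverse-transfer packaging unchanged.  As written, with statement (2) applied to the pair $\rz f_\om,\rz g_\om$, the proof does not go through under the hypotheses as given; it would require adding the assumption that $\{\p^\a f_j(x_{j,\a})\}$ itself is bounded.
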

\begin{proof}
   First of all, if we let $h_i=f_i-g_i$, then the above hypotheses imply that
   for each multiindex $\a$ with $|\a|\leq k$, the sequence $\{\p^\a h_i:i\in\bbn\}$ is  equicontinuous on $U$ and is appropriately bounded on a good sequence of points and so lemma \ref{lem: basic equicont families lemma} implies that  we can restate the first part of the hypothesis as statement \textbf{(SC)}: for each infinite $\la\in\rz\bbn$, $\rz h_\la\in SC^k(U)$.
%   for each $x\in U$ there is $c_x\in\bbr_+$ such that $\sup\{|\p^\a h_i(x)|:|\a|\leq k,i\in\bbn\}<c_x$
   Now the second part of the hypothesis can be restated  as follows: for each $r>0$ and $x\in U$, there is $i_0=i_0(x,r)\in\bbn$ such that $|h_i(x)|<r$ for all $i\geq i_0$. Given this, if we can verify that for each $s>0$ and $x\in U$, there is $j_0\in\bbn$ with $|\p^\a h_j(x)|<s$ for each  multiindex $\a$ with $|\a|\leq k$ and all $j\geq j_0$, then the conclusion will clearly follow.

%   \textbf{(1S)}: For each compact $K\subset U$, there is $c_K\in\bbr_+$ such that $\sup\{|\p^\a h_i(x)|:|\a|\leq k, i\in\bbn, x\in K\}<c_K$.
   Now, as the $h_i,i=1,2,\ldots$ form an equicontinuous sequence, a version of the second part of lemma \ref{lem: basic equicont families lemma} above implies that the second part of the hypothesis can be restated as \textbf{(2S)}: for each $r>0$ and compact $K\subset U$, there is $i_0\in\bbn$ such that we have that $|h_i(x)|<r$ for all $i\geq i_0$ and $x\in K$.
%   Fixing the $K$ and hence the $c_K$ in (1S) and  transferring we get (1N$K$): $\sup\{|\rz\p^\a(\rz h_\la)(\xi)|:|\a|\leq k,\la\in\rz\bbn,\xi\in\rz K\}<\rz c_K$ and so as (1N$K$) holds for all compact $K\subset U$ (a different standard finite $c_K$ for each such $K$), we have
    But now fixing $r,K,i_0$ in (2S) and transferring we get statement $\bsm{(2N-r,K,i_0)}$: $|\rz h_\la(\xi)|<\rz r$ for all $\la\in\rz\bbn$ with $\la>\rz i_0$ and $\xi\in\rz K$. In particular, if $\la\geq\om\in\rz\bbn$ is infinite, we have that (2N-$r,K,i_0$) holds for all $r\in\bbr_+$ and compact $K\subset U$, eg., $\rz h_\la(\xi)\sim 0$ for all $\xi\in\rz U_{nes}$ and $\la\geq\om$. This, along with statement (SC), gives the hypotheses for statement (3) in the theorem above for all $\la\geq\om$. That is, we have the fact \textbf{(HT)}: $\rz\p^\a(\rz h_{\om'})(\xi)\sim 0$ for all $\xi\in\rz U_{nes}$ and $|\a|\leq k$ and $\om'\geq \om$. Given this, define for each $r\in\bbr_+$ and compact $K\subset U$ the following set:
\begin{align}
   \SB_{r,\;K}=\{j\in\bbn:\;\text{for all}\;j'\geq j, \text{for all}\;\a\;\text{with}\;|\a|\leq k, |\p^\a h_{j'}(x)|<r\;\text{for all}\;x\in K\}.
\end{align}
   Then, for each $r>0$ and compact $K\subset U$ it's clear from (HT) that $\om\in\rz\SB_{r,\;K}$, eg., $\rz\SB_{r,\;K}$ is nonempty and so by reverse transfer, $\SB_{r,\;K}$ is nonempty.   So  let $x\in U$ and $s>0$, then there is compact $K\subset U$ such that $x\in K$ and we have just verified that $\SB_{s,\;K}$ is nonempty, ie., by definition of $\SB_{s,\;K}$, there is $j_0\in\bbn$ such that $x\in K$ and so we have that $j\geq j_0$ implies $|\p^\a h_j(x)|<s$ for all $\a$ with $|\a|\leq k$.
%    the hypotheses *transferred implies that if eg., $0<\Fs\sim 0$, then for all $\a$ with $|\a|\leq k$ and $\xi\in\rz U$, there is $\om\in\rz\bbn$ such that
\end{proof}
\subsection{dS-smoothness}

Returning to the nonstandard developments, we have some material relating internal functions of apparently different S-regularities. We begin with a definition.
Note that it's easy to see that a $\Fg\in SC^0(U)$ generally will not satisfy $\Fg\sim\Ff$ for any $\Ff\in SC^k(U)$, eg.,  for $\Ff\in\;^\s C^k(U) $ when $\Fg\notin SC^k(U)$. But, it is true that all standard iterated difference quotients up to degree $k$ must eventually approximate an $SC^0$ function on $\rz U_{nes}$. The corollary below gives a statement of this. We need to first give a definition of this (external) class of internal functions. Note that we are using the recipe discussed in subsection \ref{subsec: notion of S-property} and plays a role in the corollary, \ref{cor: NSA reg+almost->near}, that is the nonstandard version of the standard principal result that follows it.
\begin{definition}\label{def: dSC^k}
    For $k\in\bbn\cup\{\infty\}$, we will define the (external) class of internal functions $dSC^k(U)$ as follows. We say that $\Ff:\rz U\ra\bbr$ belongs to $dSC^1(U)$ if for each $x\in U$ there is an infinitesimal $\e_0\in\rz\bbr_+$ and a nearstandard  internal linear $\SL_\xi:\rz\bbr^n\ra\rz\bbr$ such that $\xi\mapsto \SL_\xi$ is S-continuous on $\rz U_{nes}$ the following is satisfied:
\begin{align}
    \bigg|\f{\Ff(\xi+\e\Fv)-\Ff(\xi)}{\e}-\SL_\xi(\Fv)\bigg|\sim 0\;\text{for all}\;\e_0<\e\sim 0\;\text{and all}\;\Fv\;\text{with}\;|\Fv|=1.
\end{align}
    More generally, we say that $\Ff\in dSC^k(U)$ if there are nearstandard $j$-*multilinear symmetric maps $\SL^j_\xi:\rz\bbr^n\x\cdots\x\rz\bbr^n\ra\rz\bbr$ for $j=1,\ldots,k$ such that $\xi\mapsto \SL^j_\xi$ is S-continuous on $\rz U_{nes}$ and there is an infinitesimal $\e_0\in\rz\bbr_+$ such that the following holds
\begin{align}\label{eqn: def expression for f in dSC^k}
    \text{for every}\;\Fv\in\rz\bbr^n\;\text{with}\;|\Fv|=1,\;\text{for every}\;\e\;\text{with}\;\e_0<\e\sim 0 \qquad\qquad\notag\\
    \f{1}{\e^k}\bigg| \Ff(\xi+\e\Fv)-\Ff(\xi)-\sum_{j=1}^k\e^j\SL^j_\xi(\Fv,\ldots,\Fv)\bigg|\sim 0\qquad
\end{align}
\end{definition}
   Note that  $^\s C^k(U)\subset SC^k(U)\subsetneqq dSC^k(U)$ but $dSC^k(U)\nsubseteq\rz C^k(U)$ as elements of $dSC^k(U)$ are not necessarily internally differentiable, eg., again consider the infinitesimal saw tooth function. We will show that \textbf{$\bsm{dSC^k(U)}$ is the maximal class of pointwise nearstandard internal functions whose standard part is in $\bsm{C^k(U)}$}.
\begin{lem}\label{lem: dSC^k properties}
     \textbf{(1)}: The property of being in $dSC^k(U)$ is stable under perturbations. That is,  if\; $\Ff\in dSC^k(U)$ and $\Fg:\rz V\ra\rz\bbr$ is an internal map with $\Ff(\xi)\sim\Fg(\xi)$ for all $\xi\in\rz V_{nes}$, then $\Fg\in dSC^k(U)$.
     \textbf{ (2)}: If $f:U\ra\bbr$ is such that $\rz f\in dSC^k(U)$, then $f\in C^k(U)$.
    \textbf{ (3)}:  In particular, if\; $\Ff\in dSC^k(U)$, we have $^o\Ff\in C^k(U)$.
\end{lem}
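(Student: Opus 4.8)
The plan is to obtain \textbf{(2)} as the special case $\Ff=\rz f$ of \textbf{(3)} (recall $^\circ(\rz f)=f$ on $U$ by the conventions of the preliminaries), and to prove \textbf{(3)} by a smoothing argument that reduces it to the first assertion of Theorem \ref{thmbasreg}; \textbf{(1)} is independent and follows from an underflow/saturation estimate on the perturbation. For \textbf{(3)}, let $\Ff\in dSC^k(U)$ with associated data: an infinitesimal $\e_0\in\rz\bbr_+$ and S-continuous nearstandard symmetric multilinear maps $\SL^1_\xi,\dots,\SL^k_\xi$ as in \eqref{eqn: def expression for f in dSC^k}. First I would record that $\Ff$ is $SC^0$: for $\e_0<\e\sim 0$ the defining inequality gives $\Ff(\xi+\e\Fv)-\Ff(\xi)=\sum_{j=1}^k\e^j\SL^j_\xi(\Fv,\dots,\Fv)+(\text{infinitesimal})\sim 0$, and a two-step-path argument (join $\xi\sim\eta$ through an auxiliary point displaced by $3\e_0$, whose displacements from both $\xi$ and $\eta$ exceed $\e_0$ and are infinitesimal) upgrades this to the SC-criterion; since $\Ff$ is pointwise nearstandard, $^\circ\Ff$ then exists, and by the standard $SC^0$ boundedness facts (cf.\ Lemma \ref{lem: SC^0,connected, finite at pt->bdd}) $\Ff$ is bounded by a finite $M$ on $\rz K$ for each compact $K\subset U$. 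I would also record a uniformity remark: for $\xi\in\rz U_{nes}$ and a fixed infinitesimal $\eta_0$ small enough that $\xi+\e\Fv\in\rz U$ whenever $|\Fv|=1$, $0<\e\le\eta_0$, every element of the \emph{internal} set $\{(\Fv,\e):|\Fv|=1,\ \e_0<\e\le\eta_0\}$ satisfies the internal inequality $\e^{-k}|\Ff(\xi+\e\Fv)-\Ff(\xi)-\sum_j\e^j\SL^j_\xi(\Fv,\dots,\Fv)|<\d$ for each fixed standard $\d>0$ (each such $\e$ being infinitesimal and $>\e_0$); hence the internal supremum of that expression over the set is $\le\d$ for all standard $\d$, i.e.\ it equals a single infinitesimal $\Fm_\xi$, and this bound can be taken uniform for $\xi\in\rz K$.

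Now fix a compact $K\subset U$ and a standard smooth bump $\rho\ge0$ supported in the unit ball with $\int\rho=1$, choose an infinitesimal mollification scale $\eta$ with $\e_0\ll\eta\sim 0$ (concretely: $\eta$ large enough relative to $\e_0$ and $M$ that $\e_0/\eta\sim0$ and $M\,\eta^{-k}(\e_0/\eta)^n\sim0$), and set $\Ff_\eta(\xi)=\int\Ff(\xi-\eta z)\rho(z)\,dz$ on the transfer of a slightly smaller compact; this $\Ff_\eta$ is internally $\rz C^\infty$. I claim $\Ff_\eta\sim\Ff$ on $\rz K$ and $\Ff_\eta\in SC^k(U)$. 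For the first: split the integral at $|z|=\e_0/\eta$; on $|z|>\e_0/\eta$ the expansion of $\Ff(\xi-\eta z)$ about $\xi$ is available (as $\e_0<\eta|z|\le\eta\sim0$) and gives $\Ff(\xi-\eta z)-\Ff(\xi)\sim0$ uniformly, while the complementary ball has volume $\sim(\e_0/\eta)^n$ on which $|\Ff|\le M$, contributing $\sim0$. For the second: for $|\a|\le k$ write $\rz\p^\a\Ff_\eta(\xi)=\eta^{-|\a|}\int\Ff(\xi-\eta z)(\rz\p^\a\rho)(z)\,dz$; substituting the order-$k$ expansion and using $\int\rz\p^\a\rho=0$ and $\rz\p^\a(\text{polynomial of degree}<|\a|)=0$, the $\eta^{\,j-|\a|}$-terms with $j<|\a|$ vanish, the $j=|\a|$ term is a nonzero nearstandard multiple of a contraction of $\SL^{|\a|}_\xi$, and the $j>|\a|$ terms, the remainder (controlled by $\Fm_\xi$) and the inner-ball correction (controlled by $M$ and the choice of $\eta$) are all infinitesimal; since $\xi\mapsto\SL^{|\a|}_\xi$ is S-continuous this makes $\rz\p^\a\Ff_\eta$ nearstandard-valued and $SC^0$. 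Hence $\Ff_\eta\in SC^k(U)$, so $^\circ\Ff_\eta\in C^k(U)$ by Theorem \ref{thmbasreg}, and $^\circ\Ff=\;^\circ\Ff_\eta\in C^k(U)$ since $\Ff\sim\Ff_\eta$. This proves \textbf{(3)}, hence \textbf{(2)}.

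For \textbf{(1)}, let $\Ff\in dSC^k(U)$ have data $(\e_0,\SL^1,\dots,\SL^k)$ and let $\Fg$ be internal with $\Fg(\xi)\sim\Ff(\xi)$ for all $\xi\in\rz U_{nes}$; put $\Fd=\Fg-\Ff$. For each compact $K\subset U$ the internal number $\rz\sup_{\rz K}|\Fd|$ is less than every standard $r>0$, hence by underflow it is an infinitesimal $\Fm_K$; by sufficient saturation (as in the proof of Lemma \ref{lem: NSA; LGps close to almost gps}) there is one infinitesimal $\Fm^*$ with $\Fm_K\le\Fm^*$ for all $K$. Then $\Fg$ satisfies \eqref{eqn: def expression for f in dSC^k} with the \emph{same} multilinear data $\SL^j$ and the enlarged scale $\e_0'=\max\{\e_0,(\Fm^*)^{1/(2k)}\}$: for $\e_0'<\e\sim0$ and $|\Fv|=1$ one has $\e^{-k}|\Fg(\xi+\e\Fv)-\Fg(\xi)-\sum_j\e^j\SL^j_\xi(\Fv,\dots,\Fv)|\le \e^{-k}|\Ff(\xi+\e\Fv)-\Ff(\xi)-\sum_j\e^j\SL^j_\xi(\Fv,\dots,\Fv)|+2\Fm^*/\e^{k}$, the first term being infinitesimal by $\Ff\in dSC^k(U)$ and $2\Fm^*/\e^{k}\le 2(\Fm^*)^{1/2}\sim0$ by the choice of $\e_0'$; so $\Fg\in dSC^k(U)$.

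The step I expect to be the main obstacle is the mollification computation in \textbf{(3)}: calibrating $\eta$ against $\e_0$ and $M$ so that the inner-ball error $|z|<\e_0/\eta$, where the $dSC^k$ expansion is unavailable, stays negligible after division by $\eta^{|\a|}$ for \emph{all} $|\a|\le k$, and confirming the remainder estimate is uniform in direction and scale. If those estimates become unwieldy, the fallback is a direct induction on $k$: the base case $k=1$ is Proposition \ref{approxprop} (after an overflow extension of \eqref{eqn: def expression for f in dSC^k} to standard scales), and the inductive step shows each component $\xi\mapsto\SL^1_\xi(\rz e_i)$ lies in $dSC^{k-1}(U)$ with Taylor coefficients assembled from $\SL^2,\dots,\SL^k$, so that $\p_i(^\circ\Ff)\in C^{k-1}$ and hence $^\circ\Ff\in C^k$; the verification there is a comparison of the order-$k$ expansions of $\Ff$ at $\xi$ and at $\xi+\e\Fv$ with $\d=\e$, and its hard point is the bookkeeping of remainder terms, which forces one to carry the lower-order $dSC$-regularity of the $\SL^j$ along in the induction hypothesis.
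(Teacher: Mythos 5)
Your part \textbf{(1)} is correct and is essentially the paper's argument: both you and the paper observe that $\rz\sup_{\rz K}|\Ff-\Fg|$ is a single infinitesimal $\Fr_0$ and then enlarge $\e_0$ so that $\Fr_0/\e^k\sim 0$ for $\e$ above the new threshold; the paper picks a new $\e_0$ directly with $\Fr_0/\e_0^k\sim 0$ while you take $\e_0'=\max\{\e_0,(\Fm^*)^{1/(2k)}\}$, which is the same estimate in disguise. Your deduction of \textbf{(2)} from \textbf{(3)} is also fine; the paper runs the implication in the opposite order, first proving \textbf{(2)} and then getting \textbf{(3)} from \textbf{(1)} and \textbf{(2)} via $\Ff\sim\rz(^\circ\Ff)$, but nothing is circular either way.

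The genuine problem is in your proof of \textbf{(3)}. The mollification $\Ff_\eta(\xi)=\int\Ff(\xi-\eta z)\rho(z)\,dz$ presupposes that the internal function $\Ff$ is \textsl{$^*$measurable}, and the $dSC^k$ hypothesis does not give that. The defining estimate \eqref{eqn: def expression for f in dSC^k} only constrains $\Ff$ at displacement scales $\ge\e_0$; below scale $\e_0$, $\Ff$ can be completely wild. Concretely, if $f\in C^k(U)$ and $\Fc$ is the internal characteristic function of a $^*$nonmeasurable subset of $\rz U$, then $\Ff=\rz f+\e_0^{2k}\Fc$ lies in $dSC^k(U)$ (the perturbation is $\le 2\e_0^{2k}$, hence $o(\e^k)$ for $\e\ge\e_0$) yet is not $^*$Lebesgue-measurable, so the convolution integral is simply undefined. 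You cannot repair this by replacing the integral with a $^*$finite sampling sum, since a $^*$finite sum of translates of $\Ff$ inherits $\Ff$'s internal nondifferentiability; you would instead have to interpolate $\Ff$'s values on a $^*$finite $\eta$-lattice by internally smooth functions (splines), which is a larger repair than your note suggests.

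The paper sidesteps the issue entirely. To prove \textbf{(2)} it never smooths $\Ff$: for each fixed standard $x\in U$ and unit $v$, it regards $B(\e)=\e^{-k}\bigl|\rz f(\rz x+\e\Fv)-\rz f(\rz x)-\sum_j\e^j\SL^j_{\rz x}(\Fv,\dots,\Fv)\bigr|$ as an internal function of the \textsl{single} variable $\e$, uses overflow on $\{\e':\forall\e\in[\e_0,\e'],\,B(\e)<r/2\}$ to push the Taylor estimate out to a standard radius, replaces $\SL^j_{\rz x}$ by its standard part $L^j_x$ at the cost of infinitesimal error, and then invokes the converse Taylor theorem together with the continuity of $x\mapsto L^j_x$ (from S-continuity of $\SL^j$). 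No measurability or internal smoothness of $\Ff$ is ever used. Your fallback remark (induction on $k$ via Proposition \ref{approxprop} after an overflow extension to standard scales) is, once unwound, essentially this overflow argument without the unnecessary induction, so if you abandon mollification and develop that fallback directly you will land on the paper's proof.
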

\begin{proof}
    For assertion (1), note that we just need to prove the statement on $\rz K$ for $K\subset U$ compact as $x\in U$ implies that $\mu(x)\subset\rz K$ for some such $K$. Given such a $K$, then we have that, as $\rz K$ is internal, $\FI=\{|\Ff(\xi)-\Fg(\xi)|: \xi\in\rz K\}$ is internal and therefore if it contained arbitrarily large infinitesimals, overflow would imply that $\FI$  contains noninfinitesimals, contrary to hypothesis; hence $\rz\sup\{|\Ff(\xi)-\Fg(\xi)|:\xi\in\rz K\}=\Fr_0\sim 0$. So now  choose our infinitesimal $\e_0$ in the definition that $\Ff\in dSC^k(U)$ above so that $\e_0^k/\Fr_0\sim 0$, ie., so that
\begin{align}\label{eqn: |f-g|<<e^k on K}
    \f{|\Ff(\z)-\Fg(\z)|}{\e_0^k}\sim 0\;\text{for all}\;\z\in\rz K.
\end{align}
     We will now apply the above expression for $\z=\xi$ and $\z=\xi+\e\Fv$. In expression \ref{eqn: def expression for f in dSC^k} let $T^k_\xi\Ff(\d,\Fv)$ denote the higher `Taylor' expansion for $\Ff$, ie., the summed term. in expression \ref{eqn: def expression for f in dSC^k} (starting with first order part). Then, we have that
\begin{align}
     \f{1}{\e^k}\big|\Fg(\xi+\e\Fv)-\Fg(\xi)-T^k_\xi\Ff(\e,\Fv)\big|\leq \qquad\qquad\qquad\qquad\qquad\qquad\qquad\qquad\notag\\
     \f{|\Fg(\xi+\e\Fv)-\Ff(\xi+\e\Fv)|}{\e^k}+\f{|\Fg(\xi)-\Ff(\xi)|}{\e^k}+\f{1}{\e^k}\big|\Ff(\xi+\e\Fv)-\Ff(\xi)-T^k_\xi\Ff(\e,\Fv)\big|.
\end{align}
     We see that this is infinitesimal for all $\e$ with  $0\sim \e\geq\e_0$, since the first two terms in the second line are infinitesmal from expression \ref{eqn: |f-g|<<e^k on K} and the last by hypothesis.

        To prove the second assertion, assume that for $x\in U$, $\Ff$ has the expansion as in expression \ref{eqn: def expression for f in dSC^k}, with the nearstandard multilinear maps $\SL^j_\xi$, and define an internal function $B$ as follows:
\begin{align}
     B(\e)\;\dot=\;\f{1}{\e^k}\bigg| \rz f(\xi+\e\Fv)-\rz f(\xi)-\sum_{j=1}^k\e^j\SL^j_\xi(\Fv,\ldots,\Fv)\bigg|.
\end{align}
     Now $B$ is an internal function on $\rz\bbr_+$  which, because $\rz f\in dSC^k(U)$, we have that for $\e_0\leq \e\sim 0$ , $B(\e)\sim 0$. Let $r\in\bbr$ be arbitrary positive, and define
\begin{align}
     \SB\;\dot=\;\{\e'\in \rz [\e_0,1]:\;\text{for all}\; \e\in [\e_0,\e'],\;B(\e)<r/2\}.
\end{align}
     Then all $\e'$ with $\e_0\leq\e'\sim 0$ is in $\SB$ and so by overflow, there is a positive noninfinitesimal $\Fa$ in $\SB$, eg., by the definition of $\SB$, if $s=\;^o\Fa/2$, then $\rz s\in\SB$. Given this, we have that
\begin{align}
    \f{1}{\e^k}\bigg| \rz f(\xi+\e\Fv)-\rz f(\xi)-\sum_{j=1}^k\e^j\SL^j_\xi(\Fv,\ldots,\Fv)\bigg|<\rz r/2\;\text{for}\;\e_0\leq\e<\rz\!s.
\end{align}
    But, as $\SL^j_\xi$ is a nearstandard multilinear linear operator and so has a standard part (at each $x\in U$), we have a multilinear operator $L^j_x(v,\ldots,v)\doteq\;^o(\SL^j_{*x}(\rz v,\ldots,\rz v))$  for  $v\in \bbr^n$. Furthermore, we have for each $j$ that $|\SL^j_{*x}(\rz v,\ldots,\rz v)- \rz L^j_x(v,\ldots,v)|\sim 0$, so that, using the triangle inequality for each $\SL^j_{*x}$, for each $v\in\bbr^n$ with $|v|=1$, the \textbf{above inequality restricted to standard values} becomes:
\begin{align}
       \f{1}{\rz t^k}\bigg| \rz f(\rz x+\rz t\rz v)-\rz f(\rz x)-\sum_{j=1}^k\rz t^j\rz L^j_{\rz x}(\rz v,\ldots,\rz v)\bigg|\quad\leq\qquad\quad\qquad\qquad\qquad\qquad\notag\\
       \sum_{j=1}^k\rz t^{j-k}|\SL^j_{*x}(\rz v,\ldots,\rz v)- \rz L^j_x(v,\ldots,v)|\quad+\qquad\qquad\qquad\qquad\qquad\qquad\\
        \f{1}{\rz t^k}\bigg| \rz f(\rz x+\rz t\rz v)-\rz f(\rz x)-\sum_{j=1}^k\rz t^j\SL^j_{\rz x}(\rz v,\ldots,\rz v)\bigg|<\rz r\qquad\qquad\qquad\notag
\end{align}
     $\text{for}\;t\in\bbr_+,t<s$ as the middle term is infinitesimal. But this is just the statement
\begin{align}
     \f{1}{ t^k}\bigg|  f(x+ tv)- f(x)-\sum_{j=1}^k t^j L^j_{ x}(v,\ldots,v)\bigg|<r,\;\text{for}\;t\in\bbr_+,t<s
\end{align}
     which, as $x\mapsto L^j_x$ is continuous for each $j$, is the statement that $f\in C^k(U)$.

     To prove the third assertion, first note that as $\Ff\in SC^0(U)$, then if $f\dot=\;^o\Ff$, we have  $\Ff\sim\rz f$ and so the first assertion implies that $\rz f\in dSC^k(U)$ and so the second assertion implies that $f\in C^k(U)$.
\end{proof}
\begin{cor}\label{cor: SC^j sim SC^k}
    Suppose that $k\in\{0,1,2,\ldots,\infty\}$ and $U\subset\bbr^n$ is open. Let $\Ff:\rz U\ra\rz\bbr$ be an internal map and $\Fg\in SC^k(U,\bbr)$ with  $\Ff(\xi)\sim\Fg(\xi)$ for all $\xi\in\rz U_{nes}$. Then $\Ff\in dSC^k(U,\bbr)$; in particular, $SC^k(U)\subset dSC^k(U)$ as noted earlier.
\end{cor}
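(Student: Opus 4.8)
The plan is to derive this corollary as a near-immediate consequence of the lemma on $dSC^k$ properties, Lemma \ref{lem: dSC^k properties}, together with statement (3) of Theorem \ref{thmbasreg}. The key observation is that membership in $SC^k(U)$ already supplies, for each $x\in U$, an honest internal Taylor expansion of the required shape: if $\Fg\in SC^k(U,\bbr)$, then $\Fg$ is in particular internally $C^k$ on a standard neighborhood, so for each $\xi\in\rz U_{nes}$ the internal Taylor polynomial $\rz T^k_\xi\Fg$ with remainder term of order $\e^{k}$ exists by transfer of ordinary Taylor's theorem, and the coefficients are the internal symmetric multilinear maps $\SL^j_\xi = \tfrac{1}{j!}\rz d^j\Fg_\xi$. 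By the definition of $SC^k$ (Definition \ref{def: SC^k fcns, k in N and infty}), each $\rz\p^\a\Fg$ with $|\a|\le k$ is $SC^0$, so each $\xi\mapsto\SL^j_\xi$ is nearstandard-valued and S-continuous on $\rz U_{nes}$. The internal remainder estimate from transferred Taylor's theorem, combined with the S-continuity (hence finiteness) of the $k$-th order internal derivatives, yields that the remainder divided by $\e^k$ is infinitesimal for all infinitesimal $\e$ — indeed for all $\e$ below some noninfinitesimal threshold, but certainly the condition in expression \ref{eqn: def expression for f in dSC^k} holds, with the auxiliary infinitesimal $\e_0$ taken to be, say, $0$ or any fixed positive infinitesimal. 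Thus $\Fg\in dSC^k(U,\bbr)$, which gives the parenthetical remark $SC^k(U)\subset dSC^k(U)$.

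For the main assertion, I would then invoke part (1) of Lemma \ref{lem: dSC^k properties}: since $\Fg\in dSC^k(U,\bbr)$ and $\Ff(\xi)\sim\Fg(\xi)$ for all $\xi\in\rz U_{nes}$, the perturbation-stability of $dSC^k$ immediately gives $\Ff\in dSC^k(U,\bbr)$. This is the whole argument: the corollary is exactly ``$SC^k\subset dSC^k$'' followed by ``$dSC^k$ is $\sim$-stable''.

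The only point requiring a little care — and what I expect to be the main (though still modest) obstacle — is verifying cleanly that an $SC^k$ map genuinely satisfies the defining inequality of $dSC^k$ with the $k$-th order remainder, i.e. controlling $\e^{-k}\lvert \Fg(\xi+\e\Fv)-\Fg(\xi)-\sum_{j=1}^k\e^j\SL^j_\xi(\Fv,\dots,\Fv)\rvert$. For this I would write the integral form of the remainder, $\rz R^k = \int_0^\e \cdots$, exactly as in the proof of Proposition \ref{approxprop}: the $k$-th order remainder is an iterated integral of $\rz d^k\Fg_{\xi+tv} - \rz d^k\Fg_\xi$ (plus the exact Taylor terms), and since $t\mapsto \rz d^k\Fg_{\xi+tv}$ is S-continuous and $[0,\e]$ is $\rz$-compact with $\e\sim 0$, the integrand has $\rz\sup$ infinitesimal, so the integral is bounded by $\e^k$ times an infinitesimal; dividing by $\e^k$ finishes it. Alternatively, one can cite statement (3) of Theorem \ref{thmbasreg} to get $\rz\p^\a\Fg\sim\rz\p^\a(\rz(^o\Fg))$ for $|\a|\le k$, reduce to the standard map $^o\Fg\in C^k(U)$, use its ordinary Taylor expansion, and transfer back — this routes the estimate through already-proved machinery. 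Either way the computation is short and the structure of the proof is: (i) $SC^k\Rightarrow dSC^k$ via transferred Taylor's theorem and the $SC^0$-ness of the derivatives; (ii) $dSC^k$ is stable under infinitesimal perturbation by Lemma \ref{lem: dSC^k properties}(1); (iii) combine.
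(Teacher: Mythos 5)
Your proof is correct, and it uses the same key step the paper does — the perturbation-stability in Lemma \ref{lem: dSC^k properties}(1) — but it routes the ``intermediate $dSC^k$ function'' differently. The paper passes through the standard part: from $\Fg\in SC^k$ one gets $f\dot=\;^o\Fg={}^o\Ff\in C^k(U)$ by Theorem \ref{thmbasreg}(1), hence $\rz f\in{}^\s C^k(U)\subset dSC^k(U)$, then from $\Ff\sim\Fg\sim\rz f$ (the second $\sim$ via Theorem \ref{thmbasreg}(3)) one applies $\sim$-stability. You instead establish $\Fg\in dSC^k(U)$ \emph{directly}, by producing the nearstandard multilinear forms $\SL^j_\xi=\tfrac1{j!}\rz d^j\Fg_\xi$ from the internal Taylor expansion and controlling the remainder by the S-continuity of $\rz d^k\Fg$ on the $\rz$-compact segment $[\xi,\xi+\e\Fv]$, and then apply $\sim$-stability to $\Ff\sim\Fg$. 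The difference is not decorative: the paper's route only needs the easy containment $^\s C^k\subset dSC^k$ (transferred Taylor's theorem for a genuinely standard function, no S-continuity estimate), whereas your route needs the stronger containment $SC^k\subset dSC^k$ up front. That costs you the internal remainder estimate, but it buys a clean, noncircular proof of the parenthetical claim $SC^k(U)\subset dSC^k(U)$, which in the paper's version is obtained a posteriori by taking $\Ff=\Fg$ (and its ``as noted earlier'' refers to an unproved remark after Definition \ref{def: dSC^k}). Your alternative, outlined at the end, of citing Theorem \ref{thmbasreg}(3) and reducing to $^o\Fg\in C^k$ is exactly the paper's argument. One small caution on wording: $\e_0$ in Definition \ref{def: dSC^k} must be a \emph{positive} infinitesimal, so you cannot literally take $\e_0=0$; but since your estimate holds for all $0<\e\sim 0$, any fixed positive infinitesimal $\e_0$ will do.
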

\begin{proof}
   By (3) of the above lemma, $f\dot=\;^o\Ff\in C^k(U)$; and so $\rz f\in dSC^k(U)$, but on $\rz U_{nes}$ we have that $\Fg\sim \Ff\sim\rz f$; ie., $\Fg\sim \rz f$ with $\rz f\in dSC^k(U)$ and so by (1) in the above lemma, $\Fg\in dSC^k(U)$.

\end{proof}
   We have the following (almost) standard corollary of the previous result. To shorten notation, let $S^{n-1}$ denote the set of $v\in\bbr^n$ with $|v|=1$.
\begin{cor}\label{cor: props of asympt dSC^k families }
    If $\Ff\in dSC^k(U)$ and $\FF=\{f_j:j\in\bbn\}$ is a sequence of continuous maps $f_j:U\ra \bbr$ satisfying $\rz f_\la(\xi)\sim\Ff(\xi)$ for all $\xi\in\rz U_{nes}$ and $\la\in\rz\bbn_\infty$, and if for each $j=1,\ldots,k$ and $x\in U$, $L^j_x$ is the standard part of the nearstandard (multi)linear map $\SL^j_{*x}$ in the expansion for $\Ff$ as in expression \ref{eqn: def expression for f in dSC^k}, then the following holds. For every $x\in U$ and $r\in\bbr_+$, there is $j_{x,r}\in\bbn$ and $s_{x,r}\in\bbr_+$ with the following properties:
\begin{align}
    \text{for every}\;j>j_{x,r},\;\text{positive}\;s<s_{x,r}\;\text{and vector}\;v\in S^{n-1}\qquad\qquad \notag\\
    \f{1}{s^k}\bigg| f_j(x+sv)-f_j(x)-\sum_{i=1}^k s^i L^i_x(v,\ldots,v)\bigg|<r
\end{align}
\end{cor}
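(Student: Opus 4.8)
The strategy is a routine ``reverse transfer'' argument of exactly the kind used throughout this chapter (see the proofs of Corollary \ref{cor: 1st stan cor of appendix thm} and Corollary \ref{cor: 2nd stan cor of appendix thm}). The heart of the matter is already contained in Lemma \ref{lem: dSC^k properties}: since $\Ff \in dSC^k(U)$, its expansion data $\SL^j_\xi$ (for $j = 1, \ldots, k$) have nearstandard values, $\xi \mapsto \SL^j_\xi$ is S-continuous on $\rz U_{nes}$, and there is an infinitesimal $\e_0 \in \rz\bbr_+$ making the defining inequality \ref{eqn: def expression for f in dSC^k} hold for all $\e$ with $\e_0 < \e \sim 0$ and all unit $\Fv$. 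First I would fix $x \in U$ and $r \in \bbr_+$. The key observation is that for $\la \in \rz\bbn_\infty$ we have $\rz f_\la(\xi) \sim \Ff(\xi)$ for all $\xi \in \rz U_{nes}$ by hypothesis, hence by part (1) of Lemma \ref{lem: dSC^k properties} (stability of $dSC^k$ under infinitesimal perturbation) the internal map $\rz f_\la$ is itself in $dSC^k(U)$ with the \emph{same} multilinear data $\SL^j_\xi$ — this is precisely what the proof of part (1) gives, once one chooses the perturbation-$\e_0$ large enough relative to the (internal) sup of $|\rz f_\la - \Ff|$ on the relevant compact set.

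Next I would transfer the defining property of $dSC^k$ for $\rz f_\la$ down to standard points. Concretely, restricting inequality \ref{eqn: def expression for f in dSC^k} (applied to $\rz f_\la$, with the standard parts $L^j_x \doteq {}^o(\SL^j_{*x})$ substituted for the $\SL^j_{*x}$, at the cost of an infinitesimal error handled by the triangle inequality exactly as in the proof of part (2) of Lemma \ref{lem: dSC^k properties}) and then applying overflow in the parameter $\e$, I get: for each $\la \in \rz\bbn_\infty$ there is a noninfinitesimal $\Fa_\la \in \rz\bbr_+$ such that
\begin{align}
\f{1}{s^k}\bigg| \rz f_\la(\rz x + s \rz v) - \rz f_\la(\rz x) - \sum_{i=1}^k s^i \rz L^i_x(\rz v, \ldots, \rz v) \bigg| < \rz r \notag
\end{align}
for all standard $s \in (0, {}^o\Fa_\la)$ and all unit $v$. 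Now I would package this as a standard statement about the index set: for each $j_0 \in \bbn$ and each $s_0 \in \bbr_+$, let $\FE_{j_0, s_0}$ be the set of those pairs witnessing that $|s^{-k}(f_j(x+sv) - f_j(x) - \sum_i s^i L^i_x(v,\ldots,v))| < r$ holds for all $j \geq j_0$, all $0 < s < s_0$, all unit $v$; equivalently, consider the set of $j_0 \in \bbn$ for which \emph{some} $s_0 > 0$ works uniformly over $j \geq j_0$. The internal set $\{\la_0 \in \rz\bbn : (\exists\, \Fs_0 > 0)\ \forall \la \geq \la_0,\ \forall\, 0 < s < \Fs_0,\ \forall |v| = 1,\ \ldots < \rz r\}$ contains every infinite $\la_0 = \om$, hence is nonempty; by reverse transfer the standard version is nonempty, yielding the desired $j_{x,r}$ together with a common $s_{x,r} > 0$.

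\textbf{Main obstacle.} The one point requiring genuine care — and the step I expect to be the crux — is obtaining a \emph{uniform} threshold $s_{x,r}$ and $j_{x,r}$ at the same time, i.e.\ making the overflow argument produce a single noninfinitesimal $\Fa$ that works for a whole infinite tail $\{\la : \la \geq \om\}$ of the $f_\la$'s, rather than a $\la$-dependent $\Fa_\la$. This is handled by performing the overflow \emph{inside} the quantifier over $\la \geq \om$: one forms the internal set $\SB = \{\Fa' \in \rz(0,1] : \forall\, \la \geq \om,\ \forall\, \e \in [\e_0, \Fa'],\ \forall |v| = 1,\ B_\la(\e, v) < r/2\}$ where $B_\la$ is the normalized remainder for $\rz f_\la$; since all $\la \geq \om$ share the \emph{same} multilinear data and the \emph{same} perturbation-$\e_0$ (this is the payoff of invoking Lemma \ref{lem: dSC^k properties}(1) uniformly, which works because $\rz\sup\{|\rz f_\la(\xi) - \Ff(\xi)| : \xi \in \rz K,\ \la \geq \om\}$ is an internal quantity that is still infinitesimal), $\SB$ contains all sufficiently small infinitesimals and overflow gives a noninfinitesimal member $\Fa$. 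Then $s_{x,r} = {}^o\Fa / 2$ and $j_{x,r} = $ any element of ${}^s\bbn$ witnessing ``$\la \geq \om$'' after reverse transfer work simultaneously. The remaining estimates — passing from $\SL^j$ to its standard part $L^j$, and from the ``$< r/2$'' after overflow back to ``$< r$'' — are the same triangle-inequality bookkeeping already carried out in the proof of Lemma \ref{lem: dSC^k properties}(2), and I would not reproduce them in detail.
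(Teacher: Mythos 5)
Your proposal is correct and follows essentially the same route as the paper's proof: both rest on Lemma \ref{lem: dSC^k properties}(1) to put every $\rz f_\la$ ($\la\geq\om$) in $dSC^k(U)$ with the same multilinear data and a common $\e_0$ (via internality of the sup of $|\rz f_\la-\Ff|$ over the tail), then overflow to extract a noninfinitesimal $s$-threshold, reverse transfer to extract a finite index, and the same triangle-inequality bookkeeping to replace $\SL^j$ by $L^j$. The only cosmetic difference is that the paper runs the argument uniformly over a compact $K$ and packages the two overflows into a single two-parameter internal set $\SO_{r,K}$ evaluated along the diagonal $(\la,1/\la)$, whereas you handle the $s$-overflow (with the $\la$-quantifier inside) and the index reverse-transfer separately; both mechanisms are equivalent here.
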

\begin{proof}
   From lemma \ref{lem: dSC^k properties} we know that, for infinite $\la\in\rz\bbn$, we have $\rz f_\la\in dSC^k(U)$. To simplify notation here for $\la\in\rz\bbn,\;\xi\in\rz U_{nes},\;\d\in\rz\bbr_+,\;\Fv\in\rz\bbr^n$ with $|\Fv|=1$, we will let
\begin{align}
     R(\la,\xi,\d,\Fv)\;\dot=\;\f{1}{\d^k}\bigg|\rz f_\la(\xi+\d\Fv)-\rz f_\la(\xi)-\sum_{j=1}^k\d^j\SL^j_\xi(\Fv,\ldots,\Fv) \bigg|.
\end{align}
    where $\SL^j_\xi$ are the nearstandard internal multilinear maps corresponding to  the same notation $\Ff$ in expression \ref{eqn: def expression for f in dSC^k}.   Next fix an arbitrary compact $K\subset U$, and let
\begin{align}
   \SR_K(\la,\d)\;\dot=\;\rz\sup\{ R(\la,\xi,\d,\Fv):\xi\in\rz K\;\text{and}\;|\Fv|=1\}.
\end{align}
   Now it's easy to see that that there is $0<\d_0\sim 0$ such that for $\la\in\rz\bbn_\infty$, $\d_0\leq\d\sim 0$, $\xi\in\rz K$ and $\Fv\in\rz S^{n-1}$, we have that $R(\la,\xi,\d,\Fv)\sim 0$. This follows from the same argument as used in the proof of statement (1) of lemma \ref{lem: dSC^k properties}: use the $\rz f_\la$ in place of $\Fg$ and $\d_0$ in the place of $\e_0$ in expression \ref{eqn: |f-g|<<e^k on K}. But then, as $\rz K\x\rz S^{n-1}$ is internal, $\rz f_\la$ is internal and $R$ is internal function of $\xi\in\rz K$ and $\Fv\in\rz S^{n-1}$, then  the set $\FR\dot=\{R(\la,\xi,\d,\Fv):\xi\in\rz K\;\text{and}\;\Fv\in\rz S^{n-1}\}$ is internal and so if $\FR$ has arbitrarily large infinitesimals, overflow would imply that $\FR$ would contain noninfinitesimals. If this were true, then there must be $\xi_0\in\rz K$ and $\Fv_0\in\rz S^{n-1}$ such that $R(\la,\xi_0,\d,\Fv_0)\nsim 0$ contrary to the above statement. Hence, we have statement \textbf{(I)}: $\SR(\la,\d)$ is infinitesimal for infinite $\la\in\rz\bbn$ and positive infinitesimal (bigger than $\d_0$) in $\rz\bbr_+$.
    Given this, if $r\in\bbr_+$ and $K\subset U$ is compact, we will define the following internal set:
\begin{align}
       \SO_{r,\;K}\;\dot=\;\{(\la',\d')\in\rz\bbn\!\x\![\d_0,\rz\infty):\text{if}\;(\la,\d)\in [\la',\rz\infty)\!\x\![\d_0,\d'],\;\text{then}\;\SR_K(\la,\d)<\rz r/2\}.
\end{align}
     Now statement (I) above implies that if $\la\in\rz\bbn$ is infinite and $\d_0$ is a positive infinitesimal at least as big as $\d_0$, then $(\la,\d)\in\SO_{r,\;K}$, and this statement implies the following. If $[a]$ denote the integer part of $a\in\bbr_+$ and if $\la_0\in\rz\bbn$ is the infinite integer given by $\la_0=[1/\d_0]+1$, then $(\la,1/\la)\in\SO_{r,\;K}$ for infinite $\la\in\rz\bbn$ with $\la\leq \la_0$. That is, $\{\la\in\rz\bbn:(\la,1/\la)\in\SO_{K,\;r}\}$ contains the external set of infinite integers less than $\la_0$ and so, as $\SO_{K,\;r}$ is internal, contains finite integers, eg., $\rz\! j_0$ for some $j_0\in\bbn$. By the definition of $\SO_{K,\;r}$, this says that if $(\la,\d)\in \rz [j_0,\infty)\x[\d_0,\rz 1/j_0]$, then $\SR_K(\la,\d)<\rz r/2$.
      That is, if $\rz|\Fv|=1$ and $\xi\in\rz K$, then $R_K(\la,\xi,\d,\Fv)<\rz r/2$. In particular, this holds for standard values in these ranges, ie., if $v\in\bbr^n$  with $|v|=1$, $x\in K$, $j\in\bbn$ with $j\geq j_0$ and $s\in\bbr_+$ with $s\leq s_0\;\dot=1/j_0$, then we have fact \textbf{(B)}: $R_K(\rz j,\rz x,\rz s,\rz v)<\rz r/2$. But, as already argued twice $L^j=\;^o\SL^j$ satisfies \textbf{(C)}: $|\rz L^j_{*x}(\rz v)-\SL^j_{*x}(\rz v)|\sim 0$ for all $x\in K$. Using (B) and (C) along with the triangle inequality, we can weaken the bound some, say to $r$, to get
\begin{align}
   \f{1}{\rz t^k}\bigg| \rz f_{*j}(\rz x+\rz t\rz v)-\rz f_{*j}(\rz x)-\sum_{j=1}^k\rz t^j\rz L^j_{\rz x}(\rz v,\ldots,\rz v)\bigg|<\rz r.
\end{align}
    But everything is standard in this statement and we finish just as we finished the proof of the second statement of lemma \ref{lem: dSC^k properties}.
\end{proof}
%   If $b\in\bbr_+$, let $\rz C^k_b(U)$ denote those elements $\Ff$ in $\rz C^k(U)$ such that $|\rz\p^\a\Ff(\xi)|\leq \rz b$ for all $\xi\in\rz U_{nes}$ and $|\a|\leq k$.
    The previous work now has the completely standard consequence. Again, consider a sequence of saw tooth functions converging pointwise to a constant function!
\begin{cor}\label{cor: standard dSC^k result}
    Suppose that $f\in C^k(U)$ and $\FF=\{f_1,f_2,\ldots\}$ is a sequence in $C^0(U)$ such that for each $x\in U$, $f_j(x)\ra f(x)$ as $j\ra\infty$. If we denote the  $j$-multilinear differential of $f$ at $x$ by $L^j_x$, then we have the conclusion of the previous result. That is, for every $x\in U$ and $r\in\bbr_+$, there is $j_{x,r}\in\bbn$ and $s_{x,r}\in\bbr_+$ such that
\begin{align}
    \text{for every}\;j>j_{x,r},\;\text{positive}\;s<s_{x,r}\;\text{and vector}\;v\in S^{n-1}\qquad\qquad \notag\\
    \f{1}{s^k}\bigg| f_j(x+sv)-f_j(x)-\sum_{i=1}^k s^i L^i_x(v,\ldots,v)\bigg|<r.
\end{align}
\end{cor}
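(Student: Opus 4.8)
The plan is to deduce Corollary~\ref{cor: standard dSC^k result} directly from the nonstandard Corollary~\ref{cor: props of asympt dSC^k families}, applied with the internal function taken to be $\rz f$. The conclusion of that corollary is already phrased entirely in standard terms and is word for word the present assertion, with the same data $j_{x,r}\in\bbn$, $s_{x,r}\in\bbr_+$ and the same multilinear maps $L^i_x$; so the whole argument collapses to checking, for the pair $(\rz f,\{f_j\})$, the two hypotheses of Corollary~\ref{cor: props of asympt dSC^k families}, namely that $\rz f\in dSC^k(U)$ and that $\rz f_\la(\xi)\sim\rz f(\xi)$ for all $\xi\in\rz U_{nes}$ and all $\la\in\rz\bbn_\infty$.

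The first hypothesis is immediate: $f\in C^k(U)$ gives $\rz f\in{}^\s C^k(U)\subset SC^k(U)\subset dSC^k(U)$ by the inclusions recorded just after Definition~\ref{def: dSC^k}. Moreover, transferring Taylor's theorem with remainder shows that the nearstandard symmetric multilinear maps $\SL^j_{\rz x}$ occurring in the expansion~\ref{eqn: def expression for f in dSC^k} of $\rz f$ at a standard point $\rz x$ are simply the transfers of the ordinary $j$-th differentials of $f$ at $x$, so $^o\SL^j_{\rz x}=L^j_x$; hence the maps $L^i_x$ named in the statement are precisely those produced by Corollary~\ref{cor: props of asympt dSC^k families}, and there is no ambiguity to resolve on that score.

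The second hypothesis is where the content lies. Pointwise convergence on $U$ is the first-order sentence $\forall x\in U\,\forall\e>0\,\exists N\in\bbn\,\forall j\,(j\ge N\Rightarrow|f_j(x)-f(x)|<\e)$; transferring it, specialising $\e$ to a positive infinitesimal, and using that $\rz f$ is $S$-continuous gives $\rz f_\la(\xi)\sim\rz f(\xi)$, but a priori only for $\la$ past an internally produced threshold that may itself be infinite. Upgrading this to ``for all $\la\in\rz\bbn_\infty$'', and at all nearstandard $\xi$ rather than just standard ones, is the step I expect to be the main obstacle. I would attack it in two stages: first at standard points, where $\rz f_\la(\rz x)\sim\rz f(\rz x)$ for every infinite $\la$ is just the elementary fact that the infinite-index terms of a convergent real sequence are infinitesimally close to the limit; then across monads, using $S$-continuity of $\rz f$ together with the stability of the class $dSC^k$ (Lemma~\ref{lem: dSC^k properties}(1)) and the overflow/underflow bookkeeping already employed in the proof of Corollary~\ref{cor: props of asympt dSC^k families}. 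Once the two hypotheses are in hand, Corollary~\ref{cor: props of asympt dSC^k families} yields, for each $x\in U$ and $r\in\bbr_+$, exactly the required $j_{x,r}\in\bbn$ and $s_{x,r}\in\bbr_+$ making the displayed Taylor-type inequality hold for all $j>j_{x,r}$, all $0<s<s_{x,r}$, and all unit vectors $v$, which completes the proof.
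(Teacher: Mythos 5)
Your reduction to Corollary \ref{cor: props of asympt dSC^k families} is exactly the route the paper takes, and you have correctly isolated the one step that carries all the content: passing from pointwise convergence $f_j(x)\ra f(x)$ to the hypothesis $\rz f_\la(\xi)\sim\rz f(\xi)$ for \emph{all} $\xi\in\rz U_{nes}$ and all infinite $\la$. But your two-stage plan for that step does not close. Stage one (standard points) is fine. Stage two fails: $S$-continuity of $\rz f$ lets you move $\rz f$ across the monad of $x$, but gives no control whatever on $\rz f_\la$ across that monad; to get $\rz f_\la(\xi)\sim\rz f_\la(\rz x)$ for $\xi\sim\rz x$ you would need $S$-continuity of $\rz f_\la$ itself, i.e.\ equicontinuity of the family $\{f_j\}$ near $x$ (this is precisely what Lemma \ref{lem: basic equicont families lemma} supplies, under an equicontinuity hypothesis that is absent here). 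Indeed the implication you are trying to establish --- pointwise convergence implies $\rz f_\la\sim\rz f$ on all of $\rz U_{nes}$ --- is equivalent to locally uniform convergence, so it cannot be derived from the stated hypothesis by any amount of overflow bookkeeping.

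You should know that the paper's own proof is a one-line assertion of this same unjustified implication (``the hypothesis on $\FF$ implies that for infinite $\la\in\rz\bbn$, we have $\rz f_\la(\xi)\sim\rz f(\xi)$ for all $\xi\in\rz U_{nes}$''), and in fact the corollary as stated is false. Take $f\equiv 0$, fix $x_0\in U$, and let $f_j$ be a continuous bump of height $1$ supported in the ball of radius $1/j^2$ about $x_0+e_1/j$. Each point of $U$ lies in at most finitely many of these supports, so $f_j\ra 0$ pointwise, and all $L^i_{x_0}$ vanish; yet with $v=e_1$ and $s=1/j$ the displayed quantity equals $j^k$, so the conclusion fails at $x_0$ for any $r<1$. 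The statement (and your argument, and the paper's) becomes correct if pointwise convergence is strengthened to uniform convergence on compact subsets of $U$, or if one adds an equicontinuity hypothesis on $\FF$ as in Corollary \ref{cor: 2nd stan cor of appendix thm}; under either of these the missing hypothesis $\rz f_\la(\xi)\sim\rz f(\xi)$ on $\rz U_{nes}$ does follow and the rest of your proposal goes through.
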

\begin{proof}
    As the hypothesis on $\FF$ implies that for infinite $\la\in\rz\bbn$, we have $\rz f_\la(\xi)\sim\rz f(\xi)$ for all $\xi\in\rz U_{nes}$ and as $^\s C^k(U)\subset dSC^k(U)$, the result follows from the previous proposition.
\end{proof}
\subsection{Functorial expression of S-smoothness}
 Here we will give easy consequences of Theorem \ref{thmbasreg}in terms of relationships between our canonical maps.

The following diagram is an immediate corollary of Theorem \ref{thmbasreg}.

\begin{cor}                                      %%%COR
For every $^\s$finite multiindex $\a$, the following diagram of maps
is commutative:
\begin{align}\label{diag1}
    \begin{CD}
                \dscmn   @>\rz\p^{\a}>>    \dscmn  \\
                @V\frak{st}VV              @V\frak{st}VV\\
                \dcmn    @>\p^{\a}>>        \dcmn
    \end{CD}
\end{align}
 where $\frak{st}(f)\doteq\;^o\!f$.
\end{cor}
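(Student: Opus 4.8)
The claim is that for every ${}^\s$finite multiindex $\a$, the square with horizontal arrows $\rz\p^\a$ (internal differentiation on $\dscmn$, resp. $\dcmn$) and vertical arrows $\fst = {}^o(\,\cdot\,)$ commutes. This is essentially a repackaging of Theorem \ref{thmbasreg}(1): for $\Ff\in SC^\infty(\bbr^m,\bbr^n) = \dscmn$ we must show $\fst(\rz\p^\a\Ff) = \p^\a(\fst\,\Ff)$, i.e. ${}^o(\rz\p^\a\Ff) = \p^\a({}^o\Ff)$, and that both routes around the diagram are actually \emph{defined} — that is, that the objects land in the spaces labelling the target corner. So the plan has two parts: (a) check well-definedness of each arrow on the relevant domain, and (b) invoke Theorem \ref{thmbasreg}(1) for the identity of the two composites.

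\textbf{Step 1: the arrows are defined.} First I would note that $\fst$ maps $\dscmn$ into $\dcmn$: this is exactly statement (1) of Theorem \ref{thmbasreg} (equivalently Lemma \ref{lem: S-smooth facts}(1)), which says $\Ff\in SC^\infty(U,\bbr^n)$ implies ${}^o\Ff\in C^\infty(U,\bbr^n)$; here $U=\bbr^m$ (or a standard neighborhood, as in the running convention that ${}^o\Ff$ is understood restricted to the original open set). Next, the top arrow $\rz\p^\a$ maps $\dscmn$ to itself: by Definition \ref{def: SC^k fcns, k in N and infty}, if $\Ff\in SC^\infty$ then for every multiindex $\b$ the map $\rz\p^\b\Ff$ is $SC^0$; applying this with $\b$ ranging over $\a+\g$ shows $\rz\p^\a\Ff\in SC^\infty$ as well. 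Finally the bottom arrow $\p^\a$ maps $\dcmn$ to $\dcmn$, which is classical. So all four corners and arrows of \eqref{diag1} make sense.

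\textbf{Step 2: commutativity.} Now for $\Ff\in\dscmn$, Theorem \ref{thmbasreg}(1) gives directly that for every ${}^\s$finite multiindex $\a$,
\begin{align}
{}^o(\rz\p^\a\Ff) = \p^\a({}^o\Ff)\quad\text{on }U,\notag
\end{align}
which is precisely $\fst\circ\rz\p^\a = \p^\a\circ\fst$ as maps $\dscmn\to\dcmn$. That closes the argument. I would present Step 2 in one line since it is a verbatim instance of the already-proved theorem; the only content beyond citing Theorem \ref{thmbasreg} is the bookkeeping in Step 1 that each arrow in the square respects the function classes attached to its endpoints.

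\textbf{Main obstacle.} There is no real obstacle here — this corollary is formal once Theorem \ref{thmbasreg} is in hand; the proof of that theorem (via Proposition \ref{approxprop} and its induction on $|\a|$) is where all the work lives. The only thing to be a little careful about is the domain convention: $\dscmn$ is shorthand for internal $SC^\infty$ maps on a standard neighborhood, and ${}^o\Ff$ is understood as restricted to that neighborhood (per the discussion in subsection \ref{subsec: transferring maps and domains}), so the equality of the two composites is an equality of maps on the \emph{open} set $U$, not on its closure. I would state this restriction explicitly to avoid boundary issues, and otherwise simply cite Theorem \ref{thmbasreg}(1).
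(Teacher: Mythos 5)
Your proof is correct and takes essentially the same route as the paper, which simply states the diagram is an immediate corollary of Theorem \ref{thmbasreg} without further elaboration. Your Step 1 bookkeeping (checking the arrows land where they should, via Theorem \ref{thmbasreg}(1) and Definition \ref{def: SC^k fcns, k in N and infty}) is implicit in the paper but is a sensible thing to spell out.
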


Let $\SJ^k_{m,n}$ denote the affine bundle of $k$ jets of maps in
\cmn. We have the usual source projections $\pi_k:\SJ^k_{
m,n}\ra\bbr^m$ and target $\rho:\SJ^k_{m,n}\ra\bbr^n$ projections.
Let $C^{\infty}(\SJ^k_{m,n})$ denote the \cm  module of smooth
sections of $\SJ^k_{m,n}$. Let ${\frak j}_k:\dcmn\ra
C^{\infty}(\SJ^k_{m,n})$, denote the $k$ jet operator, given by
sending $f\in\dcmn$ to the map $x\mapsto j^k_xf$. Now *transfer this
setup.

\begin{cor}                                              %%%COR
${\frak j}_k:\dscmn\ra  SC^{\infty}(\SJ^k_{m,n})$ satisfies
$\frak{st}\circ\rz{\frak j}_k={\frak j}_k\circ \frak{st}$, ie., we have
an abelian diagram
%$$
\begin{align}
\begin{CD}      SC^{\infty}(\SJ^k_{m,n})   @>\frak{st}>>    C^{\infty}(\SJ^k_{m,n})  \\
                @A{\rz\frak j}_kAA              @A{\frak j}_kAA\\
                \dscmn    @>\frak{st}>>        \dcmn
\end{CD}
\end{align}
%$$
\end{cor}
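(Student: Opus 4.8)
The plan is to reduce the statement $\fst\circ\rz{\frak j}_k={\frak j}_k\circ\fst$ to the previous corollary (diagram \eqref{diag1}), i.e.\ to the fact that taking standard parts intertwines $\rz\p^\a$ with $\p^\a$ for each $^\s$finite multiindex $\a$. The key observation is that a section $\sigma$ of $\SJ^k_{m,n}$ is, in a standard trivialization over $\bbr^m$, nothing but an indexed tuple of its component functions $(\sigma_\a)_{|\a|\le k}$ valued in copies of $\bbr^n$, and the $k$-jet operator ${\frak j}_k$ sends $f$ to the section whose $\a$-component is $\p^\a f$. Thus for each fixed $f\in\dscmn$ the section $\rz{\frak j}_k(f)$ has $\a$-component $\rz\p^\a f$; applying $\fst$ componentwise (which makes sense because, by Lemma \ref{lem: S-smooth facts}(1)--(2), each $\rz\p^\a f$ lies in $SC^\infty$ hence has a well-defined standard part lying in $C^\infty(\bbr^m,\bbr^n)$) gives the section whose $\a$-component is $\fst(\rz\p^\a f)$. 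On the other side, ${\frak j}_k(\fst f)$ is the section whose $\a$-component is $\p^\a(\fst f)$. So the two sections agree componentwise exactly when $\fst(\rz\p^\a f)=\p^\a(\fst f)$ for all $|\a|\le k$, which is precisely diagram \eqref{diag1} (equivalently Theorem \ref{thmbasreg}(1)).

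The steps I would carry out, in order: (1) Fix the standard affine-bundle trivialization of $\SJ^k_{m,n}$ over $\bbr^m$, record that in this trivialization a smooth section is the same datum as a finite tuple $(\sigma_\a)_{|\a|\le k}$ with $\sigma_\a\in C^\infty(\bbr^m,\bbr^n)$, and that ${\frak j}_k(f)$ corresponds to $(\p^\a f)_{|\a|\le k}$; these are standard statements, so transfer gives the corresponding internal description of $\rz{\frak j}_k$ and $SC^\infty(\SJ^k_{m,n})$. (2) Observe that since this trivialization is a \emph{standard} homeomorphism (indeed a standard affine-bundle isomorphism), it carries the external operation $\fst$ on $SC^\infty(\SJ^k_{m,n})$ to the componentwise standard-part operation on tuples — this is the same ``standard trivializations preserve monads / commute with $\st$'' principle used repeatedly in the preliminaries (cf.\ the discussion in \S\ref{subsec: intro ns metrc tngnt sp E^n} and the properties of $\st$ in \S\ref{subsec: props of standard part map}). (3) For $f\in\dscmn$, note $f\in SC^\infty$ forces each $\rz\p^\a f\in SC^\infty$ by Lemma \ref{lem: S-smooth facts}, so $\fst(\rz\p^\a f)$ is defined and, by Theorem \ref{thmbasreg}(1), equals $\p^\a(\fst f)$. (4) Chase the diagram: $\fst(\rz{\frak j}_k f)$ has $\a$-component $\fst(\rz\p^\a f)=\p^\a(\fst f)$, which is the $\a$-component of ${\frak j}_k(\fst f)$; since this holds for every $|\a|\le k$ the two sections coincide. (5) Note the map ${\frak j}_k\colon\dscmn\to SC^\infty(\SJ^k_{m,n})$ is well-defined into $SC^\infty$ sections by the same componentwise argument, so the diagram makes sense as stated.

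The main obstacle — really the only nontrivial point — is Step (2): being careful that ``standard part of a section of the jet bundle'' genuinely means ``componentwise standard part of its $\p^\a$-components'' and not something subtler, i.e.\ that the external operation $\fst$ on $SC^\infty(\SJ^k_{m,n})$ is compatible with the bundle trivialization. Everything else is bookkeeping on top of Theorem \ref{thmbasreg} and Lemma \ref{lem: S-smooth facts}. I would handle this obstacle exactly as the earlier sections handle the analogous issue for $\rz TU$: invoke that the trivialization of $\SJ^k_{m,n}$ over $\bbr^m$ is a standard diffeomorphism, hence its transfer carries nearstandard points to nearstandard points and preserves monads, so $\st$ may be computed in the trivialization; once this is said, the componentwise reduction is immediate and the diagram commutes by Theorem \ref{thmbasreg}(1) applied to each multiindex $\a$ with $|\a|\le k$ (and to all $^\s$finite $\a$ when $k=\infty$).
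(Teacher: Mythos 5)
Your proposal is correct and follows exactly the route the paper intends: the paper's own proof is the one-line remark that this is "just the jet version of the previous corollary," and your componentwise reduction via the standard trivialization of $\SJ^k_{m,n}$, combined with Theorem \ref{thmbasreg}(1) and Lemma \ref{lem: S-smooth facts}, is precisely the unpacking of that remark. You have simply made explicit the bookkeeping (in particular Step (2) on compatibility of $\fst$ with the standard trivialization) that the paper leaves implicit.
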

\begin{proof}
 This is just the jet version of the previous
corollary.
\end{proof}

\section{Good Hausdorff topologies on families of map germs}\label{chap: topologizing germs}
\subsection{Introduction}\label{sec: intro: top on map germs}
    Before we begin, we should once more point out that there is an updated version of this chapter on the arXiv, \cite{McGafGerms2012arXiv1206.0473M}. It includes simplified proofs of some of the early parts of this chapter as well as proof that the group of germs of homeomorphisms $(\bbr^n,0)\ra(\bbr^n,0)$ is a topological group, a thorough standard rendition of this topology and a classical (but surprising) topological setting for our topology. Nonetheless, although this chapter is somewhat rough around the edges in places, we believe that the hardy field constructions in section \ref{subsec: top independence from delta}  and the part on relationship with nongerm  convergence in \ref{sec: relationship with nongerm convergence} are important. There is a third version that includes other material that will appear in time.

    It is commonly believed that one cannot construct a nondiscrete ``good'' topology on the ring of germs at $0$ of smooth real valued functions on $\bbr^n$, much less the ring of germs of continuous real valued functions on $\bbr^n$.
    For example, Gromov, \cite{Gromov1986} remarks (p.36) that ``There is no useful topology in this space...of germs of [$C^k$] sections...'' over a particular set.
    Furthermore, there are hints in the literature, for example in the work of eg., Du Pleisses and Wall, \cite{DuPlessisWall1995},  on topological stability, see p.95 and chapter 5 (p.121-) on the great difficulties of working with germ representatives with respect to aspects of smooth topology (eg., how to define the stability of germs), but that there are no alternatives, eg., in working with the germs directly.
%    See also the work of Milnor on microbundles [see??????] and the work in microlocal analysis [see??????] all indicating that the existence of such a framework, could be quite useful.
        In this chapter, using nonstandard analysis, we will give a construction of a good Hausdorff topology on the ring of germs of real valued functions on $\bbr^n$ at $0$ that has good convergence properties.

    More specifically, we give a construction of a nonmetrizable Hausdorf topology on the ring of real valued germs on $\bbr^n$ at $0$ that has the following properties:  Net convergence is akin to uniform convergence of continuous functions in the sense that a convergent net of germs of continuous functions has limit the germ of a continuous function. Moreover, germ composition is a continuous map with composition on the right by germs of homeomorphisms giving topological ring isomorphisms. In one very special instance, we show how this topology extends the usual norm topology if the `germs' come from functions all with a common domain. For example, we give theorems relating types of convergence of a family of functions all defined in a given ball to our germ convergence of the *finite extensions  of these families restricted to germ domains. As time allows, we will extend this work; eg., we will extend this framework to the context of the orbit space of the action of the topological group of homeomorphisms germs acting on locally Euclidean topological groups. The overall  intention  is to develop what might be called a categorical framework for germ topologies.

    Our constructions rely critically on nonstandard methods. To have some chance of success, we needed the following critical facts to make these results possible. First, the algebra of germs at $0$ is canonically isomorphic (via the domain restriction map) to the external algebra of standard functions on any infinitesimal ball about $0$, see corollary \ref{cor: SG_0 -> F(B_delts) is R alg isomorph}.  Second, we need that the germ topology be defined in terms of these nonstandard algebras of standard functions on these infinitesimal balls, see definition \ref{def: of topology  at 0 germ} on page \pageref{def: of topology  at 0 germ}. Third, we have a criterion, in the context of these  functions restricted to these infinitesimal balls,  to determine those germs that are germs of continuous functions, see proposition \ref{prop: germ continuity from k<<<d}  on page \pageref{prop: germ continuity from k<<<d}.
%    @@@@@@@@@@is totally determined by balls on any infinitesimal ball about $0$ and second that the topology so defined is independent of this choice of infinitesimal. Moreover, the utility of both of these facts is critical in the proofs of several important things here.

    Let us next summarize the strategies and results here. Our topologies are simply defined as norm topologies on infinitesimal balls. But unlike the standard case, it's  a serious problem \textbf{(1)}: to determine a choice of a family of bounds $\{\a\}$, so that as $\|\rz f\|$ is controlled by these specific values, we get a good notion of nearness. The second problem is, \textbf{(2)}: we want this topology to have good convergence properties, eg., we want a convergent net of continuous germs to have a continuous germ as the limit point.   The third problem is,   that $\|\;\|$ is defined as a norm over a ball with radius some positive infinitesimal $\d$, symbolically: $\rz\|\;\|_\d$, and thus we want \textbf{(3)}: this topology to be independent of the choice of this infinitesimal. Although, we don't, as yet, have a standard rendering of this topology, we still want \textbf{(4)}: to find relationships with standard convergence results. Finally, we hope that \textbf{(5)}: this topology has good properties with respect to ring operations and composition of germs.

    Let's describe how we solve all these problems.  First, problems (1) and (2) are intertwined and are solved in section \ref{sec: various radii, converg of C^0 is C^0} and section \ref{sec: topology-fixed target} up to page \pageref{subsec: top properties ring structure}. Simply posed: to get a good set of distances for $\rz\|\;\|_\d$,  we must, in fact,  define multiple families  of infinitesimals, all related to the $\SN_\d$ families, see eg., \pageref{def: SN_delta families of numbers} and show that they define ``equivalent'' sets of distances (this is defined in terms of several forms of the notion ``coinitial'' in the text), analogous to the trivial standard fact that eg., $1/2,1/3,1/4,\ldots$ and $1/4,1/9,1/16,\ldots$ form equivalent sets of distances for, say, the $\sup$ norm for the continuous functions on the unit ball (see the abstract definition on page \pageref{def: of coinitial partial order relations}). (Note that, unlike in such a simple example, our set of distances cannot be countable, eg',  is not metrizable;  see  corollary \ref{cor: countable nets can't converge in tau} and especially the construction in the previous lemma.) But, it turns out that for these infinitesimal balls, the families $\SN_\d$ do not have the right properties to prove that a limit point of a convergent net of continuous germs is a continuous germ. Therefore, we must define other families of infinitesimals, the various $\ov{\SS}^\d$ families (one, $\wh{\SS}^{\k,\d}$, which depends on two, incomparable, infinitesimals, again see page \ref{def: SS^delta families of numbers}), as well as the notion of $[f]$-good infinitesimals (see definition \ref{def: e is [f]-good}), specifically for this purpose. Essentially, convergence akin to uniform convergence is implied by the existence of $[f]$-good numbers in our sets of moduli, see theorem \ref{thm: convergnet of cont germs in SG converges to cont} and especially lemma \ref{lem: SN_d contains f-good numbers} on page \pageref{lem: SN_d contains f-good numbers}.  We must also show  the equivalence of the $\SN_\d$ and $\SS^\d$ families. This essentially occurs in lemma \ref{lem: SS_delta=SN_delta}, on page \pageref{lem: SS_delta=SN_delta},  but occupies several other lemmas, eg. see lemma \ref{lem:  SM^0 is coinitial in SM}, on page \pageref{lem:  SM^0 is coinitial in SM}, and its corollary.

    Problem (3) depends essentially on the existence of a simultaneously sufficiently numerous and sufficiently rigid family of positive functions defined near infinity in $\bbr$. Sufficiently numerous means that they define a coinitial subset $\SA_\d$ of the $\SN_\d$ moduli. This is the Hardy construction rendered in lemma \ref{lem: Hardy power series construction}, on page \pageref{lem: Hardy power series construction}, but we needed a systematic version, see definition \ref{def: SM_<,B and SM^om,n_<,B} on page \pageref{def: SM_<,B and SM^om,n_<,B}, the following lemmas and lemma \ref{lem: E(H(m))>m} on page \pageref{lem: E(H(m))>m}. Sufficiently rigid means a subset of these Hardy series gives a coinitial subset of $\SN_\d$ when evaluated at an infinitesimal $\d$ if and only if they give a coinitial subset of $\SN_{\d'}$ for any other infinitesimal $\d'$. This was accomplished by looking at the sequences of integer exponents defining them and proving that these have certain asymptotic rigidity properties, see eg., corollary \ref{cor: sequence ptwise cofinal->unif cofinal}. We then convert this sequential rigidity into a perturbation rigidity for values in the domain of the Hardy series, see lemma \ref{lem: Incr pwwise bdd->Hrdy ptwise bdd} and the consequence of this and the sequential rigidity just mentioned, corollary \ref{cor: FK_xi_0 cofin<->FK_xi_1 cofin}.  These elements are pulled together in lemma \ref{lem: un(n)is sequence in N corres to un(m)} and it's following corollary.

    The solutions of problems (4) and (5) can now be found.  Problems (4) needed a new definition for coinitial subsets of $\SN_\d$ (see definition \ref{def: convergently coinitial in SN_d}). We need this  for internal sequences of infinitesimals that are given by the values of the transfers of standard sequences of functions evaluated at an infinitesimal. These have properties quite different  from the moduli, $\SN_\d$; eg, see corollary \ref{cor: m_j(d_0)is coin SN_d_0 -> true for all d}, on page \pageref{cor: m_j(d_0)is coin SN_d_0 -> true for all d}.  Nonetheless, with a bridging definition for convergence of a sequence of functions in our germ topology  $\tau$ (see definition \ref{def: convergence of standard seq of fncs in germ top} and the following cautionary remark), we get a group of results for the relationships between standard convergence and $\tau$ convergence, of which proposition \ref{prop: corres. between unif. converg and tau converg} (for uniformly convergent sequences) and   proposition \ref{prop: corres: t->f_t standard converg<-> tau converg} (for one parameter families of maps) are representative examples. For problem (5), the topological aspects of the ring operations  for the space of continuous map germs occupy  subsection \ref{subsec: top properties ring structure}; eg see proposition \ref{prop: SG_0 is Hausdorff top ring} and the preceding lemma. The material on composition of continuous germs occurs in subsection \ref{subsec: top properties of germ composition}; see proposition \ref{prop: rc_h:G^0_n->G^0_n is C^0} for the continuity of right composition and for the statement of the more difficult left composition, see proposition \ref{prop: lc_h:G^0_n,p,0->G^0_n,p,0 is C^0}.

\subsection{Germs and their infinitesimal restrictions}\label{sec: various radii, converg of C^0 is C^0}
    Let $n\in\bbn$ and if $0<r\in\bbr$, respectively $0<\Fr\in\rz\bbr$, let $B_r=B^n_r=\{x\in\bbr^n:|x|\leq r\}$, respectively $\rz B^n_\Fr=\{\xi\in\rz\bbr^n:|\xi|\leq\Fr\}$. Let $\mu(0)=\mu_n(0)=\{\xi\in\rz\bbr^n:|\xi|\sim 0\}$ and $\mu(0)_+=\{\xi\in\mu(0):\xi>0\}$; we will sometimes write $0<\d\sim 0$ instead of $\d\in\mu(0)_+$
\begin{definition}
    Let $\un{F}=\un{F}(n,1)=$
\begin{align}
    \{(U,f):U\subset\bbr^n\;\text{is a convex neighborhood of}\;\;0\;\text{and}\;\;f:U\ra\bbr\}
\end{align}
     and $\un{F}(n,1)_0\subset\un{F}(n,1)$ denote the set of those $(U,f)$ such that $f(0)=0$. If $y$ is some point in the range, we may also use $\un{F}(n,1)_y$ for those $(f,U)$ with $f(0)=y$. For the associated set of equivalence classes of germs, let $\SG_0=\SG_{n,1}$ denote the ring of germs of $f:(\bbr^n,0)\ra(\bbr,0)$ at $0\in\bbr^n$, that $\SG_0^0\subset\SG_0$ the subring consisting  of germs of continuous functions.
%      and suppose that $\d$ is a positive infinitesimal, written $0<\d\sim 0$.
\end{definition}
    Although elementary, the following basic result is apparently folklore. There are many variations of this; the statement below is needed for this paper.

\begin{lem}\label{lem: *A contains inf nbd-> has stand nbd}
    Suppose that $A\subset\bbr^n$ and $0<\d\sim 0$ are such that $\{\xi\in\mu(0):|\xi|>\d\}\subset\rz A$. Then there is $0<r\in\bbr$ such that $B_r\ssm\{0\}\subset A$. Similarly, if $B\subset\bbr^n$ is such that $\rz B_\d\subset\rz B$, then there is $0<r\in\bbr$ such that $B_r\subset B$.
\end{lem}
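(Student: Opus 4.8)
The statement is essentially an overspill/underspill argument, and I would prove both halves the same way. Consider first the second assertion: we are given $B\subset\bbr^n$ with $\rz B_\d\subset\rz B$ for some positive infinitesimal $\d$, and we want a standard $r>0$ with $B_r\subset B$. The natural move is to look at the internal set
\begin{align}
    \FS\doteq\{\Fr\in\rz\bbr_+:\rz B_\Fr\subset\rz B\}.
\end{align}
This is internal by the internal definition principle (the relation ``$\rz B_\Fr\subset\rz B$'' is built from the transferred constant $\rz B$, the transferred family $\Fr\mapsto\rz B_\Fr$, and $\subset$), and by hypothesis $\d\in\FS$. Since $\d$ is a positive infinitesimal, $\FS$ contains a positive infinitesimal, so by overflow (the overspill form: an internal set containing arbitrarily small positive infinitesimals — or here, at least one — together with being closed downward would give more, but we only need overflow past the infinitesimals) there is a \emph{noninfinitesimal} $\Fr_0\in\FS$, in fact we can arrange $\Fr_0$ to be the transfer $\rz r$ of a standard $r>0$: pick any standard $r$ with $0<r<\Fr_0$ and $r\le{}^\circ\Fr_0$, then $\rz B_{\rz r}\subset\rz B_{\Fr_0}\subset\rz B$ since $\Fr\mapsto\rz B_\Fr$ is monotone (transfer of $B_s\subset B_t$ for $s\le t$). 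Now $\rz B_{\rz r}=\rz(B_r)$, so $\rz(B_r)\subset\rz B$, and reverse transfer of this containment statement (both $B_r$ and $B$ standard) gives $B_r\subset B$, as desired.

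For the first assertion the argument is the same with a minor bookkeeping adjustment to handle the puncture at $0$. We are given $A\subset\bbr^n$ and $0<\d\sim 0$ with $\{\xi\in\mu(0):|\xi|>\d\}\subset\rz A$. Observe that for any standard $r>0$ with $r>\d$ (automatic, $\d$ infinitesimal) and any $\xi$ with $\d<|\xi|\le r$ we have $|\xi|\sim 0$, hence $\xi\in\rz A$. Thus in particular $\rz B_\d\ssm\{0\}$ — no wait, we want the annulus — consider instead the internal set
\begin{align}
    \FT\doteq\{\Fr\in\rz\bbr_+:\text{for all }\xi\in\rz\bbr^n\text{ with }\d<|\xi|\le\Fr,\ \xi\in\rz A\}.
\end{align}
Again $\FT$ is internal and, by the hypothesis, every positive infinitesimal exceeding $\d$ lies in $\FT$ (for such $\Fr$, $\d<|\xi|\le\Fr$ forces $|\xi|\sim 0$ and $|\xi|>\d$). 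So $\FT$ contains positive infinitesimals, and by overflow it contains some $\rz r$ with $r\in\bbr$, $r>0$. Unwinding: for all standard $x\in\bbr^n$ with $0<|x|\le r$ we have (taking $\xi=\rz x$, noting $\d<|\rz x|$ since $\d\sim 0<|\rz x|$) $\rz x\in\rz A$, i.e.\ $x\in A$ by reverse transfer applied pointwise — more cleanly, the standard statement ``$\forall x\,(0<|x|\le r\Rightarrow x\in A)$'' transfers to the internal statement witnessed above, so it holds, i.e.\ $B_r\ssm\{0\}\subset A$.

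The only point requiring any care — and the closest thing to an obstacle — is the invocation of overflow: one must be sure the set ($\FS$ or $\FT$) is genuinely internal, which is why I spell out that it is defined by the internal definition principle from transferred constants and formal relations, exactly as in the discussion of overflow in section~\ref{subsec: overflow}. Everything else is routine: monotonicity of $\Fr\mapsto\rz B_\Fr$ is a transfer of an evident standard fact, and the passage from $\rz(B_r)\subset\rz B$ back to $B_r\subset B$ is the standard reverse-transfer principle noted in section~\ref{subsec: transfer principle}. No saturation is needed. I would present the second assertion in full and remark that the first is identical modulo the harmless excision of the origin.
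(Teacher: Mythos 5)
Your proof of the first assertion is sound: you correctly check that $\FT$ contains \emph{all} positive infinitesimals (vacuously for $\Fr\le\d$, and directly from the hypothesis for $\d<\Fr\sim 0$), so overflow does apply, and the pointwise reverse-transfer step finishes cleanly.

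Your proof of the second assertion, however, has a genuine gap at the overflow step. All you actually know is that $\d\in\FS$ together with downward-closedness, hence $\FS\supset(0,\d]$. But $(0,\d]$ is itself an internal, downward-closed set that ``contains arbitrarily small positive infinitesimals'' and nevertheless contains no noninfinitesimal, so the properties you invoke do \emph{not} force $\FS$ to overflow past $\mu(0)_+$. Overflow requires $\FS$ to contain \emph{all} positive infinitesimals, and you cannot establish that without circularity: once $B_r\subset B$ for some standard $r$, then indeed $\rz B_\Fs\subset\rz B_{\rz r}\subset\rz B$ for every $\Fs\sim 0$ and $\FS$ does contain all of $\mu(0)_+$ --- but that is the conclusion, not a hypothesis. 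The parenthetical in which you weaken overflow to ``at least one infinitesimal, plus closed downward'' is exactly where the reasoning breaks.

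The fix uses more than bare internality of $\FS$: observe that $\FS=\rz S$ is the \emph{transfer of a standard set}, namely $S=\{r\in\bbr_+:B_r\subset B\}$. Then $\d\in\rz S$ forces $\rz S\neq\emptyset$, so by reverse transfer $S\neq\emptyset$, and any $r\in S$ is the desired standard radius --- no overflow at all is needed for this half. The paper instead derives the second assertion from the first by a contradiction argument applied to the radial set $\SB$; either route works, but both go through the fact that $B$ is standard, which is precisely the structural information your appeal to internality alone leaves on the table.
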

\begin{proof}
    First, it's clear that as $\rz A$ is internal, then overflow implies that there is a standard $a>0$ such that $\{\xi:\d<|\xi|\leq \rz a\}\subset \rz A$. Let $E$ denote $A\cap B_a$ and let $E^c$ denote the complement of $E$ in $B_a\ssm\{0\}$; so that $E\cup E^c=B_a\ssm\{0\}$.  We know that $\rz E^c\subset B_\d\ssm\{0\}$; that is, for $0<d\in\bbr$, we have the statement: $\xi\in\rz E^c\Rightarrow \xi\in\rz B_d\ssm\{0\}$. But then reverse transfer gives the statement: $x\in E^c\Rightarrow x\in B_d\ssm\{0\}$ and as $d>0$ in $\bbr$ was arbitrary, then we get that $E^c=\emptyset$ so that $E=B_d\ssm\{0\}$.
    To prove the second assertion, suppose the conclusion does not hold so that there is a maximum positive $\d\sim 0$ such that if $\SB_t$ is the set $\{t\in\bbr_+:B_t\subset B\}$, then
\begin{align}
    [0,\d)\subset\rz\SB\dot=\{\Ft\in\rz\bbr_+:\rz B_\Ft\subset\rz B\}.\notag
\end{align}
     But then $\{\Ft:\d<\Ft\sim 0\}\subset\rz\SB^c$ and so as $\d\sim 0$ and is nonzero, the first part ($n=1$ here) implies that $\{\Ft:0<\Ft\sim 0\}\subset\rz \SB^c$, forcing $[0,\d)\nsubseteq \rz \SB$, ie., $\rz B_\Ft\subsetneqq \rz B$ for $\Ft<\d$, a contradiction.
\end{proof}

     Let $\rz F(B_\d)$ denote the $\rz\bbr$ algebra of internal functions on $B_\d$ and $^\s F(B_\d)$ denote the (external) subring of standard functions on $B_\d$ which is clearly an $^\s\bbr$ algebra , and so can be viewed as an $\bbr$ algebra. Note that $\SG_0$ and its subring $\SG_0^0$ are $\bbr$ algebras.
     The above lemma has the following immediate consequence which is the critical fact that allows the characterizations of germs in this paper.
\begin{cor}\label{cor: SG_0 -> F(B_delts) is R alg isomorph}
    Suppose that $U\subset\bbr^n$ is a neighborhood of $0$ in $\bbr^n$, $f: U\ra\bbr$ is a function and $0<\d\sim 0$. Then if $\rz f(\xi)=0$ for all $\xi\in B_\d$, then there is another neighborhood of $0$, $V\subset\bbr^n$ such that $f|_V$ is identically zero; ie., $[f]\in\SG_0$ is the zero germ.
    That is, the map $\SR_\d:\SG_0\ra\;^\s F(B_\d):[f]\mapsto \rz f|_{B_\d}$ is an $\bbr$-algebra isomorphism.
\end{cor}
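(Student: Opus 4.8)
The plan is to deduce the corollary directly from Lemma~\ref{lem: *A contains inf nbd-> has stand nbd} together with the obvious fact that the map $\SR_\d$ is a ring homomorphism, so that the only substantive point is injectivity. First I would observe that $\SR_\d$ is well defined and an $\bbr$-algebra homomorphism: if $[f]=[g]$ in $\SG_0$, then $f$ and $g$ agree on some neighborhood $W$ of $0$, hence (by transfer, since $\d\in\mu(0)_+$ forces $B_\d\subset\rz W$) $\rz f$ and $\rz g$ agree on $B_\d$; and the ring operations on germs are computed representative-wise, so they commute with restriction to $B_\d$ and then with transfer. (One should note here that products, sums and scalar multiples of representatives are again representatives defined on the common intersection of domains, which is again a convex neighborhood of $0$ because $U$'s are taken convex; the nonstandard extension of a pointwise product is the pointwise product of the extensions, so $\SR_\d$ respects all the algebra operations.)

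Next I would prove injectivity, which is exactly the content of the first sentence of the corollary. Suppose $[f]\in\SG_0$ with $\SR_\d([f])=0$, i.e. $\rz f(\xi)=0$ for all $\xi\in B_\d$. Apply the second assertion of Lemma~\ref{lem: *A contains inf nbd-> has stand nbd} with $B$ taken to be the zero set $\{x\in U : f(x)=0\}$ (extended by $\bbr^n\setminus U$ outside, or simply work inside a fixed small representative domain so that $\rz B\supset B_\d$): the hypothesis $\rz B_\d\subset\rz B$ is met, so there is a standard $r>0$ with $B_r\subset B$, i.e. $f\equiv 0$ on $B_r$. Hence $[f]$ is the zero germ, so $\ker\SR_\d=0$.

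For surjectivity I would argue as follows. Let $F\in{}^\s F(B_\d)$, so $F=\rz h|_{B_\d}$ for some standard function $h$ defined at least on $B_\d$'s shadow; but more carefully, by definition of ${}^\s F(B_\d)$ an element is of the form $\rz g|_{B_\d}$ where $g:\bbr^n\to\bbr$ (or $g$ defined on some standard set containing a neighborhood of $0$, which one can always arrange since $B_\d\subset\rz V$ forces $V$ to contain a standard ball $B_r$ by the lemma's second assertion). Restricting such a $g$ to a convex neighborhood $U$ of $0$ contained in its domain gives $(U,g)\in\un F(n,1)$, and then $\SR_\d([g])=\rz g|_{B_\d}=F$. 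Thus $\SR_\d$ is onto. Combining with injectivity and the homomorphism property, $\SR_\d$ is an $\bbr$-algebra isomorphism, and the first displayed claim (that $\rz f\equiv 0$ on $B_\d$ implies $[f]=0$) is the injectivity statement just proved.

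The step I expect to be the only real obstacle is the bookkeeping around ``standard functions on $B_\d$'': making precise that every element of ${}^\s F(B_\d)$ really is the transfer of a standard function defined on an honest standard neighborhood of $0$ (not merely on the infinitesimal ball), so that it descends to a genuine germ. This is handled cleanly by the second half of Lemma~\ref{lem: *A contains inf nbd-> has stand nbd}: the domain of any standard representative, once transferred, contains $B_\d$, hence contains a standard $B_r$, so the representative is defined on a neighborhood of $0$ and the germ makes sense. Everything else is routine verification that restriction-then-transfer is a ring map, which I would state but not belabor.
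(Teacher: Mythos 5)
Your proposal is correct and takes essentially the same approach as the paper: injectivity via the second assertion of Lemma~\ref{lem: *A contains inf nbd-> has stand nbd} applied to the zero set of $f$, and the homomorphism property by direct verification that ring operations commute with restriction-then-transfer. You spell out the surjectivity step more carefully than the paper (which leaves it essentially definitional from the meaning of ${}^\s F(B_\d)$), but the underlying argument is the same.
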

\begin{proof}
    Let $supp(f)\subset U$ be the set of $x\in U$ such that $f(x)\not=0$ and  $A\subset U$ denote $U\ssm supp(f)$. Then $B_\d\subset\rz A$ and so the above lemma implies that there is a positive $r\in\bbr$ such that $B_r\subset A$, eg., $f(x)=0$ for $x\in B_r$; eg., $[g]=0$.
    To verify that $\SR_\d$ is an $\bbr$-algebra homomorphism is straightforward as $[f],[g]\in\SG_0$ and $c\in\bbr$ satisfy $c[f]=[cf],[f]+[g]=[f+g]$ and $[f][g]=[fg]$ and eg., $(\rz f\cdot\rz g)|_{B_\d}=(\rz f|_{B_\d})(\rz g|_{B_\d})$ as internal functions on $B_\d$.
\end{proof}
   \textbf{ Given the above proposition, when we talk about germs or elements of $\bsm{\SG_0}$, we will usually be working with subalgebras of the external algebras $\bsm{^\s F(B_\d)}$.  In particular, all work on germs will occur in the algebras $\bsm{^\s F(B_\d)}$, for some infinitesimal $\bsm{\d}$.}
\subsubsection{Monadic regularity of standard functions}\label{subsec: monadic regularity of standard fcns}
    We begin with the following simple but surprising proposition.
\begin{proposition}
    Suppose that $0<\d\sim 0$ and $[f]\in\SG_0$ is such that $\rz f|_{B_\d}$ is *continuous on $B_\d$. Then $[f]\in\SG^0$.
\end{proposition}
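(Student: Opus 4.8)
The goal is to promote pointwise $*$continuity of $\rz f$ on the infinitesimal ball $B_\d$ to the statement that the germ $[f]$ is the germ of a continuous function near $0$ in $\bbr^n$. By Corollary \ref{cor: SG_0 -> F(B_delts) is R alg isomorph} it suffices to work entirely with $\rz f|_{B_\d}$. The plan is to first reduce to showing that $f$ is continuous at every point of a small standard ball $B_r$, and then to extract such an $r$ from an overflow argument applied to an internal set whose members encode ``$f$ is $\eta$--continuous at scale $s$'' statements. The key point is that $*$continuity of $\rz f$ on $B_\d$ gives, via the internal $\e$--$\d$ characterization transferred up, a uniform internal modulus of continuity on $B_\d$, and that this internal data, once it is seen to hold on all of $B_\d$, must (by Lemma \ref{lem: *A contains inf nbd-> has stand nbd}) persist on some standard ball $B_r$.

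The steps, in order, would be as follows. First, recall that $\rz f$ is $*$continuous on the $*$compact set $B_\d$, so by transfer of the standard theorem that a continuous function on a compact set is uniformly continuous, $\rz f$ is $*$uniformly continuous on $B_\d$: for every $\e>0$ in $\rz\bbr$ there is $s>0$ in $\rz\bbr$ such that $\xi,\z\in B_\d$ and $|\xi-\z|<s$ imply $|\rz f(\xi)-\rz f(\z)|<\e$. Second, fix an arbitrary standard $r_0>0$ and a standard $\e>0$; I want to produce a standard $s>0$ witnessing uniform continuity of $f$ on $B_{r_0}$ at level $\e$. Consider, for each positive standard $a$, the internal statement that $s\in\rz\bbr_+$ works as a $*$uniform-continuity modulus at level $\rz\e$ on the ball $\rz B_a$; transfer this family of statements down. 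Third — and this is where Lemma \ref{lem: *A contains inf nbd-> has stand nbd} enters — one observes that the set of standard radii $a$ on which $f$ admits a genuine $\e$--modulus of continuity contains the transfer data of the infinitesimal ball, hence by the overflow/reverse-transfer mechanism of that lemma contains a standard interval $(0,r]$; more precisely, I would set up an internal subset of $\rz\bbr_+\times\rz\bbr_+$ of pairs $(a,s)$ such that $s$ is an $\e$--modulus on $B_a$, note it contains $(\d,s_\e)$ for the internal $s_\e$ from step one, and apply overflow to get a standard pair $(r,s)$ in it. Fourth, letting $\e$ range over $1/k$, $k\in\bbn$, and intersecting (or taking the smallest of finitely many radii at each stage — actually one uses a single $r$ that works for a coinitial sequence of $\e$'s, which is arranged by the same overflow giving $(a,s)$ pairs with $a$ bounded below), conclude that $f$ is continuous on some fixed standard $B_r$. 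Fifth, this means the representative $f|_{B_r}$ is a continuous function on a convex neighborhood of $0$ with $f(0)=0$ (the value condition is inherited from $[f]\in\SG_0$), so $[f]\in\SG_0^0$ by definition.

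The main obstacle I anticipate is the bookkeeping in steps three and four: one must be careful that the overflow argument delivers a \emph{single} standard radius $r$ that simultaneously serves all the required levels $\e=1/k$, rather than a sequence $r_k\to 0$ which would be useless. The clean way around this is to run the argument in two stages — first fix the infinitesimal modulus $s_\e$ from $*$uniform continuity on $B_\d$ for \emph{infinitesimal} $\e$ as well, so that the internal set $\{(a,s): s\text{ is an }\e\text{-modulus on }B_a\text{ for all }\e\geq\e_0\}$ (with $\e_0\sim 0$ fixed) contains the pair $(\d, s_{\e_0})$ — and then overflow on $a$ gives a standard $r$ working for all standard $\e\geq\e_0$, i.e. for all standard $\e>0$. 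After that, continuity of $f$ on $B_r$ is immediate. A secondary, purely routine point is checking that the restriction $f|_{B_r}$ genuinely represents the germ $[f]$, which is exactly the content of the isomorphism $\SR_\d$ in Corollary \ref{cor: SG_0 -> F(B_delts) is R alg isomorph}, so no independent argument is needed there.
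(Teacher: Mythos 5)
Your overall idea (pull the continuity of $\rz f$ on $B_\d$ back to a standard ball) is right, but the execution has a genuine gap at exactly the point you flag as the ``main obstacle,'' and the repair you propose does not work. The two-stage fix asks us to fix an infinitesimal $\e_0$ and consider the internal set of pairs $(a,s)$ for which $s$ is an $\e$-modulus on $B_a$ for all $\e\ge\e_0$, note that it contains $(\d,s_{\e_0})$, and then ``overflow on $a$'' to get a standard $r$. Overflow does not apply here: an internal set containing a single infinitesimal $\d$ (or even the internal interval $(0,\d]$, which is what you actually get, since a modulus on $B_\d$ restricts to smaller balls) need not contain any noninfinitesimal point --- $\{\d\}$ itself is internal. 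Overflow requires the set to contain a suitable \emph{external} set, e.g.\ all of $\mu(0)_+$, and nothing in the hypothesis puts radii $a>\d$ into your set. Nor can you fall back on reverse transfer, because once $\e_0\sim 0$ is a parameter the set is no longer the transfer of a standard set. Worse, even if a standard $r$ did lie in it, the witness $s$ could itself be infinitesimal, and an infinitesimal modulus at an infinitesimal level $\e_0$ carries no standard continuity information about $f$ on $B_r$; so the conclusion of step four would not follow anyway.

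The fix is to drop the $\e$-by-$\e$ decomposition entirely, which is what the paper does: let $A=\{r\in\bbr_+: f|_{B_r}\ \text{is continuous on}\ B_r\}$. This is a standard set, its transfer is $\rz A=\{\Fr\in\rz\bbr_+:\rz f|_{B_\Fr}\ \text{is *continuous}\}$, and the hypothesis says precisely that $\d\in\rz A$. Hence $\rz A\neq\emptyset$, so $A\neq\emptyset$ by reverse transfer, and any $r\in A$ gives a continuous representative of $[f]$. Your detour through *uniform continuity on the *compact ball and moduli $(a,s)$ manufactures the uniformity-in-$\e$ problem that the direct reverse-transfer argument never encounters, because ``$f|_{B_r}$ is continuous'' is already a single first-order property of $r$ (with parameter $f$) that can be transferred in one step.
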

\begin{proof}
     The proof is trivial: if $A=\{r\in\bbr_+:f|_{B_r}\;\text{is continuous on}\; B_r\}$, then $\rz A=\{\Fr\in\rz\bbr_+:\rz f|_{\rz B_\Fr}\;\text{is *continuous on}\rz B_\Fr\;\}$ and the hypothesis says that $\rz A\not=\emptyset$ and so $A\not=\emptyset$.

%    By the lemma below, the hypothesis implies that $\rz f|_{B_\e}$ is *continuous on $B_\e$ for any  $0<\e\sim 0$. But then the set $\{\Fr\in\rz\bbr:\Fr>0\;\text{and}\;\rz f|_{B_\Fr}\;\text{is *continuous on}\;B_{\Fr}\}$ is an internal set containing all positive infinitesimals and therefore contains standard standard positive numbers, say $\rz a$ for some $a>0$ in $\bbr$.
%    But then we have that $\rz f|_{\rz B_a}$ is *continuous on $\rz B_a$ and so by reverse transfer, we have that $f|_{B_a}$ is continuous, and as, for positive $a$ sufficiently small, we have that $f|_{B_a}$ is an arbitrary representative in $[f]$, we have $f\in\SG^0$.
%    Note that we fudged on representatives, ie.,
\end{proof}
\begin{remark}
     Analogues of these two results for various regularity notions, eg., for homeomorphism germs, or differentiability classes, eg., germs of $C^k$ submersions, hold by almost identical arguments. We will return to these and their implications in later sections and in following papers.
     These results will allow one to work on monads where domains and ranges for standard functions are remarkably well defined and then lift to local standard results.
\end{remark}

    The following corollary indicates that the topology we define on germs will be independent of the infinitesimal neighborhood.
\begin{cor}\label{cor: *cont on B epsilon iff *cont on B del}
    Suppose that $[f]\in\SG_0$ and $\e,\d$ are positive infinitesimals. Then $\rz f$ is *continuous on $B_\d$ if and only if it is *continuous on $B_\e$.
\end{cor}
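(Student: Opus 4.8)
The plan is to deduce this from the preceding proposition (that if $[f]\in\SG_0$ and $\rz f|_{B_\d}$ is $^*$continuous on $B_\d$ for some positive infinitesimal $\d$, then $[f]\in\SG^0$), together with the basic fact that the germ at $0$ of an actual continuous function is, near $0$, represented by $^*$continuous restrictions on \emph{every} infinitesimal ball. So the argument is really a two-way street: $^*$continuity on one infinitesimal ball forces genuine (standard, local) continuity, and genuine local continuity transfers back down to $^*$continuity on any other infinitesimal ball.

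\textbf{Key steps, in order.} First I would assume $\rz f$ is $^*$continuous on $B_\d$. By the previous proposition this gives $[f]\in\SG^0$, i.e.\ there is an honest neighborhood $V$ of $0$ in $\bbr^n$ and a continuous representative $g\colon V\to\bbr$ with $[g]=[f]$. Second, I would invoke the standard statement: ``for every $0<r\in\bbr$ with $B_r\subset V$, $g|_{B_r}$ is continuous on $B_r$.'' This is just the restriction of a continuous function to a closed ball. Third, transfer: the set $A=\{r\in\bbr_+: B_r\subset V\text{ and } g|_{B_r}\text{ is continuous}\}$ is a standard set whose transfer is $\rz A=\{\Fr\in\rz\bbr_+:\rz B_\Fr\subset\rz V\text{ and }\rz g|_{\rz B_\Fr}\text{ is }^*\text{continuous}\}$. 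Since $A$ contains all sufficiently small positive reals, $\rz A$ contains all positive infinitesimals (by overflow/underflow, or directly: if $0<\e<r$ for some $r\in A$, then $\e\in\rz A$ by transfer of ``$s\le r\Rightarrow s\in A$'' valid for $s\in A$ once $r$ is small enough). In particular $\e\in\rz A$, so $\rz g|_{B_\e}$ is $^*$continuous on $B_\e$. Fourth, since $[g]=[f]$, Corollary \ref{cor: SG_0 -> F(B_delts) is R alg isomorph} gives $\rz g|_{B_\e}=\rz f|_{B_\e}$ as internal functions on $B_\e$ (they agree on a standard neighborhood of $0$, hence on $B_\e$ by transfer), so $\rz f$ is $^*$continuous on $B_\e$. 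This proves one direction; the other direction is identical with the roles of $\d$ and $\e$ interchanged.

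\textbf{Where the work is.} There is essentially no hard step here — the corollary is a soft consequence of the proposition it follows plus the algebra-isomorphism corollary. The only mild subtlety is making sure that ``$^*$continuous on $B_\e$'' really is the transfer of the predicate ``continuous on $B_r$'' uniformly in the radius, so that transfer of the family of statements $\{r\in A\}$ is legitimate; this is exactly the kind of ``family of $(x,r,s)$ statements transferred separately'' bookkeeping used elsewhere in the paper (cf.\ the proof of Lemma \ref{lem: basic equicont families lemma}), and it is routine. One should also note, for cleanliness, that the representative $g$ and the germ $f$ determine the same internal function on any infinitesimal ball, which is precisely Corollary \ref{cor: SG_0 -> F(B_delts) is R alg isomorph}; without that identification one would only get $^*$continuity of $\rz g$, not of $\rz f$, on $B_\e$.
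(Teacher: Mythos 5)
Your proof is correct and takes essentially the same route as the paper: the paper simply states ``this is clear from the previous proposition'' and leaves the reader to make the symmetry/transfer argument explicit, which is exactly what you do (invoke the proposition to get $[f]\in\SG^0_0$, then transfer continuity of a standard representative down to any infinitesimal ball). Your extra remark identifying $\rz g|_{B_\e}$ with $\rz f|_{B_\e}$ via Corollary~\ref{cor: SG_0 -> F(B_delts) is R alg isomorph} is a sound bit of bookkeeping, though it amounts to no more than the observation that representatives of the same germ agree on every infinitesimal ball.
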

\begin{proof}
    This is clear from the previous proposition.
\end{proof}
\begin{definition}\label{def: e is [f]-good}
    Let $\e,\d$ be positive infinitesimals,ie., $\e,\d\in\mu(0)_+$ with $\e\lll\d$. For a nonzero germ $[f]\in\SG_0$, we say that $0<\e\sim 0$ is $[f]$-good on $\rz B_\d$ if  the following holds. Suppose that for all \;$\xi,\z\in\rz B_\d$ with $|\xi-\z|$ sufficiently small, $|\rz f(\xi)-\rz f(\z)|<\e$ holds;  then $[f]$ has a continuous representative on some neighborhood $B_r$ of $0$.
    We say that $0<\e\in\rz\bbr$ is $\SG_0$-good if $\e$ is $[f]$-good for all nonzero $[f]$ in $\SG_0$.
\end{definition}
    Note that if $\e\in\rz\bbr$ is $[f]$-good (respectively $\SG_0$-good), and $0<\ov{\e}<\e$, then $\ov{\e}$ is $[f]$-good (respectively $\SG_0$-good).
    The proof of existence of $[f]$-good numbers of appropriate magnitudes will be carried out later; existence $\SG_0$-good numbers needs saturation.
    Clearly, but implicitly the magnitude of a $\SG_0$-good, or a $[f]$-good number, is dependent on the degree of pinching, $\e$, occurring on this ball; but also it depends on  the magnitudes of the size of the ball, $\d$, where this occurs: the relative magnitudes are critical. This should be kept in mind in the following

%\begin{proposition}
  Moreover, one can show that
{\it There exists  $0<\e\in\rz\bbr$ that is $\SG_0$-good.}
\noindent Since this fact will not be used here, the proof, which is an easy saturation argument, will be omitted.
%\end{proposition}
%\begin{proof}
%    NOT NEEDED FOR PAPER; LEFT OUT FOR NOW (EASY SATURATION)
%\end{proof}

\subsubsection{A good set of moduli}\label{subsec: good set of radii}
    We want to choose a collection of potential distances between germs on $B_\d$ that will separate germs without getting a discrete topology. One possible way is as follows. In the  world of standard functions on the unit interval $I\subset\bbr$, if $f$ is any nonzero bounded function on $I$, then we can always find a positive $r\in\bbr$ such that $r<\norm{f}=\sup\{|f(x)|:x\in I\}$, eg., a Hausdorff topology can be defined strictly in terms of positive numbers via $N_c=\{f:\norm{f}<c\}$ and these are in fact determined by eg., a (countable) sequence of positive numbers dense at $0$.   But in our case, there is no clear set of numerical moduli. Such a set must be infinite and as such cannot be internal (infinite *finite will not work; eg., the first obvious problem with such is that it will have a minimum!).
    Given this, we look to the standard functions (restricted to $\rz B_\d$) themselves for our set of moduli. The first guess would be to take the external set from  *supremums of collections of standard functions. This is quite analogous to the definition of the compact open topology on the space of continuous function between topological spaces, where here, the family of (guaging) open sets  in the range collapse to a single ideal infinitesimal element.

    Recall that we are assuming sufficient saturation in the following.
    Given sufficient saturation, there exists incomparably pairs of infinitesimals $\FI\subset\mu_+(0)\x\mu_+(0)$, defined as follows. Let $F(\bbr_+,0)$ denote the set of maps $f:(U,0)\ra (V,0)$ where $U,V$ are arbitrary interval neighborhoods of $0$ in $\bbr_+=\{r\in\bbr:r>0\}$.
\begin{definition}\label{def:  SM and SM^0}
\begin{enumerate}
\item    Let $\bsm{\SM}\subset F(\bbr_+,0)$ denote the set of $\{m:\bbr_+\ra\bbr_+:\;\text{if}\;r,s\in\bbr_+\;\text{then}\; r<t \Leftrightarrow m(r)<m(t)\;\text{and}\;t\ra 0\Leftrightarrow m(t)\ra 0\}$.
\item        Given $0<\d\sim 0$, we say that $\bsm{0<\e\sim 0}$ \textbf{is incomparably smaller than} $\bsm{\d}$ if for all $m\in\SM$, we have $\rz m(\d)>\e$. We may write this $\bsm{\e\lll\d}$ or write $(\d,\e)\in\FI$.
\item     Let $\bsm{\SM^0}=\{m\in \SM:m\;\text{is continuous on some neighborhood of}\;0\}$.
\item    Let $\bsm{\wt{\SM}}$ denote the set of $m\in F(\bbr_+,0)$ with possible value $0$ such that if $r,s$ are in the domain of $m$ with $r<s$, then $m(r)\leq m(s)$ and as above $\lim_{t\ra 0}m(t)=0$.
\end{enumerate}
\end{definition}

\begin{remark}\label{rem: e<<<d and f(d)<e -> f(d)=0}
    Clearly we have that $\wt{\SM}\supset\SM$. Note that if $f\in F(\bbr_+,0)$ has values in $[0,\infty)$, $\d\in\mu(0)_+$ and $\e\lll\d$, then by the definition, if $\rz f(\d)<\e$, then, in fact, $\rz f(\d)=0$.
    Also note that if $m\in\SM^0$, then $m^{-1}\in\SM$, where here $m^{-1}$ may be defined on an arbitrarily small neighborhood of $0$ in $\bbr_+$.
    Finally, note that if $m_1,m_2,\ldots$ is a sequence in $\SM$, then $\liminf_{j\ra\infty}m_j$ is an element of $\wt{\SM}$.
\end{remark}
    A proof of the existence of incomparable pairs of positive infinitesimals is an easy concurrence argument in an enlarged model.
    Given this, let's give a criterion for $[f]\in\SG_0$ to be a continuous germ.
    We first need a preparatory abstract lemma that gives a (new) standard interpretation of incomparable infinitesimals.
\begin{lem}\label{lem:  abstract k<<<d lemma}
    Suppose that $\om,\Om\in\rz\bbn$ are such that $\Om\ggg\om$, and let $r_j\in\bbr_+$ with $r_j\ra 0$ as $j\ra\infty$. Let $A:\bbr^n\x\bbr^n\ra\bbr_+$ and for $j\in\bbn$, let $S_A(j)$ denote the assertion:
\begin{align}
    \text{there is}\;r\in\bbr_+\;\text{such that}\;|x-y|<r\Rightarrow A(x,y)<r_j
\end{align}
    and for $n\in\bbn$, $\bbs_A(n,j)$ denote the statement $(x,y\in B_{r_n})\;\wedge\;S_A(j)$. Suppose that $\rz\bbs_A(\om,\Om)$ holds.
    Then there is $n_0\in\bbn$ such that $\bbs_A(n_0,j)$ holds for infinitely many $j\in\bbn$.
\end{lem}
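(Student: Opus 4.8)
The plan is to argue by contradiction using overflow/underflow, exploiting that $\Om \ggg \om$ encodes exactly the statement that $S_A(j)$ fails only for finitely-indexed-in-a-weak-sense $j$. Suppose, for contradiction, that for every $n \in \bbn$ the set $J_n \doteq \{ j \in \bbn : \bbs_A(n,j) \text{ holds} \}$ is finite. Fix $n$; then there is $N(n) \in \bbn$ with $\bbs_A(n,j)$ false for all $j \geq N(n)$. Unpacking: for $j \geq N(n)$, either some point fails to lie in $B_{r_n}$ (irrelevant once we work on a fixed ball) or $S_A(j)$ fails, meaning for every $r \in \bbr_+$ there exist $x,y$ with $|x-y| < r$ but $A(x,y) \geq r_j$. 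The key is to repackage the hypothesis $\bbs_A(n,j)$ as a statement $T(n,j)$ about a single real-valued function of $(n,j)$, e.g.\ let $\rho(n,j) = \inf\{ \sup_{|x-y|<r,\, x,y\in B_{r_n}} A(x,y) : r \in \bbr_+ \}$ (a modulus-of-continuity-type quantity on the ball $B_{r_n}$), so that $\bbs_A(n,j)$ holds iff $\rho(n,j) < r_j$.

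First I would transfer this reformulation: $\rz\bbs_A(\om,\Om)$ holds means $\rz\rho(\om,\Om) < \rz r_\Om$, where $\rz r_\Om \sim 0$ since $r_j \to 0$ and $\Om$ is infinite. Hence $\rz\rho(\om,\Om) \sim 0$ as well — in fact $\rz\rho(\om,\Om) < \rz r_\Om$, and by monotonicity of $j \mapsto \rho(n,j)$ in the relevant direction (or just directly) $\rz\rho(\om, \lambda) < \rz r_\lambda$ for all $\lambda$ in some internal initial segment containing $\Om$; more usefully, for the fixed $\om$, the internal set $\SC_\om \doteq \{ \lambda \in \rz\bbn : \rz\rho(\om,\lambda) < \rz r_\lambda \}$ contains $\Om$. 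Now the contradiction hypothesis says: for every standard $n$, $\bbs_A(n,j)$ holds for only finitely many $j$, so in particular the internal set $\{ \lambda : \rz\rho(\rz n, \lambda) < \rz r_\lambda\}$ contains no infinite $\lambda$ — equivalently, $\rz\rho(\rz n, \lambda) \geq \rz r_\lambda$ for all infinite $\lambda$, and so $\rz\rho(\rz n, \lambda)$ is noninfinitesimal for $\lambda$ infinite (since $r_{j} \to 0$ forces... wait — $r_\lambda \sim 0$, so this does NOT immediately give noninfinitesimality). So I must instead use: for standard $n$, there is standard $N(n)$ with $\rho(n,j) \geq r_j$ for $j \geq N(n)$; transferring, $\rz\rho(\rz n, \lambda) \geq \rz r_\lambda$ for $\lambda \geq \rz N(n)$, in particular for all infinite $\lambda$.

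The real content is now comparing $\om$ with the standard $n$'s. Since $\om$ is infinite, $\om \geq \rz N(n)$ for every standard $n$ — wait, that's false in general; $\om$ infinite only gives $\om > \rz n$ for standard $n$, not $\om \geq \rz N(n)$, but $N(n)$ is still standard so indeed $\om > \rz N(n)$ for each fixed standard $n$. But the displayed conclusion $\rz\rho(\rz n,\lambda) \geq \rz r_\lambda$ is about varying the \emph{first} argument over standards with the \emph{second} argument $\geq \rz N(n)$; I need a statement varying the first argument to the infinite value $\om$. Here is where I would use that $\Om \ggg \om$ rather than merely $\Om$ infinite: the hypothesis $\rz\bbs_A(\om,\Om)$ with $\Om$ incomparably larger than $\om$ should, via the definition of $\ggg$ (for every $m \in \SM$, $\rz m(\om) < \Om$, or the reciprocal formulation), let me replace $\om$ by a \emph{standard} index at the cost of shrinking $\Om$ to a still-infinite index. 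Concretely: define a standard function $m$ by $m(n) = $ (least $j$ such that $\bbs_A(n,j)$ holds, if it exists; this is where the contradiction hypothesis says the set is finite hence... no, finite sets can be nonempty). Let me instead set $m(n) = 1 + \max J_n$ when $J_n \neq \emptyset$ (finite by hypothesis) and $m(n)=1$ otherwise; then $m$ (after making it monotone, which lands it in $\wt\SM$ or $\SM$) satisfies: $\bbs_A(n,j)$ fails for $j \geq m(n)$. Transferring, $\rz\bbs_A(\om,\lambda)$ fails for $\lambda \geq \rz m(\om)$. But $\Om \ggg \om$ and $\rz m$ is the transfer of an element built from $\SM$-type data, so we can arrange $\Om \geq \rz m(\om)$, giving that $\rz\bbs_A(\om,\Om)$ fails — contradicting the hypothesis.

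\textbf{Main obstacle.} The delicate point is the last step: making the auxiliary standard function $m$ genuinely an element of $\SM$ (strictly increasing, tending to $0$ — or its reciprocal, so that ``$\ggg$'' applies) while still encoding ``$\bbs_A(n,j)$ fails for $j$ beyond $m(n)$.'' The contradiction hypothesis only gives, for each $n$, a finite exceptional set $J_n$; turning the sequence $n \mapsto \max J_n$ into a single $\SM$-function whose transfer at the infinite index $\om$ provably exceeds (or is exceeded by, depending on orientation) $\Om$ is exactly what the incomparability $\Om \ggg \om$ is designed to supply, but one must be careful about which of $\om,\Om$ plays the role of ``the small one'' and set up $m$ (and possibly pass to $1/m(1/\cdot)$ or to $\wt\SM$ and then dominate by an $\SM$-element, cf.\ Remark~\ref{rem: e<<<d and f(d)<e -> f(d)=0}) so the direction of the inequality comes out right. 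I expect the rest — the reformulation via $\rho$, the transfers, the use of $r_j \to 0$ — to be routine.
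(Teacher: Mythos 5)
Your third paragraph \emph{is} the paper's proof, and the obstacle you flag is routine, not a genuine gap. The paper's entire argument is: assume for contradiction that $J_n \doteq \{j\in\bbn : \bbs_A(n,j)\ \text{holds}\}$ is finite for every standard $n$; set $L(n)=\max J_n$ (and $L(n)=1$ when $J_n=\emptyset$); by transfer, $\rz\bbs_A(\om,\la)$ fails for every $\la>\rz L(\om)$, so the hypothesis $\rz\bbs_A(\om,\Om)$ forces $\Om\le\rz L(\om)$, which contradicts $\Om\ggg\om$.

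The monotonicity worry you single out as the ``main obstacle'' is dispatched by the standard running-max trick: replace $L$ with $\wt L(n)\doteq n+\max\{L(1),\dots,L(n)\}$, which is a strictly increasing standard map to $\bbn$ dominating $L$, so $\Om\le\rz L(\om)\le\rz\wt L(\om)$ still contradicts $\Om\ggg\om$ (in the reciprocal formulation for integers, $\Om\ggg\om$ says precisely that $\Om>\rz g(\om)$ for every standard increasing $g$). The paper does not even bother to do this adjustment; it asserts the contradiction directly, implicitly relying on the fact that any standard $L$ is dominated by a standard monotone $\wt L$. You correctly note (cf.\ the passage through $\wt\SM$ and Remark~\ref{rem: e<<<d and f(d)<e -> f(d)=0}) that one must orient the comparison carefully, but once you fix on ``$\Om$ beats every standard function of $\om$'' the direction comes out right.

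Your first two paragraphs are unnecessary detours. The $\rho$-reformulation adds nothing because $\bbs_A(n,j)$ is already a single internal-formula-valued predicate of $(n,j)$ and transfers as such; you never use $\rho$ in the final argument. The confusion in paragraph two about whether $\om\ge\rz N(n)$ is moot — the paper never compares $\om$ with $N(n)$ for standard $n$, it compares $\Om$ with $\rz L(\om)$, a single application of $\ggg$. Trimmed to its third paragraph plus the running-max fix, your proposal matches the paper's proof.
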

\begin{proof}
    If the conclusion does not hold, then for each $n\in\bbn$, there are only a finite number of $j\in\bbn$ such that $\bbs_A(n,j)$ holds. Therefore, for each $n\in\bbn$, the integer $L(n)\dot=\max\{j:\bbs_A(n,j)\;\text{holds}\}$. That is, $L:\bbn\ra\bbn$ is a map such that $\Om\leq\rz L(\om)$, a contradiction.
\end{proof}
\begin{proposition}\label{prop: germ continuity from k<<<d}
     Suppose that $0<\d\sim 0$ and $[f]\in\SG_0$ satisfies the following condition.
    For $\xi,\z\in\rz B_\d$  sufficiently small, we have that  $|\rz f(\xi)-\rz f(\z)|\lll\d$. Then $[f]\in\SG^0_0$.
\end{proposition}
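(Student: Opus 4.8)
The statement to prove is Proposition \ref{prop: germ continuity from k<<<d}: if $0<\d\sim 0$ and $[f]\in\SG_0$ is such that, for $\xi,\z\in\rz B_\d$ sufficiently close, $|\rz f(\xi)-\rz f(\z)|\lll\d$, then $[f]\in\SG^0_0$, i.e. $[f]$ has a continuous representative. The core idea is to unwind the definition of $\lll$ (incomparably smaller) so that the hypothesis becomes a statement about a countable family of standard moduli, and then run a reverse-transfer argument to extract a standard continuity statement for $f$ on a small ball. The role of the abstract Lemma \ref{lem:  abstract k<<<d lemma} is precisely to package the combinatorial step of passing from ``$|\rz f(\xi)-\rz f(\z)|$ is smaller than \emph{every} $\rz m(\d)$'' to ``on some \emph{fixed} standard ball $B_{r_{n_0}}$, the $\e_j$-continuity estimate holds for infinitely many $j$,'' where $\e_j\to 0$ is a chosen sequence of positive reals (e.g. $\e_j=r_j$).

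First I would set up the standard data feeding Lemma \ref{lem:  abstract k<<<d lemma}. Fix a representative $f:U\to\bbr$ with $f(0)=0$ on a convex neighborhood $U$, fix a decreasing sequence $r_j\to 0$ in $\bbr_+$ with $B_{r_1}\subset U$, and define $A(x,y)=|f(x)-f(y)|$ on $U\times U$. By Corollary \ref{cor: SG_0 -> F(B_delts) is R alg isomorph} the germ $[f]$ is faithfully recorded by $\rz f|_{B_\d}$. The hypothesis says $|\rz f(\xi)-\rz f(\z)|\lll\d$ for $\xi,\z\in\rz B_\d$ sufficiently close; I would first note that $\d$ itself can be written as $\rz\om^{-1}$ for a suitable infinite $\om\in\rz\bbn$ after composing with a standard strictly increasing modulus if necessary, or more directly: choose an infinite $\Om\in\rz\bbn$ and an infinite $\om\in\rz\bbn$ with $\Om\ggg\om$ such that $\rz B_\d\subset \rz B_{r_\om}$ (possible since $\d\sim 0$, so $\d< r_n$ for all standard $n$, hence $\d< r_\om$ for some infinite $\om$) and such that the ``sufficiently close'' threshold in the hypothesis, together with the smallness $|\rz f(\xi)-\rz f(\z)|<\rz r_\Om$, can be met — the latter because $|\rz f(\xi)-\rz f(\z)|\lll\d$ forces it to be below every $\rz m(\d)$ and in particular (choosing $m$ appropriately, or just directly from $\lll$) below the infinitesimal $\rz r_\Om$ for suitable infinite $\Om$; one picks $\Om$ infinite but small enough that this holds while still $\Om\ggg\om$, which is where a short saturation/overflow remark enters. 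This gives exactly that $\rz\bbs_A(\om,\Om)$ holds in the notation of Lemma \ref{lem:  abstract k<<<d lemma}.

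Then Lemma \ref{lem:  abstract k<<<d lemma} yields a fixed $n_0\in\bbn$ such that $\bbs_A(n_0,j)$ holds for infinitely many $j\in\bbn$, i.e. for infinitely many $j$ there is $r\in\bbr_+$ with $|x-y|<r\Rightarrow |f(x)-f(y)|<r_j$ for $x,y\in B_{r_{n_0}}$. Since the $r_j\to 0$, this is precisely the statement that $f$ is (uniformly) continuous on $B_{r_{n_0}}$: given any $\e>0$, pick $j$ among the infinitely many with $r_j<\e$, get the corresponding $r$, and conclude. Hence $f|_{B_{r_{n_0}}}$ is continuous, so $[f]\in\SG^0_0$. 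I would also double-check the reverse-transfer bookkeeping: the assertions $S_A(j)$ and $\bbs_A(n,j)$ are first-order statements about the standard objects $f$, $r_j$, $r_n$, so their transfers $\rz S_A(j)$, $\rz\bbs_A(n,j)$ are the literal $\rz$-ified versions, and the hypothesis on $\rz f|_{B_\d}$ must be massaged into the form $\rz\bbs_A(\om,\Om)$ — this translation is the one genuinely delicate point.

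\textbf{Main obstacle.} The crux is the translation step: showing that the hypothesis ``$|\rz f(\xi)-\rz f(\z)|\lll\d$ for $\xi,\z\in\rz B_\d$ sufficiently close'' actually implies $\rz\bbs_A(\om,\Om)$ for a compatible pair $\Om\ggg\om$. One must simultaneously arrange (i) $\rz B_\d\subseteq\rz B_{r_\om}$, forcing $\om$ not too large (roughly $r_\om\gtrsim\d$), and (ii) $|\rz f(\xi)-\rz f(\z)|<\rz r_\Om$, which needs $r_\Om$ above the actual infinitesimal spread of $\rz f$ on $\rz B_\d$, forcing $\Om$ not too large either — yet one needs $\Om\ggg\om$. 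The definition of $\lll$ (beating every standard modulus $m\in\SM$) is exactly what provides the slack: the spread is below $\rz m(\d)$ for \emph{all} $m$, and by composing the ``ball-radius'' modulus relating $\d$ to $r_\om$ with a rapidly-decaying modulus one can always find $\Om$ growing arbitrarily faster than $\om$ while still keeping $r_\Om$ above the spread. Making this quantifier juggling precise — possibly invoking a one-line concurrence/saturation argument to realize the required infinite pair $(\om,\Om)$ — is where the real work lies; the rest is the mechanical reverse transfer already abstracted into Lemma \ref{lem:  abstract k<<<d lemma}.
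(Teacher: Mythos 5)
Your proposal correctly identifies the paper's strategy: set $A(x,y)=|f(x)-f(y)|$, verify $\rz\bbs_A(\om,\Om)$ for suitable infinite $\Om\ggg\om$, then invoke Lemma~\ref{lem:  abstract k<<<d lemma} to get a standard $n_0$ with $\bbs_A(n_0,j)$ holding for infinitely many $j$, and read off uniform continuity of $f$ on $B_{r_{n_0}}$. You are also right that the paper's own proof elides the translation step almost entirely, so that step is the genuine content. But your translation has the crucial ball inclusion in the \emph{wrong direction}, and this is not cosmetic.

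Unwinding the definition, $\rz\bbs_A(\om,\Om)$ asserts: there is $r\in\rz\bbr_+$ such that for \emph{all} $\xi,\z\in\rz B_{r_\om}$ with $|\xi-\z|<r$, one has $|\rz f(\xi)-\rz f(\z)|<\rz r_\Om$. The hypothesis of the proposition controls $|\rz f(\xi)-\rz f(\z)|$ only for $\xi,\z\in\rz B_\d$. To deduce $\rz\bbs_A(\om,\Om)$ you therefore need every relevant pair $\xi,\z\in\rz B_{r_\om}$ to already lie in $\rz B_\d$, i.e.\ you need $\rz B_{r_\om}\subseteq\rz B_\d$, equivalently $\rz r_\om\leq\d$. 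You chose the reverse inclusion $\rz B_\d\subset\rz B_{r_\om}$ (and your overflow justification indeed secures only that). With your choice, the annulus $\rz B_{r_\om}\ssm\rz B_\d$ is a nontrivial region on which the hypothesis is silent, and the derivation of $\rz\bbs_A(\om,\Om)$ collapses. Correspondingly, the sign of constraint~(i) in your ``main obstacle'' paragraph is flipped: the inclusion forces $\om$ to be \emph{large enough} (so that $r_\om$ drops below $\d$), not ``not too large.''

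The fix is straightforward: take $\om$ to be the least $\Fj\in\rz\bbn$ with $\rz r_\Fj\leq\d$, so $\rz r_\om\leq\d<\rz r_{\om-1}$ and $\rz B_{r_\om}\subseteq\rz B_\d$. With $r_j=1/j$ (or any sequence with boundedly varying ratios), $\rz r_\om$ differs from $\d$ by a standardly bounded factor, so $\e_0\lll\d$ (with $\e_0$ the $*$sup of the spread of $\rz f$ on $\rz B_\d$ at scale $\rho$) still gives $\e_0\lll\rz r_\om$; for each standard strictly increasing $h:\bbn\ra\bbn$ the modulus $m_h(1/k)=1/h(k)$ then gives $\e_0<\rz m_h(1/\om)=\rz r_{\rz h(\om)}$, so each set $\{\Om\in\rz\bbn:\Om>\rz h(\om),\ \rz r_\Om>\e_0\}$ is nonempty, and saturation yields $\Om\ggg\om$ with $\rz r_\Om>\e_0$. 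This is exactly the saturation remark you flagged, now with the constraints pointing the right way. The rest of your proposal (applying the lemma, concluding continuity on $B_{r_{n_0}}$) matches the paper's argument.
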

\begin{proof}
    Using the notation of the previous, let $A(x,y)\dot=|f(x)-f(y)|$ and let $S_f(j)$ denote the statement $S_A(j)$ of the previous lemma and $\bbs_f(n,j)$ the corresponding $\bbs_A(n,j)$. Then to say that $\bbs_f(\om,\Om)$ holds is precisely our hypothesis as $\rz r_\om\lll\rz r_\Om$. Hence, we have the conclusion: there is $n_0\in\bbn$ such that $\bbs_f(n_0,j)$ holds for infinitely many $j\in\bbn$. That is, there are $j_1,j_2,\ldots\in\bbn$, such that for each $k\in\bbn$ the following holds:
\begin{align}
    \text{there is}\;r\in\bbr_+\;\text{such that}\;|x|,|y|<r_{j_{n_0}},|x-y|<r\Rightarrow |f(x)-f(y)|<r_{j_k}
\end{align}
    That is, since $r_{j_k}\ra 0$ as $k\ra\infty$, this says that on the ball of radius $r_{j_{n_0}}$ intersected with the open set where the representative $f$ for $[f]$ is defined, we can, for any $k$,  make $|f(x)-f(y)|<r_{j_k}$ by choosing $|x-y|$ sufficiently small, ie.,  $f$ is continuous; eg., $[f]\in\SG^0_0$.
\end{proof}
    The above is some motivation for the next definitions.
\begin{definition}\label{def: J^delta_f numbers}
    Given our $0<\d\sim 0$, let $0<\k\sim 0$, $\k$ incomparably smaller than $\d$. Then for each $[f]\in\SG_0$ we have the following definitions.
\begin{enumerate}
  \item [(a)]   Define $\FJ_f^{\k,\d}=\rz\sup\{|\rz f(\xi)-\rz f(\z)|:|\xi-\z|<\k\;\text{for $\xi,\z$ sufficiently small and}\;\xi,\z\in\rz B_\d\}$.
  \item [(b)]     If\; $0<r\in\bbr$, let $\ov{J}_f^r=\lim_{t\ra 0}\sup\{|f(x)-f(z)|:|x-z|\leq t; x,z,x-z\in B_r\}$.
  \item [(c)]    For\; $0<\d\in\rz\bbr$ infinitesimal, define $\rz\ov{J}^\d_f=\rz\lim_{\Ft\ra 0}\sup\{|\rz f(\xi)-\rz f(\z)|:|\xi-\z|\leq\Ft:\xi,\z,\xi-\z\in B_\d\}$.
\end{enumerate}
\end{definition}

\begin{definition}\label{def: SS^delta families of numbers}
     Given this, define
\begin{enumerate}
 \item [(1)]    Define $=\wh{\SS}^{\k,\d}=\{\FJ_f^{\k,\d}:[f]\in\SG_0\}\smallsetminus\{0\}$.
 \item [(2)]    If\; $0<\d\in\rz\bbr$, let $\ov{\SS}^\d=\rz\{\ov{J}^\d_f:[f]\in\SG_0\}\ssm\{0\}$.
\end{enumerate}
%    With this let $\ov{\SS}=\{\Fr+\rz a:\Fr\in\SS\;\text{and}\;a\in\bbr_+\}$.
\end{definition}
   {\it  We will later find that the previous collection of numbers is asymptotically comparable with the collection, $\SN_\d$, to be defined next. Those defined above will allow us to prove that convergence in the topology  $\tau_\d$  on $\SG_0$ (shortly to be defined), is like uniform convergence. The following collection of numbers will essentially play the role of the appropriate distances between germs, our set of moduli. It is critical that we show that  these sets are at least asymptotically intertwined so that our topology has the good properties that come from these sets being asymptotically equivalent moduli. The remainder of this section achieves this goal along with verifying the good convergence properties. }
\begin{definition}\label{def: SN_delta families of numbers}
     If $0<r\in\bbr$ and $g:B_r\ra\bbr$, write $\norm{g}_r\dot=\sup\{|g(x)|:x\in B_r\}$ so that $\rz\norm{g}_\d\dot=\rz\sup\{|\rz g(\xi)|:\xi\in B_\d\}$, we may write this as $\norm{g}_\d$.
     Let $\SN_\d$ denote the set $\{\rz\norm{\rz g}_\d:[g]\in\SG_0\}$ and $\SN_\d^0=\{\rz\norm{\rz g}_\d:[g]\in\SG^0_0\}$
     As the ring structure will later play a role, let $\wh{\SN}_\d=\SN_\d\cup-\SN_\d$ and define $\wh{\SN}_\d^0$ similarly.
\end{definition}
    Note that $\SN^0_\d$ consists of a set of positive infinitesimal and $0$, whereas $\SN_\d$ contains positive noninfinitesimals (equal to or infinitesimally close to standard values taken by elements of $\SG_0$ at $0$.) We also clearly have $\SN^0_\d\subset\SN_\d\cap\mu(0)$.

    We have the following easy information.
\begin{lem}
     Viewing $\rz\bbr$ as an $^\s\bbr$ algebra, we have that $\wh{\SN}_\d$ is an $^\s\bbr$ subalgebra of $\rz\bbr$ such that $^\s\bbr<\wh{\SN}$ where $\SN_\d$ is the subset of positive elements in $\rz\bbr$. $\wh{\SN}^0_\d$ is also a $^\s\bbr$ algebra and as it is a subring of $\mu(0)$, it does not contain $^\s\bbr$.
\end{lem}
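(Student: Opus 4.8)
The statement to prove is that $\wh{\SN}_\d = \SN_\d \cup (-\SN_\d)$ is an $^\s\bbr$-subalgebra of $\rz\bbr$ containing $^\s\bbr$ (with $\SN_\d$ its positive part), and that $\wh{\SN}^0_\d$ is a $^\s\bbr$-subalgebra contained in $\mu(0)$, hence not containing $^\s\bbr$. The plan is to work through Corollary \ref{cor: SG_0 -> F(B_delts) is R alg isomorph}, which gives the $\bbr$-algebra isomorphism $\SR_\d : \SG_0 \xrightarrow{\cong} {}^\s F(B_\d)$, $[g] \mapsto \rz g|_{B_\d}$, and then to transport the obvious closure properties of the $^\s$-supremum along this isomorphism.

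First I would record that $\SN_\d = \{\rz\norm{\rz g}_\d : [g] \in \SG_0\}$ is exactly the set of $^\s$-supremum-norms of the standard functions $\rz g|_{B_\d}$ over $B_\d$. Since the map $[g] \mapsto \rz g|_{B_\d}$ is surjective onto $^\s F(B_\d)$, it suffices to check closure at the level of bounded internal functions of the form $\rz g|_{B_\d}$. The key algebraic facts, all obtained by transfer of the standard sup-norm inequalities on a compact ball in $\bbr^n$, are: (i) $\rz\norm{\rz f \cdot \rz g}_\d \le \rz\norm{\rz f}_\d \cdot \rz\norm{\rz g}_\d$, so products stay in $\SN_\d$ (using that $[f][g] = [fg]$ from the corollary); (ii) for $c \in {}^\s\bbr$, $\rz\norm{c \cdot \rz g}_\d = |c| \cdot \rz\norm{\rz g}_\d$, so ${}^\s\bbr$-scalar multiples stay in $\wh{\SN}_\d$; (iii) $\rz\norm{\rz f + \rz g}_\d \le \rz\norm{\rz f}_\d + \rz\norm{\rz g}_\d$; the subtlety here is that a sum of two sup-norms need not itself be a sup-norm of a single function, so additive closure of $\wh{\SN}_\d$ is not automatic from the triangle inequality alone. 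To get genuine additive closure I would argue as follows: given $[f], [g] \in \SG_0$, pick representatives with disjoint-behavior — concretely, replace $f$ and $g$ by germs whose $\rz$-values on $B_\d$ realize the sum $\rz\norm{\rz f}_\d + \rz\norm{\rz g}_\d$ as a single sup-norm, e.g. by using a germ in $n$ variables whose restriction along one coordinate half-space equals (a scaled copy of) $f$ and along the complementary half-space equals $g$ plus the appropriate constant; such a germ exists in $\SG_0$ and its $\rz$-sup-norm over $B_\d$ equals the desired sum. (If $n=1$ one can still produce such a germ by a piecewise construction, continuous or not as needed; since $\SN_\d$ allows arbitrary, not necessarily continuous, germs, this is unobstructed.) This realizes $\rz\norm{\rz f}_\d + \rz\norm{\rz g}_\d \in \SN_\d$, and together with sign changes gives that $\wh{\SN}_\d = \SN_\d \cup (-\SN_\d)$ is closed under addition.

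With closure under addition, ${}^\s\bbr$-scalar multiplication, and multiplication established, $\wh{\SN}_\d$ is an $^\s\bbr$-subalgebra of $\rz\bbr$. That $^\s\bbr < \wh{\SN}_\d$ in the stated sense: every constant germ $[g] \equiv c$ with $c \in {}^\s\bbr$ has $\rz\norm{\rz g}_\d = |c|$, so $^\s\bbr_{\ge 0} \subseteq \SN_\d$, and with sign changes $^\s\bbr \subseteq \wh{\SN}_\d$; moreover since every nonzero germ has strictly positive $\rz$-sup-norm over $B_\d$ (a nonzero germ is nonzero on a standard ball by the corollary, hence nonzero somewhere in $B_\d$), $\SN_\d$ is exactly the set of positive elements of $\wh{\SN}_\d$. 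For $\wh{\SN}^0_\d$, the same three closure arguments apply verbatim once one checks that the piecewise construction used for additive closure can be carried out with \emph{continuous} germs (glue $f$ and a suitably translated $g$ across a sphere after arranging matching boundary values — here one does need a small bump/partition-of-unity adjustment, which is routine), and then $\wh{\SN}^0_\d$ is an $^\s\bbr$-algebra. Finally $\SN^0_\d \subseteq \mu(0)$: a continuous germ $g$ with $g(0)=0$ has, by Proposition \ref{prop: germ continuity from k<<<d} and continuity at $0$, $\rz g(\xi) \sim 0$ for all $\xi \in B_\d$ (since $|\xi| \sim 0$), so $\rz\norm{\rz g}_\d \sim 0$; hence $\wh{\SN}^0_\d \subseteq \mu(0)$, and since $\mu(0)$ contains no nonzero standard real, $^\s\bbr \not\subseteq \wh{\SN}^0_\d$.

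\textbf{Main obstacle.} The one genuinely non-formal point is additive closure of $\SN_\d$ (and of $\SN^0_\d$): proving that the \emph{sum} of two elements of $\SN_\d$ is again of the form $\rz\norm{\rz h}_\d$ for a single germ $h$, rather than merely bounded by such. My expectation is that this is handled by an explicit disjoint-support-style construction of $h$ from representatives of the two given germs (available because $\SG_0$ is the full ring of germs, with no constraints beyond the value at $0$ in the $\SG_0^0$ case), and that the continuous version requires only a routine smoothing of the gluing region. Everything else is transfer of elementary sup-norm estimates combined with Corollary \ref{cor: SG_0 -> F(B_delts) is R alg isomorph} and, for the $\mu(0)$-containment, Proposition \ref{prop: germ continuity from k<<<d}.
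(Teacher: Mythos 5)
Your outline --- verify that $\SN_\d$ is closed under $+$, $\cdot$, and $^\s\bbr$-scaling, note that constant germs put $^\s\bbr$ inside $\wh{\SN}_\d$, and observe that continuous germs vanishing at $0$ have infinitesimal $\d$-norm --- matches the skeleton of the paper's (very terse) proof, but both of your closure verifications have gaps. For multiplication, the sub-multiplicativity $\norm{\rz f\cdot\rz g}_\d\le\norm{\rz f}_\d\cdot\norm{\rz g}_\d$ points the wrong way: it bounds the $\d$-norm of the product germ $fg$ by the product $\Fr\Fs$ of the two given elements of $\SN_\d$, whereas what you must exhibit is a germ $h$ with $\norm{\rz h}_\d$ \emph{equal} to $\Fr\Fs$ exactly; an inequality with slack produces no such $h$. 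For addition, the disjoint-support gluing cannot work as described, because sup-norms of pieces glued along a hyperplane combine by $\max$, not by $+$: a germ equal to a scaled $f$ on $\{x_n\ge 0\}$ and to $g$ plus a constant on $\{x_n<0\}$ has $\d$-norm the larger of the two piecewise sups. To force the sup up to $\Fr+\Fs$ by a constant shift of $g$ you would need to shift by $\Fr=\norm{\rz f}_\d$ itself, a nonstandard number, and then the shifted $g$ is no longer a standard germ and contributes nothing to $\SN_\d$.

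The missing idea is the radial modulus construction that the paper uses explicitly a few lines later, in the proof of Lemma~\ref{lem: SN_d=SM_d}. Given $[f]\in\SG_0$, set $m_f(t)=\norm{f}_t$ for small $t>0$; this is a standard nondecreasing function with $\rz m_f(\d)=\norm{\rz f}_\d=\Fr$, and similarly $m_g$ with $\rz m_g(\d)=\Fs$. Define $h_+(x)=m_f(|x|)+m_g(|x|)$ and $h_\times(x)=m_f(|x|)\,m_g(|x|)$. These are radially symmetric and nondecreasing in $|x|$, so their $\d$-sups are attained on $|\xi|=\d$ and give $\norm{\rz h_+}_\d=\rz m_f(\d)+\rz m_g(\d)=\Fr+\Fs$ and $\norm{\rz h_\times}_\d=\Fr\Fs$ on the nose, which proves closure of $\SN_\d$ under $+$ and $\cdot$. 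When $f,g\in\SG_0^0$, the moduli $m_f,m_g$ are continuous and vanish at $0$, hence $h_+,h_\times\in\SG_0^0$ and $\SN^0_\d$ is also closed, with none of the partition-of-unity smoothing you anticipated. The remainder of your argument is fine, except that for $\SN^0_\d\subset\mu(0)$ you only need the nonstandard criterion for continuity at $0$ together with the transfer of the extreme value theorem on $B_\d$; Proposition~\ref{prop: germ continuity from k<<<d} is not required.
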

\begin{proof}
      We just need to verify that if $\Fr,\Fs\in\SS$, then $\Fr+\Fs$ and $\Fr\Fs$ are also elements of $\SS$ as the rest is obvious or follows immediately from this.
\end{proof}

    We will now return to some properties of the various $J_f$'s.
    Note that if $\ov{\d}\ll\d$, in the case where we need $\la=\k$ to be incomparably smaller than $\la$, then we may need to choose a much smaller $\ov{\k}$ corresponding to it and then our $\SS^{\ov{\d}}_{\ov{\k}}$ will consist of a set of positive elements that will cluster around $0$ in a much tighter fashion.  In general, as $f$ is standard, we clearly we have  $\FJ_f^\d\geq\rz\ov{J}^\d_f$, but not necessarily related to $\ov{J}^r_f$ and so eg., if $[f]\in\SG_0^0$, then $\ov{J}^r_f$ may not be $0$ for some representatives, we will have $\rz \ov{J}^\d_f=0$ and $\ov{\FJ}^\la_f\sim 0$. Consider the following function. Let $S\subset\bbr$ such that both $S$ and $\bbr\ssm S$ are dense in $\bbr$. Define $f:\bbr\ra\bbr$ by $f(x)=x$ if $x\in S$ and $0$ otherwise and finally define $f$ to be constant for $|x|>r/2$. Then one can see that, if $0<\k\lll\d\sim 0$, then $0=\ov{J}^r_f$, $\d=\rz\ov{J}^\d_f\ggg\k=\FJ^\k_f>0$. Of course, $f$ defines a noncontinuous germ that is continuous at $0$.  Furthermore, again because $f$ is standard, we have the following tighter relation.

    Although the next lemma is obvious, it's important in the considerations on $[f]$-good numbers.
\begin{lem}\label{lem: 2J_f bndd below by J_f bar}
    For each $[f]\in\SG_0$, $\FJ^{\k,\d}_f\geq\rz\ov{J}^\d_f$.
\end{lem}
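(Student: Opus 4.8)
The plan is to reduce this to an elementary monotonicity fact about moduli of oscillation of standard functions and then invoke transfer. First I would fix $[f]\in\SG_0$ together with $\d\in\mu(0)_+$ and $\k\in\mu(0)_+$ with $\k\lll\d$, and work with the standard function
\[
    G(t,r)=\sup\{|f(x)-f(z)|:|x-z|\le t,\ x,z,x-z\in B^n_r\},
\]
defined (with values in $\bbr\cup\{+\infty\}$) for all sufficiently small positive real $r$ and all positive real $t$; the choice of representative $f$ for $[f]$ is immaterial by corollary \ref{cor: SG_0 -> F(B_delts) is R alg isomorph}. The point to record is that $t\mapsto G(t,r)$ is non-decreasing, since shrinking $t$ shrinks the indexing set of the supremum, so that $\lim_{t\ra 0}G(t,r)=\inf_{t>0}G(t,r)$, and this limit is precisely $\ov{J}^r_f$ of definition \ref{def: J^delta_f numbers}(b). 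Hence for every admissible standard $r$ and every standard $t>0$ one has $\ov{J}^r_f\le G(t,r)$.

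Next I would transfer this last inequality and evaluate it at $r=\d$ and at the infinitesimal $t=\k/2$. Since $\rz\ov{J}^\d_f=\rz\lim_{\Ft\ra 0}\rz G(\Ft,\d)$ by definition \ref{def: J^delta_f numbers}(c), this produces
\[
    \rz\ov{J}^\d_f\ \le\ \rz G(\k/2,\d)=\rz\sup\{|\rz f(\xi)-\rz f(\z)|:|\xi-\z|\le\k/2,\ \xi,\z,\xi-\z\in\rz B_\d\}.
\]
Finally I would compare the indexing set on the right with the one defining $\FJ^{\k,\d}_f$ in definition \ref{def: J^delta_f numbers}(a): any pair $(\xi,\z)$ with $|\xi-\z|\le\k/2$ and $\xi,\z\in\rz B_\d$ (the further restriction $\xi-\z\in\rz B_\d$ only shrinks the set) satisfies $|\xi-\z|<\k$, lies in $\rz B_\d$, and is automatically ``sufficiently small'' since points of $\rz B_\d$ are infinitesimal; so the indexing set above is contained in that of $\FJ^{\k,\d}_f$, whence $\rz G(\k/2,\d)\le\FJ^{\k,\d}_f$. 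Chaining the two inequalities gives $\rz\ov{J}^\d_f\le\FJ^{\k,\d}_f$.

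There is no real obstacle here — the lemma is, as the text notes, obvious. The only things that need a moment of care are bookkeeping points: routing the argument through the radius $\k/2<\k$ in order to convert the non-strict inequality $|\xi-\z|\le\Ft$ occurring in the definition of $\ov{J}$ into the strict inequality $|\xi-\z|<\k$ used in $\FJ$ (equivalently, using that $\rz\ov{J}^\d_f$ is the $*$-infimum of $\rz G(\cdot,\d)$ over all positive radii), and observing that the side conditions ``$\xi-\z\in B_\d$'' and ``$\xi,\z$ sufficiently small'' do not interfere. One should also read the inequality in $\rz\bbr\cup\{+\infty\}$, which is consistent: if $\rz\ov{J}^\d_f=+\infty$ then $\rz G(\k/2,\d)=+\infty$ and therefore $\FJ^{\k,\d}_f=+\infty$ as well.
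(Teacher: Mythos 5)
Your argument is correct. The paper's own ``proof'' of this lemma consists of the single sentence ``This is clear,'' so there is nothing to compare against; your write-up is a valid and careful elaboration of exactly the monotonicity-plus-transfer reasoning the author is implicitly relying on, including the two genuine bookkeeping points (converting $\le\Ft$ into $<\k$ via a smaller radius, and checking that the extra side condition $\xi-\z\in B_\d$ in definition \ref{def: J^delta_f numbers}(c) only shrinks the indexing set, so the containment goes the right way).
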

\begin{proof}
     This is clear
\end{proof}
\begin{lem}\label{lem: SN_d=SM_d}
     Given $\d\in\mu(0)_+$, if $\SM_\d=\{\rz m(\d):m\in\SM\}=\{\norm{\rz m}_\d:m\in\SM\}$, then $\SM_\d=(\SN_\d\cap\mu(0))\ssm\{0\}$.
     If $\wt{\SM}_\d$ is similarly defined, then $\wt{\SM}_\d=\SM_\d\cup\{0\}$.
\end{lem}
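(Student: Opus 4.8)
The statement has two parts. For the first, I would show the two containments $\SM_\d \subset (\SN_\d \cap \mu(0))\smallsetminus\{0\}$ and $(\SN_\d \cap \mu(0))\smallsetminus\{0\} \subset \SM_\d$ separately. The forward containment is essentially immediate: if $m \in \SM$, then $m$ is a strictly increasing function $\bbr_+ \to \bbr_+$ with $\lim_{t\to 0}m(t)=0$, so $m$ extends (by $m(0)=0$ and constancy past some point, or simply by viewing $m$ as an element of $\un{F}(1,1)_0$ after trivial modification) to a representative of a germ $[m]\in\SG_0$. Since $m>0$ on $\bbr_+$ and $m$ is increasing, $\rz m(\d) = \rz\sup\{|\rz m(\xi)|:\xi\in B_\d\} = \norm{\rz m}_\d$ for $\d\in\mu(0)_+$ (the sup over the ball $|\xi|\le\d$ is attained at $\xi=\d$), so $\rz m(\d)\in\SN_\d$; and $\rz m(\d)\sim 0$ because $\lim_{t\to 0}m(t)=0$ transfers to say $\rz m$ sends infinitesimals to infinitesimals. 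Finally $\rz m(\d)\neq 0$ since $m>0$ on $\bbr_+$ and $\d>0$. Hence $\SM_\d \subset (\SN_\d\cap\mu(0))\smallsetminus\{0\}$.

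For the reverse containment, suppose $\Fr \in (\SN_\d \cap \mu(0))\smallsetminus\{0\}$, so $\Fr = \norm{\rz g}_\d = \rz\norm{\rz g}_\d$ for some $[g]\in\SG_0$, with $0 \neq \Fr \sim 0$. The task is to produce an honest $m \in \SM$ with $\rz m(\d) = \Fr$. The natural candidate is $m(t) \dot= \sup\{|g(x)| : |x|\le t\}$, i.e. the "radial maximal function" of $g$; this is manifestly nondecreasing in $t$, and $\lim_{t\to 0}m(t) = |g(0)| = 0$ provided $g(0)=0$ — and indeed $[g]\in\SG_0$ means $g(0)=0$ by definition of $\SG_0$. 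By transfer, $\rz m(\d) = \rz\sup\{|\rz g(\xi)| : |\xi|\le\d\} = \norm{\rz g}_\d = \Fr$. The remaining issue is that $m$ as defined is only \emph{non}decreasing, i.e. lies in $\wt\SM$ rather than $\SM$, and it may fail to be strictly increasing (if $g$ vanishes on a neighborhood of some radius) or fail to have the clean form required. This is exactly where the second sentence of the lemma helps: the identity $\wt\SM_\d = \SM_\d \cup \{0\}$ is really the content, and one proves the reverse containment for $\wt\SM$ directly (every $\Fr\in\SN_\d\cap\mu(0)$, including $0$, is $\rz m(\d)$ for $m = $ radial max of $g\in\SG_0$, which lies in $\wt\SM$), and then observes that when $\Fr\neq 0$ one can perturb $m$ to a strictly increasing $m'\in\SM$ with the \emph{same} value at $\d$ — e.g. replace $m(t)$ by $\max\{m(t), \Fr_0 \cdot h(t)\}$ no; rather, add a tiny strictly increasing standard bump $t\mapsto c\,t$ below the level where it would change $\rz m(\d)$, using that $\Fr\lll\d$-type slack is \emph{not} available here, so instead: since $\Fr\neq 0$ and $\Fr\sim 0$, pick a standard increasing $\phi\in\SM$ with $\rz\phi(\d) < \Fr$ (possible because $\Fr$ is a positive infinitesimal and $\SM_\d$ is coinitial among positive infinitesimals — or more elementarily, because $\rz m(\d)=\Fr>0$ forces $m$ not identically $0$ near $0$, so $m$ already majorizes some genuine element of $\SM$), and set $m' = \max\{m, \phi\}$, which is in $\wt\SM$, is strictly increasing wherever $\phi$ dominates, and agrees with $m$ at $\d$. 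Then take $m'' = \frac12(m' + \mathrm{id})$ composed appropriately to force strict monotonicity while keeping $\rz m''(\d)=\Fr$ — the cleanest route is: $m$ itself is \emph{continuous from the right} and nondecreasing with $m>0$ past any radius where $g\not\equiv 0$, and one checks that the strictly-increasing requirement in $\SM$ can always be arranged by adding $\epsilon\,t$ for infinitesimally...

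\textbf{Streamlined plan.} Rather than the above, the clean argument is: prove $\wt\SM_\d = \SM_\d\cup\{0\}$ \emph{first} (it is the robust statement), by showing (i) $\wt\SM_\d \supset \SM_\d$ trivially and $0\in\wt\SM_\d$ (take $m\equiv 0$); (ii) $\wt\SM_\d \subset \SM_\d\cup\{0\}$: given $m\in\wt\SM$ with $\rz m(\d)=\Ft\neq 0$, the function $m_*(t) \dot= m(t) + t$ lies in $\SM$ (strictly increasing since $t\mapsto t$ is, and limit $0$), and $\rz m_*(\d) = \Ft + \d$; this is not quite $\Ft$, so instead use $m_*(t) = \sup_{s\le t} m(s) $, then $m$ already, then add that... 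OK — the genuinely clean fix: for $m\in\wt\SM$ with $\rz m(\d)=\Ft\neq 0$, $m$ is not identically zero near $0$, so there is $r_0>0$ with $m(r_0)>0$; define $m'(t) = m(t)$ for $t\ge$ (point where $m$ first becomes positive) and interpolate strictly increasingly down to $0$ below that — a standard construction — giving $m'\in\SM$ with $m'=m$ on $[r_1,\infty)$ for some standard $r_1>0$; since $\d < r_1$ we'd need $\d$ not infinitesimal, contradiction. So this doesn't work either when $g$ vanishes near $0$ — but if $g$ vanishes on $B_{r_1}$ then $\rz g$ vanishes on $B_\d$ so $\Ft = 0$, excluded. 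Hence $m(t)>0$ for all $t>0$ small, $m$ is nondecreasing and positive on $(0,r_1)$, and then $m(t) + t\cdot\mathbf{1}$... Let me just state it: since $m>0$ on all sufficiently small $t>0$ and is nondecreasing with limit $0$, the function $m'(t)\dot= \inf\{m(s) + (t-s): 0<s\le t\}$ (the largest $1$-Lipschitz-from-below minorant, or simply $\min(m(t), \text{something})$) — \textbf{I would replace all this with}: $\SM$ can be assumed closed under the operation $m\mapsto m + \epsilon\cdot\mathrm{id}$ only up to relabeling, and the honest statement proved in the paper's style is that \emph{$\SM_\d$ and $\wt\SM_\d\setminus\{0\}$ are coinitial}, but the lemma claims equality, which holds because for each $m\in\wt\SM$ with $\rz m(\d)\ne 0$ there is a strictly increasing $m'\le m$ in $\SM$ with $\rz m'(\d) = \rz m(\d)$: take $m'(t) = \sup\{\phi(t): \phi\in\SM, \phi\le m\text{ and }\rz\phi(\d)\le\rz m(\d)\}$ and verify via transfer and the non-vanishing of $m$ near $0$ that the sup is attained in $\SM$ at value exactly $\rz m(\d)$. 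The main obstacle is precisely this: massaging a merely nondecreasing modulus into a strictly increasing one in $\SM$ while pinning its value at the single infinitesimal point $\d$ — a soft but fiddly point, handled by the radial-maximal-function representation $m(t)=\sup_{|x|\le t}|g(x)|$ together with the observation that $g(0)=0$ for $[g]\in\SG_0$, which forces $m\in\wt\SM$, and the transfer identity $\rz m(\d)=\norm{\rz g}_\d$ which gives both containments at once once the strict-monotonicity cosmetic issue is dispatched.

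\textbf{Summary of the plan.} (1) Forward inclusion $\SM_\d\subset(\SN_\d\cap\mu(0))\smallsetminus\{0\}$: each $m\in\SM$ is (a representative of) a germ in $\SG_0$ since $m\in\un F(1,1)_0$; monotonicity gives $\rz m(\d)=\norm{\rz m}_\d\in\SN_\d$; $\lim_{t\to0}m=0$ transfers to $\rz m(\d)\sim 0$; positivity gives $\rz m(\d)\neq 0$. (2) Reverse inclusion: given $0\neq\Fr\sim 0$ in $\SN_\d$, write $\Fr=\norm{\rz g}_\d$ with $[g]\in\SG_0$, hence $g(0)=0$; set $m(t)=\sup_{|x|\le t}|g(x)|$, a nondecreasing function with $\lim_{t\to0}m(t)=g(0)=0$, so $m\in\wt\SM$; transfer gives $\rz m(\d)=\norm{\rz g}_\d=\Fr$, so $\Fr\in\wt\SM_\d$; and the non-vanishing of $g$ near $0$ (forced by $\Fr\neq0$) lets one replace $m$ by a strictly increasing $m'\in\SM$ with $\rz m'(\d)=\Fr$, giving $\Fr\in\SM_\d$. (3) Second assertion $\wt\SM_\d=\SM_\d\cup\{0\}$: "$\supset$" is (1) plus $0=\rz 0(\d)$; "$\subset$" is step (2) applied verbatim, the value $0$ arising exactly when the chosen $g$ vanishes on $B_\d$. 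The hard part, as noted, is the strict-monotonicity adjustment in step (2); everything else is transfer plus the definition of $\SG_0$.
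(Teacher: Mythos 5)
Your approach is the same as the paper's: the forward inclusion via the radial function $x\mapsto m(|x|)$, and the reverse inclusion via the radial maximal function $m(t)=\norm{g}_t$, which lands only in $\wt\SM$. (One small slip: $m(t)\to 0$ does not follow from $g(0)=0$ alone, since $g$ need not be continuous at $0$; it follows from $\Fr=\norm{\rz g}_\d\sim 0$ together with monotonicity of $m$, by transfer.) The paper's own proof simply asserts that $t\mapsto\norm{f}_t$ lies in $\SM$ and defers the $\wt{\SM}_\d=\SM_\d\cup\{0\}$ bridge to the appendix, so you have correctly isolated the one nontrivial point. But your proposal does not close it, and none of your attempted repairs can: adding $\e\cdot t$ or multiplying by $(1+t)$ changes the value at $\d$; a standard, strictly increasing, positive perturbation cannot vanish at $\d$; and the supremum-over-minorants candidate is neither shown to be attained nor shown to be strictly increasing.

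The difficulty is genuine and not merely cosmetic, because strictly increasing standard functions are injective. Take $g$ to be the dyadic step germ, $g(x)=2^{-k}$ for $2^{-k-1}\le|x|<2^{-k}$, so that $\norm{\rz g}_\d=2^{-\nu}$ where $2^{-\nu-1}\le\d<2^{-\nu}$. If $m\in\SM$ had $\rz m(\d)=2^{-\nu}=\rz g(\d)$, then $\d\in\rz E_m$ for $E_m=\{t:m(t)=g(t)\}$; injectivity of $m$ forces $E_m$ to meet each dyadic block in at most one point, so $E_m$ is a standard sequence $(e_k)$ and $\d=\rz e_\nu$. In a sufficiently saturated model one can choose $\d$ in $\rz[2^{-\nu-1},2^{-\nu})$ avoiding $\rz e_\nu$ for every standard sequence, and then $2^{-\nu}$ lies in $(\SN_\d\cap\mu(0))\ssm\{0\}$ but not in $\SM_\d$; so exact equality should not be expected. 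What does hold, and is all the subsequent coinitiality arguments use, is that $\SM_\d$ and $(\SN_\d\cap\mu(0))\ssm\{0\}$ are mutually coinitial: one direction is your forward inclusion, and for the other, given $m\in\wt{\SM}$ with $\Ft=\rz m(\d)>0$, the function $m'(t)=t^{-1}\int_0^t m(s)\,ds$ is continuous, strictly increasing (because $m$ is nonconstant on every interval $(0,t)$), positive, tends to $0$, and satisfies $m'\le m$, whence $0<\rz m'(\d)\le\Ft$ with $\rz m'(\d)\in\SM_\d$. I would either prove and use that coinitial form of the lemma, or weaken the definition of $\SM$ to nondecreasing moduli.
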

\begin{proof}
    Just note that if $[f]\in\SG_0$, and $f\in[f]$ is any representative with $\rz\norm{\rz f}_\d\sim 0$, then on the neighborhood where it's defined $t\mapsto m(t)=\norm{f}_t\in\SM$  so that $\rz m(\d)=\norm{\rz f}_\d$ giving $\SN_\d\subset\SM_\d$. On the other hand, if $m\in\SM$, then $f(x)= m(|x|)\in\un{F}(n,1)_0$ with $\norm{\rz f}_\d=\rz m(\d)\sim 0$.
     The proof of the last assertion is left to the appendix.

\end{proof}
\begin{lem}\label{lem: SS_delta=SN_delta}
    We have that $\ov{\SS}^\d=\SN_\d$; that is, if $\Fr\in \ov{\SS}$ ; then there is $[g]\in\SG_0$ such that $\norm{g}_\d=\Fr$ and conversely, if $\Fs\in\SN_\d$, then there is $[f]\in\SG_0$ such that $J_{[f]}=\Fs$.
    In particular, for every $\Fr\in\wh{\SS}^{\k,\d}$, there is $\Fs\in\SN_\d$ with $\Fs\leq\Fr$.
%    Similarly, we also have that $\wh{\SS}^\d=\SN_\d$, in particular, we have $\ov{\SS}^\d=\wh{\SS}^\d$.
\end{lem}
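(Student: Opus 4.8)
The plan is to prove the two inclusions $\ov{\SS}^\d\sbq\SN_\d$ and $\SN_\d\sbq\ov{\SS}^\d$ by explicit germ constructions, and then deduce the closing ``in particular'' from Lemma~\ref{lem: 2J_f bndd below by J_f bar} together with that equality. For $\ov{\SS}^\d\sbq\SN_\d$, fix $[f]\in\SG_0$ and let $g_f$ be the standard germ represented near $0$ by $x\mapsto\ov{J}^{|x|}_f$. This is well defined: $\ov J^0_f=0$ because $B_0=\{0\}$, and $\ov{J}^\rho_f$ depends only on $[f]$ once $\rho$ is small enough that all representatives of $f$ agree on $B_\rho$; so $[g_f]\in\SG_0$ with $g_f(0)=0$. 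Since $\rho\mapsto\ov{J}^\rho_f$ is nonnegative and nondecreasing (enlarging the ball only enlarges the relevant suprema), for all sufficiently small standard $r$ we have $\norm{g_f}_r=\sup_{|x|\le r}\ov{J}^{|x|}_f=\ov{J}^r_f$. Transferring this identity and evaluating at $\d$ gives $\rz\norm{\rz g_f}_\d=\rz\ov{J}^\d_f$, so every nonzero element of $\ov{\SS}^\d$ lies in $\SN_\d$. (When $f$ is unbounded near $0$ both quantities are infinite and one matches them via, e.g., $x\mapsto 1/|x|$.)

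For $\SN_\d\sbq\ov{\SS}^\d$, fix $[g]\in\SG_0$ with $\Fs\doteq\rz\norm{\rz g}_\d\neq 0$ and set $M(r)\doteq\norm{g}_r$ on some $B_{r_0}$ where a representative of $g$ is defined; $M$ is nondecreasing with $M(0)=0$. Build an auxiliary set $S\sbq\bbr^n$ by choosing, for each $r\in(0,r_0]$, a countable dense proper subset $D_r$ of the sphere $\{|x|=r\}$, a countable dense $Q\sbq\{|x|>r_0\}$, and putting $S=Q\cup\bigcup_{0<r\le r_0}D_r$; then $S$ is dense, its complement is dense (each sphere meets $S$ only in the countable set $D_r$), and $0\notin S$. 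Define $f$ on $B_{r_0}$ by $f(x)=M(|x|)$ for $x\in S$ and $f(x)=0$ otherwise, with $f(0)=0$; then $[f]\in\SG_0$. I claim $\ov{J}^r_f=M(r)$ for small $r$. Since $0\le f\le M(r)$ on $B_r$, clearly $\ov{J}^r_f\le M(r)$. Conversely, fix small $t>0$ and $\e>0$; if $M(r)<\e$ there is nothing to show, so assume $M(r)\ge\e$ and pick $y_0\in B_r$ with $|g(y_0)|>M(r)-\e$, noting $|y_0|>0$ and, for $t$ small, $|y_0|>t$. As $D_{|y_0|}\sbq S$ is nonempty, pick $x\in S$ with $|x|=|y_0|$, and pick $z$ on the segment from $x$ towards $0$ with $|z-x|=t|x|/r\le t$ and $z\notin S$ (the complement of $S$ being dense along that segment); then $z,x-z\in B_r$ and $|f(x)-f(z)|=M(|x|)=M(|y_0|)\ge|g(y_0)|>M(r)-\e$. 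Letting $t\ra 0$, then $\e\ra 0$, gives $\ov{J}^r_f\ge M(r)$. Transferring $\ov{J}^r_f=\norm{g}_r$ and evaluating at $\d$ yields $\rz\ov{J}^\d_f=\Fs$, so $\Fs\in\ov{\SS}^\d$. The step I expect to be the real obstacle is producing this $S$: one needs a single set that is simultaneously dense, has dense complement, and meets every sphere $\{|x|=r\}$, so that $M$ is realized exactly as an oscillation profile $\ov{J}^r_f$.

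For the final clause, let $\Fr=\FJ^{\k,\d}_f\in\wh{\SS}^{\k,\d}$, so $\Fr\neq 0$. By Lemma~\ref{lem: 2J_f bndd below by J_f bar}, $\Fr\ge\rz\ov{J}^\d_f$. If $\rz\ov{J}^\d_f\neq 0$, then $\rz\ov{J}^\d_f\in\ov{\SS}^\d=\SN_\d$ by the equality just proved, and $\Fs\doteq\rz\ov{J}^\d_f$ does the job; if $\rz\ov{J}^\d_f=0$, then already $\Fs=0\in\SN_\d$ (the zero germ lies in $\SG_0$) satisfies $\Fs\le\Fr$, and a positive such $\Fs$ can be extracted if needed from the coinitiality of $\SN_\d\cap\mu(0)$ recorded in Lemma~\ref{lem: SN_d=SM_d} and its corollaries.
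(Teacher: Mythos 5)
Your proof is correct and follows essentially the same route as the paper's: the inclusion $\ov{\SS}^\d\sbq\SN_\d$ via the monotone oscillation profile $x\mapsto\ov{J}^{|x|}_f$, and the reverse inclusion by modulating a function realizing $\norm{g}_\d$ with the indicator of a dense set having dense complement (the paper multiplies a continuous representative of the norm-realizing germ by $\chi_K$; you use the radial majorant $M(|x|)\chi_S$, a cosmetic variant that actually sidesteps the paper's interchange of limit and supremum). The one imprecision is your choice of $z$: for an arbitrary choice of the $D_r$'s the complement of $S$ need not be dense along the radial segment through $x$, but since $S^c$ is dense in every sphere of radius less than $|x|$, a point $z\notin S$ with $|z|<|x|$ and $|z-x|<t$ exists off that segment, and your estimate goes through unchanged.
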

\begin{proof}
    Letting, for any $0<r\in\bbr$, $f:B_r\ra\bbr$ be a representative for $\Fr=\rz\norm{f}_\d$, define $g:B_r\ra\bbr$ as follows:
\begin{align}
    g(x)=\lim_{t\ra 0}\sup\{|f(y)-f(z)|:|y-z|\leq t\;\text{with}\;y,z,y-z\in B_{|x|}\}.
\end{align}
    Then if $x,y, z\in B_r$ with $|x|\leq |y|$, and $|z|=r$, then we have $g(x)\leq g(y)\leq g(z)=\norm{g}_r$ which is, by definition equal to $\lim_{t\ra 0}\sup\{|f(y)-f(x)|:|y-x|\leq t,x,y,y-x\in B_r \}=J_f^r$.  But then, by transfer we have that $\rz\norm{\rz g}_\d=\ov{J}^\d_f$, ie., $\ov{\SS}^\d\subset \SN_\d$. Note that a simpler version of this argument gets $\wh{\SS}^\d\subset\SN_\d$ by instead letting $g(x)\dot=\sup\{|f(y)-f(z)|:y,z,y-z\in B_{|x|}\}$.

    On the other hand, if $\Fr\in\SN_\d$, we will find $[g]\in\SG_0$ such that $\ov{J}_g=\Fr$. From the constructions above, we may assume that the  $[f]\in\SG_0$ with $\norm{f}_\d=\Fr$ has representative $f$ that is continuous on some neighborhood $U$ of $0$. Let $K\subset U$ be a dense subset such that $U\smallsetminus K$ is also dense and $\Fr=\rz\sup\{\rz f(\xi):\xi\in\rz K\cap B_\d\}$. Define $h:U\ra \{0,1\}$ by $h(x)=1$ if $x\in K$ and $h(x)=0$ if $x\in U\ssm K$ and let $g(x)=f(x)h(x)$ for $x\in U$.
    Now, on the one hand, by density of $K$ and continuity of $f\geq 0$, we have that for each $x\in B_r$ that
\begin{align}\label{eqn: writing f(x) as a limsup}
    f(x)=\lim_{t\ra 0}\sup_y\{|g(x)-g(y)|:y,x-y\in B_r,|x-y|<t\}.
\end{align}
    On the other hand, if $V_t\dot=\{|g(x)-g(y)|:x,y,x-y\in B_r, |x-y|<t\}$, and for a given $x\in B_r$, $V_{x,t}\dot=\{|g(x)-g(y)|:y,x-y\in B_r,|x-y|<t\}$, then we have the decomposition $V_t=\cup\{V_{x,t}:x\in B_r\}$ so that
\begin{align}
   \sup_{x,y}V_t=\sup\{\sup_y V_{x,t}:x\in B_r\}
\end{align}
    and so
\begin{align}
    \lim_{t\ra 0}\sup_{x,y\in B_r}V_{t}=\lim_{t\ra 0}\sup\{\sup_y V_{x,t}:x\in B_r\}=\sup\{\lim_{t\ra 0}\sup_y V_{x,t}:x\in B_r\}.
\end{align}
    But, by  expression \ref{eqn: writing f(x) as a limsup} and the definition of $V_{x,t}$, the last term is just $\norm{f}_r$ and by definition, the first term is $\ov{J}^r_g$ for the function $g$ defined above in terms of the given $f$, ie., we have constructed $g$ so that $\ov{J}^r_g=\norm{f}_r$. But then our assertion follows by transfer.
    The last assertion follows from lemma \ref{lem: 2J_f bndd below by J_f bar} and the first part of this lemma.
\end{proof}
%\begin{lem}\label{lem: SN is gen by norms of continuous maps }
%    We have that $\SN_\d=\{\norm{\rz f}_\d:[f]\in\SG_0^0\}$
%\end{lem}
%\begin{proof}
%    Suppose that $\Fr\in\SN_\d$. First, of course, we may assume that we may choose  $[f]\in\SG_0$ such that $\norm{\rz f}_\d=\Fr$ to be pseudomonotone. Next, ***********
%\end{proof}
    The following lemma will be important in constructing a counterexample to $\tau$-continuity of composition with a general continuous germ.
\begin{lem}\label{lem: SN_d doesnot have d-incomparable no}
    $\SN_\d$ does not contain an incomparable range relative to $\d$. That is, for all $\Fr\in\SN_\d$, then  $(\d,\Fr)\not\in\FI$.
\end{lem}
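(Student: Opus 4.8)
The plan is to show that for every germ $[g]\in\SG_0$, the number $\Fr=\rz\norm{\rz g}_\d$ fails to be incomparably smaller than $\d$, i.e. there is some $m\in\SM$ with $\rz m(\d)\le\Fr$. By Lemma \ref{lem: SN_d=SM_d} we know $\SN_\d\cap\mu(0)\setminus\{0\}=\SM_\d$, so a positive infinitesimal element $\Fr\in\SN_\d$ is already of the form $\rz m(\d)$ for some $m\in\SM$, and then $\rz m(\d)=\Fr$ exhibits $\Fr$ as comparable to $\d$. Hence the only cases that need real work are $\Fr=0$ (trivially not incomparable, since incomparable numbers are by definition positive infinitesimals, or one simply notes $0$ is excluded from the relevant notion) and $\Fr$ a positive noninfinitesimal, i.e. $\Fr\nsim 0$. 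But a positive noninfinitesimal cannot be incomparably smaller than the infinitesimal $\d$: taking for instance $m(t)=t\in\SM$, we get $\rz m(\d)=\d\sim 0<\Fr$, so $\Fr$ is not less than $\rz m(\d)$, contradicting $\e\lll\d$ which requires $\rz m(\d)>\e$ for \emph{every} $m\in\SM$.

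So the proof I would write is short: let $\Fr\in\SN_\d$, and write $\Fr=\rz\norm{\rz g}_\d$ for some $[g]\in\SG_0$. First I would dispose of the case $\Fr\nsim 0$ using the identity function $m(t)=t$ as above: since $\d=\rz m(\d)<\Fr$ fails the inequality $\rz m(\d)>\Fr$ required by $(\d,\Fr)\in\FI$, we conclude $(\d,\Fr)\notin\FI$. Then, in the case $0<\Fr\sim 0$, I would invoke Lemma \ref{lem: SN_d=SM_d} to obtain $m\in\SM$ with $\rz m(\d)=\Fr$; concretely, the representative $g$ of $[g]$ gives the monotone function $t\mapsto\norm{g}_t$, which lies in $\SM$ (after possibly restricting to a neighborhood of $0$ where $g$ is defined and shrinking so that it is strictly increasing — or one may just use the $\wt\SM$ version and then perturb), and its transfer evaluated at $\d$ is exactly $\Fr$. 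Since $\rz m(\d)=\Fr$ is not strictly greater than $\Fr$, again $(\d,\Fr)\notin\FI$. The case $\Fr=0$ is excluded because elements of $\FI$ are pairs of positive infinitesimals.

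The only mild subtlety — and the step I would be most careful about — is the monotonicity/strict-monotonicity bookkeeping needed to land genuinely in $\SM$ rather than merely $\wt\SM$: the function $t\mapsto\norm{g}_t$ is nondecreasing and tends to $0$, but need not be strictly increasing. However, for the purpose of this lemma it suffices to find \emph{any} $m\in\SM$ with $\rz m(\d)\le\Fr$ (the definition of $\e\lll\d$ demands $\rz m(\d)>\e$ for all $m\in\SM$, so exhibiting one $m$ with $\rz m(\d)\le\Fr$ already defeats incomparability). Thus I can simply take $m(t)=t$ in every case, since $\d=\rz m(\d)$ and $\d\le\Fr$ whenever $\Fr$ is a positive infinitesimal not smaller than $\d$ — but I must handle the possibility that $0<\Fr<\d$. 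In that last situation I would instead use Lemma \ref{lem: SN_d=SM_d} (or directly Remark \ref{rem: e<<<d and f(d)<e -> f(d)=0} applied to $f(t)=\norm{g}_t$, which forces $\norm{g}_\d=0$ if $\norm{g}_\d\lll\d$, contradicting $\Fr>0$) to rule out $(\d,\Fr)\in\FI$. So the cleanest route is: if $(\d,\Fr)\in\FI$ with $\Fr=\rz\norm{\rz g}_\d>0$, apply the definition of $\lll$ to the function $m(t)=\norm{g}_t\in\wt\SM$ — and by Remark \ref{rem: e<<<d and f(d)<e -> f(d)=0}, $\rz m(\d)<\Fr$ is impossible unless $\rz m(\d)=0$, i.e. $\Fr=0$, a contradiction. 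I would present it in this last form, citing Remark \ref{rem: e<<<d and f(d)<e -> f(d)=0} and Lemma \ref{lem: SN_d=SM_d}, which makes the argument a two-line contradiction.
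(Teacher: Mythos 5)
Your proof is correct and takes essentially the same route as the paper: the paper also defeats incomparability by evaluating the monotone function $t\mapsto\norm{g}_t$ (scaled by $\tfrac12$ and taken from a pseudomonotone representative) at $\d$ to produce an element of $\SM$ whose value at $\d$ does not exceed $\Fr$. Your extra attention to the strict-monotonicity requirement for membership in $\SM$ versus $\wt\SM$ is a legitimate wrinkle that the paper's own proof also passes over lightly.
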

\begin{proof}
    We must verify that if $\Fr\in\SN_\d$ that there is $f\in\SM$ such that $\rz f(\d)\leq \Fr$. We may assume that $\Fr=\norm{\rz g}_\d$ for some pseudomonotone $[g]\in\SG_0^0$ so that for $r<s$ sufficiently small, $r<s$, $\norm{g}_r\leq\norm{g}_s$. Given this, for $r\in\bbr_+$, sufficiently small define $f(r)=\f{1}{2}\norm{g}_r$. Then $f\in\SM$ by the previous sentence and upon transfer we get that $\rz f(\d)=\f{1}{2}\norm{\rz g}_\d<\Fr$.
\end{proof}

%    With this definition note that if $[f], [g]\in\SG_0$ with $[f]\not=[g]$ and $n\in\bbn$, then $\rz \sup_{\xi\in B_\d}|\rz f(\xi)-\rz g(\xi)|/n\in\ov{\SS}$ and if nonzero, $\rz\inf_{\xi\in B_\d}|\rz f(\xi)-\rz g(\xi)|/n\in\ov{\SS}$.

\begin{remark}
      For our given $\d\sim 0$, we have defined three (external) sets of numbers $\SS,\ov{\SS}$ and $\SN$. Above we have verified that $\ov{\SS}=\SN$. We also know from Lemma \ref{lem: 2J_f bndd below by J_f bar} that we can bound elements of $\SS$ below by elements of $\ov{\SS}$, hence if $\SS$ has $[f]$-good numbers, so does $\ov{\SS}=\SN$, the set of numbers that will form the moduli for our system of neighborhoods of the $0$ germ.
      It's not that important for our purposes that $\ov{\SS}$ contain $\SG_0$-good numbers, but, if we wish to prove that $\SG_0^0$ is closed in $\SG_0$, it's critical that it contain $[f]$-good numbers for each nonzero $[f]\in\SG_0$.
      Finally, note that the last lemma will allow us to give a characterizations of the topology $\tau_0$ invariant of the choice of the given infinitesimal $\d$.
\end{remark}
\begin{lem}\label{lem: SN_d contains f-good numbers}
    If $[f]\in\SG_0$ is nonzero, then  $\SN_\d$ contains $[f]$-good numbers for $\rz B_\d$.
\end{lem}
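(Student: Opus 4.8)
The goal is to show that for a fixed nonzero $[f]\in\SG_0$, the set of moduli $\SN_\d=\{\rz\norm{\rz g}_\d:[g]\in\SG_0\}$ contains at least one $[f]$-good number on $\rz B_\d$; recall from Definition \ref{def: e is [f]-good} that $\e$ being $[f]$-good means: if $|\rz f(\xi)-\rz f(\z)|<\e$ holds for all sufficiently small $\xi,\z\in\rz B_\d$ with $|\xi-\z|$ sufficiently small, then $[f]$ has a continuous representative near $0$. Because $\SN^0_\d=\{\rz\norm{\rz g}_\d:[g]\in\SG^0_0\}\subset\SN_\d$ and any value smaller than an $[f]$-good number is again $[f]$-good, it suffices to produce a single $\e\in\SN^0_\d$ with $\e\lll\d$ (incomparably smaller than $\d$), and then invoke Proposition \ref{prop: germ continuity from k<<<d}. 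Indeed, if $\e\lll\d$ and $|\rz f(\xi)-\rz f(\z)|<\e$ for $\xi,\z$ small with $|\xi-\z|$ small, then a fortiori $|\rz f(\xi)-\rz f(\z)|\lll\d$, so Proposition \ref{prop: germ continuity from k<<<d} gives $[f]\in\SG^0_0$, which is exactly the $[f]$-goodness condition (in fact this $\e$ is $\SG_0$-good, but we only need $[f]$-goodness).

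So the plan reduces to: \emph{exhibit $\e\in\SN^0_\d$ with $\e\lll\d$.} First I would note that by Lemma \ref{lem: SN_d=SM_d} we have $\SM_\d=\{\rz m(\d):m\in\SM\}=(\SN_\d\cap\mu(0))\ssm\{0\}$, and since each such $m$ can be realized by the continuous radial function $x\mapsto m(|x|)$, in fact $\SM_\d\subset\SN^0_\d$. Thus it is enough to find $m\in\SM$ (or $m\in\SM^0$) with $\rz m(\d)\lll\d$. By the definition of $\lll$, this means $\rz m(\d)$ must be smaller than $\rz n(\d)$ for \emph{every} $n\in\SM$. A single $m$ cannot beat all $n$ pointwise, but it can at an infinitesimal argument: this is precisely the content of the existence of incomparable pairs of positive infinitesimals noted in the text (the easy concurrence/saturation argument after Definition \ref{def: SM and SM^0} and Remark \ref{rem: e<<<d and f(d)<e -> f(d)=0}). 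Concretely, I would form the family of internal sets $\{\,\{\e\in\mu(0)_+:\e<\rz n(\d)\}:n\in\SM\,\}$; each finite subfamily has nonempty intersection (take $\e$ below the minimum of finitely many $\rz n(\d)$, all of which are positive infinitesimals since $n(t)\to 0$), and there are fewer of these than our saturation cardinal, so the intersection $\{\e\in\mu(0)_+:\e\lll\d\}$ is nonempty. Pick such an $\e_0$. Now $\e_0$ is a positive infinitesimal, so the function $t\mapsto e_0(t)$... — wait, $\e_0$ is a number, not a function; instead I set $m(t)=\min\{t,\;c\}$ scaled appropriately, or more simply: since $\e_0\in\mu(0)_+$, Lemma \ref{lem: SN_d=SM_d} (read backwards) already tells us $\e_0\in\SM_\d$ provided $\e_0\ssm\{0\}$, i.e. $\e_0\ne 0$, which it is. Hence $\e_0=\rz m(\d)$ for some $m\in\SM$, and $\e_0\in\SM_\d\subset\SN^0_\d$.

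Putting it together: $\e_0\in\SN^0_\d\subset\SN_\d$ and $\e_0\lll\d$, so by the argument in the first paragraph $\e_0$ is $[f]$-good for $\rz B_\d$, proving the lemma. Alternatively, and perhaps cleaner for the write-up, I would go through $\ov{\SS}^\d$: by Lemma \ref{lem: SS_delta=SN_delta} we have $\ov{\SS}^\d=\SN_\d$, and by Lemma \ref{lem: 2J_f bndd below by J_f bar} every element of $\wh{\SS}^{\k,\d}$ dominates some element of $\ov{\SS}^\d=\SN_\d$; combined with Proposition \ref{prop: germ continuity from k<<<d}, an $\e\in\SN_\d$ with $\e\lll\d$ forces continuity of any germ pinched below $\e$. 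The one step deserving care — the \textbf{main obstacle} — is the saturation/concurrence argument guaranteeing an infinitesimal $\e_0$ that is incomparably smaller than $\d$, i.e. below $\rz n(\d)$ for all standard $n\in\SM$ simultaneously; this requires that the saturation cardinal exceeds $\card(\SM)$, which is part of the blanket saturation hypothesis, and it requires checking the finite intersection property, namely that finitely many $\rz n_1(\d),\dots,\rz n_k(\d)$ are all positive infinitesimals so that something strictly positive lies below their minimum — this holds because each $n_i$ is strictly increasing with $n_i(t)\to 0$, hence $\rz n_i(\d)\in\mu(0)_+$ by transfer. Everything else is bookkeeping with the identifications $\SM_\d\subset\SN^0_\d\subset\SN_\d$ and $\ov{\SS}^\d=\SN_\d$ already established.
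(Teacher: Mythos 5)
Your plan hinges on a step that cannot be carried out: you want to ``exhibit $\e\in\SN^0_\d$ with $\e\lll\d$,'' but Lemma \ref{lem: SN_d doesnot have d-incomparable no} says exactly that no element of $\SN_\d$ is incomparably smaller than $\d$. The concrete manifestation of the error is in your saturation paragraph. You correctly argue (modulo replacing your external sets $\{\e\in\mu(0)_+:\e<\rz n(\d)\}$ by the internal sets $\{\e\in\rz\bbr_+:0<\e<\rz n(\d)\}$) that saturation produces an $\e_0\in\mu(0)_+$ with $\e_0<\rz n(\d)$ for \emph{every} $n\in\SM$. But you then read Lemma \ref{lem: SN_d=SM_d} backwards to conclude $\e_0\in\SM_\d$. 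That lemma says $\SM_\d=\{\rz m(\d):m\in\SM\}=(\SN_\d\cap\mu(0))\ssm\{0\}$; it does \emph{not} say that every positive infinitesimal lies in $\SM_\d$, and in fact $\SM_\d$ is a proper (very sparse) subset of $\mu(0)_+$. Worse, your conclusion is self-contradictory: if $\e_0\in\SM_\d$ then $\e_0=\rz m(\d)$ for some $m\in\SM$, yet you chose $\e_0$ precisely so that $\e_0<\rz m(\d)$ for every $m\in\SM$, giving $\e_0<\e_0$.

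The point you are missing is that the lemma does not require (and cannot yield) a $\d$-incomparable element of $\SN_\d$, only an $[f]$-good one \emph{for the given $[f]$}, and that is a much weaker property that is downward closed. The paper's route is to look at the oscillation $\FJ^{\k,\d}_f\in\wh{\SS}^{\k,\d}$: if $[f]\notin\SG^0_0$ this is a \emph{nonzero} element of $\wh{\SS}^{\k,\d}$, and any modulus below it makes the pinching hypothesis in Definition \ref{def: e is [f]-good} fail for $[f]$, hence is vacuously $[f]$-good (if $[f]\in\SG^0_0$ every modulus is $[f]$-good outright). Lemma \ref{lem: SS_delta=SN_delta} (last assertion) then supplies an element of $\SN_\d$ below $\FJ^{\k,\d}_f$, and Proposition \ref{prop: germ continuity from k<<<d} with $\k\lll\d$ is what makes the threshold work. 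You cite the right lemmas in your ``alternatively'' paragraph, but you still anchor them to the impossible $\e\lll\d$ in $\SN_\d$ rather than to the function-specific oscillation, so the alternative has the same gap.
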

\begin{proof}
    We know the following: lemma \ref{lem: SS_delta=SN_delta} implies that  for every $\Fr\in\wh{\SS}^{\la,\d}$, there is  $\Fs\in\SN_\d$ with $\Fs\leq\Fr$, and also if $\Fr,\Fr'\in\mu(0)_+$ with $\Fr'<\Fr$ and $\Fr$ is $[f]$-good for $\rz B_\la$, then it follows that $\Fr'$ is $[f]$-good for $\rz B_\la$. Hence, it suffices to show that $\wh{\SS}^{\la,\d}$ contains $[f]$-good numbers. But note that, as $\d\lll\la$, if $|\rz f(\xi)-\rz f(\z)|$ is coinitial with $\SN_\d$ for $\xi,\z,\xi-\z\in\rz B_\la$ with $|\xi-\z|$ sufficiently small, then $[f]\in\SG^0_0$ by proposition \ref{prop: germ continuity from k<<<d}.
\end{proof}
    Assuming we wish to include magnitudes akin to those of $f$-good numbers in our range of neighborhood diameters, then with the previous lemma we get an upper bound on these moduli on  how finely we wish to resolve our germs. In the next subsection, we will see that this resolution range works well.
%    then compactness of $B_r$ and hence the uniform continuity of $f$ on $B_r$ implies that we can construct a good number for $f$ in terms of $\sup$ and $\inf$

\subsection{Topology on germs with fixed target}\label{sec: topology-fixed target}
    This section is the lions' share of the work here. We will verify that a convergent net of continuous germs is a continuous germ.  We will prove that $\SG^0$ and its higher dimensional analogs have good topological algebraic properties in Subsections \ref{subsec: top properties ring structure} and \ref{subsec: top properties of germ composition}. With a fair amount of effort, we prove in Subsection \ref{subsec: top independence from delta} that the topology defined in terms of the given infinitesimal $\d$, ie., $\tau^\d$, is independent of the choice of infinitesimal. In Subsection \ref{sec: relationship with nongerm convergence}, we prove results giving  correspondences between convergence of a sequence of functions on a neighborhood of $0$ in $\bbr^n$ and $\tau$ convergence of an extended net of `germs'.

    Here we will develop our topology only on $\SG^0_0$

    For a given $0<\d\sim 0$, using $\SN_\d$  (or equivalently $\SN^0_\d$ as we shall) see we wish to construct a system of neighborhoods of $[0]$, the zero germ in $\SG_0$.
\begin{definition}\label{def: of topology  at 0 germ}
    Given $\Fr\in\SN_\d$, let $U_\Fr=U^\d_\Fr\subset\SG_0$ denote the (external) set $\{[f]\in\SG_0:\norm{\rz f}_\d<\Fr\}$ where $\norm{\rz f}_\d=\rz\sup_{\xi\in B_\d}|\rz f(\xi)|$ . By definition, $\Fr\in\SN_\d$ implies that $U_\Fr$ is nonempty, in fact, infinite.  Let $\tau_0=\tau^\d_0=\{U^\d_\Fr:\Fr\in\SN_\d\}$ and note that by the definition of $\SN_\d$, $\tau_0$ is closed under finite intersections. If $\d$ is fixed in the discussion, we will often write $\tau$ or $\tau_0$ and $U_\Fr$ leaving off the $\d$'s.
    Suppose that $(D,<)$ is a directed set and that $([f_d]:d\in D)$ is a $D$ net in $\SG_0$. Then $([f_d]:d\in D)$ converges in $\tau^\d$ to the zero germ  in $\SG_0$ if for each $\Fr\in\SN_\d$, there is $d_0\in D$ such that if $d\in D$ with $d>d_0$, then $\norm{\rz f_d}_\d<\Fr$, ie.,  $[f_d]\in U^\d_\Fr$.
\end{definition}
    We will work with the properties of $\tau^\d$ here, for the arbitrary positive infinitesimal $\d$ already knowing that, at least for continuity of germs, the choice of $\d$ is irrelevant and later find that the topology generated by $\tau^\d$ itself is invariant of the choice of $\d$.

\subsubsection{A good system of neighborhoods at the zero germ}\label{subsec: a good system of nbds}
\begin{lem}\label{lem: f cont, f-g  small implies g cont}
    Fix $0<\d\sim 0$ and suppose that $[f]\in\SG_0^0$ and $[g]\in\SG_0$. With the hypothesis that $0<\Fr\in\ov{\SS}^\d$ is $[g]$-good, we have that if $\rz f|_{B_\d}-\rz g|_{B_\d}\in U_{\Fr/3}$ then $[g]\in\SG_0^0$.
\end{lem}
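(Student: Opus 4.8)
The goal is to show that if $[f]$ is a continuous germ, $[g]$ is an arbitrary germ, $\Fr$ is $[g]$-good on $\rz B_\d$, and $\rz f|_{B_\d}-\rz g|_{B_\d}\in U_{\Fr/3}$ (i.e. $\rz\norm{\rz f-\rz g}_\d<\Fr/3$), then $[g]\in\SG_0^0$. The natural strategy is to verify directly the hypothesis in the definition of $[g]$-good (definition \ref{def: e is [f]-good}): namely that for all $\xi,\z\in\rz B_\d$ sufficiently small with $|\xi-\z|$ sufficiently small, $|\rz g(\xi)-\rz g(\z)|<\Fr$. Once that modulus-of-continuity-type inequality is established, the definition of $[g]$-good immediately yields that $[g]$ has a continuous representative, i.e. $[g]\in\SG_0^0$, which is exactly the conclusion.

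\textbf{Key steps.} First I would unwind the hypothesis $\rz f|_{B_\d}-\rz g|_{B_\d}\in U_{\Fr/3}$: by definition of $U_{\Fr/3}=U^\d_{\Fr/3}$ this says $\rz\sup_{\eta\in B_\d}|\rz f(\eta)-\rz g(\eta)|<\Fr/3$, hence in particular $|\rz f(\eta)-\rz g(\eta)|<\Fr/3$ for every $\eta\in\rz B_\d$. Second, since $[f]\in\SG_0^0$ is a continuous germ, its transfer $\rz f$ is $\rz$continuous on $\rz B_\d$ for our infinitesimal $\d$ (using corollary \ref{cor: *cont on B epsilon iff *cont on B del} together with the proposition preceding it, which characterize continuous germs via $\rz$continuity on an infinitesimal ball); in particular $\rz f$ has a $\rz$modulus of continuity on $\rz B_\d$, so for $\xi,\z\in\rz B_\d$ with $|\xi-\z|$ sufficiently small we get $|\rz f(\xi)-\rz f(\z)|<\Fr/3$. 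Third, I combine these via the triangle inequality: for such $\xi,\z$,
\begin{align}
|\rz g(\xi)-\rz g(\z)| \le |\rz g(\xi)-\rz f(\xi)| + |\rz f(\xi)-\rz f(\z)| + |\rz f(\z)-\rz g(\z)| < \tfrac{\Fr}{3}+\tfrac{\Fr}{3}+\tfrac{\Fr}{3} = \Fr.
\end{align}
Thus for $\xi,\z\in\rz B_\d$ sufficiently small (and with $|\xi-\z|$ sufficiently small) we have $|\rz g(\xi)-\rz g(\z)|<\Fr$. Fourth, invoke that $\Fr$ is $[g]$-good on $\rz B_\d$: by definition \ref{def: e is [f]-good} this precisely says that this pinching condition forces $[g]$ to have a continuous representative on some $B_r$, i.e. $[g]\in\SG_0^0$.

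\textbf{Main obstacle.} There is no serious obstacle here; the only point requiring a little care is bookkeeping the quantifiers ``$\xi,\z$ sufficiently small'' and ``$|\xi-\z|$ sufficiently small'' so that the same representatives and the same restriction to a common small ball are used for $\rz f$ and $\rz g$ simultaneously — i.e. choosing a common neighborhood on which both representatives are defined and on which the $\rz$continuity of $\rz f$ gives the desired modulus. Since corollary \ref{cor: SG_0 -> F(B_delts) is R alg isomorph} lets us work with honest internal functions $\rz f|_{B_\d}$, $\rz g|_{B_\d}$ on the fixed infinitesimal ball $B_\d$, the subtraction, the sup-norm bound, and the triangle inequality are all literally internal inequalities on $B_\d$, so the argument is a routine three-$\epsilon$ estimate transferred to the nonstandard ball. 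I would also remark, for completeness, that the hypothesis $\Fr\in\ov{\SS}^\d$ is used only to know such $[g]$-good $\Fr$ exist in the relevant set of moduli (lemma \ref{lem: 2J_f bndd below by J_f bar} and lemma \ref{lem: SN_d contains f-good numbers}), but the proof of the implication itself needs only that $\Fr$ is in fact $[g]$-good.
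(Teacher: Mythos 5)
Your proposal is correct and follows essentially the same route as the paper: both proofs are the three-epsilon argument, using the $\rz$continuity of $\rz f$ on $B_\d$ (transfer of continuity of a representative of $[f]$) to control $|\rz f(\xi)-\rz f(\z)|$, the sup-norm hypothesis to control the two endpoint terms, and then the definition of $[g]$-goodness of $\Fr$ to conclude $[g]\in\SG_0^0$. Your closing remarks on quantifier bookkeeping and on the role of $\Fr\in\ov{\SS}^\d$ are consistent with how the paper uses this lemma in theorem \ref{thm: convergnet of cont germs in SG converges to cont}.
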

\begin{proof}
    The proof is a  nonstandard version of the three epsilon argument with the use of good numbers. Suppose that $\ov{\Fr}$ is $[g]$-good and let $\Fr=\ov{\Fr}/3$. As $\rz f$ is *continuous, if $\xi,\z\in B_\d$ are sufficiently small, we have that $|\rz f(\xi)-\rz f(\z)|<\Fr/3$; so that if $\rz f|_{B_\d}-\rz g|_{B_\d}\in U_\Fr$, we have that
    $|\rz g(\xi)-\rz g(\xi)|\leq |\rz g(\xi)-\rz f(\xi)|+|\rz f(\xi)-\rz f(\z)|+|\rz g(\z)-\rz f(\z)|<\ov{\Fr}$. But as $\ov{\Fr}$ is $[g]$-good, we obtain our result from the definition, \ref{def: e is [f]-good}.
\end{proof}
    The following theorem along with the nondiscrete nature of our topology indicates that our topology has good properties; in particular this result indicates that $\tau^\d$ convergence is analogous to uniform convergence.
\begin{thm}\label{thm: convergnet of cont germs in SG converges to cont}
    Suppose that $D$ is a directed set and that $d\in D\mapsto [f^d]$ is a $D$-net in $\SG_0^0$ that is $\tau_0$ convergent to $[g]\in\SG_0$. Then $[g]\in\SG_0^0$.
\end{thm}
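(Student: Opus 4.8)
The plan is to show directly that the limit germ $[g]$ satisfies the hypothesis of Proposition \ref{prop: germ continuity from k<<<d} (equivalently, that it is approximated by a continuous germ in the sense of Lemma \ref{lem: f cont, f-g  small implies g cont}). Fix the positive infinitesimal $\d$ used to define $\tau_0 = \tau^\d_0$. If $[g]$ is the zero germ there is nothing to prove, so assume $[g]\neq 0$. By Lemma \ref{lem: SN_d contains f-good numbers}, $\SN_\d$ contains a $[g]$-good number for $\rz B_\d$; fix such an $\Fr\in\SN_\d$, and consider the neighborhood $U^\d_{\Fr/3}\in\tau_0$. Since the $D$-net $d\mapsto[f^d]$ converges in $\tau_0$ to $[g]$, there is $d_0\in D$ with $[f^{d}]-[g]\in U^\d_{\Fr/3}$ for all $d>d_0$; here I am using that $\SN_\d$ (hence $\wh\SN_\d$) is closed under the ring operations, as recorded in the lemma following Definition \ref{def: SN_delta families of numbers}, so that the difference germ lives in the domain where the neighborhoods are defined. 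Pick one such $d_1>d_0$. Then $[f^{d_1}]\in\SG^0_0$ and $\rz f^{d_1}\big|_{B_\d}-\rz g\big|_{B_\d}\in U^\d_{\Fr/3}$.

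Now I would invoke Lemma \ref{lem: f cont, f-g  small implies g cont} with $f = f^{d_1}$ (which is continuous near $0$), $g$ as given, and the $[g]$-good number $\Fr$: the hypothesis $\rz f^{d_1}|_{B_\d} - \rz g|_{B_\d}\in U_{\Fr/3}$ is exactly what was just arranged, so the lemma yields $[g]\in\SG^0_0$, as desired. One small bookkeeping point: Lemma \ref{lem: f cont, f-g  small implies g cont} is stated for $\Fr\in\ov{\SS}^\d$ being $[g]$-good, whereas I produced $\Fr\in\SN_\d$; but $\ov{\SS}^\d=\SN_\d$ by Lemma \ref{lem: SS_delta=SN_delta}, so the two formulations coincide and no adjustment is needed. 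Alternatively, if one prefers to avoid citing Lemma \ref{lem: f cont, f-g  small implies g cont} verbatim, the same conclusion follows by a direct three-term estimate: for $\xi,\z\in\rz B_\d$ sufficiently small,
\begin{align*}
|\rz g(\xi)-\rz g(\z)| \le |\rz g(\xi)-\rz f^{d_1}(\xi)| + |\rz f^{d_1}(\xi)-\rz f^{d_1}(\z)| + |\rz f^{d_1}(\z)-\rz g(\z)| < \Fr,
\end{align*}
using $\norm{\rz f^{d_1}-\rz g}_\d<\Fr/3$ twice and the $\rz$-continuity of $\rz f^{d_1}$ once; then the definition of $[g]$-good number (Definition \ref{def: e is [f]-good}) gives that $[g]$ has a continuous representative on some $B_r$.

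The one genuinely substantive ingredient here is the existence of $[g]$-good numbers of the right magnitude inside $\SN_\d$, i.e.\ Lemma \ref{lem: SN_d contains f-good numbers}, together with the chain $\ov\SS^\d=\SN_\d$ and $\FJ^{\k,\d}_f\ge\rz\ov J^\d_f$ that feeds it; this is precisely where the careful matching of the relative magnitudes of $\d$, $\k$, and the resolution scale $\Fr$ is used, and it is what makes Proposition \ref{prop: germ continuity from k<<<d} applicable. Since that machinery has already been established, the proof of the theorem itself is short: the only work is to select a good $\Fr$, extract a single net-element $[f^{d_1}]$ close enough to $[g]$, and run the three-$\e$ comparison. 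I do not anticipate any obstacle beyond verifying that the neighborhood $U^\d_{\Fr/3}$ is nonempty and belongs to $\tau_0$ (immediate, since $\Fr/3\in\SN_\d$ as $\SN_\d$ is closed under multiplication by the standard rational $1/3$) and that convergence of the net is being used for this particular $\Fr$, which is legitimate because $\Fr$ was fixed before invoking convergence.
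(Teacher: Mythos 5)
Your proof is correct and follows essentially the same route as the paper: extract a $[g]$-good number from $\SN_\d$ via Lemma \ref{lem: SN_d contains f-good numbers}, use $\tau_0$-convergence to find a single net element $C^0$-infinitesimally close to $[g]$, and conclude by Lemma \ref{lem: f cont, f-g  small implies g cont} (or, equivalently, the explicit three-term estimate). If anything, you are slightly more careful than the paper's own write-up about carrying the factor $\Fr/3$ through so that the cited lemma applies verbatim.
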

\begin{proof}
    The result follows easily from the above preliminaries. As $d\mapsto [f^d]$ is $\tau$ convergent to $[g]$, if $\Fr\in\ov{\SS}$, there is $d_0\in D$ such that $\rz f^{d}|_{B_\d}-\rz g|_{B_\d}\in U_\Fr$ for $d>d_0$. But then, by Lemma \ref{lem: SN_d contains f-good numbers},  $\ov{\SS}$ contains a $[g]$-good number $\ov{\Fr}$ and as just noted we know that there is $\ov{d}\in D$ such that if $d\in D$ with $d>\ov{d}$, we have that $\rz f^d|_{B_\d}-\rz g|_{B_\d}\in U_{\ov{\Fr}}$ and so by Lemma \ref{lem: f cont, f-g  small implies g cont}, we have that $[g]\in\SG_0^0$.
\end{proof}
%\subsubsection{Topology and convergence}\label{subsec: the topology}

\subsubsection{Convergence in our topology}\label{subsec: convergence}
 In this part $0<\d\sim 0$ is still fixed. We have defined a system of neighborhoods of the $0$ germ with the collection of sets $\tau_0$; that is, a neighborhood base for a topology for $\SG_0$ at $[0]$. %The ordering properties of $\ov{\SS}$ ($\SN_\d$?) force the following fact. ?????***********
    Given the above preliminaries, we will now make $\SG_0$ into a topological vector space in the usual way by translating the sets of $\tau_0$.
\begin{definition}\label{def: of full topology by translation}
    Given $\tau_0$ above and $[f]\in\SG_0$, let $\tau_f$ denote the $[f]$ translation of $\tau_0$; ie., $\tau_f=\{U+\rz f|_{B_\d}:U\in\tau_0\}$ and let $\tau$ denote the topology generated, in the usual way, by finite intersections of arbitrary unions of elements of $\tau_f$ as $[f]$ varies in $\SG_0$.
\end{definition}

    Before we can say anything more about this topology we need more formalities on orders. See Fuchs, \cite{Fuchs1963}, for a good coverage of the mathematics of ordered algebraic systems.  So returning to the discussion at the beginning of subsection \ref{subsec: good set of radii}, let's begin with a definition from the theory of ordered sets. Suppose that $(P,\leq)$ is a partially ordered set (ie., for all  $p,q,r\in P$ we have $p\leq p,p\leq q$ and $q\leq p$ implies $p=q$ and $p\leq q,q\leq r$ implies $p\leq r$) and $J\subset P$ with the induced partial order. We will often assume that our partially ordered sets are downward, respectively upward, directed, ie., if $p,q\in P$ then $\inf\{p,q\}\in P$, respectively $\sup\{p,q\}\in P$.
\begin{definition}\label{def: of coinitial partial order relations}
    If $(P,<)$ is a partially ordered, downward directed set and $J\subset P$, then we say that $\bsm{J}$\textbf{ is coinitial in }$\bsm{P}$ with respect to $<$, if for all $p\in P$, there is $a\in J$ such that $a\leq p$.
    Suppose that we have two subsets $J,K\subset P$. Then we say that $\bsm{J}$ \textbf{is coinitial with} $\bsm{K}$, written $J\risingdotseq K$ or $K\fallingdotseq J$ if for all all  $k\in K$, there is $j\in J$ such that $j\leq k$ and we say that $\bsm{J}$ \textbf{and} $\bsm{K}$ \textbf{are coinitial}, written $J:=:K$ if $J$ is coinitial with $K$ and $K$ is coinitial with $J$.
    If $\Fr,\d$ are positive infinitesimals, we say that $\Fr$ is almost in $\SN_\d$, if there is $\Fs,\Ft\in\SN_\d$ such that $\Fs\leq \Fr\leq \Ft$; we will write this as  $\Fr\; a\!\!\!\in\SN_\d$. If $\Fr,\Fs\in\mu(0)_+$ with $\Fr\;a\!\!\!\in\SN_\Fs$ and $\Fs\;a\!\!\!\in\SN_\Fr$, then we write $\bsm{\Fr\asymp\Fs}$.
\end{definition}
\begin{remark}\label{rem: coinitial is partial order}
     Note that $J\risingdotseq K$ defines a partial order (on subsets of $\mu(0)_+$) and $J:=:K$ defines an equivalence relation on subsets of $\mu(0)_+$. In particular, if $J_1\risingdotseq K_1$, $J_1:=:J_2$ and $J_2\risingdotseq K_2$, then $K_1:=:K_2$.
     Note that if $\Fr,\Fs$ and $\Ft\in\mu(0)_+$ and if $\Fr\asymp\Fs$, and $\Fs\asymp\Ft$, then $\Fr\asymp\Ft$; eg., $\asymp$ defines an (external) equivalence relation on $\mu(0)_+$.
%     The notation $\Fr\asymp\d$ is symmetric which is accurate in the sense that $\Fr\;a\!\!\in\SN_\d$ if and only if $\d\;a\!\!\in\SN_\Fr$, as we shall see.
     Note that it's possible (and even probable depending on the saturation) that $\Fr\asymp\Fs$, yet $\SN_\Fr\cap\SN_\Fs=\emptyset$. Nonetheless we have the following good asymptotic correspondence.
\end{remark}
    We need a further definition on convergence within the framework of coinitiality.
    Until now we were generally satisfied with considered subsets $S$ of $\SN_\d$ that are coinitial; this corresponds to the notion of accumulation. We have considered convergence of nets in eg., $\SG_0$, but have not formalized this notion with respect to the $\SN_\d$'s. We will do this now.

\begin{definition}
    Suppose that $(T,<)$ is a totally ordered, upwardly directed, set with $S\subset T$ given the restricted total order, $(D,<)$ is an upward directed set, and that $\SV=(\Fv_d:d\in D)$ is a $D$ net in $T$. Then we say that $\bsm{\SV}$ \textbf{is convergently coinitial in the range of} $\bsm{S}$, if for each $\Fs\in S$, there is $d_0\in D$ such that, if $d>d_0$ then $\Fv_d>\Fs$. If, in addition we have that for each $\Fv\in\SV$, there is $\Fs\in S$ with $\Fs\geq\Fv$, then we say that $\bsm{\SV}$ \textbf{is convergently coinitial with} $\bsm{S}$.
\end{definition}
    If we already know that the map $D\ra T:d\mapsto \Fv_d$ is (order reversing)\textbf{ monotone}, then clearly coinitiality implies coinitial convergence. %This has been the situation to now, but beginning in Subsection *************

    Given the above definitions, we can say a little more about the totally ordered sets defining our topology.  If $[m]\in\SM^0$, and $\Fr,\Fs$ are positive infinitesimals with  $\Fr<\Fs$, then $\norm{\rz m}_\Fr\leq\norm{\rz m}_\Fs$ and so eg., $\SN_\Fr$ is coinitial with $\SN_\Fs$. We also have the following.
\begin{lem}
    If $0<\d\sim 0$ and $\Fr\in\SN_\d$, then $\SN_\Fr\subset\SN_\d$.
\end{lem}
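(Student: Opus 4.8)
The plan is to use a \emph{change-of-radius trick}: since the $\rz$-ball of radius $\Fr$ is exactly the $\rz$-ball whose radius equals the $\d$-sup-norm of a standard representative, a sup-norm taken over $B_\Fr$ can be rewritten, via transfer, as a sup-norm over $B_\d$ of a standard germ obtained by precomposing two standard moduli. Concretely, write $\Fr=\rz\norm{\rz g}_\d$ for a representative $g:U\ra\bbr$ of some $[g]\in\SG_0$ (so $g(0)=0$), and set $m(s)\doteq\norm{g}_s=\sup\{|g(x)|:|x|\le s\}$ for $s$ in a small interval $[0,r_1]$. Then $m$ is standard, non-decreasing, and $m(0)=|g(0)|=0$; moreover $\Fr\sim 0$ forces $\rz m(\d)\sim 0$, whence $m(0^+)=\inf_{s>0}m(s)=0$ (otherwise $\rz m(\d)\ge m(0^+)\nsim 0$), so in particular $m(s)\le a$ for $s$ small, where $a$ will be fixed below.

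Now take an arbitrary $\Fs\in\SN_\Fr$, say $\Fs=\rz\norm{\rz h}_\Fr$ for a representative $h:B_a\ra\bbr$ of some $[h]\in\SG_0$ (we may assume $h$ bounded near $0$, else $\Fs$ is not finite and there is nothing to prove). Put $q(t)\doteq\norm{h}_t=\sup\{|h(x)|:|x|\le t\}$ for $0\le t\le a$; this is standard, non-decreasing, with $q(0)=0$, and by transfer of the identity $\norm{h}_t=\sup\{|h(x)|:|x|\le t\}$, evaluated at the radius $\Fr$ (which is $\le a$ since $\Fr\sim 0$), we get $\Fs=\rz q(\Fr)$.

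Next I would define $k(x)\doteq q\bigl(m(|x|)\bigr)$ on the ball $\{x\in\bbr^n:|x|<r_0\}$, choosing $r_0\le r_1$ small enough that $m(|x|)\le a$ there (possible by the previous paragraph). This domain is a convex neighborhood of $0$, $k$ maps it into $\bbr$, and $k(0)=q(m(0))=q(0)=0$, so $[k]\in\SG_0$ (only $k(0)=0$ is needed; continuity is irrelevant for membership in $\SN_\d$). Since $q\circ m$ is non-negative and non-decreasing, for all small $t>0$ one has $\norm{k}_t=\sup\{q(m(|x|)):|x|\le t\}=q(m(t))=(q\circ m)(t)$. Transferring this identity and evaluating at $\d$,
\[
\rz\norm{\rz k}_\d=\rz(q\circ m)(\d)=\rz q\bigl(\rz m(\d)\bigr)=\rz q(\Fr)=\Fs,
\]
so $\Fs=\rz\norm{\rz k}_\d\in\SN_\d$. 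As $\Fs\in\SN_\Fr$ was arbitrary, $\SN_\Fr\sbq\SN_\d$.

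The only delicate point is making the composite $q\circ m$ into a bona fide function near $0$ in $\bbr_+$, i.e. matching the domains of the two standard moduli; this is precisely where the hypothesis $0<\d\sim 0$ enters (through $\Fr\sim 0$, which gives $m(0^+)=0$ and $\Fr\le a$). When $\Fr$ is not infinitesimal the same argument goes through after replacing $q$ by its constant extension $\bar q(t)=q(\min(t,a))$ and observing that $\rz\bar q(\Fr)=\rz q(\Fr)$ whenever $\Fr$ lies in the transferred domain of $q$; these are routine modifications and I would leave them to the reader. I expect this domain-bookkeeping to be the main (and essentially only) obstacle; everything else is transfer together with the elementary computation of $\norm{q\circ m(|\cdot|)}_t$.
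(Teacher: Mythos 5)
Your proof is correct and is essentially the paper's own argument: both reduce the claim to the identity $\rz\norm{\cdot}_\Fr\circ(\text{outer modulus})=\rz\norm{\cdot}_\d\circ(\text{outer modulus}\circ\text{inner modulus})$, i.e.\ to composing the monotone modulus of the germ realizing $\Fs$ with the modulus realizing $\Fr=\rz m(\d)$ and reading off the sup over $B_\d$ at the endpoint. The only (cosmetic) differences are that you build the moduli $m,q$ from arbitrary representatives rather than invoking the prior identification with $\SM^0$, and that you explicitly flag the domain-matching and the noninfinitesimal-$\Fr$ case, which the paper passes over silently.
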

\begin{proof}
    We will show that if $[m]\in\SM^0$, then $\norm{\rz m}_\Fr\in\SN_\d$. But , by definition, letting $\Fr=\norm{\rz m'}_\d$ for some $m'\in\SM^0$ and we may assume, as $m$ is continuous, that $\norm{\rz m}_\Fr=\rz m(\z_0)$ where $\z_0=\rz m'(\xi_0)$ for some $\xi_0\in B_\d$. (This gives the third equality in the following expression, ie., the $\rz\sup$ occurs on the image of $\rz m'$.) Now recalling that all of our functions are positive
\begin{align}
    \norm{\rz m}_\Fr=\norm{\rz m}_{\|m'\|_\d}=\rz\sup\{\rz m(\z):\z\leq\norm{\rz m'}_\d\}\\
    =\rz\sup\{\rz m\circ m'(\xi):\xi\leq\d\}=\norm{\rz m\circ m'}_\d, \notag
\end{align}
    and of course $\norm{\rz m\circ m'}_\d\in\SN_\d$.
%    We still need to prove (PROOF DEFERRED TO COROLLARY 3.9) that $\SN_\Fr\risingdotseq\SN_\d$. But if $\Fs=\norm{\rz \un{m}}_\d\in\SN_\d$ for some $[\un{m}]\in\SM^0$, choose $[m']\in\SM^0$ such that $\Ft=\norm{\rz m'}_\d<\d$. Then $\norm{\rz \un{m}\circ m'}_\d$
\end{proof}
\begin{remark}
%    Later, see corollary ####, we will prove that $\SN_\Fr$ coinitial in $\SN_\d$
    It will follow from the results in subsection \ref{subsec: top independence from delta} that if $\Fr,\Fs$ are positive infinitesimals such that $\Fr\asymp\Fs$; then $\SN_\Fr$ and $\SN_\Fs$ are coinitial in $\rz\bbr_{nes,+}$.
\end{remark}

     At this point, we need to refine a definition given in Definition \ref{def: of coinitial partial order relations }.
\begin{definition}
    Let $\d\in\mu(0)_+$ and suppose that $(D^1,<)$ and $(D^2,<)$ are directed sets. Suppose that $\FF_1=(\Fr_d:d\in D^1)$ and $\FF_2=(\Fs_d:d\in D^2)$ are nets in $\mu(0)_+$. Then we say that $\FF_1$ and $\FF_2$ are coinitial with each other in the range of $\SN_\d$ if for each $\Fr_{d_1}\in\FF_1$ with $\Fr_{d_1}>\Fu$ for some $\Fu\in\SN_\d$, there is $\Fs_{d_2}\in\FF_2$ such that $\Fs_{d_2}<\Fr_{d_1}$ and the analogous statement with the indices 1 and 2 switched holds also.
\end{definition}
    With this definition we need an elementary but useful fact.
\begin{lem}\label{lem: sets coin wrt N_d go to these under m in M}
    Suppose that $\FF_j\subset\mu(0)_+$, $j=1,2$ are coinitial with each other in the range of $\SN_\d$ and that $[m]\in\SM^0$. Then $\rz m(\FF_j)$, $j=1,2$ are coinitial with each other in the range of $\SN_\d$. In particular, $\rz m(\FF_1)$ is coinitial with $\SN_\d$ if and only if $\rz m(\FF_2)$ is coinitial with $\SN_d$.
\end{lem}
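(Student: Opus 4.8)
The statement asserts that applying an order-preserving $[m]\in\SM^0$ to two families of positive infinitesimals that are coinitial with each other in the range of $\SN_\d$ produces families that are again coinitial with each other in the range of $\SN_\d$, with the further consequence that coinitiality with $\SN_\d$ itself is preserved. The plan is to exploit the monotonicity of $m$ together with the fact that $\rz m$ carries $\SN_\d$ into $\SN_\d$ (this is the content of the lemma just proved, that $\norm{\rz m}_\Fr\in\SN_\d$ whenever $\Fr=\norm{\rz m'}_\d\in\SN_\d$ and $m,m'\in\SM^0$, equivalently $\rz m\circ\rz m'=\rz(m\circ m')$ with $m\circ m'\in\SM^0$).

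First I would set up notation: let $\FF_1=(\Fr_d:d\in D^1)$ and $\FF_2=(\Fs_d:d\in D^2)$ be the two nets in $\mu(0)_+$ that are coinitial with each other in the range of $\SN_\d$, and fix $[m]\in\SM^0$. I want to show that $\rz m(\FF_1)=(\rz m(\Fr_d):d\in D^1)$ and $\rz m(\FF_2)=(\rz m(\Fs_d):d\in D^2)$ are coinitial with each other in the range of $\SN_\d$. So take an element $\rz m(\Fr_{d_1})\in\rz m(\FF_1)$ with $\rz m(\Fr_{d_1})>\Fu$ for some $\Fu\in\SN_\d$. I need to produce $\Fs_{d_2}\in\FF_2$ with $\rz m(\Fs_{d_2})<\rz m(\Fr_{d_1})$; by monotonicity of $\rz m$ (the transfer of the order-preserving property of $m\in\SM$) it suffices to find $\Fs_{d_2}<\Fr_{d_1}$. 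To invoke the coinitiality hypothesis on $\FF_1,\FF_2$, I must first check that $\Fr_{d_1}$ itself dominates some element of $\SN_\d$. This is where the hypothesis $\rz m(\Fr_{d_1})>\Fu$ with $\Fu\in\SN_\d$ comes in: writing $\Fu=\norm{\rz u}_\d$ and, if necessary, replacing $u$ by $m^{-1}\circ u$ (recall from Remark after Definition \ref{def:  SM and SM^0} that $m\in\SM^0$ implies $m^{-1}\in\SM$ on a small enough neighborhood of $0$), one gets $\rz(m^{-1})(\Fu)\in\SM_\d=(\SN_\d\cap\mu(0))\smallsetminus\{0\}$ by Lemma \ref{lem: SN_d=SM_d}, and $\Fr_{d_1}=\rz(m^{-1})(\rz m(\Fr_{d_1}))>\rz(m^{-1})(\Fu)$ since $m^{-1}$ is order preserving. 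Hence $\Fr_{d_1}$ lies above an element of $\SN_\d$, so the hypothesis that $\FF_1,\FF_2$ are coinitial with each other in the range of $\SN_\d$ applies and yields $\Fs_{d_2}\in\FF_2$ with $\Fs_{d_2}<\Fr_{d_1}$; applying $\rz m$ gives $\rz m(\Fs_{d_2})<\rz m(\Fr_{d_1})$. The symmetric argument with indices $1$ and $2$ interchanged completes the proof of the first assertion.

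For the ``in particular'' clause, I would argue: suppose $\rz m(\FF_1)$ is coinitial with $\SN_\d$. Since $\FF_1$ and $\FF_2$ are coinitial with each other in the range of $\SN_\d$, the first part gives that $\rz m(\FF_1)$ and $\rz m(\FF_2)$ are coinitial with each other in the range of $\SN_\d$; combining this with $\rz m(\FF_1)$ coinitial with $\SN_\d$ via the transitivity/composition property of the coinitiality relations recorded in Remark \ref{rem: coinitial is partial order} (namely $J_1\risingdotseq K_1$, $J_1 :=: J_2$, $J_2\risingdotseq K_2$ imply $K_1 :=: K_2$, suitably adapted to the ``in the range of'' qualifier), one concludes $\rz m(\FF_2)$ is coinitial with $\SN_\d$. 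The reverse implication is identical by symmetry.

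The main obstacle I anticipate is bookkeeping around the phrase ``coinitial with each other in the range of $\SN_\d$'': one must be careful that the relevant elements of the nets genuinely dominate members of $\SN_\d$ before the hypothesis can be used, and that $\rz m$ (and $\rz m^{-1}$) do not push elements out of the window $\SN_\d\cap\mu(0)$ or fail to be defined — this is handled by the closure of $\SM^0$ under composition and inversion (on germ-sized domains) together with Lemma \ref{lem: SN_d=SM_d} identifying $\SM_\d$ with $(\SN_\d\cap\mu(0))\smallsetminus\{0\}$. None of the individual steps is deep; the care is entirely in matching the quantifiers in the definition of coinitiality in the range of $\SN_\d$.
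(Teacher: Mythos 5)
Your proposal is correct and follows essentially the same route as the paper, whose entire proof is a one-line appeal to the facts that $\rz m(\SN_\d)\subset\SN_\d$ and that the monotone $\rz m$ "sends sandwiched elements to such respecting their orders." Your argument simply unwinds that remark carefully — in particular, your use of $m^{-1}$ to show that $\Fr_{d_1}$ itself dominates an element of $\SN_\d$ is exactly the detail the paper leaves implicit.
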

\begin{proof}
    As $\rz m(\SN_\d)\subset\SN_\d$ and as $\rz m$ is monotone and so eg., sends sandwiched elements to such respecting their orders, then this statement is clear.
\end{proof}
    We also have a convergence version of the above.
\begin{lem}
    Suppose that $(D^j,<)$ for $j=1,2$ are directed sets and that $\SX^j=(\Fu^j_d:d\in D^j)$ are nets in $\mu(0)_+$ such that the maps $d\mapsto\Fu^j_d$ are monotone (order reversing). If $[m]\in\SM^0$, let $m\circ\SX^j$, $j=1,2$ denote the net $d\in D^j\mapsto \rz m(\Fu^j_d)$. Suppose that $\SX^1$ and $\SX^2$ are convergently coinitial in the range of $\SN_\d$. Then $m\circ \SX^1$ is convergently coinitial in the range of $\SN_\d$ if and only if $m\circ\SX^2$ is convergently coinitial in the range of $\SN_\d$.
\end{lem}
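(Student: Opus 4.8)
The statement asserts that, given $[m]\in\SM^0$ and two monotone (order-reversing) nets $\SX^1,\SX^2$ in $\mu(0)_+$ that are convergently coinitial in the range of $\SN_\d$, the composed nets $m\circ\SX^1$ and $m\circ\SX^2$ are convergently coinitial in the range of $\SN_\d$ if and only if one of them is. Since the roles of $\SX^1$ and $\SX^2$ are symmetric, it suffices to prove one implication: assuming $m\circ\SX^1$ is convergently coinitial in the range of $\SN_\d$, show that $m\circ\SX^2$ is. I would first record the elementary properties of $[m]\in\SM^0$ already available in the excerpt: $m$ is continuous and strictly increasing near $0$, so $\rz m$ is $SC^0$ and strictly $\rz$-increasing on an infinitesimal interval; $\rz m(\SN_\d)\subset\SN_\d$ (this is the displayed computation $\norm{\rz m}_\Fr=\norm{\rz m\circ m'}_\d$ in the lemma just above); and, because $m\in\SM^0$ implies $m^{-1}\in\SM$ (Remark \ref{rem: e<<<d and f(d)<e -> f(d)=0}), $\rz m$ carries infinitesimals to infinitesimals and has an $\rz$-increasing partial inverse. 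These facts give that $\rz m$ preserves the strict order and ``sandwiching'' among infinitesimals, exactly as exploited in Lemma \ref{lem: sets coin wrt N_d go to these under m in M}.

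The core of the argument is to transport the convergent-coinitiality hypothesis from $\SX^1$ to $\SX^2$ at the level of the nets themselves, and then apply $\rz m$. Fix an arbitrary $\Fs\in\SN_\d$. By Lemma \ref{lem: sets coin wrt N_d go to these under m in M} (or directly from $\rz m(\SN_\d)\subset\SN_\d$ together with strict monotonicity), there is $\Fu\in\SN_\d$ with $\rz m(\Fu)\le\Fs$; concretely one can take $\Fu$ with $\rz m(\Fu)$ equal to an element of $\SN_\d$ that is $\le\Fs$, using that $\SN_\d$ is coinitial with its image under $\rz m$. Apply convergent coinitiality of $\SX^1$ in the range of $\SN_\d$ to this $\Fu$: there is $d_1\in D^1$ with $\Fu^1_d>\Fu$ for all $d>d_1$. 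Now I need to push this down to $\SX^2$. Since $\SX^1$ and $\SX^2$ are \emph{convergently coinitial in the range of} $\SN_\d$ with each other (this is the meaning of the shared hypothesis, unwound through Definition \ref{def: of coinitial partial order relations} and the convergence refinement), and since $\Fu^1_d>\Fu\in\SN_\d$ on a tail of $D^1$, there is a tail of $D^2$ on which $\Fu^2_d>\Fu$ as well — this is where the monotonicity of the two nets is used, to guarantee that the eventual domination by the element $\Fu\in\SN_\d$ for one net forces eventual domination for the other rather than just frequent domination. Applying the strictly increasing map $\rz m$ gives $\rz m(\Fu^2_d)>\rz m(\Fu)$ for $d$ in that tail, hence $\rz m(\Fu^2_d)>\rz m(\Fu)$, and since $\rz m(\Fu)$ was chosen with $\rz m(\Fu)\ge$ an element of $\SN_\d$ comparable to $\Fs$ — I would instead arrange $\rz m(\Fu)\le\Fs$ and note this does not directly help; the cleaner route is to choose $\Fu$ so that $\rz m(\Fu)\ge\Fs'$ for some $\Fs'\in\SN_\d$ with $\Fs'$ arbitrary, i.e.\ run the selection of $\Fs$ first through $\rz m$'s image using that $\SN_\d$ and $\rz m(\SN_\d)$ are coinitial. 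Carrying this bookkeeping through yields $\rz m(\Fu^2_d)>\Fs$ on a tail of $D^2$, which is exactly convergent coinitiality of $m\circ\SX^2$ in the range of $\SN_\d$. The final clause ``in the range of $\SN_\d$'' for $m\circ\SX^2$ — namely that each $\rz m(\Fu^2_d)$ is $\le$ some element of $\SN_\d$ — follows from $\rz m(\SN_\d)\subset\SN_\d$ applied to the corresponding bound for $\Fu^2_d$ coming from $\SX^2$ being convergently coinitial \emph{with} $\SN_\d$, together with monotonicity of $\rz m$.

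\textbf{The main obstacle.} The delicate point is the step where eventual (tail) domination by a \emph{fixed} element $\Fu\in\SN_\d$ for the net $\SX^1$ is converted into eventual domination by $\Fu$ for $\SX^2$. The definition of two nets being ``coinitial with each other in the range of $\SN_\d$'' as stated is a \emph{frequency/accumulation}-type condition (for each $\Fr_{d_1}$ dominating some $\Fu\in\SN_\d$ there \emph{exists} $\Fs_{d_2}<\Fr_{d_1}$), not an eventual one; the upgrade to an eventual statement is precisely what the monotonicity (order-reversing) hypothesis on $\SX^1$ and $\SX^2$ is there to supply, and I would spell out carefully that a monotone net which is frequently $>\Fu$ with $\Fu\in\SN_\d$ is in fact eventually $>\Fu$, because once a monotone (order-reversing in $d$, i.e.\ the values decrease) net — wait, here the values \emph{decrease} toward $0$, so ``$\Fv_d>\Fs$ eventually'' is the correct convergence notion and a decreasing net that is ever below $\Fu$ stays below; one must therefore phrase the comparison in the direction that is stable under the monotonicity. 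Getting these inequality directions exactly right, and confirming that the quantifier shape in Definition \ref{def: of coinitial partial order relations} plus the convergence refinement genuinely delivers the eventual comparison for the second net, is the real content; everything else (preservation of order, of sandwiching, and of $\SN_\d$ under $\rz m$) is routine given $m\in\SM^0$ and the lemmas immediately preceding.
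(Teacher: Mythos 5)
The paper states this lemma without proof (it is offered only as ``a convergence version'' of Lemma \ref{lem: sets coin wrt N_d go to these under m in M}), so there is nothing of the author's to compare your argument against; judged on its own terms, your proposal has a genuine gap. You never produce a consistent chain of inequalities: you begin by reading the hypothesis as ``$\Fu^1_d>\Fu$ on a tail of $D^1$,'' claim that the relation between $\SX^1$ and $\SX^2$ then yields ``$\Fu^2_d>\Fu$ on a tail of $D^2$,'' and end by conceding that ``getting these inequality directions exactly right \dots is the real content'' without actually doing so. Two things go wrong with the transfer step as you set it up. First, the condition that two nets are coinitial with each other in the range of $\SN_\d$ only ever supplies terms of one net lying \emph{below} given terms of the other; it cannot propagate a lower bound ``$>\Fu$'' from $\SX^1$ to $\SX^2$, so the step is unsupported by the definition you invoke. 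Second, the monotonicity upgrade you appeal to is available only in the ``$<$'' direction: an order-reversing net with a single term below a fixed $\Fu$ has its entire subsequent tail below $\Fu$, which is not the form of upgrade your first formulation requires.

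For the record, the inequalities do close, and closing them shows that the detour through the second net is unnecessary under the hypothesis as literally stated. Given $\Fs\in\SN_\d$, one may assume $\Fs$ is infinitesimal (every $\rz m(\Fu^j_d)$ lies in $\mu(0)_+$ and so is already below any noninfinitesimal modulus); then $\Fs=\rz m'(\d)$ for some $m'\in\SM$ by Lemma \ref{lem: SN_d=SM_d}, and since $m^{-1}\in\SM$ whenever $m\in\SM^0$, the element $\Fu\dot=\rz m^{-1}(\Fs)=\rz(m^{-1}\circ m')(\d)$ again lies in $\SN_\d$. Applying the hypothesis on $\SX^j$ to this $\Fu$ gives a tail of $D^j$ on which $\Fu^j_d<\Fu$, and the strictly increasing $\rz m$ converts this into $\rz m(\Fu^j_d)<\rz m(\Fu)=\Fs$ on that tail. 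This derives the conclusion for each $j$ separately from the hypothesis on that net alone (neither the other net nor the monotonicity is used), so under the literal hypothesis both sides of the ``if and only if'' hold and the equivalence is trivial. The statement only carries content if, as your proposal implicitly assumes, the intended hypothesis is that $\SX^1$ and $\SX^2$ are convergently coinitial \emph{with each other} in the range of $\SN_\d$; in that case your outline is the right shape, but it must be rerun entirely with ``$<$'': a tail of $\SX^1$ below $\Fu$, a single term of $\SX^2$ below a term of that tail supplied by the mutual condition, and then the order-reversing monotonicity of $\SX^2$ to promote that single term to a tail.
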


\begin{lem}\label{lem: SN is gen by set of the f(d)'s}
     If $V(\d)$ is the semiring  $\{\rz m(\d):m\in\SM\}$, then $V(\d)=\SN_\d$.
\end{lem}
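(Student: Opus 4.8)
\textbf{Proof plan for Lemma \ref{lem: SN is gen by set of the f(d)'s}.}

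The statement to prove is that $V(\d) = \{\rz m(\d) : m \in \SM\}$ equals $\SN_\d = \{\rz\norm{\rz g}_\d : [g] \in \SG_0\}$, where $\SM$ is the set of strictly increasing functions $(\bbr_+,0) \to (\bbr_+,0)$ vanishing at $0$. The plan is to prove the two inclusions separately, both by the same elementary device already used in Lemma \ref{lem: SN_d=SM_d} and Lemma \ref{lem: SN_d doesnot have d-incomparable no}: pass between a germ and its ``sup-as-a-function-of-radius'' companion, then invoke transfer.

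First I would prove $V(\d) \subseteq \SN_\d$. Given $m \in \SM$, define $g : \bbr^n \to \bbr$ by $g(x) = m(|x|)$ on the neighborhood of $0$ where $m$ is defined (extended by a constant outside a small ball, exactly as in the proof of Lemma \ref{lem: SN_d=SM_d}). Then $g$ is a bona fide representative of a germ $[g] \in \SG_0$ (note $g$ need not be continuous, which is fine since $\SN_\d$ is built from all of $\SG_0$, not just $\SG_0^0$). Since $m$ is increasing, $\norm{g}_r = m(r)$ for all small $r$, and transferring this identity gives $\rz\norm{\rz g}_\d = \rz m(\d)$, so $\rz m(\d) \in \SN_\d$.

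Next, for the reverse inclusion $\SN_\d \subseteq V(\d)$: given $[g] \in \SG_0$ with representative $g$ on a neighborhood of $0$, set $m(r) = \norm{g}_r = \sup\{|g(x)| : x \in B_r\}$ for $r$ small. This $m$ is monotone nondecreasing with $\lim_{r\to 0} m(r) = 0$, so a priori $m \in \wt\SM$ rather than $\SM$ — it may fail to be strictly increasing, and it may genuinely fail to vanish at $0$ only in the degenerate sense already handled. \textbf{This is the main obstacle}: $\SM$ requires strict monotonicity, but $\norm{g}_r$ can be locally constant (e.g. if $g$ is locally constant near $0$ along the directions achieving the sup). To repair this I would perturb $m$ to a strictly increasing $\tilde m \in \SM$ with $\tilde m(r) \geq m(r)$ and $\rz\tilde m(\d) = \rz m(\d)$; concretely one can take $\tilde m(r) = m(r) + \eta(r)$ where $\eta$ is a fixed strictly increasing function with $\eta \lll$ everything — more carefully, one chooses $\eta$ so that wherever $m$ is strictly increasing $\tilde m$ still is, and wherever $m$ is flat the term $\eta$ restores strictness, while arranging $\rz\eta(\d)$ to be an infinitesimal small enough (using incomparability, cf. Remark \ref{rem: e<<<d and f(d)<e -> f(d)=0}) that it is absorbed: if $\rz m(\d) = 0$ this needs the degenerate case, and if $\rz m(\d) > 0$ one instead scales so that $\rz\tilde m(\d) = \rz m(\d)$ exactly. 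Alternatively, and perhaps more cleanly, observe that the inclusion $\SN_\d \subseteq V(\d)$ already follows by combining the two lemmas whose proofs precede this one: $\SN_\d \cap \mu(0) = \SM_\d = \{\rz m(\d) : m \in \SM\}$ by Lemma \ref{lem: SN_d=SM_d}, and the noninfinitesimal elements of $\SN_\d$ (those infinitesimally close to standard values of germs at $0$) are handled by adding a suitable constant-dominant term — again representable by an $m \in \SM$ of the form $m(r) = c + (\text{small strictly increasing})$ after a translation argument. I would write the proof along these lines, checking that every element of $\SN_\d$, infinitesimal or not, arises as $\rz m(\d)$ for an explicitly constructed $m \in \SM$, and note that the semiring structure on $V(\d)$ claimed in the statement is inherited from the pointwise sum and product of functions in $\SM$ together with $\rz\norm{\cdot}_\d$ respecting these operations on $B_\d$, exactly as in the lemma following Definition \ref{def: SN_delta families of numbers}.
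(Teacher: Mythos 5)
Your inclusion $V(\d)\subseteq\SN_\d$ (take $g(x)=m(|x|)$, use monotonicity of $m$ and transfer) is exactly the paper's argument, and your scruple about the reverse inclusion is well placed: the paper's one-line proof asserts that $[m_f]\in\SM$ for $m_f(t)=\norm{f}_t$, which is false in general since $m_f$ need be neither strictly increasing nor vanishing at $0$ when $f$ is discontinuous there. But your patch $\tilde m=m+\eta$ does not close the gap as written — $\rz\tilde m(\d)=\rz m(\d)+\rz\eta(\d)$ with $\rz\eta(\d)>0$, so the target value is never recovered exactly, and forcing $\rz\eta(\d)=0$ forces $\eta\equiv 0$. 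Your fallback of simply citing Lemma~\ref{lem: SN_d=SM_d} is the right move for the nonzero infinitesimal part of $\SN_\d$ and is in fact more honest than the paper's own proof.

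The last part of your plan, however, is a dead end. Every $m\in\SM$ satisfies $\lim_{t\to 0}m(t)=0$, hence $\rz m(\d)\in\mu(0)_+$, so $V(\d)\subset\mu(0)_+$ and cannot contain $0$ (from the zero germ) or any noninfinitesimal element of $\SN_\d$. A function of the form $m(r)=c+(\text{small strictly increasing})$ with $c>0$ has $\lim_{r\to 0}m(r)=c\neq 0$, so it is not in $\SM$; no translation rescues this. Indeed, as literally stated the lemma contradicts Lemma~\ref{lem: SN_d=SM_d}, which already identifies $V(\d)=\SM_\d=(\SN_\d\cap\mu(0))\ssm\{0\}\subsetneqq\SN_\d$. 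The intended statement is presumably the one that lemma gives, with $\SG_0$ read as $\SG^0_0$ (or $\SN_\d$ as $\SN^0_\d\ssm\{0\}$) so that all norms are nonzero infinitesimals. You were right to press on this point; the resolution is not to construct an $m$ for the noninfinitesimal case but to recognise that none can exist.
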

\begin{proof}
     For each $[f]\in\SG$, we have the element $[m_f]\in\SM$ given by $t\mapsto\norm{f}_t$, call this map $A$. Then $A$ is clearly a surjection, as given $[m]\in\SM$, we get a $[f]\in\SG$ such that $A([f])=[m]$ by defining $f(x)=m(|x|)$. But also it's clear by the definition of $A$ that if $[f]\in\SG$, then $\norm{f}_t=A([f])(t)$, and so by transfer if $\d\in\mu(0)_+$, then $\norm{\rz f}_\d=\rz A([f])(\d)$. But then $\{\norm{\rz f}_\d:[f]\in\SG\}=\{\rz A([f])(\d):[f]\in\SG\}$ and as the image of $A$ is all of $\SM$, this last set id just $\{\rz m(\d):[m]\in\SM\}$.
\end{proof}
     If $\Fr\in\SN_\d$, let $\SN_{\d,\Fr}\subset\SN_\d$ be the set of $\Fs\in\SN_\d$ such that $\Fs\leq \Fr$.
     Then a net $(D,\xi_d)\subset\SN_\d$ is coinitially convergent with $\SN_\d$ if and only if for each $\Fr\in\SN_\d$, there is $d_0\in D$ such that $\{\xi_d:d>d_0\}\subset\SN_{\d,\Fr}$.

\begin{cor}\label{cor: m(N_d)-N_d and N_(m(d))=N_d}
    Suppose that $0<\d\sim 0$ and $\un{m}\in\SM^0$. Then $V(\d)=V(\rz \un{m}(\d))$ as subsemirings of $\mu(0)_+$ and  $\SN_{*\un{m}(\d)}=\SN_\d$. Also $\rz m(\SN_\d)=\SN_\d$, ie., if $\Fr\in\SN_\d$, then $\SN_\Fr=\SN_\d$.
\end{cor}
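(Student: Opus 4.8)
\textbf{Proof strategy for Corollary \ref{cor: m(N_d)-N_d and N_(m(d))=N_d}.}

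The plan is to deduce all three assertions from the lemmas immediately preceding it, principally Lemma \ref{lem: SN is gen by set of the f(d)'s} (which says $\SN_\d = V(\d) = \{\rz m(\d) : m\in\SM\}$) and the monotonicity/composition computation that already appeared in the proof of Lemma \ref{lem: SN_d=SM_d} and in the earlier lemma showing $\SN_\Fr\subset\SN_\d$ for $\Fr\in\SN_\d$. First I would fix $0<\d\sim 0$ and $\un m\in\SM^0$, and set $\Fe \doteq \rz\un m(\d)$, which is a positive infinitesimal (since $\un m\in\SM^0\subset\SM$ forces $\lim_{t\to 0}\un m(t)=0$, hence $\rz\un m(\d)\sim 0$, and positivity of values of elements of $\SM$ gives $\Fe>0$). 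The key observation is that $\SM^0$ is closed under composition: if $m\in\SM^0$ then $m\circ\un m$ is again strictly increasing near $0$ with $\lim_{t\to 0}(m\circ\un m)(t)=0$, so $m\circ\un m\in\SM$; and conversely, since $\un m\in\SM^0$ has a (local) monotone inverse $\un m^{-1}\in\SM$ (this is Remark \ref{rem: e<<<d and f(d)<e -> f(d)=0}: $m\in\SM^0\Rightarrow m^{-1}\in\SM$, defined on a small neighborhood of $0$), every $m\in\SM$ can be written as $m = (m\circ\un m)\circ\un m^{-1}$ near $0$.

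For $V(\d)=V(\Fe)$: by Lemma \ref{lem: SN is gen by set of the f(d)'s}, $V(\Fe)=\{\rz m(\Fe):m\in\SM\}=\{\rz m(\rz\un m(\d)):m\in\SM\}=\{\rz(m\circ\un m)(\d):m\in\SM\}$, where the last equality is transfer of functional composition. Since $m\mapsto m\circ\un m$ maps $\SM$ into $\SM$, this set is contained in $V(\d)$. Conversely, given $m\in\SM$, write $m=(m\circ\un m)\circ\un m^{-1}$ near $0$, note $\un m^{-1}\in\SM$ so $\rz\un m^{-1}(\d)$ is a valid argument, and $\rz m(\d)=\rz(m\circ\un m)(\rz\un m^{-1}(\d))$; but I need the argument $\rz\un m^{-1}(\d)$ to be realized as $\Fe$ evaluated — more directly, it is cleanest to argue $V(\d)\subseteq V(\Fe)$ by: for $m\in\SM$, the function $m\circ\un m^{-1}\in\SM$, and $\rz(m\circ\un m^{-1})(\Fe)=\rz(m\circ\un m^{-1})(\rz\un m(\d))=\rz(m\circ\un m^{-1}\circ\un m)(\d)=\rz m(\d)$, using that $\un m^{-1}\circ\un m = \mathrm{id}$ near $0$, hence $\rz m(\d)\in V(\Fe)$. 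Combining both inclusions gives $V(\d)=V(\Fe)$. Then Lemma \ref{lem: SN is gen by set of the f(d)'s} applied at both $\d$ and $\Fe$ gives $\SN_\d = V(\d) = V(\Fe) = \SN_{\Fe} = \SN_{*\un m(\d)}$, which is the second assertion.

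For $\rz m(\SN_\d)=\SN_\d$: the inclusion $\rz m(\SN_\d)\subseteq\SN_\d$ is exactly the earlier lemma ``$\Fr\in\SN_\d\Rightarrow\SN_\Fr\subset\SN_\d$'' combined with $\rz m(\SN_\Fr)\subseteq\SN_\Fr$, or more directly it follows from the composition computation $\norm{\rz m}_\Fr = \norm{\rz m\circ m'}_\d$ used there (writing $\Fr=\norm{\rz m'}_\d$ with $m'\in\SM^0$). For the reverse, given any $\Fs\in\SN_\d$, write $\Fs=\rz m'(\d)$ with $m'\in\SM^0$ (using that $\SN_\d$ is generated by $\SM$, and that one may take a continuous representative, as in Lemma \ref{lem: SN_d=SM_d}); then $m'\circ m^{-1}\in\SM$ and $\rz m(\rz(m'\circ m^{-1})(\d))=\rz(m\circ m^{-1}\circ m')(\d)=\rz m'(\d)=\Fs$, exhibiting $\Fs\in\rz m(\SN_\d)$. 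Finally, if $\Fr\in\SN_\d$ then $\Fr=\rz m'(\d)$ for some $m'\in\SM^0$, and $\SN_\Fr = \SN_{*m'(\d)} = \SN_\d$ by the second assertion applied with $\un m = m'$. I expect the only mild technical obstacle is bookkeeping the domains of the local inverses $\un m^{-1}$, $m^{-1}$ (they are defined only on germ-sized neighborhoods of $0$), but since all evaluations take place at infinitesimal arguments and Corollary \ref{cor: SG_0 -> F(B_delts) is R alg isomorph} legitimizes working with germ representatives, this causes no real difficulty; everything else is transfer plus the monotonicity already established.
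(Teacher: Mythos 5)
Your proposal takes essentially the same route as the paper: the heart of both arguments is that $\un m\in\SM^0$ implies $\un m^{-1}\in\SM$, and hence pre- or post-composition by $\un m$ leaves $\SM$ invariant ($\un m^{-1}\circ\SM=\SM$ and $\SM\circ\un m=\SM$), which together with the transfer identity $\rz(f\circ g)(\d)=\rz f(\rz g(\d))$ immediately yields $V(\d)=V(\rz\un m(\d))$. Your write-up of the two inclusions is in fact cleaner than the paper's chain of equalities (which contains a couple of internal typos). The first two assertions are handled correctly.

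There is, however, a composition-order slip in the third part that, as written, is wrong. You set $\Ft=\rz(m'\circ m^{-1})(\d)$ and then compute $\rz m(\Ft)$. But
\[
\rz m\bigl(\rz(m'\circ m^{-1})(\d)\bigr)=\rz\bigl(m\circ(m'\circ m^{-1})\bigr)(\d)=\rz(m\circ m'\circ m^{-1})(\d),
\]
which is \emph{not} $\rz(m\circ m^{-1}\circ m')(\d)$ and does not simplify to $\rz m'(\d)=\Fs$ unless $m$ and $m'$ happen to commute. The fix is the obvious one: take $\Ft=\rz(m^{-1}\circ m')(\d)$ instead, so that $\rz m(\Ft)=\rz(m\circ m^{-1}\circ m')(\d)=\rz m'(\d)=\Fs$, and $m^{-1}\circ m'\in\SM$ since $m^{-1}\in\SM$ and $\SM$ is closed under composition. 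Your subsequent line $\rz(m\circ m^{-1}\circ m')(\d)=\rz m'(\d)$ is consistent with the corrected choice, so this reads as a transcription slip rather than a conceptual gap, but as stated the displayed equality is false and should be repaired.

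One further small caution: when you write ``$\Fs\in\SN_\d$, write $\Fs=\rz m'(\d)$ with $m'\in\SM^0$,'' you need $\Fs$ to be an infinitesimal element of $\SN_\d$ (equivalently $\Fs\in\SN_\d\cap\mu(0)_+=\SM_\d$, as in Lemma \ref{lem: SN_d=SM_d}), and then the continuous representative construction (or Proposition \ref{prop: SN^0_d is coin in SN_d} together with the observation that $t\mapsto\norm{g}_t$ is continuous when $g$ is) produces $m'\in\SM^0$. The corollary's ``if $\Fr\in\SN_\d$'' should be read with this restriction in mind, since $\SN_\d$ also contains noninfinitesimal values for which the inverse-composition argument does not apply; this is a looseness in the statement itself, not a defect of your argument.
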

\begin{proof}
    We know by definition that $V(\rz \un{m}(\d))\subset V(\d)$, but $\un{m}^{-1}\in\SM$ and so we have that $\SM=[\un{m}^{-1}]\circ\SM=\{[\un{m}^{-1}\circ m]:[m]\in\SM\}$. Therefore
\begin{align}
    V(\d)=\{\rz m(\d):[m]\in\SM\}=\{\rz\un{m}\circ\un{m}^{-1}\circ m(\d):m\in\SM\}\qquad\qquad\qquad\notag\\
    =\{\rz m\circ\wt{m}(\d):[\wt{m}]\in[\un{m}^{-1}]\circ\SM\}\quad\qquad\qquad\notag\\
    =\{\rz \un{m}\circ \wt{m}(\d):[\wt{m}]\in\SM\}=  V(\rz m(\d)).
\end{align}
    The last statement follows by an identical verification using instead that $\SM\circ[\un{m}]=\SM$.
\end{proof}
    Note that it possible that $0<\e\sim 0$ is not incomparably large or smaller than $\d$, ie., for some $m_1,m_2\in\SM$ we have $\rz m_1(\d)<\e<\rz m_2(\d)$ and yet $\SN_\e\cap\SN_\d=\emptyset$. One should check Puritz, \cite{Puritz1971}, for `discrete' versions of this and much other. Nonetheless, we have the following result.

\begin{lem}
    Suppose that $\Fr\in\SN_\d$, then $\SN_\d$ and $\{\Fu\in\SN_\d:\d<\Fu<\d+\Fr\}$  have the same cardinality.
\end{lem}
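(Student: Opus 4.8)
The claim is that $\SN_\d$ and its truncated copy $\{\Fu\in\SN_\d:\d<\Fu<\d+\Fr\}$ have the same cardinality, for any fixed $\Fr\in\SN_\d$. The plan is to produce an explicit injection in each direction and invoke Cantor--Schr\"oder--Bernstein, or, more economically, to produce a single bijection by transporting $\SN_\d$ into a short interval using an element of $\SM^0$. The key observation is the batch of closure results already established: $\SN_\d = V(\d) = \{\rz m(\d):m\in\SM\}$ by Lemma \ref{lem: SN is gen by set of the f(d)'s}, and $\rz m(\SN_\d)=\SN_\d$ for $m\in\SM^0$ by Corollary \ref{cor: m(N_d)-N_d and N_(m(d))=N_d}. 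So the strategy is: choose a strictly increasing continuous $m\in\SM^0$ whose transferred image on the relevant range of arguments lands inside $(\d,\d+\Fr)$, and use it to biject $\SN_\d$ with a coinitial piece of the target set.

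First I would reduce to a cleaner statement. Write $\Fr=\rz m_0(\d)$ for some $m_0\in\SM^0$ (using $\SN_\d\cap\mu(0)=\SM_\d$, Lemma \ref{lem: SN_d=SM_d}, after checking $\Fr$ is infinitesimal --- if $\Fr$ is noninfinitesimal the interval $(\d,\d+\Fr)$ contains the infinitesimal part of $\SN_\d$ essentially verbatim and the statement is easy, so assume $\Fr\sim 0$). Define $m(t)=t+ \tfrac12 m_0(t)$ for $t$ near $0$; then $m\in\SM^0$ (it is strictly increasing, continuous near $0$, and $m(t)\to 0$), and $m^{-1}\in\SM$. Transferring, for every $\Fu\in\SN_\d$ I want to land $\rz m$ evaluated at something in $(\d,\d+\Fr)$; more directly, consider the map $\Phi:\SN_\d\to\rz\bbr$, $\Phi(\Fs)=\d+\tfrac12\cdot\frac{\Fr}{1+\Fs}\cdot$ (a rescaled copy) --- but this is ad hoc. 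The cleaner route: by Corollary \ref{cor: m(N_d)-N_d and N_(m(d))=N_d}, $\rz m(\SN_\d)=\SN_\d$, so $\rz m$ restricted to $\SN_\d$ is a bijection of $\SN_\d$ onto itself; but I instead want to move $\SN_\d$ into the short interval. For that, note $\SN_{\Fr}=\SN_\d$ (again Corollary \ref{cor: m(N_d)-N_d and N_(m(d))=N_d}), and every element of $\SN_{\Fr}$ is $<\Fr$, hence $\d+\Fs\in(\d,\d+\Fr)$ for $\Fs\in\SN_\Fr$; so $\Fs\mapsto\d+\Fs$ injects $\SN_\d=\SN_\Fr$ into $\{\Fu:\d<\Fu<\d+\Fr\}$. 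This gives one inequality $|\SN_\d|\le|\{\Fu\in\SN_\d:\d<\Fu<\d+\Fr\}|$ \emph{provided} the image actually lies in $\SN_\d$; that needs $\d+\Fs\in\SN_\d$, which holds because $\SN_\d=\wh{\SN}_\d\cap\rz\bbr_+$ is closed under addition (the ring lemma preceding Lemma \ref{lem: SN_d doesnot have d-incomparable no}, applied with $\d,\Fs\in\SN_\d$).

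The reverse inequality is trivial since $\{\Fu\in\SN_\d:\d<\Fu<\d+\Fr\}\subset\SN_\d$. So Cantor--Schr\"oder--Bernstein finishes it. I would write this up as: (i) the trivial inclusion gives one direction; (ii) additive closure of $\SN_\d$ plus $\SN_\Fr=\SN_\d$ (Corollary \ref{cor: m(N_d)-N_d and N_(m(d))=N_d}) gives that $\Fs\mapsto\d+\Fs$ maps $\SN_\d$ injectively into the truncated set; (iii) conclude by Schr\"oder--Bernstein. The one genuine check --- and the main obstacle --- is step (ii)'s claim that $\d+\Fs<\d+\Fr$ for all $\Fs\in\SN_\d$, i.e. that every element of $\SN_\d$ is strictly below $\Fr$; this is \emph{false} as stated since $\SN_\d$ contains noninfinitesimals. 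The fix is that we only need \emph{some} injective copy of $\SN_\d$ inside $(\d,\d+\Fr)$, and the correct statement to use is: $\SN_\Fr$ is coinitial with $\SN_\d$ and, being $=\SN_\d$, is literally equal to it; but its elements that happen to be $<\Fr$ form a coinitial, hence cardinality-preserving, subset (here one invokes the previous lemma of the excerpt, that $\SN_\d$ and a coinitial segment of it have equal cardinality --- so I would instead cite the immediately preceding lemma to say $|\SN_\d|=|\SN_{\d,\Fr}|$ where $\SN_{\d,\Fr}=\{\Fs\in\SN_\d:\Fs\le\Fr\}$, then note $\Fs\mapsto\d+\Fs$ sends $\SN_{\d,\Fr}$ injectively into $\{\Fu\in\SN_\d:\d<\Fu\le\d+\Fr\}$ using additive closure). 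That is the step I expect to require the most care: threading the correct coinitiality/truncation lemma so that the translation map genuinely lands in the short interval while remaining inside $\SN_\d$.
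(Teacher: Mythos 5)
Your proposal is correct and follows essentially the same route as the paper: the paper likewise translates a truncated copy of $\SN_\d$ by $\d$ (its map $T:\rz m(\d)\mapsto\d+\rz m(\d)$ applied to $\{m\in\SM^0:\rz m(\d)<\Fr\}$) and combines this injection with the trivial reverse inclusion. The only caveat is that the "immediately preceding lemma" you cite for $|\SN_{\d,\Fr}|=|\SN_\d|$ does not exist in the text --- the paper simply asserts this cardinality equality as clear --- but it is easily supplied, e.g.\ by the injection $\Fu\mapsto\Fs_0\Fu/(1+\Fu)$ for a fixed $\Fs_0\in\SN_\d$ with $\Fs_0<\Fr$, using the multiplicative closure of $\SN_\d$.
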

\begin{proof}
    It's easy to see that as $m$ varies in $\SM^0$, the map $T:\rz m(\d)\mapsto\d+\rz m(\d):\SN_\d\ra\SN_\d$ is one to one. Also if $\SM^0_\Fr=\{m\in\SM^0:\rz m(\d)<\Fr\}$, then it's clear that the cardinalities of $\{\rz m(\d):m\in\SM^0_\Fr\}$ and $\SN_\d$ are the same. Therefore, because of this and as $T$ is an injection, the cardinality of $T(\{\rz m(\d):m\in\SM^0_\Fr\})$ is the same as that of $\SN_\d$. But by construction $T(\{\rz m(\d):m\in\SM^0_\Fr\})$ lies in $\{\Fu\in\SN_\d:\d<\Fu<\d+\Fr\}$.
\end{proof}
\begin{proposition}
    Suppose that $\SR_1,\SR_2$ are positive subsemirings of $\rz\bbr_{nes,+}$ that don't contain incomparable ranges and are coinitial in $\rz\bbr_+$. Then $\tau(\SR_1)_0$ and $\tau(\SR_2)_0$ define equivalent neighborhood systems of the zero germ in $\SG_0$.
\end{proposition}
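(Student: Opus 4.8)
The statement asserts that two positive subsemirings $\SR_1,\SR_2$ of $\rz\bbr_{nes,+}$, each containing no $\d$-incomparable range and each coinitial in $\rz\bbr_+$, generate the same neighborhood system of the zero germ via the recipe $U_\Fr = \{[f]\in\SG_0 : \norm{\rz f}_\d < \Fr\}$. The natural approach is to show that the neighborhood bases $\{U^{\SR_1}_\Fr : \Fr\in\SR_1\}$ and $\{U^{\SR_2}_\Fr : \Fr\in\SR_2\}$ mutually refine each other, i.e.\ $\SR_1$ and $\SR_2$ are coinitial \emph{with each other} in the sense of Definition~\ref{def: of coinitial partial order relations} after passing to the relevant subsets. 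The key point to extract first is that, by Lemma~\ref{lem: SN is gen by set of the f(d)'s} (and Corollary~\ref{cor: m(N_d)-N_d and N_(m(d))=N_d}), the set of actual germ-sup-values $\SN_\d = V(\d) = \{\rz m(\d):m\in\SM\}$ is itself a positive subsemiring of $\rz\bbr_{nes,+}$ with no $\d$-incomparable range (Lemma~\ref{lem: SN_d doesnot have d-incomparable no}) that is coinitial in $\rz\bbr_+$ (since $\SM$ contains functions $m$ with $\rz m(\d)$ arbitrarily small, being coinitial with $\SM_\d=\SN_\d\cap\mu(0)$). So it suffices to prove the special case that \emph{any} such $\SR$ is coinitial-with $\SN_\d$, and then conclude by transitivity of the equivalence relation $:=:$ on subsets of $\mu(0)_+$ (Remark~\ref{rem: coinitial is partial order}).

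First I would fix $\SR$ with the stated properties and show $\SN_\d \fallingdotseq \SR$ and $\SR \fallingdotseq \SN_\d$ on the relevant ranges. For the direction $\SR \fallingdotseq \SN_\d$: given $\Fs\in\SN_\d$, I need some $\Fr\in\SR$ with $\Fr\le\Fs$. Since $\SR$ is coinitial in $\rz\bbr_+$, pick $\Fr\in\SR$ with $\Fr\le\Fs$ directly --- this direction is essentially immediate from coinitiality in $\rz\bbr_+$. For the reverse direction $\SN_\d \fallingdotseq \SR$: given $\Fr\in\SR$, I must produce $\Fs\in\SN_\d$ with $\Fs\le\Fr$. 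Here is where the ``no incomparable range'' hypothesis on $\SR$ enters crucially. If $\Fr$ were incomparably small relative to $\d$, then (Remark~\ref{rem: e<<<d and f(d)<e -> f(d)=0}) any $\rz m(\d)<\Fr$ would force $\rz m(\d)=0$, so there would be \emph{no} nonzero element of $\SN_\d$ below $\Fr$ and coinitiality could fail. But by hypothesis $\SR$ contains no $\d$-incomparable range, so for each $\Fr\in\SR$ there is $m\in\SM$ with $\rz m(\d)\le\Fr$; thus $\Fs\dot=\rz m(\d)\in\SN_\d$ satisfies $\Fs\le\Fr$. Because the neighborhood $U^{\SR}_\Fr$ shrinks as $\Fr$ shrinks, these two coinitiality facts say precisely that $\{U^{\SR}_\Fr:\Fr\in\SR\}$ and $\{U^{\SN_\d}_\Fs:\Fs\in\SN_\d\}$ are mutually cofinal refinements, hence generate the same filter of neighborhoods of $[0]$.

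With the special case in hand, the general statement follows: $\tau(\SR_1)_0$ and $\tau(\SN_\d)_0$ generate the same neighborhood system of $[0]$, and likewise $\tau(\SR_2)_0$ and $\tau(\SN_\d)_0$, so by transitivity $\tau(\SR_1)_0$ and $\tau(\SR_2)_0$ do too. A small technical wrinkle I would need to address carefully: in the definition of $U_\Fr$ one only compares $\norm{\rz f}_\d$ against elements $\Fr$ of the ambient semiring, and $\SR_j$ may contain noninfinitesimal elements (values of germs at $0$), so the comparison $\norm{\rz f}_\d<\Fr$ must be interpreted for $\Fr$ ranging over all of $\SR_j\cap\rz\bbr_{nes,+}$; the coinitiality argument should be run on the positive \emph{infinitesimal} parts together with the standard-positive parts, and since both semirings are coinitial in all of $\rz\bbr_+$ this causes no real difficulty --- one just checks the refinement on each ``scale'' separately, exactly as $\SN_\d$ itself splits into $\SN^0_\d$ (infinitesimals) and the noninfinitesimal part.

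\textbf{Main obstacle.} The substantive step is the reverse coinitiality $\SN_\d \fallingdotseq \SR$, i.e.\ showing the no-incomparable-range hypothesis really does guarantee that elements of $\SN_\d$ sit below arbitrary elements of $\SR$; one must be careful that ``no incomparable range'' is used in the form: for every $\Fr\in\SR$ there exists $m\in\SM$ with $\rz m(\d)\le\Fr$ (equivalently $(\d,\Fr)\notin\FI$), and then invoke Lemma~\ref{lem: SN is gen by set of the f(d)'s} to land inside $\SN_\d$. Everything else is bookkeeping about coinitial filters and the fact that translation-invariance (Definition~\ref{def: of full topology by translation}) reduces equivalence of the full topologies to equivalence of the neighborhood bases at $[0]$.
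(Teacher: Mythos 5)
Your proposal is correct, but it takes a more elaborate route than the paper. The paper's proof is a two-line direct argument: given a $\tau(\SR_1)_0$-convergent net and any $\Fs\in\SR_2$, coinitiality of $\SR_1$ in $\rz\bbr_+$ immediately yields $\Fr\in\SR_1$ with $\Fr<\Fs$, hence $U_\Fr\subset U_\Fs$ and the net is $\tau(\SR_2)_0$-convergent; symmetry finishes it. In particular the no-incomparable-ranges hypothesis is never invoked in the paper's argument for the bare equivalence. You instead interpolate $\SN_\d$ between the two semirings, proving $\SR_j:=:\SN_\d$ for each $j$ and concluding by transitivity of $:=:$ (Remark \ref{rem: coinitial is partial order}); this is where you genuinely need the no-incomparable-ranges hypothesis, namely for the direction $\SN_\d\fallingdotseq\SR_j$, since an $\Fr\in\SR_j$ incomparably below $\d$ would have nothing of $\SN_\d$ beneath it. Your detour is logically sound and buys something the paper's proof does not state: the common neighborhood system is identified as the one generated by $\SN_\d$ itself, i.e.\ the topology $\tau_0^\d$, which is the fact one actually wants downstream. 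The cost is length and the extra dependence on Lemma \ref{lem: SN is gen by set of the f(d)'s} and the splitting into infinitesimal and noninfinitesimal scales, none of which the direct mutual-refinement argument requires.
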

\begin{proof}
    This is straightforward. Suppose that $(D,<)$ is a directed set and $(f_d;d\in D)$ is a $\tau(\SR_1)_0$-convergent $D$-net; ie., for each $\Fr\in\SR_1$ there is $d_0\in D$ such that $f_d\in U_\Fr$ for $d\geq d_0$. But by hypothesis, if $\Fs\in\SR_2$, there is $\Fr\in\SR_1$ with $\Fr<\Fs$ and therefore a $d_0\in D$ such that $d\geq d_0$ implies that $d_d\in U_\Fr\subset U_\Fs$.
\end{proof}
\begin{lem}\label{lem:  SM^0 is coinitial in SM}
    $\SM^0\subset\SM$ is coinitial in $\SM$.
\end{lem}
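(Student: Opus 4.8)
The statement asserts that $\SM^0$ — the monotone increasing moduli $m:\bbr_+\to\bbr_+$ that are continuous near $0$ — is coinitial in $\SM$, i.e.\ for every $m\in\SM$ there is $m'\in\SM^0$ with $m'\le m$ (on a common neighborhood of $0$). The plan is to take an arbitrary $m\in\SM$ and manufacture a continuous, strictly increasing minorant that still tends to $0$ at $0$. The natural device is to integrate, or to pass to a running infimum combined with a convexity/linear-interpolation trick: given $m\in\SM$, define, for $t$ in a small right-neighborhood of $0$,
\[
m'(t)\;=\;\frac{1}{t}\int_0^t m(s)\,ds .
\]
Since $m$ is positive and increasing, the integral is well-defined (as a monotone function $m$ is Lebesgue measurable and locally bounded near $0$), $m'$ is continuous (indeed differentiable a.e., and continuous as an average of a monotone function), $0<m'(t)\le m(t)$ because $m$ is increasing so $m(s)\le m(t)$ for $s\le t$, and $m'$ is itself increasing: for $t_1<t_2$, $m'(t_1)=\frac1{t_1}\int_0^{t_1}m\le \frac1{t_2}\int_0^{t_2}m=m'(t_2)$, using again monotonicity of $m$ (the average over a longer interval of an increasing function is at least the average over the shorter). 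Finally $m'(t)\to 0$ as $t\to 0$ because $0<m'(t)\le m(t)\to 0$. Thus $m'\in\wt\SM$ and $m'\le m$.

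The one gap in the above is that $\SM$ requires \emph{strict} monotonicity ($r<t\Leftrightarrow m(r)<m(t)$), while the average $m'$ is only guaranteed weakly increasing (it is constant on any interval where $m$ is constant). I would fix this cheaply by adding a small strictly increasing continuous bump that does not spoil the minorant property: set $m''(t)=\tfrac12 m'(t)+\tfrac12 t\cdot\inf_{0<s\le r_0}\frac{m'(s)}{s}$ on a fixed small interval $(0,r_0]$ — or, more simply, $m''(t)=\min\{m'(t),\,c\,t\}$ for a suitable small constant $c>0$ chosen so that $c\,t\le m'(t)$ fails to hold identically (forcing genuine dependence on $t$) is the wrong direction; instead take $m''(t)=m'(t)\cdot\frac{t}{t+1}$-type corrections. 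Cleanest: since $m'$ is continuous, increasing, positive, with $m'(0^+)=0$, the function $t\mapsto m'(t)+\eta(t)$ where $\eta$ is any strictly increasing continuous function with $\eta(t)\le \tfrac12 m'(t)$ (e.g.\ $\eta(t)=\tfrac12 m'(t)$ composed with a strictly increasing homeomorphism of $[0,r_0]$, or simply reparametrize) still lies below $m$ after rescaling by $\tfrac12$: take $m_\star(t)=\tfrac12\bigl(m'(t)+\eta(t)\bigr)$ with $\eta(t)=\tfrac12 m'(2t)$ say, which is strictly increasing wherever $m'$ is eventually nonconstant; and if $m'$ were constant on a right-neighborhood of $0$ then $m'\equiv 0$ there, contradicting $m'>0$. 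So in fact $m'$ is automatically strictly increasing on a small enough interval, because a nonzero monotone function on $(0,r_0]$ with limit $0$ at $0$ cannot be locally constant at $0$, and the averaging operator strictly increases the value as soon as $m$ is not a.e.\ constant. I would state this last point as a short lemma: \emph{if $m\in\SM$ then $m'(t)=\frac1t\int_0^t m<m'(t')$ for all $0<t<t'$ small}, proved by noting $\int_0^{t'}m-\frac{t'}{t}\int_0^t m = \int_t^{t'}m - (\tfrac{t'}{t}-1)\int_0^t m = \int_t^{t'}\bigl(m(s)-\bar m_t\bigr)ds$ where $\bar m_t$ is the average on $[0,t]$, and $m(s)\ge m(t)\ge \bar m_t$ on $[t,t']$ with strict inequality on a set of positive measure unless $m$ is constant, which is impossible.

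The expected main obstacle is purely bookkeeping: making sure $m\in\SM$ (which a priori need only be monotone, not measurable in any stated sense) actually supports the integral — but monotone real functions are automatically Borel measurable and locally bounded away from a jump, and here $m(t)\to 0$ controls behavior at $0$, so $\int_0^t m(s)\,ds$ converges; this should be dispatched in one sentence. A secondary subtlety is the domain: elements of $\SM$ and $\SM^0$ are only germs at $0$ in the sense of being defined on \emph{some} neighborhood $(0,r_0]$, so "coinitial" is understood up to shrinking the domain, and all constructions above are performed on a common small interval. I would conclude: given $m\in\SM$, the function $m'$ constructed above lies in $\SM^0$ and satisfies $m'\le m$ on their common domain, hence $\SM^0$ is coinitial in $\SM$. \qed
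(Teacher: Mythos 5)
Your averaging construction $m'(t)=\tfrac1t\int_0^t m(s)\,ds$ is correct, and it is a genuinely different route from the one taken in the paper. The paper's own proof enumerates the discontinuity points $p_1>p_2>\cdots$ of the monotone representative $m$ and replaces $m$ by a linear interpolation below the graph on a small interval $[p_j,q_j]$ to the right of each $p_j$, producing a piecewise-linear minorant that agrees with $m$ off those intervals. That argument leans on the claim that ``the germ at $0$ of the set of points of discontinuity \ldots is discrete,'' which is not true for a general strictly increasing function---a strictly increasing $m$ can perfectly well have a dense set of jumps on every interval $(0,r)$ (e.g.\ $m(t)=t+\sum_{q_n<t}2^{-n}$ with $q_n$ an enumeration of the rationals)---so the paper's proof as written has a gap that would need to be patched (for instance by first replacing $m$ with a smaller monotone representative whose jumps are discrete). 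Your integral-average sidesteps this entirely: $m$ is automatically Borel measurable and bounded near $0$ (by strict monotonicity and $m(t)\to0$), so $m'$ is well defined, continuous on $(0,r_0)$, positive, satisfies $m'\le m$ because $m$ is increasing, and tends to $0$ by squeeze; that cleanly puts $m'$ in $\SM^0$ below $m$.

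One remark on presentation: the middle third of your proposal wanders through several patches ($\tfrac12 m'+\tfrac12\eta$, $\min\{m',ct\}$, reparametrizations) before you correctly notice that none of them is needed. The final observation is the right one, and you prove it essentially correctly: writing
\begin{align}
 m'(t_2)-m'(t_1)=\frac{1}{t_2}\int_{t_1}^{t_2}\bigl(m(s)-m'(t_1)\bigr)\,ds,
\end{align}
and using $m(s)>m(t_1)\ge m'(t_1)$ for $s\in(t_1,t_2)$ (with $m(t_1)>m'(t_1)$ strictly since $m$ is strictly increasing) gives $m'(t_1)<m'(t_2)$, so $m'$ is automatically strictly increasing and lies in $\SM$, hence in $\SM^0$. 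I would strike the intermediate fixes and keep only this last paragraph; the proof is then both shorter and, unlike the paper's, free of the hidden hypothesis about the discontinuity set.
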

\begin{proof}
    Let $[m]\in\SM$; we will find $[\tl{m}]\in\SM^0$ such that $\tl{m}(t)\leq m(t)$ for $t>0$ sufficiently small. Now, as $[m]$ is monotone, the germ at $0$  of the set of points of discontinuity, $S$, is discrete, eg countable with possible limit point at $0$; let $p_1>p_2>\cdots$ be a enumeration of these, noting that only the tail end is well defined. Define a piecewise linear $\tl{m}\in\SM^0$ as follows. For each $j\in\bbn$, let $m(p_j)_-$ denote the limit $m(t)$ as $t\uparrow p_j)$ and $m(p_j)_+$ denote the limit $m(t)$ as $t\downarrow p_j$ (exists by monotonicity and $m$ assumes one of these). We know that $m(p_j)_-\leq m(p_j)_+<m(t)$ for $p_j<t<p_{j-1}$; so there is $q_j\in (p_j,p_{j-1})$ such that if $m_j:[p_j,q_j]\ra\bbr_+$ is the affine map with graph the line segment connecting the two points $(p_j,f(p_j)_-)$ and $(q_j,f(q_j))$, then we have $m_j(t)\leq m(t)$ for $t\in [p_j,q_j]$.
    Therefore defining $\tl{m}$ to be $m_j$ on $[p_j,q_j]$ for all $j\in\bbn$ and to be $m$ on the complement of $\cup\{[p_j,q_j]:j\in\bbn\}$, we have defined a function $\tl{m}$ (representative) in $\SM^0$ with $\tl{m}(t)\leq m(t)$ for sufficiently small $t\in\bbr_+$.
\end{proof}
\begin{definition}
    Let $\SN^0_\d\subset\SN_\d$ denote the set $\{\Fr\in\SN_\d:\Fr=\norm{\rz f}_\d:\;\text{for some}\;[f]\in\SG_0^0\}$. It's easy to see, as with $\SN_\d$, that $\SN^0_\d$ is a semiring. Let $\hat{\SN}^0_\d=\SN^0_\d\sqcup-\SN^0_\d\sqcup\{0\}$ denote the subring of $\hat{\SN}_\d$ it generates.
\end{definition}
\begin{proposition}\label{prop: SN^0_d is coin in SN_d}
     $\SN^0_\d$ is coinitial in $\SN_\d$.
\end{proposition}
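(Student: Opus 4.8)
\textbf{Proof sketch for Proposition \ref{prop: SN^0_d is coin in SN_d}.}
The plan is to reduce the statement to the corresponding coinitiality fact about the function classes $\SM^0$ and $\SM$, which has already been established as Lemma \ref{lem:  SM^0 is coinitial in SM}. Recall that Lemma \ref{lem: SN is gen by set of the f(d)'s} (see also Lemma \ref{lem: SN_d=SM_d}) tells us that $\SN_\d = V(\d) = \{\rz m(\d) : m\in\SM\}$, where the correspondence $[f]\mapsto [m_f]$, $m_f(t)=\norm{f}_t$, is the surjection $A:\SG_0\to\SM$. So an arbitrary element of $\SN_\d$ is of the form $\rz m(\d)$ for some $m\in\SM$. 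First I would observe the parallel statement for $\SN^0_\d$: if $m\in\SM^0$, i.e. $m$ is (germ-)continuous near $0$, then the radial function $f(x) = m(|x|)$ is continuous on a neighborhood of $0$, so $[f]\in\SG^0_0$, and $\norm{\rz f}_\d = \rz m(\d)$; hence $\{\rz m(\d) : m\in\SM^0\}\subseteq\SN^0_\d$. (Conversely $\SN^0_\d$ is contained in $\{\rz m(\d):m\in\SM\}=\SN_\d$, which is all we need.)

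Given this, the argument is short. Let $\Fr\in\SN_\d$ be arbitrary; write $\Fr = \rz m(\d)$ with $m\in\SM$. By Lemma \ref{lem:  SM^0 is coinitial in SM}, $\SM^0$ is coinitial in $\SM$, so there is $\tl m\in\SM^0$ with $\tl m(t)\le m(t)$ for all sufficiently small $t>0$. This is a standard inequality between elements of $\SM$, so by transfer $\rz{\tl m}(\d)\le\rz m(\d)=\Fr$. By the previous paragraph $\Fs\dot=\rz{\tl m}(\d)$ lies in $\SN^0_\d$ (it is $\norm{\rz f}_\d$ for the radial continuous germ $f(x)=\tl m(|x|)$), and $\Fs\le\Fr$. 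Since $\Fr\in\SN_\d$ was arbitrary, this is exactly the assertion that $\SN^0_\d$ is coinitial in $\SN_\d$ in the sense of Definition \ref{def: of coinitial partial order relations}.

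I do not expect a serious obstacle here: the content has already been packaged into Lemma \ref{lem:  SM^0 is coinitial in SM} and Lemma \ref{lem: SN is gen by set of the f(d)'s}, and the only new ingredient is the trivial remark that a monotone modulus function $m$ pulls back to a continuous germ exactly when $m$ itself is continuous near $0$, together with one application of transfer to move the pointwise inequality $\tl m\le m$ down to $\d$. The one point to be careful about is the direction of the radial construction: one must check that $x\mapsto m(|x|)$ indeed represents an element of $\SG_0^0$ (continuity at and near $0$, vanishing at $0$), which is immediate since $m\in\SM^0$ is continuous near $0$ with $\lim_{t\to 0}m(t)=0$, and that $\norm{\cdot}_t$ of this function is literally $m(t)$, so that the identification with $V(\d)$ used above is exact rather than merely asymptotic.
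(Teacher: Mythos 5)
Your proof is correct and follows essentially the same approach as the paper's, which simply cites Lemma \ref{lem: SN_d=SM_d} and Lemma \ref{lem:  SM^0 is coinitial in SM} as a one-line direct consequence; you have filled in the reduction (passing through the radial construction $f(x)=m(|x|)$ and one transfer of the pointwise inequality $\tl m\le m$) that the paper leaves implicit.
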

\begin{proof}
     This is a direct consequence of the proof of Lemma \ref{lem: SN_d=SM_d} and Lemma \ref{lem:  SM^0 is coinitial in SM}.
\end{proof}
    From work later in this paper (see Subsection \ref{subsec: top independence from delta}) we have some idea of the cardinality of minimal coinitial subsets of $\SN_\d$.
\begin{lem}
    There exists a coinitial subset of $\SN_\d$ with the cardinality of $\bbr$.
\end{lem}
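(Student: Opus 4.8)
The plan is to build an explicit coinitial family of cardinality $|\bbr|$ inside $\SN_\d$ using the Hardy-type constructions whose existence is promised elsewhere in the paper. By Proposition \ref{prop: SN^0_d is coin in SN_d} it suffices to produce a coinitial subset of $\SN^0_\d$, so we may work entirely with continuous germs; and by Lemma \ref{lem: SN_d=SM_d} together with Lemma \ref{lem:  SM^0 is coinitial in SM} the problem reduces to finding a subset $\SA\subset\SM^0$, of cardinality $|\bbr|$, such that $\{\rz m(\d):m\in\SA\}$ is coinitial in $\SN_\d=\{\rz m(\d):m\in\SM\}$. In other words, I want a continuum-sized family of monotone moduli that is cofinal-from-below: for every $m\in\SM$ there is $a\in\SA$ with $\rz a(\d)\le\rz m(\d)$.

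The key step is the Hardy power-series construction referred to in the introduction (``the Hardy construction rendered in lemma \ref{lem: Hardy power series construction}''), which, applied near $0$ rather than near infinity after the change of variable $t\mapsto 1/t$, yields for each strictly increasing sequence of positive integer exponents a modulus in $\SM^0$, and the family of all such moduli is coinitial in $\SM$. First I would invoke that lemma to obtain a coinitial family $\{m_E : E\in\SE\}\subset\SM^0$ indexed by a set $\SE$ of exponent sequences. Then I would note two things: (i) the family is coinitial, hence $\{\rz m_E(\d):E\in\SE\}$ is coinitial in $\SN_\d$ (using that $\rz$ preserves the pointwise inequalities, as in Remark \ref{rem: e<<<d and f(d)<e -> f(d)=0} and the monotonicity arguments preceding Lemma \ref{lem: sets coin wrt N_d go to these under m in M}); and (ii) one does not need all of $\SE$ — a carefully chosen subfamily of cardinality exactly $|\bbr|$ still exhausts $\SM$ from below. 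For (ii) I would exhibit a single ``diagonal'' one-parameter subfamily: for each real $s>0$ let $h_s\in\SM^0$ be a modulus that decays at $0$ faster than any power, e.g. built from the exponent sequence scaled by $s$, arranged so that $s<s'\Rightarrow h_{s'}(t)\le h_s(t)$ for $t$ small; then show $\{h_s:s\in\bbr_+\}$ is coinitial in $\SM^0$, because given any $m\in\SM$ the coinitial Hardy family contains some $m_E\le m$, and by construction some $h_s\le m_E$. This gives a coinitial subset of $\SN_\d$ with $|\bbr|$ elements; it cannot be smaller than $|\bbr|$ in the relevant sense because, by Corollary \ref{cor: countable nets can't converge in tau}, no countable subset of $\SN_\d$ is coinitial, and a routine cardinality count of the Hardy exponent sequences caps the size at $|\bbr|$.

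The main obstacle I anticipate is step (ii): verifying that a \emph{single} continuum-indexed chain $\{h_s\}$ is genuinely coinitial in $\SM$ (equivalently that its evaluations at $\d$ are coinitial in $\SN_\d$), rather than merely some uncountable subfamily of the full Hardy family. This requires pinning down the asymptotic-rigidity properties of the exponent sequences — precisely the content alluded to around Corollary \ref{cor: sequence ptwise cofinal->unif cofinal} and Lemma \ref{lem: Incr pwwise bdd->Hrdy ptwise bdd} — to guarantee that scaling the exponents by a real parameter produces a chain that still dominates (from below) every $m\in\SM$ near $0$. If a clean one-parameter chain proves too delicate, the fallback is to take $\SA$ to be the set of all Hardy moduli with exponent sequences lying in a fixed almost-disjoint family of subsets of $\bbn$ of size $|\bbr|$; coinitiality of this $\SA$ then follows from coinitiality of the full Hardy family by an interpolation argument, and $|\SA|=|\bbr|$ is immediate. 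Either way, the upper bound $|\SA|\le|\bbr|$ is a bookkeeping matter and the lower bound is Corollary \ref{cor: countable nets can't converge in tau}, so the real work is concentrated in the coinitiality verification.
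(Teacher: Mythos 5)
Your starting point --- that the Hardy family of Lemma \ref{lem: Hardy power series construction} gives a coinitial subset of $\SN_\d$ --- is exactly the paper's, but your step (ii) contains a genuine error and an unnecessary detour. The ``diagonal one-parameter chain'' $\{h_s:s\in\bbr_+\}$ with $s<s'\Rightarrow h_{s'}\le h_s$ cannot be coinitial in $\SN_\d$: since $\bbn$ is cofinal in $\bbr_+$, such a chain admits the countable coinitial subfamily $\{h_n:n\in\bbn\}$, and Lemma \ref{lem: SN does not contain countale coinit} shows that no countable decreasing family in $\SN_\d$ is coinitial. (The paper makes precisely this point with the example $f_r(x)=\exp(-1/x^r)$ immediately before that lemma.) So the obstacle you flag as ``delicate'' is in fact fatal to that route. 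The fallback via almost-disjoint families is also unnecessary: the Hardy family $\FH_{<,B}$ is indexed by $Incr$, the increasing sequences of positive integers, so it already has cardinality at most $|\bbr|$; there is nothing to cut down, and the whole of step (ii) can be deleted.

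The issue you do not address is the opposite bound: to get a coinitial set of cardinality exactly $|\bbr|$ you must show the evaluation $[m]\mapsto\rz m(\d)$ does not collapse the continuum-sized Hardy family onto something smaller. Your appeal to Corollary \ref{cor: countable nets can't converge in tau} only yields uncountability, which equals $|\bbr|$ only under the continuum hypothesis. The paper closes this gap with Corollary \ref{cor: E_d is an isomorphism onto A_d}: two distinct convergent power series germs in $\SS\SM$ agree on at most a countable set near $0$, so a saturation (concurrence) argument produces a generic $\d$ at which $\SE_\d$ is injective on $\SS\SM$; then $\SA_\d=\SE_\d(\SS\SM)$ is coinitial by the Hardy lemma and has cardinality $|\SS\SM|=|\bbr|$. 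That injectivity step is the missing ingredient in your proposal.
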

\begin{proof}
    From Lemma \ref{lem: Hardy power series construction}, we know that the subset of $\SM$ given by germs  of analytic functions, $\SS\SM$, satisfies $\SA_\d=\{\rz m(\d):m\in\SS\SM\}$ is coinitial in $\SN_\d$ and we also know, by Corollary \ref{cor: E_d is an isomorphism onto A_d}, that the map $\SE_\d:\SS\SM\ra\SN_\d:[m]\mapsto\rz m(\d)$ is an injection onto this coinitial subset. The assertion follows as the cardinality of $\SS\SM$ is the same as that of $\bbr$.
\end{proof}
    So the previous fact gives a lower bound on the possible cardinalities of minimal coinitial subsets. We will now prove that this cardinality must actually occur.
    Now consider $F(I)$ the ring of real valued functions on the unit interval $I$ and in the $\sup$ topology on $F(I)$, consider the collection $(\SC,\subset)$ of all neighborhoods of the the zero function partially ordered by inclusion. $(\SC,\subset)$ is an uncountable partially ordered set, but it has a (many) countable coinitial subset $\{N_{1/n}:n\in\bbn\}$ where $N_{1/n}=\{f\in F(I):\norm{f}<1/n\}$ hence to prove completeness or not it suffices to test convergence for one of these countable coinitial sets. The critical fact here is that $(0,\infty)$ has countable coinitial subsets, but as we shall see $\SS$ does not, hence convergence in $(\SG,\tau_0)$ will need uncountable nets.

    The previous is elementary and standard, but motivates the following. We will show that $\SN_\d$ does not contain a countable coinitial subset and hence according to the above discussion, one needs nets on at least uncountable directed sets to give convergence. Consider the following example. Here we will be considering real valued germs of maps at $0$ in $\bbr$; fix our positive infinitesimal $\d$. For each $0<r\in\bbr$, let $f_r(x)=\exp(-1/x^r)$ for $x>0$, setting $f_r$ equal to $0$ for $x<0$. $f_r$ is monotone (and in fact smooth) and so it is straightforward that $\norm{\rz f_r}_\d=\rz\exp(-1/\d^r)$ and so as $c^{ab}=(c^a)^b$, we have that
\begin{align}
    \norm{\rz f_{r+t}}_\d=(\norm{\rz f_r}_\d)^{1/\d^t}.
\end{align}
    From this one can check that $\norm{\rz f_r}_\d\sim 0$ and that if $a>0$, then  $\norm{\rz f_{r+a}}_\d\ll (\norm{\rz f_r}_\d)^n$ for all $n\in\bbn$. That is, we have an uncountable subset
\begin{align}
    \{\ell_r\dot=\norm{\rz f_r}_\d:r\in\bbr_+\}\subset\SS\cap\mu(0)
\end{align}
    such that for each $r,s\in\bbr_+$ with $r<s$, $\ell_s<(\ell_r)^n$ for all $n\in\bbn$, yet there is $\Fr\in\SS$ such that $\Fr<\ell_r$ for all $r\in\bbr_+$. We will give a general proof of this below but here we will give an explicit bound. If $g(x)=\exp(-e^{1/x})$, it's straightforward to check that $\rz f_r(\d)>\rz g(\d)$ for all positive $r\in\bbr$.
%    First note that if $f,g:\bbr_+\ra\bbr$ are monotone increasing functions, then so is the function $x\mapsto h(x)= f(x)^{g(x)}$. So let $f(x)=\exp(-1/x^r)$ and $g(x)=$ ***************

    For what it's worth  the net of functions $f_r$ does indeed have a countable coinitial subset $f_n$ for $n\in\bbn$ ie., as $\bbn$ is cofinal in $\bbr_+$. But we will prove the following.
\begin{lem}\label{lem: SN does not contain countale coinit}
    There is no countable subset $\SC=\{\Fs_j:j\in\bbn\}\subset\SS$ with $\Fs_j>\Fs_{j+1}$ for all $j$, that is coinitial in $\SS$.
\end{lem}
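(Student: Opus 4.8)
The plan is to show that no decreasing sequence in $\SS$ can be coinitial by a diagonal construction: given any countable $\SC = \{\Fs_j : j \in \bbn\}$ with $\Fs_j > \Fs_{j+1}$, I would manufacture an element $\Fr \in \SS$ with $\Fr < \Fs_j$ for every $j$, so $\SC$ cannot witness coinitiality. Since $\SS = \wh{\SS}^{\k,\d}$ (or $\ov{\SS}^\d$, which by Lemma \ref{lem: SS_delta=SN_delta} equals $\SN_\d$, and by Lemma \ref{lem: SN_d=SM_d} is the same as $\SM_\d = \{\rz m(\d) : m \in \SM\}$), it suffices to work with the moduli functions: each $\Fs_j = \rz m_j(\d)$ for some $m_j \in \SM$, and I want to produce $m \in \SM$ with $\rz m(\d) < \rz m_j(\d)$ for all $j$.

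The key step is a standard (not nonstandard) construction at infinity, dual to the explicit example just given with $f_r(x) = \exp(-1/x^r)$ dominated below by $g(x) = \exp(-e^{1/x})$. First I would use Lemma \ref{lem:  SM^0 is coinitial in SM} to replace each $m_j$ by a continuous $\tl m_j \in \SM^0$ with $\tl m_j \leq m_j$ near $0$; then $\rz{\tl m}_j(\d) \leq \Fs_j$, so it is enough to beat the $\tl m_j$'s. Now I build a single continuous increasing $m: (0,t_0] \to \bbr_+$ with $\lim_{t\to 0} m(t) = 0$ such that $m(t) < \tl m_j(t)$ for all $t$ in a punctured neighborhood of $0$ depending on $j$ — this is a routine exhaustion/diagonal argument: pick a decreasing sequence $t_1 > t_2 > \cdots \to 0$ and on each interval $(t_{k+1}, t_k]$ define $m$ to lie below $\min\{\tl m_1, \ldots, \tl m_k\}$ while remaining increasing and tending to $0$ (e.g.\ interpolate linearly between suitably small values at the $t_k$). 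Since $m \leq \tl m_j$ on an entire real punctured neighborhood of $0$, transfer gives $\rz m(\d) \leq \rz{\tl m}_j(\d) \leq \Fs_j$ for every $j \in \bbn$. Finally, to get strict inequality $\Fr < \Fs_j$ rather than $\leq$, I replace $m$ by $\tfrac{1}{2}m \in \SM^0$; then $\Fr \dot= \rz(\tfrac12 m)(\d) = \tfrac12 \rz m(\d) < \rz m(\d) \leq \Fs_j$, and $\Fr$ is a nonzero positive infinitesimal, hence $\Fr \in \SM_\d = \SN_\d = \SS$. This contradicts coinitiality of $\SC$, since no $\Fs_j \leq \Fr$.

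The main obstacle is carrying out the interval-by-interval definition of $m$ cleanly so that the resulting function is genuinely in $\SM$ (strictly increasing — or at least honestly monotone with the right limit — and with $\lim_{t\to 0}m(t)=0$) while simultaneously staying below each $\tl m_j$ on a full neighborhood of $0$ rather than merely at a sequence of points; one has to be careful that the linear interpolants do not accidentally exceed some $\tl m_j$ at interior points of an interval, which is why passing first to the continuous representatives $\tl m_j$ (so that $\min\{\tl m_1,\dots,\tl m_k\}$ is continuous and positive on each compact $[t_{k+1},t_k]$) is what makes the construction go through. Everything else — the reductions via the coinitiality lemmas, the identification $\SS = \SN_\d = \SM_\d$, and the transfer of the pointwise domination inequality — is bookkeeping already set up in the preceding subsections.
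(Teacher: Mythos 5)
Your proof is correct, and it takes a noticeably different (and cleaner) route than the paper's. The paper works directly in $\SG_0$: it passes to pseudo-monotone representatives $f^j$ on a family of balls $B_j$ with radii $b_j\downarrow 0$, builds a \emph{step function} $h$ that is constant on each annulus $B_j\ssm B_{j+1}$ and undercuts $\min\{f^1,\dots,f^k\}$ there, verifies the annulus-constancy identity $(\diamondsuit)$ ($\norm{h}_D=\norm{h}_{b_j}$ for any ball $D$ with $B_{j+1}\subsetneqq D\subset B_j$), transfers the entire sequence, and then \emph{squeezes} $B_\d$ between two consecutive transferred balls $\rz B_\Fk\subset B_\d\subset\rz B_{\Fk-1}$ at an infinite index $\Fk$; the step-function property $(\diamondsuit)$ plus the decreasing condition at infinite $\Fk$ then pins $\norm{\rz h}_\d$ below every $\Fr_j$. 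You instead collapse to the one-dimensional moduli $m_j\in\SM$, invoke Lemma \ref{lem:  SM^0 is coinitial in SM} to continuize them, run a Borel/Hardy-type diagonalization on shrinking \emph{intervals} to produce a single $m\in\SM^0$ with $m<\tl m_j$ on a standard punctured neighborhood of $0$ for each fixed $j$, and then transfer \emph{one $j$ at a time}. This avoids the step-function and infinite-index squeeze machinery entirely; the only nonstandard ingredient is the evaluation $\rz m(\d)$. What the paper's version buys is that it makes no appeal to continuous regularizations or to $\SM^0$ (the step function $h$ need not be continuous), and it stays closer to the germ picture used elsewhere in the chapter; what your version buys is conceptual economy and a single, localizable transfer per index. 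Your construction details are also sound — defining $m$ on $(t_{k+1},t_k]$ by linear interpolation between suitably suppressed values (e.g.\ $m(t_k)=c_k/2^k$ with $c_k=\min\{\tl m_1(t_{k+1}),\dots,\tl m_k(t_{k+1})\}$) gives an honest element of $\SM^0$ strictly dominated by each $\tl m_j$ near $0$, and the remark about passing to continuous representatives first so the running minima stay away from zero on compacta is exactly the point that makes the interpolation safe. (One tiny remark: the strict inequality $m(t)<\tl m_j(t)$ already transfers to $\rz m(\d)<\rz\tl m_j(\d)$ since $\d<\rz\rho_j$, so the final $\tfrac12 m$ rescaling is not actually needed.)
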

\begin{proof}
    Suppose, by way of contradiction,  that such a sequence exists.  By Lemma \ref{lem: SS_delta=SN_delta}, we may work with the collection of $\SN_\d$, so suppose that $\Fr_1>\Fr_2>\ldots$ is a countable subset of $\SS$ such that, if $[g]\in\SG_0$, then there is $j\in\bbn$ with $\norm{\ov{f}^j}_\d<\norm{g}_\d$. Here, for each $j\in\bbn$, we have that $\ov{f}^j$ is any representative of a germ $[\ov{f}^j]\in\SG_0$ with $\Fr_j=\norm{\ov{f}^j}_\d$. Without loss of generality, we may redefine the $\ov{f}^j$'s preserving $\Fr_j=\norm{\rz \ov{f}^j}_\d$ by defining $f^j(x)=sup\{|\ov{f}^j(y)|:y\in B_{|x|}\}$ so that if $r>0$ such that $f^j$ is defined on $B_r$ and $x\in B_r$ has $|x|=r$, then $f^j(x)=\norm{f^j}_r=\norm{\ov{f_j}}_r$.
    (In this case, we have that the representatives $f^j$ are nonnegative and pseudo-monotone: ($|x|\leq |y|$ implies that $f^j(x)\leq f^j(y)$)). We may further assume that the representatives $f^j$ are defined on a ball $B_j$ for $j\in\bbn$ with radius $b_j$ such that $b_j\ra 0$ as $j\ra \infty$, so that now if $x\in\p B_j$, then $f^j(x)=\norm{f^j}_{b_j}$. Define $h:B_1\ra\bbr$ as follows. Writing $B_1=\sqcup\{B_j\ssm B_{j+1}:j\in\bbn\}\sqcup\{0\}$, a disjoint union, we will define $h$ as a step function undercutting successively more of the $f^j$'s. For $j\in\bbn$, define
\begin{align}
    h(z)=\sup\{\min\{f^j(z):1\leq j\leq k\}:z\in B_j\ssm B_{j+1}\},
\end{align}
    defining $h(0)=0$. We have that, as the $f^j$ are pseudo-monotone, that $h$ is pseudo-monotone on $B_1$ and constant on $B_j\ssm B_{j+1}$. In particular, if $D\subset\bbr^n$ is a closed ball centered at $0$ with $B_{j+1}\subsetneqq D\subset B_j$, then we have ($\bsm{\diamondsuit}$) $\norm{h}_D=\norm{h}_{b_j}$ by the following argument. First $h$ pseudomonotone on $D$ implies that $\norm{h}_D=\norm{h}_{D\ssm B_{j+1}}$, but $h$ is constant on $B_j\ssm B_{j+1}$ and so $\norm{h}_{D\ssm B_{k+1}}=\norm{h}_{B_j\ssm B_{j+1}}$ and again by pseudomonotonicity of $h$ we have $\norm{h}_{B_j}=\norm{h}_{B_j\ssm B_{j+1}}$.

     Transfer this setup. The transfer of the set $\{f_j:j\in\bbn\}$ defined on the balls $B_j$ will be denoted by $\{\rz f_\Fj:\Fj\in\rz\bbn\}$ where the transfer of the set of $B_j$'s will be denoted by $\rz B_\Fj$ for $\Fj\in\rz\bbn$ with the (internal) set of radii denoted $\rz b_\Fj$ satisfying $\rz\lim_{\Fj\ra\infty}\rz b_\Fj=0$; eg., for $\Fj$ large enough $\rz B_\Fj\subset B_\d$. Also, we have, by transfer, that if $\Fk\in\rz\bbn$ and $\xi\in\rz B_\Fk\smallsetminus\rz B_{\Fk+1}$, then $\rz h(\xi)=\rz\min\{\rz f^\ell(\xi):1\leq\ell\leq\Fk\}$.
    Now there is $\Fk$  big enough so that $\rz B_\Fk\subset B_\d\subset\rz B_{\Fk-1}$. (At this point, note that in our original choice of the radii $b_1>b_2>\cdots$, as we already have $\d$ in hand we need to be sure that $\d\not\in\rz\{b_1,b_2,\ldots\}$, which is no problem.)  So we have that
\begin{align}\label{eqn: *f^k, f^j inequalities}
    \rz\norm{\rz f^\Fk}_\Fk<\rz\norm{\rz f^\Fk}_\d<\rz\norm{\rz f^j}_\d=\Fr_j,\;\text{for all}\; j\in\bbn.
\end{align}
     But by the transfer of ($\diamondsuit$) above, we have
\begin{align}\label{eqn: h(xi) bounds norm(*f^k)below}
    \rz\norm{\rz h}_\d=\rz\norm{\rz h}_\Fk
\end{align}
    and so putting expressions \ref{eqn: *f^k, f^j inequalities} and \ref{eqn: h(xi) bounds norm(*f^k)below} together we get that $\norm{f^j}_\d\dot=\;\Fr_j>\rz\norm{\rz h}_\d$ for all $j\in\bbn$, contradicting the assertion on the coinitial sequence.
%    Let $\SJ$ denote the map $^\s F(B_\d)\ra\rz\bbr_+$  given by $\rz f|_{B_\d}\mapsto \FJ_f^\k$ if $\FJ_f^\k\not=0$ and $1$ otherwise. Saturation given an extension of this map to an internal map $\ov{\SJ}:\rz F(B_\d)\ra\rz\bbr_+$ and also a *finite subset $\SF\subset\rz F(B_\d)$ with
\end{proof}

\begin{cor}\label{cor: countable nets can't converge in tau}
    Suppose that $(D,<)$ is a countable directed set and $d\mapsto [f_d]$ is a $D$ net in $\SG_0$ that is not eventually constant. Then $([f_d]:d\in D)$ does not converge.
    In particular, this is also true if $D$ contains a countable cofinal subset.
\end{cor}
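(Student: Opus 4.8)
The statement to be proved is Corollary \ref{cor: countable nets can't converge in tau}: if $(D,<)$ is a countable directed set and $d\mapsto[f_d]$ is a $D$-net in $\SG_0$ that is not eventually constant, then the net does not converge in $\tau$; and likewise if $D$ merely has a countable cofinal subset. The plan is to argue by contradiction, exploiting the nonexistence of a countable coinitial subset of $\SN_\d$ established in Lemma \ref{lem: SN does not contain countale coinit}. So suppose the net $([f_d]:d\in D)$ converges in $\tau$ to some germ $[g]\in\SG_0$. Replacing $[f_d]$ by $[f_d]-[g]$ (this is legitimate since $\tau$ is the translation-invariant topology of Definition \ref{def: of full topology by translation}, so convergence to $[g]$ is equivalent to convergence of the translated net to the zero germ), I may assume the net converges to $[0]$ and is not eventually equal to $[0]$.

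First I would reduce to the case where $D$ itself is countable: if $D$ has a countable cofinal subset $D'$, then the subnet indexed by $D'$ still converges to $[0]$ and is still not eventually constant (any eventual value of the subnet along a cofinal set would, by directedness, force the whole net to be eventually that value), so it suffices to treat countable $D$. Now enumerate $D=\{d_1,d_2,\dots\}$ and, using directedness, build an increasing cofinal sequence $e_1<e_2<\cdots$ in $D$. For each $j$, convergence to $[0]$ means: for every $\Fr\in\SN_\d$ there is $d_0\in D$ with $\norm{\rz f_d}_\d<\Fr$ for all $d>d_0$. The key step is to extract from the net a strictly decreasing sequence of positive elements of $\SN_\d$ that is coinitial in $\SN_\d$, contradicting Lemma \ref{lem: SN does not contain countale coinit} (via Lemma \ref{lem: SS_delta=SN_delta}, which lets us pass freely between $\SN_\d$ and $\ov{\SS}^\d$, and Proposition \ref{prop: SN^0_d is coin in SN_d} if we wish to stay inside $\SN^0_\d$). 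Concretely: since the net is not eventually $[0]$, along the cofinal sequence $(e_j)$ the values $\norm{\rz f_{e_j}}_\d$ are not eventually $0$; passing to a subsequence we get indices with $\norm{\rz f_{e_{j}}}_\d>0$, hence each such quantity lies in $\SN_\d\smallsetminus\{0\}$ (it is $\rz\norm{\rz f_{e_j}}_\d$ for a standard $f_{e_j}$, so it is a positive element of $\SN_\d$). Set $\Fr_k\dot=\norm{\rz f_{e_{j_k}}}_\d$ for a suitably chosen strictly increasing $j_k$ so that $\Fr_1>\Fr_2>\cdots$ (possible since, by convergence, for any already-chosen $\Fr_k\in\SN_\d$ there is $d_0$ with $\norm{\rz f_d}_\d<\Fr_k$ for $d>d_0$, and some later $e_{j}$ with $e_j>d_0$ and $\norm{\rz f_{e_j}}_\d>0$, so we may take $\Fr_{k+1}=\norm{\rz f_{e_j}}_\d<\Fr_k$). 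Finally I would check that $\{\Fr_k:k\in\bbn\}$ is coinitial in $\SN_\d$: given any $\Fs\in\SN_\d$, convergence to $[0]$ provides $d_0\in D$ with $\norm{\rz f_d}_\d<\Fs$ for all $d>d_0$; choosing $k$ large enough that $e_{j_k}>d_0$ gives $\Fr_k<\Fs$. This countable coinitial subset contradicts Lemma \ref{lem: SN does not contain countale coinit}.

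The main obstacle I anticipate is the bookkeeping around the hypothesis ``not eventually constant'' when passing to subnets and subsequences: one must be careful that after translating by $[g]$ and passing to a cofinal sequence, the net still fails to be eventually zero, and that one can genuinely arrange $\Fr_1>\Fr_2>\cdots$ strictly decreasing rather than merely non-increasing (this uses the interplay of directedness of $D$ with the convergence condition, and the fact that a net in $\SG_0$ that is not eventually constant but converges must take infinitely many distinct nonzero translated values with $\norm\cdot_\d$-norms tending into every $U_\Fr$). A secondary technical point is ensuring $\norm{\rz f_{e_j}}_\d\in\SN_\d$ as opposed to merely $\rz\bbr_{nes,+}$ — but this is immediate from the definition of $\SN_\d$ since $f_{e_j}$ is a standard representative. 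One should also remark that the argument only used coinitiality of a countable set in $\SN_\d$, so replacing $\SN_\d$ by the coinitial subring $\SN^0_\d$ (Proposition \ref{prop: SN^0_d is coin in SN_d}) changes nothing; and that the case of $D$ with countable cofinal subset follows because Lemma \ref{lem: SN does not contain countale coinit} forbids even a countable coinitial \emph{sequence}, which is exactly what a convergent countable-cofinality net would manufacture.
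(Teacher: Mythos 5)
Your overall strategy is exactly the paper's: a convergent, non-eventually-constant countable net would manufacture a countable, strictly decreasing, coinitial sequence in $\SN_\d$, contradicting Lemma \ref{lem: SN does not contain countale coinit}. However, two of your intermediate justifications are false as stated. First, the parenthetical claim that ``any eventual value of the subnet along a cofinal set would, by directedness, force the whole net to be eventually that value'' is wrong: take $D=\bbn$, $D'=2\bbn$, with $[f_d]=[0]$ for even $d$ and $[f_d]$ nonzero but $\tau$-converging to $[0]$ for odd $d$; the subnet over $D'$ is constant while the net is not eventually constant. Second, and for the same reason, the assertion that ``along the cofinal sequence $(e_j)$ the values $\norm{\rz f_{e_j}}_\d$ are not eventually $0$'' does not follow from the net not being eventually $[0]$: the nonzero terms may entirely avoid a cofinal sequence fixed in advance. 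Since your extraction of $\Fr_{k+1}$ explicitly relies on finding a later $e_j$ with $\norm{\rz f_{e_j}}_\d>0$, this step can fail.

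The repair is easy and stays inside your framework: after translating by the putative limit $[g]$, let $E=\{d\in D:[f_d]\neq[0]\}$, which is cofinal precisely because the net is not eventually constant (hence not eventually $[g]$). Enumerate a countable cofinal subset $\{c_1,c_2,\dots\}$ of $D$ (namely $D$ itself, or $D'$ in the second case), and inductively choose $d^{(k)}\in E$ with $d^{(k)}>d^{(k-1)}$, $d^{(k)}>c_k$, and $d^{(k)}$ beyond the convergence threshold attached to $\Fr_{k-1}$; this uses directedness together with cofinality of $E$, rather than a pre-chosen cofinal sequence. Then $\Fr_k\dot=\norm{\rz f_{d^{(k)}}}_\d$ is a positive element of $\SN_\d$ (by Corollary \ref{cor: SG_0 -> F(B_delts) is R alg isomorph}), strictly decreasing, and coinitial because $(d^{(k)})$ is cofinal, and the contradiction with Lemma \ref{lem: SN does not contain countale coinit} goes through. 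With this adjustment your argument coincides with the paper's, which compresses all of the above into the remark that a countable index set cannot supply a coinitial family for $\SN_\d$.
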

\begin{proof}
    Suppose that a countable noneventually constant net $([f_d]:d\in D)$ converges. Then there is a coinitial subset $\ST\subset\SN$ such that for each $\Ft\in\ST$, there is a $d\in D$ such that $[f_d]\in U_\Ft$. But this defines a map from $D$ onto $\ST$ which is impossible as $\ST$ is uncountable.
\end{proof}
    Although $\tau_0$ appears to be defined by a norm, as we have seen the associated  `metric' takes values in a set without a countable coinitial set.
\begin{cor}
    $(\SG_0,\tau_0)$ is not metrizable.
\end{cor}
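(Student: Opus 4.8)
The statement to prove is that $(\SG_0,\tau_0)$ is not metrizable. The strategy is immediate from the preceding corollary: a metrizable topological space is first countable, so it suffices to exhibit a point of $\SG_0$ with no countable neighborhood base, and the natural candidate is the zero germ $[0]$. By construction the collection $\tau_0=\{U^\d_\Fr:\Fr\in\SN_\d\}$ is a neighborhood base at $[0]$ (and is closed under finite intersections), so a countable neighborhood base at $[0]$ would, after shrinking, yield a countable \emph{coinitial} subset of $\SN_\d$ (a decreasing sequence $\Fr_1>\Fr_2>\cdots$ in $\SN_\d$ such that every $\Fr\in\SN_\d$ dominates some $\Fr_j$); this is exactly what Lemma \ref{lem: SN does not contain countale coinit} forbids.

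\begin{proof}
Suppose, for contradiction, that $(\SG_0,\tau_0)$ is metrizable. Then it is first countable, so there is a countable neighborhood base $\{W_j:j\in\bbn\}$ at the zero germ $[0]\in\SG_0$. Since $\tau_0=\{U^\d_\Fr:\Fr\in\SN_\d\}$ is itself a neighborhood base at $[0]$, for each $j$ there is $\Fr_j\in\SN_\d$ with $U^\d_{\Fr_j}\subseteq W_j$, and conversely each $U^\d_\Fr$ contains some $W_{j(\Fr)}$, hence contains $U^\d_{\Fr_{j(\Fr)}}$. By Lemma \ref{lem: SS_delta=SN_delta} we may work with $\SN_\d$, and using Remark \ref{rem: e<<<d and f(d)<e -> f(d)=0} (together with the fact that $\SN_\d$ is directed downward, being closed under the binary operation $\Fr,\Fs\mapsto$ a common lower bound coming from $\max$ of representatives) we obtain: for every $\Fr\in\SN_\d$ there is $j$ with $U^\d_{\Fr_j}\subseteq U^\d_\Fr$, and by unwinding the definition of $U^\d_\bullet$ this forces $\Fr_j\leq\Fr$ for the associated indices (shrinking the $\Fr_j$ if necessary so that $\Fr_j$ is actually realized as $\norm{\rz g_j}_\d$ for some $[g_j]\in\SG_0$ with $\Fr_{j+1}<\Fr_j$). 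Thus $\{\Fr_j:j\in\bbn\}$ is a countable coinitial subset of $\SN_\d$. This contradicts Lemma \ref{lem: SN does not contain countale coinit} (phrased via Lemma \ref{lem: SS_delta=SN_delta}), which says $\SN_\d$ has no countable coinitial subset. Hence $(\SG_0,\tau_0)$ is not metrizable.
\end{proof}

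The only point requiring a little care — and the step I expect to be the mild obstacle — is the passage from ``countable neighborhood base at $[0]$'' to ``countable coinitial subset of $\SN_\d$'': one must check that the inclusion $U^\d_{\Fr}\subseteq U^\d_{\Fr'}$ among basic sets implies the numerical comparison $\Fr\leq\Fr'$ (or at least $\Fr\; a\!\!\!\in\SN_{\Fr'}$ in the sense of Definition \ref{def: of coinitial partial order relations}), and that one can arrange the base elements to be strictly decreasing. Both are routine given the realizability of every $\Fr\in\SN_\d$ as a sup-norm $\norm{\rz g}_\d$ and the monotonicity built into $\SM$; no genuinely new idea is needed beyond the two cited lemmas. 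Alternatively, one can avoid even this bookkeeping by invoking Corollary \ref{cor: countable nets can't converge in tau} directly: in a metrizable (hence first countable) space every point is the limit of a sequence from any set having it in its closure, but $(\SG_0,\tau_0)$ admits non-eventually-constant sequences whose closure contains $[0]$ yet which do not converge, contradicting metrizability.
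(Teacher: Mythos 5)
Your proof is correct and takes essentially the same route as the paper, which gives no explicit proof but relies on the immediately preceding observation that the set of moduli $\SN_\d$ has no countable coinitial subset (Lemma \ref{lem: SN does not contain countale coinit}), so that $[0]$ has no countable neighborhood base. The bookkeeping step you flag (that $U^\d_{\Fr_j}\subseteq U^\d_\Fr$ forces $\Fr_j\leq\Fr$) does go through, since $\Fr$ itself is realized as $\norm{\rz g}_\d$ for some germ $[g]$, which lies in $U^\d_{\Fr_j}\setminus U^\d_\Fr$ whenever $\Fr<\Fr_j$.
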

\begin{remark}
    Note that later we will use a construction of Borel to show that any countable sequence in $\SG_0^0$ (and therefore $\SG_0^0$), there is a power series germ that blocks its coinitiality. But such limiting constructions lie outside the Hardy field constructions, hence it appears that such do not define consistent coinitial moduli.
\end{remark}

    The $\tau$ topology is about nearness in the sense of how closely pinched the graphs of map germs are. If we try to extend the topology on $\SG_0$ to a ring of map germs with arbitrary target value, eg., $\SG$, we get bad behavior. For example, if $[f_1],[f_2]\in\SG^0_n$ are germs such that $f_1(0)\not=f_2(0)$, then for all $\Fr,\Fs\in\SN_\d$, we have that $U^\d_\Fr([f_1])\cap U^\d_\Fs([f_2]=\emptyset)$. In particular, if $\SG(x)\subset\SG$ consists of those germs $[f]$ such that $f(0)=x$, and $\SG(-x)=\SG\smallsetminus\SG(x)$, then $\SG(x)$ and $\SG(-x)$ would be disjoint open sets in this extended $\tau^\d$ topology, that is, $\SG(x)$ would be both open and closed in this extended topology. The previous assertion follows easily from the observation: $[f]\in\SG(x)$ implies that $U_\Fr([f])\subset\SG(x)$ for all $\Fr\in\SN_\d$ and $\SG(-x)$ is just the union of such $\SG(y)$ for $y\not=x$.
    Hence we will extend the topology to all of $\SG=\cup_{x\in\bbr}\SG(x)$ in a different manner.
\begin{lem}
    With respect to the $\tau$ topology $\SG$ is the union of its disjoint open subsets $\SG(x)$ as $x$ varies in $\bbr^n$.
\end{lem}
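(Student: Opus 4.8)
The plan is to exhibit $\SG$ as a set-theoretic disjoint union of the fibers $\SG(x)$ of the ``value at $0$'' map, and then to show each fiber is $\tau$-open because every sufficiently fine basic $\tau$-neighborhood of a germ already pins down the (standard) value of that germ at the origin.

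First I would record the elementary set-theoretic facts. The assignment $\operatorname{ev}_0\colon\SG\to\bbr$, $[f]\mapsto f(0)$, is well defined, since two germs that agree on a neighborhood of $0$ in particular agree at $0$; and $\SG(x)$ is by definition $\operatorname{ev}_0^{-1}(x)$. Hence the sets $\SG(x)$ are pairwise disjoint (if $[f]\in\SG(x)\cap\SG(y)$ then $x=f(0)=y$) and their union over $x$ is all of $\SG$ (every germ $[f]$ lies in $\SG(f(0))$), so $\SG=\bigsqcup_x\SG(x)$ as sets. Next I would show each $\SG(x)$ is $\tau$-open. Recall from Definition \ref{def: of full topology by translation} that $\tau$ (extended to $\SG$ by the same translation recipe) has among its generating sets all translates $U^\d_\Fr+\rz f|_{B_\d}=U^\d_\Fr([f])$ with $[f]\in\SG$ and $\Fr\in\SN_\d$; note $\SN_\d$ contains positive infinitesimals, e.g. $\d=\norm{\,\rz(|\cdot|)\,}_\d\in\SN_\d\cap\mu(0)_+$. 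Fix $[f]\in\SG(x)$ and choose such an $\Fr$ with $0<\Fr\sim 0$. For $[g]\in U^\d_\Fr([f])$ we have $\norm{\rz g-\rz f}_\d<\Fr$; evaluating the difference $\rz g-\rz f=\rz(g-f)$ at the standard point $0\in B_\d$ gives $|g(0)-f(0)|\le\norm{\rz g-\rz f}_\d<\Fr\sim 0$, and since $g(0),f(0)\in\bbr$ are standard this forces $g(0)=f(0)=x$. Thus $U^\d_\Fr([f])\subset\SG(x)$, so $\SG(x)$ is a neighborhood of each of its points and therefore $\tau$-open. Combining with the previous step, $\SG$ is the union of the pairwise disjoint $\tau$-open sets $\SG(x)$, as claimed. (As a by-product, each $\SG(x)=\SG\smallsetminus\bigcup_{y\ne x}\SG(y)$ is also closed, which reconciles with the preceding discussion that a naive translation-only extension of $\tau_0$ would render the $\SG(x)$ clopen.)

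The proof is essentially bookkeeping together with one use of the fact that a standard real lying within an infinitesimal of another standard real must equal it, so there is no substantial obstacle. The only point that warrants being stated explicitly — the ``hard part'' only in the sense of needing care — is the compatibility clause: one must confirm that the translated sets $U^\d_\Fr([f])$ are genuinely among the generators of the extended topology $\tau$ on all of $\SG$, not merely on $\SG_0$, so that producing one such set inside $\SG(x)$ actually certifies openness. This is immediate from the way $\tau$ is generated by finite intersections of unions of the translates $\tau_f$, but it should be written out, and it is the place where one is implicitly invoking that the extension ``in a different manner'' is still built from these translated basic sets.
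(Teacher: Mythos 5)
Your proposal is correct and follows essentially the same route as the paper's (informal) proof, which is given in the paragraph immediately preceding the lemma: the $\SG(x)=\operatorname{ev}_0^{-1}(x)$ partition $\SG$, and each is open because basic translates of $\tau_0$-neighborhoods of $[0]$ stay inside a single fiber of $\operatorname{ev}_0$. The one small difference is in how you certify $U^\d_\Fr([f])\subset\SG(x)$: you pick an infinitesimal $\Fr\in\SN_\d$ and argue via the norm bound at the standard point $0$ together with the fact that two standard reals within an infinitesimal of each other are equal, whereas the paper's observation is the purely algebraic one that $U_\Fr+\rz f|_{B_\d}$ consists of germs of the form $h+f$ with $h(0)=0$, so the value at $0$ is pinned to $f(0)$ for \emph{every} $\Fr\in\SN_\d$ (noninfinitesimal included). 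Both arguments are valid; yours needs one well-chosen $\Fr$ while the paper's shows the containment holds for all basic neighborhoods, but for proving openness either suffices.
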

    Although this seems to foreclose topological relations between the different $\SG(x)$ and therefore a good rendering of $\tau^d$ continuity of eg.,  families of map germs $t\mapsto [f_t]$ when $f_t(0)$ is not constant, we shall see that this is a fixable problem.

\subsection{Rigid coinitial subsets and topological independence from $\d$}
    The first subsection constructs coinitial subsets of $\SN_\d$ that come from increasingly rigid subfamilies of $\SM$. Initially, we are not able to force topological independence from the choice of infinitesimal, $\d$. We do get some good ordering properties, in particular a total ordering of analytic germs at good (generic) infinitesimals. In the second subsection, we prove independence by systematically exploiting the Hardy construction (from the first subsection) by using a critical (and somewhat surprising, see eg., corollary \ref{cor: FI_(om_0) cofin<->FI_(om_1) cofin}) fact about increasing sequences of integers.
\subsubsection{Coinitial subsets  and order}
   In this subsection, a warmup for the next, we investigate some coinitial subsets of $\SN_\d$ that are defined in terms of quite rigid families of functions. Before we proceed with the motivation for this part, we need a formal definition.
\begin{definition}
    If $\d\in\mu(0)_+$, let $\SE_\d:\SG_0\ra\rz\bbr$ denote the evaluation map $[m]\mapsto\rz m(\d)$.
    If $\SJ\subset\SM$, then we will often denote $\SE_\d(\SJ)$ by $\SJ_\d$.
\end{definition}
    The original  hope here was to prove that the topologies $\tau_d$ were independent of the infinitesimal $\d$ via an order preserving argument for some good coinitial subset $\SS\subset\SM^0$. That is, we wanted to find $\SS$ so that (1): $\SS\mapsto \SS_\d$ was injective for sufficiently numerous good infinitesimals $\d$, so that (2): $\SS_\d$ was coinitial in $\SN_\d$ for sufficiently many infinitesimals $\d$ and such that (3): the graphs $\d\mapsto\rz m(\d)$ as $\d$ varied over positive infinitesimals satisfied a local intersection property. Assuming the injectivity property for sufficiently numerous $\d$'s so that for each $\Fr\in\SE_\d(\SS)$, the element $m_\Fr=\SE^{-1}(\Fr)\in\SS$ is well defined, then the local intersection property can be stated as follows.
    Given a given good infinitesimal $\d_0$ and  $\Fr\in\SS_{\d_0}$, then there is an $\Fs_0\in\SS_{\d_0}$ such that for all $\Fs\in\SS_{\d_0}$ with $\Fs<\Fs_0$, we have that the graphs $\d\mapsto \rz m_\Fr(\d)$ and $\d\mapsto \rz m_\Fs(d)$ would be disjoint. After some thought, one could see that this strategy would allow us to prove that if $\d_0,\d_1$ are good infinitesimals and $\ST\subset\SS$ is such that $\ST_{\d_1}$ is coinitial in $\SS_{\d_1}$, then we would also have $\ST_{\d_2}$ coinitial in $\SS_{\d_2}$. From this fact, we could derive that our topology is independent of $\d$.

    This approach foundered. We found good subsets of $\SM^0$ satisfying conditions (1) and (2), but, we could not get a handle on condition (3). This subsection contains the results of this approach. In the next subsection, we use some the results here and a different strategy to reach our goal. We begin with some constructions of good $\SS$ and then consider some order properties for good infinitesimals.

   \begin{definition}
    Let $\SP\SL^0\subset\SM^0$ denote the set of piecewise affine germs in $\SM^0$. That is, an element $[m]\in\SM^0$ is in $\SP\SL^0$ if there is the germ of a countable discrete subset $S$ of points $p_1>p_2>\cdots$ whose only possible limit point is $0$ such that for all $j\in\bbn$, $m|_{[p_{j+1},p_j]}$ is an affine map. If $0<\d\sim 0$, let $\SP l_\d$ denote $\SE_\d(\SP\SL^0)\subset\mu(0)_+$.
\end{definition}
    Given this definition, we have the following construction of an element of $\SP\SL^0$.
    We may assume that if  $m\in\SM^0$, we have eg., $m(1/10)<1/2$. Fix any such $m$ and for $j\in\bbn$ greater that $10$, say, choose $e_j\in\bbn$ such that $(m(1/j))^{e_j}<m(1/(j+1))$. This is clearly possible, as $m(1/j)>0$ for all $j\in\bbn$ and for a given $j_0\in\bbn, j\geq 10$ say, we have $(m(1/j_0))^k\ra 0$ as $k\ra\infty$. Given this, note then that the line segment over the interval $[1/j+1,1/j]$ joining the two points $(1/(j+1),(m(1/(j+1)))^{e_{j+1}})$ and $(1/j,(m(1/j))^{e_j})$ lies below the graph of $m$ over the segment $[1/(j+1),1/j]$ as its lies below the `horizontal' line segment over $[1/(j+1),1/j]$ with `$y$-coordinate' the value $(m(j))^{e_j}<m(1/(j+1))$.
    Hence defining the continuous piecewise affine function $\tl{m}$ on the interval $(0,1/10)$, say,  such that for each $j\in\bbn$, $\tl{m}|_{[1/(j+1),1/j]}$ gives the graph just described, we find for all positive $t<1/10$ that $\tl{m}(t)<m(t)$. We have therefore proved the following.
\begin{lem}
    $\SP l_\d$ is coinitial in $\SN^0_\d$; that is, if $\Fr\in\SN_\d^0$, then there is $\Fs\in\SP l$ such that $\Fs\leq\Fr$.
\end{lem}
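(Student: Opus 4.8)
The statement to prove is that $\SP l_\d$ is coinitial in $\SN^0_\d$, i.e., for every $\Fr\in\SN^0_\d$ there is $\Fs\in\SP l_\d$ with $\Fs\leq\Fr$.

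The plan is to lift the explicit piecewise-affine undercutting construction sketched just before the lemma to the nonstandard level via transfer. First I would recall that, by Proposition \ref{prop: SN^0_d is coin in SN_d} (together with Lemma \ref{lem: SN_d=SM_d} and Lemma \ref{lem:  SM^0 is coinitial in SM}), it suffices to find, for each $[m]\in\SM^0$, an element $[\tl m]\in\SP\SL^0$ with $\tl m(t)\leq m(t)$ for all sufficiently small $t>0$; then transferring, $\rz\tl m(\d)\leq\rz m(\d)$, so that $\SE_\d(\SP\SL^0)=\SP l_\d$ is coinitial in $\SE_\d(\SM^0)=\SN^0_\d$. Indeed, given $\Fr\in\SN^0_\d$ we may write $\Fr=\norm{\rz m}_\d=\rz m(\d)$ for some $[m]\in\SM^0$ (after replacing $m$ by $t\mapsto\sup\{|m(s)|:s\le t\}$, which is still in $\SM^0$ and has the same $\d$-norm), and then $\Fs\dot=\rz\tl m(\d)\in\SP l_\d$ satisfies $\Fs\le\Fr$.

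Next I would carry out the standard construction exactly as in the paragraph preceding the lemma: normalize so that $m(1/10)<1/2$; for each integer $j\ge 10$ pick $e_j\in\bbn$ with $(m(1/j))^{e_j}<m(1/(j+1))$, which is legitimate since $m$ is positive and $(m(1/j))^k\to 0$ as $k\to\infty$ by $0<m(1/j)<1$; define $\tl m$ on $(0,1/10)$ to be, on each interval $[1/(j+1),1/j]$, the affine interpolant between the points $(1/(j+1),(m(1/(j+1)))^{e_{j+1}})$ and $(1/j,(m(1/j))^{e_j})$. The key inequality is that on $[1/(j+1),1/j]$ the chord lies below the constant function equal to $(m(1/j))^{e_j}$, which in turn is $<m(1/(j+1))\le m(t)$ for $t$ in that interval by monotonicity of $m$; hence $\tl m(t)<m(t)$ there. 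One also checks $\tl m$ is continuous, strictly increasing (the values $(m(1/j))^{e_j}$ are strictly increasing in $j$ since each is less than $m(1/(j+1))\le (m(1/(j+1)))^{e_{j+1}}$ only if $e_{j+1}\ge1$, which we may arrange, and in any case we can choose the $e_j$ decreasing/consistent to guarantee strict monotonicity of the sequence of node values), and $\tl m(t)\to0$ as $t\to0^+$; so $[\tl m]\in\SP\SL^0$. This gives $\tl m(t)\le m(t)$ for all $0<t<1/10$, which is all that is needed.

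The main obstacle — really the only nontrivial point — is ensuring that the constructed $\tl m$ genuinely lies in $\SM^0\subset\SM$, i.e., that it is strictly monotone increasing (not merely nondecreasing) with the correct limit at $0$: one must choose the exponents $e_j$ so that the node values $v_j\dot=(m(1/j))^{e_j}$ form a strictly increasing sequence (in $j$, i.e.\ decreasing as $t\to 0$) with $v_j\to 0$; since we already have $v_j<m(1/(j+1))$, it is enough to additionally require $v_{j}<v_{j+1}$, and because $v_{j+1}=(m(1/(j+1)))^{e_{j+1}}$ can be made as close to $m(1/(j+1))$ as we like (taking $e_{j+1}=1$) while $v_j<m(1/(j+1))$, this is automatic with the choice $e_{j+1}=1$; but then we must re-examine whether $v_{j+1}=m(1/(j+1))<m(1/(j+2))$ forces the next constraint — it does, harmlessly. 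So a clean way is: set $e_j$ to be the least exponent $\ge 1$ with $(m(1/j))^{e_j}<m(1/(j+1))$, note $v_j<m(1/(j+1))\le v_{j-1}$ is false in general, so instead argue directly that the sequence $(v_j)$ is bounded above by the convergent-to-$0$ sequence $(m(1/(j+1)))$ and, after passing to the piecewise-affine interpolant, monotonicity of $\tl m$ on $(0,1/10)$ follows because on each node interval it is affine between consecutive node values and the node values are themselves monotone once we have arranged $v_j\le m(1/(j+1))$ and $m$ increasing. I would write this monotonicity check out carefully but it is entirely elementary; everything else is transfer and the already-established coinitiality reductions.
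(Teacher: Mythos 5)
Your proof takes the same route as the paper: reduce to finding, for each $[m]\in\SM^0$, a piecewise-affine $[\tl m]\in\SP\SL^0$ with $\tl m\le m$ near $0$, build $\tl m$ by the exponent-undercutting construction on the intervals $[1/(j+1),1/j]$, and transfer. That part is fine, and the undercutting inequality is correct: both node values $v_j=(m(1/j))^{e_j}$ and $v_{j+1}=(m(1/(j+1)))^{e_{j+1}}$ are below $m(1/(j+1))\le m(t)$ on the interval, so the affine chord is too.

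The one place where you go beyond the paper --- checking that $\tl m$ is genuinely in $\SM^0$, i.e.\ strictly increasing --- is the right thing to worry about, but your discussion of it is garbled. First, the direction is backwards: since the node at $t=1/j$ has value $v_j$ and $1/j$ decreases as $j$ grows, you need $v_j$ \emph{decreasing} in $j$ (not "strictly increasing in $j$"). Second, you cannot "take $e_{j+1}=1$", since $e_{j+1}$ is constrained by $(m(1/(j+1)))^{e_{j+1}}<m(1/(j+2))$, and the subsequent backtracking ("is false in general, so instead argue directly...") never lands on an argument. The actual fix is a one-liner: choose the $e_j$ inductively so that $v_{j+1}=(m(1/(j+1)))^{e_{j+1}}<\min\{m(1/(j+2)),\,v_j\}$, which is possible because $(m(1/(j+1)))^k\ra 0$ as $k\ra\infty$. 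With that choice the node values are strictly decreasing and tend to $0$, each affine piece is strictly increasing, and the undercutting inequality is unaffected since you have only made $v_{j+1}$ smaller. As written, your proof asserts the conclusion of this check without a valid argument for it; with the fix above it is complete.
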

\begin{proof}
    If $\Fr\in\SN^0_\d$, there is $m\in \SM^0$ such that $\Fr=\rz m(\d)$, but then choosing $\tl{m}\in\SP\SL^0$ as defined above, transfer of the statement that $0<t<1/10\Rightarrow\tl{m}(t)<m(t)$ gets that $\Fs\;\dot=\rz\tl{m}(\d)<\rz m(\d)$.
\end{proof}
    Yet $\SP\SL^0$ does not either have sufficient rigidity for the evaluation map to be an injection. In order to find a sufficiently rigid semiring, we will use an old result of Borel, see Hardy, \cite{Hardy1924}. But, to make our constructions a little easier, we will, for the moment convert to asymptotics at $\infty$ instead of $0$ using the following recipe. We say that a real valued function $f$ is defined in a neighborhood of infinity if there is $c>0$ such that $f$ is defined on $(c,\infty)$. Note then that $x\mapsto f(x)$  is defined on a neighborhood of infinity if and only if $x\mapsto f(1/x)$ is defined on a (deleted) neighborhood of $0$ in $\bbr_+$ and that $x\mapsto f(x)$ is monotone increasing at infinity with limit infinity (ie., for some such $c$, $c<x<y$ implies $f(x)<f(y)\uparrow\infty$ as $y\uparrow\infty$) if and only if $x\mapsto 1/f(1/x)$ is monotone decreasing to $0$ with limit $0$.
    Given this, it is elementary that given $f,g$ monotone increasing to infinity with $f$ dominating $g$ at infinity, ie., on a possibly smaller neighborhood $(\ov{c},\infty)$ of infinity, we have $x>\ov{c}$ implies that $f(x)>g(x)$, then for all $x>0$ sufficiently small, we have that $1/f(1/x)<1/g(1/x)$. It is also elementary that $x\mapsto f(x)$ is continuous (a convergent power series) in a neighborhood of $\infty$ if and only if $x\mapsto 1/f(1/x)$ is continuous (a convergent power series) in a neighborhood of $0$ in $\bbr_+$.
    This and the previous follows easily once one considers $(\bbr_+,\cdot)$ as a totally ordered abelian group with inversion the (rational) order reversing isomorphism $I:\bbr_+\ra\bbr_+:t\mapsto t^{-1}$ giving a one to one correspondence between germs of functions at $0$ and germs of functions at $\infty$ by $f\mapsto I\circ f\circ I$ (as $I=I^{-1}$). We have the following obvious lemma.
\begin{lem}\label{lem: equiv of ordered germs at 0 and infty}
    Let $\d\in\mu(0)_+$ and $\SD_0\subset\mu(0)_+$ and let $\xi=I(\d)$ and $\SD_\infty=I(\SD_0)$. If $\SF_0$ is a family of germs at $0$ of nonvanishing functions and $\SF_\infty\dot=\{[I\circ f\circ I]:[f]\in\SF_0\}$, then $(\SF_0)_\d$ is coinitial in $\SD_0$ if and only if $(\SF_\infty)_\xi$ is cofinal in $\SD_\infty$.
    Furthermore, if $\SF'_0$ is a second family of germs at $0$ of nonvanishing functions, with $\SF'_\infty$ defined as $\SF_\infty$, then $\SF_0$ and $\SF'_0$ are coinitial in $\SD_0$ if and only if $\SF_\infty$ and $\SF'_\infty$ are cofinal in $\SD_\infty$.
\end{lem}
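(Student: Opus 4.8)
\emph{Proof plan.} This lemma is purely a translation statement: it asserts that the order‑reversing involution $I:\bbr_+\ra\bbr_+$, $t\mapsto t^{-1}$, carries germs at $0$ to germs at $\infty$ in a way that swaps ``coinitial'' with ``cofinal'' and respects evaluation at infinitesimals. Since all the content is bookkeeping around the single algebraic fact $I\circ I=\mathrm{id}$ together with the fact that $I$ reverses the order on $\bbr_+$, the plan is to make each of these translations explicit and then chain them. First I would record the two elementary observations already flagged in the text preceding the lemma: (a) $f$ is defined (and nonvanishing) on a deleted neighborhood of $0$ iff $I\circ f\circ I$ is defined on a neighborhood of $\infty$, and similarly for monotonicity and for being a convergent power series; (b) for nonvanishing germs $f,g$ at $0$ one has $f(t)\le g(t)$ for all sufficiently small $t>0$ iff $(I\circ f\circ I)(x)\ge (I\circ g\circ I)(x)$ for all sufficiently large $x$. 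Fact (b) is immediate: writing $x=1/t$, $f(t)\le g(t)$ with both sides positive is equivalent to $1/f(t)\ge 1/g(t)$, i.e.\ $(I\circ f)(t)\ge (I\circ g)(t)$; substituting $t=I(x)$ gives the claim, and ``for sufficiently small $t$'' corresponds to ``for sufficiently large $x$'' under $t\mapsto 1/t$.

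Next I would transfer these germ‑level facts to the evaluation at a single infinitesimal. Fix $\d\in\mu(0)_+$ and set $\xi=I(\d)$; note $\xi$ is a positive infinite element of $\rz\bbr$ and $I(\xi)=\d$. For a germ $[h]$ at $\infty$, $\rz h(\xi)=\rz h(\rz I(\d))=\rz(h\circ I)(\d)$ by transfer of $h\circ I$, and applying $I$ once more, $\rz I(\rz h(\xi))=\rz(I\circ h\circ I)(\d)$. Thus if $[h]=[I\circ f\circ I]$ for a germ $f$ at $0$, then $\rz I\big(\rz h(\xi)\big)=\rz f(\d)$, i.e.\ $\SE_\xi(\SF_\infty)=I\big(\SE_\d(\SF_0)\big)$ as subsets of $\rz\bbr$. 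Hence $(\SF_\infty)_\xi = I\big((\SF_0)_\d\big)$, and likewise $\SD_\infty=I(\SD_0)$ by hypothesis. Now everything is reduced to a statement about a single order‑reversing bijection $I:\mu(0)_+\ra\{$positive infinite elements$\}$: a subset $A$ is coinitial in $B$ (with respect to $\le$) if and only if $I(A)$ is cofinal in $I(B)$ (with respect to $\le$), simply because $a\le b$ iff $I(a)\ge I(b)$. Applying this with $A=(\SF_0)_\d$, $B=\SD_0$ gives the first assertion of the lemma.

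For the second assertion I would repeat the same argument with $A=(\SF_0)_\d$ and $B=(\SF'_0)_\d$ in place of $\SD_0$, using that $(\SF'_\infty)_\xi=I\big((\SF'_0)_\d\big)$ by the same transfer computation; ``coinitial with'' and ``coinitial'' are defined (Definition~\ref{def: of coinitial partial order relations}) purely in terms of the partial order $\le$, so the order‑reversing bijection $I$ turns each occurrence of $\le$ into $\ge$ and thereby converts the coinitiality conditions into the corresponding cofinality conditions, in both directions. Since $J:=:K$ is the conjunction of $J\risingdotseq K$ and $K\fallingdotseq J$, both halves translate simultaneously, giving the ``iff''.

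\emph{Main obstacle.} There is no deep obstacle here; the only points requiring care are (i) making sure the transfer identity $\rz h(\rz I(\d))=\rz(h\circ I)(\d)$ and its iterate are applied to internal functions that are genuinely transfers of standard germ representatives (so that domains match — this is exactly why the lemma is stated for germs of functions nonvanishing near $0$, via Corollary~\ref{cor: SG_0 -> F(B_delts) is R alg isomorph} germ equivalence is detected on $B_\d$), and (ii) keeping the quantifiers straight when passing from ``for all sufficiently small $t$'' to ``for all sufficiently large $x$'', since the threshold in fact (b) depends on both germs involved. Both are routine once the notation $\SE_\d$, $\SF_\infty=\{[I\circ f\circ I]:[f]\in\SF_0\}$, and $\xi=I(\d)$ are fixed at the outset.
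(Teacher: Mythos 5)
Your proof is correct and is exactly the argument the paper intends: the paper states this lemma without proof (calling it "obvious" after the preceding discussion of the order-reversing involution $I$), and your write-up simply makes explicit the translation $(\SF_\infty)_\xi=I\big((\SF_0)_\d\big)$ and the fact that an order-reversing bijection swaps coinitial with cofinal. No gaps.
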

    Given this conversion recipe, we can prove the following.
\begin{definition}
    Let $\un{\SS}\SM$, respectively $\ov{\SS}\SM$, denote the ring of germs at $0$, respectively at $\infty$, of strictly positive monotone (increasing) convergent power series functions, $m$, with limit $0$ at $0$, respectively with limit $\infty$ at $\infty$, defined on some neighborhood of $0$ in $\bbr_+$, respectively neighborhood of $\infty$ in $\bbr_+$. such that all derivatives of $m$ are positive where $m$ is defined.
\end{definition}
\begin{lem}\label{lem: Hardy power series construction}
    If $[m]\in\SM^0$, then there is $[u]\in\un{\SS}\SM$ such that for some $a>0$ we have that if $0<x<a$, then $u(x)<m(x)$.
\end{lem}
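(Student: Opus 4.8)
The statement to prove is Lemma \ref{lem: Hardy power series construction}: given $[m]\in\SM^0$, we want $[u]\in\un{\SS}\SM$ (the germ at $0$ of a strictly positive, convergent, monotone increasing power series all of whose derivatives are positive, with limit $0$ at $0$) that is eventually dominated by $m$ near $0$. By Lemma \ref{lem: equiv of ordered germs at 0 and infty}, via the inversion recipe $f\mapsto I\circ f\circ I$ it is equivalent to construct, for any monotone function $M$ going to $\infty$ at $\infty$ (obtained from $m$), a power series $u$ in $\ov{\SS}\SM$ — a convergent power series with all derivatives positive, tending to $\infty$ at $\infty$ — such that $u(x)<M(x)$ for all large $x$. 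So the first step is to make this reduction explicit and then work entirely at $\infty$.

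The heart of the matter is the classical Borel-type interpolation result (as cited, Hardy \cite{Hardy1924}): one can find an \emph{entire} function (or at least a function analytic on a neighborhood of $\infty$, i.e. given near $\infty$ by a convergent series in $1/x$ — one must be careful which meaning of ``power series'' $\ov{\SS}\SM$ demands, and I would match the definition in the paper) that grows to $\infty$ but slower than a prescribed function. The plan is: first, by replacing $M$ by a smaller monotone function if necessary, reduce to the case where $M$ is itself continuous, strictly increasing, and (say) piecewise affine or smooth — this uses Lemma \ref{lem:  SM^0 is coinitial in SM} and the piecewise-affine constructions already in the text (the $\SP\SL^0$ material) translated to $\infty$. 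Second, choose an increasing sequence of points $x_1<x_2<\cdots\to\infty$ and positive values $c_j$ with $c_j\to\infty$ but $c_j< M(x_j)$ with a safety margin, e.g. $c_j<\tfrac12 M(x_j)$ and $c_j<c_{j+1}$. Third, build $u$ as a power series $u(x)=\sum_{k\ge0}a_k x^k$ with \emph{all $a_k\ge 0$} (this automatically forces all derivatives of $u$ to be positive on $(0,\infty)$ and forces monotonicity and convexity of every order) that passes below all the points $(x_j,c_j)$: concretely, since a series with nonnegative coefficients is increasing and tends to $\infty$ as soon as infinitely many $a_k$ are nonzero, one only needs to control the \emph{rate}. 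The standard device is to pick the $a_k$ so small that $u(x_j)\le c_j$ for each $j$ — one chooses $a_k$ recursively, at stage $k$ ensuring $a_k x_j^k$ is tiny compared to the remaining budget $c_j-\sum_{i<k}a_i x_j^i$ for all $j\le k$, and also ensuring $\sum a_k R^k<\infty$ for every $R$ (i.e. $u$ is entire) by e.g. $a_k\le 2^{-k}/k^k$ or similar. Then for $x\in[x_j,x_{j+1}]$, monotonicity of $u$ and $M$ gives $u(x)\le u(x_{j+1})\le c_{j+1}<M(x_j)\le M(x)$ — actually one needs to interleave the $x_j$ finely enough that $M(x_j)\le M(x)$ really bounds things, so I would instead compare $u(x)\le u(x_{j+1})\le c_{j+1}\le \tfrac12 M(x_{j+1})\le \tfrac12 M(x)\cdot(\text{something})$, adjusting constants; the cleanest version takes $c_{j+1}<M(x_j)$ directly so that $u(x)\le c_{j+1}<M(x_j)\le M(x)$ for $x\ge x_j$. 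Finally, transfer back through the inversion isomorphism $I$ to get $[u]\in\un{\SS}\SM$ with $u(x)<m(x)$ for $0<x<a$, where $a=1/x_{j_0}$ for the first good index.

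The main obstacle I expect is \textbf{matching the exact regularity demanded by $\un{\SS}\SM$}: the definition requires not merely a monotone analytic function but one whose \emph{every} derivative is positive, and whose expansion is a genuine convergent power series. The nonnegative-coefficient trick handles ``all derivatives positive'' for free, which is exactly why one wants it; the real work is the simultaneous bookkeeping that keeps (a) the series entire (hence convergent near $0$ after inversion — note inversion sends an entire function to something analytic at $\infty$, which is the $\ov{\SS}\SM$/$\un{\SS}\SM$ condition), (b) the values at $x_j$ below $c_j$, and (c) $u\to\infty$. A secondary subtlety is that after inverting, ``power series at $0$'' for $\un{\SS}\SM$ should mean convergent on a neighborhood of $0$; I would check that $1/u(1/x)$ indeed has a convergent expansion at $0$ — this follows since $u(0)=a_0$ can be taken $0$ only if we instead build $u$ with $u(\xi_0)$ small but positive at the relevant scale, so I would keep $a_0>0$ tiny (or translate), making $1/u(1/x)$ analytic at $\infty$ in the variable $1/x$, i.e. the germ at $0$ is genuinely a convergent power series with positive coefficients after the change of variable. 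Once the definitions are pinned down, the construction itself is the routine ``greedy diagonal'' argument; the proof will mostly be in carefully stating the recursion and the monotonicity comparison, which is why I would not grind through the estimates here.
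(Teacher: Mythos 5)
The reduction to $\infty$ is right, and the idea to use a power series with non-negative coefficients (so that all derivatives are automatically positive, which is exactly what $\un{\SS}\SM$ demands) is the correct device. But you have the inequality reversed at $\infty$, and this is fatal. The lemma asks for $u(x)<m(x)$ near $0$. Writing $m(x)=1/\bar m(1/x)$ and $u(x)=1/\bar u(1/x)$ with $\bar m,\bar u$ monotone increasing to $\infty$ at $\infty$, the inequality $u<m$ near $0$ is equivalent to $\bar u(y)>\bar m(y)$ for large $y$: you must build a power series that \emph{dominates} $\bar m$ at $\infty$, not one that lies below it. Your plan to choose $c_j<M(x_j)$ and then greedily pick coefficients small enough to keep $u(x_j)\le c_j$ is building the wrong object, and after inversion it would produce $u>m$ near $0$ rather than $u<m$.

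This is not a mere sign slip that flips through harmlessly: the two problems are genuinely different. Domination at $\infty$ (the paper's direction) is always achievable by the Borel construction — pick points $a_j\uparrow\infty$, pick $b_j$ with $a_{j-1}<b_j<a_j$, and choose exponents $n_j$ large enough that the single term $(a_j/b_j)^{n_j}$ already exceeds $\bar m(a_{j+1})$; convergence is then automatic by the root test since $n_j\to\infty$. In contrast, undercutting $\bar m$ by a non-degenerate power series with non-negative coefficients is often \emph{impossible}: any such series with a term of degree $\geq 1$ grows at least linearly, so if $\bar m$ grows like $\log y$ (which corresponds to a perfectly legitimate $m\in\SM^0$) there is no such $\bar u$ below it at $\infty$. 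So your greedy construction cannot be salvaged as stated; fixing it requires turning the recursion around — choosing exponents or coefficients \emph{large} enough to beat $\bar m$ between consecutive interpolation points, as the paper does — rather than small enough to stay below it. (A secondary, minor point: the preliminary reduction to $\SM^0$ via the piecewise-affine material is unnecessary here, since the hypothesis $[m]\in\SM^0$ already gives continuity.)
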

\begin{proof}
    First of all, according to the above conversion recipe any germs in $\SM^0$ just the germ of a mapping $m$ given by $x\mapsto 1/\ov{m}(1/x)$ where $\ov{m}$ is representative of the germ $[\ov{m}]$ at infinity of a continuous monotone increasing mapping; and also according to the above recipe we need only find a monotone convergent power series function $\ov{u}$ at $\infty$ such that for sufficiently large $x$, $\ov{m}(x)<\ov{u}(x)$ for then for sufficiently small $x$, $x\mapsto u(x)\dot=1/\ov{u}(1/x)$ will be a convergent power series function in $\un{\SS}\SM$ with $u(x)<m(x)$.
    So given the $\ov{m}$, we will produce $\ov{u}$ via a construction that Hardy attributes to Borel (see Hardy's text, \cite{Hardy1924}) as follows. For $j\in\bbn$, choose $a_j\in\bbr_+$ with $a_j< a_{j+1}$, $a_j\ra\infty$ as $j\ra\infty$, choose $b_{j+1}\in\bbr_+$ with $a_j<b_{j+1}< a_{j+1}$ for all $j$ and finally we choose $n_j\in\bbn$, $n_j<n_{j+1}$ for all $j$ whose growth will be more closely specified shortly. First note that
\begin{align}
    \ov{u}(x)\;\dot=\;\sum_{j=1}^\infty\Big(\f{x}{b_j}\Big)^{n_j}\;\;\;\text{converges for all}\;x\in\bbr_+
\end{align}
    as can be verified with the root test. Next, note that if $x\in[a_{j},a_{j+1})$ we have $\ov{u}(x)>(a_j/b_j)^{n_j}$, but then assuming we have (inductively) chosen $n_1,\ldots,n_{j-1}$,  choose $n_j$ larger than these and also large enough so that
\begin{align}
    \f{a_j}{b_j}>\Big(\ov{m}(a_{j+1})\Big)^{\f{1}{n_j}}
\end{align}
   Assuming we have so chosen our $n_j$'s, we see that for $x\in [a_{j},a_{j+1})$
\begin{align}
    \ov{u}(x)\geq \ov{u}(a_j)>\Big(\f{a_j}{b_j}\Big)^{n_j}>\ov{m}(a_{j+1})>\ov{m}(x),
\end{align}
    giving our domination requirement. Note that as $\ov{u}$ converges uniformly on any closed interval, then it is indefinitely differentiable, its derivatives given by termwise differentiation. But then, by inspection, one can see that $\ov{u}^{(k)}(x)>0$ for all $x>0$; ie., $\ov{u}\in\ov{\SS}\SM$.
\end{proof}

%@@@@@@@@@@@  NEED SEGUE HERE @@@@@@@@@
We will now consider the relationship (with respect to the ordering induced from $\SA_\d$) between the evaluation map $\SE_\d$ and (internal) derivatives of elements of $\SS\SM$. Although this part does not play a role in the present work, it will play a role in later work on differentiable germs.
\begin{definition}
    If $0<\d\sim 0$, let $\SA_\d$ denote the image of $\SS\SM$ under the evaluation homomorphism $\SE_\d$.
    We say that an infinitesimal $\d\in\rz\bbr_+$ is generic if the map $\SE_\d|\SA$ is an injection.
\end{definition}
    In a later section we shall need the fact that $\SN_\d$ is given by the set of values $\rz m(\d)$ as $[m]$ varies in $\SM$. Here, we need a more precise determination of a coinitial subset of $\SN_\d$.
\begin{cor}\label{cor: E_d is an isomorphism onto A_d}
     $\SA_\d$ is cofinal in $\SN_\d$. Also, there exists generic infinitesimals $\d\in\rz\bbr_+$, ie., the semiring homomorphism $\SE_\d:\SS\SM\ra\SA_\d$ is an isomorphism.
\end{cor}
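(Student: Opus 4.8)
The statement has two parts: (1) $\SA_\d$ is cofinal in $\SN_\d$, and (2) generic infinitesimals exist, i.e.\ $\SE_\d|_{\SS\SM}$ is injective for suitable $\d$. For part (1), I would proceed in two steps. First, recall that $\SN_\d$ is coinitial-equivalent to $\SN^0_\d$ (Proposition \ref{prop: SN^0_d is coin in SN_d}), which in turn is coinitial with $\SP l_\d$ by the piecewise-affine construction; and $\SP l_\d$ consists of values $\rz\tl m(\d)$ for $\tl m\in\SP\SL^0\subset\SM^0$. Second, given such an $\tl m\in\SM^0$, Lemma \ref{lem: Hardy power series construction} produces a germ $[u]\in\un{\SS}\SM$ with $u(x)<\tl m(x)$ for $x>0$ small; transferring the inequality $0<x<a\Rightarrow u(x)<\tl m(x)$ and plugging in $\d$ gives $\rz u(\d)<\rz\tl m(\d)$, so the image of $\SS\SM$ under $\SE_\d$ (which is $\SA_\d$, modulo the $0$-versus-$\infty$ translation of Lemma \ref{lem: equiv of ordered germs at 0 and infty}) sits below an arbitrary element of a coinitial subset of $\SN_\d$. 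Chaining the coinitialities — $\SA_\d \fallingdotseq \SP l_\d :=: \SN^0_\d :=: \SN_\d$ — gives that $\SA_\d$ is coinitial in $\SN_\d$, which (via the order-reversing correspondence $I$) is the assertion that $\SA_\d$ is cofinal; I would be careful to keep the ``$0$ vs.\ $\infty$'' bookkeeping straight here, since $\SS\SM$ was defined with germs at $\infty$ in mind while $\SN_\d$ lives near $0$.

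For part (2), the idea is a concurrence/saturation argument. Two distinct germs $[m_1]\neq[m_2]$ in $\SS\SM$ (both strictly monotone analytic functions, all derivatives positive) can agree at only finitely many points near $0$, since $m_1-m_2$ is analytic and not identically zero; so the set $D(m_1,m_2)$ of positive infinitesimals $\d$ at which $\rz m_1(\d)=\rz m_2(\d)$ is contained in (the transfer of) a finite set, hence its complement in $\mu(0)_+$ is ``large.'' I would then form, for each pair $(m_1,m_2)$ of distinct germs in $\SS\SM$, the internal set $E_{m_1,m_2}=\{\d\in\rz B_{r}\cap\mu(0)_+ : \rz m_1(\d)\neq\rz m_2(\d)\}$ (for $r$ small enough that both representatives are defined and the finitely many coincidences are excluded), observe that any finite intersection of these is nonempty (indeed contains all sufficiently small positive reals after deleting a finite set — a standard finite-intersection-property check), and invoke $\k$-saturation with $\k$ larger than the cardinality of $\SS\SM\times\SS\SM$ (which is the cardinality of $\bbr$, hence fine for the ambient saturation already assumed) to get a common point $\d$ in all of them. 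Such a $\d$ is by construction generic: $\SE_\d$ separates every pair of distinct germs of $\SS\SM$, so $\SE_\d|_{\SS\SM}$ is injective, and being a semiring homomorphism onto its image $\SA_\d$ by definition, it is an isomorphism onto $\SA_\d$.

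The main obstacle I anticipate is not any single deep step but rather the careful management of the coinitiality bookkeeping in part (1): the excerpt threads through several equivalences ($\SN_\d :=: \SN^0_\d$, $\SN^0_\d :=: \SP l_\d$, $\ov{\SS}^\d = \SN_\d$, and the $I$-conjugation swapping coinitial-at-$0$ with cofinal-at-$\infty$), and one must verify that composing an order-reversing correspondence with these preserves the relevant directedness so that ``$\SA_\d$ cofinal in $\SN_\d$'' is the literally correct statement and not, say, ``coinitial.'' Concretely I would state and use Remark \ref{rem: coinitial is partial order} (transitivity of $\risingdotseq$ and that $J_1\risingdotseq K_1$, $J_1:=:J_2$, $J_2\risingdotseq K_2$ give $K_1:=:K_2$) to assemble the chain cleanly. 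The saturation step in part (2) is routine once the finite-intersection property is checked, so I expect it to be short; the only subtlety there is ensuring the representatives $m_1,m_2$ are chosen on a common domain $B_r$ small enough that the (finitely many, isolated) coincidence points of $m_1-m_2$ have been removed, which is harmless since we only care about the germ at $0$.
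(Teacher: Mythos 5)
Your proposal matches the paper's own proof in both parts: the cofinality claim is obtained, as in the paper, as an immediate consequence of Lemma~\ref{lem: Hardy power series construction} (your extra detour through $\SP l_\d$ and $\SN^0_\d$ is harmless but not needed, since $\SN_\d=\{\rz m(\d):m\in\SM\}$ and $\SM^0$ is coinitial in $\SM$), and the existence of generic $\d$ is exactly the paper's concurrence-plus-saturation argument based on the discreteness of coincidence sets of distinct analytic germs. Two cosmetic repairs: distinct germs in $\un{\SS}\SM$ are analytic only on a punctured neighborhood of $0$, so they may coincide on a countably infinite discrete set accumulating at $0$ rather than a finite one (countability is all the paper claims and all the finite-intersection property needs); and your sets $E_{m_1,m_2}$ as written involve the external set $\mu(0)_+$ and so are not internal -- one should either run concurrence on the standard relation $\SR\subset\bbr_+\x\SS^{(2)}$ as the paper does, or replace $\mu(0)_+$ by the family of internal sets $\rz(0,1/j)$, $j\in\bbn$, before invoking saturation.
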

\begin{proof}
    The proof of the first statement is an immediate consequence of the previous lemma. The last part of the second statement follows from the existence of a generic infinitesimal $\d$.  The existence of $\d$ depends on the fact that if power series function germs $[m],[\ov{m}]\in\un{\SS}\SM$ are equal on the germ of a countable set with limit point $0$, then $[m]=[\ov{m}]$. This fact will allow the construction of a concurrent relation. Saturation of $\rz\bbr$ will imply  the existence of ideal points, ie., generic points, for this relation.

    Let $\SS$ denote the set of convergent power series $(m,(0,c))$ defined on some neighborhood $(0,c)$ of $0$ in $\bbr_+$. Let $\SS^{(2)}$ demote the set of unequal ordered pairs
\begin{align}
    \SS^{(2)}\;\dot=\;\{((m,(0,c)),(\ov{m},(0,\ov{c})))\in\SS\x\SS:m\not=\ov{m},c\not=\ov{c}\}
\end{align}
    and define a relation $\SR\subset\bbr_+\x\SS^{(2)}$ by
\begin{align}
    (s,((m,(0,c)),(\ov{m},(0,\ov{c}))))\in\SR\;\text{if}\; s<\min\{c,\ov{c}\},\;\text{and}\; m(s)\not=\ov{m}(s).
\end{align}
    This relation is concurrent; that is, if $k\in\bbn$ and  $((m_j,(0,c_j)),(\ov{m}_j,(0,\ov{c}_j)))$ for $j=1,\ldots,k$ are $k$ elements of $\SS^{(2)}$, then there is $s_0\in\bbr_+$ with $s_0<c_0\dot=\min_j\{c_j,\ov{c}_j\}$ such that $m_j(s_0)\not=\ov{m}_j(s_0)$ for all $j\leq k$. If
\begin{align}
    E=\{s\in (0,c_0):\;\text{there is}\;j\;\text{such that}\;m_j(s)=\ov{m}_j(s)\},
\end{align}
     then clearly $E$ is a countable subset of $(0,c_0/2)$, eg., there is $s_0\in (0,c_0/2)\ssm E$ which by definition satisfies our condition. The result now follows from saturation, ie., there is $\d\in\rz\bbr_+$, such that for every $(m,(0,c))\in \SS$, we have $(\d,(\rz m,\rz(0,c)))\in\rz\SR$. This means that $\d<\rz c$ for every $c\in\bbr_+$ (ie., is infinitesimal) and second, if $[m_1],[m_2]\in\un{\SS}\SM$ are distinct germs, then $\rz m_1(\d)\not=\rz m_2(\d)$.
\end{proof}
\begin{definition}
    The isomorphism $\SE_\d$ mapping $\SS\SM$ onto a semiring of positive elements of $\rz\bbr_{nes,+}$ induces a total ordering $\stackrel{\d}{\prec}$ on $\SS\SM$ by $[m_1]\stackrel{\d}{\prec} [m_2]$ if and only if $\SE_\d(m_1)<\SE_\d(m_2)$.
\end{definition}

\begin{remark}
    This total order extends the total ordering of all field extensions of $\bbr_+$ by fields of functions (Hardy type fields) on $(\bbr_+,0)$. We don't yet know how this fit into the current framework (see eg., Boshernitzan, \cite{BoshernitzanOrdOfInfinty1982}, or Aschenbrenner and van den Dries, \cite{vandendriesAschenHfieldExtensions2002}) and  will return to this point later.
    The problem with this isomorphism for our needs is that as we vary our generic  $\d$'s, although it leaves the ordering of these field extensions of $\bbr$ unchanged, it is shuffling a large number of the $\SE_\d(m)$'s around for arbitrary $m\in\SM^0$. We can only fix  this problem in an asymptotic sense, ie., as we vary our infinitesimal (generic or not) the shuffling does not alter asymptotic behavior of families in $\SM^0$.
    This will be sufficient to prove that our topology $\tau_\d$ is independent of $\d$.
\end{remark}
     We need a further restriction of our coinitial structures in order to prove the main result of this subsection.
\begin{definition}
     Let $\wt{\SS\SM}=\{[m]\in\SS\SM:\rz m(\xi)=o(\xi)\;\text{for}\;\xi\in\mu(0)_+\}$ and $\wt{\SA}_\d=\SE_\d(\wt{\SS\SM})$.
\end{definition}
     Let's collect some useful properties of $\wt{\SS\SM}$.
\begin{lem}
     $\wt{\SS\SM}$ is a convex ideal in the semiring $\SS\SM$ as is $\wt{\SA}_\d$ in $\SA_\d$ for any positive infinitesimal $\d$. In particular, $\wt{\SS\SM}$ is coinitial in $\SS\SM$ and $\tl{\SA_\d}$ is coinitial in $\SA_\d$.
     Also if $[m]\in\wt{\SS\SM}$, then $m'(\xi)\sim 0$ for all  $\xi\in\mu(0)_+$ and we also have that for all $\xi\in\mu(0)_+$, $m(\xi)<\xi m'(\xi)$.
\end{lem}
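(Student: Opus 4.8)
The plan is to settle the six assertions by elementary order-of-vanishing bookkeeping on power series, plus two transfers. Throughout, recall that an $[m]\in\SS\SM$ is the germ at $0$ of a convergent power series with $m(0)=0$, so $m(x)=\sum_{j\ge 1}a_jx^j$; strict monotonicity and positivity near $0$ force the first nonzero coefficient $a_{j_0}$ to be positive, and all derivatives of $m$ are positive on the domain (so in fact $a_j\ge 0$). Membership $[m]\in\wt{\SS\SM}$ is precisely the vanishing of the linear coefficient, $a_1=0$, i.e. $m$ vanishes to order $j_0\ge 2$ at $0$.

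First I would handle $\wt{\SS\SM}$ directly. Closure under addition is clear ($a_1$-coefficients add), and absorption $\SS\SM\cdot\wt{\SS\SM}\subseteq\wt{\SS\SM}$ holds because the product of a germ vanishing to order $\ge 1$ with one vanishing to order $\ge 2$ vanishes to order $\ge 3$; one checks in passing that such a product stays in $\SS\SM$ (positivity of all derivatives is preserved via Leibniz, monotonicity via $(mn)'=m'n+mn'>0$, analyticity is obvious), so $\wt{\SS\SM}$ is an ideal. For convexity I would observe $\wt{\SS\SM}$ is even downward closed in $\SS\SM$: if $[n_2]\in\wt{\SS\SM}$, $[n]\in\SS\SM$ and $n(x)\le n_2(x)$ near $0$, then $0\le n(x)/x\le n_2(x)/x\to 0$, whence $n(x)=o(x)$ and $[n]\in\wt{\SS\SM}$. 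Coinitiality of $\wt{\SS\SM}$ in $\SS\SM$ is witnessed by $[m]\mapsto[\,x\mapsto xm(x)\,]$: for $[m]\in\SS\SM$ the germ $n(x)=xm(x)$ lies in $\SS\SM$, vanishes to order $\ge 2$ hence lies in $\wt{\SS\SM}$, and satisfies $n(x)=xm(x)\le m(x)$ for $0<x<1$.

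Next I would transport everything to $\wt{\SA}_\d=\SE_\d(\wt{\SS\SM})$ using that $\SE_\d\colon\SS\SM\to\SA_\d$ is a surjective, order-preserving semiring homomorphism (order-preserving by transfer of pointwise inequalities of germs at $0$). Surjectivity makes $\SE_\d$ carry the ideal $\wt{\SS\SM}$ onto an ideal $\wt{\SA}_\d$ of $\SA_\d$, and it carries the coinitial family $\wt{\SS\SM}$ onto a coinitial subset of $\SA_\d$ (explicitly $\rz n(\d)=\d\,\rz m(\d)\le\rz m(\d)$ for $n(x)=xm(x)$). The one step with any friction is convexity of $\wt{\SA}_\d$ in the totally ordered set $\SA_\d$: because $\SE_\d$ need not be injective for non-generic $\d$, convexity cannot simply be read off from that of $\wt{\SS\SM}$. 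I would resolve this with the characterization $\wt{\SA}_\d=\{\,\Fr\in\SA_\d:\Fr/\d\sim 0\,\}$. The inclusion $\subseteq$ is immediate, since $[m]\in\wt{\SS\SM}$ says $\rz m(\xi)=o(\xi)$ for every $\xi\in\mu(0)_+$, in particular for $\xi=\d$. For $\supseteq$, write $\Fr=\rz n(\d)$ with $[n]\in\SS\SM$; if $[n]\notin\wt{\SS\SM}$ its linear coefficient $b_1>0$, so $n(x)/x\to b_1$ and $\rz n(\d)/\d\sim b_1\not\sim 0$, contradicting $\Fr/\d\sim 0$; hence $[n]\in\wt{\SS\SM}$ and $\Fr\in\wt{\SA}_\d$. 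As $\{\,\Fr\in\SA_\d:\Fr/\d\sim 0\,\}$ is an initial segment of the totally ordered $\SA_\d$, it is convex, so $\wt{\SA}_\d$ is a convex ideal.

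Finally, for the two pointwise statements fix $[m]\in\wt{\SS\SM}$ with order of vanishing $j_0\ge 2$ and leading coefficient $a_{j_0}>0$. Then $m'$ is a convergent power series with $m'(0)=0$ (its lowest power is $x^{\,j_0-1}$), and since $\rz m'$ is the transfer of a map continuous at $0$ with value $0$, defined throughout $\mu(0)_+$, we get $m'(\xi)\sim m'(0)=0$ for all $\xi\in\mu(0)_+$. For the inequality, write $\xi m'(\xi)-m(\xi)=\sum_{j\ge j_0}(j-1)a_j\xi^{\,j}$; the leading term $(j_0-1)a_{j_0}\xi^{\,j_0}$ is strictly positive and, for infinitesimal $\xi$, dominates the remainder, so $\xi m'(\xi)-m(\xi)>0$, i.e. $m(\xi)<\xi m'(\xi)$. (Alternatively, avoiding the tail estimate, the function $g(x)=xm'(x)-m(x)$ has $g(0)=0$ and $g'(x)=xm''(x)>0$ on the domain because $m''>0$, hence $g>0$ there, which transfers to $\mu(0)_+$.) In summary, every part is routine; the only genuine obstacle is the convexity of $\wt{\SA}_\d$, where the non-injectivity of $\SE_\d$ must be circumvented by the order-of-vanishing characterization just indicated.
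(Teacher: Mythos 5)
Your proof is correct, and it is in fact considerably more complete than the paper's own argument, which only addresses the final derivative claim and dispatches the rest with ``the rest follows by induction.'' For the claim $m'(\xi)\sim 0$ the two routes genuinely differ: the paper picks an infinitesimal $\la\gg\z$ with $m(\la)\ll\la$, applies the transferred mean value theorem on $[\z,\la]$, and uses monotonicity of $m'$ (from $m''>0$) to bound $m'(\z)$ by $m(\la)/(\la-\z)\sim 0$ --- a soft argument that would work for any $C^2$ convex germ with $m=o(x)$, not just power series. You instead exploit analyticity, characterizing $[m]\in\wt{\SS\SM}$ by the vanishing of the linear coefficient $a_1$ and then invoking S-continuity of $m'$ at $0$; this is valid here precisely because $\SS\SM$ consists by definition of convergent power series. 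Your coefficient characterization pays a second dividend: it yields $\wt{\SA}_\d=\{\Fr\in\SA_\d:\Fr/\d\sim 0\}$, an initial segment of $\SA_\d$, which is exactly what is needed to get convexity of $\wt{\SA}_\d$ without assuming injectivity of $\SE_\d$ --- a point the paper never addresses (it is essentially the content of the lemma and corollary that follow this one in the text, which you reprove in passing). The ideal, convexity, and coinitiality arguments (the witness $x\mapsto xm(x)$, downward closure via $n/x\le n_2/x$) and both of your arguments for $m(\xi)<\xi m'(\xi)$ (the coefficient identity and, more cleanly, $g(x)=xm'(x)-m(x)$ with $g(0)=0$, $g'=xm''>0$ plus transfer) are all sound.
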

\begin{proof}
%     Suppose that for some $k\in\bbn$ and $\xi\in\mu(0)_+$, we have $m^{(k)}(\xi)\not\sim 0$. Then for all $\z\in\mu(0)_+$ with $\z>\xi$, $m^{(k)}(\z)>m^{(k)}(\xi)$ gets $m^{(k)}(\z)\not\sim 0$. Choose $\z\in\mu(0)_+$ with $\z/\xi$ infinite
     Let $\z\in\mu(0)_+$ and as $m(\xi)=o(\xi)$ for all $\xi\in\mu(0)_+$, then choosing  $\la\in\mu(0)_+$ such that $\la \gg \z$ and so $m(\la)\ll \la$ implies that $m(\la)\ll \la-\z$ and therefore
\begin{align}
     m'(\z)< m'(\ov{\la})=\f{m(\la)-m(\z)}{\la-\z}<\f{m(\la)}{\la-\z}\sim 0,
\end{align}
     where we used the transferred mean value theorem to get $\ov{\la}\in (\z,\la)$ for the equality above; the rest follows by induction.
\end{proof}
     The previous result can be improved with the following lemma.
\begin{lem}
     Suppose that $[m]\in\SS\SM$ and $\d\in\mu(0)_+$ and $m(\d)=o(\d)$. Then $m(\xi)=o(\xi)$ for all $\xi\in\mu(0)_+$; ie., $[m]\in\wt{\SS\SM}$.
\end{lem}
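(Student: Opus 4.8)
The statement asserts that membership in $\wt{\SS\SM}$ can be tested at a \emph{single} infinitesimal: if $[m]\in\SS\SM$ and $m(\d)=o(\d)$ for one $\d\in\mu(0)_+$, then $m(\xi)=o(\xi)$ for \emph{every} $\xi\in\mu(0)_+$. The plan is to exploit the fact that elements of $\SS\SM$ are (germs at $0$ of) strictly positive monotone convergent power series all of whose derivatives are positive, so that $m$ and $m'$ are positive and increasing near $0$; hence the function $t\mapsto m(t)/t$ is controlled by $m'$ via the mean value theorem. First I would reduce the hypothesis $m(\d)=o(\d)$ to a statement about $m'$: using the transferred mean value theorem, for $\d\in\mu(0)_+$ there is $\ov{\d}\in(0,\d)$ with $m(\d)-m(0^+) = m'(\ov{\d})\,\d$, and since $m(0^+)=0$ (the limit at $0$ is $0$) we get $m(\d)=m'(\ov{\d})\,\d$. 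Because $m'$ is positive and increasing (all derivatives positive), $m'(\ov{\d})\le m'(\d)$, so $m(\d)/\d \le m'(\d)$, and conversely $m(\d)/\d \ge m'(\d')$ for any $\d'<\d$; the key point is that $m(\d)=o(\d)$ is essentially equivalent to $m'(\ov{\d})\sim 0$ for the associated mean-value point.

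The heart of the argument is then a monotonicity/overflow step. Fix an arbitrary $\xi\in\mu(0)_+$; I want $m(\xi)=o(\xi)$, i.e.\ $m(\xi)/\xi\sim 0$. Consider the internal set $\FA=\{t\in\rz\bbr_+ : m(t)/t < \e\}$ for a given standard $\e>0$ — wait, more carefully: I would instead argue directly that $t\mapsto m(t)/t$ is monotone increasing on a neighborhood of $0$. Indeed, $(m(t)/t)' = (t m'(t)-m(t))/t^2$, and $t m'(t)-m(t) = t m'(t) - \int_0^t m'(s)\,ds \ge 0$ since $m'$ is increasing; so $t\mapsto m(t)/t$ is nondecreasing near $0$. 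This monotonicity is the crucial structural fact, and it transfers. Now if $\xi\le\d$ then $m(\xi)/\xi \le m(\d)/\d\sim 0$, so $m(\xi)=o(\xi)$ immediately. If $\xi>\d$, I need to push the estimate up: here I would use that $m(\d)/\d\sim 0$ together with the internal definition principle to conclude that $\{t\in\rz\bbr_+: m(t)/t < r\}$ contains $\d$ for every standard $r>0$, hence by overflow contains an interval $[0,\Ft_0)$ with $\Ft_0$ noninfinitesimal; then any $\xi\in\mu(0)_+$ lies in this interval and $m(\xi)/\xi\sim 0$ follows. Actually the cleanest route: for each standard $r>0$, the internal set $\{t : 0<t, \ m(t)<rt\}$ is internal and (by monotonicity of $m(t)/t$ and $m(\d)<r\d$, which holds since $m(\d)=o(\d)$) contains $(0,\d]$; by overflow it contains $(0,\Ft_r)$ for some noninfinitesimal $\Ft_r$, so $m(\xi)<r\xi$ for all $\xi\in\mu(0)_+$; as $r>0$ was an arbitrary standard number, $m(\xi)=o(\xi)$.

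The main obstacle I anticipate is justifying the monotonicity of $t\mapsto m(t)/t$ rigorously from the stated hypotheses on $\SS\SM$ — namely that one really does have $tm'(t)\ge m(t)$ near $0$. This follows from $m(t)=\int_0^t m'(s)\,ds$ (using $m(0^+)=0$, valid for the convergent power series with no constant term after subtracting the limit) and monotonicity of $m'$, which is one of the defining ``all derivatives positive'' conditions; but I would want to confirm that ``limit $0$ at $0$'' indeed forces the constant term to vanish so the integral representation is clean. A secondary, purely bookkeeping, point is the careful application of overflow: the set $\{t: m(t)<rt\}$ must be verified internal (it is, being defined by internal data $\rz m$, $r$, and arithmetic), and one must check that it genuinely contains a standard-indexed family of infinitesimals so that overflow yields a noninfinitesimal bound. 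Neither difficulty is deep; the proof should be short once the monotonicity of $m(t)/t$ is in hand.
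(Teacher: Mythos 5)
Your monotonicity argument is correct and is effectively the same reduction the paper makes: since all derivatives of $m$ are positive, $m$ is convex with $m(0^+)=0$, so $t\mapsto m(t)/t$ (the secant slope from the origin) is nondecreasing near $0$; the paper phrases this as "the graph of $m$ is convex", you phrase it via $tm'(t)-m(t)=\int_0^t(m'(t)-m'(s))\,ds\ge 0$. Either way, the case $\xi\le\d$ is immediate, and the whole weight of the lemma sits on $\xi>\d$.

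The gap is in your treatment of that case. You form $\{t:\rz m(t)<rt\}$, observe it contains $(0,\d]$, and assert ``by overflow it contains $(0,\Ft_r)$ for some noninfinitesimal $\Ft_r$.'' That inference is false for a general internal set: the internal interval $(0,\d]$ itself contains $(0,\d]$ and contains no noninfinitesimal, so mere internality plus containing $(0,\d]$ gives nothing. Overflow needs the internal property to hold on an \emph{external} set (e.g.\ all infinitesimals, or all standard naturals); here $(0,\d]$ is internal and bounded above by a specific infinitesimal, so there is no overflow to invoke. What actually rescues the step — and what the paper uses — is that your set is not merely internal but the \emph{transfer of a standard set}: $B_r=\{t\in\bbr_+:m(t)<rt\}$, with $\{t:\rz m(t)<rt\}=\rz B_r$. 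The paper then cites lemma \ref{lem: *A contains inf nbd-> has stand nbd}, a reverse-transfer fact, to conclude from $(0,\d)\subset\rz B_r$ that $B_r$ contains a standard interval $(0,s_r)$, whence $\mu(0)_+\subset\rz(0,s_r)\subset\rz B_r$. Alternatively, you can note that your own monotonicity of $m(t)/t$ implies $B_r$ is \emph{already} a standard interval $(0,s_r)$ (possibly empty), and $\rz B_r\supset(0,\d]$ forces $s_r>0$ standard, so $\mu(0)_+\subset\rz B_r$. Either fix closes the gap; the raw appeal to overflow does not.
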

\begin{proof}
     First of all, note that as the graph of $m$ is convex on $\mu(0)_+$ (as $m''>0$ on $\mu(0)_+$), then for all $\xi\in\mu(0)_+$ with $\xi<\d$, we have $m(\xi)=o(\xi)$. But if $0<c\in\bbr$, and $B_c=\{t\in\rz\bbr:m(t)<ct\}$, then $(0,\d)\subset\rz B_c$ and so by lemma \ref{lem: *A contains inf nbd-> has stand nbd}, $\rz B_c$ contains a standard neighborhood, eg., $\mu(0)_+\subset\rz B_c$, and as $c>0$ was arbitrary in $\bbr_+$, we are finished.
\end{proof}
\begin{cor}
    $\wt{\SS\SM}=\SE_\d^{-1}(\wt{\SA_\d})$.
\end{cor}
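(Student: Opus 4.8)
The plan is to establish the two inclusions separately, the first being a formality and the second reducing immediately to the lemma proved just above.

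For the inclusion $\wt{\SS\SM}\subseteq\SE_\d^{-1}(\wt{\SA_\d})$, I would simply unwind the definition $\wt{\SA_\d}=\SE_\d(\wt{\SS\SM})$: if $[m]\in\wt{\SS\SM}$ then $\SE_\d([m])\in\SE_\d(\wt{\SS\SM})=\wt{\SA_\d}$, so $[m]$ lies in the preimage $\SE_\d^{-1}(\wt{\SA_\d})$. No property of $\wt{\SS\SM}$ beyond its definition is needed here. For the reverse inclusion $\SE_\d^{-1}(\wt{\SA_\d})\subseteq\wt{\SS\SM}$, I would take $[m]\in\SS\SM$ with $\SE_\d([m])=\rz m(\d)\in\wt{\SA_\d}$. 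Since $\wt{\SA_\d}=\SE_\d(\wt{\SS\SM})$ by definition, there is a witness $[u]\in\wt{\SS\SM}$ with $\rz u(\d)=\rz m(\d)$. By the defining property of $\wt{\SS\SM}$ we have $\rz u(\xi)=o(\xi)$ for every $\xi\in\mu(0)_+$; specializing to $\xi=\d$ gives $\rz m(\d)=\rz u(\d)=o(\d)$. Now I would invoke the immediately preceding lemma, which states precisely that an element $[m]\in\SS\SM$ with $\rz m(\d)=o(\d)$ for a single infinitesimal $\d\in\mu(0)_+$ must satisfy $\rz m(\xi)=o(\xi)$ for all $\xi\in\mu(0)_+$, i.e.\ $[m]\in\wt{\SS\SM}$. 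This finishes the second inclusion and hence the equality.

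There is no genuine obstacle, since the real content sits in the two preceding lemmas (convexity of the graph of $m$ on $\mu(0)_+$ together with Lemma~\ref{lem: *A contains inf nbd-> has stand nbd}). The one subtlety worth flagging is that $\SE_\d$ need not be injective — only generic infinitesimals make it so — hence one cannot conclude the reverse inclusion by declaring $[m]$ equal to the witness $[u]$; instead one pulls the scalar relation $\rz m(\d)=o(\d)$ back up to the germ through the previous lemma. The point of the corollary, then, is that it upgrades that lemma: membership of a germ $[m]\in\SS\SM$ in $\wt{\SS\SM}$ is detected exactly by whether its single value $\SE_\d([m])$ falls into the distinguished convex ideal $\wt{\SA_\d}$, and this holds for every fixed positive infinitesimal $\d$.
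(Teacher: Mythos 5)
Your proof is correct, and it is a sharper version of the paper's. The paper disposes of the corollary in one line, citing Corollary~\ref{cor: E_d is an isomorphism onto A_d} together with the preceding lemma; that citation points to the injectivity of $\SE_\d$ on $\SS\SM$ for \emph{generic} $\d$, which would allow one to identify the witness $[u]$ with $[m]$ directly and conclude $[m]\in\wt{\SS\SM}$ at once. You deliberately refuse that shortcut, and rightly so: the corollary's statement carries no genericity restriction on $\d$, and your route proves it for any fixed positive infinitesimal. The substitute step is the observation that $\wt{\SA_\d}=\SE_\d(\wt{\SS\SM})$ sits inside $\{\Fr:\Fr=o(\d)\}$ (every element of $\wt{\SA_\d}$ is $\rz u(\d)$ for some $[u]\in\wt{\SS\SM}$, hence $o(\d)$ by the very definition of $\wt{\SS\SM}$), so that $\SE_\d([m])\in\wt{\SA_\d}$ forces $\rz m(\d)=o(\d)$ even when $[m]$ and $[u]$ are distinct germs; then the previous lemma, applied at the single infinitesimal $\d$, promotes the scalar fact to membership in $\wt{\SS\SM}$. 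In other words, you identified that the real content lives in the $\d$-rigidity lemma, not in the isomorphism result, and so you get a leaner argument that is also strictly more general (it covers non-generic $\d$ without a separate reduction).

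One small point of hygiene worth stating explicitly in a write-up: the expression $\SE_\d^{-1}(\wt{\SA_\d})$ must be read as a preimage within $\SS\SM$ (the domain on which $\SE_\d$ is restricted in this subsection), not within all of $\SG_0$ where the evaluation map was originally introduced; your proposal implicitly makes this reading by taking $[m]\in\SS\SM$ at the outset, which is the correct interpretation, but it deserves a clause so the equality is not misread as a claim about arbitrary germs.
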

\begin{proof}
    This follows from Corollary \ref{cor: E_d is an isomorphism onto A_d} and the previous lemma.
\end{proof}

\subsubsection{Topological independence from delta}\label{subsec: top independence from delta}

    Although $\SN=\SN^\d$ depends on the `size' $\d$ of our infinitesimal disk, we will see that all topological properties are independent of the choice for $\d$. This approach has the following ingredients. First of all, we break the Hardy construction in two parts: we get a map $\SH:\SM^0\ra Incr$ where $Incr$ is the set of strictly increasing sequences of integers, the set of sequences of exponents for the $\ov{u}$'s of the previous subsection, and we have the map $E:Incr\ra \FH_{<,\;B}$ where $\FH_{<,\;B}$ is the set of Hardy series and $E$ assigns to an increasing sequence the corresponding series. Second, we prove an asymptotic growth result for $Incr$, lemma \ref{lem: increasing sequence bdd at point}, that says that given a subset $\FI\subset Incr$ and an infinite $\om_0\in\rz\bbn$ with the property that the set of values $\rz\sbbn(\om_0)$ as $\sbbn$ varies in $\FI$ is bounded in the set $\{\rz\sbbn(\om_0):\sbbn\in Incr\}$, then there is an element $\wt{\sbbn}\in Incr$ such that $\sbbn(j)<\wt{\sbbn}(j)$ for all $\sbbn\in\FI$ and $j\in\bbn$. Next, we prove that this pointwise-bound-implies-uniform-bound in a family of exponents of Hardy series implies a a pointwise implies uniform result for the graphs of families of Hardy series, see lemma \ref{lem: Incr pwwise bdd->Hrdy ptwise bdd}. Combining this with the  systematic bounding of elements of $\SM^0$ by elements Hardy series, we are able to prove our main assertion: that the topology given by $\tau_{\d_0}$ and $\tau_{\d_1}$ for any positive infinitesimals $\d_0,\d_1$ are homeomorphic.
%     @@@@@@@@ when inserted in this decomposition allows us to prove a uniform growth result for Hardy series over all infinitesimals. Specifically,
%     Let $\ov{\SM}^0=\SM^0\sqcup-\SM^0\sqcup\{0\}$ the ring generated by the semiring $\SM^0$.

    We need a specific subspace of $\ov{\SS}\SM$ taylored for the considerations here. In order to prove it has certain properties, we need a result about boundedness of increasing sequences of integers that are all bounded at a given infinite value. We need some notation.
\begin{definition}
    Let $Incr=\{\sbbn:\bbn\ra\bbn\;|\;\sbbn(j)<\sbbn(j+1)\;\text{for all}\;j\in\bbn\}$ and if $k\in\bbn$, $\un{\sbbn}\in Incr(\bbn)$, let $Incr_{k,\un{\sbbn}}=\{\sbbn\in Incr: \sbbn(k)\leq \un{\sbbn}(k)\}$ and note that if $\om\in\rz\bbn$, we still have a well defined standard set $Incr_{\om,\un{\sbbn}}=\{\sbbn\in Incr:\rz \sbbn(\om)\leq\rz\un{\sbbn}(\om)\}$.
    If $\sbbn_1,\sbbn_2\in Incr$, we write $\sbbn_1<\sbbn_2$, and say that $\sbbn_1$ is smaller than $\sbbn_2$, if for sufficiently large $j_0\in\bbn$, we have $\sbbn_1(j)<\sbbn_2(j)$ for $j>j_0$.
    If $\SL\subset\FZ\dot=\;^\s Incr(\a)=\{\rz\sbbn(\a):\sbbn\in Incr,\a\in\rz\bbn_\infty\}$ and $\om\in\rz\sbbn$, then we say that a subset $\FI\subset Incr$ is $\om$-cofinal with $\SL$ if for each $\la\in\SL$, there is $\sbbn\in \FI$ such that $\rz\sbbn(\om)>\la$. If $\SL=\FI ncr_\om\dot=\{\rz\sbbn(\om):\sbbn\in Incr\}$, we say that $\FL$ is $\om$-cofinal.
    If for every $\sbbn\in Incr$, there is $\ov{\sbbn}\in\FI$ and $j_0\in\bbn$ with $\sbbn(j)<\ov{\sbbn}(j)$ for $j>j_0$, then we say that $\FI$ is cofinal.
% Also, for the remainder of the paper, the notation $\SS\SM$ will mean $\un{\SS}\SM$.
\end{definition}
    We chose $\SL$ as a subset of $\FZ$ instead of $\rz\bbn$ as $\FZ$ is far from being all of $\rz\bbn$ and we are considering the analogs of the $\SN_d$'s for strategic reasons. Now for clarity, first note that  $\sbbn_1<\sbbn_2$ precisely when $\rz\sbbn_1(\xi)<\rz\sbbn_2(\xi)$ for all (sufficiently) small $\xi\in\rz\bbn_\infty$. Next,  note that a subset $\FI\subset Incr$ is not $\om$-cofinal precisely when it
    Given these definitions, we have a lemma.
\begin{lem}\label{lem: increasing sequence bdd at point}
    Let $\om\in\rz\bbn_\infty$ and $\un{\sbbn}\in Incr$. Then there is $\wt{\sbbn}\in Incr$ such that for each $\sbbn\in \rz Incr_{\om,\un{\sbbn}}$ we have $\sbbn(j)<\wt{\sbbn}(j)$ for all $j\in\bbn$.
\end{lem}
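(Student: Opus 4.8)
The statement asks: given an infinite $\om\in\rz\bbn_\infty$ and a fixed $\un{\sbbn}\in Incr$, there is a single standard increasing sequence $\wt{\sbbn}\in Incr$ that pointwise dominates (at every standard index $j\in\bbn$) \emph{every} element of the standard set $Incr_{\om,\un{\sbbn}}=\{\sbbn\in Incr:\rz\sbbn(\om)\leq\rz\un{\sbbn}(\om)\}$. The plan is to produce $\wt{\sbbn}$ by a greedy recursion on $j$, where at stage $j$ one sets
\[
    \wt{\sbbn}(j)\;\dot=\;1+\max\{\wt{\sbbn}(j-1),\ \sup\{\sbbn(j):\sbbn\in Incr_{\om,\un{\sbbn}}\}\},
\]
so the only thing to check is that the supremum $s_j\dot=\sup\{\sbbn(j):\sbbn\in Incr_{\om,\un{\sbbn}}\}$ is \emph{finite} for each fixed $j\in\bbn$. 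Granting finiteness, $\wt{\sbbn}$ is a well-defined strictly increasing sequence of integers, hence an element of $Incr$, and by construction $\sbbn(j)<\wt{\sbbn}(j)$ for all $\sbbn\in Incr_{\om,\un{\sbbn}}$ and all $j$, which is exactly the conclusion.

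So the heart of the matter is: \emph{for each fixed standard $j$, the set of possible values $\sbbn(j)$, as $\sbbn$ ranges over sequences in $Incr$ with $\rz\sbbn(\om)\leq \rz\un{\sbbn}(\om)$, is bounded.} Here is where I would invoke a monotonicity/transfer argument. Fix $j\in\bbn$; since $\om$ is infinite, $j<\om$, and for any $\sbbn\in Incr$ the strict increasingness transfers to give $\rz\sbbn(j)<\rz\sbbn(j+1)<\cdots<\rz\sbbn(\om)$, so in particular $\rz\sbbn(j)\leq \rz\sbbn(\om)-(\om-j)$. If $\sbbn\in Incr_{\om,\un{\sbbn}}$ then $\rz\sbbn(\om)\leq\rz\un{\sbbn}(\om)$, hence $\rz\sbbn(j)\leq \rz\un{\sbbn}(\om)-(\om-j)$. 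But also $\sbbn(j)=\;^\circ(\rz\sbbn(j))$ is a \emph{standard} natural number (it is just the $j$-th term of a standard sequence), so $\sbbn(j)=\rz\sbbn(j)$, and the inequality $\sbbn(j)\leq \rz\un{\sbbn}(\om)-(\om-j)$ forces $\sbbn(j)$ to be at most this fixed (internal) bound. The cleanest way to extract a \emph{standard} finite bound from this is a reverse-transfer / overflow step: consider the internal set $A_j=\{k\in\rz\bbn: k\leq \rz\un{\sbbn}(\om)-\om+j\}$; it contains $\rz\sbbn(j)$ for all $\sbbn\in Incr_{\om,\un{\sbbn}}$, and the statement ``for all $\sbbn\in Incr$ with $\sbbn(\om)\le\un\sbbn(\om)$, $\sbbn(j)\le M$'' — quantified only over the standard set $Incr$ once $j$ is fixed — is, for each candidate standard $M$, either true or false; I would argue that if it failed for every standard $M$ then by transfer there would be, for each standard $M$, an $\sbbn\in Incr$ with $\sbbn(j)>M$ yet $\sbbn(\om)\le\un\sbbn(\om)$, and feeding an infinite $M$ (via the internal version of this family) contradicts the inequality $\rz\sbbn(j)\leq\rz\un\sbbn(\om)-\om+j$ derived above. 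Hence some standard $M_j$ works; set $s_j\le M_j$.

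I expect the main obstacle to be precisely this finiteness step, and specifically getting the logic of standard-versus-internal quantifiers right: the set $Incr$ is standard (an external-looking description, but it is $^\s$ of a standard set of sequences), whereas $Incr_{\om,\un{\sbbn}}$ is defined using the infinite parameter $\om$ and is a genuine standard subset of $Incr$; one must be careful that the ``bound'' $\rz\un{\sbbn}(\om)-\om+j$ is infinite yet the conclusion ($s_j$ finite) is about standard integers, so the overflow/reverse-transfer packaging is not optional decoration but the actual content. Once finiteness of each $s_j$ is secured, the recursive construction of $\wt{\sbbn}$, the verification that $\wt{\sbbn}\in Incr$ (strict monotonicity is built in by the ``$1+\max$''), and the pointwise domination are all immediate. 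A remark worth including: the lemma is the ``pointwise-bounded-at-one-infinite-index $\Rightarrow$ uniformly pointwise-dominated'' phenomenon that the surrounding text (cf. the discussion preceding \ref{lem: increasing sequence bdd at point} and its intended use toward \ref{lem: Incr pwwise bdd->Hrdy ptwise bdd}) needs, and it is really just the observation that an increasing integer sequence pinned below a fixed value at a far-out index cannot have climbed very high at any earlier \emph{standard} index.
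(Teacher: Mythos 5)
Your instinct that everything reduces to the finiteness of $s_j=\sup\{\sbbn(j):\sbbn\in Incr_{\om,\un{\sbbn}}\}$ for each standard $j$ is exactly right, and so is your suspicion that this is where the difficulty lives — but the difficulty is fatal: $s_j$ is \emph{not} finite in general, and in fact the lemma as stated is false. Take $\un{\sbbn}(k)=2k$ and, for each $M\in\bbn$, the shifted sequence $\sbbn_M(k)=M+k$. Each $\sbbn_M$ is in $Incr$, and $\rz\sbbn_M(\om)=M+\om\le 2\om=\rz\un{\sbbn}(\om)$ because $M$ is finite and $\om$ is infinite; so every $\sbbn_M$ lies in $Incr_{\om,\un{\sbbn}}$. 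But $\sbbn_M(1)=M+1$ is unbounded over $M\in\bbn$, so $s_1=\infty$ and no single $\wt{\sbbn}\in Incr$ can dominate the whole family at $j=1$. More precisely, your inequality $\rz\sbbn(j)\le\rz\un{\sbbn}(\om)-(\om-j)$ is correct but only useful when $\rz\un{\sbbn}(\om)-\om$ is finite, which (since $k\mapsto\un{\sbbn}(k)-k$ is nondecreasing) happens exactly when $\un{\sbbn}(k)-k$ is eventually constant; in that special case your greedy construction works verbatim, and otherwise the shifted sequences above defeat it.

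The reason your overflow/reverse-transfer step cannot close the gap is that the statement you want to push to an infinite $M$ — ``there is $\sbbn\in Incr$ with $\sbbn(j)>M$ and $\rz\sbbn(\om)\le\rz\un{\sbbn}(\om)$'' — contains the infinite parameter $\om$ and quantifies over an external set of standard sequences, so it is neither standard nor internal and transfer does not apply to it. If one instead overflows the genuinely internal set $\{\Fm\in\rz\bbn:\exists\,\Fn\in\rz Incr\ (\Fn(j)>\Fm\ \wedge\ \Fn(\om)\le\rz\un{\sbbn}(\om))\}$, one finds it contains all $\Fm$ up to roughly $\rz\un{\sbbn}(\om)-\om$, so no contradiction arises. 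For what it is worth, the paper's own proof founders on the same rock: it forms $P=\{k\in\bbn:\sbbn(k)\le\un{\sbbn}(k)\ \text{for all}\ \sbbn\in Incr_{\om,\un{\sbbn}}\}$ and asserts $\om\in\rz P$, which would require transfer to commute with the external intersection $\bigcap_{\sbbn}\{k:\sbbn(k)\le\un{\sbbn}(k)\}$; what is true is only that $\om$ lies in $\rz\{k:\sbbn(k)\le\un{\sbbn}(k)\}$ for each \emph{individual} $\sbbn$, and in the example above $P$ is empty. So the problem is not your packaging of the nonstandard argument; the statement needs a stronger hypothesis (for instance, a uniform bound $\sbbn(k)\le\un{\sbbn}(k)$ at all large standard $k$, or the restriction to $\un{\sbbn}$ with $\un{\sbbn}(k)-k$ bounded) before any proof can go through.
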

\begin{proof}
    Let $X\subset Incr$ denote $Incr_{\om,\un{\sbbn}}$ and let
\begin{align}
    P=\{k\in\bbn:\sbbn(k)/\un{\sbbn}(k)\leq 1\;\text{for all}\;\sbbn\in X\}.
\end{align}
    Then, clearly $\om\in\rz P$ and so $P$ is infinite (see Hirschfeld, \cite{Hirschfeld1988}, for this interesting hook). Let $k_1<k_2<\cdots$ denote an enumeration of the elements of $P$\;; so for each $j\in\bbn$, and $\sbbn\in X$ we have $\sbbn(l)<\sbbn(k_j)\leq \un{\sbbn}(k_j)$ for $l<k_j$. So given this, define $\wt{\sbbn}:\bbn\ra\bbn$ as follows: for $l<k_1$, define $\wh{\sbbn}(l)=\un{\sbbn}(k_1)$ and inductively for $i\geq 1$, suppose that we have defined $\wh{\sbbn}(l)$ for $l<k_i$. Then,  for $l$ such that $k_i\leq l<k_{i+1}$, define $\wh{\sbbn}(l)=\un{\sbbn}(k_{i+1})$. With this, $\wh{\sbbn}(j)\leq \wh{\sbbn}(j+1)$ for all $j$ and has the property that if $\sbbn\in X$, then $\sbbn(j)\leq\wh{\sbbn}(j)$ for all $j$. Therefore, defining $\wt{\sbbn}(j)\dot=\wh{\sbbn}(j)+j$, we have $\wt{\sbbn}\in Incr$  and if $\sbbn\in X$, $l\in\bbn$, then for some $i\in\bbn$, $k_i\leq l\leq k_{i+1}$ and so $\sbbn(l)<\sbbn(k_{i+1})\leq\un{\sbbn}(k_{i+1})=\wh{\sbbn}(l)<\wt{\sbbn}(l)$, as we wanted.
\end{proof}

 We have a consequence of this lemma will be critical to proving the invariance of our topology $\tau_\d$ of the choice of infinitesimal $\d$.
\begin{cor}\label{cor: FI_(om_0) cofin<->FI_(om_1) cofin}
     Suppose that $\FI\subset Incr$ and $\om_0,\om_1\in\rz\bbn_\infty$. Then $\FI_{\om_0}$ is cofinal in $\FI ncr_{\om_0}$  if and only if $\FI_{\om_1}$ is cofinal in $\FI ncr_{\om_1}$.
\end{cor}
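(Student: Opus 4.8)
The goal is to show that $\om$-cofinality of a family $\FI\subset Incr$ is independent of which infinite $\om$ we use; concretely, $\FI_{\om_0}$ cofinal in $\FI ncr_{\om_0}$ iff $\FI_{\om_1}$ cofinal in $\FI ncr_{\om_1}$. The plan is to argue the contrapositive symmetrically: it suffices to show that if $\FI_{\om_0}$ is \emph{not} cofinal in $\FI ncr_{\om_0}$, then $\FI_{\om_1}$ is not cofinal in $\FI ncr_{\om_1}$, and then swap the roles of $\om_0$ and $\om_1$. So suppose $\FI_{\om_0}$ fails to be cofinal in $\FI ncr_{\om_0}$. Unwinding the definition, this means there is some $\un{\sbbn}\in Incr$ such that $\rz\sbbn(\om_0)\le\rz\un{\sbbn}(\om_0)$ for every $\sbbn\in\FI$ — i.e.\ $\FI\subset \rz Incr_{\om_0,\un{\sbbn}}$ in the notation of the lemma. (I should double-check the exact quantifier structure of ``not cofinal'' against Definition preceding the corollary: ``$\FI_{\om_0}$ not cofinal'' should mean there exists $\la\in\FI ncr_{\om_0}$ with $\rz\sbbn(\om_0)\le\la$ for all $\sbbn\in\FI$, and since $\la=\rz\un{\sbbn}(\om_0)$ for some $\un{\sbbn}\in Incr$, this is exactly the displayed containment.)

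The heart of the argument is then Lemma \ref{lem: increasing sequence bdd at point} applied at $\om=\om_0$: from $\un{\sbbn}$ it produces a single $\wt{\sbbn}\in Incr$ with the property that every $\sbbn\in \rz Incr_{\om_0,\un{\sbbn}}$ satisfies $\sbbn(j)<\wt{\sbbn}(j)$ for all $j\in\bbn$. In particular every $\sbbn\in\FI$ satisfies $\sbbn(j)<\wt{\sbbn}(j)$ for all standard $j$, i.e.\ $\sbbn<\wt{\sbbn}$ in the sense of $Incr$ — but I want the stronger pointwise-at-all-standard-indices statement, which is literally what the lemma gives. Now transfer: the statement ``$\sbbn(j)<\wt{\sbbn}(j)$ for all $j\in\bbn$'' holds for every $\sbbn$ in the standard set $\FI$, and I want to upgrade it to hold at the infinite index $\om_1$. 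The subtlety is that $\FI$ is external-looking but is in fact a standard set (a subset of the standard set $Incr$), so $\rz\FI$ is its transfer and for standard $\sbbn\in\FI$ we have $\rz\sbbn\in\rz\FI$; the inequality $\rz\sbbn(\Fj)<\rz\wt{\sbbn}(\Fj)$ for all $\Fj\in\rz\bbn$ holds by transfer of the universally quantified standard statement ``$\forall \sbbn\in\FI\ \forall j\ \sbbn(j)<\wt{\sbbn}(j)$''. Instantiating at $\Fj=\om_1$ gives $\rz\sbbn(\om_1)<\rz\wt{\sbbn}(\om_1)$ for every $\sbbn\in\FI$. Hence $\FI_{\om_1}$ is bounded above in $\FI ncr_{\om_1}$ by the element $\rz\wt{\sbbn}(\om_1)$, so $\FI_{\om_1}$ is not cofinal in $\FI ncr_{\om_1}$. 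By the symmetry remark, this proves the equivalence.

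I expect the main obstacle to be purely bookkeeping rather than mathematical depth: pinning down precisely what ``$\FI_\om$ cofinal in $\FI ncr_\om$'' unpacks to (the Definition just above the corollary defines $\om$-cofinal ``with $\SL$'' and then the bare ``$\om$-cofinal'' as the case $\SL=\FI ncr_\om$), and making sure the transfer step is legitimate — in particular that $\FI$, being a genuine standard subset of $Incr$, transfers so that the universal statement over $\FI$ can be instantiated at an infinite natural number. One small point to verify carefully: the lemma's conclusion quantifies over $\sbbn\in\rz Incr_{\om_0,\un{\sbbn}}$, and I am only using it for the standard members $\sbbn\in\FI$; since each such $\sbbn$ is standard and satisfies $\rz\sbbn(\om_0)\le\rz\un{\sbbn}(\om_0)$, it indeed lies in $\rz Incr_{\om_0,\un{\sbbn}}$, so the conclusion applies. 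No saturation is needed — the single witness $\wt{\sbbn}$ from the lemma does all the work, and the whole argument is a transfer-plus-contrapositive packaging of Lemma \ref{lem: increasing sequence bdd at point}.
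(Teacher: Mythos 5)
Your proposal is correct and is essentially the paper's own argument: the paper proves the same implication by contradiction (assuming cofinality at one infinite index and non-cofinality at the other), invokes Lemma \ref{lem: increasing sequence bdd at point} to produce the single dominating $\wt{\sbbn}$, and transfers the pointwise bound to the other infinite index, exactly as you do. Your contrapositive-plus-symmetry packaging and your explicit care about instantiating the lemma only at the standard members of $\FI$ are cosmetic differences, not a different route.
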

\begin{proof}
    Suppose, by way of contradiction, that $\FI_{\om_0}$ is cofinal in $\FI ncr_{\om_0}$, but $\FI_{\om_1}$ is not cofinal in $\FI ncr_{\om_1}$. So we have that there is $\un{\sbbn}\in Incr$ such that $\FI\subset \rz Incr_{\om_1,\;\un{\sbbn}}$ and therefore the previous lemma implies that there is $\wt{\sbbn}\in Incr$ such that for every $\sbbn\in \rz Incr_{\om_1,\un{\sbbn}}$, we have $\sbbn(j)<\wt{\sbbn}(j)$, and so by transferring this statement for each given $\sbbn\in Incr_{\om_1,\;\un{\sbbn}}$, we have that for each $\sbbn\in Incr_{\om_1,\;\un{\sbbn}}$ that $\rz\sbbn(\xi)<\rz\wt{\sbbn}(\xi)$ for each $\xi\in\rz\bbn$; eg., for each such $\sbbn$, we have that $\rz\sbbn(\om_0)<\rz\wt{\sbbn}(\om_0)$. But this says, in particular that $\{\rz\sbbn(\om_0):\sbbn\in\FI\}$ has all elements less than $\rz\wt{\sbbn}(\om_0)$, contradicting our hypothesis.
\end{proof}
  From this corollary follows another rather surprising consequence.
\begin{cor}\label{cor: sequence ptwise cofinal->unif cofinal}
    Suppose that $\FI\subset Incr$ with $\FI_{\om_0}$ cofinal in $Incr_\om$. Then for every $\sbbn\in Incr$, there is $\wt{\sbbn}\in\FI$ such that $\rz\sbbn(\om)<\rz\wt{\sbbn}(\om)$ for all $\om\in\rz\bbn_\infty$.
\end{cor}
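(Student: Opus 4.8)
By contradiction, using the previous corollary to reduce the statement to the production of a single infinite integer at which $\FI$ is bounded. First I would record the translation between the two forms of the conclusion: for $\sbbn,\sbbn'\in Incr$ one has $\rz\sbbn(\om)<\rz\sbbn'(\om)$ for every $\om\in\rz\bbn_\infty$ if and only if $\sbbn(j)<\sbbn'(j)$ for all large $j$ — if $\{j:\sbbn(j)\geq\sbbn'(j)\}$ were infinite, overspill would put an infinite integer into its transfer. Hence negating the conclusion gives a $\sbbn_0\in Incr$ such that for every $\sbbm\in\FI$ the set $B(\sbbm):=\{j\in\bbn:\sbbm(j)\leq\sbbn_0(j)\}$ is infinite. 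The goal is one infinite $\om^\ast\in\bigcap_{\sbbm\in\FI}\rz B(\sbbm)$, i.e. $\rz\sbbm(\om^\ast)\leq\rz\sbbn_0(\om^\ast)$ for all $\sbbm\in\FI$: then $\rz\sbbn_0(\om^\ast)$ bounds $\FI_{\om^\ast}$ from above, and since $j\mapsto\sbbn_0(j)+j$ lies in $Incr$ and its value at $\om^\ast$ exceeds that bound, $\FI_{\om^\ast}$ is not cofinal in $\FI ncr_{\om^\ast}$; by Corollary \ref{cor: FI_(om_0) cofin<->FI_(om_1) cofin} this forces $\FI_{\om_0}$ not cofinal in $\FI ncr_{\om_0}$, contradicting the hypothesis. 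Such an $\om^\ast$ exists by sufficient saturation provided $\{B(\sbbm):\sbbm\in\FI\}\cup\{[n,\infty):n\in\bbn\}$ has the finite intersection property.

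The arithmetic of $Incr$ does most of this work: for $\sbbm_1,\dots,\sbbm_k\in\FI$ the pointwise maximum $\mu=\max(\sbbm_1,\dots,\sbbm_k)$ is again in $Incr$ and $B(\sbbm_1)\cap\dots\cap B(\sbbm_k)=B(\mu)$, so the finite intersection property is precisely the statement that no finite pointwise maximum of members of $\FI$ eventually dominates $\sbbn_0$. This is automatic once $\sbbn_0$ is not eventually dominated by any member of the pointwise-max closure $\FM$ of $\FI$. Here $\FM\subseteq Incr$ is again a standard, pointwise-max-closed family, and since finite maxima of a set cofinal in $\FI ncr_{\om_0}$ are again cofinal there, $\FM$ satisfies the hypothesis of the corollary as well; and a routine rerun of the argument of Lemma \ref{lem: increasing sequence bdd at point} shows that if some $\sbbn_0\in Incr$ escaped $\FM$-domination the argument above closes. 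So the substantive case is the opposite one.

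The main obstacle is therefore the situation in which every element of $Incr$ is eventually dominated by some finite pointwise maximum of members of $\FI$, while still no single member of $\FI$ eventually dominates our $\sbbn_0$. To attack this I would pass, via Zorn's lemma, to a subfamily $\FJ\subseteq\FI$ maximal for the property that $\{B(\sbbm):\sbbm\in\FJ\}$ has the finite intersection property; saturation then gives an infinite $\om^\ast\in\bigcap_{\sbbm\in\FJ}\rz B(\sbbm)$, so every $\sbbm\in\FJ$ is $\leq\sbbn_0$ at $\om^\ast$, while maximality forces every $\sbbm^+\in\FI\setminus\FJ$ to have $\rz\sbbm^+(\om^\ast)>\rz\sbbn_0(\om^\ast)$ (adjoining $\sbbm^+$ destroys the finite intersection property, so $\max(\sbbm^+,\sbbm_1,\dots,\sbbm_k)$ with $\sbbm_i\in\FJ$ eventually dominates $\sbbn_0$, and evaluating at $\om^\ast$ — where all the $\sbbm_i$ sit below $\sbbn_0$ — pins $\sbbm^+$ above it). It then remains to bound $\FI\setminus\FJ$ at some infinite point, which would produce the contradiction exactly as in the first paragraph. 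I expect this last step to be the crux: the natural routes are to apply Lemma \ref{lem: increasing sequence bdd at point} a second time to a truncated family manufactured from $\FI\setminus\FJ$ and $\sbbn_0$, or to iterate the whole construction along the (pointwise-max-closed, upward $<_{\mathrm{ev}}$-closed, hence cofinal) family of reference sequences witnessing the failure — but controlling the members of $\FI$ that remain above $\sbbn_0$ at $\om^\ast$ is where the real work lies, and is the part of the proof I am least certain about in advance.
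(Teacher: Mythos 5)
You have correctly isolated the one nontrivial point of this corollary, but your proposal does not close, and you say as much yourself: the step you are ``least certain about'' is a genuine gap, not a routine verification. The negation of the conclusion hands you, for each $\sbbm\in\FI$, an infinite set $B(\sbbm)=\{j:\sbbm(j)\leq\sbbn_0(j)\}$, i.e.\ for each member \emph{some} infinite point at which it is bounded by $\sbbn_0$; what the contradiction requires is a \emph{single} infinite $\om^\ast$ lying in every $\rz B(\sbbm)$, so that $\sbbn_0$ bounds all of $\FI$ at once and corollary \ref{cor: FI_(om_0) cofin<->FI_(om_1) cofin} applies. You correctly reduce the existence of $\om^\ast$ to the finite intersection property of $\{B(\sbbm)\}$, equivalently to the statement that no finite pointwise maximum of members of $\FI$ eventually dominates $\sbbn_0$ while no single member does --- and that is exactly the case you cannot rule out. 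It is not a vacuous case even for increasing sequences: with $\sbbn_0(j)=4^j$, take $\sbbm_1(j)=4^j$ for $j$ even and $4^{j+1}-1$ for $j$ odd, and $\sbbm_2$ with the parities swapped; both are increasing, each falls to $\sbbn_0$ on an infinite set, yet $\max(\sbbm_1,\sbbm_2)(j)>\sbbn_0(j)$ for every $j$. The Zorn's-lemma detour does not repair this: after extracting a maximal subfamily $\FJ$ with the finite intersection property, the members of $\FI\ssm\FJ$ sit above $\sbbn_0$ at $\om^\ast$, and bounding \emph{that} residual family at some infinite point is the same uniformization problem you started with, one level up. Nothing you write, and nothing in lemma \ref{lem: increasing sequence bdd at point}, controls it.

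For comparison, the paper's own proof is two lines and simply performs the quantifier interchange you (rightly) refused to perform: from ``for every $\wt{\sbbn}\in\FI$ there exists $\om_1$ with $\rz\sbbn_0(\om_1)\geq\rz\wt{\sbbn}(\om_1)$'' it passes directly to ``$\FI_{\om_1}$ is not cofinal,'' treating the $\om_1$ that depends on $\wt{\sbbn}$ as if it were uniform over $\FI$. So you have located precisely the passage from pointwise to uniform boundedness on which the result hinges, but you have not supplied it, and the paper supplies it only by assertion. If you pursue this, attack that uniformization head on --- for instance by asking whether a family that is cofinal at $\om_0$ can have its pointwise-max closure fail to dominate, which is where both your argument and the paper's stall --- rather than routing through saturation, which demonstrably needs more than the hypothesis gives you.
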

\begin{proof}
    Suppose not, ie., there is $\sbbn_0\in Incr$, such that for every $\wt{\sbbn}\in\FI$, there exists $\om_1\in\rz\bbn_\infty$ with $\rz\sbbn_0(\om_1)\geq\wt{\sbbn}(\om_1)$. But this just says that $\FI_{\om_1}$ is not cofinal which by the previous corollary implies that $\FI_{\om_0}$ is not cofinal, a contradiction.
\end{proof}
  Given the above initial work on increasing sequences of integers, we will now formally systematize the Hardy growth properties in these families of analytic maps by carrying the asymptotic ordered properties of $Incr$ into the analytic families.
\begin{definition}\label{def: SM_<,B and SM^om,n_<,B}
    Given $B$ (and $A$) as above, $\un{\sbbn}\in Incr(\bbn)$ and $\om_0\in\rz\bbn_\infty$, define the $B$-Hardy class of analytic maps
\begin{align}
    \FH_{<,B}=\{ u\in\ov{\SS}\SM:u(x)=u_{\sbbn}(x)=\sum_{j=1}^\infty b_j^{-\sbbn(j)}x^{\sbbn(j)}\;\text{for}\;\sbbn \in Incr\}\qquad\qquad\qquad\qquad\qquad\quad \\ \notag
    \text{and}\qquad\;\;\FH^{\om_0,\un{\sbbn}}_{<,B}=\{u_\sbbn\in\ov{\SS}\SM_{<,B}:\rz \sbbn(\om_0)\leq\rz\un{\sbbn}(\om_0)\}=E(Incr_{\un{\sbbn},\;\om_0})\qquad\qquad\qquad\qquad\qquad\qquad\qquad
\end{align}
    where $E:Incr\ra\ov{\SS}\SM_{<,\;B}$ denote the map $\sbbn\mapsto u_\sbbn$.
%    Also, as $u\in\SS\SM_{<,B}$ is defined once the sequence of exponents of $x$ are given, we shall sometimes write $u(x)=u(n)(x)$ for the determining sequence $n\in Incr$.
\end{definition}
%    Note that
    With this, we have the following lemma.
\begin{lem}\label{lem: B power series with a bdd exponent}
      $E$ is a strict order preserving  bijection onto $\FH_{<,\;B}$. Furthermore, if $\om_0\in\rz\bbn_\infty$ and  $u\in\FH^{\om_0,\un{\sbbn}}_{<,B}$ for some $\un{\sbbn}\in Incr$,  then there is $\wt{\sbbn}\in Incr$ such that  $\FH^{\om_0,\;\un{\sbbn}}_{<,\;B}\subset \FH^{\om,\;\wt{\sbbn}}_{<,\;B}$ for all $\om\in\rz\bbn_\infty$. So  there is $\wt{u}\in\FH_{<,\;B}$ such that $u(x)<\wt{u}(x)$ for all $u\in\FH_{<,\;B}^{\om_0,\;\un{\sbbn}}$ for all sufficiently large $x\in\bbr_+$.
\end{lem}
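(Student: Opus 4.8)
\textbf{Proof plan for Lemma \ref{lem: B power series with a bdd exponent}.}

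The plan is to prove the three assertions in order, reducing each to the corresponding statement about the integer-sequence side via the map $E:Incr\ra\FH_{<,\;B}$. First I would verify that $E$ is a bijection onto $\FH_{<,\;B}$: surjectivity is immediate from the very definition of $\FH_{<,\;B}$ (every element is of the form $u_\sbbn$ for some $\sbbn\in Incr$), and injectivity follows because the coefficients $b_j^{-\sbbn(j)}$ of the power series $u_\sbbn$ determine the exponents $\sbbn(j)$ uniquely (the $b_j$ are fixed positive reals chosen once and for all, so two distinct increasing sequences give power series with different support and/or different coefficients on the common support). For the order-preservation: if $\sbbn_1<\sbbn_2$ in the sense of Definition (eventual domination), then for $x>0$ sufficiently large the leading behavior of $u_{\sbbn_2}-u_{\sbbn_1}$ is governed by the higher exponents, so $u_{\sbbn_1}(x)<u_{\sbbn_2}(x)$ for large $x$; conversely if $u_{\sbbn_1}(x)<u_{\sbbn_2}(x)$ for all large $x$ then $\sbbn_1\leq\sbbn_2$ eventually (a term-by-term comparison, using that each summand is monotone increasing). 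This is a routine asymptotic comparison and I would not grind through it.

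The substantive part is the second assertion. Here I would invoke Lemma \ref{lem: increasing sequence bdd at point} directly. By hypothesis $u=u_{\un{\sbbn}}\in\FH^{\om_0,\un{\sbbn}}_{<,B}$, and by definition $\FH^{\om_0,\un{\sbbn}}_{<,B}=E(Incr_{\un{\sbbn},\;\om_0})$, i.e.\ it is the image under $E$ of the standard set $Incr_{\om_0,\un{\sbbn}}=\{\sbbn\in Incr:\rz\sbbn(\om_0)\leq\rz\un{\sbbn}(\om_0)\}$. Lemma \ref{lem: increasing sequence bdd at point} (applied with $\om=\om_0$) gives $\wt{\sbbn}\in Incr$ with $\sbbn(j)<\wt{\sbbn}(j)$ for all $j\in\bbn$ and all $\sbbn\in Incr_{\om_0,\un{\sbbn}}$. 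Transferring the statement ``$\sbbn(j)<\wt{\sbbn}(j)$ for all $j$'' (which holds for each fixed standard $\sbbn$ in the standard set $Incr_{\om_0,\un{\sbbn}}$, hence is a valid standard universally quantified statement) yields $\rz\sbbn(\xi)<\rz\wt{\sbbn}(\xi)$ for all $\xi\in\rz\bbn$ and all $\sbbn\in Incr_{\om_0,\un{\sbbn}}$; in particular, for every $\om\in\rz\bbn_\infty$ and every $\sbbn\in Incr_{\om_0,\un{\sbbn}}$ we get $\rz\sbbn(\om)<\rz\wt{\sbbn}(\om)\leq\rz\wt{\sbbn}(\om)$, so $\sbbn\in Incr_{\om,\wt{\sbbn}}$. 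Applying $E$ (and using that $E(Incr_{\om,\wt{\sbbn}})=\FH^{\om,\wt{\sbbn}}_{<,B}$) gives the claimed inclusion $\FH^{\om_0,\un{\sbbn}}_{<,B}\subseteq\FH^{\om,\wt{\sbbn}}_{<,B}$ for all $\om\in\rz\bbn_\infty$.

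Finally, for the domination-of-graphs conclusion I would pass from the sequence inequality $\sbbn(j)<\wt{\sbbn}(j)$ (for all $j$, all $\sbbn\in Incr_{\om_0,\un{\sbbn}}$) to an inequality between the corresponding Hardy series $u_\sbbn$ and $u_{\wt\sbbn}$. Here I would use (or re-prove in a line) the monotone-in-exponents property of these series: since $0<b_j^{-1}x<\cdots$ and each term $b_j^{-m}x^m$ is increasing in $m$ for $x>b_j$, and since $\wt\sbbn$ dominates every $\sbbn$ in the family termwise, for $x$ large enough (so that $x>b_j$ for all relevant $j$, and so that the tails are controlled exactly as in the root-test estimate of Lemma \ref{lem: Hardy power series construction}) we get $u_\sbbn(x)<u_{\wt\sbbn}(x)$; setting $\wt u=u_{\wt\sbbn}\in\FH_{<,B}$ finishes it. The main obstacle I anticipate is bookkeeping in this last step: a \emph{termwise} domination of exponents does not instantly give domination of the infinite sums for \emph{all} large $x$ uniformly over the whole family $\FH^{\om_0,\un{\sbbn}}_{<,B}$, because ``sufficiently large $x$'' could in principle depend on $\sbbn$. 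I would handle this exactly as in Lemma \ref{lem: Hardy power series construction}: the $b_j$ are common to all series in $\FH_{<,B}$, the blocks $[a_j,a_{j+1})$ are common, and the domination $\wh\sbbn(l)\geq\sbbn(l)$ (with a uniform $\wh\sbbn$, from the proof of Lemma \ref{lem: increasing sequence bdd at point}) makes the block-by-block estimate uniform in $\sbbn$, so the threshold for $x$ can be taken independent of $\sbbn\in\FH^{\om_0,\un{\sbbn}}_{<,B}$. Once that uniformity is nailed down the statement follows.
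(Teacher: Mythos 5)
Your proposal follows essentially the same route as the paper: reduce everything to the integer-sequence side, invoke Lemma \ref{lem: increasing sequence bdd at point} to produce the uniformly dominating $\wt{\sbbn}$, and push the conclusion through $E$ using its order-preservation. The paper's own proof is in fact terser than yours --- it simply asserts that termwise domination of exponents gives $E(\sbbn)(x)<E(\wt{\sbbn})(x)$ ``for all $x\in\bbr_+$'' without the uniformity bookkeeping over the family that you (rightly) flag as the delicate point --- so your version is, if anything, the more careful of the two.
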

\begin{proof}
%     As each $u\in \SS\SM_{<,B}$ is uniquely determined as $u(\sbbn)$ for some $\sbbn\in Incr$ and so we can define $E:Incr\ra \SS\SM_{<,B}$ by $n\mapsto u(n)$.
     If $\sbbn,\un{\sbbn}\in Incr$ with $\sbbn(j)<\un{\sbbn}(j)$ for all $j\in\bbn$, then $E(\sbbn)(x)<u(\un{\sbbn})(x)$ for all $x\in\bbr_+$. It's easy to see that $E$ is a bijection which is order preserving in the sense that if $\sbbn,\un{\sbbn}\in Incr$ with $\sbbn(j)<\un{\sbbn}(j)$ for all $j\in\bbn$, then $u(\sbbn)(x)<u(\un{\sbbn})(x)$ for all $x\in\bbr_+$. Note that as $\FH^{\om,\un{\sbbn}}_{<,B}=E(Incr_{\om,\un{\sbbn}})$, then as the $\wt{\sbbn}$ found in Lemma \ref{lem: increasing sequence bdd at point} satisfies $\wt{\sbbn}(j)>\sbbn(j)$ for all $\sbbn\in Incr_{\om,\un{\sbbn}}$, then $\wt{u}\dot=E(\wt{\sbbn})$ satisfies $\wt{u}(x)>u(x)$ for all $u\in\FH^{\om,\un{\sbbn}}_{<,B}$ and $x\in\bbr_+$, as we wanted to show.
\end{proof}

\begin{definition}
   If $\xi\in\rz\bbr_+$ is infinite and $\FK\subset\FH_{<,\;B}$, let $\FH_B^\xi=\{\rz u(\xi):u\in\FH_B\}$ and $\FK_\xi\subset\FH^\xi_B$ denote $\{\rz u(\xi):u\in \FK\}$.
\end{definition}

\begin{lem}\label{lem: Incr pwwise bdd->Hrdy ptwise bdd}
    Let $\xi_0\in\rz\bbr_+$ be infinite and $\SA\subset\FH_{<,\;}$. Then there is $\om_0\in\rz\bbn_\infty$ such that   $\SA_{\xi_0}$ is not cofinal in $\FH^{\xi_0}_B$ if and only if $E^{-1}(\SA)\subset Incr_{\un{\sbbn},\;\om_0}$ for some $\un{\sbbn}\in Incr$. In other words, $\SA_{\xi_0}$ is cofinal in $\FH_B^{\xi_0}$ if and only if $E^{-1}(\SA)_{\om_0}$ is cofinal in $\FI ncr_{\om_0}$.
\end{lem}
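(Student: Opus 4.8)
The plan is to fix once and for all the data $(a_j)_{j\ge 1}$, $(b_j)_{j\ge 1}$ underlying the Hardy construction of Lemma \ref{lem: Hardy power series construction} (so $a_j\nearrow\infty$ and $a_j<b_{j+1}<a_{j+1}$, hence $b_j<a_j$, for all $j$), and, for the given infinite $\xi_0$, to single out the index $\om_0\in\rz\bbn_\infty$ with $\rz a_{\om_0}\le\xi_0<\rz a_{\om_0+1}$; this $\om_0$ exists by transferring the standard statement ``every $x\ge a_1$ lies in some $[a_j,a_{j+1})$'', and it is infinite because $\xi_0$ is. First I would record the purely combinatorial bookkeeping: ``$E^{-1}(\SA)\subset Incr_{\om_0,\un\sbbn}$ for some $\un\sbbn\in Incr$'' is literally the negation of ``$E^{-1}(\SA)$ is $\om_0$-cofinal'', and by Corollary \ref{cor: FI_(om_0) cofin<->FI_(om_1) cofin} the latter property does not depend on which infinite index is used; thus the equivalence to be proved is insensitive to the choice of $\om_0$, and the two phrasings in the statement are related by negation. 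It then suffices to prove: (I) if $E^{-1}(\SA)\subset Incr_{\om_0,\un\sbbn}$ then $\SA_{\xi_0}$ is not cofinal in $\FH^{\xi_0}_B$; and (II) if $E^{-1}(\SA)$ is $\om_0$-cofinal then $\SA_{\xi_0}$ is cofinal in $\FH^{\xi_0}_B$. Direction (I) is essentially a corollary of Lemma \ref{lem: B power series with a bdd exponent}: the hypothesis says $\SA\subset\FH^{\om_0,\un\sbbn}_{<,B}$, and the proof of that lemma furnishes $\wt u\dot=E(\wt\sbbn)\in\FH_{<,B}$ with $u(x)<\wt u(x)$ for all $u\in\FH^{\om_0,\un\sbbn}_{<,B}$ and all $x\in\bbr_+$; transferring this and evaluating at $\xi_0$ shows that $\rz\wt u(\xi_0)\in\FH^{\xi_0}_B$ strictly dominates every element of $\SA_{\xi_0}$, so $\SA_{\xi_0}$ cannot be cofinal.

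Direction (II) is the technical heart, and the plan is to convert $\om_0$-cofinality of exponent sequences into cofinality of values via crude but uniform bounds on $\rz u_\sbbn$ at the two ends of the bracketing interval. For $\sbbn\in Incr$ and $x\in[a_j,a_{j+1})$, positivity of the terms gives at once the lower bound $u_\sbbn(x)\ge (a_j/b_j)^{\sbbn(j)}$ (the $j$-th term alone, since $x/b_j\ge a_j/b_j>1$); splitting the series at index $j+1$, bounding the finitely many initial terms by a power of $a_{j+1}$ and estimating the convergent tail geometrically (using $b_{j+2}>a_{j+1}$, so the tail ratio is $<1$) yields $u_\sbbn(a_{j+1})\le\beta\bigl(j,\sbbn(j+1)\bigr)$, where $\beta:\bbn\times\bbn\to\bbr_+$ is an explicit \emph{standard} function of the pair $(j,\sbbn(j+1))$ alone, the remaining dependence being on the fixed data $(a_i),(b_i)$. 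Since $a_j/b_j>1$, one then defines a standard map $\sharp:Incr\to Incr$ by $\sbbn^\sharp(j)\dot=j+\min\{m\in\bbn:(a_j/b_j)^m>\beta(j,\sbbn(j+1))\}$, so that for every $\sbbn\in Incr$, every $j\in\bbn$ and every $m\ge\sbbn^\sharp(j)$ one has $(a_j/b_j)^m>u_\sbbn(a_{j+1})$; I transfer this last statement. Now let $u=u_\sbbn\in\FH_B$ be arbitrary. As $E^{-1}(\SA)$ is $\om_0$-cofinal, Corollary \ref{cor: sequence ptwise cofinal->unif cofinal} applied to $\FI=E^{-1}(\SA)$ and the standard sequence $\sbbn^\sharp$ produces $\wt\sbbn\in E^{-1}(\SA)$ with $\rz(\sbbn^\sharp)(\om)<\rz\wt\sbbn(\om)$ for all $\om\in\rz\bbn_\infty$, in particular $\rz\wt\sbbn(\om_0)\ge\rz(\sbbn^\sharp)(\om_0)$. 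Chaining the estimates at $\om=\om_0$ gives
\begin{align*}
\rz u_{\wt\sbbn}(\xi_0)\ \ge\ \bigl(\xi_0/\rz b_{\om_0}\bigr)^{\rz\wt\sbbn(\om_0)}\ \ge\ \bigl(\rz a_{\om_0}/\rz b_{\om_0}\bigr)^{\rz\wt\sbbn(\om_0)}\ >\ \rz u_\sbbn(\rz a_{\om_0+1})\ \ge\ \rz u_\sbbn(\xi_0),
\end{align*}
using $\xi_0\ge\rz a_{\om_0}$, the transferred inequality with $m=\rz\wt\sbbn(\om_0)$, and monotonicity of $\rz u_\sbbn$ together with $\xi_0<\rz a_{\om_0+1}$. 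Since $u_{\wt\sbbn}=E(\wt\sbbn)\in\SA$ and $u$ was arbitrary, $\SA_{\xi_0}$ is cofinal in $\FH^{\xi_0}_B$, which is (II). Finally, reassembling (I) and (II) with the equivalences from the first paragraph yields the two displayed forms of the lemma.

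I expect the main obstacle to be the clean packaging of the tail estimate: one must arrange the majorant $\beta(j,\sbbn(j+1))$ to be a genuine standard function of $\bbn\times\bbn$ whose only $\sbbn$-dependence is through $\sbbn(j+1)$, so that $\sharp$ is a bona fide standard map and its transfer is legitimate at the infinite index $\om_0$ even though $\rz\beta\bigl(\om_0,\rz\sbbn(\om_0+1)\bigr)$ may itself be infinite; relatedly, one must keep scrupulous track of exactly which first-order statement is being transferred at each of the several transfer steps (the existence of $\om_0$, the $\sbbn^\sharp$-inequality, the order-preservation of $E$). Everything else — the identification of the various cofinality formulations, and the invocations of Corollaries \ref{cor: FI_(om_0) cofin<->FI_(om_1) cofin} and \ref{cor: sequence ptwise cofinal->unif cofinal} and of Lemma \ref{lem: B power series with a bdd exponent} — is routine.
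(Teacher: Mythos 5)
Your proposal is correct, and it takes a genuinely different route from the paper's on the substantive direction.  The paper proves only the implication ``$\SA_{\xi_0}$ not cofinal $\Rightarrow$ $E^{-1}(\SA)$ not $\om_0$-cofinal'' by contradiction, in three lines: it applies Corollary \ref{cor: sequence ptwise cofinal->unif cofinal} to produce $\wh\sbbn\in E^{-1}(\SA)$ dominating an arbitrary $\sbbn$ at all infinite indices, and then appeals to the assertion in Lemma \ref{lem: B power series with a bdd exponent} that ``$E$ is order-preserving'' to convert this into domination of $\rz E(\sbbn)$ by $\rz E(\wh\sbbn)$ at every infinite $\xi$.  That appeal is where the paper is hand-wavy: the order-preservation the cited lemma actually records is ``$\sbbn(j)<\un\sbbn(j)$ for \emph{all} $j\in\bbn$ implies $E(\sbbn)(x)<E(\un\sbbn)(x)$ for all $x$'', whereas what has been produced (``$\rz\sbbn(\om)<\rz\wh\sbbn(\om)$ for all infinite $\om$'', equivalently ``for sufficiently large $j$'') is strictly weaker, and the strengthened germ-at-$\infty$ version of order-preservation is not spelled out there --- indeed it is not termwise, since for indices $j$ with $b_j>x$ the terms move the wrong way.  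You bypass this entirely.  Your Direction~(I) coincides with what the paper leaves implicit (the dominating $\wt u$ from Lemma \ref{lem: B power series with a bdd exponent}).  Your Direction~(II) --- the contrapositive of the paper's stated implication --- rederives exactly the needed one-point domination at $\xi_0$ from scratch, by bracketing $\xi_0\in[\rz a_{\om_0},\rz a_{\om_0+1})$, bounding $u_\sbbn(a_{j+1})$ above by a standard majorant $\beta(j,\sbbn(j+1))$ depending only on the construction data, manufacturing the shifted exponent sequence $\sbbn^\sharp$ that defeats $\beta$, feeding $\sbbn^\sharp$ rather than $\sbbn$ to Corollary \ref{cor: sequence ptwise cofinal->unif cofinal}, and then chaining $\rz u_{\wt\sbbn}(\xi_0)\ge(\rz a_{\om_0}/\rz b_{\om_0})^{\rz\wt\sbbn(\om_0)}>\rz u_\sbbn(\rz a_{\om_0+1})\ge\rz u_\sbbn(\xi_0)$.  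What this buys you is precision: you only ever compare $\sbbn^\sharp$ and $\wt\sbbn$ at the single infinite index $\om_0$, and the lower bound for $u_{\wt\sbbn}$ uses only the single $\om_0$-th term, so there is no need to assert a global comparison of two full Hardy series. The paper's version is slicker if one grants the order-preservation as a black box; yours is longer but closes the gap.  One small repair to make in a final write-up: the $\sbbn^\sharp$ you define need not itself be strictly increasing, since the inner $\min$ can drop as $j$ increases; replace it by its cumulative monotonization $j\mapsto j+\max_{i\le j}\bigl(\sbbn^\sharp(i)-i\bigr)$, which is in $Incr$, is still standard, and only enlarges the defining inequality, so the rest of the argument is unaffected.
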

\begin{proof}
    It suffices to prove that if $\SA_{\xi_0}$ is not cofinal in $\FH^{\xi_0}_B$, then there is $\om_0\in\rz\bbn_0$ such that $E^{-1}(\SA)\subset Incr_{\un{\sbbn},\om_0}$. Suppose that this is not true. Then, by corollary \ref{cor: sequence ptwise cofinal->unif cofinal},  for every $\sbbn \in Incr$, there is $\wh{\sbbn}\in E^{-1}(\SA)$ such that $\rz\sbbn(\om)<\rz\wh{\sbbn}(\om)$ for all $\om\in\rz\bbn_\infty$. But as $E$ is order preserving (lemma \ref{lem: B power series with a bdd exponent}), then this implies that for every $\sbbn\in Incr$, there is $\wh{\sbbn}\in E^{-1}(\SA)$ such that $\rz E(\sbbn)(\xi)<\rz E(\wh{\sbbn})(\xi)$ for all infinite $\xi\in\rz\bbr_+$. But  as $\FH_{<,\;B}=E(Incr)$, this says that for all $u\in\FH_{<,\;B}$, $\rz u(\xi)<\rz E(\wh{\sbbn})(\xi)$ for all infinite $\xi\in\rz\bbr_+$, an absurdity.
\end{proof}
\begin{cor}\label{cor: FK_xi_0 cofin<->FK_xi_1 cofin}
     If $\FK\subset\FH_{<.\;B}$ and $\xi_0,\xi_1\in\rz\bbr_+$ are infinite, then $\FK_{\xi_0}$ is cofinal in $\FH^{\xi_0}_B$ if and only if $\FK_{\xi_1}$ is cofinal in $\FH^{\xi_1}_B$
\end{cor}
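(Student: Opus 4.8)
Looking at this corollary, it's the final statement of a chain of lemmas about Hardy-type analytic function families, and it asserts that cofinality of the evaluated family $\FK_{\xi}$ in $\FH^\xi_B$ is independent of the choice of infinite $\xi \in \rz\bbr_+$.

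\textbf{Proof proposal.}

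The plan is to reduce the statement to the corresponding fact about increasing sequences of integers, which has already been established. First I would observe that the map $E \colon Incr \ra \FH_{<,\;B}$ is an order-preserving bijection (this is \ref{lem: B power series with a bdd exponent}), so $\FK = E(\FJ)$ for a unique $\FJ \dot= E^{-1}(\FK) \subset Incr$. By lemma \ref{lem: Incr pwwise bdd->Hrdy ptwise bdd}, for each fixed infinite $\xi \in \rz\bbr_+$ we have the equivalence
\begin{align}
    \FK_\xi \text{ is cofinal in } \FH^\xi_B \quad \Longleftrightarrow \quad \FJ_{\om(\xi)} \text{ is cofinal in } \FI ncr_{\om(\xi)},
\end{align}
where $\om(\xi) \in \rz\bbn_\infty$ is the infinite integer furnished by that lemma. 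Actually, a cleaner route avoids tracking a specific $\om(\xi)$: lemma \ref{lem: Incr pwwise bdd->Hrdy ptwise bdd} shows that $\FK_\xi$ fails to be cofinal in $\FH^\xi_B$ precisely when $E^{-1}(\FK) \subset Incr_{\un{\sbbn},\;\om}$ for some $\un{\sbbn}\in Incr$ and some $\om \in \rz\bbn_\infty$ — and the point is that this last condition, the existence of a pointwise integer bound $\un{\sbbn}$ with $E^{-1}(\FK) \subset Incr_{\un{\sbbn},\om}$, is itself independent of which infinite integer $\om$ is used, by corollary \ref{cor: FI_(om_0) cofin<->FI_(om_1) cofin}.

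So the key steps, in order, are: (1) apply $E^{-1}$ to transport $\FK$ to the sequence family $\FJ \subset Incr$, using that $E$ is a bijection; (2) invoke lemma \ref{lem: Incr pwwise bdd->Hrdy ptwise bdd} at $\xi_0$ to get that $\FK_{\xi_0}$ cofinal in $\FH^{\xi_0}_B$ is equivalent to $\FJ_{\om_0}$ being cofinal in $\FI ncr_{\om_0}$ for the associated $\om_0$; (3) do the same at $\xi_1$, getting equivalence with cofinality of $\FJ_{\om_1}$ in $\FI ncr_{\om_1}$; (4) apply corollary \ref{cor: FI_(om_0) cofin<->FI_(om_1) cofin}, which says that $\FJ_{\om_0}$ cofinal in $\FI ncr_{\om_0}$ iff $\FJ_{\om_1}$ cofinal in $\FI ncr_{\om_1}$; (5) chain the three equivalences together. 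One should state the argument for the contrapositive (failure of cofinality) if that matches the form in which lemma \ref{lem: Incr pwwise bdd->Hrdy ptwise bdd} is most directly applied, since that lemma is phrased in the ``not cofinal'' direction as well as the ``cofinal'' direction.

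The only genuine subtlety — and the step I'd watch most carefully — is bookkeeping the infinite integers $\om_0, \om_1$ attached to $\xi_0, \xi_1$ by lemma \ref{lem: Incr pwwise bdd->Hrdy ptwise bdd}: one must be sure that the same family $\FJ = E^{-1}(\FK)$ is being tested in both applications (it is, since $\FK$ is fixed) and that the $\om_i$ are genuinely infinite (which the lemma guarantees), so that corollary \ref{cor: FI_(om_0) cofin<->FI_(om_1) cofin} applies. There are no calculations; everything is a concatenation of previously proved equivalences, so the proof is essentially a one-paragraph diagram-chase once the transport via $E$ and the three cited results are lined up.
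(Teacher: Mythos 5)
Your proposal is correct and follows essentially the same route as the paper: apply Lemma \ref{lem: Incr pwwise bdd->Hrdy ptwise bdd} at each of $\xi_0$ and $\xi_1$ to convert cofinality of $\FK_{\xi_j}$ in $\FH^{\xi_j}_B$ into cofinality of $E^{-1}(\FK)_{\om_j}$ in $Incr_{\om_j}$, then chain through Corollary \ref{cor: FI_(om_0) cofin<->FI_(om_1) cofin}. Your attention to the bookkeeping of the $\om_j$'s is exactly the right thing to watch, and matches the paper's (terser) argument.
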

\begin{proof}
%    It suffices to prove that $\FK_{\xi_0}$ is cofinal implies that $\FK_{\xi_1}$ is cofinal. But
    By the previous lemma there exists $\om_0$ and $\om_1$ in $\rz\bbn_\infty$ such that $\FK_{\xi_j}$ is cofinal in $\FH^{\xi_j}$ if and only if $E^{-1}(\FK)_{\om_j}$ is cofinal in $Incr_{\om_j}$ for $j=1,2$. But by corollary \ref{cor: FI_(om_0) cofin<->FI_(om_1) cofin}, $E^{-1}(\FK)_{\om_0}$ is cofinal in $Incr_{\om_0}$ if and only if $E^{-1}(\FK)_{\om_1}$ is cofinal in $Incr_{\om_1}$.
%    If $u\in\FH_{<,\;B}$, we need to show that there is $v\in\FK$ such that $\rz u(\xi_1)<\rz v(\xi_1)$.
\end{proof}

    Returning to the Hardy series bounding above a given element of $\SM^0$, with a bit more care on the determinations of the exponents of the power series $u$ in Lemma \ref{lem: Hardy power series construction}, we will give a bound on a particular exponent in the power series expansion in terms of the magnitude of the monotone function $m$ at a value associated with the given index of this exponent.
    This will give us a map from subsets of $\SM^0$ to subsets of $\ov{\SS}\SM$, so that by factoring through the map $E$, this map will allow a strong correspondence between the growth of a given subset of $\SM^0$ at infinity, and the corresponding subset of $\FH_{<,\;B}$.

    Without loss of generality in the Hardy construction, we will assume that $q=a_j/b_j>1$ is constant. For example, if we want $q=2$, we can let $a_j=e^j$ and $b_j=e^j/2$, getting $a_{j}<b_{j+1}<a_{j+1}$ for all $j\in\bbn$. As is easy to see, this has no effect on the above construction. This will ease the process of finding an explicit formula for our sequence of exponents.
    Given $[m]\in\SM^0$, with $m\in [m]$, and $j\in\bbn$, taking the logarithm of the expression $q^{n_j}>m(a_{j+1})$, we find that for this expression to hold and for $n_j>n_{j-1}$ to be satisfied, it suffices to define out exponent $n_j$ as follows:  we find  that if we define $\SH(m):\bbn\ra\bbn$ by

\begin{align}\label{eqn: def of n^m_j for m in SM^0}
     \SH(m)(j)\;\dot=\;\wt{m}(j)\;\dot=\;\ell\bigg(\f{\log(m(a_{j+1}))}{\log(q)}\bigg)+j.
\end{align}
    where if $r\in\bbr_+$, $\ell(r)$ is the least integer $k$ such that $k\geq r$. To explain, clearly $1<a_j<a_{j+1}$ and monotonicity of $m$ implies that  $\log (m(a_j))<\log(m(a_{j+1}))$, and the assertion follows.
%    From this,  given $n^m_1=$,  inductively we define $n^m_j=\max{n^m_1,n^m_2,\ldots,n^m_{j-1},q^m_j}$
     So given the $m\in\SM^0$, we have defined an element $\wt{m}\in Incr$ by defining $\wt{m}(j)$ to be the integer $n_j$ above and  with $\wt{m}$ so defined in terms of $m$, we get a specific form of the power series constructed in Lemma \ref{lem: Hardy power series construction}
\begin{align}\label{eqn: def of bnding exp as integer of log}
    u_m(x)=\sum_{j=1}^\infty \bigg(\f{x}{b_j}\bigg)^{\wt{m}(j)}
\end{align}
    which as noted in that lemma clearly converges uniformly on compact intervals, and so eg., is analytic on $\bbr_+$.
    Formalizing this, we have
\begin{lem}\label{lem: E(H(m))>m}
    Let $\SH:\SM^0\ra Incr$ denote the map $m\ra\wt{m}$ and $E:Incr\ra\FH_{<,\;B}$ the map defined in definition \ref{def: SM_<,B and SM^om,n_<,B}. Then for all sufficiently large $x\in\bbr_+$, we have that $E(\SH(m))(x)>m(x)$. In particular, if $\xi\in\rz\bbr_+$ is infinite, then $\rz E(\SH(m))(\xi)>\rz m(\xi)$.
\end{lem}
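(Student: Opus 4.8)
The statement to prove, Lemma \ref{lem: E(H(m))>m}, is essentially a bookkeeping consequence of the Hardy-series construction in Lemma \ref{lem: Hardy power series construction} together with the explicit choice of exponents recorded in expression \ref{eqn: def of n^m_j for m in SM^0}. So the plan is to unwind the construction carefully and check that the domination inequality survives with the \emph{specific} (rather than merely existential) exponents $\wt{m}(j) = \SH(m)(j)$.

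First I would recall the setup: we have fixed $q = a_j/b_j > 1$ constant (e.g. $a_j = e^j$, $b_j = e^j/2$, $q=2$), so that $a_j < b_{j+1} < a_{j+1}$ for all $j$, and we have $m \in \SM^0$ with representative $m$. The exponent $\wt m(j) = \SH(m)(j)$ is defined in \ref{eqn: def of n^m_j for m in SM^0} precisely so that $q^{\wt m(j)} > m(a_{j+1})$ (taking $\ell$ of $\log(m(a_{j+1}))/\log q$ forces the inequality) and so that $\wt m(j) > \wt m(j-1)$ (the $+j$ term guarantees strict monotonicity, so $\wt m \in Incr$). Then $u_m = E(\SH(m))$ is the series $u_m(x) = \sum_{j\geq 1}(x/b_j)^{\wt m(j)}$, which converges uniformly on compacta by the root test exactly as in Lemma \ref{lem: Hardy power series construction}.

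The key step, carried out verbatim from the proof of Lemma \ref{lem: Hardy power series construction}: fix $x$ large enough that $x \geq a_1$, and for such $x$ choose $j \in \bbn$ with $x \in [a_j, a_{j+1})$. Then, dropping all but the $j$-th term,
\begin{align}
    u_m(x) \;\geq\; \Big(\f{x}{b_j}\Big)^{\wt m(j)} \;\geq\; \Big(\f{a_j}{b_j}\Big)^{\wt m(j)} \;=\; q^{\wt m(j)} \;>\; m(a_{j+1}) \;\geq\; m(x),
\end{align}
where the last inequality uses monotonicity of $m$ and $x < a_{j+1}$. This establishes $E(\SH(m))(x) > m(x)$ for all sufficiently large $x \in \bbr_+$ (concretely, for $x \geq a_1$). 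The second, nonstandard, assertion is then immediate by transfer: the set $\{r \in \bbr_+ : E(\SH(m))(t) > m(t) \text{ for all } t > r\}$ is nonempty, hence its transfer is nonempty and, being an up-set, contains every infinite $\xi \in \rz\bbr_+$; equivalently, $\rz E(\SH(m))(\xi) > \rz m(\xi)$ for all infinite $\xi$. One should note that $E$ here is the map of Definition \ref{def: SM_<,B and SM^om,n_<,B} with the $b_j$'s from the current normalized choice, so that $E(\SH(m)) = u_m$ literally.

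I do not anticipate a genuine obstacle here; the only thing requiring care is making sure the definition \ref{eqn: def of n^m_j for m in SM^0} really does deliver \emph{both} required properties of $\wt m$ simultaneously — the domination $q^{\wt m(j)} > m(a_{j+1})$ and the strict increase $\wt m \in Incr$ — and that the ``sufficiently large $x$'' in the conclusion is honestly $x \geq a_1$ (or whatever lower bound the domain of $m$ and the root-test convergence jointly force), rather than depending on $j$. A secondary point worth a sentence is that one should check $m(a_{j+1}) > 0$ so that $\log m(a_{j+1})$ makes sense, which holds since $m \in \SM^0 \subset \SM$ takes strictly positive values away from $0$ and $a_{j+1} > 0$; and that $q > 1$ makes $\log q > 0$ so the sign in the definition of $\ell(\cdot)$ is the right one. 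Once these are in place, the displayed chain of inequalities closes the proof.
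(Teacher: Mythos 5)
Your proof is correct and is exactly what the paper intends: the paper states Lemma \ref{lem: E(H(m))>m} without its own proof block precisely because the preceding discussion derives the exponents $\wt m(j)$ in \ref{eqn: def of n^m_j for m in SM^0} so that $q^{\wt m(j)} > m(a_{j+1})$, and your rerun of the Hardy chain estimate from Lemma \ref{lem: Hardy power series construction} with these specific exponents is the intended verification. The caveats you flag (positivity of $m(a_{j+1})$ so the logarithm is defined, the role of the $+j$ term in forcing $\wt m \in Incr$, and the threshold for ``sufficiently large $x$'') are exactly the right ones and dispose of cleanly as you indicate, with the nonstandard assertion then following by transfer of the standard inequality on a tail $(r_0,\infty)$.
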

    Let $\un{m}\in\SM^0$ and $\xi_0\in\rz\bbr_+$ be infinite and define $\bsm{\SJ_{\xi_0,\un{m}}}=\{m\in\SM^0:\rz m(\xi_0)\leq\rz\un{m}(\xi_0)\}$.
    Suppose now that $\SJ\subset\SM^0$ is such that for infinite $\xi_0\in\rz\bbr_+$, $\SJ_{\xi_0}$ is not cofinal in $\SN_{\xi_0}$; that is, there is $\un{m}\in\SM^0$ such that $\un{m}(\xi_0)>m(\xi_0)$ for all $m\in\SJ$. Now for each $m\in\SM^0$, we have $\SH(m)=\wt{m}\in Incr$ given by the increasing sequence $\wt{m}(1)<\wt{m}(2)<\cdots$, and we also have $\xi_0\in\rz[a_\om,a_{\om+1})$ for some $\om\in\rz\bbn_\infty$.

     Given this we have the following assertion.
\begin{lem}\label{lem: un(n)is sequence in N corres to un(m)}
   Suppose that $\SP\subset\SM^0$ and $\xi_0\in\rz\bbr_+$ is infinite. Suppose also that $\SP_{\xi_0}$ is not cofinal in $\SN_{\xi_0}$. Then $\SP_{\xi}$ is not cofinal in $\SN_\xi$ for all infinite $\xi\in\rz\bbr_+$.
%     Let $\un{m}\in\SM^0$ and $\xi_0\in\rz\bbr_+$ be infinite.
%    If $[m]\in\SJ_{\xi_0,\un{m}}$, then $\SH(m)\in Incr_{\om_0,\;\un{\sbbn}}$ for some $\om_0\in\rz\bbn$ and $\un{\sbbn}\in Incr$; ie., $\SH(\SJ_{\xi_0,\;\un{m}})\subset Incr_{\om_0,\;\un{\sbbn}}$. Therefore, there is $\wt{\sbbn}\in Incr$ such that if $[m]\in\SJ_{\xi_0,\;\un{m}}$, we have that $\SH(m)(j)<\wt{\sbbn}(j)$ for all $j\in\bbn$.
%    So, in particular, we have that $m(x)<E(\wt{\sbbn})(x)$ for all $x\in\bbr_+$.
\end{lem}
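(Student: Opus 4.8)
The strategy is to reduce this statement about $\SM^0$ and the moduli $\SN_\xi$ to the corresponding statement about the integer-exponent sequences, where we have already established the key rigidity (Corollary \ref{cor: FI_(om_0) cofin<->FI_(om_1) cofin}), and then carry that conclusion back. Concretely, I would argue by contrapositive on the second index: suppose $\SP_{\xi_0}$ is not cofinal in $\SN_{\xi_0}$; I will show $\SP_\xi$ is not cofinal in $\SN_\xi$ for an arbitrary infinite $\xi\in\rz\bbr_+$. (That the two infinite points $\xi_0,\xi$ play symmetric roles means this one-directional statement suffices.)

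First I would pass from $\SM^0$ to $\FH_{<,B}$ using the Hardy-bounding map. Since $\SP_{\xi_0}$ is not cofinal in $\SN_{\xi_0}$, there is $\un m\in\SM^0$ with $\rz\un m(\xi_0)\geq\rz m(\xi_0)$ for all $m\in\SP$. Apply $\SH$ to get $\SH(\SP)\subset Incr$ and, by Lemma \ref{lem: E(H(m))>m}, the associated Hardy series $E(\SH(m))$ dominate the $m$ at infinity, in particular $\rz E(\SH(m))(\xi_0)>\rz m(\xi_0)$. I then need the companion direction: every $m\in\SM^0$ is bounded below at $\xi_0$ by \emph{some} Hardy series, and $\SN_{\xi_0}$ is (up to passing to the coinitial subset of analytic/Hardy germs, via the $\SP\SL^0$ and $\ov{\SS}\SM$ constructions and Corollary \ref{cor: E_d is an isomorphism onto A_d}) comparable with $\FH^{\xi_0}_B$. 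The upshot I want is: $\SP_{\xi_0}$ not cofinal in $\SN_{\xi_0}$ forces $\SH(\SP)_{\om_0}$ not cofinal in $\FI ncr_{\om_0}$ for the infinite $\om_0$ with $\xi_0\in\rz[a_{\om_0},a_{\om_0+1})$ — this is exactly the content of Lemma \ref{lem: Incr pwwise bdd->Hrdy ptwise bdd} applied to $\FK=E(\SH(\SP))$, together with Lemma \ref{lem: un(n)is sequence in N corres to un(m)}'s preliminary reduction of $\SN$ to Hardy-series values.

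Next, apply the rigidity. By Corollary \ref{cor: FI_(om_0) cofin<->FI_(om_1) cofin}, $\SH(\SP)_{\om_0}$ not cofinal in $\FI ncr_{\om_0}$ implies $\SH(\SP)_{\om_1}$ not cofinal in $\FI ncr_{\om_1}$, where $\om_1$ is the infinite integer with $\xi\in\rz[a_{\om_1},a_{\om_1+1})$. Running the Hardy correspondence in reverse — Lemma \ref{lem: increasing sequence bdd at point} produces a single $\wt\sbbn\in Incr$ dominating all of $\SH(\SP)$ at $\om_1$ (hence, by transfer and the order-preservation of $E$ in Lemma \ref{lem: B power series with a bdd exponent}, at every infinite $\om$, so in particular at $\om_1$), and $\wt u\dot= E(\wt\sbbn)$ then satisfies $\rz\wt u(\xi)>\rz u(\xi)$ for all $u\in E(\SH(\SP))$. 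Since $\rz u(\xi)\geq\rz m(\xi)$ need not hold directly, here I must instead use that $\SH(m)$ was built so that $E(\SH(m))$ dominates $m$ near infinity, which does not immediately give a \emph{lower} bound for $m$ at $\xi$; so the cleaner route is to bound the $m\in\SP$ themselves: replace $\un m$ by $\wt u$ restricted to standard values (it lies in $\ov{\SS}\SM$, hence after the $I$-conjugation recipe gives an element of $\SM^0$, by Lemma \ref{lem: equiv of ordered germs at 0 and infty}) and check $\rz\wt u(\xi)>\rz m(\xi)$ for $m\in\SP$ — which follows because the $\SH$-bound and the transfer of Lemma \ref{lem: E(H(m))>m} give $\rz m(\xi)<\rz E(\SH(m))(\xi)<\rz\wt u(\xi)$. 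This exhibits an element of $\SN_\xi$ (namely $\rz\wt u(\xi)$, or a Hardy value just below it) strictly above all of $\SP_\xi$, so $\SP_\xi$ is not cofinal in $\SN_\xi$, as required.

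\textbf{Main obstacle.} The delicate point is the bookkeeping that keeps the two directions of domination straight: the Hardy map $\SH$ gives an \emph{upper} envelope of each $m$, while cofinality of $\SP$ in $\SN$ is about elements of $\SN$ lying \emph{above} $\SP$, and these interact correctly only because $\FH_{<,B}$-values and $\SN$-values are asymptotically interleaved (via $\SP\SL^0$, $\ov{\SS}\SM$, and the evaluation isomorphism). I expect the real work to be verifying that "$\SP_{\xi_0}$ not cofinal in $\SN_{\xi_0}$" genuinely transfers to "$\SH(\SP)_{\om_0}$ not cofinal in $\FI ncr_{\om_0}$" — i.e. pinning down precisely which coinitial/cofinal subset of $\SN_{\xi_0}$ one tests against and confirming $E(\SH(-))$ respects it — since once that reduction is in place, Corollaries \ref{cor: FI_(om_0) cofin<->FI_(om_1) cofin} and \ref{cor: sequence ptwise cofinal->unif cofinal} do the independence-of-$\xi$ step essentially for free.
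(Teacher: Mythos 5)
Your proposal is correct and follows essentially the same route as the paper: reduce to the Hardy series via $E\circ\SH$, invoke the point-independence of cofinality for $\FH_{<,B}$ (the paper cites Corollary \ref{cor: FK_xi_0 cofin<->FK_xi_1 cofin} directly, which you merely unpack into Lemma \ref{lem: Incr pwwise bdd->Hrdy ptwise bdd} plus Corollary \ref{cor: FI_(om_0) cofin<->FI_(om_1) cofin}), and translate back with Lemma \ref{lem: E(H(m))>m} to produce a single Hardy series dominating all of $\SP$ at every infinite $\xi$. The "main obstacle" you flag — making the passage from non-cofinality in $\SN_{\xi_0}$ to boundedness of the exponent sequences precise — is indeed where the paper is tersest, and your handling of it matches the paper's intent.
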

\begin{proof}
   As $\SP$ is not cofinal in $\SN_{\xi_0}$, there is $\un{m}\in\SM^0$ so that $\rz\un{m}(\xi_0)>\rz m(\xi_0)$ for all $m\in\SP$. But by lemma \ref{lem: E(H(m))>m}, this says that $\rz u_{\un{m}}(\xi_0)\dot=\rz E(\SH(\un{m}))(\xi_0)>\rz u_{m}(\xi_0)$ for all $u_m\in E(\SH(\SP))$. That is, $E(\SH(\SP))_{\xi_0}$ is not cofinal in $\FH_B^{\xi_0}$. But by corollary \ref{cor: FK_xi_0 cofin<->FK_xi_1 cofin}, this implies that $E(\SH(\SP))_{\xi}$ is not cofinal in $\FH_B^{\xi}$ for all infinite $\xi\in\rz\bbr_+$. Yet again by lemma \ref{lem: E(H(m))>m} (applied now for all infinite $\xi$), this says that $\SP_\xi$ is not cofinal in $\SN_\xi$ for all infinite $\xi\in\rz\bbr_+$.
\end{proof}
   This has an easy but important consequence.
\begin{cor}\label{cor: cofinal independ of delta}
    Suppose that $m\in\SJ_{\xi_0,\un{m}}$ for some $\un{m}\in\SM^0$ and some infinite $\xi_0\in\rz\bbr_+$. Then there is $\wt{u}\in\FH$ such that $m\in\SJ_{\xi,\wt{u}}$ for all infinite $\xi\in\rz\bbr_+$. Said differently, if $\SA\subset\SM^0$, and $\xi_0,\xi_1\in\rz\bbr_+$ are infinite, then $\SA$ is cofinal at $\xi_0$ if and only if $\SA$ is cofinal at $\xi_1$.
\end{cor}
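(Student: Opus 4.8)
The plan is to read off both assertions from the two lemmas immediately preceding the corollary, with almost no new work.

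For the first assertion I would argue as follows. The hypothesis $m\in\SJ_{\xi_0,\un{m}}$ is only used to guarantee $m\in\SM^0$, so that $\SH(m)\in Incr$ and the Hardy series $\wt{u}\dot=E(\SH(m))\in\FH_{<,\;B}=\FH$ are both defined. Lemma \ref{lem: E(H(m))>m} then gives $\rz\wt{u}(\xi)=\rz E(\SH(m))(\xi)>\rz m(\xi)$ for every infinite $\xi\in\rz\bbr_+$; hence $\rz m(\xi)\le\rz\wt{u}(\xi)$ for all such $\xi$, i.e. $m\in\SJ_{\xi,\wt{u}}$ for every infinite $\xi$, which is exactly the claim. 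If one wants the sharper reading in which a single $\wt{u}$ (depending only on $\un{m}$ and $\xi_0$) dominates every $m\in\SJ_{\xi_0,\un{m}}$ simultaneously at all infinite $\xi$, I would instead show that $\SH(\SJ_{\xi_0,\un{m}})\subset Incr_{\om_0,\un{\sbbn}}$ for a suitable $\un{\sbbn}\in Incr$ and $\om_0$ the block index of $\xi_0$, feed this into Lemma \ref{lem: B power series with a bdd exponent} to obtain a uniform upper Hardy series $\wt{u}$, and finish with Lemma \ref{lem: E(H(m))>m}; this strengthening is not needed for the second assertion.

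For the second assertion --- the ``said differently'' form --- I would simply take the contrapositive of Lemma \ref{lem: un(n)is sequence in N corres to un(m)}. Suppose $\SA\subset\SM^0$ is cofinal at $\xi_1$, i.e. $\SA_{\xi_1}$ is cofinal in $\SN_{\xi_1}$. Were $\SA$ not cofinal at $\xi_0$, Lemma \ref{lem: un(n)is sequence in N corres to un(m)} (with $\SP=\SA$) would yield that $\SA_{\xi}$ is not cofinal in $\SN_{\xi}$ for \emph{every} infinite $\xi\in\rz\bbr_+$, contradicting cofinality at $\xi_1$. Thus cofinality at $\xi_1$ forces cofinality at $\xi_0$, and interchanging $\xi_0,\xi_1$ gives the converse, so cofinality of $\SA$ is independent of the infinite evaluation point.

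I do not expect a genuine obstacle here: all the difficulty was already spent upstream --- the asymptotic rigidity of $Incr$ (Lemma \ref{lem: increasing sequence bdd at point}, Corollary \ref{cor: FI_(om_0) cofin<->FI_(om_1) cofin}), its transport to Hardy series (Corollary \ref{cor: FK_xi_0 cofin<->FK_xi_1 cofin}), and the systematic bounding of $\SM^0$ by Hardy series (Lemmas \ref{lem: Hardy power series construction} and \ref{lem: E(H(m))>m}). The only slightly delicate point, and only if one pursues the uniform version of the first assertion, is the bookkeeping turning the single-point inequality $\rz m(\xi_0)\le\rz\un{m}(\xi_0)$ into membership of $\SH(m)$ in a fixed $Incr_{\om_0,\un{\sbbn}}$: this relies on $\xi_0$ lying in a block $[\,a_{\om_0},a_{\om_0+1})$ and on $\SH$ reading off $\log_q$ of the function at the block endpoints (formula \ref{eqn: def of n^m_j for m in SM^0}), so that monotonicity of $m$ and $\un{m}$ closes the gap between adjacent block indices. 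Finally, this corollary is the ingredient needed next to conclude that the neighborhood system $\tau_{\d}$ at the zero germ --- defined from $\SN_\d$ --- is, up to coinitiality, independent of $\d$, hence that the topology on $\SG_0$ does not depend on the choice of the infinitesimal $\d$.
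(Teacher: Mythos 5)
Your proof is correct, and for the second (``said differently'') assertion it follows essentially the paper's route: invoking Lemma \ref{lem: un(n)is sequence in N corres to un(m)} and closing by contradiction. The one place you diverge worth noting is the first sentence. You observe, correctly, that if $\wt{u}$ is allowed to depend on $m$ (via $\wt{u}=E(\SH(m))$), then that sentence follows at once from Lemma \ref{lem: E(H(m))>m} and the hypothesis $m\in\SJ_{\xi_0,\un{m}}$ is never used; the assertion becomes nearly vacuous. The paper instead proves the sharper, uniform reading: it sets $\wt{u}=E(\SH(\un{m}))$, which depends only on $\un{m}$ (and hence works simultaneously for all $m\in\SJ_{\xi_0,\un{m}}$), obtaining this from the transferred conclusion of Lemma \ref{lem: un(n)is sequence in N corres to un(m)}; that uniformity is what lets the paper derive the second assertion \emph{from} the first. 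Your approach of going directly to the contrapositive of Lemma \ref{lem: un(n)is sequence in N corres to un(m)} for the second assertion is cleaner in this respect, since it avoids needing the uniform version of the first assertion at all, and it makes the logical dependence transparent. Your alternative path to the uniform $\wt{u}$ --- showing $\SH(\SJ_{\xi_0,\un{m}})\subset Incr_{\om_0,\un{\sbbn}}$ and feeding into Lemma \ref{lem: B power series with a bdd exponent} --- is also sound and is exactly the content of Corollary \ref{cor: relating bnd in Incr to bnd in SM}.
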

\begin{proof}
    Let $\wt{u}$ be equal to $E(\sbbn_{\un{m}})$ in the previous lemma. Then the transfer of the conclusion of that lemma implies the following statement. If $m\in\SJ_{\xi_0,\;\un{m}}$ for some $\xi_0$ and $\un{m}\in\SM^0$, then $\rz m(\xi)<\rz\wt{u}(\xi)$ for all $\xi\in\rz\bbr_+$, in particular, the infinite $\xi$; ie., $m\in\SJ_{\xi,\;\wt{u}}$. The second statement follows easily: by symmetry we need only show cofinal at $\xi_0$ implies cofinal at $\xi_1$. Suppose not, ie., we have cofinal at $\xi_0$ and not $\xi_1$, ie., $\SA\subset\SJ_{\xi_1,\un{m}}$ for some $m\in\SM^0$, but then the conclusion of first part says $\SA$ is not cofinal at all $\xi$, eg., at $\xi_0$, a contradiction.
\end{proof}

    Summarizing our framework, we have the following. We have the mapping system
\begin{align}\label{diag: cont monotone to monotone analytic}
    \xymatrix{\SM^0\ar@{->}[r]^{\SH}& Incr\;\ar@{>->>}[r]^{E} &\FH_{<,B}}\; :[m]\mapsto (\sbbn_m(j))_{j\in\bbn}\mapsto\sum_j b^{-\sbbn_m(j)}x^{\sbbn_m(j)}
\end{align}
    where for $[m]\in\SM^0$, $E(\SH([m]))=[u_m]$ the element of $\SM_{<,B}$ satisfying $\rz u_m(\xi)>\rz m(\xi)$ for all $\xi\in\mu(0)_+$. With the above results we can conclude the following.
\begin{cor}\label{cor: relating bnd in Incr to bnd in SM}
    If $\xi_0\in\rz\bbr_+$ is infinite, $\un{m}\in\SM^0$, $\om\in\rz\bbn_\infty$ is the unique integer such that $\xi_0\in\rz[a_\om,a_{\om+1})$ and $\un{n}\in Incr$ is the sequence of integers defined in Lemma \ref{lem: un(n)is sequence in N corres to un(m)}, then  $E\circ\SH(\SJ_{\xi,\un{m}})\subset\FH^{\om,\un{n}}_{<,B}$. In particular, if $[m]\in\SM^0$ is such that $\rz m(\xi_0)\leq \rz \un{m}(\xi_0)$, then $u_m(x)<E(\un{n})(x)$ for all sufficiently large $x\in\bbr_+$; eg., $\rz u_m(\xi_0)<\rz E(\un{n})(\xi_0)$.
\end{cor}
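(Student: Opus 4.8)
The statement to prove is Corollary \ref{cor: relating bnd in Incr to bnd in SM}: given an infinite $\xi_0 \in \rz\bbr_+$, a function $\un{m} \in \SM^0$, the unique infinite $\om$ with $\xi_0 \in \rz[a_\om, a_{\om+1})$, and the sequence $\un{n} \in Incr$ from Lemma \ref{lem: un(n)is sequence in N corres to un(m)}, we want $E \circ \SH(\SJ_{\xi_0,\un{m}}) \subset \FH^{\om,\un{n}}_{<,B}$, and consequently that $[m] \in \SM^0$ with $\rz m(\xi_0) \leq \rz\un{m}(\xi_0)$ forces $u_m(x) < E(\un{n})(x)$ for large $x$.

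\textbf{Plan of proof.} The whole argument is a bookkeeping exercise assembling pieces already in place. First I would recall the definition of $\SJ_{\xi_0,\un{m}} = \{m \in \SM^0 : \rz m(\xi_0) \leq \rz\un{m}(\xi_0)\}$ and the two maps $\SH : \SM^0 \ra Incr$ (sending $[m]$ to the exponent sequence $\wt m = \SH(m)$ defined in expression \ref{eqn: def of n^m_j for m in SM^0}) and $E : Incr \ra \FH_{<,B}$ (sending $\sbbn$ to $u_\sbbn$ as in definition \ref{def: SM_<,B and SM^om,n_<,B}). The core point is that membership in $\SJ_{\xi_0,\un{m}}$ translates, via $\SH$, into membership in $Incr_{\om,\un{n}}$, and then $E$ carries $Incr_{\om,\un{n}}$ onto $\FH^{\om,\un{n}}_{<,B}$ by definition. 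So the proof reduces to the single claim: if $m \in \SM^0$ satisfies $\rz m(\xi_0) \leq \rz\un{m}(\xi_0)$, then $\rz\SH(m)(\om) \leq \rz\un{n}(\om)$, where $\un{n}$ is precisely the $Incr$-sequence extracted from $\un{m}$ in the construction of Lemma \ref{lem: un(n)is sequence in N corres to un(m)} (i.e., $\un n = \SH(\un m)$, using the explicit formula \ref{eqn: def of bnding exp as integer of log} with the normalization $q = a_j/b_j$ constant).

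\textbf{Key steps, in order.} (1) Fix $m \in \SJ_{\xi_0,\un{m}}$. Since $\xi_0 \in \rz[a_\om, a_{\om+1})$ and $m$ is monotone, we have $\rz m(\xi_0) \geq \rz m(a_\om)$ wait --- actually I want to relate the exponent $\SH(m)(\om)$ to the value $\rz m(\xi_0)$; the formula \ref{eqn: def of n^m_j for m in SM^0} gives $\SH(m)(j) = \ell(\log(m(a_{j+1}))/\log q) + j$, so $\SH(m)(\om)$ is governed by $m(a_{\om+1})$. Here I would use that $\xi_0 < a_{\om+1}$ together with monotonicity to control $\rz m(a_{\om+1})$ in terms of the bound at $\xi_0$ --- more carefully, one notes $\xi_0 \geq a_\om$ so $\rz m(a_\om) \leq \rz m(\xi_0) \leq \rz\un m(\xi_0) \leq \rz\un m(a_{\om+1})$, and the construction in Lemma \ref{lem: un(n)is sequence in N corres to un(m)} is engineered precisely so that the exponent bound $\un n(\om)$ dominates $\SH(m)(\om)$ for every such $m$; this is the content already proved there (that $\un{n} \in Incr$ is a sequence corresponding to $\un{m}$ such that $\SP_{\xi_0}$ not cofinal implies the analytic families $E(\SH(\SP))$ live below $E(\un n)$). (2) Apply the logarithm monotonically and the ceiling function $\ell(\cdot)$ to convert the value inequality at $a_{\om+1}$ into the exponent inequality $\SH(m)(\om) \leq \un n(\om)$; this is exactly the computation that motivated formula \ref{eqn: def of n^m_j for m in SM^0}, run once more but transferred. (3) Conclude $\SH(m) \in \rz Incr_{\om,\un n}$, hence $E(\SH(m)) \in E(\rz Incr_{\om,\un n}) = \rz\FH^{\om,\un n}_{<,B}$; since $m$ was arbitrary in $\SJ_{\xi_0,\un m}$, this gives the first assertion $E\circ\SH(\SJ_{\xi_0,\un m}) \subset \FH^{\om,\un n}_{<,B}$. (4) For the "in particular" clause: apply Lemma \ref{lem: B power series with a bdd exponent}, which produces $\wt u = E(\wt\sbbn)$ dominating every element of $\FH^{\om,\un n}_{<,B}$ --- but in fact $E(\un n)$ itself already dominates $u_m$ whenever $\SH(m) \leq \un n$ pointwise, by the order-preserving property of $E$ stated in Lemma \ref{lem: B power series with a bdd exponent}; combined with Lemma \ref{lem: E(H(m))>m} ($\rz E(\SH(m))(\xi) > \rz m(\xi)$) we get $\rz u_m(\xi_0) < \rz E(\un n)(\xi_0)$, and by transfer the standard inequality $u_m(x) < E(\un n)(x)$ for all sufficiently large $x \in \bbr_+$.

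\textbf{Main obstacle.} The only genuinely delicate point is step (1)--(2): making the passage from the value bound $\rz m(\xi_0) \leq \rz\un m(\xi_0)$ at the single point $\xi_0$ to the exponent bound at index $\om$ airtight, since $\SH(m)(\om)$ depends on $m(a_{\om+1})$ rather than $m(\xi_0)$ directly, and $\xi_0$ only satisfies $a_\om \leq \xi_0 < a_{\om+1}$. One must be careful that monotonicity is used in the right direction and that the definition of $\un{n}$ in Lemma \ref{lem: un(n)is sequence in N corres to un(m)} (built from $\un m$ via the same recipe $\SH$, with the constant-$q$ normalization) is invoked with exactly the indexing that makes $\un n(\om) \geq \SH(m)(\om)$ hold uniformly over $m \in \SJ_{\xi_0,\un m}$. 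Once that alignment of indices is checked, everything else is immediate from the order-preserving and cofinality machinery already established. I expect this corollary to be essentially a restatement-with-explicit-bounds of Lemma \ref{lem: un(n)is sequence in N corres to un(m)}, so the proof will be short: "this is immediate from the construction in the proof of Lemma \ref{lem: un(n)is sequence in N corres to un(m)} together with Lemmas \ref{lem: B power series with a bdd exponent} and \ref{lem: E(H(m))>m}," followed by the two-line logarithm computation spelling out why $\SH(m)(\om) \leq \un n(\om)$.
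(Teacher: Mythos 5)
Your plan matches the paper's intended proof: the paper's own argument is exactly the terse two-sentence citation you predicted at the end, namely that $\SH(\SJ_{\xi_0,\un m})\subset Incr_{\om,\un n}$ follows from Lemma \ref{lem: un(n)is sequence in N corres to un(m)} and that $E(Incr_{\om,\un n})\subset\FH^{\om,\un n}_{<,B}$ follows from Lemma \ref{lem: B power series with a bdd exponent}, with the ``in particular'' clause read off as a restatement. Your identification $\un n=\SH(\un m)$ is also the natural reading, and your reduction to the single claim $\SH(m)(\om)\leq\un n(\om)$ for $m\in\SJ_{\xi_0,\un m}$ is the right one.

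The delicate point you flag in steps (1)--(2) is, however, a genuine gap, and your own chain does not close it. From $a_\om\leq\xi_0<a_{\om+1}$ and monotonicity you derive $\rz m(a_\om)\leq\rz m(\xi_0)\leq\rz\un m(\xi_0)\leq\rz\un m(a_{\om+1})$, hence $\rz m(a_\om)\leq\rz\un m(a_{\om+1})$. But expression \ref{eqn: def of n^m_j for m in SM^0} defines $\SH(m)(j)$ in terms of $m(a_{j+1})$, so $\SH(m)(\om)$ is controlled by $m(a_{\om+1})$ and $\un n(\om)=\SH(\un m)(\om)$ by $\un m(a_{\om+1})$. To conclude $\SH(m)(\om)\leq\un n(\om)$ you need $\rz m(a_{\om+1})\leq\rz\un m(a_{\om+1})$, which a single-point comparison at the interior point $\xi_0$ does not give: two functions in $\SM^0$ that are ordered at $\xi_0$ can cross inside $[\xi_0,a_{\om+1})$. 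What your chain actually bounds, after applying $\log$ and $\ell(\cdot)$ and absorbing the linear $+j$ shift, is $\SH(m)(\om-1)$ against $\un n(\om)$, an off-by-one weaker statement than what the corollary asserts. This is harmless for the downstream cofinality applications (a one-index shift in $Incr$ does not change cofinality in $\FI ncr_\om$), but as literally stated the inclusion requires either enlarging $\un n$ (e.g.\ use $\un m(a_{\om+2})$) or shifting the argument of $\SH$ from $a_{j+1}$ to $a_j$ and reindexing. The paper's citation to Lemma \ref{lem: un(n)is sequence in N corres to un(m)} glosses over the same issue; your instinct to slow down and spell out ``the two-line logarithm computation'' was correct, and had you carried it out you would have found it lands one index off.
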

\begin{proof}
    This follows from the previous work; ie., by Lemma \ref{lem: un(n)is sequence in N corres to un(m)}, we have $\SH(\SJ_{\xi_0,\un{m}})\subset Incr_{\om,\un{n}}$ and by Lemma \ref{lem: B power series with a bdd exponent}, $E(Incr_{\om,\un{n}})\subset\FH^{\om,\un{n}}_{<,B}$. The last part is just a restatement of this.
\end{proof}

\begin{cor}\label{cor: dominating analytic family shadow subset M^0}
    Suppose that $\d\in\mu(0)_+$ and that $\SJ\subset\SM^0$. Then there is $\SK\subset\un{\SS}\SM$ such that for each $0<\d\sim 0$ we have that $\SK_\d=\SE_\d(\SK)$ is coinitial with $\SJ_\d=\SE_\d(\SJ)$.
    That is, $\SK$ can be chosen so that for any $\d\sim 0$, $\SJ_\d$ is coinitial in $\SN_\d$ if and only if $\SK_\d$ is.
\end{cor}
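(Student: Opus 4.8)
\textbf{Proof plan for Corollary \ref{cor: dominating analytic family shadow subset M^0}.}
The plan is to use the Hardy-series machinery built up in this subsection to produce a single analytic family $\SK$ whose coinitiality behavior at every infinitesimal is slaved to that of $\SJ$. First I would set $\SK \dot= E(\SH(\SJ)) \subset \FH_{<,B} \subset \un{\SS}\SM$, where $\SH:\SM^0\ra Incr$ is the exponent-sequence map of \eqref{eqn: def of n^m_j for m in SM^0} and $E:Incr\ra\FH_{<,B}$ is the power-series assignment of Definition \ref{def: SM_<,B and SM^om,n_<,B}; note that $\FH_{<,B}\subset\un{\SS}\SM$ since every $u_\sbbn$ is a strictly positive, monotone increasing convergent power series with all derivatives positive (after the usual inversion $I$ converting $\infty$-germs to $0$-germs, as in Lemma \ref{lem: equiv of ordered germs at 0 and infty}). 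The key inequality is Lemma \ref{lem: E(H(m))>m}: for each $[m]\in\SM^0$, $\rz u_m(\xi) > \rz m(\xi)$ for all $\xi\in\mu(0)_+$ (transporting via $I$ the statement there phrased at infinity). Thus for any $\d\in\mu(0)_+$ the element $\SE_\d(u_m)=\rz u_m(\d)$ dominates $\SE_\d(m)=\rz m(\d)$.

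The heart of the argument is to show: for any $\d\sim 0$, $\SJ_\d$ is coinitial in $\SN_\d$ if and only if $\SK_\d$ is. The ``only if'' direction is almost immediate: if $\SJ_\d$ is coinitial in $\SN_\d$, then given $\Fr\in\SN_\d$ pick $m\in\SJ$ with $\rz m(\d)\leq\Fr$; since $\rz u_m(\d)>\rz m(\d)$ this does not directly help, so instead I would use that $\SN_\d$ is closed under $\SM^0$-composition and coinitiality is a tail property — more precisely, I would argue via the equivalence ``cofinal at infinity'' of Corollary \ref{cor: cofinal independ of delta}: coinitiality of $\SK_\d$ in $\SN_\d$ is (after inversion) the statement that $E(\SH(\SJ))$ is cofinal at $\xi=I(\d)$ in $\FH_B^\xi$, which by Lemma \ref{lem: Incr pwwise bdd->Hrdy ptwise bdd} is equivalent to $\SH(\SJ)_{\om}$ being cofinal in $\FI ncr_\om$ for the corresponding $\om\in\rz\bbn_\infty$, and by Lemma \ref{lem: un(n)is sequence in N corres to un(m)} this is in turn equivalent to $\SJ_\xi$ being cofinal in $\SN_\xi$, i.e. $\SJ_\d$ coinitial in $\SN_\d$. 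So the two coinitiality statements are simultaneously true or false at $\d$, giving both directions at once.

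The final step — and this is really the payoff of the preceding lemmas — is the $\d$-independence built into this chain. By Corollary \ref{cor: FI_(om_0) cofin<->FI_(om_1) cofin}, cofinality of $\SH(\SJ)_\om$ in $\FI ncr_\om$ holds for one infinite $\om$ iff it holds for all; by Corollary \ref{cor: FK_xi_0 cofin<->FK_xi_1 cofin} the same rigidity holds for $\FH_B$; and by Lemma \ref{lem: un(n)is sequence in N corres to un(m)} (equivalently Corollary \ref{cor: cofinal independ of delta}) this transports back to: $\SJ_\d$ coinitial in $\SN_\d$ for one $\d\sim 0$ iff for all. Chaining these equivalences, for \emph{every} $\d\in\mu(0)_+$ we get $\SK_\d$ coinitial in $\SN_\d$ $\iff$ $\SJ_\d$ coinitial in $\SN_\d$, which is exactly the asserted ``$\SJ_\d$ is coinitial in $\SN_\d$ if and only if $\SK_\d$ is.'' Moreover the domination $\rz u_m(\d)>\rz m(\d)$ shows $\SK_\d \fallingdotseq \SJ_\d$ outright (every $m\in\SJ$ is bounded above by $u_m\in\SK$ at $\d$), and I would remark that one gets genuine mutual coinitiality $\SK_\d :=: \SJ_\d$ when $\SJ_\d$ is coinitial in $\SN_\d$ by combining with Lemma \ref{lem: Hardy power series construction} (every $u_m$ lies above some element of $\SM^0$, but one can also bound each $u_m$ below by an element of $\SJ$ only in the coinitial case, which is all that is claimed).

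\textbf{Main obstacle.} The delicate point is not the domination inequality (which is elementary) but verifying that the equivalence ``$\SJ_\d$ coinitial in $\SN_\d$ $\iff$ $\SH(\SJ)_\om$ cofinal in $\FI ncr_\om$'' is genuinely bidirectional — the forward implication needs Lemma \ref{lem: E(H(m))>m} applied \emph{uniformly} (i.e. over all infinite $\xi$, via transfer of a standard statement), while the reverse needs that the Hardy series constructed from $m$ cannot overshoot so badly that it escapes $\SN_\d$; this is exactly what Corollary \ref{cor: relating bnd in Incr to bnd in SM} controls. Assembling these without circularity — the lemmas are interdependent through the $E$ and $\SH$ maps — is where the care is required; once the chain of ``cofinal at $\xi_0$ $\iff$ cofinal at $\xi_1$'' equivalences is laid out in order, the corollary follows formally.
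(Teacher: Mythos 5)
Your proposal follows the paper's own route: the paper's proof of this corollary is literally a one-line appeal to Corollary \ref{cor: relating bnd in Incr to bnd in SM} together with the inversion Lemma \ref{lem: equiv of ordered germs at 0 and infty}, and what you have written is that argument unpacked, with $\SK=E(\SH(\SJ))$ pulled back to germs at $0$ and the $\d$-independence supplied by the chain Corollary \ref{cor: FI_(om_0) cofin<->FI_(om_1) cofin} $\ra$ Corollary \ref{cor: FK_xi_0 cofin<->FK_xi_1 cofin} $\ra$ Lemma \ref{lem: un(n)is sequence in N corres to un(m)} / Corollary \ref{cor: cofinal independ of delta}, exactly as intended. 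One correction: when you transport Lemma \ref{lem: E(H(m))>m} through $I$ you retain the inequality $\rz u_m(\xi)>\rz m(\xi)$ on $\mu(0)_+$, but the inversion reverses order — at $0$ the Hardy germ satisfies $u_m<m$ (this is precisely what Lemma \ref{lem: Hardy power series construction} produces). With the correct direction the first assertion of the corollary, that $\SK_\d$ is coinitial with $\SJ_\d$ (i.e.\ $\SK_\d\risingdotseq\SJ_\d$), is immediate, your remark that the domination ``does not directly help'' in the only-if direction evaporates, and your concluding claim $\SK_\d\fallingdotseq\SJ_\d$ should read $\SK_\d\risingdotseq\SJ_\d$. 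This is a bookkeeping slip rather than a gap; the cofinality-rigidity machinery you assemble for the converse direction is correct and is the substance of the paper's argument.
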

\begin{proof}
    Applying lemma \ref{lem: equiv of ordered germs at 0 and infty} to the above corollary,  to get the corresponding result for germs at $0$.
%    As $\SS\SM\subset\SM^0$, it suffices to verify that each element of $\SJ$ is bounded below an element of $\SK$. Fix the sequences of positive numbers $A=\{a_j:j\in\bbn\}$ and $B=\{b_j:j\in\bbn\}$ in the construction above so that for each  $[m]\in \SJ$, the construction of previous lemma gives the sequence of exponents $\un{E}(m)=\{n_j(m):j\in\bbn\}$ for the given $[m]\in\SJ$ and so an element $[u_m]\in\SS\SM$ such that $u_m(r)<m(r)$ for sufficiently small $r\in\bbr_+$ and so by transfer $\rz u_m(\xi)<\rz m(\xi)$ for $\xi\in\mu(0)_+$; but then $\SK=\{[u_m]:[m]\in\SJ\}$ satisfies the asserted properties.
%    For the last assertion, it suffices to verify that, for a given $\d$, if $\SJ_\d$ is not coinitial in $\SN_\d$, then neither is $\SK_\d$. So suppose that there is $\Ft_0\in\SN_\d$ such that $\Fs\in\SJ_\d$ implies that $\Fs>\Ft_0$. We must demonstrate $\Ft_1\in\SN_\d$ with the property that $\rz u_m(\d)>\Ft_1$ for all $[m]\in\SJ$. Note first that for a given $[m]\in\SJ$, and index $j_0\in\bbn$, $n_{j_0}(m)$ is a function of $n_j(m)$ for $1\leq j<j_0$ and $m(a_{j+1})$. As
\end{proof}
    From this point until the end of the paper, we will use $\SS\SM$ instead of $\un{\SS}\SM$.

\begin{thm}\label{thm: Id map: tau_d ot tau_d'is homeo}
    If $0<\d,\d'\sim 0$, with $\d$ generic, the identity map $\SI:(\SG^0,\tau_\d)\ra (\SG^0,\tau_{\d'})$ is a  homeomorphism. Hence, for all positive infinitesimals $\d,\d'$, we have that $\SI:(\SG^0,\tau_\d)\ra (\SG^0,\tau_{\d'})$ is a homeomorphism.
\end{thm}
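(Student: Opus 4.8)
The plan is to reduce the homeomorphism statement to the assertion that the neighborhood bases $\tau^\d_0$ and $\tau^{\d'}_0$ at the zero germ generate the same filter, since translation by a fixed germ is trivially a homeomorphism of $\SG^0$ (see Definition \ref{def: of full topology by translation}), and the full topology $\tau$ is generated by translates of $\tau_0$. Thus it suffices to show: for every $\Fr\in\SN_\d$ there is $\Fs\in\SN_{\d'}$ with $U^{\d'}_{\Fs}\subseteq U^{\d}_{\Fr}$, and symmetrically. Equivalently, phrased in terms of coinitiality, I must show that the two sets of moduli $\SN_\d$ and $\SN_{\d'}$ (viewed inside $\rz\bbr_{nes,+}$ via the respective norms) are mutually coinitial in the sense that refines Definition \ref{def: of coinitial partial order relations}; more precisely, that a $D$-net of germs is $\tau^\d$-convergent to $[0]$ if and only if it is $\tau^{\d'}$-convergent to $[0]$. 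Because $\SN^0_\d$ is coinitial in $\SN_\d$ (Proposition \ref{prop: SN^0_d is coin in SN_d}), and by Lemma \ref{lem: SN_d=SM_d} we have $(\SN_\d\cap\mu(0))\ssm\{0\}=\SM_\d=\{\rz m(\d):m\in\SM\}$, and by Lemma \ref{lem:  SM^0 is coinitial in SM} together with Lemma \ref{lem: Hardy power series construction} the evaluations $\SA_\d=\{\rz u(\d):u\in\SS\SM\}$ of the Hardy class are coinitial in $\SN_\d$, the problem is reduced to a purely ``Hardy-series'' statement: for any subset $\SJ\subseteq\SM^0$ (in particular any coinitial subset), $\SJ_\d$ is coinitial in $\SN_\d$ if and only if $\SJ_{\d'}$ is coinitial in $\SN_{\d'}$. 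But this is exactly Corollary \ref{cor: dominating analytic family shadow subset M^0} (itself a consequence of Corollary \ref{cor: cofinal independ of delta} and Lemma \ref{lem: un(n)is sequence in N corres to un(m)}, which convert pointwise-at-$\xi_0$ bounds on increasing integer sequences into uniform bounds via Lemma \ref{lem: increasing sequence bdd at point} and Corollary \ref{cor: FI_(om_0) cofin<->FI_(om_1) cofin}).

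Concretely, here are the steps in order. First I would note that the non-infinitesimal part of $\SN_\d$ causes no trouble: elements of $\SN_\d$ that are not infinitesimal are those $\rz\norm{\rz g}_\d$ with $g(0)\ne 0$, and these are infinitesimally close to a fixed finite set of standard values, independent of $\d$; only the infinitesimal moduli, i.e.\ those arising from $\SG^0_0$, matter for resolving nearness, and for those $\SN^0_\d\subseteq\mu(0)$. Second, fix $\Fr\in\SN^0_\d$. By Lemma \ref{lem:  SM^0 is coinitial in SM} choose $m\in\SM^0$ with $\rz m(\d)\le\Fr$, and by Lemma \ref{lem: Hardy power series construction} (applied through the at-$\infty$/at-$0$ dictionary of Lemma \ref{lem: equiv of ordered germs at 0 and infty}) choose $u\in\SS\SM$ with $u(x)\le m(x)$ for small $x>0$, so $\rz u(\d)\le\Fr$. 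Third, I would transport: I want $\Fs\in\SN_{\d'}$ with $\Fs\le\rz u(\d')$ forcing $U^{\d'}_{\rz u(\d')}\subseteq U^\d_\Fr$ after the identification. The crux is that $\SS\SM$ maps, via $\SH$ then $E$ of diagram \eqref{diag: cont monotone to monotone analytic}, into the Hardy class $\FH_{<,B}$, and by Corollary \ref{cor: FK_xi_0 cofin<->FK_xi_1 cofin} applied to singletons (or more carefully to the coinitial family $E(\SH(\SS\SM))$), coinitiality of a subfamily is independent of the evaluation point; reading this back through Lemma \ref{lem: E(H(m))>m} and Corollary \ref{cor: cofinal independ of delta} gives that $\{\rz u(\d):u\in\SS\SM\}$ being coinitial in $\SN_\d$ is equivalent to $\{\rz u(\d'):u\in\SS\SM\}$ being coinitial in $\SN_{\d'}$, and moreover that the two coinitial families are mutually coinitial with each other. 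Fourth, assembling: given any $\Fr\in\SN_\d$, produce $u\in\SS\SM$ with $\rz u(\d)\le\Fr$; then $\rz u(\d')\in\SA_{\d'}\subseteq\SN_{\d'}$ works, because the coinitiality relation is preserved; running the argument with $\d,\d'$ interchanged (using genericity of $\d$ only where the injectivity of $\SE_\d$ is needed, and otherwise symmetry) gives the reverse inclusion. Fifth, conclude that $\SI$ maps a neighborhood base at $[0]$ to a neighborhood base at $[0]$, hence is continuous with continuous inverse, hence a homeomorphism; the general statement (dropping genericity) follows since any two positive infinitesimals $\d,\d''$ can be compared through a common generic $\d'$, or directly since the Hardy-class argument above never actually used genericity for the coinitiality transport — genericity is only a convenience for the order-theoretic bookkeeping of Subsection \ref{subsec: top independence from delta}.

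The main obstacle I anticipate is precisely the middle step: establishing that mutual coinitiality of moduli families is invariant under changing the infinitesimal. This is not formal — it genuinely rests on the rigidity phenomenon for increasing integer sequences (Lemma \ref{lem: increasing sequence bdd at point} and its corollaries \ref{cor: FI_(om_0) cofin<->FI_(om_1) cofin}, \ref{cor: sequence ptwise cofinal->unif cofinal}), which is the technical heart of the whole subsection, and on the careful two-step factorization $\SM^0\xrightarrow{\SH}Incr\xrightarrow{E}\FH_{<,B}$ that transfers the integer-sequence rigidity to analytic families (Lemma \ref{lem: B power series with a bdd exponent}, Lemma \ref{lem: Incr pwwise bdd->Hrdy ptwise bdd}, Corollary \ref{cor: FK_xi_0 cofin<->FK_xi_1 cofin}). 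The subtlety is that $\SN_\d$ itself is \emph{not} carried bijectively to $\SN_{\d'}$ by any natural map — as the remark after the definition of $\stackrel{\d}{\prec}$ warns, the evaluation isomorphisms shuffle elements around — so one cannot argue by a direct order-isomorphism; one must instead argue entirely at the level of coinitial (equivalently, convergence) data, using that the Hardy subfamily is simultaneously coinitial in $\SN_\d$ for \emph{all} $\d$ and rigid enough that its internal coinitiality structure does not depend on $\d$. Once Corollary \ref{cor: dominating analytic family shadow subset M^0} is in hand the remaining bookkeeping — translating the coinitiality statement into an inclusion of basic open sets $U_\Fs\subseteq U_\Fr$ and then into continuity of $\SI$ and $\SI^{-1}$ — is routine and I would not belabor it.
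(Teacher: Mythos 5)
Your overall route is the one the paper intends: reduce to the statement that a net converges to the zero germ in $\tau_\d$ iff it does in $\tau_{\d'}$, and derive that from the coinitiality-transport results of the subsection (Lemma \ref{lem: increasing sequence bdd at point} through Corollary \ref{cor: dominating analytic family shadow subset M^0}); the paper's own proof is a one-sentence deferral to exactly this. You have the right chain of lemmas, but your "assembling" step contains a genuine error. You claim that if $u$ satisfies $\rz u(\d)\le\Fr$, then $U^{\d'}_{\rz u(\d')}\subseteq U^\d_\Fr$. This is false. For a standard germ $[f]$, the two quantities $\norm{\rz f}_{\d'}$ and $\norm{\rz f}_\d$ are values of the single standard monotone function $t\mapsto\norm{f}_t$ at two different infinitesimals, and a pointwise comparison with $u$ at $\d'$ does not transport to a comparison at $\d$. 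Concretely, take $\Fr=\d^2$, $u(t)=t^2$, and a standard continuous monotone $m$ whose graph oscillates about that of $u$: strictly below $t^2$ on a sequence of intervals accumulating at $0$ and strictly above $t^2$ on an interleaved sequence. For infinitesimals $\d'<\d$ landing in intervals of opposite type one gets $\rz m(\d')<(\d')^2$ while $\rz m(\d)>\d^2$, so the germ of $x\mapsto m(|x|)$ lies in $U^{\d'}_{\rz u(\d')}$ but not in $U^\d_\Fr$. This is precisely the "shuffling" phenomenon the paper warns about after introducing $\stackrel{\d}{\prec}$: no single comparison function can be evaluated at both infinitesimals to produce the required inclusion.

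The repair is to run the argument in the contrapositive, uniform form, which is what Lemma \ref{lem: un(n)is sequence in N corres to un(m)} and Corollary \ref{cor: cofinal independ of delta} actually deliver. Given $\Fr\in\SN_\d$, let $\SP$ be the family of norm-functions $t\mapsto\norm{f}_t$ of all germs with $\norm{\rz f}_\d\ge\Fr$. By construction $\SP_\d$ is not coinitial in $\SN_\d$, so by the transport lemma $\SP_{\d'}$ is not coinitial in $\SN_{\d'}$: there is a single Hardy series $\wt u$ dominating the whole family from below at every infinitesimal, hence a single $\Fs_0\in\SN_{\d'}$ with $\norm{\rz f}_{\d'}\ge\Fs_0$ for every such $[f]$. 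Any $\Fs\in\SN_{\d'}$ with $\Fs\le\Fs_0$ then gives $U^{\d'}_\Fs\subseteq U^\d_\Fr$, and symmetry plus translation finish the proof. So the correct modulus at $\d'$ is not $\rz u(\d')$ for the $u$ you placed below $\Fr$ at $\d$, but the value at $\d'$ of the dominating series produced by applying the rigidity machinery to the \emph{excluded} family. (Your observations that genericity is inessential here and that the passage between $\SM$, $\SM^0$ and general germs is handled by Lemmas \ref{lem: SN_d=SM_d} and \ref{lem:  SM^0 is coinitial in SM} are correct.)
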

\begin{proof}
    We will show that a net in $\SG$ converges with respect to the $\tau_\d$ topology if and only if it converges with respect to the $\tau_{\ov{\d}}$ topology.
\end{proof}
    We have the useful corollary.
\begin{cor}\label{cor: f_d converges in tau_d iff in tau_d' }
    Suppose that $[g]\in\SG^0$,\; $(D,<)$ is a directed set and $([f_d]:d\in D)$ is a net in $\SG^0$ and $\d,\d'$ are positive infinitesimals. Then $\norm{\rz f_d-\rz g}_\d$ converges to $0$ if and only if $\norm{\rz f_d-\rz g}_{\d'}$ converges to $0$; that is, $[f_d]\ra [g]$ in $\tau_\d$ if and only if $[f_d]\ra [g]$ in $\tau_{\d'}$.
\end{cor}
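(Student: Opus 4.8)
The plan is to reduce the corollary to Theorem \ref{thm: Id map: tau_d ot tau_d'is homeo} together with the basic structural facts established about the moduli sets $\SN_\d$. Since $\tau_\d$ and $\tau_{\d'}$ are topological vector space topologies obtained by translating the neighborhood base $\tau_0^\d$, respectively $\tau_0^{\d'}$, of the zero germ, it suffices to prove the statement for $[g]=[0]$; the general case follows by translating by $\rz g|_{B_\d}$ (resp. $\rz g|_{B_{\d'}}$), which makes sense because, by Corollary \ref{cor: SG_0 -> F(B_delts) is R alg isomorph}, the restriction maps $\SR_\d$ and $\SR_{\d'}$ are $\bbr$-algebra isomorphisms identifying $[g]$ with a genuine element of $^\s F(B_\d)$ (resp. $^\s F(B_{\d'})$). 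So the target reduces to: $\norm{\rz f_d}_\d\to 0$ with respect to the moduli $\SN_\d$ if and only if $\norm{\rz f_d}_{\d'}\to 0$ with respect to the moduli $\SN_{\d'}$.

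First I would unwind the definition of $\tau_\d$-convergence of $([f_d]:d\in D)$ to $[0]$: it means that for every $\Fr\in\SN_\d$ there is $d_0\in D$ with $[f_d]\in U_\Fr^\d$ (i.e. $\norm{\rz f_d}_\d<\Fr$) for $d>d_0$. By Lemma \ref{lem: SN_d=SM_d} (and Lemma \ref{lem: SN is gen by set of the f(d)'s}), $\SN_\d\cap\mu(0)$, which is coinitial in $\SN_\d$, equals $\{\rz m(\d):m\in\SM\}$, and by Lemma \ref{lem:  SM^0 is coinitial in SM} it suffices to test convergence against the coinitial subfamily $\{\rz m(\d):m\in\SM^0\}$, and in fact against the coinitial subfamily coming from the Hardy class $\SS\SM$ via Corollary \ref{cor: E_d is an isomorphism onto A_d} and Corollary \ref{cor: dominating analytic family shadow subset M^0}. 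The key point, supplied by Corollary \ref{cor: cofinal independ of delta} (and its engine Lemma \ref{lem: un(n)is sequence in N corres to un(m)}, which in turn rests on the rigidity Corollary \ref{cor: sequence ptwise cofinal->unif cofinal} for increasing integer sequences), is that a subset $\SA\subset\SM^0$ is coinitial in $\SN_{\d_0}$ if and only if it is coinitial in $\SN_{\d_1}$, for \emph{any} two positive infinitesimals $\d_0,\d_1$. Thus the index sets controlling convergence in $\tau_\d$ and in $\tau_{\d'}$ are the same family $\SM^0$ (or $\SS\SM$) of standard functions, evaluated at $\d$ versus $\d'$; since these functions are standard and monotone, the condition $\norm{\rz f_d}_\d=\rz(\norm{\rz f_d})_\d<\rz m(\d)$ for all $m$ in a coinitial family is what we must relate.

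The concrete mechanism for the passage from $\d$ to $\d'$: given that $([f_d])\to[0]$ in $\tau_\d$, let $m\in\SS\SM$ be arbitrary; I want $d_0$ with $\norm{\rz f_d}_{\d'}<\rz m(\d')$ eventually. Here I would invoke genericity of (a choice of) $\d$ through Theorem \ref{thm: Id map: tau_d ot tau_d'is homeo}, whose proof skeleton is exactly "a net converges in $\tau_\d$ iff in $\tau_{\d'}$"; the corollary is then literally the restatement of that theorem's content specialized to a net of the form $d\mapsto[f_d-g]$, using that homeomorphic coordinate change / translation preserves the convergence (translation is continuous in a topological vector space, and $[f_d]\to[g]$ in $\tau_\d$ is by definition $[f_d-g]\to[0]$ in $\tau_\d$, i.e. $\norm{\rz f_d-\rz g}_\d\to 0$). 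So the bulk of the work is just citing Theorem \ref{thm: Id map: tau_d ot tau_d'is homeo} and noting the translation-invariance; the only genuinely new verification is that $[f_d]\to[g]$ in the net sense is equivalent to $[f_d-g]\to[0]$, which is immediate from Definition \ref{def: of full topology by translation} and the fact that $\SR_\d$ is an algebra homomorphism so $\rz(f_d-g)|_{B_\d}=\rz f_d|_{B_\d}-\rz g|_{B_\d}$.

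The main obstacle is not in this corollary itself — it is essentially a corollary of bookkeeping once Theorem \ref{thm: Id map: tau_d ot tau_d'is homeo} is in hand — but rather in making sure the quantifier "for all positive infinitesimals $\d,\d'$" is honestly delivered. Theorem \ref{thm: Id map: tau_d ot tau_d'is homeo} first handles a \emph{generic} $\d$ and a general $\d'$; to get the fully symmetric statement I would argue: given arbitrary $\d_0,\d_1$, pick a generic $\eta$ (Corollary \ref{cor: E_d is an isomorphism onto A_d} guarantees one exists), conclude $\tau_{\d_0}\cong\tau_\eta$ and $\tau_{\eta}\cong\tau_{\d_1}$, and compose. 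I expect the subtle step to be confirming that "coinitial in $\SN_\eta$ $\Leftrightarrow$ coinitial in $\SN_{\d_0}$" does not secretly need $\eta$ or $\d_0$ to be generic — but Corollary \ref{cor: cofinal independ of delta} is stated for all infinite $\xi_0,\xi_1$ without any genericity hypothesis, so after transporting from $0$ to $\infty$ via Lemma \ref{lem: equiv of ordered germs at 0 and infty} this goes through, and the proof closes.
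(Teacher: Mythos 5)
Your proposal is correct and takes the same (thin) route the paper does: the corollary is just the net-convergence reformulation of Theorem \ref{thm: Id map: tau_d ot tau_d'is homeo}, reduced to $[g]=[0]$ by translation-invariance of the topology, and the paper in fact offers no proof of the corollary at all (the theorem's ``proof'' is a single unfinished sentence promising exactly this net-convergence statement). Your elaboration of why coinitiality-independence of $\SN_\d$ actually gives net-convergence-independence---via the \emph{uniform} (all-infinite-$\xi$) conclusion of Corollary \ref{cor: cofinal independ of delta}, not merely the set-theoretic $\d$-independence of coinitiality---is the one genuine piece of content here, and it is the right way to fill the gap the paper leaves open.
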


\subsection{Relationship with nongerm  convergence}\label{sec: relationship with nongerm convergence}
    First of all, whether talking about a sequence of germs at $0$ or a sequence of continuous functions defined on a neighborhood of $0$, we will fail at finding a relation with $\tau$ convergence. For germs, see below. As far as a sequence of functions defined on some fixed neighborhood of zero, we must still deal with the fact that there are no countable neighborhood bases of the zero germ in the $\tau$ topology, and so unless we can extend the sequence to an uncountable net without a countable coinitial subnet, we are stuck. Of course, we can transfer the sequence noting that $\rz\bbn$ does not have a countable coinitial subset. Although $\rz\bbn$ is too large, *finite initial intervals $\{1,2,\ldots,\om\}$ don't have countable coinitial subsets, and are suitable for what we need. We do introduce the problem that our nets are no longer standard, but as we will see below, the relationship of these *finite nets to the $\tau$ is directly related to standard behavior of the original sequence. Note that, we could have also considered `*finite' directed index sets of the form $\{\Fj\in\rz\bbn:\Fj\lll\Fz\}$ where $\Fz$ is an infinite integer. Given mild saturation, these also do not have countable coinitial subsets and in a sense are more natural, but they are not internal, a needed property.

    If we have a sequence of germs $\{[f_j]:j\in\bbn\}$ in $\SG^0_n$, and consider the transferred sequence $\{\rz[f_\Fj]:\Fj\in\rz\bbn\}$, we see immediately that for any given $0<\d\sim 0$, there is $\om\in\rz\bbn$ large enough to that representatives of $\rz[f_\om]$ are not well defined on $B_\d$. For example, if $\chi_{B_r}:\bbr\ra\{0,1\}$ is the indicator function of $B_r$, let $\wt{f}_j(x)=\chi(2^jx)f_j(x)$ where we extend the function $f_j$ arbitrarily outside of $B_r$ so that $\wt{f}_j$ will be well defined on $B_r$. Then for all $j\in\bbn$, $[f_j]=[\wt{f}_j]$ as elements of $\SG$, ie., their values on $\mu(0)$ are the same. But for $\Fj\in\rz\bbn_\infty$, $\rz f_\Fj$ and $\rz\wt{f}_\Fj$ are not equal on $\mu(0)$.
    On the other hand, if the germs of mappings arise from standard mappings all defined on some standard ball $B_r$ centered at $0$ and if we have a sequence of such $f_j$, $j\in\bbn$, then clearly all $\rz f_\Fj$ for $\Fj\in\rz\bbn$ are defined on all of $\mu(0)$, but note that even if $\rz f_\Fj(\xi)$ is nearstandard for $\xi\sim 0$, typically $\rz f_\Fj$ is not nearstandard for any nonzero $x\in B_r$; eg., consider the sequence $\wt{f}_j(x)=x^j$ and the transfer of the dilated sequence $f_j(x)=\wt{f}_j(2^jx)$.

    Nonetheless, we shall prove two basic results giving a correspondence between the convergence of a sequence of functions and $\tau$ convergence of extensions of these sequences restricted to germ type, ie., monadic domains. First, we shall see (in Proposition \ref{prop: corres. between unif. converg and tau converg}) that in the case that $\norm{f_j}_r\ra 0$ as $j\ra \infty$, then in fact the `germs' associated to any extended *finite sequence $\rz f_\Fj$, $\Fj=1,2,\ldots,\om$ do, in fact, `converge in the topology $\tau$', once properly interpreted. In contrast to the uniform situation just mentioned, we will also give a pointwise correspondence, Proposition \ref{prop: corres. between ptwise converg and tau converg}. Given the setting, it turns out that the pointwise and the uniform versions are equivalent; the nonstandard setting allows a sort of uniform convergence at a point phenomena.   We will also give  converses (to both): if this internally extended family converges in $\tau$ on $B_\d\subset\mu(0)_+$ for some $\d\sim 0$, then $\{f_j:j\in\bbn\}$ is convergently coinitial (see below) in a sufficiently small neighborhood of $0$.

\begin{definition}\label{def: convergently coinitial in SN_d}
    Fix $\d\in\mu(0)_+$. Suppose that $(D,<)$ is a (upward) directed set and $\Xi=(\xi_d:d\in D)$ is a net in $\mu(0)_+$. Then we say that $\Xi$ is convergently coinitial in the range of $\SN_\d$ if for each $\Fr\in\SN_\d$, there is $d_0\in D$ that if $d>d_0$ then $\xi_d<\Fr$. It will be said to be convergently coinitial with $\SN_\d$ if in addition we  have that $\SN_\d$ is coinitial with $\Xi$.
\end{definition}
    Note that $(\rz\bbn,\rz\!\!<)$ (and all of its subsets) is a directed set so that if $\om\in\rz\bbn$ and $\xi_\Fj\in\rz\bbr$ for $1\leq\Fj\leq\om$, we may consider $\{\xi_1,\xi_2,\ldots,\xi_\om\}$ as a net in $\rz\bbr$.
    It's clear that if $\Xi$ is monotone decreasing and coinitial in the range of $\SN_\d$, then it is convergently coinitial in the range of $\SN_\d$.
    If $\Xi=(\xi_d:d\in D)$ is convergently coinitial with $\SN_\d$, then it is roughly monotone, ie., for each $d_0\in D$, there is $d_1\in D$, such that if $d>d_1$, then $\xi_d<\xi_{d_0}$.
    We need to first state some lemmas giving useful properties of these nets that arise by extensions of standard sequences.

\begin{lem}
    Let $\SL\in\un{M}^0(0,a)$. Given a sequence $A=\{m_1,m_2,\ldots\}$ in $\un{M}^0(0,a)$, we can always find another sequence $\wt{A}=\{\wt{m}_1,\wt{m}_2,\dots\}$ in $\un{M}^0(0,a)$ such that $\wt{m}_1(t)>\wt{m}_2(t)>\cdots$ for all $t$  and for all $\d\in\mu(0)_+$, $A_\d$ is coinitial in $\SL_\d$ if and only if $\wt{A}_\d$ is coinitial in $\SL_\d$. So  $A_\d$ is coinitial with $\SL_\d$ if and only if $\wt{A}_\d$ is convergently coinitial with $\SL_\d$.
\end{lem}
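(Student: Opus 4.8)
The statement asserts that given a decreasing-in-value sequence $A = \{m_1,m_2,\ldots\}$ in $\underline{M}^0(0,a)$ (so each $m_j$ is a continuous strictly-increasing germ vanishing at $0$) we can extract from it a sequence $\wt A = \{\wt m_1,\wt m_2,\ldots\}$ which is genuinely pointwise monotone decreasing, $\wt m_1(t) > \wt m_2(t) > \cdots$ for all small $t>0$, and which has the same coinitiality behavior at every infinitesimal $\d$: for each $\d\in\mu(0)_+$, $A_\d$ is coinitial in $\SL_\d$ iff $\wt A_\d$ is coinitial in $\SL_\d$. The plan is to construct $\wt m_j$ as pointwise minima of finite initial segments of $A$, after arranging genuine strict separation by a small perturbation.

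First I would replace each $m_j$ by $m_j' = m_j/j$ (still in $\underline{M}^0$, since dividing by a positive constant preserves monotonicity and the limit at $0$); this doesn't change coinitiality of $A_\d$ in $\SL_\d$ for any $\d$, because $\SL = \underline{M}^0$ is closed under scaling by positive reals and $m_j'(\d)\leq m_j(\d)$ so any $m_j'$ still sits below any given $\SL$-element that some $m_i$ sat below (and conversely, for the other direction, one uses that $m_j'(\d) \geq m_j(\d)/j$, and a coinitial family stays coinitial under such bounded-ratio perturbations — this uses the Hardy-series machinery, in particular that we may assume our $\SL$-elements dominate arbitrary $\SM^0$-elements, or more directly Lemma \ref{lem: SN_d=SM_d} type arguments). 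Then I would set $\wt m_j(t) \dot= \min\{m_1'(t), m_2'(t),\ldots, m_j'(t)\}$. Each $\wt m_j$ is a minimum of finitely many continuous strictly increasing functions vanishing at $0$, hence is continuous, strictly increasing, and vanishes at $0$, i.e. $\wt m_j\in\underline{M}^0(0,a)$. Clearly $\wt m_1(t)\geq\wt m_2(t)\geq\cdots$ for all $t$; to get strict inequality, perturb: replace $\wt m_j$ by $\wt m_j + t^{2^j}$, say, or more cleanly by $(1 + 2^{-j})^{-1}$ times $\wt m_j$ composed appropriately — the point being to introduce a strictly decreasing multiplicative gap while retaining membership in $\underline{M}^0$ and not disturbing asymptotic behavior at $\d$. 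Since the perturbation is by a factor tending to $1$, the same bounded-ratio argument shows coinitiality is preserved.

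The coinitiality equivalence then follows from two observations, each to be verified for an arbitrary $\d\in\mu(0)_+$. If $A_\d$ is coinitial in $\SL_\d$: given $\Fr = \rz\ell(\d)\in\SL_\d$ with $[\ell]\in\underline{M}^0$, there is $m_j$ with $\rz m_j'(\d)\leq\Fr$; but then $\rz{\wt m}_j(\d)\leq\rz m_j'(\d)\leq\Fr$ by definition of the min, so $\wt A_\d$ is coinitial in $\SL_\d$. Conversely if $\wt A_\d$ is coinitial in $\SL_\d$: given $\Fr\in\SL_\d$, there is $j$ with $\rz{\wt m}_j(\d)\leq\Fr$; now $\rz{\wt m}_j(\d) = \min_{i\leq j}\rz m_i'(\d) = \rz m_{i_0}'(\d)$ for some $i_0\leq j$ (here transfer of the standard fact that a finite min is attained), so $\rz m_{i_0}'(\d)\leq\Fr$, showing $A'_\d$ — and hence $A_\d$ up to the harmless scaling — is coinitial in $\SL_\d$. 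The last sentence of the statement (the ``convergently coinitial'' upgrade) is then immediate: a pointwise monotone decreasing sequence that is coinitial with $\SL_\d$ is automatically \emph{convergently} coinitial with it, since monotonicity plus coinitiality forces the tail of the net to eventually drop below any prescribed $\Fr\in\SL_\d$ — this is the remark already recorded just after Definition \ref{def: convergently coinitial in SN_d}.

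The main obstacle I anticipate is not the combinatorial construction (taking finite minima is routine) but rather the careful bookkeeping of the ``bounded-ratio perturbations preserve coinitiality'' claim: one must be sure that multiplying the $m_j$ by factors $1/j$ and then by factors near $1$ does not destroy coinitiality of $A_\d$ in $\SL_\d = \SN_\d^0$ (equivalently $\SM_\d$), for \emph{every} infinitesimal $\d$ simultaneously. For the direction ``$\wt A_\d$ coinitial $\Rightarrow A_\d$ coinitial'' this requires knowing that if $\rz m'_{i_0}(\d) = \rz m_{i_0}(\d)/i_0 \leq \Fr$ for $\Fr\in\SM_\d$, then some genuine $\SM^0$-element evaluated at $\d$ sits below $\rz m_{i_0}(\d)$ — but $\rz m_{i_0}(\d)$ itself is in $\SM_\d$ (it equals $\rz{(i_0 m'_{i_0})}(\d)$ and $i_0 m'_{i_0} = m_{i_0}\in\SM^0$), and coinitiality of $A$ means we want $A_\d$ below arbitrary $\SL_\d$, which is exactly what the $\wt A_\d$ statement plus $\wt m_j \leq m_j$ delivers after unwinding. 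So in fact the scaling by $1/j$ may be avoidable entirely if one instead perturbs only additively by rapidly vanishing terms like $t^{2^j}$ to force strictness; I would try that route first and fall back on the multiplicative version only if the additive perturbation interferes with strict monotonicity in $t$ of the individual $\wt m_j$. Either way, the verification that strictness can be achieved ``for free'' asymptotically is where the care lies; the rest is a transfer of the elementary fact that finite minima are attained.
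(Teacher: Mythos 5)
Your core construction — $\wt m_j(t) = \min\{m_1(t),\ldots,m_j(t)\}$ and the observation that a finite minimum is attained, transferred to handle $\Fj\in\rz\bbn$ — is exactly the paper's proof, and you are right that the paper's bare claim of \emph{strict} inequality $\wt m_j > \wt m_{j+1}$ is not justified by the plain minimum (it yields only $\geq$; if $m_{j+1}\geq\wt m_j$ everywhere, nothing changes). That is a genuine slip in the paper, and noticing it is to your credit.

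However, the repair you propose creates a new gap. Rescaling by $1/j$ (whether before the min, as you do, or after) is not ``harmless.'' In the backward direction, from $\rz\wt m_{j}(\d)\leq\Fr$ you extract $i_0\leq j$ with $\rz m'_{i_0}(\d)=\rz m_{i_0}(\d)/i_0\leq\Fr$, which gives only $\rz m_{i_0}(\d)\leq i_0\Fr$, not $\leq\Fr$. Since $i_0$ depends on $\Fr$, you cannot make this up by starting from a smaller element of $\SL_\d$: the choice of $j$ would then feed back into the target, and the argument goes circular. You half-acknowledge this (``bounded-ratio perturbations preserve coinitiality \ldots this uses the Hardy-series machinery, or more directly Lemma \ref{lem: SN_d=SM_d}''), but no such lemma rescues a factor that grows with the index. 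The additive perturbation $\wt m_j + t^{2^j}$ does give strictness, but it has the symmetric defect in the \emph{forward} direction: $\wt m_{j}(\d)+\d^{2^j}$ need not sit below a given $\Fr$ just because $\wt m_j(\d)$ does, since the added term may dominate when $\wt m_j(\d)$ is extremely small. Neither perturbation is actually needed for the last sentence of the lemma — ``convergently coinitial'' only requires eventual monotone descent (non-strict monotonicity suffices, cf.\ the remark after Definition \ref{def: convergently coinitial in SN_d}) — so the right fix is to weaken the stated $>$ to $\geq$ and keep the paper's plain minimum, rather than to perturb. In short: your approach is the paper's, but the extra rescaling step you insert is a genuine error, not a harmless bookkeeping convenience.
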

\begin{proof}
    It's easy to see that defining $\wt{m}_j(t)=\min\{m_1(t),\ldots,m_j(t)\}$ for $j\in\bbn$ and $0<t<a$. It's clear that $\wt{m}_j(t)>\wt{m}_{j+1}(t)$ for all $t$ and $j$. As $\wt{m}_j(t)\leq w_j(t)$ for all $j$ and $t$ so that this holds after transfer, we need only verify that $\wt{A}_\d$ is coinitial in $\SL_\d$ implies that $A_\d$ is coinitial in $\SL_\d$. But if $\Fr\in\SL_\d$, then there is $\Fj\in\rz\bbn$ such that $\rz\wt{m}_\Fj(\d)<\Fr$ and clearly as $\rz\min\{\rz m_\Fi(\d):1\leq\Fi\leq\Fj\}=\rz\un{m}_{\Fj}(\d)$, then there is $\Fi\leq\Fj$ with $\rz m_\Fi(\d)<\Fr$.
\end{proof}
\begin{lem}
    Suppose that $m_1,m_2,\ldots$ is a sequence in $\SM$, $\d\in\SN_\d$, $\om\in\rz\bbn$ and $\rz m_1(\d)>\rz m_2(\d)>\cdots$. Then the following are equivalent.
\begin{enumerate}
     \item[$\imath$)]      $\{\rz m_1(\d),\ldots,\rz m_\om(\d)\}$ is convergently coinitial in the range of $\SN_\d$.
     \item[$\imath\imath$)] $\rz m_\om(\d)<\Fr$ for all $\Fr\in\SN_\d$.
     \item[$\imath\imath\imath$)] $\rz m_\om(\d)\lll\Fr$ for all $\Fr\in\SN_\d$.
\end{enumerate}
\end{lem}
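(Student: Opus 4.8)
The plan is to prove the three-way equivalence by establishing $\imath)\Leftrightarrow\imath\imath)$ as a routine unwinding of definitions, and then the substantive part, $\imath\imath)\Leftrightarrow\imath\imath\imath)$, which is where the content of the earlier lemmas on $\SN_\d$ comes in. Throughout I keep $\d\in\mu(0)_+$ fixed and use that the sequence $(\rz m_j(\d))_j$ is strictly decreasing, so the net $\{\rz m_1(\d),\ldots,\rz m_\om(\d)\}$ indexed by $\{1,\ldots,\om\}\subset\rz\bbn$ is monotone decreasing.

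First I would dispatch $\imath)\Leftrightarrow\imath\imath)$. For $\imath)\Rightarrow\imath\imath)$: fix $\Fr\in\SN_\d$; convergent coinitiality gives $d_0\in\{1,\ldots,\om\}$ with $\rz m_d(\d)<\Fr$ for all $d>d_0$, and since the sequence is strictly decreasing and $\om$ is the top index, taking $d=\om$ (assuming $\om>d_0$, which holds because $\om$ is infinite while we may take $d_0$ finite — or simply note $\rz m_\om(\d)\leq\rz m_{d_0+1}(\d)<\Fr$) yields $\rz m_\om(\d)<\Fr$. For $\imath\imath)\Rightarrow\imath)$: given $\Fr\in\SN_\d$, set $d_0=1$; then for any $d>d_0$ in $\{1,\ldots,\om\}$ we have $\rz m_d(\d)\leq\rz m_\om(\d)<\Fr$ by monotonicity, so the net is convergently coinitial in the range of $\SN_\d$. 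The remaining clause of convergent coinitiality "in the range of" (as opposed to "with") imposes no further condition here, so this direction is immediate.

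The heart of the matter is $\imath\imath)\Leftrightarrow\imath\imath\imath)$. The implication $\imath\imath\imath)\Rightarrow\imath\imath)$ is trivial since $\rz m_\om(\d)\lll\Fr$ implies $\rz m_\om(\d)<\Fr$ by the definition of $\lll$ (incomparably smaller implies smaller). For $\imath\imath)\Rightarrow\imath\imath\imath)$, I would argue: suppose $\rz m_\om(\d)<\Fr$ for all $\Fr\in\SN_\d$ but, for contradiction, $\rz m_\om(\d)$ is not incomparably smaller than some $\Fr_0\in\SN_\d$; then there is $f\in\SM$ with $\rz f(\Fr_0)\leq\rz m_\om(\d)$. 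Now the key structural fact is Lemma \ref{lem: SN_d doesnot have d-incomparable no} together with Corollary \ref{cor: m(N_d)-N_d and N_(m(d))=N_d}: since $\Fr_0\in\SN_\d$ we have $\SN_{\Fr_0}=\SN_\d$, and moreover $\SN_{\Fr_0}$ does not contain an incomparable range relative to $\Fr_0$, hence $\rz f(\Fr_0)$, being bounded below by... — here I need to be careful. The cleaner route: by Lemma \ref{lem: SN_d=SM_d}, $\SN_\d\cap\mu(0)$ minus $\{0\}$ equals $\{\rz m(\d):m\in\SM\}$; applying $f\in\SM$ to the value $\Fr_0=\rz m'(\d)$ and using that $\rz f\circ\rz m'=\rz(f\circ m')$ with $f\circ m'\in\SM$, we get $\rz f(\Fr_0)=\norm{\rz(f\circ m')}_\d\in\SN_\d$. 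So $\rz f(\Fr_0)$ is itself an element of $\SN_\d$ that is $\leq\rz m_\om(\d)$. But this contradicts $\imath\imath)$: taking $\Fr=\rz f(\Fr_0)\in\SN_\d$ we would need $\rz m_\om(\d)<\rz f(\Fr_0)\leq\rz m_\om(\d)$, absurd. Hence $\rz m_\om(\d)\lll\Fr$ for all $\Fr\in\SN_\d$.

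The main obstacle I anticipate is getting the quantifier structure in the failure-of-incomparability step exactly right: "$\rz m_\om(\d)$ not incomparably smaller than $\Fr_0$" unpacks, via Definition \ref{def:  SM and SM^0}(2), to "there exists $m\in\SM$ with $\rz m(\Fr_0)\leq\rz m_\om(\d)$" — but I must check whether the correct reading is $\rz m(\Fr_0)\leq\rz m_\om(\d)$ or the reversed inequality, since the relation $\e\lll\d$ was defined as "$\rz m(\d)>\e$ for all $m\in\SM$", i.e. here $\e=\rz m_\om(\d)$ plays the role of the small quantity and $\d$ is replaced by $\Fr_0$; so failure means $\exists m:\rz m(\Fr_0)\leq\rz m_\om(\d)$, which is what I used. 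The second delicate point is confirming that $\rz f(\Fr_0)\in\SN_\d$: this rests on the semiring/composition properties of $\SN_\d$ recorded around Lemmas \ref{lem: SN_d=SM_d} and \ref{lem: SN is gen by set of the f(d)'s} and Corollary \ref{cor: m(N_d)-N_d and N_(m(d))=N_d}, and I would cite $\SN_\Fr=\SN_\d$ for $\Fr\in\SN_\d$ to close the loop cleanly. Once these two bookkeeping points are pinned down, the rest is immediate.
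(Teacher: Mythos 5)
Your decomposition into $\imath)\Leftrightarrow\imath\imath)$ followed by $\imath\imath)\Leftrightarrow\imath\imath\imath)$ is a valid alternative to the paper's cyclic chain $\imath)\Rightarrow\imath\imath)\Rightarrow\imath\imath\imath)\Rightarrow\imath)$, and your argument for the substantive step $\imath\imath)\Rightarrow\imath\imath\imath)$ is essentially identical to the paper's: negate incomparability to get $f\in\SM$ with $\rz f(\Fr_0)\leq\rz m_\om(\d)$, write $\Fr_0=\rz m'(\d)$, observe $f\circ m'\in\SM$, hence $\rz f(\Fr_0)=\rz(f\circ m')(\d)\in\SN_\d$ by lemma \ref{lem: SN is gen by set of the f(d)'s} (equivalently corollary \ref{cor: m(N_d)-N_d and N_(m(d))=N_d}), contradicting $\imath\imath)$. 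Your worry about the quantifier structure in unwinding $\lll$ is handled correctly.

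There is, however, one concrete error. In your $\imath\imath)\Rightarrow\imath)$ step you set $d_0=1$ and claim for $d>d_0$ that $\rz m_d(\d)\leq\rz m_\om(\d)<\Fr$ "by monotonicity". But the sequence is strictly \emph{decreasing}: $\rz m_1(\d)>\rz m_2(\d)>\cdots$, so for $d\leq\om$ one has $\rz m_d(\d)\geq\rz m_\om(\d)$, the opposite of what you wrote. With $d_0=1$, the tail $\{2,\ldots,\om\}$ has maximal value $\rz m_2(\d)$, which $\imath\imath)$ does not control. The fix is to choose $d_0$ at the top of the index set, e.g.\ $d_0=\om-1$, so the only $d>d_0$ in $\{1,\ldots,\om\}$ is $d=\om$ and $\rz m_\om(\d)<\Fr$ suffices. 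Relatedly, in $\imath)\Rightarrow\imath\imath)$ your first parenthetical ("$\om$ is infinite while we may take $d_0$ finite") is unjustified — $\om$ is merely in $\rz\bbn$ and nothing forces $d_0$ to be finite — but your second route, $\rz m_\om(\d)\leq\rz m_{d_0+1}(\d)<\Fr$, is the correct and cleaner one (it uses the decreasing monotonicity in the right direction). That is also slightly more direct than the paper's proof-by-contradiction of this implication.
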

\begin{proof}
    For $\imath$)  implies $\imath\imath$), assume $\imath$) holds but $\imath\imath$) doesn't hold, ie., there is $\Fr_0\in\SN_\d$ such that $\rz m_\om(\d)\geq\Fr_0$, but then for all $\Fj\in\{1,2,\ldots,\om\}$, $\rz m_\Fj(\d)\not<\Fr_0$, a contradiction.  Next we will verify $\imath\imath$) implies $\imath\imath\imath$), ie., that  $\rz m_\om(\d)<\Fr$ for all $\Fr\in\SN_\d$ implies that $\rz m(\d)\lll\Fr$ for all $\Fr\in\SN_\d$. Suppose this is not true, ie., there is $\Fr\in\SN_\d$ and $[m]\in\SM^0$ such that $\rz m(\Fr)<\rz m_\om(\d)$ and by definition $\Fr=\rz m'(\d)$ for some $m'\in\SM^0$. But then we have $\rz m\circ m'(\d)=\Ft\in\SN_\d$ satisfying $\Ft<\rz m_\om(\d)$, a contradiction.
    $\imath\imath\imath$) implies $\imath$) is obvious.
\end{proof}

\begin{remark}
    As the generic infinitesimal $\d$ grows, ie., as we evaluate our semiring of monotone function germs at larger infinitesimal values, our set of moduli $\SN_\d$ are gradually getting `closer' to noninfinitesimal values. To give a sense of how the $\SN_\d$ are becoming unbounded in $\mu(0)_+$ as $\d$ is increasing in an unbounded way in $\mu(0)_+$, we have the following lemma.
\end{remark}
\begin{lem}
    Suppose that $0<\e\sim 0$. Then there is a $0<\d\sim 0$ such that $\e<\Fr$ for all $\Fr\in\SN_\d$.
\end{lem}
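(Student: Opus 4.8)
The plan is to produce a single infinitesimal $\d$ by a saturation (concurrence) argument, after first reducing the statement to a question only about the ``shadow'' family $\SM^0$. The reduction goes as follows. By Lemma~\ref{lem: SN_d=SM_d} every nonzero \emph{infinitesimal} element of $\SN_\d$ is of the form $\rz m(\d)$ for some $m\in\SM$, while every \emph{noninfinitesimal} element of $\SN_\d$ automatically exceeds $\e$ because $\e\sim 0$. (The element $0\in\SN_\d$ coming from the zero germ is of course excluded from the assertion.) Moreover, by Lemma~\ref{lem:  SM^0 is coinitial in SM}, $\SM^0$ is coinitial in $\SM$: for each $m\in\SM$ there is $\tl m\in\SM^0$ with $\tl m(t)\le m(t)$ for all small $t>0$, whence $\rz{\tl m}(\d)\le\rz m(\d)$ for $\d\in\mu(0)_+$. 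So it suffices to find $0<\d\sim 0$ with $\rz m(\d)>\e$ for every $m\in\SM^0$; then for an arbitrary nonzero $\mathfrak r\in\SN_\d$ one either has $\mathfrak r\not\sim 0$ (done) or $\mathfrak r=\rz m(\d)\ge\rz{\tl m}(\d)>\e$ with $\tl m\in\SM^0$ as above.

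For the saturation step I would first fix an infinitesimal $\e_1$ with $\e<\e_1\sim 0$ (e.g.\ $\e_1=\sqrt\e$, which is $\sim 0$ and exceeds $\e$ since $\e<1$), and for each $m\in\SM^0$ set $A_m=\{\xi\in\rz\bbr:\xi>0\text{ and }\rz m(\xi)\ge\e_1\}$, which is internal by the internal definition principle, together with $C_k=\{\xi\in\rz\bbr:0<\xi<1/k\}$ for $k\in\bbn$. The key verification is the finite intersection property of $\{A_m:m\in\SM^0\}\cup\{C_k:k\in\bbn\}$: given $m_1,\dots,m_j\in\SM^0$ and $K\in\bbn$, each $m_i$ is continuous and strictly increasing with $m_i(t)\to 0$, so $m_i^{-1}\in\SM$ (as in the remark after Definition~\ref{def:  SM and SM^0}) and hence $\rz(m_i^{-1})(\e_1)\sim 0$; putting $\eta=\max_i\rz(m_i^{-1})(\e_1)\sim 0$ and choosing $\d'$ with $2\eta<\d'<1/K$ and $\d'\sim 0$ (possible because $\eta\sim 0$ while $1/K$ is standard), monotonicity of $\rz m_i$ gives $\rz m_i(\d')>\e_1$, so $\d'\in C_K\cap\bigcap_i A_{m_i}$. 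Sufficient saturation (we assume $\rz V(S)$ is $\k$-saturated for $\k$ larger than the cardinality of $\SM^0$, as elsewhere in this paper) then yields $\d\in\bigcap_{m\in\SM^0}A_m\cap\bigcap_{k}C_k$; this $\d$ satisfies $0<\d\sim 0$ and $\rz m(\d)\ge\e_1>\e$ for every $m\in\SM^0$, which by the first paragraph completes the proof.

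The argument is short, so there is no single deep obstacle; the point that takes the most care is the finite intersection property, and within it the observation that $\rz(m^{-1})(\e_1)$ is infinitesimal for $m\in\SM^0$ (so that a suitable $\d'$ can be squeezed below $1/K$ yet still infinitesimal). The only genuinely nonelementary ingredients are the already-established coinitiality facts---Lemma~\ref{lem: SN_d=SM_d} and Lemma~\ref{lem:  SM^0 is coinitial in SM}---which are what let us pass from the unwieldy full set $\SN_\d$ to the manageable, honestly-invertible family $\SM^0$; this is also exactly the reason one cannot avoid saturation here, since by Lemma~\ref{lem: SN does not contain countale coinit} no countable subfamily would do.
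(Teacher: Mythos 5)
Your proof is correct and is essentially the paper's own argument: both use a saturation (concurrence) argument to manufacture an infinitesimal $\d$ that is incomparably larger than $\e$, and then observe that every nonzero element of $\SN_\d$ dominates some $\rz m(\d)>\e$. The only cosmetic difference is that you re-derive this last step from Lemma \ref{lem: SN_d=SM_d} together with the coinitiality of $\SM^0$ in $\SM$ (Lemma \ref{lem:  SM^0 is coinitial in SM}), whereas the paper cites Lemma \ref{lem: SN_d doesnot have d-incomparable no}, whose content is the same.
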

\begin{proof}
    This lemma follows from the fact that given a positive infinitesimal $\e$, there is another positive infinitesimal $\d$ incomparably larger than $\e$. Assuming this for the moment, then Lemma \ref{lem: SN_d doesnot have d-incomparable no} implies that the elements of $\SN_\d$ are all larger than $\e$. So given $0<\e\sim 0$, if $m\in\SM^0$ and $j\in\bbn$, let $\SL_{m,j}=\{\Fr\in\rz\bbr_+:\rz m(\e)<\Fr<1/\rz j\}$. It's clear that the (external) set $\FL=\{\SL_{m,j}:[m]\in\SM^0\;\text{and}\;j\in\bbn\}$ has the finite intersection property, so that saturation (see eg.,  Stroyan and Luxemburg, \cite{StrLux76}, p181) implies that $\cap\FL$ is nonempty, ie., there is $\d\in\rz\bbr_+$ such that $\d<1/\rz j$ for all $j\in\bbn$ and $\rz m(\e)<\d$ for all $m\in\SM^0$.
\end{proof}

    Nonetheless there is are technical results that dramatically shows how different internal sets of the form $\{\rz f_\Fj(\d):\Fj\in\rz\bbn\}$ (where $\{f_j:j\in\bbn\}$ is a sequence of functions converging to zero, in some sense, on some neighborhood of $0$) are from our sets $\SJ_\d$, where $\SJ\subset\SM^0$ is such that $\SJ_\d$ is coinitial in $\SN_\d$, in the sense of the above noted properties. In the next lemma we show that for our nets that the  truncated *sequences that are convergently coinitiality in $\SN_\d$ for some $\d$ have the full *sequences coinitial in $\rz\bbr_+$.  (As it is usually true that relations between the characteristics of convergence and those of coinitiality can be captured by simplifying to the extremal values of function-germs in $\SM^0$, the following is stated in that context.)
\begin{lem}\label{lem: *K_d coin(mu(0)) iff *K^om_d coin N_d}
    Suppose that $\SK=\{m_j:j\in\bbn\}$ is a sequence in $\un{M}^0(0,a)$,  and for $\om\in\rz\bbn$ let $\SK^\om_\d$ denote $\{\rz m_\Fj(\d):1\leq\Fj\leq\om\}$. Then the following are equivalent.
\begin{enumerate}
    \item[a)] There is $\d\in\mu(0)_+$ such that $\rz \SK_\d$ is convergently coinitial in $\rz\SN_\d$.
    \item[b)] There is $\d\in\mu(0)_+$ such that for some $\om\in\rz\bbn$,  $\SK^\om_\d$ is convergently coinitial in the range of $\SN_\d$.
    \item[c)] There is $r_0\in\bbr_+$ such that $\lim_{j\ra\infty}m_j(r)=0$ for $r\leq r_0$.
\end{enumerate}

%    and suppose that $\rz\SK_\d\dot=\{\rz m_\Fj(\d):\Fj\in\rz\bbn\}$ is coinitial (respectively convergently coinitial)in the range of $\SN_\d$. Then, in fact, there is an $\om\in\rz\bbn$ such that $\{\rz m_\Fj(\d):1\leq\Fj\leq\om\}$ is coinitial (respectively convergently coinitial) in $\SN_\d$ and $\rz\SK_\d$ is coinitial (respectively convergently coinitial) in $\mu(0)_+$ and so there is $r\in\bbr_+$ such that $0$ is a limit point of $\{m_j(r):j\in\bbn\}$ (respectively $m_j(r)\ra 0$ as $j\ra\infty$).
%    Conversely, given the above sequence with $ m_j(r)\ra 0$ as $j\ra\infty$ for some $r\in\bbr_+$. Then for every $\d\in\mu(0)_+$, there is $\om\in\rz\bbn$ such that $\{\rz m_\Fj(\d):1\leq\Fj\leq\om\}$ is convergently coinitial in $\SN_\d$.
\end{lem}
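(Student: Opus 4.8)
The plan is to prove the three equivalences by showing a cycle of implications, say $c) \Rightarrow a) \Rightarrow b) \Rightarrow c)$, exploiting the fact that all the $m_j$ are monotone (elements of $\un{M}^0(0,a)$, hence nondecreasing with limit $0$ at $0$) so that $\norm{m_j}_t = m_j(t)$ and pointwise statements can be read off at single values. The key preparatory move is the previous lemma: given the sequence $\SK = \{m_j\}$, replace it by $\wt{\SK} = \{\wt m_j\}$ with $\wt m_j(t) = \min\{m_1(t),\dots,m_j(t)\}$, so that $\wt m_1(t) > \wt m_2(t) > \cdots$ pointwise; this replacement preserves coinitiality (with $\SN_\d$) for every $\d$ and is exactly what the second lemma above needs in order to equate convergent coinitiality of the truncated $*$-sequence with the pinching conditions $\imath\imath)$ and $\imath\imath\imath)$. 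So after this reduction I may assume the $m_j$ are strictly pointwise decreasing.

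First I would do $c)\Rightarrow a)$. Assume $\lim_{j\to\infty} m_j(r) = 0$ for all $r \le r_0$. Pick any $\d \in \mu(0)_+$ with $\d < r_0$. I claim $\rz\SK_\d = \{\rz m_\Fj(\d) : \Fj \in \rz\bbn\}$ is convergently coinitial in $\rz\SN_\d$. Take $\Fr \in \SN_\d$; by Lemma \ref{lem: SN_d=SM_d} (using $\SN^0_\d$ is coinitial in $\SN_\d$, Proposition \ref{prop: SN^0_d is coin in SN_d}) there is $m \in \SM^0$ with $\norm{\rz m}_\d \le \Fr$, and moreover I can arrange $\Fr = \rz m(\d)$. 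Now the standard statement ``for each $n\in\bbn$ there is $j_0$ such that $m_j(r) < m(r)$ for all $j \ge j_0$ (on, say, $B_{r_0}$ intersected with the common domain)'' — which follows from $c)$ together with the fact that $m(r) > 0$ for $r>0$ — transfers to give $d_0 \in \rz\bbn$ (in fact one can take $d_0$ standard) such that $\rz m_\Fj(\d) < \rz m(\d) = \Fr$ for all $\Fj \ge d_0$. That is precisely convergent coinitiality. (One must be slightly careful that $c)$ as literally stated is pointwise in $r$; the monotonicity of $m$ and $m_j$ lets me upgrade to a statement good uniformly on $B_\d$, or alternatively just evaluate at the single point whose norm realizes $\rz m(\d)$, which is the trick already used in the proof of Lemma \ref{lem: SN_d doesnot have d-incomparable no}.)

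Next, $a)\Rightarrow b)$ is the easy direction: if $\rz\SK_\d$ is convergently coinitial in $\rz\SN_\d$, then overflow/the internal definition principle produces an infinite $\om \in \rz\bbn$ such that the truncation $\SK^\om_\d = \{\rz m_\Fj(\d) : 1 \le \Fj \le \om\}$ already has the property that $\rz m_\om(\d) < \Fr$ for all $\Fr\in\SN_\d$: the internal set of $\la \in \rz\bbn$ for which $\rz m_\la(\d)$ is still below every element of a fixed coinitial chain contains all sufficiently large standard integers, hence by overflow contains an infinite $\om$; one does this simultaneously for a coinitial family using saturation, or more simply notes that since the $\rz m_\Fj(\d)$ are (after the reduction) $*$-decreasing, the tail eventually drops below any prescribed $\Fr\in\SN_\d$. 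Then the middle lemma ($\imath\imath)\Leftrightarrow\imath)$) gives convergent coinitiality of $\SK^\om_\d$ in the range of $\SN_\d$. Finally $b)\Rightarrow c)$: suppose $\SK^\om_\d$ is convergently coinitial in the range of $\SN_\d$ for some $\d, \om$. Then for every $m \in \SM^0$ there is $\Fj \le \om$ with $\rz m_\Fj(\d) < \norm{\rz m}_\d$; in particular $\rz m_\om(\d) \lll \Fr$ for all $\Fr \in \SN_\d$ (by the middle lemma again, $\imath)\Rightarrow\imath\imath\imath)$). Applying Lemma \ref{lem: *A contains inf nbd-> has stand nbd} / an overflow argument to the internal statement ``$\rz m_\om(\Ft) < $ [a suitable small standard bound] for $\Ft$ near $\d$'' will extract a standard $r_0$ and then, by running the definition of $\SN_\d$ backwards through $m(r) = r$-type test functions, force $m_j(r) \to 0$ for $r \le r_0$. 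Concretely: if $c)$ failed, there would be $r \in \bbr_+$, arbitrarily small, and $c_0 > 0$ with $m_j(r) \ge c_0$ for infinitely many $j$; transferring and using monotonicity gives $\rz m_\om(\d) \ge$ a noninfinitesimal for the relevant $\om$ — but that contradicts $\rz m_\om(\d) \lll \Fr$ for every $\Fr \in \SN_\d$ once one picks $\Fr$ to be, e.g., $\rz m(\d)$ for a standard $m$ with $m(r) = c_0/2$ near $r$, using that $\rz m(\d) < c_0$.

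The main obstacle I expect is bookkeeping around the gap between the literally pointwise hypothesis in $c)$ and the $\norm{\cdot}_\d$-statements used everywhere else, i.e. making sure that monotonicity of the $m_j$ genuinely converts ``$m_j(r)\to 0$ for each fixed $r$'' into the uniform-on-$B_\d$ control needed after transfer, and dually that $b)$ really does pin down a standard radius $r_0$ rather than just an infinitesimal one. This is handled by the preliminary reduction to the pointwise-decreasing sequence $\wt m_j$ (so suprema become values at endpoints) plus Lemma \ref{lem: *A contains inf nbd-> has stand nbd} to descend from an infinitesimal neighborhood to a standard one; none of it is deep, but it is where an incautious argument would slip.
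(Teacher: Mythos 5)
Your overall plan (a three-way cycle rather than the paper's two biconditionals) is a legitimate variant, and your argument for $c)\Rightarrow a)$ is essentially correct: transferring $\lim_j m_j(r)=0$ for $r\leq r_0$ and evaluating at an infinitesimal $\d<r_0$ gives $\rz\lim_{\Fj}\rz m_\Fj(\d)=0$, which is the strongest form of convergent coinitiality and certainly suffices. But the two remaining directions both have real gaps.

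For $a)\Rightarrow b)$, the overflow argument as stated cannot work. You write that "the internal set of $\la\in\rz\bbn$ for which $\rz m_\la(\d)$ is still below every element of a fixed coinitial chain" is internal and subject to overflow, but $\SN_\d$ has no countable coinitial subset (this is exactly Lemma \ref{lem: SN does not contain countale coinit}), so any "coinitial chain" in $\SN_\d$ is uncountable and external, and the condition "below every element of it" is not internal. Your "more simply" fallback just restates the hypothesis $(a)$; it does not produce a single $*$finite truncation that works. The saturation route you mention in passing is genuinely what is needed (compare the proof of Lemma \ref{lem: K_d converg coin SN_d-> K^om_d converg coin in SN_d}), but you do not carry it out. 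The paper's argument for this direction is different and much shorter: it uses the fact that $\rz\SN_\d$ (the transfer, an internal set) is coinitial in $\mu(0)_+$, hence contains a single $\Fz$ that lies below \emph{all} of $\SN_\d$; applying $(a)$ to that one $\Fz$ produces a $\la\in\rz\bbn$ with $\rz m_\la(\d)<\Fz$, and then $\SK^\la_\d$ is already below all of $\SN_\d$. That observation is the ingredient you are missing.

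For $b)\Rightarrow c)$, your sketch has a logical hole you do not address: the conclusion of the middle lemma concerns $\rz m_\om(\d)$ for a single infinite $\om$, and you need to recover a statement about the standard sequence $m_j$ for all sufficiently large \emph{standard} $j$. Lemma \ref{lem: *A contains inf nbd-> has stand nbd} helps you descend from an infinitesimal radius to a standard radius, but it does nothing about the index $\om$. The paper avoids this entirely: having established $(a)\Leftrightarrow(b)$, it introduces $P=\{r\in\bbr_+:\lim_j m_j(r)=0\}$, observes that $(b)$ (via $(a)$ and the $\liminf$ trick of Remark \ref{rem: e<<<d and f(d)<e -> f(d)=0}) yields $\rz P\neq\emptyset$, so by reverse transfer $P\neq\emptyset$, and then monotonicity of the $m_j$ gives $(0,r_0)\subset P$, i.e.\ $(c)$. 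You should restructure your $b)\Rightarrow c)$ along those lines rather than trying to descend from the infinitesimal picture index by index.
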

\begin{proof}
%     We will only prove the convergently coinitial statements, the other argument being essentially the same.
    We will first prove (a) holds if and only if (b) holds. Suppose that (a) is true. As $\rz\SN_\d$ is coinitial with $\mu(0)_+$, we have that there is $\Fz\in\rz\SN_\d$ such that $\Fz$ is less than all elements of $\SN_\d$ and by hypothesis $\rz\SK_\d$ is coinitial with $\rz\SN_\d$, then there is $\la\in\rz\bbn$ such that $\rz m_\la(\d)<\Fz$, so that $\{\rz m_1,\ldots,\rz m_\la(\d)\}$ is certainly coinitial in the range of $\SN_\d$ and if monotone decreasing is convergently coinitial there. If (b) holds, we know from the previous lemma that $\rz m_\om(\d)\lll\Fr$ for all $\Fr\in\SN_\d$. Given this, let $\hat{m}(t)=\lim_{j\ra\infty}\inf m_j(t)$ so that we have $\hat{m}(t)\geq 0$ where defined. But $\rz\hat{m}(\d)=\rz\lim_{\Fj\ra\infty}\inf m_\Fj(\d)\leq\rz m_\om(\d)\lll\d$ and so by remark \ref{rem: e<<<d and f(d)<e -> f(d)=0}, we must have $\rz\hat{m}(\d)=0$. But this says that $\rz\SK_\d$ is convergently coinitial in $\rz\bbr_+$, ie. in $\rz\SN_\d$.

    To finish, it suffices to verify that (c) holds if and only if (b) holds. But if $P=\{r\in\bbr_+:\SK_r\;\text{is convergently coinitial in}\;\bbr_+\}$, then (b) is equivalent to $\rz P\not=\emptyset$ which then implies that $P$ is nonempty. On the other hand if $r\in P$ and $r'\in\bbr_+$, $r'<r$, then $m_j\in\SM^0$ implies $r'\in P$; transferring the statement: $r'\in\bbr_+$ with $r'\leq r_0$ implies $r'\in P$ gets (c).
\end{proof}
    The next lemma also shows the special nature of these transferred sequences: if the full transferred sequence is convergently coinitial, then in fact a *finite truncation is.
\begin{lem}\label{lem: K_d converg coin SN_d-> K^om_d converg coin in SN_d}
     Suppose we have the same notation of the previous lemma. If $\SK_\d$ is convergently coinitial in $\SN_\d$, then there is $\om\in\rz\bbn$ such that $\SK^\om_\d$ is convergently coinitial in $\SN_\d$.
\end{lem}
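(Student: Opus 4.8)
The plan is to produce the index $\om$ by showing that the internal sequence $\Fj\mapsto\rz m_\Fj(\d)$ has already descended below every element of $\SN_\d$ at some fixed (possibly infinite) stage; the whole point is that internality then lets us truncate.

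First I would reduce to the case in which the transferred sequence is internally nonincreasing at $\d$, i.e.\ $\rz m_1(\d)\ge\rz m_2(\d)\ge\cdots$. This is achieved by passing to $\wt m_j:=\min\{m_1,\dots,m_j\}$, as in the reduction lemma preceding Lemma~\ref{lem: *K_d coin(mu(0)) iff *K^om_d coin N_d}; since $\rz\wt m_\Fj(\d)\le\rz m_\Fj(\d)$ the hypothesis is inherited by $\wt\SK_\d$, and for a nonincreasing sequence the set $\{\Fj\le\om:\rz\wt m_\Fj(\d)<\Fr\}$ is a final segment of $[1,\om]$, so ``$\SK^\om_\d$ is convergently coinitial in $\SN_\d$'' becomes the single requirement that $\rz\wt m_\om(\d)<\Fr$ for every $\Fr\in\SN_\d$. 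The estimate obtained this way is then carried back to $\SK$ using the convergent coinitiality of $\SK_\d$ itself (which forces the genuine tail of $(\rz m_\Fj(\d))$, not merely that of the min--sequence, to be small); alternatively, the general statement follows from the equivalence (b)$\Leftrightarrow$(c) of Lemma~\ref{lem: *K_d coin(mu(0)) iff *K^om_d coin N_d}.

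Working with the nonincreasing sequence, set $\Fc:=\rz\inf_{\Fj\in\rz\bbn}\rz m_\Fj(\d)$, a well--defined nonnegative element of $\rz\bbr$ by internality. Since $(m_j(t))_j$ decreases for each $t$, $\Fc=\rz\hat m(\d)$ where $\hat m(t):=\lim_{j\to\infty}m_j(t)$ is a standard nonnegative function with $\hat m(0)=0$ (its values being dominated by $m_2$), so $\hat m\in F(\bbr_+,0)$, the class relevant to Remark~\ref{rem: e<<<d and f(d)<e -> f(d)=0}. Convergent coinitiality of $\SK_\d$ in $\SN_\d$ gives $\Fc<\Fr$ for every $\Fr\in\SN_\d$, so $\Fc$ lies below $\inf\SN_\d$; by Lemma~\ref{lem: SN_d=SM_d} the infimum of the positive part of $\SN_\d$ equals $\inf\{\rz m(\d):m\in\SM\}$, which is not attained (given $m\in\SM$, the function $t\mapsto m(t)^2$ is again in $\SM$ and is eventually strictly smaller than $m$) and is therefore incomparably smaller than $\d$, cf.\ Lemma~\ref{lem: SN_d doesnot have d-incomparable no}. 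Hence $\Fc\lll\d$, and Remark~\ref{rem: e<<<d and f(d)<e -> f(d)=0} applied to $\hat m$ forces $\Fc=0$. On the other hand $\inf\SN_\d>0$: the family $\{(0,\Fr):0<\Fr\in\SN_\d\}$ has the finite intersection property, so by sufficient saturation there is a positive infinitesimal below every element of $\SN_\d$. Since $\rz\inf_\Fj\rz m_\Fj(\d)=\Fc=0<\inf\SN_\d$, there is $\om\in\rz\bbn$ with $\rz m_\om(\d)<\inf\SN_\d$; by monotonicity $\rz m_\Fj(\d)<\inf\SN_\d\le\Fr$ for all $\Fj\ge\om$ and all $\Fr\in\SN_\d$, so $\SK^\om_\d=\{\rz m_1(\d),\dots,\rz m_\om(\d)\}$ is convergently coinitial in $\SN_\d$, completing the argument.

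The step I expect to be the main obstacle is precisely the reduction and the attendant quantifier bookkeeping: one must make sure that convergent coinitiality of the truncated, a priori non--monotone family $\SK^\om_\d$ is extracted from the hypothesis on the full sequence $\SK_\d$ (not just from the min--sequence), and that the single $\om$ is chosen after quantifying over all $\Fr\in\SN_\d$. A lesser point needing care is the identification $\Fc=\rz\hat m(\d)$ and the verification that $\hat m$ is an admissible input to Remark~\ref{rem: e<<<d and f(d)<e -> f(d)=0}.
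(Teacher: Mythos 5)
Your proof is correct in essence but takes a genuinely different route from the paper's. The paper's argument is a direct concurrence/saturation step: for each $m\in\SM^0$ it forms the internal set $\SK_m=\{\om\in\rz\bbn:\SK^\om_\d\ \text{is coinitial with}\ [\rz m(\d),a]\}$, shows $\{\SK_m:m\in\SM^0\}$ has the finite intersection property (by dominating finitely many $m^j$ below by a single $\un m\in\SM^0$), and invokes sufficient saturation to land a single $\om_0$ in $\bigcap_m\SK_m$ all at once. You instead pass to the monotone min-sequence, compute $\Fc=\rz\inf_\Fj\rz\wt m_\Fj(\d)=\rz\hat m(\d)$, observe that the hypothesis forces $\Fc<\Fr$ for every $\Fr\in\SN_\d$, hence $\Fc\lll\d$, hence $\Fc=0$ via Remark~\ref{rem: e<<<d and f(d)<e -> f(d)=0} (exactly the mechanism the paper already deploys inside Lemma~\ref{lem: *K_d coin(mu(0)) iff *K^om_d coin N_d}, so you could also just cite that lemma's (a)$\Leftrightarrow$(b) equivalence), and then use saturation only to manufacture one positive infinitesimal $\eta$ below every element of $\SN_\d$; the desired $\om$ with $\rz\wt m_\om(\d)<\eta$ then drops out. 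The trade-off: the paper's route finds $\om$ by a simultaneity argument over all of $\SM^0$, yours factors through the ``inf $=0$'' fact and uses saturation only in the weaker form ``there is a lower bound for $\SN_\d$''.

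Two points of hygiene. You repeatedly write $\inf\SN_\d$ as though it were an element of $\rz\bbr$; since $\SN_\d$ is external this need not exist, and what you actually want is a positive infinitesimal $\eta$ below every element of $\SN_\d$, which requires the small saturation argument over $\{(0,\rz m(\d)):m\in\SM\}$ — make that explicit. Second, what your argument (and indeed the paper's own proof) actually delivers for the original, unreduced sequence is that $\SK^\om_\d$ is \emph{coinitial} in $\SN_\d$: the bound $\rz\wt m_\om(\d)<\Fr$ only guarantees that \emph{some} $\rz m_\Fi(\d)$ with $\Fi\le\om$ lies below $\Fr$, not that the tail of the truncated original net is uniformly small. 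This mismatch with the word ``convergently'' is shared by the paper's own proof, and the preceding equivalence lemma was only stated for decreasing sequences, so the lemma should be read as implicitly about the associated monotone sequence (or as asking only for coinitiality); on that reading both your argument and the paper's are fine, but you should flag it rather than wave at ``carrying the estimate back to $\SK$'', which does not by itself upgrade coinitial to convergently coinitial.
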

\begin{proof}
    Fix $a\in\bbr_+$ and let $m\in\SM^0$ and consider  the following internal set
\begin{align}
    \SK_m=\{\om\in\rz\bbn\;|\;\{\rz m_\Fj(\d):1\leq\Fj\leq\om\}\;\text{is coinitial with}\;[\rz m(\d),a]\}.
\end{align}
     First of all, (a) implies that $\SK_m\not=\emptyset$ for each $m\in\SM^0$ by definition of coinitiality: for any $m\in\SM^0$, there is $\om\in\rz\bbn$ such that $\rz m_\om(\d)<\rz m(\d)$. We will prove that the set of internal sets $\FK\dot=\{\SK_m:[m]\in\SM^0\}$ has the finite intersection property. Suppose that $m^1,\ldots,m^k\in\SM^0$; then there is $\un{m}\in\SM^0$ such that $\rz\un{m}(\d)<\rz m^j(\d)$ for $j=1,\cdots,k$. Now $\SK_{\un{m}}$ is nonempty and if $\un{\om}\in\SK_{\un{m}}$, it's clear that $\un{\om}\in\SK_{m^j}$ for all $j$, ie., $\un{\om}$ is in their intersection. Hence, the elements of $\FK$ have the finite intersection property and as the cardinality of $\SM^0$ is bounded above by that of $\SP(\bbr)$, then saturation implies that $\cap\{\SK_m:m\in\SM^0\}\not=\emptyset$. That is, there is $\om_0\in\SK_m$ for all $m\in\SM^0$ or equivalently $\{\rz m_\Fj(\d):1\leq\Fj\leq\om_0\}$ is coinitial in $\SN_\d$.
%    $\{\om\in\rz\bbn:\{\rz m_\Fj(\d):\Fj\in\rz\bbn\}\;\text{is coinitial with}\;[\rz m(\d),a]\}$. Then for each finite subset $\{m^1,\ldots,m^k\}\subset\SM^0$, $\FF_{m^1}\cap\cdots\cap\FF_{m^k}\not=\emptyset$ since we can always find $\un{m}\in\SM^0$ such that $\rz\un{m}(\d)<\rz m^j(\d)$ for $j=1,\ldots,k$ and we have $\emptyset\not=\FF_{\un{m}}\subset\FF_{m^1}\cap\cdots\cap\FF_{m^k}$. But then, by saturation $\cap\{\FF_m:m\in\SM^0\}\not=\emptyset$; eg., there is  $\{\rz m_\Fj(\d_0):\Fj\in\rz\bbn\}$ is coinitial with
\end{proof}
    As we saw in the previous section, for a given $\SJ\subset\SM^0$ and $\d,\d'\in\mu(0)_+$, there is no good reason to believe that $\SJ_{\d'}$ is coinitial in $\SN_{\d'}$ if $\SJ_\d$ is coinitial in $\SN_\d$. With these extended sequences, this does occur.
\begin{cor}\label{cor: m_j(d_0)is coin SN_d_0 -> true for all d}
    Keeping the notation of the previous two lemmas, we have the following. If for some $\d\in\mu(0)_+$ we have that $\SK_\d$ is convergently coinitial in $\SN_\d$, then in fact $\SK_{\d'}$ is convergently coinitial in $\SN_{\d'}$ for all $\d'\in\mu(0)_+$.
\end{cor}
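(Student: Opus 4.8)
The plan is to reduce the statement to the equivalences already established in Lemma \ref{lem: *K_d coin(mu(0)) iff *K^om_d coin N_d}, and then to exploit the fact that the genuinely decisive condition there, condition (c), is a purely standard statement that makes no reference whatsoever to the infinitesimal at which one is evaluating. In brief: the hypothesis of the corollary \emph{is} condition (a) of that lemma for the sequence $\SK=\{m_j:j\in\bbn\}$; condition (a) is equivalent to (c); and (c) carries no $\d$; so the conclusion for all $\d'$ should drop out — provided one is careful, because (a) and (b) of the lemma are stated \emph{existentially} in $\d$, whereas the corollary is \emph{universal} in $\d'$. The repair is to look inside the proof of Lemma \ref{lem: *K_d coin(mu(0)) iff *K^om_d coin N_d} rather than to quote it as a black box.

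Concretely, first I would record that the hypothesis ``for some $\d\in\mu(0)_+$, $\SK_\d$ is convergently coinitial in $\SN_\d$'' is exactly condition (a), so Lemma \ref{lem: *K_d coin(mu(0)) iff *K^om_d coin N_d} gives condition (c): there is $r_0\in\bbr_+$ with $\lim_{j\ra\infty}m_j(r)=0$ for every $r\le r_0$. Next I would revisit the set $P=\{r\in\bbr_+:\SK_r\text{ is convergently coinitial in }\bbr_+\}$ from the proof of that lemma. As shown there, $P$ is downward closed (since each $m_j\in\SM^0$, if $r\in P$ and $r'<r$ then $r'\in P$); and condition (c) — after, if one likes, replacing $m_j$ by $\min\{m_1,\dots,m_j\}$ to make the sequence pointwise decreasing, as in the first lemma of this subsection — gives $(0,r_0]\subset P$. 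Transferring the statement ``$r'\le r_0\Rightarrow r'\in P$'' yields $\rz P\supseteq(0,\rz r_0]$, and since every positive infinitesimal lies in $(0,\rz r_0]$ we get $\d'\in\rz P$ for \emph{every} $\d'\in\mu(0)_+$; that is, $\rz\SK_{\d'}$ is convergently coinitial in $\rz\bbr_+$ for every positive infinitesimal $\d'$. Finally I would upgrade ``convergently coinitial in $\rz\bbr_+$'' to ``convergently coinitial in $\SN_{\d'}$'': this is immediate from the fact (recorded in Lemma \ref{lem: SN_d=SM_d} and the discussion around it, and used throughout this section) that $\SN_{\d'}$, equivalently $\SN^0_{\d'}$, is coinitial with $\mu(0)_+$, so a net that eventually drops below every positive infinitesimal in particular eventually drops below every element of $\SN_{\d'}$, and conversely. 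If the truncated formulation $\SK^\om_{\d'}$ is wanted, one then applies Lemma \ref{lem: K_d converg coin SN_d-> K^om_d converg coin in SN_d} at $\d'$.

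The main (and really only) obstacle is the quantifier mismatch just mentioned: one cannot literally invoke Lemma \ref{lem: *K_d coin(mu(0)) iff *K^om_d coin N_d} for the new infinitesimal $\d'$, because its conditions (a), (b) only assert existence of \emph{some} good $\d$. The content of the corollary is that the ``good'' radii form a \emph{standard}, downward-closed, nonempty subset of $\bbr_+$, hence contain an entire interval $(0,r_0]$, and transfer therefore makes this set engulf all of $\mu(0)_+$ simultaneously. No new analysis of the sequences $m_j$ is required beyond what condition (c) already supplies; the proof is essentially a bookkeeping refinement of the preceding lemma.
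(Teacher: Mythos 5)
Your proof is correct and follows essentially the same route as the paper's: both pass from the hypothesis to the standard, downward-closed set of good radii (your $P$, the paper's $Q=\{r:\lim_j m_j(r)=0\}$), observe it contains an interval $(0,r_0]$, and transfer so that every positive infinitesimal lands in its star, before converting back to coinitiality in $\SN_{\d'}$. The only cosmetic difference is that the paper reconverts via the (a)$\Rightarrow$(b) direction of Lemma \ref{lem: *K_d coin(mu(0) iff *K^om_d coin N_d)} at $\d'$, while you upgrade directly and defer the truncated form to Lemma \ref{lem: K_d converg coin SN_d-> K^om_d converg coin in SN_d}.
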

\begin{proof}
    The hypothesis and the (a) if and only (b) equivalence of Lemma \ref{lem: *K_d coin(mu(0)) iff *K^om_d coin N_d} gets $\rz\SK_\d$ is convergently coinitial in $\rz\SN_\d$. And as the last is coinitial in $\rz\bbr_+$, then, this implies that $\rz\lim_{\Fj\ra*\infty}\rz m_\Fj(\d)=0$. That is, if $Q=\{r\in\bbr_+:\lim_{j\ra\infty}m_j(r)=0\}$, this says that $\d\in\rz Q$ and so $Q$ has infinitely many $r\in\bbr_+$ accumulating at $0$. Pick $r\in Q$ and note that monotonicity implies the statement: $r'\in\bbr_+$ with $r'\leq r$ implies that $r'\in Q$. Transfer of this statement gets $\rz\lim_{\Fj\ra*\infty}\rz m_\Fj(\d')$ for all $\d'\in\rz\bbr_+$ with $\d'\leq\rz r$. But then applying (a) implies (b) of Lemma \ref{lem: *K_d coin(mu(0)) iff *K^om_d coin N_d} for any such $\d'\in\mu(0)_+$, we get our conclusion.
\end{proof}

    Suppose now that $0<r\in\bbr$ and that for $j\in\bbn$, $f_j:B_r\ra\bbr$ is a sequence of continuous functions on $B_r$.
    Recall that $\{f_j:j\in\bbn\}$ converges (uniformly) on $B_r$ to $0$ if $\norm{f_j}_r\ra 0$ as $j\ra\infty$ (equivalent to pointwise convergence of the values as $B_r$ is compact). We will define another type of convergence related to the topology $\tau$.
    Given a sequence $\{f_j\}$ as above, we have the *transfer $\{\rz f_\Fj:j\in\bbn\}$. We need the following definition in order to relate our two topologies.
%\begin{definition}
%    If $r\in\bbr_+$ and $F=\{f_j:j\in\bbn\}$ is a sequence of real valued functions defined on $B_r$, then we define a *sequence of functions $G_{\rz F}=\{\rz f_\Fj|_{\mu(0)}:\Fj\in\rz\bbn\}$ by transferring the sequence $F$ and then restricting the elements of $\rz F$ to $\mu(0)$. If $\om\in\rz\bbn_\infty$, we write $G_{\rz F_\om}\subset G_{\rz F}$ for the *finite set consisting of the first $\om$ terms in $G_{\rz F}$ and $G^\d_{\rz F_\om}$ for the set consisting of these maps restricted to $B_\d$.
%%    We say that $S\subset\rz\bbn$ is reachable if $^\s\bbn\subset S$, $S$ is an interval (necessarily external), and $S$ doesnot contain incomparable numbers. If $\{f_j:j\in\bbn\}$ is a sequence of functions, then given such a reachable $S\subset\rz\bbn$, we will consider $\{\rz f_\Fj:\Fj\in S\}$.
%\end{definition}

\begin{definition}\label{def: convergence of standard seq of fncs in germ top}
    Let $r\in\bbr_+$, $F=\{ f_j:j\in\bbn\}$ be a sequence of real valued functions on $B_r$. We say that the extension of $\{f_j:j\in\bbn\}$ converges in the topology $\tau$  (to the  zero germ),  if for $\d\in\mu(0)_+$, there is $\om\in\rz\bbn$  such that the internal $\om$ sequence of numbers $\{\norm{\rz f_\Fj}_\d:1\leq\Fj\leq\om\}$ is cnvergently coinitial in the range of $\SN_\d$.
\end{definition}
    For example, the sequence of constant functions $x\mapsto f_j(x)\equiv 1/j$, as a sequence of functions uniformly defined on $B_r$, converges in the topology $\tau$ at each $x\in Int(B_r)$. Note also that the *sequence of constant functions $\rz f_\Fj:\xi\in\mu(0)\ra1/\Fj\in\rz\bbr $ converge in the $\tau$ topology, a hint at what follows.
\begin{remark}
    This definition is not possible if we are speaking instead of a sequence of germs. First, recall that for a given $\d$, the corresponding $\SN_\d$ cannot carry an incomparably range of numbers; eg., $\SN_\d$ is contained in a very narrow interval in $\mu(0)_+$ (there are *infinitely many). Given this, we note that the transfer of a sequence of domains of representatives of the germs may shrink through infinitesimal disks contained in $\mu(0)$ so rapidly that the common domain of the first $\om$ maps, for a given $\om\in\rz\bbn_\infty$, may be a $B_\d$ for $\d$ so small that the corresponding $\SN_\d$ consists of infinitesimals that are too small for the $\d$-norms of this extended sequence to be coinitial.
\end{remark}

    We will  now prove some  results that give  correspondences between  uniform convergence (to the zero function) on some neighborhood of $0$ and $\tau$ convergence of the extended sequence.
\begin{proposition}\label{prop: corres. between unif. converg and tau converg}
    Suppose that $F=\{f_j:j\in\bbn\}$ is a sequence of functions defined on $B_{r_0}$. The following are equivalent.
\begin{enumerate}
    \item[a')] There is $\ov{r}\in\bbr_+$, $\ov{r}\leq r$, such that $\norm{f_j}_{\ov{r}}\ra 0$ as $j\ra\infty$.
    \item[b')] For some $\d\in\mu(0)_+$, there is $\om\in\rz\bbn$ such that $\{\norm{\rz f_\Fj}_\d:1\leq\Fj\leq \om\}$ is convergently coinitial with $\SN_\d$.
    \item[c')] For each $\d\in\mu(0)_+$, there is $\om\in\rz\bbn$ such that $\{\norm{\rz f_\Fj}_\d:1\leq\Fj\leq \om\}$ is convergently coinitial with $\SN_\d$.
\end{enumerate}
%    Conversely, if given $0<\d\sim 0$, there is $\om\in\rz\bbn$ such that $\{\norm{\rz f_\Fj}_\d:1\leq\Fj\leq\om\}$ is convergently coinitial with $\SN_\d$, then there is $r\in\bbr_+$ such that $\norm{f_j}_r\ra 0$ as $j\ra\infty$.
\end{proposition}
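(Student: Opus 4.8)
The plan is to reduce everything to the one–variable monotone setting already treated in Lemma~\ref{lem: *K_d coin(mu(0)) iff *K^om_d coin N_d}, Lemma~\ref{lem: K_d converg coin SN_d-> K^om_d converg coin in SN_d} and Corollary~\ref{cor: m_j(d_0)is coin SN_d_0 -> true for all d}. For $j\in\bbn$ set $m_j(r)=\norm{f_j}_r$ on a punctured neighbourhood of $0$ in $\bbr_+$; since the supremum over a larger ball dominates the supremum over a smaller one, each $m_j$ is nondecreasing, and transfer of $\norm{f_j}_r=m_j(r)$ gives $\norm{\rz f_\Fj}_\d=\rz m_\Fj(\d)$ for every $\Fj\in\rz\bbn$ and every $0<\d<r_0$. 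The implication (c')$\Rightarrow$(b') is immediate, so the work is in the remaining two, which I would package as a single equivalence chain. First I would observe that (a') is equivalent to the statement $(\ast)$: there is $\ov r\in\bbr_+$ with $\lim_{j\to\infty}m_j(r)=0$ for all $r\le\ov r$ --- the forward direction being monotonicity, $m_j(r)\le m_j(\ov r)\to 0$, and the reverse trivial. But $(\ast)$ is exactly condition (c) of Lemma~\ref{lem: *K_d coin(mu(0)) iff *K^om_d coin N_d} for the sequence $\SK=\{m_j\}$.

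Next I would run the cited machinery in sequence. By Lemma~\ref{lem: *K_d coin(mu(0)) iff *K^om_d coin N_d}, $(\ast)$ is equivalent to the existence of one $\d\in\mu(0)_+$ with $\rz\SK_\d$ convergently coinitial in the range of $\rz\SN_\d$, and by Corollary~\ref{cor: m_j(d_0)is coin SN_d_0 -> true for all d} this holds for one such $\d$ if and only if it holds for every $\d\in\mu(0)_+$. Lemma~\ref{lem: K_d converg coin SN_d-> K^om_d converg coin in SN_d} then upgrades each of these to the existence of an $\om\in\rz\bbn$ with $\{\rz m_\Fj(\d):1\le\Fj\le\om\}$ convergently coinitial with $\SN_\d$; re-reading this through $\norm{\rz f_\Fj}_\d=\rz m_\Fj(\d)$ gives precisely (c') (the ``for every $\d$'' form) and (b') (the ``for some $\d$'' form). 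Conversely, from (b') the truncated family is in particular convergently coinitial in the range of $\SN_\d$, which is condition (b) of Lemma~\ref{lem: *K_d coin(mu(0)) iff *K^om_d coin N_d}; by that lemma it is equivalent to (c), i.e. to $(\ast)$, i.e. to (a'). That closes the circle (a')$\Rightarrow$(c')$\Rightarrow$(b')$\Rightarrow$(a').

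The step I expect to cost the most care is the reduction itself, because the three cited lemmas are phrased for sequences inside $\un{M}^0(0,a)$, i.e. for \emph{continuous, strictly increasing} representatives, whereas $r\mapsto\norm{f_j}_r$ is only nondecreasing, may have jumps, and need not tend to $0$ at $0$ when $f_j(0)\ne0$. I would dispose of this with the standard devices of that subsection: pass first to $\hat m_j=\min\{m_1,\dots,m_j\}$ (still nondecreasing, and nonincreasing in $j$, so that ``convergently'' has its literal meaning) as in the minimum lemma preceding Lemma~\ref{lem: *K_d coin(mu(0)) iff *K^om_d coin N_d}, and then replace a nondecreasing germ by a strictly increasing piecewise--affine minorant in $\SP\SL^0$ --- or a majorant, in the few places where the direction of the inequality matters --- as in Lemma~\ref{lem:  SM^0 is coinitial in SM}, verifying in each instance that the substitution leaves ``coinitial in the range of'', ``coinitial with'' and the limit $m_j(r)\to0$ unaffected. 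Since every argument in those lemmas uses only monotonicity in the radius plus transfer, this is routine bookkeeping but must be written out. I would also dispatch the degenerate case $f_j(0)\not\to 0$ as vacuous (it defeats (a') and, through $\rz m_\Fj(\d)$, also (b') and (c')), and note that here coinitiality ``with'' $\SN_\d$ adds nothing to coinitiality ``in the range of'', since the value $0$ of the zero germ lies in $\SN_\d$ and below every $\norm{\rz f_\Fj}_\d$.
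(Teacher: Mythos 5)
Your proposal is correct and hinges on the same reduction as the paper: set $m_j(r)=\norm{f_j}_r$, transfer the identity $\norm{\rz f_\Fj}_\d=\rz m_\Fj(\d)$, and route everything through Lemma~\ref{lem: *K_d coin(mu(0)) iff *K^om_d coin N_d}. Where you differ is in the packaging. The paper's proof of $\text{(a')}\Rightarrow\text{(c')}$ goes by transfer of ``convergently coinitial in $\bbr_+$'' to an internal statement and then one application of Lemma~\ref{lem: *K_d coin(mu(0)) iff *K^om_d coin N_d}; for $\text{(b')}\Rightarrow\text{(a')}$ it applies the lemma again, then runs a fresh overflow argument on the internal statement ``$\{\norm{\rz f_\Fj}_\d:\Fj\in\rz\bbn\}$ is convergently coinitial in $\rz\bbr_+$'' to push $\d$ up to a standard radius, and closes by reverse transfer. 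You instead route both directions entirely through the already-proved equivalence of conditions (a), (b), (c) inside Lemma~\ref{lem: *K_d coin(mu(0)) iff *K^om_d coin N_d}, invoke Corollary~\ref{cor: m_j(d_0)is coin SN_d_0 -> true for all d} for the ``some $\d$ $\Leftrightarrow$ every $\d$'' step, and call Lemma~\ref{lem: K_d converg coin SN_d-> K^om_d converg coin in SN_d} explicitly for the truncation to a $\rz$finite index $\om$. This is a cleaner modular chain that avoids re-deriving overflow, at the cost of leaning on more prior machinery. Your final note that ``convergently coinitial with $\SN_\d$'' and ``convergently coinitial in the range of $\SN_\d$'' coincide here because $0=\norm{\rz 0}_\d\in\SN_\d$ is a real (if small) point of rigor the paper silently elides when it passes between the proposition's ``with'' and the cited lemmas' ``in.'' You also correctly flag that $r\mapsto\norm{f_j}_r$ is merely nondecreasing and possibly discontinuous whereas the one-variable lemmas are stated for $\un{M}^0$; the paper's own proof applies them without comment, so your proposed repair via the $\min$-sequence device and the piecewise-affine minorant of Lemma~\ref{lem:  SM^0 is coinitial in SM} (together with discarding the degenerate case $f_j(0)\not\to 0$) is a genuine tightening rather than superfluous caution.
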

\begin{proof}
    The hypothesis for the first claim clearly implies the statement: for all $r\in\bbr_+$ with $r\leq\ov{r}$ $\{\norm{f_j}_r:j\in\bbn\}$ is convergently coinitial in $\bbr_+$ whose transfer gives: for all $\Fr\in\rz\bbr_+$ with $\Fr\leq\rz\ov{r}$, $\{\norm{\rz f_\Fj}_\Fr:\Fj\in\rz\bbn\}$ is convergently coinitial in $\rz\bbr_+$.
    But then Lemma \ref{lem: *K_d coin(mu(0)) iff *K^om_d coin N_d} implies, in particular, that if $\d\in\mu(0)_+$, then there is $\om\in\rz\bbn$ such that $\{\norm{\rz f_\Fj}_\d:1\leq\Fj\leq\om\}$ is convergently coinitial in $\SN_\d$.
    Conversely, if given $\d\in\mu(0)_+$, there is $\om$ such that $\{\norm{\rz f_\Fj}_\d:1\leq\Fj\leq\om\}$ is convergently coinitial in $\SN_\d$, then again by Lemma \ref{lem: *K_d coin(mu(0)) iff *K^om_d coin N_d} we have that $\{\norm{\rz f_\Fj}_\d:\Fj\in\rz\bbn\}$ is convergently coinitial in $\mu(0)_+$, ie., in $\rz\bbr_+$ for all $\d\sim 0$. But then as this is, for each $\d$, an internal statement, overflow implies that it holds for all $\d$ less than some noninfinitesimal $\la>0$, eg., for some standard $\rz r<\la$. But then, reverse transfer of this statement for $\rz r$ gives our conclusion.
%     eg., coinitial with $\mu(0)_+$. In particular, if $\d\in\mu(0)_+$ and $\e\in\mu(0)_+$ with $\e\lll\d$, then there is $\om\in\rz\bbn$ with $\norm{\rz f_\Fj}_\d<\e$ for $\Fj\geq\om$ so that as  $\Fs\in\SN_\d$ implies that $\Fs>\e$, we have the first conclusion.

%\begin{align}
%    W=\{r\in\bbr_+:\;\text{there exists}\;j\in\bbn\;\text{such that}\;\norm{f_j}_r<\norm{m}_r\}
%\end{align}
%    Then, clearly $\d\in\rz W$ and therefore $W$ is nonempty
\end{proof}
    We have the following equivalent `pointwise' formulation of the previous proposition.

\begin{proposition}\label{prop: corres. between ptwise converg and tau converg}
    Suppose that $r_0\in\bbr_+$ and $F=\{f_j:j\in\bbn\}$ is a sequence of real valued functions on $B_{r_0}$; then the following are equivalent to a') and b') of the previous proposition.
\begin{enumerate}
    \item[a'')]There is $r\in\bbr_+,r\leq r_0$ such that for each $x\in B_r, f_j(x)\ra 0$ as $j\ra\infty$.
    \item[b'')]Given $\d\in\mu(0)_+$, there is $\om\in\rz\bbn$ such that if $\xi\in B_\d$, then $\{ |\rz f_{\Fj}(\xi)|:1\leq\Fj\leq\om\}$ is convergently coinitial in $\SN_\d$.
\end{enumerate}
\end{proposition}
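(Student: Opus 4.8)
The plan is to deduce Proposition~\ref{prop: corres. between ptwise converg and tau converg} from Proposition~\ref{prop: corres. between unif. converg and tau converg} by inserting a'') and b'') into the chain a')$\Leftrightarrow$b')$\Leftrightarrow$c') established there; concretely it suffices to prove b'')$\Leftrightarrow$b') and a'')$\Leftrightarrow$a'). I would dispatch b'')$\Leftrightarrow$b') first, the only ingredient being that $\rz B_\d$ is $\rz$-compact, so $\norm{\rz f_\Fj}_\d=\rz\sup\{|\rz f_\Fj(\xi)|:\xi\in\rz B_\d\}$ is attained. One direction is then immediate: since $|\rz f_\Fj(\xi)|\le\norm{\rz f_\Fj}_\d$ for all $\xi\in\rz B_\d$, convergent coinitiality of the $\om$-truncation of $(\norm{\rz f_\Fj}_\d)$ in $\SN_\d$ forces convergent coinitiality of $(|\rz f_\Fj(\xi)|:1\le\Fj\le\om)$, with the \emph{same} $\om$ and uniformly in $\xi$. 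For the converse, fix $\Fr\in\SN_\d$; by transfer of the definition of supremum, ``$\norm{\rz f_\Fj}_\d<\Fr$'' is exactly ``$|\rz f_\Fj(\xi)|<\Fr$ for all $\xi\in\rz B_\d$'', so an internal witness $\Fj_0$ produced by b'') at the maximizing $\xi$ serves for the sup-norm net as well, yielding b').

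The substantive step is a'')$\Leftrightarrow$a'). Here a')$\Rightarrow$a'' is trivial (take $r=\ov r$). For a'')$\Rightarrow$a') I would route through Lemma~\ref{lem: *K_d coin(mu(0)) iff *K^om_d coin N_d}: writing $m_j(t)=\norm{f_j}_t$ (a non-decreasing modulus near $0$), condition a') is condition (c) of that lemma for $\{m_j\}$, while condition (b) there is exactly b'), already handled; so it suffices to show a'') forces condition (c), i.e.\ that pointwise convergence of the $f_j$ to $0$ on some $B_r$ forces $\lim_{j\to\infty}\norm{f_j}_{\ov r}=0$ for some $\ov r\le r$. The nonstandard argument: transfer ``for each $x\in B_r$, $f_j(x)\to 0$'' to get $\rz f_\Fj(\rz x)\sim 0$ for every standard $x\in B_r$ and every infinite $\Fj$, then replace the external quantifier ``for each standard $x$'' by the internal quantifier ``for each $\xi\in\rz B_{\ov r}$'' for an appropriate standard $\ov r$ by applying overflow to the internal set $\{\Ft\in\rz\bbr_+:\norm{\rz f_\Fj}_\Ft\sim 0\text{ for all infinite }\Fj\le\om\}$; finally reverse-transfer ``$\norm{\rz f_\Fj}_{\ov r}<\rz\e$ for all infinite $\Fj$'', valid for every standard $\e>0$, to conclude $\norm{f_j}_{\ov r}\to 0$. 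Combining b'')$\Leftrightarrow$b', a'')$\Leftrightarrow$a', and the equivalences of Proposition~\ref{prop: corres. between unif. converg and tau converg} then gives all the asserted equivalences.

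The main obstacle is exactly the passage from pointwise to local uniform convergence inside a'')$\Rightarrow$a'): for an arbitrary family of functions this fails classically, so the argument has to genuinely exploit the nonstandard setting — the point being that a quantifier over an $\rz$-compact internal ball can be recovered from a quantifier over standard points together with internality of the set carrying the uniform bound, which is the ``uniform convergence at a point'' phenomenon flagged in the remark before the proposition. Pinning down the correct standard radius $\ov r$ so that the overflow set above is nonempty is where care is needed; once that is in place, the remaining bookkeeping — matching the roles of $\SN_\d$, the truncation index $\om$, and the nets $(|\rz f_\Fj(\xi)|)$ versus $(\norm{\rz f_\Fj}_\d)$ — is routine and parallels the proof of Proposition~\ref{prop: corres. between unif. converg and tau converg}.
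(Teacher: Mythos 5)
Your decomposition differs from the paper's: you aim to prove $b'')\Leftrightarrow b')$ and $a'')\Leftrightarrow a')$, whereas the paper declares $a'')\Leftrightarrow a')$ ``clear'' with no argument at all and invests its effort in $b'')\Rightarrow c')$ by contradiction. Your $b'')\Rightarrow b')$ sketch (``an internal witness $\Fj_0$ produced by $b'')$ at the maximizing $\xi$'') is in the spirit of the paper's argument, but glosses over the fact that the maximizing $\xi$ depends on the index $\Fj$. The paper instead supposes $c')$ fails, so that for the $\om$ from $b'')$ there is $\Fr_0\in\SN_\d$ with $\norm{\rz f_\Fj}_\d>\Fr_0$ for all $\Fj\le\om$; it then takes for each $\Fj\le\om$ a point $\xi_\Fj\in B_\d$ with $|\rz f_\Fj(\xi_\Fj)|\ge\Fr_0$, applies $b'')$ at the single point $\xi_\om$, and extracts a tail estimate that forces $|\rz f_\om(\xi_\om)|<\Fr_0$, a contradiction. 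Your phrasing, which tries to pass a single witness across all $\Fj$, needs to be replaced by essentially that scaffolding before it goes through.

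The genuine gap is the $a'')\Rightarrow a')$ argument. The set $\{\Ft\in\rz\bbr_+:\norm{\rz f_\Fj}_\Ft\sim 0\;\text{for all infinite}\;\Fj\le\om\}$ is external on two counts (``$\sim 0$'' and ``for all infinite $\Fj$'' are both external), so overflow cannot be applied to it. Even setting that aside, the transferred hypothesis $a'')$ only gives $\rz f_\Fj(\rz x)\sim 0$ at \emph{standard} $x$, and that says nothing about $\norm{\rz f_\Fj}_\Ft$, a sup over $\rz B_\Ft$ and hence over predominantly non-standard points; pointwise control at standard points cannot be upgraded to a sup-bound by this route. You correctly flag the obstruction --- pointwise convergence does not classically imply locally uniform convergence --- but the overflow maneuver does not supply the missing uniformity; it restates the gap. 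The paper offers no guidance here: it asserts $a'')\Leftrightarrow a')$ is clear without proof, so the very step you are trying to make explicit is precisely the one the paper leaves unjustified.
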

\begin{proof}
    We will show a''), respectively b''), is equivalent to a'), respectively c'), of the previous proposition. The equivalence of a') and a'') is clear and c') clearly implies b''), so it suffices to verify b'') implies c').
    By way of contradiction, suppose b'') holds but that c') does not hold. That is, there is $\d\in\mu(0)_+$ such that for each $\om\in\rz\bbn$, $\{\norm{\rz f_\Fj}_\d:1\leq\Fj\leq\om\}$ is not convergently coinitial in $\SN_\d$; ie., given $\om\in\rz\bbn$, there is $\Fr_0\in\SN_\d$  such that $\norm{\rz f_\Fj}_\d>\Fr_0$ for all $\Fj\leq\om$. In particular, for $\Fj\leq\om$, there is $\xi_\Fj\in B_\d$ such that $|\rz f_\Fj(\xi_\Fj)|\geq\Fr_0$. But choosing our $\om$ to be the element of $\rz\bbn$ asserted in b''), then b'') implies, as $\xi_\om\in B_\d$, that $\{|\rz f_\Fj(\xi_\om)|:\Fj\leq\om\}$ is convergently coinitial in $\SN_\d$, in particular, there is $\Fj_0<\om$ such that if $\Fj>\Fj_0$, then $|\rz f_\Fj(\xi_\om)|<\Fr_0$, eg, this must be true for $|\rz f_\om(\xi_\om)|$, a contradiction.
%    there is $\Fr\in\SN_\d$ such that all elements of the set $\{\norm{\rz f_\Fj}_\d:1\leq\Fj\leq\om\}$ are all larger than $\Fr$. That is, and we don't need *compactness of $B_\d$ on this, for each $\Fj\in\rz\bbn$ there is $\xi_\Fj\in B_\d$ such that $\rz f_\Fj(\xi_\Fj)>\Fr_0/2$
\end{proof}

    Here we will give a correspondence result that relates `pointwise convergence' that is  analogous to the way the previous proposition gives a correspondence for `uniform convergence'.
%\begin{lem}
%    Suppose that $\{f_j:j\in\bbn\}$ is a sequence of continuous functions on $B_r$ such that if $\Fj\in\rz\bbn_\infty$, then there is $0<\e\sim 0$ with $\rz f_\Fj(\xi)\leq\e$ for all $\xi\in\mu(0)$. Then $\{f_j\}$ converges to $[0]$ at zero in the topology $\tau$.
%\end{lem}

%    The next proposition essentially gives (plausible) a formal correspondence between pointwise convergence on an open set and monadic uniform convergence at each point in tat set.
    We want to consider the analogous situation of one parameter families of functions.
\begin{definition}\label{def: continuity in each variable}
    We say that a map $F:(\bbr,0)\ra C^0(B^n_r)$ is continuous at $t=0$ if the corresponding map $\wt{F}:\bbr^{n+1}\ra\bbr$, $\wt{F}(t,x)=F(t)(x)$ is continuous in  $x$ and in $t$ at $t=0$ separately with respect to the usual topology on the Euclidean spaces. We will sometimes write $F(t)=f_t$ and say that, eg., the map $t\mapsto f_t(x)$ is continuous for $x\in\bbr^n$ such that $f_t(x)$ is defined for $|t|$ small.
\end{definition}
    We first need a lemma.
\begin{lem}
    If $\d\in\mu(0)_+$ let  $\SU_\d\subset\mu(0)_+$ denote the set $\{\d'\in\mu(0)_+:\d<\d'\}$, and suppose that  $c\in\SM^0$. Then for each $\d\in\mu(0)_+$, there is $\e\in\mu(0)_+$ such that $\rz c(\SU_\e)$ is convergently coinitial with $\SN_\d$.
\end{lem}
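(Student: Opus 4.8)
The plan is to reduce the statement to a single saturation fact — that some positive infinitesimal lies strictly below the whole external set $\SN_\d$ — and then to transport it through the germ $c$.

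Since $c\in\SM^0$ it has a continuous, strictly increasing representative on some interval $(0,a)$, hence a homeomorphism onto an interval $(0,L)$, and $\rz c^{-1}$ is the transfer of $c^{-1}\in\SM^0$. Because $\lim_{t\to 0}c(t)=0$ (and likewise for $c^{-1}$), the map $\rz c$ restricts to a strictly increasing bijection of $\mu(0)_+$ onto itself. Consequently, for every $\e\in\mu(0)_+$,
\begin{align}
\rz c(\SU_\e)=\{\,\rz c(\d'):\d'\in\mu(0)_+,\ \d'>\e\,\}=\{\,\eta\in\mu(0)_+:\eta>\rz c(\e)\,\}=\SU_{\rz c(\e)},
\end{align}
and $\rz c$ carries the net $\d'\mapsto\rz c(\d')$ on $\SU_\e$ (directed toward $\e$) onto the identity net on $\SU_{\rz c(\e)}$. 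This net is monotone, so by the remark that monotonicity turns coinitiality into coinitial convergence it suffices to choose the infinitesimal $\eta_0:=\rz c(\e)$ so that $\SU_{\eta_0}$ is coinitial with $\SN_\d$ in the required range sense, and then set $\e=\rz c^{-1}(\eta_0)\in\mu(0)_+$.

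To produce $\eta_0$: by Proposition \ref{prop: SN^0_d is coin in SN_d}, $\SN^0_\d$ is coinitial in $\SN_\d$, and each nonzero $[g]\in\SG_0^0$ has $\rz\norm{\rz g}_\d\in\mu(0)_+$ (as $\rz g$ is $S$-continuous at $0$ and $g(0)=0$). Consider the family of internal sets
\begin{align}
\FA=\bigl\{\,(0,\rz\norm{\rz g}_\d):[g]\in\SG_0^0,\ [g]\neq 0\,\bigr\}\cup\bigl\{\,(0,1/n):n\in\bbn\,\bigr\};
\end{align}
each member is nonempty and $\FA$ has the finite intersection property, since a finite intersection is again an interval $(0,\la)$ with $\la$ the minimum of finitely many positive numbers. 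As $\card\SG_0^0\le\card\bbr$, sufficient saturation yields $\eta_0\in\bigcap\FA$; then $\eta_0<1/n$ for all $n$, so $\eta_0\in\mu(0)_+$, and $\eta_0<\rz\norm{\rz g}_\d$ for every nonzero $[g]\in\SG_0^0$, so $\eta_0$ lies below all of $\SN^0_\d$, hence below all of $\SN_\d$.

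Finally I would verify that $\SU_{\eta_0}$, with the identity net, is convergently coinitial with $\SN_\d$. Given $\Fr\in\SN_\d$, put $\eta_\ast=\min\{\Fr,2\eta_0\}$; since $\eta_0<\Fr$ and $\eta_0\in\mu(0)_+$ we get $\eta_0<\eta_\ast\in\mu(0)_+$, and every net value $\eta<\eta_\ast$ satisfies $\eta<\Fr$ — this is convergent coinitiality in the range of $\SN_\d$. The remaining clause, that each net value is bounded above by an element of $\SN_\d$, holds because $\SN_\d$ contains noninfinitesimals (e.g. $\rz\norm{\rz g}_\d=1$ for the germ of the bounded function equal to $1$ off $0$ and to $0$ at $0$), while every element of $\SU_{\eta_0}$ is a positive infinitesimal. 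Transporting back through $\rz c$ as above, $\rz c(\SU_\e)=\SU_{\eta_0}$ inherits these properties, proving the lemma. The only delicate point is the existence of $\eta_0$, which rests on (sufficient) saturation; a secondary nuisance is that ``convergently coinitial with $\SN_\d$'' is phrased slightly differently in the two definitions of this section, but the argument above meets the strongest reading and hence all intended ones.
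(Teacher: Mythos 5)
Your route is genuinely different from the paper's, and it does work. The paper's proof is a one-liner: it appeals to the fact $\rz c(\SN_\d)=\SN_\d$ (from Corollary~\ref{cor: m(N_d)-N_d and N_(m(d))=N_d}) and then simply picks $\e\lll\d$, so that for any $\Fr\in\SN_\d$ the preimage $\rz c^{-1}(\Fr)$ is again in $\SN_\d$ and hence exceeds $\e$, giving the required threshold in $\SU_\e$. You instead run a concurrence/saturation argument over the (size at most $\card\bbr$) family of intervals $(0,\rz\norm{\rz g}_\d)$ to produce an explicit $\eta_0\in\mu(0)_+$ strictly below all of $\SN_\d$ — this is exactly the assertion $\eta_0\lll\d$ — and then pull it back through the order-isomorphism $\rz c$ of $\mu(0)_+$ to get $\e=\rz c^{-1}(\eta_0)$ with $\rz c(\SU_\e)=\SU_{\eta_0}$. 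Both approaches boil down to choosing $\e$ so that $\rz c(\e)\lll\d$; the paper reaches that via the $\rz c$-invariance of $\SN_\d$, you reach it via saturation. Your approach is more self-contained and makes the saturation hypothesis visible, whereas the paper leans on previously established machinery; your monotone verification of ``convergently coinitial in the range of $\SN_\d$'' via $\eta_\ast=\min\{\Fr,2\eta_0\}$ is correct and clean.

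One caveat concerns the ``with $\SN_\d$'' clause. You verify it under the reading ``each net value is bounded above by some element of $\SN_\d$'' (the phrasing in the subsection on convergence), but the definition that immediately precedes this lemma (Definition~\ref{def: convergently coinitial in SN_d}) asks instead that $\SN_\d$ be coinitial with the net, i.e., that for each net value $\eta$ there exist $\Fs\in\SN_\d$ with $\Fs\leq\eta$. Under that reading your construction does not satisfy it — e.g.\ $2\eta_0$ is a net value of $\SU_{\eta_0}$ and, since $m/2\in\SM^0$ whenever $m\in\SM^0$, it is still below all of $\SN_\d\cap\mu(0)_+$ — and in fact the paper's one-line proof does not address this clause either, so the tension lies in the paper's two discrepant definitions rather than in your reasoning. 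But your closing remark that the argument ``meets the strongest reading'' is not quite right: under the Definition~\ref{def: convergently coinitial in SN_d} reading it is met only vacuously (via the zero germ, if one counts $0\in\SN_\d$), not by the boundedness observation you supply.
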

\begin{proof}
    This is clear as for a given $\d\in\mu(0)_+$ we have $\rz c(\SN_\d)=\SN_\d$ so that just pick $\e\lll\d$.
\end{proof}
\begin{proposition}\label{prop: corres: t->f_t standard converg<-> tau converg}
    Suppose that, for some $r\in\bbr_+$, $t\mapsto f_t:\bbr\ra F(B_r,\bbr)$ is such that $f_t(0)=0$ for all $t\in\bbr$ where defined and $f_0$ is the zero function. Then the following are equivalent.
\begin{enumerate}
    \item[$\a$)]The map $t\mapsto f_t$ is a continuous at $t=0$.
    \item[$\b$)]The extended map $\Ft\in\rz\bbr_+\mapsto\rz f_\Ft$ satisfies the following. For each $\d\in\mu(0)_+$, there is $\a\in\rz\bbr_+$ such that $\{\norm{\rz f_\Fs}_\d:\Fs>\a\}$ is convergently coinitial in $\SN_\d$.
\end{enumerate}
%    Suppose that $0<r\in\bbr$ and that $\{f_j:j\in\bbn\}$ is a sequence of continuous functions on $B_r$ converging uniformly there. If $x\in Int(B_r)$, then the sequence of $f_j$'s converge in the topology $\tau$ at $x$. Conversely, suppose that we have the sequence $f_j$ of continuous functions and that for each $x\in Int(B_r)$, the sequence converges in the topology $\tau$ at $x$, then ******
\end{proposition}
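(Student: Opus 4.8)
The plan is to reduce $(\alpha)$ to the standard assertion that $\norm{f_t}_{\ov r}\to 0$ as $t\to 0$ for some small standard $\ov r\le r$, and then to transport that assertion across the $\rz$ map using the coinitiality machinery already assembled for sequences in Propositions \ref{prop: corres. between unif. converg and tau converg} and \ref{prop: corres. between ptwise converg and tau converg}, now with the continuous parameter $t$ in place of the discrete one.

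The first step is the reduction of $(\alpha)$. By Definition \ref{def: continuity in each variable}, $(\alpha)$ amounts to: each $f_t$ is continuous and $f_t(x)\to f_0(x)=0$ as $t\to 0$, for each $x$. Exactly as in the equivalence of the conditions labelled a$'$) and a$''$) in Proposition \ref{prop: corres. between ptwise converg and tau converg} --- the nonstandard incarnation of ``pointwise convergence on a ball forces uniform convergence on a slightly smaller ball'' --- this is equivalent to the existence of $\ov r\le r$ with $\rho(s):=\sup\{\norm{f_t}_{\ov r}:|t|\le s\}\to 0$ as $s\to 0^+$. The function $\rho$ is nondecreasing and, excluding the trivial case $\rho\equiv 0$ near $0$ (where $f_t\equiv 0$ on $B_{\ov r}$), lies in $\wt{\SM}$; interpolating between the values $\rho(1/n)$ as in the undercutting construction of Lemma \ref{lem:  SM^0 is coinitial in SM}, but in the majorizing direction, produces $c\in\SM^0$ with $c(s)\ge\rho(s)$ for all small $s>0$. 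After transfer this gives $\norm{\rz f_\Fs}_\d\le\norm{\rz f_\Fs}_{\ov r}\le\rz c(\Fs)$ for every $\d\in\mu(0)_+$ with $\d\le\ov r$ and every positive $\Fs$ in the transferred domain.

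For $(\alpha)\Rightarrow(\beta)$, fix $\d\in\mu(0)_+$ and apply the lemma immediately preceding this proposition to $c$: there is $\e\in\mu(0)_+$ with $\rz c(\SU_\e)$ convergently coinitial with $\SN_\d$, where $\SU_\e=\{\d'\in\mu(0)_+:\d'>\e\}$. Putting $\a=\e$ and using that $c$ is nondecreasing and that $\norm{\rz f_\Fs}_\d\le\rz c(\Fs)$, the family $\{\norm{\rz f_\Fs}_\d:\Fs>\a\}$ is dominated termwise by $\rz c(\SU_\a)$, hence is itself convergently coinitial in the range of $\SN_\d$; this is $(\beta)$. For $(\beta)\Rightarrow(\alpha)$, fix $\d\in\mu(0)_+$ and take $\a$ as in $(\beta)$. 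Since $\SN_\d$ contains elements infinitesimally close to every positive standard real (Lemma \ref{lem: SN_d=SM_d} and the discussion following Definition \ref{def: SN_delta families of numbers}), $(\beta)$ forces $\norm{\rz f_\Fs}_\d\sim 0$ on a cofinal part of $\{\Fs>\a\}$; applying overflow to the internal set $\{t>0:\norm{\rz f_t}_\d<\rz\e_0\}$ for a given standard $\e_0>0$, and then reverse transfer, yields a standard $\ov r>0$ and the statement $\norm{f_t}_{\ov r}\to 0$ as $t\to 0$, which is $(\alpha)$ by the first step. Independence of this argument from the chosen $\d$ follows from Corollary \ref{cor: m_j(d_0)is coin SN_d_0 -> true for all d} applied to the transferred one parameter family.

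The step I expect to be the genuine obstacle is the bookkeeping around the directed index set $\{\Fs>\a\}$ together with the role of $\a$: one must read ``convergently coinitial in the range of $\SN_\d$'' for this family with the order in which $\Fs\downarrow\a$ is the cofinal direction, and one must check that the $\SM^0$ majorant $c$ of $\rho$ can genuinely be taken continuous and monotone while still bounding $\rho$ throughout $\mu(0)_+$ after transfer. This passage from $\wt{\SM}$ to $\SM^0$ in the majorizing (rather than the minorizing) direction is where the only real combinatorial work lies, and it is a routine variant of the piecewise linear argument in Lemma \ref{lem:  SM^0 is coinitial in SM}; everything else is a transcription of the sequence case proofs with $t$ in place of $j$ and $\SU_\a$ in place of the finite truncations $\{1,\dots,\om\}$.
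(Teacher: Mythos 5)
Your proof takes a genuinely different route from the paper's. The paper reduces to the already-proved sequence case by a single reparameterization $\Fs=1/\Fj$ (so $\Fs\downarrow 0$ becomes $\Fj\uparrow\infty$) and then invokes Proposition \ref{prop: corres. between ptwise converg and tau converg}. You argue directly in the continuous parameter: for $(\alpha)\Rightarrow(\beta)$ you construct a running-supremum modulus $\rho$, majorize it by some $c\in\SM^0$, and dominate $\norm{\rz f_\Fs}_\d$ by $\rz c(\Fs)$, then apply the lemma preceding the proposition. That half is sound and is an attractive alternative to the reparameterization.

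The $(\beta)\Rightarrow(\alpha)$ direction, however, has a genuine gap. From $(\beta)$, all you know is that for each $\Fr\in\SN_\d$ there is a tail interval $(\a,\Fs_0(\Fr))$ on which $\norm{\rz f_\Fs}_\d<\Fr$; you have no control on how short that interval is, and in particular no reason to think the internal set $\{\Ft>0:\norm{\rz f_\Ft}_\d<\rz\e_0\}$ contains any external set to which overflow applies. (Note also that the sequence proof overflows on $\d$, not on the index.) In the discrete case the bridge from ``*finite truncation coinitial'' to a standard conclusion is exactly Lemma \ref{lem: *K_d coin(mu(0)) iff *K^om_d coin N_d}, whose proof passes through the \emph{standard} function $\hat m=\liminf_j m_j\in\wt{\SM}$, shows $\rz\hat m(\d)\lll\d$, and invokes Remark \ref{rem: e<<<d and f(d)<e -> f(d)=0} to force $\rz\hat m(\d)=0$; that vanishing of a standard function at $\d$ is what makes reverse transfer bite. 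Your argument never produces such a standard object, so there is nothing for reverse transfer to act on. To complete your direct approach you would need a continuous-parameter analogue of that lemma (e.g., $\hat m(t)=\liminf_{s\to 0^+}\norm{f_s}_t$ and the same $\lll$ argument), which is nontrivial enough that the paper instead sidesteps it by the $1/\Fj$ reparameterization.
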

\begin{proof}
     Since our condition in $\b$) is the same as saying for each $\e\in\mu(0)_+$, there is $\om\in\rz\bbn$, such that the *sequence $\{1/\Fj:1\leq\Fj\leq\om\}$ is convergently coinitial in $\SU_\e$,
     then our condition is equivalent to proving that there is $\om\in\rz\bbn$ such that the sequence $j\in\bbn\mapsto f_{1/j}$ satisfies $\{\norm{\rz f_{1/\Fj}}_\d:1\leq\Fj\leq\om\}$ is convergently coinitial in $\SN_\d$.
     With the same reparameterization, we similarly get that for each $x\in B_r$, $f$ is continuous at $t=0$, ie., $f_t(x)\ra 0$ as $t\ra 0$ for each $x\in B_r$ is equivalent to $f_{1/j}(x)\ra 0$ as $j\ra\infty$ at each such $x$.
     But now the equivalence of $\a$) and $\b$) has been reduced to that of a'') and b'') in Proposition \ref{prop: corres. between ptwise converg and tau converg}.
\end{proof}

\subsection{Continuity of germ operations}\label{sec: continuity of germ ops}
     In this section, armed with all of the preliminaries, eg., with the relationships among all of the various families of infinitesimals functioning as moduli, we can prove that our spaces of continuous germs have good algebraic properties. That is, in the next subsection, we verify good ring properties and in the following good compositional properties.

\subsubsection{Topological properties of the ring operations of germs}\label{subsec: top properties ring structure}
    In this subsection we will verify that if $[f]\in\SG_0$, then the maps $+_{[f]}:\SG_0\ra\SG_0:[g]\mapsto [f]+[g]$ and $\x_{[f]}:\SG_0\ra\SG_0:[g]\mapsto [f][g]$ are continuous in the topology $\tau$.
\begin{lem}
    Given $\Fr,\Fs\in\SN_\d$ with $\Fr<\Fs$, there is $\Ft\in\SN_\d$ such that $\Fs\Ft<\Fr$; that is, $\Fs\SN_\d\subset\SN_\d$ is coinitial.
\end{lem}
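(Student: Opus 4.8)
The plan is to establish the two assertions in turn: the inclusion $\Fs\SN_\d\subseteq\SN_\d$, and then the coinitiality of $\Fs\SN_\d$ in $\SN_\d$.

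The inclusion is just the semiring property of $\SN_\d$ already recorded in the lemma that $\wh{\SN}_\d$ is an $^\s\bbr$-subalgebra of $\rz\bbr$; concretely, writing $\Fs=\rz m_1(\d)$ and $\Ft=\rz m_2(\d)$ with $m_1,m_2\in\SM$ (Lemma \ref{lem: SN is gen by set of the f(d)'s}), one has $m_1m_2\in\SM$ and $\Fs\Ft=\rz(m_1m_2)(\d)\in\SN_\d$. So only coinitiality requires work.

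For coinitiality, fix an arbitrary $\Fr\in\SN_\d$; I must produce $\Ft\in\SN_\d$ with $\Fs\Ft\le\Fr$. First I would reduce to $\Fr$ a positive infinitesimal: by Proposition \ref{prop: SN^0_d is coin in SN_d} there is $\Fr_0\in\SN_\d^0$ with $\Fr_0\le\Fr$, and it suffices to find $\Ft\in\SN_\d$ with $\Fs\Ft\le\Fr_0$. Being in $\SN_\d^0$, $\Fr_0$ is a positive infinitesimal, so $0<\Fr_0<1$; moreover, writing $\Fr_0=\rz\norm{\rz g}_\d$ with $[g]\in\SG_0^0$, the germ $x\mapsto g(x)^2$ also lies in $\SG_0^0$ and realizes $\Fr_0^2=\rz\norm{\rz g}_\d^2=\rz\norm{\rz(g^2)}_\d$, so $\Fr_0^2\in\SN_\d^0\subseteq\SN_\d$. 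Now set $\Ft\dot=\Fr_0^2$. Since the moduli in play are nearstandard (the noninfinitesimal members of $\SN_\d$ are infinitesimally close to standard values $|g(0)|$), $\Fs$ is finite, hence $\Fs\Fr_0\sim 0$, and in particular $\Fs\Fr_0<1$; therefore
\begin{align*}
\Fs\Ft=(\Fs\Fr_0)\,\Fr_0<\Fr_0\le\Fr .
\end{align*}
Specializing to the stated hypothesis $\Fr<\Fs$ then yields $\Ft\in\SN_\d$ with $\Fs\Ft<\Fr$, which also gives that $\Fs\SN_\d$ is coinitial in $\SN_\d$.

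The only delicate point is the finiteness of $\Fs$, which is precisely what lets an infinitesimal $\Fr_0$ absorb the fixed factor $\Fs$ after squaring; this is exactly the content of the reduction, via $\SN_\d^0$ being coinitial, to the moduli relevant to the topology $\tau$ on the ring $\SG_0^0$ of continuous germs, where all $\rz$-sup norms are nearstandard. If one wanted to avoid even this, one could take $\Ft=\Fr_0^{\,k}\in\SN_\d^0$ (realized by $x\mapsto g(x)^k$) for a standard $k$ large enough that $\Fs\Fr_0^{\,k-1}<1$; the existence of such $k$ is the sole genuine use of $\Fs$ finite, so no substantive obstacle arises — the lemma reduces to the facts that $\SN_\d$ is product-closed, that $\SN_\d^0$ is coinitial in it, and that squaring a small modulus outpaces multiplication by a fixed finite one.
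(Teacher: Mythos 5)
Your proof is correct, but it takes a slightly different route from the paper's. The paper's own argument is a one-liner: since $\Fs$ is bounded above by $\rz c$ for some standard $c\in\bbr_+$, and $\rz(1/c)\in\SN_\d$, one simply takes $\Ft=(\rz 1/c)\,\Fr$, which lies in $\SN_\d$ by the semiring property and satisfies $\Fs\Ft\le\Fr$. You instead first descend, via the coinitiality of $\SN^0_\d$ in $\SN_\d$, to an infinitesimal modulus $\Fr_0\le\Fr$ and then square it, letting $\Fs\Fr_0\sim 0$ absorb the finite factor. Both arguments hinge on exactly the same key fact — the finiteness of $\Fs$ (which the paper asserts without further justification, and which strictly speaking presupposes that the germs generating $\SN_\d$ have finite $\rz$-sup norm on $B_\d$; you at least flag this as the delicate point). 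The paper's version is shorter and needs only that $\SN_\d$ is an $^\s\bbr$-semimodule; yours needs in addition Proposition \ref{prop: SN^0_d is coin in SN_d} and closure of $\SN^0_\d$ under squaring, but in exchange it produces a $\Ft$ that is itself a modulus of a continuous germ, which is occasionally convenient. Either argument suffices for the statement.
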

\begin{proof}
    As each element $\Fs\in\SN$ is bounded above by an element $\rz c$ for some $c\in\bbr_+$, and as such $\rz 1/c\in\SN_\d$, then a good  choose for $\Ft$ is $\Ft=(\rz 1/c)\Fr$.
%    This is just the *transfer of the following fact. If $0<r\in\bbr$ and $f,g$ are real valued functions on $B_r$ that are pseudomonotone with $\norm{f}_r<\norm{g}_r$, then there is (nonconstant) pseudomonotone $h$ defined on $B_r$ such that $\norm{h}_r\norm{g}_r<\norm{f}_r$. This statement is obvious. Fixing $f,g$ and $h$ as constants but now with $r\in\bbr_+$, we transfer this statement to get
\end{proof}
    It should be clear that we cannot weaken the previous lemma to assume $\Fr<\Fs$ and only one of $\Fr$ or $\Fs$ is in $\SN_\d$; an easy example is to choose $\Fs\in\SN_\d$ and $\Fr\in\mu(0)_+$ with $\Fr\lll\Fs$ (eg.,  $\Fr$ and $\Fs$ cannot be incomparable).

\begin{proposition}\label{prop: SG_0 is Hausdorff top ring}
    $(\SG_0,\tau)$ is a Hausdorff topological ring.
\end{proposition}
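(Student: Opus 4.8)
The plan is to verify, in turn, that $(\SG_0,\tau)$ is Hausdorff and that the additive and multiplicative structure maps are continuous; throughout we fix the positive infinitesimal $\d$ used to define $\tau$ (and note that since every germ in $\SG_0$ vanishes at $0$, $\SG_0$ lies entirely in the single ``fibre'' $\SG(0)$, so the pathology of disjoint clopen pieces $\SG(x)$ noted earlier for the full space $\SG$ is irrelevant here). The decisive structural point is that $\tau$ was built (Definition~\ref{def: of full topology by translation}) by translating the filter base $\tau_0=\{U_\Fr^\d:\Fr\in\SN_\d\}$ around the abelian group $(\SG_0,+)$, so by the standard criterion for ring topologies it suffices to check a few conditions on the base $\tau_0$ at the zero germ, together with separation. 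I will repeatedly use facts recorded above: the $\rz$sup-norm $\norm{\cdot}_\d$ satisfies the triangle inequality and is submultiplicative, $\norm{\rz f\,\rz g}_\d\le\norm{\rz f}_\d\norm{\rz g}_\d$ (both by transfer); the restriction map $\SR_\d:\SG_0\to{}^\s F(B_\d)$ is an injective $\bbr$-algebra homomorphism (Corollary~\ref{cor: SG_0 -> F(B_delts) is R alg isomorph}); $\SN_\d$ is a positive subsemiring of $\rz\bbr$, stable under multiplication by positive standard reals (as $\Fr=\rz\norm{\rz g}_\d$ gives $c\Fr=\rz\norm{\rz(cg)}_\d\in\SN_\d$ for $c\in\bbr_+$, hence in particular $\Fr/2\in\SN_\d$), bounded above by ${}^\s\bbr$, and with $\tau_0$ closed under finite intersections; and $\Fr<\Fs$ in $\SN_\d$ implies $\Fs\Ft<\Fr$ for some $\Ft\in\SN_\d$ (the lemma immediately preceding this proposition).

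\textbf{Additive group and separation.} For the translated base to give a topological group it is enough to note (i) $U_{\Fr/2}^\d+U_{\Fr/2}^\d\subseteq U_\Fr^\d$, which is the triangle inequality together with $\Fr/2\in\SN_\d$; and (ii) $-U_\Fr^\d=U_\Fr^\d$, since $\norm{-\rz k}_\d=\norm{\rz k}_\d$. As $\SG_0$ is abelian, these already yield continuity of $+$ and of $[g]\mapsto-[g]$. For separation, by translation-invariance it suffices to separate $[0]$ from an arbitrary nonzero $[h]\in\SG_0$; injectivity of $\SR_\d$ shows $\rz h|_{B_\d}$ is not the zero function, so $\Fr_h\doteq\norm{\rz h}_\d>0$ and $\Fr_h\in\SN_\d$ by definition of $\SN_\d$. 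With $\Fs=\Fr_h/2\in\SN_\d$: if some $[k]$ lay in $U_\Fs^\d\cap\bigl(U_\Fs^\d+\rz h|_{B_\d}\bigr)$ then $\norm{\rz k}_\d<\Fr_h/2$ and $\norm{\rz k-\rz h}_\d<\Fr_h/2$, whence $\Fr_h=\norm{\rz h}_\d\le\norm{\rz h-\rz k}_\d+\norm{\rz k}_\d<\Fr_h$, absurd. So $U_\Fs^\d$ and $U_\Fs^\d+\rz h|_{B_\d}$ are disjoint open neighborhoods of $[0]$ and $[h]$.

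\textbf{Multiplication.} Given $[f_0],[g_0]\in\SG_0$ and $\Fr\in\SN_\d$, I must produce $\Fr_1,\Fr_2\in\SN_\d$ with $\bigl(U_{\Fr_1}^\d+\rz f_0|_{B_\d}\bigr)\bigl(U_{\Fr_2}^\d+\rz g_0|_{B_\d}\bigr)\subseteq U_\Fr^\d+\rz(f_0g_0)|_{B_\d}$. Since $\SR_\d$ is a ring homomorphism, for $[k_1]\in U_{\Fr_1}^\d$, $[k_2]\in U_{\Fr_2}^\d$ one has, as internal functions on $B_\d$,
\begin{align*}
(\rz f_0+\rz k_1)(\rz g_0+\rz k_2)-\rz f_0\,\rz g_0=\rz f_0\,\rz k_2+\rz k_1\,\rz g_0+\rz k_1\,\rz k_2,
\end{align*}
so it suffices to arrange $\norm{\rz f_0}_\d\Fr_2+\norm{\rz g_0}_\d\Fr_1+\Fr_1\Fr_2<\Fr$. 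Pick $c\in\bbr_+$ with $\max\{\norm{\rz f_0}_\d,\norm{\rz g_0}_\d,\Fr,\rz1\}<\rz c$, and set $\Fr_1=\Fr_2=\Fr/(4\rz c)\in\SN_\d$ (stability under standard scaling). Then $\norm{\rz f_0}_\d\Fr_2<\rz c\cdot\Fr/(4\rz c)=\Fr/4$, likewise $\norm{\rz g_0}_\d\Fr_1<\Fr/4$, and $\Fr_1\Fr_2<\Fr_1<\Fr/4$ because $\Fr_2<\rz c/(4\rz c)<1$; summing and using the triangle inequality gives the claim. The special case $[f_0]=[g_0]=[0]$ supplies the condition $U_{\Fr_1}^\d\cdot U_{\Fr_1}^\d\subseteq U_\Fr^\d$ still needed for the ring-topology criterion, and the same three-term expansion shows continuity of the scalar action $\bbr\times\SG_0\to\SG_0$. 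Hence $(\SG_0,+,\cdot,\tau)$ is a Hausdorff topological ring.

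\textbf{The main obstacle.} Almost all the genuine content has already been packed into the earlier analysis of $\SN_\d$ — that it is a positive subsemiring bounded above by ${}^\s\bbr$, stable under positive standard scaling, with $\Fs\SN_\d$ coinitial — which itself rests on the correspondences with the moduli families $\SM,\wt\SM$ and on Corollary~\ref{cor: SG_0 -> F(B_delts) is R alg isomorph}. Given those, the only thing to watch is that every auxiliary modulus introduced above ($\Fr/2$, $\Fr/(4\rz c)$, and $\min\{\Fr,\Fs\}$ used implicitly for finite intersections) genuinely belongs to $\SN_\d$; this is precisely what stability under standard scaling guarantees, and is why $\SN_\d$, rather than its coinitial sub-semiring $\SN_\d^0$, is the convenient index set to test the criterion against. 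If one prefers to argue directly on $\SG^0_0$ with the smaller index set $\SN^0_\d$, the same computations go through after replacing the explicit $c$-bounds by appeals to the $\Fs\SN_\d$-coinitiality lemma.
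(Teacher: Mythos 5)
Your argument is correct and rests on the same core ingredients as the paper's proof: the algebraic identity $(\rz f_0+\rz k_1)(\rz g_0+\rz k_2)-\rz f_0\,\rz g_0=\rz f_0\,\rz k_2+\rz k_1\,\rz g_0+\rz k_1\,\rz k_2$ is precisely the three-term decomposition the paper uses (there phrased with nets: $[f]([g_d]-[g])+([f_d]-[f])[g]+([f_d]-[f])([g_d]-[g])$), and both rely on the coinitiality lemma for $\Fs\SN_\d$ and on $\wh{\SN}_\d$ being an ${}^\s\bbr$-algebra to supply auxiliary moduli. The differences are in packaging rather than substance: you work directly with filter-base inclusions on the translated neighborhood system where the paper works with convergent nets, and your explicit scalar bound $\Fr/(4\rz c)$ replaces the paper's two applications of the coinitiality lemma. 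More usefully, you actually \emph{verify} the Hausdorff claim — separating $[0]$ from an arbitrary nonzero $[h]$ via the injectivity of $\SR_\d$ and the triangle inequality — whereas the paper's stated proof handles only continuity of the ring operations and leaves Hausdorffness implicit; you also make explicit that the translated base really does yield a topological group via $U_{\Fr/2}^\d+U_{\Fr/2}^\d\subseteq U_\Fr^\d$ and $-U_\Fr^\d=U_\Fr^\d$, which the paper dismisses as following ``by the definition of the topology.'' So the route is the same, but your write-up is the more complete one.
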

\begin{proof}
    We need to show that the ring operations are continuous and as the vector space addition is continuous by the definition of the topology, we need only prove the product is continuous. First left and right multiplication by a given element of $\SG_0$ is continuous. By the previous lemma, for a given $[f]\in\SG_0$, multiplication on the right $\x_{[f]}$ (and so multiplication on the left ${}_{[f]}\x$) is continuous. That is, if $\Fs\in\SN_\d$ is $\norm{f}_\d$ and given $\Fr\in\SN$, then there is $\Ft\in\SN_\d$ such that $\Fs\Ft<\Fr$, ie., $\x_{[f]}(U_\Ft)\subset U_\Fr$. In fact, this shows that given $\Ft\in\SN_\d$, then certainly there are $\Fr,\Fs\in\SN_\d$ with $\Fr\Fs<\Ft$ and so $U_\Fr\cdot U_\Fs=\{f\cdot g:f\in U_\Fr,g\in U_\Fs\}\subset U_\Ft$.
    This shows that $([f],[g])\mapsto [f][g]$ is continuous in the following special case: $([f_d]:d\in D)$ and $([g_d]:d\in D)$ are nets converging to $[0]$ in $\tau^\d$, then $d\mapsto [f_d][g_d]$ converges to $[0]$ in $\tau^\d$. Given this let's verify that if we have nets $[f_d]\ra [f]$ (in $\tau^\d$) and $[g_d]\ra [g]$ (also in $\tau^\d$), then $[f_d][g_d]\ra[f][g]$ (in $\tau^\d$ also). First, $\tau^\d$ continuity of left and right multiplication implies that $[f]([g_d]-[g])\ra [0]$ in $\tau^\d$ and $([f_d]-[f])[g]\ra [0]$ in $\tau^\d$. But the previous assertion says also that $([f_d]-[f])([g_d]-[g])\ra [0]$ in $\tau^\d$. Adding the previous three expressions (noting that addition is a continuous operation in $\SG_0$) gets $[f_d][g_d]\ra[f][g]$ (in $\tau^\d$) as we wanted.
\end{proof}
\begin{proposition}
    $\SG^0_0$ is a closed subring of $\SG_0$.
\end{proposition}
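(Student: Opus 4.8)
The plan is to combine the routine fact that $\SG^0_0$ is an $\bbr$-subalgebra of $\SG_0$ with the already-established Theorem \ref{thm: convergnet of cont germs in SG converges to cont}, invoking the characterisation of closed sets by nets.

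First I would check that $\SG^0_0$ is a subring of $\SG_0$. Fix $0<\d\sim 0$. By Corollary \ref{cor: SG_0 -> F(B_delts) is R alg isomorph} the restriction map $\SR_\d:\SG_0\ra\;^\s F(B_\d)$, $[f]\mapsto\rz f|_{B_\d}$, is an $\bbr$-algebra isomorphism carrying the ring operations on germs to the pointwise operations on standard functions on $B_\d$, and by the Proposition in Subsection \ref{subsec: monadic regularity of standard fcns} one has $[f]\in\SG^0_0$ exactly when $\rz f|_{B_\d}$ is $\rz$continuous on $B_\d$. Since a sum, difference, or product of $\rz$continuous functions on $B_\d$ is again $\rz$continuous, and the zero germ is trivially in $\SG^0_0$, the set $\SG^0_0$ is stable under the ring operations of $\SG_0$, hence is a subring (indeed an $\bbr$-subalgebra).

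The substantive point is $\tau$-closedness. I would use the general topological fact that a subset $A$ of a space is closed iff the limit of every convergent net in $A$ lies in $A$ — and here it must be used in its genuine net form, since by Corollary \ref{cor: countable nets can't converge in tau} no nonconstant countable net converges in $(\SG_0,\tau)$, so the sequential version is useless. So let $(D,<)$ be a directed set and $([f_d]:d\in D)$ a net in $\SG^0_0$ that $\tau$-converges to some $[g]\in\SG_0$. Because $\tau$ makes $\SG_0$ a topological group under addition (Definition \ref{def: of full topology by translation}), translation by $-[g]$ is a homeomorphism, so the net $([f_d]-[g]:d\in D)$ converges to the zero germ for the neighbourhood system $\tau_0$; unwinding Definition \ref{def: of topology  at 0 germ}, this says that for every $\Fr\in\SN_\d$ there is $d_0\in D$ with $\norm{\rz f_d-\rz g}_\d<\Fr$ whenever $d>d_0$, i.e. $\rz f_d|_{B_\d}-\rz g|_{B_\d}\in U^\d_\Fr$ eventually. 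That is exactly the hypothesis of Theorem \ref{thm: convergnet of cont germs in SG converges to cont} (recall $\SN_\d=\ov{\SS}^\d$ by Lemma \ref{lem: SS_delta=SN_delta}, so the moduli there are precisely those of $\SN_\d$), and its conclusion delivers $[g]\in\SG^0_0$; thus $\SG^0_0$ is $\tau$-closed, and by Theorem \ref{thm: Id map: tau_d ot tau_d'is homeo} this conclusion does not depend on the auxiliary infinitesimal $\d$.

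Almost all of the work is already contained in Theorem \ref{thm: convergnet of cont germs in SG converges to cont}, which in turn rests on the existence of $[g]$-good moduli inside $\SN_\d$ (Lemma \ref{lem: SN_d contains f-good numbers}) together with the three-$\e$ comparison using good numbers (Lemma \ref{lem: f cont, f-g  small implies g cont}); so I do not anticipate a genuine obstacle here. The one point that demands care is the one flagged above: because $(\SG_0,\tau)$ is not metrizable, closedness must be tested against nets over arbitrary directed sets, and one should resist substituting sequences.
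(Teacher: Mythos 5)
Your proof is correct and follows the same route as the paper: the paper simply observes that $\tau$-closedness of $\SG^0_0$ is exactly the content of Theorem \ref{thm: convergnet of cont germs in SG converges to cont}. You add more detail (the subring check via $\SR_\d$, the remark on nets vs.\ sequences, the $\d$-independence), but the essential argument is identical.
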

\begin{proof}
    We just need to prove that $\SG_0^0$ is a closed subspace of $\SG_0$. But this is the import of Theorem \ref{thm:  convergnet of cont germs in SG converges to cont}.
\end{proof}

\begin{definition}
    For $n,p\in\bbn$, let $\SG_{n,p}$ denote the germs of maps at $0$ of maps $f:(\bbr^n,0)\ra \bbr^p$, \; $\SG^0_{n,p}$ these map germs that are germs of continuous maps and $\SG_{n,p,0}\subset \SG_{n,n}$ those map germs sending $0$ to $0$.
    Considering $\SG_{n,p,0}$ as a Cartesian product of $p$ copies of the topological ring $\SG_{n,0}$, we give it the natural product topology. We give the subsets  $\SG_{n,n,0}$, $\SG^0_{n,n,0}$, etc., the subspace topology.
\end{definition}
    It is clear from the definition above that the product $\tau_\d$ topology defined on $\SG^0_{n,p,0}$ is generated by translates of open neighborhoods of the zero germ and that the system of open neighborhoods of the zero germ is generated by Cartesian products of the form $U^\d_{\vec{\Fr}}=U^\d_{\Fr_1}\x\cdots\x U^\d_{\Fr_n}$ where $\vec{\Fr}$ denotes the ordered $n$-tuple $(\Fr_1,\ldots,\Fr_n)\in\SN_\d^n$.
    Of course,   given such $\vec{\Fr}$, if $\un{\Fr}<\Fr_j$ for each $j$, then $U_{\un{\Fr}}\x\cdots\x U_{\un{\Fr}}\subset U_{\vec{\Fr}}$, eg., when checking convergence we may test with these diagonal neighborhoods. Similarly, given any of the typical norm functions $N:\bbr^n\ra [0,\infty)$ on $\bbr^n$, the family of nested neighborhoods of the $[0]$ germ defined by $U_{N,\Fr}=\{[f]\in\SG: \rz N(f(\xi))<\Fr\;\text{for each}\;\xi\in B_\d\}$ as $\Fr$ varies in $\SN_\d$, gives a coinitial family of neighborhoods of $[0]$ in $\tau^\d$.

\begin{proposition}
    $\SG^0_{n,p,0}$ is a Hausdorff $\SG^0_{n,0}$-module.
\end{proposition}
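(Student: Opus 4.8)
The plan is to deduce the statement formally from the already-proved fact that $\SG_{n,0}=\SG_0$ is a Hausdorff topological ring with $\SG^0_0$ a closed subring (Proposition \ref{prop: SG_0 is Hausdorff top ring} together with the proposition that $\SG^0_0$ is a closed subring of $\SG_0$), using nothing beyond permanence of the Hausdorff property and of continuity under finite products, subspaces, diagonals and projections. By definition $\SG_{n,p,0}$ carries the product of $p$ copies of $\SG_{n,0}$ and $\SG^0_{n,p,0}$ the subspace topology; since $\SG^0_{n,0}=\SG^0_0$ is closed in $\SG_{n,0}$, the set $\SG^0_{n,p,0}$ is exactly $(\SG^0_{n,0})^p$, closed in $\SG_{n,p,0}$, and the induced topology on it coincides with the product of $p$ copies of $\tau|_{\SG^0_{n,0}}$. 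So I would simply work with $\SG^0_{n,p,0}=(\SG^0_{n,0})^p$ in its product topology. First, $\SG^0_{n,0}$ is Hausdorff as a subspace of the Hausdorff space $\SG_{n,0}$, and a finite product of Hausdorff spaces is Hausdorff; hence $\SG^0_{n,p,0}$ is Hausdorff. Second, $\SG^0_{n,0}$ is a topological abelian group (a subring of a topological ring), so $\SG^0_{n,p,0}$, with coordinatewise addition and negation, is a topological abelian group.

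It remains to show the module action
\begin{align}
    \rho:\SG^0_{n,0}\x\SG^0_{n,p,0}\lra\SG^0_{n,p,0},\qquad
    \rho\big([c],([f_1],\ldots,[f_p])\big)=\big([cf_1],\ldots,[cf_p]\big),
\end{align}
is continuous; the $\SG^0_{n,0}$-module axioms themselves hold coordinatewise and are purely algebraic, so only the topology is at issue. For each $j$ the $j$-th coordinate of $\rho$ factors as
\begin{align}
    \SG^0_{n,0}\x\SG^0_{n,p,0}\xrightarrow{\;\mathrm{pr}_0\x\mathrm{pr}_j\;}\SG^0_{n,0}\x\SG^0_{n,0}\xrightarrow{\;\cdot\;}\SG^0_{n,0},
\end{align}
where $\mathrm{pr}_0$ projects onto the first factor, $\mathrm{pr}_j$ projects $(\SG^0_{n,0})^p$ onto its $j$-th coordinate, and the last arrow is the ring multiplication of $\SG^0_{n,0}$. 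The projections are continuous, and the multiplication is jointly continuous — this is exactly what Proposition \ref{prop: SG_0 is Hausdorff top ring} establishes (nets $[f_d]\ra[f]$, $[g_d]\ra[g]$ force $[f_d][g_d]\ra[f][g]$), restricted to the closed subring $\SG^0_0$. Hence every coordinate of $\rho$ is continuous, so $\rho$ is continuous into the product. Combined with the Hausdorff statement, this proves $\SG^0_{n,p,0}$ is a Hausdorff topological $\SG^0_{n,0}$-module.

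The proof has no real obstacle: the only nonformal ingredient, joint continuity of germ multiplication, was already dispatched in the topological-ring proposition, and everything here is a mechanical consequence via diagonal and projection maps together with stability of the Hausdorff property under finite products and subspaces. If one prefers not to pass through coordinates, the same argument can be run directly with the neighbourhood basis $U^\d_{N,\Fr}$ of the zero germ of $\SG^0_{n,p,0}$ attached to a fixed norm $N$ on $\bbr^p$, exactly as in the discussion preceding the statement, using that $\Fs\,\SN_\d\subset\SN_\d$ is coinitial for each $\Fs\in\SN_\d$.
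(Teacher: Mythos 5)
Your proof is correct and follows essentially the same route as the paper's: Hausdorffness from finite products (and subspaces), and continuity of the module action by reducing coordinatewise to the joint continuity of the ring multiplication established in Proposition \ref{prop: SG_0 is Hausdorff top ring}. The paper's own proof is just a two-sentence version of exactly this argument, so your write-up is simply a more explicit rendering of it.
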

\begin{proof}
    This is clear: as a finite product of Hausdorff spaces (with the product topology), it is clearly Hausdorff. Also the continuity of the module operation $\SG_{n,0}\x\SG_{n,p,0}\ra\SG_{n,p,0}$ is clear as it's just the ring operation $\SG_{n,0}\x\SG_{n,0}\ra\SG_{n,0}$ on each of the $p$ coordinates.
\end{proof}

\subsubsection{Topological properties of germ composition}\label{subsec: top properties of germ composition}
    We return to the context of subsection \ref{subsec: top properties ring structure} and consider the morphisms of these topological rings induced by continuous map germs.

    The composition of germs of maps is well known and routine and we will assume the general definitions and facts known. Heuristically, composition of functions obviously distorts domains and range and so it will be here, but cutting down domains will not be a problem as our monad representatives remain well defined as such when domains are repeated expanded and contracted within $\mu(0)$. If $n,p\in\bbn$, let $\SG_{n,p,0}\subset\SG_{n,p}$ denote the set of germs at $0$ in $\bbr^n$ of maps $f:(\bbr^n,0)\ra (\bbr^p,0)$.
    We begin by noting the obvious problem: if $h\in\SG_{n,n,0}$ or even in $\SG^0_{n,n,0}$, and $f\in\SG$, then $\rz f|_{B_\d}\circ \rz h|_{B_\d}$ is often not defined. On the other hand, if $[h]\in\SG^0_{n,n,0}$ and $[f]\in\SG_n$, then $\rz(f\circ h)_{B_\d}$ is always defined even when $\rz h(B_\d)\nsubseteq B_\d$ as $h(\mu(0))\subset\mu(0)$ and although we are defining the composition with $[f]$ in terms of its representative that is uniquely defined on all of $\mu(0)$.
    If $[h]\in\SG^0_{n,n,0}$, we will denote the map $\SG_n\ra\SG_n:[f]\mapsto [f]\circ[h]$ by $rc_{[h]}$ and if $[g]\in\SG$

    To begin with, we look at the effect of composition on our sets of moduli. As right composition carries algebraic operations, we start there and as moduli are determined in terms of one dimensional mappings we will consider both the right and left actions of $\SM^0$ on inself.
    We will begin with some preliminaries on the effects of compositions on our semirings of moduli.
%    Let $0<\d\sim 0$ and consider $[m]\in\SM^0$ and $rc_{[m]}:\SG\ra\SG$

   We begin with a corollary of results in subsection \ref{subsec: convergence}.

\begin{cor}\label{cor: SN_r is coin with SN_d if r=f(d)}
    Suppose that $[f]\in\SG^0$, $0<\d\sim 0$, and $\Fr=\norm{\rz f}_\d$.
%    Then $\SN_\Fr$ is coinitial with $\SN_\d$ (in $\rz\bbr_+$).
    Then for each $U^\d_\Fr\in\tau^\d_0$ with $\Fr\in\SN_\d$, there is $U^\d_\Fs\in\tau^\d_0$ with $\Fs\in\SN_\Fr$ such that $U^\d_\Fs\subset U^\d_\Fr$, eg., the subset of $\tau^\d_0$ given by the set of $U^\d_\Fs$ with $\Fs$ varying in $\SN_\Fr$ is a subbase at $0$ for $\tau^\d_0$.
%    Suppose that $(D,,)$ is a directed set, and $(\Fn_d:d\in D)$ is a net in $\mu(0)_+$. If $\{\Fn_d\}$ is coinitial in $\SN_\d$, then $\{\Fn_d\}$ is coinitial in $\SN_\Fr$.
\end{cor}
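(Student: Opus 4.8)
The statement to prove is Corollary \ref{cor: SN_r is coin with SN_d if r=f(d)}: given $[f]\in\SG^0$, $0<\d\sim 0$, and $\Fr=\norm{\rz f}_\d$, every basic neighborhood $U^\d_\Fr$ with $\Fr\in\SN_\d$ contains a basic neighborhood $U^\d_\Fs$ with $\Fs\in\SN_\Fr$, so that $\{U^\d_\Fs:\Fs\in\SN_\Fr\}$ is a subbase at $[0]$ for $\tau^\d_0$.

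The plan is to reduce everything to the purely numerical fact that $\SN_\Fr$ is coinitial in $\SN_\d$ when $\Fr\in\SN_\d$, and then translate that into an inclusion of basic neighborhoods. First I would recall Corollary \ref{cor: m(N_d)-N_d and N_(m(d))=N_d} (or the lemma immediately preceding it, ``If $0<\d\sim 0$ and $\Fr\in\SN_\d$, then $\SN_\Fr\subset\SN_\d$''), which already gives $\SN_\Fr\subset\SN_\d$ and in fact $\SN_\Fr=\SN_\d$ when $\Fr\in\SN_\d$; combined with Proposition \ref{prop: SN^0_d is coin in SN_d} and the coinitiality bookkeeping from Definition \ref{def: of coinitial partial order relations}, this shows $\SN_\Fr$ is coinitial in $\SN_\d$. (The hypothesis that $[f]\in\SG^0$, i.e. $[f]$ has a continuous representative, is what forces $\Fr=\norm{\rz f}_\d$ to actually lie in $\SN_\d$ — indeed in $\SN^0_\d$ — rather than being merely a nearstandard positive number; this is exactly the content of Definition \ref{def: SN_delta families of numbers} and Lemma \ref{lem: SN_d=SM_d}.)

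Next I would carry out the translation. Fix $\Fr\in\SN_\d$ and consider $U^\d_\Fr=\{[g]\in\SG_0:\norm{\rz g}_\d<\Fr\}$. By the coinitiality just established, there is $\Fs\in\SN_\Fr$ with $\Fs\leq\Fr$; since $\Fs\in\SN_\Fr\subset\SN_\d$, the set $U^\d_\Fs=\{[g]:\norm{\rz g}_\d<\Fs\}$ is a legitimate member of $\tau^\d_0$, and $\Fs\leq\Fr$ immediately gives $U^\d_\Fs\subset U^\d_\Fr$. This is the required inclusion. To upgrade ``every $U^\d_\Fr$ contains some $U^\d_\Fs$ with $\Fs\in\SN_\Fr$'' to ``$\{U^\d_\Fs:\Fs\in\SN_\Fr\}$ is a subbase at $[0]$'', I would note that $\tau^\d_0$ is already closed under finite intersections (Definition \ref{def: of topology at 0 germ}) and is a neighborhood base at $[0]$, so a subfamily that is still coinitial with respect to inclusion is automatically a (sub)base at $[0]$; this is the standard fact that a cofinal refinement of a neighborhood base is a neighborhood base.

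I do not expect a serious obstacle here — the corollary is essentially a repackaging of the $\SN_\Fr\subset\SN_\d$ lemma and the definition of the topology. The one point requiring a little care is making sure the element $\Fs$ we extract genuinely sits in $\SN_\Fr$ and not merely below some element of $\SN_\Fr$; this is handled by taking $\Fs$ itself to be an element of $\SN_\Fr$ dominated by $\Fr$, which exists precisely because $\Fr\in\SN_\Fr$ (using $\SN_\Fr=\SN_\d\ni\Fr$) so one may even take $\Fs=\Fr$, or, if a strictly smaller modulus is wanted, apply the halving trick $\Fs=(\rz 1/c)\Fr$ as in the lemma preceding Proposition \ref{prop: SG_0 is Hausdorff top ring}. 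The only genuinely substantive input — that $\SN_\Fr$ and $\SN_\d$ are asymptotically interchangeable — was already proved in Subsection \ref{subsec: top independence from delta} and in Corollary \ref{cor: m(N_d)-N_d and N_(m(d))=N_d}, so the proof is short.
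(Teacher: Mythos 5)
Your proof is correct and follows essentially the same route as the paper: both reduce the statement to the coinitiality of $\SN_\Fr$ (resp.\ $\SN^0_\Fr$) with $\SN_\d$ (resp.\ $\SN^0_\d$), citing corollary \ref{cor: m(N_d)-N_d and N_(m(d))=N_d} together with proposition \ref{prop: SN^0_d is coin in SN_d}, and then translate the numerical coinitiality into the inclusion $U^\d_\Fs\subset U^\d_\Fr$. Your version simply spells out the translation step and the observation that $\Fs$ can be taken in $\SN_\Fr$ itself, which the paper leaves implicit.
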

\begin{proof}
    Of course, according to Remark \ref{rem: coinitial is partial order} it suffices to prove that $\SN^0_\Fr$ is coinitial with $\SN^0_\d$.
    But  this is a consequence of corollary \ref{cor: m(N_d)-N_d and N_(m(d))=N_d} and the fact that $\SN^0_\d$ is coinitial in $\SN_\d$ which is proposition \ref{prop: SN^0_d is coin in SN_d}.
%    Also without loss generality, we may  represent our moduli in terms of elements of $\SM^0$. So given $\Fr=\norm{\rz m}_\d$ for some $[m]\in\SM^0$, and $\Fs\in\SN^0_\d$ (which we may represent as $\norm{\rz m'}_\d$ for $[m']\in\SM^0$), we want to find $\Ft\in\SN_\Fr$ such that $\Ft\leq\Fs$.
%    But, by definition $m':(0,\Fr]\ra (0,\Fs]$ and $m:(0,\d]\ra (0,\Fr]$ which implies $m^{-1}:(0,\Fr]\ra (0,\d]$ and therefore $[m'']\dot=[\f{1}{2}m'\circ m^{-1}]\in\SM^0$ and $m'':(0,\Fr]\ra (0,\Fs/2]$, eg., $\Ft\dot=\norm{\rz m''}_\Fr\leq\Fs/2$.
\end{proof}
    Before we proceed, we need to point out an obvious fact.
\begin{lem}\label{lem: m in M^0 sends coin pairs to coin pairs}
    Suppose that $(\La,<)$ and $(\G,<)$ are directed sets and that $\{\Fr_\la:\la\in\La\}$ and $\{\Fs_\g:\g\in\G\}$ are subsets of $\mu(0)_+$ that are coinitial with each other in the range $\SN_\d$. Then if $[m]\in\SM$, we have that $\{\rz m(\Fr_\la):\la\in\La\}$ is coinitial with $\SN_\d$ if and only if $\{\rz m(\Fs_\g):\g\in\G\}$ is coinitial with $\SN_\d$.
\end{lem}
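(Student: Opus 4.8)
The plan is to reduce the statement to the one-dimensional monotonicity of $m$ together with the transfer principle, exactly in the spirit of the proof of Lemma \ref{lem: sets coin wrt N_d go to these under m in M}. First I would recall what the hypothesis gives: the sets $\{\Fr_\la:\la\in\La\}$ and $\{\Fs_\g:\g\in\G\}$ are coinitial with each other in the range of $\SN_\d$, meaning that for each $\Fr_{\la_0}$ dominating some element of $\SN_\d$ there is $\Fs_\g<\Fr_{\la_0}$, and symmetrically. By symmetry of the assertion it suffices to prove one implication: assuming $\{\rz m(\Fr_\la):\la\in\La\}$ is coinitial with $\SN_\d$, I would show $\{\rz m(\Fs_\g):\g\in\G\}$ is coinitial with $\SN_\d$.

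The two things I would use about $[m]\in\SM$ are: (i) $\rz m$ is strictly monotone (this is the defining property of $\SM$, transferred), so $\Fs_\g<\Fr_\la$ implies $\rz m(\Fs_\g)<\rz m(\Fr_\la)$; and (ii) $\rz m(\SN_\d)\subset\SN_\d$, i.e. $\rz m$ sends the moduli into the moduli — this is Lemma \ref{lem: SN_d=SM_d} combined with the fact that $\SN_\d$ is a semiring closed under $\rz m$-images (as in the computation $\norm{\rz m}_{\|m'\|_\d}=\norm{\rz m\circ m'}_\d$ appearing in the proof of the lemma just before \ref{lem: SN_d=SM_d}, or directly Corollary \ref{cor: m(N_d)-N_d and N_(m(d))=N_d}). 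Given $\Fu\in\SN_\d$, I want $\rz m(\Fs_{\g_0})<\Fu$ for some $\g_0$. Since $\rz m^{-1}$ of an element of $\SN_\d$ is again (coinitially) controlled by $\SN_\d$ — more precisely, using that $\SN_\d=\{\rz m''(\d):m''\in\SM\}$ (Lemma \ref{lem: SN is gen by set of the f(d)'s}) and that $\SM$ is closed under composition and contains $m^{-1}$-type germs via $\SM^0$ being coinitial (Lemma \ref{lem:  SM^0 is coinitial in SM}) — there is $\Fv\in\SN_\d$ with $\rz m(\Fv)\leq\Fu$. Now by coinitiality of $\{\rz m(\Fr_\la)\}$ with $\SN_\d$ there is $\la_1$ with $\rz m(\Fr_{\la_1})<\rz m(\Fv)$, whence by monotonicity $\Fr_{\la_1}<\Fv$, so $\Fr_{\la_1}$ dominates no element larger than... — here I must be a little careful, since coinitiality "in the range of $\SN_\d$" only talks about $\Fr_\la$'s that dominate an element of $\SN_\d$. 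I would handle this by noting $\Fr_{\la_1}<\Fv\in\SN_\d$ means $\Fr_{\la_1}$ itself lies below a modulus, and then invoke the hypothesis that the two families are coinitial with each other to get $\Fs_{\g_0}<\Fr_{\la_1}$; applying $\rz m$ and monotonicity gives $\rz m(\Fs_{\g_0})<\rz m(\Fr_{\la_1})<\rz m(\Fv)\leq\Fu$, as desired. The reverse implication is identical with the roles of $\La$ and $\G$ swapped.

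The main obstacle I anticipate is bookkeeping around the precise meaning of "coinitial with each other in the range of $\SN_\d$" versus plain coinitiality: the definition restricts attention to net-elements that exceed some modulus, and one must check that the $\rz m$-images of such elements again exceed some modulus (so that the conclusion is a statement of the same restricted type). This is exactly where property (ii), $\rz m(\SN_\d)\subset\SN_\d$ together with $\rz m$ being an order-isomorphism onto its image, does the work: it guarantees that sandwiching by moduli is preserved in both directions under $\rz m$. Once that is pinned down the proof is the three-line chain above, so I would keep the writeup short — essentially "apply monotonicity of $\rz m$ and $\rz m(\SN_\d)\subset\SN_\d$; the statement is symmetric in the two families."
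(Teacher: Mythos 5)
The paper itself offers no proof here --- it introduces the lemma with ``we need to point out an obvious fact'' and moves on --- so you are filling a gap the author left, not reproducing an argument. Your reduction to monotonicity of $\rz m$ plus $\rz m(\SN_\d)=\SN_\d$ is the right skeleton, and you are also right to flag the bookkeeping around ``in the range of $\SN_\d$.'' But the way you resolve it is backwards. Definition \ref{def: of coinitial partial order relations} (really the subsequent one specific to $\SN_\d$) only delivers an $\Fs_{\g_0}<\Fr_{\la_1}$ under the premise that $\Fr_{\la_1}>\Fu'$ for \emph{some} $\Fu'\in\SN_\d$; it says nothing about elements that lie below a modulus. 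Having established $\Fr_{\la_1}\leq\Fv\in\SN_\d$ does not put you in a position to invoke the hypothesis --- you would need $\Fr_{\la_1}$ to \emph{dominate} a modulus, and you have no argument for that. If every $\Fr_\la$ that satisfies $\rz m(\Fr_\la)\leq\rz m(\Fv)$ happens to be incomparably smaller than $\d$, the hypothesis is vacuous on that side and the chain breaks.

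This is not merely a presentational slip: the case you glossed over is exactly where the stated lemma is delicate. Take $\{\Fr_\la\}=\{\Fr\}$ a single infinitesimal with $\Fr\lll\d$ (so $\Fr$ lies below every element of $\SN_\d$) and $\{\Fs_\g\}=\{\d\}$. These two singletons are coinitial with each other in the range of $\SN_\d$: the condition on the $\Fr$-side is vacuous because $\Fr$ dominates no modulus, and the condition on the $\d$-side is met because $\Fr<\d$. The set $\{\rz m(\Fr)\}$ is automatically coinitial with $\SN_\d$ (since $\Fr\lll\d$ forces $\rz m(\Fr)\lll\d$, using closure of $\SM$ under composition), yet $\{\rz m(\d)\}$ certainly is not. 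So the ``only if'' direction fails without some extra standing assumption --- e.g. that each family is itself coinitial with $\SN_\d$, or that each family contains elements dominating moduli. If you add such a hypothesis (which holds in all the places the paper actually uses this lemma), your three-line chain goes through cleanly: find $\Fv\in\SN_\d$ with $\rz m(\Fv)\leq\Fu$, find $\Fr_{\la_1}$ sandwiched between two moduli with $\Fr_{\la_1}\leq\Fv$, now legitimately invoke the range-coinitiality to get $\Fs_{\g_0}<\Fr_{\la_1}$, and apply monotonicity. Your writeup should say explicitly that you are assuming the families interleave nontrivially with $\SN_\d$, rather than trying to make the raw definition do work it cannot do.
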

    Given the previous, we will see that continuous germs sending zero to zero send coinitial magnitudes to coinitial magnitudes.
\begin{cor}\label{cor: Im of coin set under G^0_n,p,0 map is coin}
    Suppose that $[h]\in\SG^0_{n,p,0}$ and that $\{\Fv_\la:\la\in\La\}\subset\mu_n(0)$ is such that $\{|\Fv_\la|:\la\in\La\}$ is coinitial with $\SN_\d$. Then $\{\rz h(\Fv_\la):\la\in\La\}\subset\mu_p(0)$ satisfies $\{|\rz h(\Fv_\la)|:\la\in\La\}$ is coinitial in $\SN_\d$.
\end{cor}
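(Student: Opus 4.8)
The plan is to reduce the corollary to a single elementary fact: the "modulus of oscillation at $0$" of a continuous representative of $[h]$, after a harmless correction, is an element of $\SM^0$, and $\SM^0$ is closed under functional inversion and composition; inverting it inside $\SM^0$ then produces the witnessing magnitude inside $\SN_\d$.

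First I would record the preliminary reductions. Since $[h]\in\SG^0_{n,p,0}$, the transfer $\rz h$ is $S$-continuous at $0$ with $\rz h(0)=0$, so $\rz h(\mu_n(0))\subset\mu_p(0)$; in particular each $|\rz h(\Fv_\la)|$ is a nonnegative infinitesimal, which already gives $\{\rz h(\Fv_\la):\la\in\La\}\subset\mu_p(0)$. To prove coinitiality in $\SN_\d$ I must show that for every positive $\Fr\in\SN_\d$ there is $\la\in\La$ with $|\rz h(\Fv_\la)|\le\Fr$. If $\Fr$ is noninfinitesimal this is immediate, since $\La\ne\emptyset$ (the hypothesis forces this, as $\SN_\d\ne\emptyset$) and every $|\rz h(\Fv_\la)|$ is infinitesimal. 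So assume $\Fr\sim 0$. By Lemma \ref{lem: SN_d=SM_d}, $\Fr\in\SM_\d$; using coinitiality of $\SM^0$ in $\SM$ (Lemma \ref{lem:  SM^0 is coinitial in SM}) I may choose $c\in\SM^0$ with $\rz c(\d)\le\Fr$, and since $\rz c(\d)\in\SM_\d\subset\SN_\d$ it suffices to produce $\la$ with $|\rz h(\Fv_\la)|\le\rz c(\d)$. So fix $c\in\SM^0$ and put $\Fr_0=\rz c(\d)$.

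Next I would build the modulus. Fix a representative $h\colon U\to\bbr^p$ of $[h]$ with $h(0)=0$, and on a small interval $(0,t_0]$ set $\bar\om_h(t)=\sup\{|h(x)|:|x|\le t\}$, $\bar\om_h(0)=0$. Compactness of $\bar B_t$ makes $\bar\om_h$ finite and nondecreasing, and using $h(0)=0$ together with uniform continuity of $h$ on a compact neighborhood of $0$ one checks $\bar\om_h$ is continuous with $\bar\om_h(t)\to 0$; by construction $|h(x)|\le\bar\om_h(|x|)$ for $|x|\le t_0$. Now $m(t)=\bar\om_h(t)+t$ is a strictly increasing, continuous, positive germ tending to $0$, so $[m]\in\SM^0$ and $m\ge\bar\om_h$ near $0$. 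Its functional inverse $n=m^{-1}$ lies in $\SM$ (Remark \ref{rem: e<<<d and f(d)<e -> f(d)=0}) and is continuous, hence $[n]\in\SM^0$, and therefore $[n\circ c]\in\SM^0$. Put $\Fs_0=\rz(n\circ c)(\d)$, a positive infinitesimal; by Lemma \ref{lem: SN_d=SM_d} again $\Fs_0\in\SM_\d\subset\SN_\d$. Transferring $m\circ n=\mathrm{id}$ and the monotonicity equivalence "$m(s')\le s\iff s'\le n(s)$" gives $\rz m(\Fs_0)=\Fr_0$ and $\rz m(\Ft)\le\Fr_0$ for every $\Ft\le\Fs_0$; transferring $|h(x)|\le m(|x|)$ (for $|x|\le t_0$) gives $|\rz h(v)|\le\rz m(|v|)$ for all $v\in\mu_n(0)$. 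Finally, $\{|\Fv_\la|:\la\in\La\}$ is coinitial with $\SN_\d$, hence coinitial in $\SN_\d$, so since $\Fs_0\in\SN_\d$ there is $\la_0$ with $|\Fv_{\la_0}|\le\Fs_0$; then $|\rz h(\Fv_{\la_0})|\le\rz m(|\Fv_{\la_0}|)\le\Fr_0\le\Fr$, as required.

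The only genuinely delicate steps are the verification that $\bar\om_h$ is continuous — so that $\bar\om_h+\mathrm{id}$ is an honest element of $\SM^0$ and not merely of $\wt\SM$ — and the bookkeeping that keeps inverse and composition inside $\SM^0$, which is exactly what forces the witness $\Fs_0=\rz(n\circ c)(\d)$ to land in $\SN_\d$. I also expect to need the remark that only the one-sided conclusion (coinitial \emph{in} $\SN_\d$) is available: $\{|\rz h(\Fv_\la)|\}$ need not be coinitial \emph{with} $\SN_\d$, since $h$ may vanish at some $\Fv_\la$, or be flat enough that $|\rz h(\Fv_\la)|$ is incomparably smaller than $\d$, so that no $\rz m(\d)$ with $m\in\SM$ lies below it.
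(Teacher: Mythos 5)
Your proof is correct and follows essentially the same route as the paper's: dominate $|h|$ by a radial, strictly monotone modulus in $\SM^0$ and exploit the invariance of $\SN_\d$ under composition with elements of $\SM^0$ (the paper cites Corollary \ref{cor: m(N_d)-N_d and N_(m(d))=N_d}, i.e.\ $\rz m(\SN_\d)=\SN_\d$, where you instead produce the witness $\Fs_0=\rz(m^{-1}\circ c)(\d)$ by hand). Your construction of $m=\bar\om_h+\mathrm{id}$ simply makes explicit the reduction that the paper compresses into ``WLOG $[h]$ is pseudomonotone \dots\ we can therefore assume we are working with $[m]\in\SM^0$.''
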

\begin{proof}
    Without loss of generality, we may assume that $[h]$ is pseudomonotone (recall that this means that if $|\xi|\leq|\z|$, then $|\rz h(\xi)|\leq |\rz h(\z)|$).
    We can also assume that $p=1$ and
%    We may also assume that $[h]$ is continuous by Lemma \ref{lem:  SM^0 is coinitial in SM}.
    we can therefore assume we are working with $[m]\in\SM^0$, for which the problem becomes: if $\SX=\{\xi_\la:\la\in\La\}$ is coinitial in $\SN_\d$, then $\{\rz m(\xi_\la):\la\in\La\}$ is coinitial in $\SN_\d$.
    But as $\SX$ and $\SN_\d$ are coinitial with each other in the range of $\SN_\d$, then, by Lemma \ref{lem: sets coin wrt N_d go to these under m in M}, this is equivalent to showing that $\rz m(\SN_d)$ is coinitial with $\SN_d$, and in fact they are equal by Corollary \ref{cor: m(N_d)-N_d and N_(m(d))=N_d}.
%    Now let $([m_d]:d\in D)$ be a net in $\SM^0$ such that $\SD=\{\norm{\rz m_d}_\d:d\in D\}$ is coinitial with $\SX$ in the range of $\SN_\d$. If we can show that $\{\norm{\rz m\circ m_d}_\d:d\in D\}$ is coinitial in $\SN_\d$, then by Lemma \ref{lem: m in M^0 sends coin pairs to coin pairs} we will be finished. But note that  $[m]\in\SM^0$ implies that $\norm{\rz m\circ m_d}_\d\leq\norm{\rz m_d}_\d$
\end{proof}

    Note that the previous result certainly does not imply that $\{|\Fv_\la|:\la\in\La\}$ is coinitial with $\{|\rz h(\Fv_\la)|:\la\in\La\}$, eg., this certainly does not hold if $[h]$ is the zero germ.
\begin{definition}
    We say that $[h]\in\SG^0_{n,n,1}$ is regular if $\rz h(\mu(0))=\mu(0)$ and there is $n\in\bbn$ such that for each $\xi\in\mu(0)$, $\rz h^{-1}(\xi)\cap\mu(0)$ has cardinality at most $n$.
\end{definition}
    So clearly homeomorphisms are regular. Note that we can replace this last condition with the same condition on inverse images of points of $B_\d$.
\begin{lem}
    If $\Fr\in\SN_\d$ with $\Fr<\d$, respectively $\Fr>\d$, then there is $m\in\SM^0$ such that $\rz m(\d)\leq\Fr$, respectively $\rz m(\d)\geq\Fr$.
\end{lem}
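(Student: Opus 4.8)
The plan is to prove each half by sandwiching a suitable (possibly discontinuous) monotone representative of the relevant germ between a continuous piecewise-linear member of $\SM^0$ and then transferring, the two halves using dual constructions. Before starting I would record what the ``respectively'' clause really covers: the substantive content lives in $\SN_\d\cap\mu(0)_+$. Indeed $\Fr<\d$ already forces $\Fr\sim 0$ since $\d\in\mu(0)$, while the clause $\Fr>\d$ is only non‑vacuous when $\Fr$ is infinitesimal, because for any standard $m\in\SM^0$ and infinitesimal $\d$ one has $\rz m(\d)\sim 0$, so no such $m$ can satisfy $\rz m(\d)\ge\Fr$ for noninfinitesimal $\Fr$. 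So I read the statement (and this is exactly the regime in which it is used in the composition arguments) as: for $\Fr\in\SN_\d\cap\mu(0)_+$, if $\Fr<\d$ there is $m\in\SM^0$ with $\rz m(\d)\le\Fr$, and if $\Fr>\d$ there is $m\in\SM^0$ with $\rz m(\d)\ge\Fr$.

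For the first ($\le$) direction I would write $\Fr=\norm{\rz g}_\d$ for some $[g]\in\SG_0$ and consider the standard germ $m_0(t)=\norm{g}_t=\sup_{|x|\le t}|g(x)|$. This is non‑decreasing with $m_0(0)=|g(0)|=0$ (so $m_0\in\wt{\SM}$), and since $\Fr>0$ it is strictly positive on a punctured neighborhood of $0$ (otherwise $[g]=[0]$ by Corollary~\ref{cor: SG_0 -> F(B_delts) is R alg isomorph} and $\Fr=0$). Now I would run the construction in the proof of Lemma~\ref{lem:  SM^0 is coinitial in SM}: pick standard $t_1>t_2>\cdots\to 0$ with $m_0(t_{j+1})<m_0(t_j)$ (possible as $m_0>0$ and $m_0(t)\to 0$) and let $m$ be the continuous piecewise‑linear function, affine on each $[t_{j+1},t_j]$ with $m(t_j)=m_0(t_{j+1})$. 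Then $m\in\SM^0$, and for $t\in[t_{j+1},t_j]$ one has $m(t)\le m(t_j)=m_0(t_{j+1})\le m_0(t)$, so $m\le m_0$ on a standard neighborhood of $0$; transferring and evaluating at $\d$ gives $\rz m(\d)\le\rz m_0(\d)=\Fr$. (One could instead first pass to a continuous representative using Proposition~\ref{prop: SN^0_d is coin in SN_d}, but this is not needed.)

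For the second ($\ge$) direction, $\Fr\in(\SN_\d\cap\mu(0))\ssm\{0\}=\SM_\d$ by Lemma~\ref{lem: SN_d=SM_d}, so $\Fr=\rz m_0(\d)$ for some strictly increasing $m_0\in\SM$. Here the point is to dominate $m_0$ \emph{from above} by a continuous strictly increasing function that still vanishes at $0$. I would pick standard $t_1>t_2>\cdots\to 0$ in the domain of $m_0$ (so $m_0(t_{j+1})<m_0(t_j)$ automatically, $m_0$ being strictly increasing) and take $m$ continuous and piecewise‑linear, affine on each $[t_{j+1},t_j]$ with $m(t_{j+1})=m_0(t_j)$ — i.e.\ ``use the value at the next larger node''. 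Then $m$ is strictly increasing and continuous, $m(t_{j+1})=m_0(t_j)\to 0$ so $m\in\SM^0$, and for $t\in[t_{j+1},t_j]$ one has $m(t)\ge m(t_{j+1})=m_0(t_j)\ge m_0(t)$; hence $m\ge m_0$ on a standard neighborhood of $0$, and transferring and evaluating at $\d$ gives $\rz m(\d)\ge\rz m_0(\d)=\Fr$.

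The hard part is this second construction: undercutting a monotone germ by a continuous one is routine (it is Lemma~\ref{lem:  SM^0 is coinitial in SM}), but overshooting a possibly discontinuous monotone germ by a continuous \emph{strictly increasing} one that still tends to $0$ needs the ``previous‑node'' trick above, and it is also what forces the scope ($\Fr$ infinitesimal) of the ``$\Fr>\d$'' case. Once both halves are in hand, the lemma says precisely that $\SN_\d$ is squeezed, around the value $\d$, between $\{\rz m(\d):m\in\SM^0\}$ from below and from above, complementing Lemma~\ref{lem: SN_d doesnot have d-incomparable no}.
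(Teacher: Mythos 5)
Your proof is correct, and it takes a genuinely different route from the paper's. The paper simply observes that if no $m\in\SM$ satisfied $\rz m(\d)\le\Fr$ then $\Fr$ would be incomparably smaller than $\d$, contradicting Lemma~\ref{lem: SN_d doesnot have d-incomparable no}, and then states that the ``$\Fr>\d$'' case has ``the same proof.'' That argument is terse in two places: it delivers $m\in\SM$ rather than $m\in\SM^0$ (one needs Lemma~\ref{lem:  SM^0 is coinitial in SM} to finish), and ``same proof'' for the $\Fr>\d$ direction is not a literal repetition --- one has to pass through inversion of elements of $\SM^0$ and the fact that $\SN_\Fr=\SN_\d$ (Corollary~\ref{cor: m(N_d)-N_d and N_(m(d))=N_d}) to see that $\d\lll\Fr$ would again violate the incomparability lemma. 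You instead bypass the incomparability machinery entirely and do both halves by hand: undercut $t\mapsto\norm{g}_t$ by a piecewise-linear member of $\SM^0$ for the $\le$ half (the same construction as in the proof of Lemma~\ref{lem:  SM^0 is coinitial in SM}), and overshoot a representative $m_0\in\SM$ for the $\ge$ half via the dual ``value-at-the-next-larger-node'' piecewise-linear construction, then transfer. This is more elementary and self-contained, it produces an $m\in\SM^0$ directly, and it makes visible why the two directions are genuinely dual rather than identical. You also correctly flag that the ``$\Fr>\d$'' clause only has content for infinitesimal $\Fr$ (for noninfinitesimal $\Fr$ no $m\in\SM^0$ can have $\rz m(\d)\geq\Fr$), a scope restriction the paper leaves implicit. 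The only cost of your route is that it does not reuse Lemma~\ref{lem: SN_d doesnot have d-incomparable no} --- which the paper clearly intends as the structural reason behind this fact --- but as a self-contained argument it is complete and arguably cleaner.
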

\begin{proof}
    Suppose that there is $\Fr\in\SN_\d$ with $\Fr<\d$, but there is no $m\in\SM$ such that $\rz m(\d)\leq\Fr$. Then we have that $\Fr$ and $\d$ are incomparable and we know this can't happen by eg., Lemma \ref{lem: SN_d doesnot have d-incomparable no}. The other statement has the same proof.
\end{proof}

\begin{proposition}\label{prop: rc_h:G^0_n->G^0_n is C^0}
    If $[h]\in\SG^0_{n,n,0}$, then $rc_{[h]}:\SG^0_n\ra\SG_{n}^0$ is a continuous homomorphism.
\end{proposition}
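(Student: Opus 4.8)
The plan is to show that $rc_{[h]}$ is a ring homomorphism and that it is continuous at the zero germ, the latter being automatic once we translate by a fixed germ since all the $\tau$-structure is defined by translation.

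First I would record the algebraic fact: for $[f],[g]\in\SG^0_n$ and $c\in\bbr$ we have $(f\circ h,h) $ representatives giving $(cf)\circ h = c(f\circ h)$, $(f+g)\circ h = f\circ h + g\circ h$, and $(fg)\circ h = (f\circ h)(g\circ h)$, all as identities of germs at $0$ since the composition is witnessed on the monad $\mu(0)$ where $\rz h(\mu(0))\subset\mu(0)$ by $h(0)=0$ and $S$-continuity of $\rz h$ (here we use $[h]\in\SG^0_{n,n,0}$). One must also note that $f\circ h$ is again the germ of a continuous function: this is exactly proposition \ref{prop: germ continuity from k<<<d} / the monadic-continuity criterion applied to $\rz f\circ\rz h$ on $B_\d$, or more simply the composition of $S$-continuous internal maps is $S$-continuous, so $\rz(f\circ h)$ is $S$-continuous on $B_\d$, hence $[f\circ h]\in\SG^0_n$. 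Thus $rc_{[h]}:\SG^0_n\to\SG^0_n$ is well defined and is an $\bbr$-algebra homomorphism.

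Next I would prove continuity at $[0]$. Fix $0<\d\sim 0$. Given a basic neighborhood $U^\d_\Fr$ of $[0]$ with $\Fr\in\SN_\d$, we want $\Fs\in\SN_\d$ with $rc_{[h]}(U^\d_\Fs)\subset U^\d_\Fr$. Without loss of generality $[h]$ may be taken pseudomonotone; let $\Fe=\rz\|\rz h\|_\d=\norm{\rz h}_\d$, which is a noninfinitesimal-or-infinitesimal element of $\SN_\d$ (in fact an element of $\mu(0)_+$ when $h(0)=0$, as $\rz h$ is $S$-continuous and $\rz h(\mu(0))\subset\mu(0)$). Since $\rz h(B_\d)\subset B_\Fe$, for any $[f]$ we have $\norm{\rz f\circ\rz h}_\d\le \rz\sup\{|\rz f(\z)|:\z\in B_\Fe\}=\norm{\rz f}_\Fe$. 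Hence it suffices to find $\Fs\in\SN_\d$ so that $\norm{\rz f}_\d<\Fs$ forces $\norm{\rz f}_\Fe<\Fr$. Because $\Fe\in\SN_\d$, corollary \ref{cor: SN_r is coin with SN_d if r=f(d)} (together with corollary \ref{cor: m(N_d)-N_d and N_(m(d))=N_d}) gives $\SN_\Fe=\SN_\d$ and, more to the point, that the family $\{U^\d_\Fs:\Fs\in\SN_\Fe\}$ is coinitial in $\tau^\d_0$; so choose $\Fs\in\SN_\Fe$ with $\Fs\le\Fr$. Then $[f]\in U^\d_\Fs$ means $\norm{\rz f}_\d<\Fs$, and chasing through the inequalities (using that $\Fs$ arose as $\norm{\rz m}_\Fe$ for a suitable $[m]\in\SM^0$, so that $\norm{\rz f}_\d<\Fs$ propagates, via monotonicity of $m$ and the bound $\rz h(B_\d)\subset B_\Fe$, to $\norm{\rz f\circ\rz h}_\d<\Fr$) yields $rc_{[h]}([f])\in U^\d_\Fr$. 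Finally, for a general net $[f_d]\to[f]$ in $\tau^\d$ write $[f_d]=[f]+[g_d]$ with $[g_d]\to[0]$; homomorphism plus the $[0]$-continuity just shown gives $[f_d]\circ[h]=[f]\circ[h]+[g_d]\circ[h]\to[f]\circ[h]$, so $rc_{[h]}$ is continuous everywhere.

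The main obstacle I anticipate is the bookkeeping in the middle step: matching the modulus $\Fe=\norm{\rz h}_\d$ against the moduli $\SN_\d$ in a way that genuinely uses coinitiality rather than a naive "$\Fs=\Fr$" choice. The subtlety is that $\Fe$ need not lie in any convenient position relative to $\d$, and one must invoke the fact (corollaries \ref{cor: m(N_d)-N_d and N_(m(d))=N_d} and \ref{cor: SN_r is coin with SN_d if r=f(d)}) that $\SN_{\norm{\rz h}_\d}$ and $\SN_\d$ are coinitial with each other so that control at scale $\d$ transfers to control at scale $\Fe$ and back. Everything else — the algebraic identities, the preservation of continuity of germs under composition, and the reduction of continuity everywhere to continuity at $[0]$ — is routine given the machinery already assembled, in particular the monadic characterization of continuous germs and the coinitiality calculus of Section \ref{sec: topology-fixed target}.
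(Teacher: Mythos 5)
The estimate you use is the same as the paper's, $\norm{\rz(f\circ h)}_\d\le\norm{\rz f}_\Fe$ with $\Fe=\norm{\rz h}_\d\in\SN_\d$, and the reduction of general continuity to continuity at $[0]$ via the homomorphism property is fine. The gap is in the middle step: you want $\Fs\in\SN_\d$ with the property that $\norm{\rz f}_\d<\Fs$ forces $\norm{\rz f}_\Fe<\Fr$, and you appeal to Corollaries \ref{cor: SN_r is coin with SN_d if r=f(d)} and \ref{cor: m(N_d)-N_d and N_(m(d))=N_d}. Those corollaries only say that the moduli semirings $\SN_\Fe$ and $\SN_\d$ coincide (or are coinitial) as subsets of $\mu(0)_+$. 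That does not give the implication you need. The quantities $\norm{\rz f}_\d$ and $\norm{\rz f}_\Fe$ are suprema over \emph{different} balls, and when $\Fe>\d$ (e.g.\ $h(x)=2x$, so $\Fe=2\d$) a bound on $\rz f$ over $B_\d$ imposes no constraint on $B_\Fe\ssm B_\d$; equality of the moduli sets is irrelevant to this comparison, and the parenthetical about ``monotonicity of $m$'' does not close the gap because the germ $[f]$ is arbitrary.

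What is actually needed is that $U^\Fe_\Fr=\{[f]:\norm{\rz f}_\Fe<\Fr\}$ is a $\tau_\d$-neighborhood of $[0]$, which is precisely an instance of the $\d$-independence theorem \ref{thm: Id map: tau_d ot tau_d'is homeo} (equivalently Corollary \ref{cor: f_d converges in tau_d iff in tau_d' }). The paper's proof invokes exactly this when it passes from convergence ``in the $\tau_\Fr$ topology'' to convergence ``in the $\tau_\d$ topology.'' Once you cite that theorem, the rest of your argument (including the existence of such $\Fs$) goes through, and your proof coincides in substance with the paper's.
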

\begin{proof}
    If $0<\d\sim 0$, $[f]\in\SG^0_n$ and  $[h]\in\SG^0_{n,n,0}$ and $\rz h(B_\d)\subset B_\e$ for some positive $\e\sim 0$, then $|\rz f\circ h(\xi)|\leq \norm{\rz f}_\e$ for $\xi\in B_\d$, so that if $\Fr=\norm{\rz h}_\d$, then $\xi\in B_\d$ implies that $|\rz f\circ h(\xi)|\leq\norm{\rz f}_\Fr$ and so $\norm{\rz f\circ h}_\d\leq\norm{\rz f}_\Fr$.

    Given this, if $([f_d:d\in D])$ is a net in $\SG^0_n$ converging to the zero germ $[0]$ in say the $\tau_\d$ topology, it is sufficient to show that it follows that $([f_d\circ h]:d\in D)$ converges also (in some $\tau_\Fs$ topology for some $0<\Fs\sim0$, as topology is independent of $\Fs$). The above estimate gives $\norm{\rz f_d\circ h}_\d\leq\norm{\rz f_d}_\Fr$ for all $d\in D$. Now by above (Corollary \ref{cor: SN_r is coin with SN_d if r=f(d)}) $\SN^0_\Fr$ is coinitial in $\SN^0_\d$ and $[f_d]$ converges in the $\tau_\Fr$ topology and so in the $\tau_\d$ topology and so $\{\norm{\rz f_d}_\Fr:d\in D\}$ is coinitial in $\SN^0_\d$. But the estimates above then imply that $\{\norm{\rz f_d\circ h}_\d:d\in D\}$ is coinitial in $\SN^0_\d$, as we wanted.
\end{proof}
\begin{cor}\label{cor: rc_h:G^0_n,p,0->G^0_n,p,0 is C^0}
    Suppose that $[h]\in\SG^0_{n,n,0}$. Then $rc_{[h]}$ is a continuous $\SG^0_n$ module homomorphism of $\SG^0_{n,p,0}$.
\end{cor}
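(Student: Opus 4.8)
The statement to prove is that if $[h]\in\SG^0_{n,n,0}$, then $rc_{[h]}$ is a continuous $\SG^0_n$-module homomorphism of $\SG^0_{n,p,0}$.

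The plan is to reduce this corollary to the already-established proposition \ref{prop: rc_h:G^0_n->G^0_n is C^0}, which handles the scalar case $p=1$, using the fact that the topology on $\SG^0_{n,p,0}$ is the product topology of $p$ copies of $\SG^0_{n,0}$. First I would observe that $rc_{[h]}$ acts on $\SG^0_{n,p,0}$ coordinatewise: writing $[f]=([f_1],\ldots,[f_p])$ with each $[f_j]\in\SG^0_{n,0}$, we have $[f]\circ[h]=([f_1]\circ[h],\ldots,[f_p]\circ[h])$, since composition of a $\bbr^p$-valued germ with $[h]$ is just the composition of each component, and $[h](0)=0$ guarantees each $[f_j]\circ[h]\in\SG^0_{n,0}$ (each component of a continuous germ composed with a continuous germ fixing $0$ is a continuous germ by proposition \ref{prop: rc_h:G^0_n->G^0_n is C^0} applied to $[f_j]$). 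Thus $rc_{[h]}$ on $\SG^0_{n,p,0}$ is the product map $rc_{[h]}\times\cdots\times rc_{[h]}$ ($p$ factors) under the identification $\SG^0_{n,p,0}\cong(\SG^0_{n,0})^p$.

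Next I would invoke the standard fact that a finite product of continuous maps is continuous with respect to the product topologies; since proposition \ref{prop: rc_h:G^0_n->G^0_n is C^0} gives that each factor $rc_{[h]}:\SG^0_n\ra\SG^0_n$ is continuous, the product map is continuous, and since the topology on $\SG^0_{n,p,0}$ was defined (in subsection \ref{subsec: top properties ring structure}) precisely as this product topology, continuity of $rc_{[h]}$ on $\SG^0_{n,p,0}$ follows. Concretely, one can check this on the generating neighborhoods: a basic neighborhood of the zero germ in $\SG^0_{n,p,0}$ has the form $U^\d_{\vec{\Fr}}=U^\d_{\Fr_1}\times\cdots\times U^\d_{\Fr_p}$, and $rc_{[h]}^{-1}(U^\d_{\vec{\Fr}})=rc_{[h]}^{-1}(U^\d_{\Fr_1})\times\cdots\times rc_{[h]}^{-1}(U^\d_{\Fr_p})$, each factor of which is open by proposition \ref{prop: rc_h:G^0_n->G^0_n is C^0}; translating by an arbitrary germ handles neighborhoods of non-zero germs.

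Finally I would verify the module homomorphism property. For $[a]\in\SG^0_n$ and $[f]\in\SG^0_{n,p,0}$, we have $rc_{[h]}([a]\cdot[f])=([a]\cdot[f])\circ[h]=([a]\circ[h])\cdot([f]\circ[h])=rc_{[h]}([a])\cdot rc_{[h]}([f])$, where the middle equality is the elementary fact that composition distributes over pointwise products of germs (it holds for representatives on any common infinitesimal ball $B_\d$ and hence for germs via corollary \ref{cor: SG_0 -> F(B_delts) is R alg isomorph}); additivity $rc_{[h]}([f]+[g])=rc_{[h]}([f])+rc_{[h]}([g])$ is identical. The only mild point of care is the well-definedness of the composition in the module-action equation, i.e. that representatives can be taken on a common infinitesimal ball so that the algebraic identities hold there verbatim; this is exactly the content of the remarks opening subsection \ref{subsec: top properties of germ composition} (that $h(\mu(0))\subset\mu(0)$ and that the representative of $[f]$ is uniquely defined on all of $\mu(0)$). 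I do not anticipate any real obstacle here; the corollary is essentially a bookkeeping consequence of the $p=1$ case plus the product-topology definition, and the main (already-done) work lies in proposition \ref{prop: rc_h:G^0_n->G^0_n is C^0}.
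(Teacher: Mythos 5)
Your proof is correct and takes essentially the same route as the paper, which simply says ``This is clear from the previous proposition''; you have just supplied the (entirely routine) details — coordinatewise action under the identification $\SG^0_{n,p,0}\cong(\SG^0_{n,0})^p$, continuity of a finite product of continuous maps in the product topology, and the pointwise distributivity of right composition over the module operations — that the paper leaves implicit.
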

\begin{proof}
    This is clear from the previous proposition.
\end{proof}
    Before we proceed to proving that left composition is a continuous operation, we want a more explicit description of the convergence of a net $([f_d]:d\in D)\subset\SG^0_{n,p,0}$ to $[f]\in\SG^0_{n,p,0}$. This result once more indicates the uniform convergence flavor of $\tau$ convergence.
\begin{lem}\label{lem: ptwise condition for [f_d]->[f_0] in tau}
    Suppose that $([f_d]:d\in D)$ is a net in $\SG^0_{n,p,0}$ and $[f]\in\SG^0_{n,p,0}$. Then $[f_d]\ra[f]$ in $\tau$ if and only if the following holds. Let $\d\in\mu(0)_+$. Given $\Fr$ in $\SN_\d$, then there is $d_0\in D$ such that if $\xi\in\mu_n(0)$ satisfies $\xi\in B_\d$, then $\rz f_d(\xi)\in\mu_p(0)$  satisfies $|\rz f_d(\xi)-\rz f(\xi)|\leq\Fr$ for all $d>d_0$.
\end{lem}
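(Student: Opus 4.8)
The plan is to unwind the definitions and reduce everything to the behavior of the coordinate functions on the infinitesimal ball $B_\d$. Recall that the $\tau$ topology on $\SG^0_{n,p,0}$ is, by construction, the product topology coming from $p$ copies of $\SG^0_{n,0}$, and the neighborhoods of the zero germ in $\tau^\d$ are the translates of the sets $U^\d_{\vec{\Fr}}=U^\d_{\Fr_1}\x\cdots\x U^\d_{\Fr_p}$ with $\vec{\Fr}\in\SN_\d^p$, which (as noted right after the definition of the product topology) are coinitial with the ``diagonal'' neighborhoods $U^\d_{N,\Fr}=\{[g]\in\SG^0_{n,p,0}:\rz N(\rz g(\xi))<\Fr\;\text{for all}\;\xi\in B_\d\}$ as $\Fr$ varies over $\SN_\d$, for any fixed standard norm $N$ on $\bbr^p$. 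So first I would fix $\d\in\mu(0)_+$, and observe that $[f_d]\ra[f]$ in $\tau$ is equivalent (using Corollary \ref{cor: f_d converges in tau_d iff in tau_d' }, which guarantees independence of the infinitesimal) to saying: for each $\Fr\in\SN_\d$ there is $d_0\in D$ with $[f_d]-[f]\in U^\d_{N,\Fr}$ for all $d>d_0$, i.e. $\rz N(\rz f_d(\xi)-\rz f(\xi))<\Fr$ for all $\xi\in B_\d$ and all $d>d_0$.

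Second, I would massage the statement to be proved into exactly this shape. The lemma asks that for each $\Fr\in\SN_\d$ there be $d_0$ with $|\rz f_d(\xi)-\rz f(\xi)|\le\Fr$ for all $\xi\in\mu_n(0)$ with $\xi\in B_\d$; since the monad $\mu_n(0)$ contains $B_\d$ (as $\d\sim 0$), the qualifier ``$\xi\in\mu_n(0)$'' is automatic and merely records that the relevant points are infinitesimal. The only genuine content beyond the norm-neighborhood reformulation is the assertion that $\rz f_d(\xi)\in\mu_p(0)$, i.e. that the images are infinitesimal; but this follows immediately from $[f_d]\in\SG^0_{n,p,0}$, which forces $\rz f_d(0)=0$ and $\rz f_d$ $S$-continuous, hence $\rz f_d(\mu_n(0))\subset\mu_p(0)$. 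With the norm $N$ chosen to be the sup-of-coordinates norm, $\rz N(\rz f_d(\xi)-\rz f(\xi))\le\Fr$ is literally $|\rz f_d(\xi)-\rz f(\xi)|\le\Fr$ in the notation of the lemma (where $|\cdot|$ on $\bbr^p$ is that max norm, or any equivalent standard norm up to the usual harmless constant, which can be absorbed since $\Fr$ ranges over all of $\SN_\d$ and $\SN_\d$ is a semiring closed under multiplication by standard reals). The strict-versus-nonstrict inequality ($<\Fr$ versus $\le\Fr$) is likewise immaterial: given $\Fr\in\SN_\d$ one finds $\Fr'\in\SN_\d$ with $\Fr'<\Fr$ (e.g. $\Fr'=\tfrac12\Fr$, using Lemma \ref{lem: SN_d=SM_d} and that $\tfrac12\in{}^\s\bbr$), applies the strict version for $\Fr'$, and concludes the nonstrict one for $\Fr$.

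So the proof is essentially: $(\Leftarrow)$ the pointwise bound $|\rz f_d(\xi)-\rz f(\xi)|\le\Fr$ for all $\xi\in B_\d$ says exactly $[f_d]-[f]\in U^\d_{N,\Fr}$ (after passing from $\le$ to $<$ via an intermediate modulus), and since the $U^\d_{N,\Fr}$ form a neighborhood base at $[0]$, this gives $\tau$-convergence; $(\Rightarrow)$ $\tau$-convergence to $[f]$ gives, for each $\Fr\in\SN_\d$, eventual membership in the translate of $U^\d_{N,\Fr}$, which unpacks to the pointwise estimate, and $S$-continuity of $\rz f_d$ supplies $\rz f_d(\xi)\in\mu_p(0)$. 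I do not anticipate a real obstacle here; the ``hard part,'' such as it is, is purely bookkeeping: making sure the product-topology neighborhood base is correctly identified with the sup-norm balls $U^\d_{N,\Fr}$ (done in the paragraph following the product-topology definition), that $\SN_\d$ being a semiring lets one trade a strict inequality and a standard constant factor for a nonstrict one, and that the monad/$S$-continuity remarks justify the image being in $\mu_p(0)$. All of these are either already recorded in the excerpt or are one-line consequences of the definitions of $\SN_\d$, $U^\d_\Fr$, and $SC^0$.
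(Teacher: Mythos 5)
Your proposal is correct and takes the same route as the paper, which records the proof as ``a direct consequence of the definition''; you have simply spelled out the bookkeeping (identifying the diagonal/sup-norm neighborhood base, absorbing the $\le$ vs.\ $<$ distinction into the semiring structure of $\SN_\d$, and noting that $S$-continuity and $f_d(0)=0$ give $\rz f_d(\xi)\in\mu_p(0)$) that the paper leaves implicit.
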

\begin{proof}
     This is a direct consequence of the definition.
\end{proof}
     Left composition by an element of $\SG^0_{n,n,0}$ acting on $\SG^0_{n,p,0}$ is obviously not a homomorphism, but we have a good topological result. Note also that, unlike proving the $\tau$ continuity of right composition, proving the continuity of left composition does not follow immediately from such at $[0]$ upon translation.
     The proof of left continuity will follow after a bit more formal development. We begin with a definition.
\begin{definition}
     We say that $[h]\in\SG_{n,n,0}$ is uniformly continuous at $0$ if it satisfies the following. There is $r\in\bbr_+$ such that for each $m\in\SM(B_r)$, there is $\ov{m}\in\SM(B_r)$ with the following property. For each $x,y\in B_r$ with $|x-y|<\ov{m}(|x|)$, we have $|h(x)-h(y)|<m(|x|)$.
\end{definition}
     So if $[h]$ is uniformly continuous at $0$, then there is $G_h:\SM(B_r)\ra\SM(B_r)$ for $r$ small enough defined by $G_h(m)=\ov{m}$.
\begin{lem}\label{lem: h unif cont at 0 -> h satisfies N_d cont est}
     Suppose that $[h]$ is uniformly continuous at $[0]$ and that $\d\in\mu(0)_+$. The the following holds. For each $\Fr\in\SN_\d$, there is $\ov{\Fr}\in\SN_\d$ such that if $\xi,\z\in B_\d$ with $|\xi-\z|<\ov{\Fr}$, then $|\rz h(\xi)-\rz h(\z)|<\Fr$.
\end{lem}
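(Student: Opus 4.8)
The plan is to deduce this lemma from the standard definition of uniform continuity at $0$ by transferring the family of $(m,\ov m)$ statements separately, much as the transfer argument was done in Lemma \ref{lem: basic equicont families lemma} and in several earlier corollaries. First I would unwind the hypothesis: $[h]$ uniformly continuous at $0$ gives $r\in\bbr_+$ and a map $G_h:\SM(B_r)\ra\SM(B_r)$, $m\mapsto\ov m$, such that for all $x,y\in B_r$ with $|x-y|<\ov m(|x|)$ one has $|h(x)-h(y)|<m(|x|)$. The key move is to run the transfer one $m$ at a time, so that no external object (the whole family $\SM(B_r)$, which is not internal) has to be transferred. For a fixed standard $m$ the statement
\begin{align}
    \forall x,y\in B_r\quad\big(|x-y|<\ov m(|x|)\ \Rightarrow\ |h(x)-h(y)|<m(|x|)\big)
\end{align}
is first order, so its transfer gives the corresponding $\rz$-statement valid for all $\xi,\z\in\rz B_r$.

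Next I would convert this to the required $\SN_\d$ formulation using Lemma \ref{lem: SN_d=SM_d} (and its companion Lemma \ref{lem: SN is gen by set of the f(d)'s}), which identifies $(\SN_\d\cap\mu(0))\ssm\{0\}$ with $\SM_\d=\{\rz m(\d):m\in\SM\}$. So, given $\Fr\in\SN_\d$, I may assume $\Fr=\rz m(\d)$ for some $m\in\SM^0$ (using that $\SN^0_\d$ is coinitial in $\SN_\d$, Proposition \ref{prop: SN^0_d is coin in SN_d}, to reduce to the continuous case; if $\Fr$ is noninfinitesimal the claim is trivial since then any infinitesimal $\ov\Fr$ works). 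Set $\ov m=G_h(m)\in\SM(B_r)$ and $\ov\Fr=\rz\ov m(\d)\in\SN_\d$. Then for $\xi,\z\in B_\d$ with $|\xi-\z|<\ov\Fr=\rz\ov m(\d)$, I want $|\rz h(\xi)-\rz h(\z)|<\Fr$. The subtlety is that the transferred implication controls $|\xi-\z|$ against $\rz\ov m(|\xi|)$ and the conclusion against $\rz m(|\xi|)$, with $|\xi|$ a \emph{variable} infinitesimal $\le\d$, not $\d$ itself. This is handled by monotonicity: since $m,\ov m\in\SM$ are (strictly) increasing, $|\xi|\le\d$ gives $\rz m(|\xi|)\le\rz m(\d)=\Fr$ and $\rz\ov m(|\xi|)\le\rz\ov m(\d)=\ov\Fr$. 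So $|\xi-\z|<\ov\Fr$ alone is \emph{not} quite enough to invoke the transferred implication at the point $\xi$ — we would need $|\xi-\z|<\rz\ov m(|\xi|)$, which could fail when $|\xi|$ is much smaller than $\d$.

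To repair this I would strengthen the choice of $\ov\Fr$ so that the modulus is taken at the \emph{smallest} relevant scale rather than at $\d$; concretely, replace $\ov m$ by $t\mapsto\ov m(t)$ restricted near $0$ and observe that we only need the estimate on the ball $B_\d$, so after shrinking $\ov m$ (still keeping it in $\SM$) we may arrange that for all sufficiently small $t$, $\ov m(t)\le t$ and in fact that $\ov m$ dominates the behavior needed at scale $\d$; alternatively, and more cleanly, I would formulate the argument using the function $\ov m$ directly: the transferred statement at the point $\xi$ says $|\xi-\z|<\rz\ov m(|\xi|)\Rightarrow|\rz h(\xi)-\rz h(\z)|<\rz m(|\xi|)\le\Fr$, so it suffices to know $|\xi-\z|<\rz\ov m(|\xi|)$ whenever $|\xi-\z|<\ov\Fr$ — and this is achieved by instead setting $\ov\Fr=\rz\ov{\ov m}(\d)$ where $\ov{\ov m}\in\SM$ is chosen (uniform continuity of $h$ applied twice, or just a diagonal/composition trick with elements of $\SM^0$, which is closed under the relevant operations by Lemma \ref{lem: SN_d=SM_d}) so that $\rz\ov{\ov m}(\d)\le\rz\ov m(\Ft)$ for all $\Ft\in\SN_\d$ with $\Ft\le\d$; this is exactly the kind of coinitiality bound supplied by Corollary \ref{cor: m(N_d)-N_d and N_(m(d))=N_d} together with Lemma \ref{lem: m in M^0 sends coin pairs to coin pairs}. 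I expect this scale-matching point — ensuring the chosen $\ov\Fr\in\SN_\d$ really forces $|\xi-\z|<\rz\ov m(|\xi|)$ uniformly over all $\xi\in B_\d$, including those with $|\xi|\lll\d$ — to be the main obstacle; everything else is routine transfer plus the already-established dictionary between $\SN_\d$ and $\SM_\d$. Once that is in place the lemma follows immediately, and it feeds directly into the proof of $\tau$-continuity of left composition (Proposition \ref{prop: lc_h:G^0_n,p,0->G^0_n,p,0 is C^0}) via Lemma \ref{lem: ptwise condition for [f_d]->[f_0] in tau}.
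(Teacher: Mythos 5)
You have correctly reproduced the paper's route (transfer the fixed-$m$ implication, then evaluate at $\d$), and you have put your finger on exactly the weak point: the transferred statement is gated on $|\xi-\z|<\rz\ov m(|\xi|)$, with $|\xi|$ a variable radius, whereas the chosen threshold $\ov\Fr=\rz\ov m(\d)$ only controls the case $|\xi|=\d$. (The paper's own proof quietly restricts to ``if $|\xi|=\d$'' and never addresses smaller $|\xi|$, so you have found a real gap in the source, not just in your own write-up.) However, your proposed repair cannot work. You ask for $\ov{\ov m}\in\SM$ with $\rz\ov{\ov m}(\d)\le\rz\ov m(\Ft)$ for all $\Ft\in\SN_\d$ with $\Ft\le\d$; but $\{\Ft\in\SN_\d:\Ft\le\d\}$ is coinitial in $\SN_\d$ and $\rz\ov m$ carries it to a set coinitial in $\SN_\d$ (corollary \ref{cor: m(N_d)-N_d and N_(m(d))=N_d}), so any such lower bound would lie below every element of $\SN_\d$ and hence could not itself be an element of $\SN_\d$. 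Worse, $|\xi|$ ranges over all of $[0,\d]$, not just over $\SN_\d$: for $|\xi|\lll\d$ (or $|\xi|=0$) the quantity $\rz\ov m(|\xi|)$ is below every positive threshold you could name, so no positive $\ov\Fr$ whatsoever forces $|\xi-\z|<\rz\ov m(|\xi|)$ uniformly on $B_\d$. The ``take the modulus at the smallest scale'' idea is therefore a dead end, not merely a technicality.

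The fix is to abandon the pointwise-scale modulus and use a modulus in the increment. In the only application (left composition, where $[h]\in\SG^0_{p,p,0}$), $h$ has a continuous representative on a compact ball $B_r$, hence is uniformly continuous there in the ordinary sense: there is a nondecreasing $\om$ with $\lim_{t\ra 0}\om(t)=0$ and $|h(x)-h(y)|\le\om(|x-y|)$ for all $x,y\in B_r$ --- crucially a function of $|x-y|$ alone, with no reference to $|x|$. Dominating $\om$ from above by a strictly increasing continuous $\om'\in\SM^0$ and transferring, one gets $|\rz h(\xi)-\rz h(\z)|\le\rz\om'(|\xi-\z|)$ for all $\xi,\z\in\rz B_r\supset B_\d$. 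Given $\Fr=\rz m(\d)\in\SN_\d$, set $\ov\Fr=\tfrac12\rz((\om')^{-1}\circ m)(\d)$, which lies in $\SN_\d$ since $(\om')^{-1}\circ m\in\SM$ (lemma \ref{lem: SN is gen by set of the f(d)'s} and corollary \ref{cor: m(N_d)-N_d and N_(m(d))=N_d}); then $|\xi-\z|<\ov\Fr$ gives $|\rz h(\xi)-\rz h(\z)|\le\rz\om'(\ov\Fr)<\Fr$ uniformly over $B_\d$, with no case analysis on $|\xi|$. I would recommend proving the lemma this way and treating the paper's notion of ``uniformly continuous at $0$'' as a red herring for this particular estimate.
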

\begin{proof}
     Transfer the statement for the $r\in\bbr_+$ for which uniform continuity at $0$ holds.  That is, if $\xi,\z\in\rz B_r$ with $|\xi-\z|<\rz G_h(m)(|\xi|)$, then $|\rz h(\xi)-\rz h(\z)|<\rz m(|\xi|)$ and note that if $\Fr\in\SN_\d$, then there is $m\in\SM(B_r)$ with $\rz m(\d)=\Fr$  so that we can use the corresponding $\ov{\Fr}=\rz G_h(m)(\d)\in\SN_\d$ satisfying the transfer of the properties of $G_h$.
     That is, if $|\xi|=\d$, so that any $\Fr\in\SN_\d$ is given by $\Fr=\rz m(\d)$ and therefore to get $|\rz h(\xi)-\rz h(\z)|<\Fr$, we need only to choose $\ov{\Fr}=\rz G_h(m)(\d)$ for the inequality  $|\xi-\z|<\ov{\Fr}$ to imply the needed inequality.
\end{proof}
\begin{lem}\label{lem: elts of G^0 are unif cont at 0}
    Suppose that $[h]\in\SG^0_{p,p,0}$, then any representative $h\in[h]$ is uniformly continuous at $0$.
\end{lem}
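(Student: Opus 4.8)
The claim to prove is Lemma \ref{lem: elts of G^0 are unif cont at 0}: if $[h]\in\SG^0_{p,p,0}$, then any representative $h\in[h]$ is uniformly continuous at $0$ in the sense of the definition preceding it, i.e.\ there is $r\in\bbr_+$ such that for every $m\in\SM(B_r)$ there is $\ov{m}\in\SM(B_r)$ with $|x-y|<\ov{m}(|x|)\Rightarrow |h(x)-h(y)|<m(|x|)$ for all $x,y\in B_r$.

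The plan is as follows. First I would fix a representative $h$ that is genuinely continuous on some closed ball $B_{r_0}$ (such a representative exists since $[h]\in\SG^0_{p,p,0}$, so $h(0)=0$ and $h$ is continuous near $0$; shrink $r_0$ as needed). Since $B_{r_0}$ is compact, $h$ is in fact uniformly continuous on $B_{r_0}$ in the ordinary $\epsilon$--$\delta$ sense: there is a modulus $\sigma\in\SM^0$, say $\sigma(t)=\sup\{|h(x)-h(y)|:x,y\in B_{r_0},\,|x-y|\le t\}$, which is nondecreasing with $\sigma(t)\to 0$ as $t\to 0$ (strict monotonicity can be arranged by adding a small linear term $t\mapsto \sigma(t)+t$, or by passing to the inverse construction of Lemma \ref{lem: SN_d=SM_d}; in any case I only need membership in $\SM^0$ up to coinitial equivalence, and Lemma \ref{lem:  SM^0 is coinitial in SM} lets me undercut by an element of $\SM^0$). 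The key inequality is then: for all $x,y\in B_{r_0}$, $|h(x)-h(y)|\le \sigma(|x-y|)$.

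Now given $m\in\SM(B_r)$ (with $r\le r_0$ to be chosen), I want $\ov{m}\in\SM(B_r)$ so that $|x-y|<\ov{m}(|x|)$ forces $\sigma(|x-y|)<m(|x|)$. Because $\sigma\in\SM^0$ is continuous, strictly increasing near $0$, with $\sigma(0)=0$, it has a local inverse $\sigma^{-1}\in\SM^0$ (Remark \ref{rem: e<<<d and f(d)<e -> f(d)=0} and the surrounding discussion note that $m\in\SM^0\Rightarrow m^{-1}\in\SM$). Define $\ov{m}(t)\dot= \sigma^{-1}(\tfrac12 m(t))$ for $t$ small; as a composite of elements of $\SM$ (rescaling by $\tfrac12$, applying $m$, applying $\sigma^{-1}$), $\ov{m}\in\SM(B_r)$ for $r$ small enough that everything is defined. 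Then if $|x-y|<\ov{m}(|x|)=\sigma^{-1}(\tfrac12 m(|x|))$, monotonicity of $\sigma$ gives $\sigma(|x-y|)<\tfrac12 m(|x|)<m(|x|)$, hence $|h(x)-h(y)|\le\sigma(|x-y|)<m(|x|)$, which is exactly the required implication. Choosing $r$ once, small enough that $\sigma$, $\sigma^{-1}$, and all the needed composites are defined on $B_r$ and that $\sigma$ is strictly monotone there, produces the uniform $r$ demanded in the definition, and the assignment $m\mapsto\ov{m}=G_h(m)$ is then well defined.

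The main obstacle — really a technical annoyance rather than a genuine difficulty — is the bookkeeping around membership in $\SM$ versus $\SM^0$: the modulus $\sigma$ built from $h$ need not be strictly increasing (it could be locally constant on intervals where $h$ is constant), and inverting a merely nondecreasing function is ambiguous. I would handle this by replacing $\sigma$ with $\sigma(t)+t$ (still a valid upper bound for $|h(x)-h(y)|$ only after noting $|h(x)-h(y)|\le\sigma(|x-y|)\le\sigma(|x-y|)+|x-y|$, which is fine), so the working modulus is strictly increasing and continuous, hence locally invertible with inverse in $\SM^0$; all the composites then land in $\SM$. The rest is routine: continuity of $h$ on a compact ball, the elementary inequality $|h(x)-h(y)|\le\sigma(|x-y|)$, and composing monotone germs. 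No nonstandard machinery is needed for this lemma itself — it is the standard uniform-continuity-on-a-compact-set fact, repackaged in the $\SM$-modulus language so that Lemma \ref{lem: h unif cont at 0 -> h satisfies N_d cont est} can be applied afterward to obtain the $\SN_\d$-estimate used in proving $\tau$-continuity of left composition.
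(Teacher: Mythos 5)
Your proof is correct and rests on the same core idea as the paper's: $h$ is continuous on a compact ball $B_r$, hence uniformly continuous there in the ordinary sense, and this is then translated into the $\SM$-modulus formulation by evaluating at $t=|x|$. The paper leaves the construction of $\ov{m}$ implicit, whereas you realize it concretely as $\ov{m}=\sigma^{-1}\circ(\tfrac12 m)$ via the modulus of continuity $\sigma$ (regularized to lie in $\SM^0$); this is a sharper write-up of the same argument, not a different route.
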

\begin{proof}
    Suppose that $h$ is a representative for $[h]$, so that there is $r\in\bbr_+$ such that $h$ is a continuous function on the ball $B_r$ and therefore, $B_r$ being compact, uniformly continuous there. That is, building a parameter into our statement of uniform continuity,  if $m\in\SM_r$, then there is $\ov{m}\in\SM_r$ such that for $x,y\in B_r$ and a given real $t$ with $0<t<r$, if  $|x-y|<\ov{m}(t)$, then $|h(x)-h(y)|<m(t)$. But as $t$ is an arbitrary parameter in $(0,r)$, then for $0<|x|<r$, we have that if $|x-y|<\ov{m}(|x|)$, then $|h(x)-h(y)|<m(|x|)$, as we wanted to prove.
\end{proof}
    We are now in a position to prove the following proposition.

\begin{proposition}\label{prop: lc_h:G^0_n,p,0->G^0_n,p,0 is C^0}
    Suppose that $[h]\in\SG^0_{p,p,0}$, then $lc_{[h]}:\SG^0_{n,p,0}\ra\SG^0_{n,p,0}$ is a continuous map.
\end{proposition}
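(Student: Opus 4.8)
The plan is to mimic the proof of the right-composition result, Proposition \ref{prop: rc_h:G^0_n->G^0_n is C^0}, but using the uniform-continuity machinery just developed (Lemmas \ref{lem: elts of G^0 are unif cont at 0} and \ref{lem: h unif cont at 0 -> h satisfies N_d cont est}) in place of the simple monotonicity estimate. First I would reduce to checking sequential/net continuity at an arbitrary point: since $\SG^0_{n,p,0}$ is not a topological group under $lc_{[h]}$, I cannot just translate to $[0]$, so I must work with a net $([f_d]:d\in D)$ converging to $[f]$ in $\tau$ and show $([h]\circ[f_d]:d\in D)$ converges to $[h]\circ[f]$ in $\tau$. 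By the independence of $\tau^\d$ from $\d$ (Theorem \ref{thm: Id map: tau_d ot tau_d'is homeo}, via Corollary \ref{cor: f_d converges in tau_d iff in tau_d' }) I may fix one $0<\d\sim 0$ throughout. The criterion to verify is the one packaged in Lemma \ref{lem: ptwise condition for [f_d]->[f_0] in tau}: given $\Fr\in\SN_\d$, produce $d_0\in D$ so that $|\rz h(\rz f_d(\xi))-\rz h(\rz f(\xi))|\leq\Fr$ for all $\xi\in B_\d$ and all $d>d_0$.

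The key step is the estimate coming from uniform continuity of $h$ at $0$. By Lemma \ref{lem: elts of G^0 are unif cont at 0}, a representative $h\in[h]$ is uniformly continuous at $0$, and by Lemma \ref{lem: h unif cont at 0 -> h satisfies N_d cont est}, for the given $\Fr\in\SN_\d$ there is $\ov{\Fr}\in\SN_\d$ such that for all $\eta,\eta'\in B_\d$ with $|\eta-\eta'|<\ov{\Fr}$ one has $|\rz h(\eta)-\rz h(\eta')|<\Fr$. Since $[f]\in\SG^0_{n,p,0}$ and $[f_d]\ra[f]$ in $\tau$, for $\xi\in B_\d$ both $\rz f(\xi)$ and $\rz f_d(\xi)$ lie in $\mu_p(0)$, hence in $B_\d$ after possibly shrinking (they are infinitesimal); more carefully, I would note $\rz f(\xi),\rz f_d(\xi)\in\mu_p(0)$ so they lie in $B_{\d'}$ for any $\d'$ not too small, and by $\d$-independence we may just run the uniform-continuity estimate on the appropriate infinitesimal ball containing the images. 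Now apply the convergence hypothesis $[f_d]\ra[f]$ with modulus $\ov{\Fr}$: by Lemma \ref{lem: ptwise condition for [f_d]->[f_0] in tau} there is $d_0\in D$ so that $|\rz f_d(\xi)-\rz f(\xi)|\leq\ov{\Fr}$ for all $\xi\in B_\d$ and $d>d_0$. Composing the two estimates gives $|\rz h(\rz f_d(\xi))-\rz h(\rz f(\xi))|<\Fr$ for all such $\xi$ and $d$, which is exactly the desired condition. One also checks $[h]\circ[f_d]\in\SG^0_{n,p,0}$ (continuity of a composition of continuous germs, with $h(0)=0$ so the zero-preserving condition holds), so the net genuinely lives in the right space.

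The main obstacle, and the place I would be most careful, is the bookkeeping of the infinitesimal balls: the images $\rz f_d(B_\d)$ and $\rz f(B_\d)$ need not lie in $B_\d$, only in $\mu_p(0)$, so the uniform-continuity estimate of Lemma \ref{lem: h unif cont at 0 -> h satisfies N_d cont est} must be invoked on a ball $B_\e$ with $\e\sim 0$ chosen so that $\rz f(B_\d)\cup\bigcup_{d>d_0}\rz f_d(B_\d)\subset B_\e$ — and then one must know that $\SN_\e$ and $\SN_\d$ are "the same" for the purposes of converting moduli, which is where Corollary \ref{cor: SN_r is coin with SN_d if r=f(d)} and the coinitiality results (e.g. Corollary \ref{cor: m(N_d)-N_d and N_(m(d))=N_d}, Lemma \ref{lem: m in M^0 sends coin pairs to coin pairs}) come in to guarantee that a modulus $\ov{\Fr}\in\SN_\e$ can be matched by one in $\SN_\d$ and vice versa. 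A secondary subtlety is that the "modulus" $\ov{\Fr}$ depends on $\Fr$ but not on $d$ — this is precisely what uniform continuity buys us, and it is essential that Lemma \ref{lem: h unif cont at 0 -> h satisfies N_d cont est} delivers $\ov{\Fr}$ uniformly over $B_\d$, so the single $d_0$ obtained from the $[f_d]\ra[f]$ hypothesis with modulus $\ov{\Fr}$ works simultaneously for all $\xi$. Once these coinitiality identifications are in hand, the argument is a clean two-$\varepsilon$ (here two-modulus) estimate parallel to the right-composition proof, and I would also remark that the corresponding module statement over $\SG^0_{n,0}$ follows coordinatewise just as in Corollary \ref{cor: rc_h:G^0_n,p,0->G^0_n,p,0 is C^0}.
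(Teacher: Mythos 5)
Your proposal is correct and follows essentially the same route as the paper's proof: reduce via Lemma \ref{lem: ptwise condition for [f_d]->[f_0] in tau} to a pointwise estimate, extract a uniform modulus $\ov{\Fr}\in\SN_\d$ for $h$ from Lemmas \ref{lem: elts of G^0 are unif cont at 0} and \ref{lem: h unif cont at 0 -> h satisfies N_d cont est}, feed $\ov{\Fr}$ back into the convergence hypothesis, and compose the two estimates. Your extra attention to where the images $\rz f_d(B_\d)$ actually land is, if anything, more careful than the paper, which simply states the uniform-continuity condition $(\diamondsuit)$ for $\z\in B^p_\d$ and leaves the identification of moduli across infinitesimal balls implicit.
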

\begin{proof}
    Suppose that $([f_d]:d\in D)$ is a net in $\SG^0_{n,p,0}$ such that $[f_d]\ra [f]\in\SG^0_{n,p,0}$ in the topology $\tau$. We want to show that $[h]\circ [f_d]=[h\circ f_d]\ra [h\circ f_d]$ in $\tau$.
    By Lemma \ref{lem: ptwise condition for [f_d]->[f_0] in tau}, it suffices to prove given a fixed $\d\in\mu(0)_+$ the following statement. Given $\Fr\in\SN_\d$, there is $d_0\in D$ such that for $d>d_0$ and each $\xi\in B_\d$, we have $|\rz h\circ f_d(\xi)-\rz h\circ f(\xi)|<\Fr$. Fix this $\Fr\in\SN_\d$ and notice that Lemmas \ref{lem: h unif cont at 0 -> h satisfies N_d cont est} and \ref{lem: elts of G^0 are unif cont at 0} together  imply the following. $\bsm{(\diamondsuit)}$: Given the fixed $\Fr$, there is $\ov{\Fr}\in\SN_\d$ such that if $\xi\in B^n_\d$ and $\z\in B^p_\d$ satisfy $|\z-\rz f(\xi)|<\ov{\Fr}$ then we have that $|\rz h(\z)-\rz h(f(\xi))|<\Fr$.
    But then applying the hypothesis in the guise given by Lemma \ref{lem: ptwise condition for [f_d]->[f_0] in tau} once more, we know that  there is $d_0\in D$, such that for $\xi\in B^n_\d$ and $d>d_0$, we have that $|\rz f_d(\xi)-\rz f(\xi)|<\ov{\Fr}$. But this is precisely the condition, with $\z=\rz f_d(\xi)$ for $d>d_0$, required for the previous statement $(\diamondsuit)$ to hold.
\end{proof}
    We will show that if $[h]\in\SG^0_{n,n,0}$ is the germ of a homeomorphism, then composition on the right is a topological ring isomorphism on $\SG^0_n$.

\begin{definition}
    We say that $[f]\in\SG^0_{n,n,0}$ is a homeomorphism germ if there exists $[g]\in\SG^0_{n,n,0}$, its inverse, satisfying $[f]\circ[g]=[id]=[g]\circ [f]$. This set of germs is clearly a group, which we will denote by $\SH^0_n$.
\end{definition}

\begin{proposition}
    Germs of homeomorphisms $[h]\in\SG^0_{n,n,0}$ give topological ring isomorphisms $rc_{[h]}:\SG^0\ra\SG^0$. If we are considering $rc_{[h]}:\SG^0_{n,p}\ra\SG^0_{n,p}$, then $rc_{[h]}$ is a $\SG^0_n$ module isomorphism.
\end{proposition}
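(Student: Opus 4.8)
The plan is to assemble the statement from the pieces already established for right composition and for the ring/module structure. First I would recall that Proposition \ref{prop: rc_h:G^0_n->G^0_n is C^0} gives, for any $[h]\in\SG^0_{n,n,0}$, that $rc_{[h]}:\SG^0_n\ra\SG^0_n$ is a continuous ring homomorphism, and that Corollary \ref{cor: rc_h:G^0_n,p,0->G^0_n,p,0 is C^0} upgrades this to a continuous $\SG^0_n$-module homomorphism of $\SG^0_{n,p,0}$. So the only new content is that when $[h]$ is a homeomorphism germ, $rc_{[h]}$ is a bijection whose inverse is also continuous. For this I would use the group structure of $\SH^0_n$ recorded in the definition preceding the statement: if $[g]\in\SG^0_{n,n,0}$ is the inverse germ of $[h]$, then functoriality of composition gives $rc_{[h]}\circ rc_{[g]}=rc_{[g]\circ[h]}=rc_{[id]}=\mathrm{id}_{\SG^0_n}$ and likewise $rc_{[g]}\circ rc_{[h]}=rc_{[h]\circ[g]}=\mathrm{id}_{\SG^0_n}$, since for a representative $f$ one has $(f\circ h)\circ g = f\circ(h\circ g)$ as germs and composition with $[id]$ is the identity on germs (here one must note the standard fact, assumed known in this section, that germ composition is associative and unital, and that restricting to monad representatives on $B_\d$ does not disturb this because $h,g$ map $\mu(0)$ to $\mu(0)$). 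Hence $rc_{[h]}$ is a bijection with two-sided inverse $rc_{[g]}$.

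Next I would observe that $rc_{[g]}$ is itself continuous: indeed $[g]\in\SG^0_{n,n,0}$ because the inverse germ of a homeomorphism germ sending $0$ to $0$ is again a continuous germ sending $0$ to $0$, so Proposition \ref{prop: rc_h:G^0_n->G^0_n is C^0} applies verbatim to $rc_{[g]}$. Therefore $rc_{[h]}$ is a homeomorphism of the topological space $(\SG^0_n,\tau)$, and since it is also a ring homomorphism (again Proposition \ref{prop: rc_h:G^0_n->G^0_n is C^0}), it is a topological ring isomorphism. For the module statement I would run the same argument one coordinate at a time: $rc_{[h]}:\SG^0_{n,p}\ra\SG^0_{n,p}$ acts as $rc_{[h]}$ on each of the $p$ factors of $\SG^0_{n,0}$, so it is a continuous bijection with continuous inverse $rc_{[g]}$ by Corollary \ref{cor: rc_h:G^0_n,p,0->G^0_n,p,0 is C^0}, and it respects the $\SG^0_n$-module structure because, writing $[f]=([f_1],\dots,[f_p])$ and $[a]\in\SG^0_n$, one has $([a][f])\circ[h]=([a]\circ[h])\,([f]\circ[h])$ coordinatewise, which is exactly the ring-homomorphism property of $rc_{[h]}$ on $\SG^0_{n,0}$ applied to $[a][f_j]=[a f_j]$.

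I expect no serious obstacle here; the lemma is essentially a formal corollary of the already-proven continuity of one-sided composition together with the group axioms for $\SH^0_n$. The one point that deserves a careful sentence rather than a wave of the hand is the interaction between germ composition and the infinitesimal-restriction description of the topology: one must confirm that $rc_{[h]}$, defined abstractly on germs, agrees with the map induced on $^\s F(B_\d)$ via the isomorphism $\SR_\d$ of Corollary \ref{cor: SG_0 -> F(B_delts) is R alg isomorph}, i.e.\ that $\rz(f\circ h)|_{B_\d}$ is computed correctly even when $\rz h(B_\d)\not\subset B_\d$, using only that $h(\mu(0))\subset\mu(0)$ and that the representative of $[f]$ is canonically defined on all of $\mu(0)$ (this is precisely the remark made at the start of subsection \ref{subsec: top properties of germ composition}). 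Once that compatibility is invoked, the associativity and unit computations above are literally the corresponding identities in $^\s F(B_\d)$, and the proof closes.
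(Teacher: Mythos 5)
Your proof is correct and follows essentially the same route as the paper: both invoke Proposition \ref{prop: rc_h:G^0_n->G^0_n is C^0} and Corollary \ref{cor: rc_h:G^0_n,p,0->G^0_n,p,0 is C^0} for continuity of $rc_{[h]}$ and $rc_{[h^{-1}]}$, then use contravariant functoriality of right composition ($rc_{[h]}\circ rc_{[h^{-1}]}=rc_{[h^{-1}]\circ[h]}=rc_{[id]}=\mathrm{Id}$) to exhibit the two-sided inverse. Your added remarks about compatibility of germ composition with the $\SR_\d$ identification are a sensible clarification but not a different argument.
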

\begin{proof}
    This follows immediately from Proposition \ref{prop: rc_h:G^0_n->G^0_n is C^0} and Corollary \ref{cor: rc_h:G^0_n,p,0->G^0_n,p,0 is C^0} as follows: both $rc_{[h]}$ and $rc_{[h^{-1}]}$ are continuous homomorphisms, and as $rc_{[h]}\circ rc_{[h^{-1}]}([g])=[g]\circ[h^{-1}]\circ[h]=[g\circ h^{-1}\circ h]=[g]$ we have $rc_{[h^{-1}]}=(rc_{[h]})^{-1}$ and so $rc_{[h]}\circ(rc_{[h]})^{-1}=rc_{[h]}\circ rc_{[h^{-1}]}=rc_{[id_n]}=Id=rc_{[id]}=rc_{[h^{-1}]}\circ rc_{[h]}=(rc_{[h]})^{-1}\circ rc_{[h]}$.
\end{proof}
    Note that only right composition is a ring homomorphism, nonetheless left composition satisfies $lc:\SG^0_{p,p}\x\SG^0_{n,p}\ra\SG^0_{n,p}$ and a formal proof similar to that above gives the following.
\begin{proposition}
    Germs of homeomorphisms $[h]\in\SG^0_{p,p,0}$ give homeomorphisms $lc_{[h]}:\SG^0_{n,p,0}\ra\SG^0_{n,p,0}:[f]\mapsto[h\circ f]$.
\end{proposition}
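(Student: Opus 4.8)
The plan is to mimic the proof of the analogous statement for right composition by germs of homeomorphisms, using the continuity of left composition already established in Proposition~\ref{prop: lc_h:G^0_n,p,0->G^0_n,p,0 is C^0} together with the group structure on $\SH^0_p$.

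First I would record that $lc_{[h]}$ is a well-defined map $\SG^0_{n,p,0}\ra\SG^0_{n,p,0}$. For $[h]\in\SG^0_{p,p,0}$ and $[f]\in\SG^0_{n,p,0}$, the composite $h\circ f$ is defined as a germ at $0$: since $f$ is continuous with $f(0)=0$ we have $f(\mu_n(0))\subset\mu_p(0)$, so $\rz(h\circ f)|_{B_\d}$ is always defined as discussed at the start of Subsection~\ref{subsec: top properties of germ composition}, regardless of whether $\rz h(B_\d)\subset B_\d$; it is the germ of a continuous map since composites of continuous maps are continuous; and it sends $0$ to $h(f(0))=h(0)=0$, so $[h\circ f]\in\SG^0_{n,p,0}$. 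Thus $lc_{[h]}$ is well defined, and by Proposition~\ref{prop: lc_h:G^0_n,p,0->G^0_n,p,0 is C^0} it is continuous.

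Next, since $[h]\in\SH^0_p$ is a homeomorphism germ, by definition it has an inverse $[h^{-1}]\in\SG^0_{p,p,0}$ with $[h]\circ[h^{-1}]=[\mathrm{id}_p]=[h^{-1}]\circ[h]$. In particular $[h^{-1}]\in\SG^0_{p,p,0}$, so the same proposition applies to give that $lc_{[h^{-1}]}:\SG^0_{n,p,0}\ra\SG^0_{n,p,0}$ is also well defined and continuous. Using associativity of germ composition (valid because composition of representatives is associative wherever defined, and passing to germs respects this), for every $[f]\in\SG^0_{n,p,0}$ one gets
\[
lc_{[h]}\circ lc_{[h^{-1}]}([f]) = [h\circ(h^{-1}\circ f)] = [(h\circ h^{-1})\circ f] = [\mathrm{id}_p\circ f] = [f],
\]
and symmetrically $lc_{[h^{-1}]}\circ lc_{[h]}([f]) = [f]$. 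Hence $lc_{[h^{-1}]} = (lc_{[h]})^{-1}$, so $lc_{[h]}$ is a continuous bijection with continuous inverse, i.e. a homeomorphism of $\SG^0_{n,p,0}$.

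I do not expect a genuine obstacle here: once the continuity of $lc_{[\cdot]}$ is in hand (Proposition~\ref{prop: lc_h:G^0_n,p,0->G^0_n,p,0 is C^0}), the statement is essentially a formal group-theoretic argument, with the only real input being the closure of $\SH^0_p$ under inversion inside $\SG^0_{p,p,0}$, which is part of the definition of a homeomorphism germ. The mildly delicate points are bookkeeping ones — that $h\circ f$ is well defined as a germ on $\SG^0_{n,p,0}$ despite the repeated contraction of domains inside $\mu_p(0)$, and that germ composition is strictly associative at the level of germs — but these are exactly the heuristics recorded at the opening of the subsection and present no difficulty, since monad representatives remain well defined under such contractions.
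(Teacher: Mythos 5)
Your proof is correct and is precisely the "formal proof similar to that above" that the paper alludes to: you invoke the continuity of $lc_{[h]}$ and $lc_{[h^{-1}]}$ from Proposition~\ref{prop: lc_h:G^0_n,p,0->G^0_n,p,0 is C^0}, and then use associativity of germ composition together with the closure of homeomorphism germs under inversion to show $lc_{[h^{-1}]}=(lc_{[h]})^{-1}$. Nothing is missing; this matches the paper's intended argument.
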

    Note that the following gives a numerical bound on regularity growth and decay of a homeomorphism germ independent of the particular element of $\SH^0_n$.
\begin{proposition}
    Suppose that $[h]\in\SH^0_n$ is the germ of a homeomorphism. Then there are $\Fr,\Ft\in\SN_\d$ $\Fr<\Ft$, such that $B_\Fr\subset\rz h(B_\d)\subset B_\Ft$. Equivalently, if $\Fz\lll\d\lll\Fw$, then for every $[h]\in\SH^0_n$, $B_\Fz\subset\rz h(B_\d)\subset B_\Fw$.
\end{proposition}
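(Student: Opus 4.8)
The plan is to prove the two containments separately and then read off the $\lll$-reformulation. Fix once and for all a representative $h\in[h]$ and a representative $g$ of the inverse germ $[h^{-1}]\in\SG^0_{n,n,0}$, both continuous on some standard ball $B_{r_0}$ and with $h\circ g=g\circ h=\mathrm{id}$ on a neighbourhood of $0$. For the outer containment $\rz h(B_\d)\subset B_\Ft$ I would take $m_0(t)\doteq\norm{h}_t=\max_{|x|\le t}|h(x)|$. Since $h$ is a homeomorphism germ with $h(0)=0$, $m_0$ is nondecreasing, continuous (the maximum of the continuous function $|h|$ over the compact balls $B_t$ depends continuously on $t$), $m_0(t)\to 0$ as $t\to 0$, and $m_0(t)>0$ for $t>0$; thus $m_0\in\wt{\SM}$ and, since $\rz m_0(\d)\ne 0$, the element $\Ft\doteq\norm{\rz h}_\d=\rz m_0(\d)$ lies in $\wt{\SM}_\d=\SM_\d\cup\{0\}$, hence in $\SN_\d$ by Lemma \ref{lem: SN_d=SM_d} (in fact in $\SN^0_\d$). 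By definition $|\rz h(\xi)|\le\Ft$ for all $\xi\in B_\d$, i.e.\ $\rz h(B_\d)\subset B_\Ft$.

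For the inner containment $B_\Fr\subset\rz h(B_\d)$ the plan is to pass to the inverse germ: $B_\Fr\subset\rz h(B_\d)$ holds once $\rz g(B_\Fr)\subset B_\d$ (and $\rz h\circ\rz g=\mathrm{id}$ on $B_\Fr$, automatic for $\Fr$ small enough in the style of this chapter). Put $m_g(t)\doteq\norm{g}_t$; as above $m_g$ is nondecreasing with limit $0$, and $m_g(t)>0$ for $t>0$ (else $g\equiv 0$ near $0$, impossible for a homeomorphism germ). Define the \emph{pseudo-inverse modulus} $\phi(t)\doteq\sup\{s>0:m_g(s)\le t\}$. Monotonicity of $m_g$ shows that $\{s>0:m_g(s)\le t\}$ is an interval $(0,\phi(t))$ or $(0,\phi(t)]$, so $m_g(s)\le t$ for every $0<s<\phi(t)$; moreover $\phi$ is nondecreasing, $\phi(t)>0$ for $t>0$, and $\phi(t)\to 0$ as $t\to 0$ (otherwise $g$ would vanish on a fixed ball). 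Hence $\tfrac12\phi\in\wt{\SM}$ and $\Fr\doteq\rz(\tfrac12\phi)(\d)=\tfrac12\rz\phi(\d)$ is a nonzero element of $\wt{\SM}_\d$, so $\Fr\in\SN_\d$ and $\Fr\sim 0$. Transferring the implication ``$|y|<\phi(t)\Rightarrow|g(y)|\le m_g(|y|)\le t$'' with $t=\d$ gives, for $|\eta|\le\Fr<\rz\phi(\d)$, that $|\rz g(\eta)|\le\d$; thus $\rz g(B_\Fr)\subset B_\d$ and $B_\Fr\subset\rz h(B_\d)$. Shrinking $\Fr$ once more if necessary (using that $\SN_\d$ contains no element incomparable with $\d$, Lemma \ref{lem: SN_d doesnot have d-incomparable no}, so $\Fr$ and $\Ft$ are comparable) we arrange $\Fr<\Ft$, which proves the first sentence.

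For the ``equivalently'' clause: by Lemma \ref{lem: SN is gen by set of the f(d)'s}, $\SN_\d=\{\rz m(\d):m\in\SM\}$, so $\Fz\lll\d$ (in the sense of Definition \ref{def:  SM and SM^0}) forces $\Fz<\Fr$ for the $\Fr\in\SN_\d$ just produced, whence $B_\Fz\subset B_\Fr\subset\rz h(B_\d)$. For the other side, replace $m_0$ by $\hat m_0(t)\doteq m_0(t)+t$, which is continuous, strictly increasing, with limit $0$, hence $\hat m_0\in\SM^0$ and therefore $\hat m_0^{-1}\in\SM$ (Remark \ref{rem: e<<<d and f(d)<e -> f(d)=0}); applying the hypothesis $\d\lll\Fw$ to $m=\hat m_0^{-1}$ yields $\rz\hat m_0^{-1}(\Fw)>\d$, and applying the increasing map $\rz\hat m_0$ gives $\Fw>\rz\hat m_0(\d)\ge\rz m_0(\d)=\Ft$, so $\rz h(B_\d)\subset B_\Ft\subset B_\Fw$.

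I expect the inner containment to be the main obstacle. The point is that $\SN_\d$ is a genuinely thin external subset of $\mu(0)_+$ — it carries no incomparable range (Lemma \ref{lem: SN_d doesnot have d-incomparable no}) — so one cannot simply pick ``some small radius'' $\Fr$; one must manufacture a radius that actually lies in $\SN_\d$, and this is exactly what the modulus $\phi$ is for, which is why the argument is forced to establish that $t\mapsto\phi(t)$ belongs to $\wt{\SM}$ and to use the identification $\wt{\SM}_\d=\SM_\d\cup\{0\}$. The remaining bookkeeping — that $\rz h\circ\rz g=\mathrm{id}$ only on some infinitesimal ball, and that representatives' domains may expand and contract within $\mu(0)$ — is routine here, but it must be tracked so that the composition statements are legitimate.
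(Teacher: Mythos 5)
Your proof is correct, but it takes a genuinely different route from the paper's. The paper proves only the inner containment $B_\Fz\subset\rz h(B_\d)$, and does so via the Jordan separation theorem: it considers $m(r)\doteq\inf\{|h(x)|:|x|=r\}$, observes that $\rz\p(\rz h(B_\d))=\rz h(\rz\p B_\d)$ lies outside $B_\Fz$ because $\rz m(\d)\in\SN_\d\ggg\Fz$, and then invokes *Jordan--Brouwer separation to force $B_\Fz$ into the interior component of the complement of this *boundary, hence into $\rz h(B_\d)$. Your argument sidesteps the topological-degree machinery entirely: you work with the inverse germ $g$, construct the pseudo-inverse modulus $\phi(t)=\sup\{s>0:\norm{g}_s\le t\}$, show $\tfrac12\phi\in\wt{\SM}$ so that $\Fr\doteq\tfrac12\rz\phi(\d)\in\SN_\d$, and deduce $\rz g(B_\Fr)\subset B_\d$, from which $B_\Fr\subset\rz h(B_\d)$ follows by applying $\rz h$ and using $\rz h\circ\rz g=\mathrm{id}$ on the infinitesimal ball. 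What the paper's argument buys is that it never needs a representative of the inverse germ with controlled modulus; what yours buys is elementarity (no Jordan theorem), a complete treatment of both containments (the paper leaves $\rz h(B_\d)\subset B_\Ft$ implicit), and the avoidance of what appears to be a slip in the paper's line ``$m(r)\ge r$ for all $r$,'' which as written is false for a general homeomorphism germ and presumably should read something like $m(r)>0$. Both arguments hinge on the same underlying dichotomy from Lemma~\ref{lem: SN_d doesnot have d-incomparable no} / Lemma~\ref{lem: SN_d=SM_d}: a modulus attached to $h$ (or to $g$) evaluated at $\d$ lands in $\SN_\d$, hence cannot be incomparably small relative to $\d$; the paper packages this through $m$ and separation, you package it through $\phi$ and direct preimage containment.
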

\begin{proof}
    We will just verify that if $\Fz\lll\d$, then $B_\Fz\subset\rz h(B_\d)$. As $\rz h|_{B_\d}$ is a *homeomorphism sending $0$ to itself, then transfer implies their is a *neighborhood $\SV$ of $0$ such that $\SV\subset\rz h(B_\d)$. We also know that if the *boundary (ie., *frontier) of $B_\d$, is  denoted by $\rz\p B_\d$, then $\rz\p \rz h(B_\d)= \rz h(\rz\p B_\d )$. Now if $m(r)=\inf\{|h(x)|:|x|=r\}$ we have $m(r)\geq r$ for all $r$ and as $h$ is a bijection for sufficiently small $r\in\bbr_+$, we have $m(r)>0$ for small $r$. But then we know that $m(\d)$ is coinitial with $\SN_\d$, eg., that $\rz m(\d)\ggg\Fz$. Of course, this says that if $\xi\in\rz\p B_\d$, then $|\rz h(\xi)|\ggg\Fz$, eg., $\rz\p\rz h(B_\d)$ is disjoint from $B_\Fz$. Now by the transfer of the Jordan separation theorem (for a sufficiently large ball) $\rz\p\rz h(B_\d)$ *separates $B_\Fw$ (for some $\Fw\ggg\d$) and as $\SV$ is disjoint from $\rz\p\rz h(B_\d)$ and contained in $\rz Int \rz h(B_\d)$, one of these *components, and as the frontier of $\rz h(B_\d)$, ie., $\z\in B_\Fw$ of the form $\rz h(\xi)$ for $\xi\in\rz\p B_\d $ have length greater than $\Fz$, then $B_\Fz\subset \rz h(B_\d)$.
\end{proof}

%\vfill\eject

\section*{Shorthand Notation and Abbreviations}

(Included is is a basic set of notations defined and used  in
this work.)
\begin{align*}
m_{*,x}\text{ or }dm_x  &\equiv\ \text{the differential at $x$ of a differentiable map (might}\\
&\qquad\text{be the internal differential with $\rz\!$ ($\rz\!$transfer) suppressed}\\
v_g  &\equiv\ \text{a tangent vector at $g$}\\
FDVS_k    &\equiv\ \text{finite dimensional vector space over $K$}\\
VS(\Fg)  &\equiv\ \text{if $\Fg$ is a Lie algebra, this is the underlying vector space of $\Fg$}\\
!  &\equiv\ \text{unique}\\
LA  &\equiv\ \text{Lie algebra}\\
LG  &\equiv\ \text{Lie group}\\
LA(G,\phi)=(L,[\ ,\ ])  &\equiv (G,\phi)\ \text{is a local Lie group and the associated Lie algebra}\\
&\qquad\text{ is }(L,[\ ,\ ])\\
\rz\!LA  &\equiv\ \text{is this functor star transferred}\\
\rz\!FDLA_K  &\equiv\ \text{an internal $^{\s}$finite dimensional Lie algebra (over the}\\ &\qquad\text{internal field $K$, usually $\rz\!\bbr$)}\\
\eta<\Fg  &\equiv\eta\ \text{is a Lie subalgebra of the Lie algebra $\Fg$}\\
\end{align*}

\bibliographystyle{amsplain}
\bibliography{nsabooks}

\end{document}